\newtheorem{theorem}{Theorem}[section]
\newtheorem{lemma}[theorem]{Lemma}
\newtheorem{assump}[theorem]{Assumption}
\newtheorem{corollary}[theorem]{Corollary}
\newtheorem{prop}[theorem]{Proposition}
\theoremstyle{definition}
\newtheorem{definition}[theorem]{Definition}
\newtheorem{notation}[theorem]{Notation}
\newtheorem{example}[theorem]{Example}
\theoremstyle{remark}
\newtheorem{remark}[theorem]{Remark}
\numberwithin{equation}{section}
\setlist{leftmargin=*}
\newcommand\nc{\newcommand}
\nc{\on}{\operatorname}
\nc{\E}{\mathbb{E}}
\nc{\R}{\mathbb R}
\nc{\C}{\mathbb C}
\nc{\Q}{\mathbb Q}
\nc{\Z}{\mathbb Z}
\nc{\N}{\mathbb N}
\nc{\F}{\mathbb F}
\nc{\T}{\mathrm{T}}
\nc{\wt}{\widetilde}
\nc{\ol}{\overline}
\nc{\rnc}{\renewcommand}
\nc{\e}{\varepsilon}
\nc{\DMO}{\DeclareMathOperator}
\nc{\grad}{\nabla}
\nc{\fsp}{\fontdimen2\font=2.3pt}
\nc{\fspp}{\fontdimen2\font=2.17pt}
\nc{\abbr}[1]{{\sc\lowercase{#1}}}
\nc{\bphi}{{\boldsymbol{\phi}}}
\nc{\ocolor}{\color}
\nc{\X}{{X}}
\nc{\Y}{{Y}}
\rnc{\t}{{t}}
\nc{\x}{{x}}
\nc{\y}{{y}}
\nc{\s}{{s}}
\nc{\z}{{z}}
\nc{\w}{{w}}
\rnc{\r}{{r}}
\rnc{\a}{{a}}
\rnc{\b}{{b}}
\rnc{\k}{{k}}
\rnc{\u}{{u}}
\nc{\n}{{n}}
\newcommand{\m}{{m}}
\rnc{\L}{{L}}
\rnc{\leq}{\leqslant}
\rnc{\geq}{\geqslant}
\rnc{\d}{\mathrm{d}}
\rnc{\O}{\mathrm{O}}
\newenvironment{nouppercase}{%
  \renewcommand{\uppercasenonmath}[1]{}}{}
\title{\fsp\Large {{KPZ} equation from a class of nonlinear {SPDE}s in infinite volume}}
\author{Kevin Yang}\thanks{Department of Mathematics and Department of Statistics, University of California, Berkeley; yangkev@berkeley.edu}
\begin{document}
\setstretch{1.0}
\fsp
\raggedbottom
\begin{nouppercase}
\maketitle
\end{nouppercase}
\begin{center}
\today
\end{center}

\begin{abstract}
\fspp We study a general class of nonlinear Ginzburg-Landau \abbr{SPDE}s in infinite volume under weak nonlinearity scaling and with non-equilibrium initial data. We derive the \abbr{KPZ} equation as a continuum limit of these equations. This makes rigorous the original derivation of the \abbr{KPZ} equation from physics \cite{KPZ} in the full-space setting, which was a problem posed in \cite{HQ}. Our analysis is based on a stochastic heat kernel for a linearization of said \abbr{SPDE}s.
\end{abstract}

{\hypersetup{linkcolor=blue}
\setcounter{tocdepth}{1}
\tableofcontents}

\allowdisplaybreaks
\section{Introduction}\label{section:intro}
The \emph{Kardar-Parisi-Zhang} (\abbr{KPZ}) equation is a stochastic \abbr{PDE} (\abbr{SPDE}) that was proposed in \cite{KPZ} as a \emph{universal} model for fluctuations of random interfaces whose growth has a local slope dependence \cite{C11,Q}. This equation is given below, where $\alpha>0$ and $\beta\in\R$ are fixed:
\begin{align}
\partial_{\t}\mathbf{h}_{\t,\X}&=\alpha\partial_{\X}^{2}\mathbf{h}_{\t,\X}+\beta|\partial_{\X}\mathbf{h}_{\t,\X}|^{2}+\xi_{\t,\X}, \quad(\t,\X)\in[0,\infty)\times\R.\label{eq:kpz}
\end{align}
Above, $\xi$ is a space-time white noise, i.e. a Gaussian process on $[0,\infty)\times\R$ with covariance kernel {\small$\E\xi_{\t,\X}\xi_{\s,\Y}=\delta_{\s=\t}\delta_{\X=\Y}$}. In this paper, we take the Cole-Hopf solution given by defining $\mathbf{h}:=\alpha\beta^{-1}\log\mathbf{Z}$, where {\small$\mathbf{Z}$} solves
\begin{align}
\partial_{\t}\mathbf{Z}_{\t,\X}&=\alpha\partial_{\X}^{2}\mathbf{Z}_{\t,\X}+\tfrac{\beta}{\alpha}\mathbf{Z}_{\t,\X}\xi_{\t,\X}.\label{eq:she}
\end{align}
To be precise, if we let {\small$\mathbf{H}$} be the Gaussian heat kernel associated to {\small$\partial_{\t}-\alpha\partial_{\X}^{2}$}, then we say that a (mild or Duhamel) solution to \eqref{eq:she} is a function on $[0,\infty)\times\R$ which solves the following stochastic integral equation:
\begin{align}
\mathbf{Z}_{\t,\X}&={\int_{\R}}\mathbf{H}_{0,\t,\X,\Y}\mathbf{Z}_{0,\Y}\d\Y+\tfrac{\beta}{\alpha}{\int_{0}^{\t}\int_{\R}}\mathbf{H}_{\s,\t,\X,\Y}\mathbf{Z}_{\s,\Y}\xi_{\s,\Y}\d\Y\d\s, \quad (\t,\X)\in[0,\infty)\times\R.
\end{align}
To see \eqref{eq:kpz} as a model for interface fluctuations, take a Gaussian free field on $\R$. Its natural Langevin dynamics is the \emph{additive} stochastic heat equation given by \eqref{eq:kpz} with {\small$\beta=0$}. Finally, to give the dynamics a slope-dependence, one introduces the quadratic first-order term. We emphasize, however, that this reasoning makes the choice of a Gaussian free field and the choice of a quadratic function of {\small$\partial_{\X}\mathbf{h}$}.

A popular generalization of this to non-Gaussian field theories \cite{HH,Spohn} and non-quadratic nonlinearities \cite{HQ} gives rise to another \abbr{SPDE} that is often known as a \emph{Ginzburg-Landau} \abbr{SPDE} \cite{HH,Spohn}. It is given by
\begin{align}
\partial_{\t}\mathbf{J}_{\t,\X}&=\partial_{\X}\mathscr{U}'(\partial_{\X}\mathbf{J}_{\t,\X})+{\color{black}\mathbf{F}[\partial_{\X}\mathbf{J}_{\t,\X}]}+\xi_{\t,\X},\quad(\t,\X)\in[0,\infty)\times\R.\label{eq:glcont}
\end{align}
Above, $\mathscr{U}$ is a smooth ``potential". The term {\small$\partial_{\X}\mathscr{U}'(\partial_{\X}\mathbf{J}_{\t,\X})$} is a second-order operator, and {\color{black}we take $\mathbf{F}$ to be a polynomial in the corresponding first-order operator {\small$\mathscr{U}'(\partial_{\X}\mathbf{J}_{\t,\X})$}}. Like \eqref{eq:kpz}, this \abbr{SPDE} is analytically ill-posed. There does not seem to be a Cole-Hopf map that linearizes \eqref{eq:glcont} in general. So, one must first regularize the \abbr{SPDE} \eqref{eq:glcont} in order to study it. A popular regularization \cite{DGP,CYau,Spohn} is the following space-discretization.
\begin{itemize}
\item First, discretize space; this makes \eqref{eq:glcont} into an \abbr{SDE}, which can be solved rigorously.
\item Take the ``continuum limit" of solutions to the discretized equations as the scale of the discretization vanishes.
\end{itemize}
Since the \abbr{RHS} of \eqref{eq:glcont} consists of a (nonlinear) smoothing mechanism {\small$\partial_{\X}\mathscr{U}'(\partial_{\X}\mathbf{J}_{\t,\X})$}, a slope-dependent flux {\small$\mathbf{F}[\mathscr{U}'(\partial_{\X}\mathbf{J}_{\t,\X})]$}, and a space-time white noise {\small$\xi_{\t,\X}$}, the {weak \abbr{KPZ} universality conjecture} (for \eqref{eq:glcont}) says that if we apply \emph{weak nonlinearity} scaling for the polynomial {\small$\mathbf{F}$} (relative to the discretization scale), then such a continuum limit converges to the solution of \eqref{eq:kpz} itself (with {\small$\alpha,\beta$} depending on {\small$\mathscr{U},\mathbf{F}$}) for quadratic {\small$\mathscr{U}$}. This conjecture dates back to the original physics paper \cite{KPZAIM}. It is the focus of open problems {\small$3$} and {\small$6$} in Section 1.3 of \cite{HQ} (the former concerns general smoothing mechanisms, and the latter concerns the non-compact nature of the domain {\small$\R$}).

\emph{In this paper, we establish this convergence} for a choice of discretization that has explicit invariant measures; this is natural from the field-theory perspective  \cite{Spohn}, though our discretization also happens to prohibit comparison principles even if {\small$\mathscr{U}$} is convex. We consider a large class of {\small$\mathscr{U}$} and polynomials {\small$\mathbf{F}$}, and we are able to derive the \abbr{KPZ} equation with essentially initial data by analyzing \eqref{eq:glcont} for a large class of \emph{non-equilibrium} initial data for \eqref{eq:glcont}. Combining our results with \cite{ACQ} gives a first derivation of random matrix eigenvalue behavior, a hallmark signature of strongly interacting systems, from \eqref{eq:glcont}. See Theorems \ref{theorem:main} and \ref{theorem:mainwedge} for our results. 

Our results extend the works \cite{HQ,HS,HX,KWX,KZ,YEJP,YEJP25} from the torus {\small$\R/\Z$} to the full-line {\small$\R$}. (As we noted earlier, this extension is the focus of open problem {\small$6$} in Section 1.3 of \cite{HQ}.) Our results also improve \cite{DGP} from Brownian initial data to essentially arbitrary initial data at the level of the limit \abbr{KPZ} equation; they also improve on the regularity assumptions on the initial data in \cite{HQ,HS,HX,KWX,KZ,YEJP,YEJP25}. In particular, this paper gives the first derivation of \eqref{eq:kpz} from \eqref{eq:glcont} for non-equilibrium data. See Section \ref{subsection:background} for more discussion.

{At a technical level, we show that the exponential Cole-Hopf transform of \eqref{eq:glcont} converges to the \abbr{SHE}, despite the fact that said Cole-Hopf map does \emph{not} linearize \eqref{eq:glcont} exactly. In particular, the Cole-Hopf transform of \eqref{eq:glcont} satisfies a perturbation of \eqref{eq:she} by additional, complicated stochastic error terms. Thus, many of the problems in the analysis of general infinite-volume singular \abbr{SPDE}s appear in the Cole-Hopf \abbr{SPDE} as well.

Our analysis of the Cole-Hopf \abbr{SPDE} (and proof of the main theorems) is based on constructing a \emph{(stochastic) heat kernel} for said \abbr{SPDE}. This lets us carefully estimate the propagation speed of \eqref{eq:glcont} in space and effectively reduce its analysis to the compact setting. The main difficulty behind this is in combining fairly weak stochastic estimates with the analytic theory of heat kernels. This method is in contrast to the weak solution and comparison principle methods in energy solutions (used in \cite{DGP,YEJP,GP17}) and the fixed-point methods in regularity structures (used in \cite{HQ,HS,HX,KWX,KZ} for quadratic {\small$\mathscr{U}$}). See Section \ref{subsection:intro-method} for an intuitive discussion of this method.
\subsection{Background}\label{subsection:background}
Essentially all of the earlier progress on universality for \eqref{eq:glcont} is restricted to the torus {\small$\R/\Z$} instead of the line {\small$\R$}. The first such result, which takes quadratic {\small$\mathscr{U}(\a)=\a^{2}$} and polynomial {\small$\mathbf{F}$}, is of \cite{HQ}, whose methods are based in regularity structures and are thus significantly different than those of this paper. Extensions to more general {\small$\mathbf{F}$} followed in \cite{HS,HX,KWX,KZ,YEJP}. For non-quadratic (but still uniformly convex) potentials and linear {\small$\mathbf{F}$}, see \cite{YEJP25}, which also restricts to the torus, since it is also based on fixed-point methods.

For progress on universality on the line, see \cite{DGP}; this considers general {\small$\mathscr{U}$}, but it is limited to a highly special invariant measure initial data; it is based on the method of ``energy solutions", which avoids the Cole-Hopf transform and is thus substantially different than the methods of this paper. In particular, the use of ``energy solutions" depends crucially on important tools from the theory of stochastic homogenization that are available exclusively for invariant measure initial data; finding an alternative to these tools remains an outstanding problem. We also emphasize that the methods of \cite{YEJP}, which use comparison principles to extend beyond invariant measure initial data, do not apply at all to the \abbr{SPDE}s in this paper. This is due to the possible non-convexity of {\small$\mathscr{U}$} in \eqref{eq:glcont} and the nature of the discretization needed to have an explicit invariant measure.

The derivation of \abbr{KPZ} from other interfaces in statistical mechanics has, in general, seen a wealth of attention (see the ``Big Picture Questions" in \cite{KPZAIM}). Prior results in this more general direction include a derivation of \abbr{KPZ} from interacting particle systems \cite{ACQ,BG,CGST,CS,CST,CT,CTIn,DT,GJ15,GJS15,GJ17,YCMP,YFOM}, directed polymers and related models \cite{AC,AKQ,CG,DDP1,DDP2,P2}, and other \abbr{SPDE} interface models \cite{DY_23,GHS}. Some of these hold in infinite-volume. However, in the ones that do, either the Cole-Hopf transform exactly linearizes the associated ``height function" (in which case this height function is ``integrable"), or said height function is a perturbation of an ``integrable" one that is so weak that it does not even contribute to {\small$\alpha,\beta$} in the limit \abbr{KPZ} equation.

{As for well-posedness of singular \abbr{SPDE}s in infinite-volume, we refer to \cite{HL18} for the \abbr{SHE} via regularity structures in weighted function spaces. We also refer to \cite{PR19,ZZZ} for a (very recent!) construction of solutions to \abbr{KPZ} in infinite-volume by means of paracontrolled distributions, also in weighted function spaces. These two works, however, are crucially dependent on the exact structure of \abbr{KPZ}, in that the potential {\small$\mathscr{U}$} in \eqref{eq:glcont} must be Gaussian, and the nonlinearity {\small$\mathbf{F}$} is quadratic.

{We also note the work \cite{BC24}, which studies a geometric singular \abbr{SPDE} by means of perturbing around a (deterministic) \emph{geometric} (i.e. covariant) heat kernel to access the global (in time) behavior.}

{Finally, let us also mention \cite{CYau}, which considers \eqref{eq:glcont} with {\small$\mathbf{F}\equiv0$} and derives \eqref{eq:kpz} with {\small$\beta=0$}.}
\subsection{Acknowledgements}
The author was supported by \abbr{NSF} \abbr{DMS}-2203075. We thank Bjoern Bringmann for bringing \cite{BC24} to our attention, {\color{black}as well as Cyril Labb\'{e} and Ran Tao for interesting conversations. We also thank two anonymous referees for their valuable comments, which led to important improvements in the paper} {\color{black}(including the inclusion of Remark \ref{remark:ws})}.
%
%
%
\section{Main results}\label{section:main}
We start by introducing a space-discretization of \eqref{eq:glcont}. The following yields infinite-dimensional \abbr{SDE}s (with dimensions indexed by {\small$\Z$}); their solution theory is discussed in Appendix A of \cite{DGP}. First, some notation.
\begin{itemize}
\item In this paper, elements in $\R^{\Z}$ will be denoted by {\small${\boldsymbol{\phi}}=({\boldsymbol{\phi}}_{\x})_{\x\in\Z}$}. For any $\x\in\Z$, we define {\small$\tau_{\x}:\R^{\Z}\to\R^{\Z}$} to be a shift operator, so that {\small$(\tau_{\x}{\boldsymbol{\phi}})_{\w}:={\boldsymbol{\phi}}_{\x+\w}$} for all $\w\in\Z$ and {\small${\boldsymbol{\phi}}\in\R^{\Z}$}.
\item The parameter $N$ is our scaling parameter; it will always be an integer, and we will take it to $\infty$.
\end{itemize}
With more notation to be introduced shortly, the \abbr{SPDE} of interest is given as follows, in which $\t\geq0$ and $\x\in\Z$:
\begin{align}
\d\mathbf{j}^{N}_{\t,\x}&=N^{\frac32}\grad^{\mathbf{X}}_{1}\mathscr{U}'[{\boldsymbol{\phi}}_{\t,\x}]\d\t+N\mathbf{F}[\tau_{\x}{\boldsymbol{\phi}}_{\t}]{\color{black}\d\t}+\sqrt{2}N^{\frac12}\d\mathbf{b}_{\t,\x}.\label{eq:curr}
\end{align}
%
\begin{itemize}
\item Above, $\mathscr{U}\in\mathscr{C}^{\infty}(\R)$ is a fixed ``potential" with derivative {\small$\mathscr{U}'$}. We make assumptions on it in Assumption \ref{assump:potential}, and we comment on the well-posedness of \eqref{eq:curr} after Assumption \ref{assump:potential} in Remark \ref{remark:potential}.
\item For any {\small$\mathfrak{l}\in\Z$}, the operator {\small$\grad^{\mathbf{X}}_{\mathfrak{l}}$} is the length-$\mathfrak{l}$ space-gradient, i.e. {\small$\grad^{\mathbf{X}}_{\mathfrak{l}}\mathsf{f}_{\x}:=\mathsf{f}_{\x+\mathfrak{l}}-\mathsf{f}_{\x}$} for any $\mathsf{f}:\Z\to\R$.
\item The processes {\small$\t\mapsto\mathbf{b}_{\t,\x}$}, for {\small$\x\in\Z$}, are jointly independent standard Brownian motions.
\item We define {\small${\boldsymbol{\phi}}_{\t,\x}:=N^{1/2}(\mathbf{j}^{N}_{\t,\x}-\mathbf{j}^{N}_{\t,\x-1})$} to be a rescaled space-gradient of {\small$\mathbf{j}^{N}$}, and we set {\small${\boldsymbol{\phi}}_{\t}:=({\boldsymbol{\phi}}_{\t,\x})_{\x\in\Z}\in\R^{\Z}$}.
\item The function $\mathbf{F}$ is defined as follows for any input {\small${\boldsymbol{\phi}}\in\R^{\Z}$} and for a fixed, deterministic constant {\small$\beta_{2}\neq0$}:
\begin{align}
\mathbf{F}[{\boldsymbol{\phi}}]:=\mathbf{F}_{2}[\bphi]+\mathbf{F}_{>2}[\bphi]&:=\tfrac13\beta_{2}\left(\mathscr{U}'[{\boldsymbol{\phi}}_{1}]\mathscr{U}'[{\boldsymbol{\phi}}_{2}]+\mathscr{U}'[{\boldsymbol{\phi}}_{0}]\mathscr{U}'[{\boldsymbol{\phi}}_{1}]+\mathscr{U}'[{\boldsymbol{\phi}}_{-1}]\mathscr{U}'[{\boldsymbol{\phi}}_{0}]\right)\nonumber\\
&+\sum_{\d=3}^{\deg}\beta_{\d}\Big(\prod_{\ell=1}^{\d}\mathscr{U}'[\bphi_{\ell}]+\prod_{\ell=1}^{\d}\mathscr{U}'[\bphi_{\ell-1}]+\ldots+\prod_{\ell=1}^{\d}\mathscr{U}'[\bphi_{-\ell+1}]\Big).\label{eq:nonlinearity}
\end{align}
Above, {\small$\deg>0$} is a fixed positive integer (indicating the degree of {\small$\mathbf{F}$}), and {\small$\beta_{3},\ldots,\beta_{\deg}\in\R$} are also fixed. We emphasize the assumption that $\mathbf{F}[\bphi]$ has no degree-$1$ part. This is for convenience, since including a degree-$1$ part would yield a \abbr{KPZ} limit for $\mathbf{j}^{N}$ only after following a diverging-speed characteristic in space (see \cite{DGP}). Finally, to be completely clear, we note that {\small$\mathbf{F}[\tau_{\x}{\boldsymbol{\phi}}_{\t}]$} in \eqref{eq:curr} is given by replacing {\small${\boldsymbol{\phi}}_{\cdot}$} with {\small${\boldsymbol{\phi}}_{\t,\x+\cdot}$} in \eqref{eq:nonlinearity}.
\end{itemize}
{\color{black}Let us now turn to {\small${\boldsymbol{\phi}}_{\t,\x}=N^{1/2}(\mathbf{j}^{N}_{\t,\x}-\mathbf{j}^{N}_{\t,\x-1})=-N^{1/2}\grad^{\mathbf{X}}_{-1}\mathbf{j}^{N}_{\t,\x}$}. First, we note that the scaling {\small$N^{1/2}$} differs from the natural scaling of {\small$N$} for a discrete gradient. Indeed, the scaling factor {\small$N^{1/2}$} is chosen so that {\small$\bphi_{\t,\x}$} is typically of order {\small$1$} since, a posteriori, if {\small$\mathbf{j}^{\N}$} is close to \eqref{eq:kpz}, then it should have H\"{o}lder regularity in space of roughly {\small$1/2$}. We now record the \abbr{SDE} satisfied by {\small$\t\mapsto\bphi_{\t,\x}$} using \eqref{eq:curr}, which we explain after:}
\begin{align}
\d{\boldsymbol{\phi}}_{\t,\x}&=N^{2}\Delta\mathscr{U}'[{\boldsymbol{\phi}}_{\t,\x}]\d\t+N^{\frac32}\wt{\mathbf{F}}[\tau_{\x}{\boldsymbol{\phi}}_{\t}]-\sqrt{2}N\grad^{\mathbf{X}}_{-1}\d\mathbf{b}_{\t,\x}.\label{eq:phi}
\end{align}
Above, {\small$\Delta=-\grad^{\mathbf{X}}_{1}\grad^{\mathbf{X}}_{-1}$} is a discrete Laplacian. The function {\small$\wt{\mathbf{F}}[\bphi]:=\mathbf{F}[\bphi]-\mathbf{F}[\tau_{-1}\bphi]$} is given by
\begin{align}
\wt{\mathbf{F}}[\tau_{\x}{\boldsymbol{\phi}}_{\t}]:=\wt{\mathbf{F}}_{2}[\tau_{\x}{\boldsymbol{\phi}}_{\t}]+\wt{\mathbf{F}}_{>2}[\tau_{\x}{\boldsymbol{\phi}}_{\t}]&:=\tfrac13\beta_{2}(\mathscr{U}'[{\boldsymbol{\phi}}_{\x+1}]\mathscr{U}'[{\boldsymbol{\phi}}_{\x+2}]-\mathscr{U}'[{\boldsymbol{\phi}}_{\x-2}]\mathscr{U}'[{\boldsymbol{\phi}}_{\x-1}])\nonumber\\
&+\sum_{\d=3}^{\deg}\beta_{\d}\Big(\prod_{\ell=1}^{\d}\mathscr{U}'[\bphi_{\x+\ell}]-\prod_{\ell=1}^{\d}\mathscr{U}'[\bphi_{\x-\ell}]\Big)\label{eq:phi-nl}
\end{align}
Our choice of discretization \eqref{eq:curr} ensures that \eqref{eq:phi} has an explicit one-parameter family of invariant measures. Let us now introduce these invariant measures, since they will be important for the statement of our main result.
\begin{definition}\label{definition:gcmeasure}
Fix any $\sigma\in\R$. Consider the product probability measure below on {\small$\R^{\Z}$}:
\begin{align}
\mathbb{P}^{\sigma}[\d{\boldsymbol{\phi}}]:=\prod_{\x\in\Z}\mathcal{Z}_{\upsilon_{\sigma},\mathscr{U}}^{-1}\exp\left[-\mathscr{U}[{\boldsymbol{\phi}}_{\x}]+\upsilon_{\sigma}{\boldsymbol{\phi}}_{\x}\right]\d{\boldsymbol{\phi}}_{\x}.\label{eq:gcmeasure}
\end{align}
Here, {\small$\mathcal{Z}_{\upsilon_{\sigma},\mathscr{U}}$} is a normalizing partition function, and {\small$\upsilon_{\sigma}\in\R$} is chosen so that {\small$\E^{\sigma}{\boldsymbol{\phi}}_{\x}=\sigma$} for all $\x\in\Z$, where {\small$\E^{\sigma}$} is expectation with respect to {\small$\mathbb{P}^{\sigma}$}. (Well-defined-ness of {\small$\mathbb{P}^{\sigma}$} will follow from Assumption \ref{assump:potential}.)
\end{definition}
The probability measure {\small$\mathbb{P}^{\sigma}$} is often called the \emph{grand-canonical ensemble} or \emph{grand-canonical measure}. The proof that these are invariant measures for \eqref{eq:phi} is a standard generator computation; {see Appendix \ref{subsection:generator}}.

The final object we must introduce is the microscopic Cole-Hopf map
\begin{align}
\mathbf{Z}^{N}_{\t,\x}&:=\exp\left(\lambda\mathbf{j}^{N}_{\t,\x}-\lambda\mathscr{R}_{\lambda}\t\right),\label{eq:ch}
\end{align}
where the constants {\small$\lambda,\mathscr{R}_{\lambda}$} are defined as follows. First, we define {\small$\alpha:=\partial_{\sigma}\E^{\sigma}\mathscr{U}'[{\boldsymbol{\phi}}_{0}]|_{\sigma=0}$} and
\begin{align}
\beta:=\tfrac12\partial_{\sigma}^{2}\E^{\sigma}(\mathbf{F}_{2}[{\boldsymbol{\phi}}])|_{\sigma=0}.\label{eq:beta}
\end{align}
Set {\small$\lambda:=\alpha^{-1}\beta$}. {It turns out that {\small$\alpha>0$} and {\small$\beta\neq0$}; see {Appendix \ref{subsection:alpha-beta}}.} Let us clarify that {\small$\beta$}, and thus the limiting \abbr{SPDE} for {\small$\mathbf{Z}^{N}$}, does \emph{not} depend on (the coefficients in) {\small$\mathbf{F}_{>2}$}; indeed, only the ``quadratic piece" of {\small$\mathbf{F}$} should persist in the large-{\small$N$} limit under the weakly asymmetric scaling that leads to \eqref{eq:kpz}. Next, the renormalization {\small$\mathscr{R}_{\lambda}$} is
\begin{align}
\mathscr{R}_{\lambda}:=\tfrac{1}{12}\lambda^{3}\E^{0}\left(\mathscr{U}'[{\boldsymbol{\phi}}_{0}]{\boldsymbol{\phi}}_{0}^{3}\right){-\tfrac16\beta_{2}\lambda^{2}}.\label{eq:renorm}
\end{align}
%
\subsection{Precise statement of main results}
We first introduce assumptions on the potential $\mathscr{U}:\R\to\R$. 
\begin{assump}\label{assump:potential}
\fsp Suppose {\small$\mathscr{U}\in\mathscr{C}^{\infty}(\R)$} has the form {\small$\mathscr{U}=\mathscr{U}_{1}+\mathscr{U}_{2}$}, where {\small$\Lambda^{-1}\leq\mathscr{U}''_{1}[\cdot]\leq\Lambda$} for a fixed {\small$\Lambda\geq1$}, and where {\small$\|\mathscr{U}_{2}\|_{\mathrm{L}^{\infty}(\R)}+\|\mathscr{U}_{2}''\|_{\mathrm{L}^{\infty}(\R)}<\infty$}. We also assume that {\small$\upsilon_{0}=0$} (see Definition \ref{definition:gcmeasure}).
\end{assump}
\begin{remark}\label{remark:potential}
\fsp Following Appendix A of \cite{DGP}, we solve \eqref{eq:curr} by first solving \eqref{eq:phi}, which we do by putting it on a large discrete torus {\small$\mathbb{T}_{\ell}$} of length {\small$\ell$}, establishing product invariant measures as in Appendix \ref{subsection:generator}, and extending the length {\small$\ell\to\infty$} using the sub-Gaussian one-dimensional marginals of said invariant measures. (One can also restrict \eqref{eq:phi} to {\small$\mathbb{T}_{\ell}$} and take {\small$\ell\to\infty$} with {\small$N$} at \emph{any} rate without losing any of the main points of this paper.)

To solve \eqref{eq:phi} after restricting to {\small$\mathbb{T}_{\ell}$}, we note that although the \abbr{RHS} is not uniformly Lipschitz in the solution, one can construct local solutions and extend them to global solutions via said invariant measures. 
\end{remark}
The second assumption presents an a priori estimate on randomness of the initial data. We will shortly clarify its statement, purpose, and to what extent it is perhaps necessary for \eqref{eq:curr} to converge to \abbr{KPZ}.
\begin{assump}\label{assump:noneq}
\fsp Let {\small$\mathbb{P}^{\mathrm{init}}$} be the probability measure on {\small$\R^{\Z}$} for the law of the initial data to \eqref{eq:phi}. Let {\small$\mathfrak{P}^{\mathrm{init}}$} be its Radon-Nikodym derivative with respect to {\small$\mathbb{P}^{0}$}. We assume that there exists a fixed constant  {\small$\gamma_{\mathrm{data}}>0$} such that 
\begin{align}
\|\mathfrak{P}^{\mathrm{init}}\|_{\mathrm{L}^{\infty}(\R^{\Z})}\lesssim N^{\frac13-\gamma_{\mathrm{data}}}.\label{eq:noneq}
\end{align}
Above, and throughout the paper, the meaning of {\small$\a\lesssim\b$} is that $|\a|\leq\mathrm{C}|\b|$ for a fixed constant $\mathrm{C}>0$. 
\end{assump}
We now briefly explain Assumption \ref{assump:noneq}. First, having invariant measure initial data essentially corresponds to \eqref{eq:noneq} but with {\small$1$} on the \abbr{RHS}. In particular, \eqref{eq:noneq} gives a substantial improvement on the initial data assumption in \cite{DGP}. Indeed, as explained in Example \ref{example:niceinit}, allowing for a divergent \abbr{RHS} in \eqref{eq:noneq} is what lets us treat essentially any initial data for the limit \abbr{KPZ} equation (instead of just Brownian initial data as in \cite{DGP} or sufficiently regular initial data as in \cite{HQ,HS,HX,KWX,KZ,YEJP,YEJP25} ). Thus, the assumption \eqref{eq:noneq} is far from being a perturbative one.

Second, it is possible to let the \abbr{RHS} of \eqref{eq:noneq} grow exponentially in {\small$N$}. Indeed, \eqref{eq:noneq} will only be used for the spatially-local theory of \eqref{eq:curr}. This analysis is done in \cite{YEJP25} with an exponentially large \abbr{RHS} of \eqref{eq:noneq}. However, the details of \cite{YEJP25} are complicated, and the assumption \eqref{eq:noneq} substantially simplifies them. Thus, in order to focus on the main \emph{new} ideas needed to go to infinite volume while keeping this paper self-contained (and providing a much simpler non-equilibrium analysis than \cite{YEJP25}), we adopt \eqref{eq:noneq}. See Remark \ref{remark:localidea} for a further discussion.

Finally, it is not even clear if one can remove the assumption \eqref{eq:noneq} entirely for general potentials {\small$\mathscr{U}$}. Indeed, a key use of \eqref{eq:noneq} (or an exponential relaxation as in the above paragraph) is in establishing regularity estimates for \eqref{eq:curr}. It is unclear if one could show said estimates otherwise, i.e. via analytic means, given the quasilinear and singular nature of \eqref{eq:noneq}. (For quadratic {\small$\mathscr{U}$}, the equation \eqref{eq:curr} is semilinear instead of quasilinear, so it is perhaps possible to remove \eqref{eq:noneq} in this specific case. Because our interest is in general {\small$\mathscr{U}$} for the sake of universality, we leave this to future work.)
\begin{example}\label{example:niceinit}
With respect to the measure {\small$\mathbb{P}^{0}$} (from Definition \ref{definition:gcmeasure}), the {\small${\boldsymbol{\phi}}_{0,\x}$} random variables are independent, mean-zero, and sub-Gaussian random walk increments. Thus, when we sum these random variables and rescale by {\small$N^{-1/2}$}, which is how one gets {\small$\mathbf{j}^{N}_{0,\cdot}$} from {\small${\boldsymbol{\phi}}_{0,\cdot}$}, the process {\small$\X\mapsto\mathbf{j}^{N}_{0,N\X}$} is roughly Brownian; this is by Donsker's principle. (Although {\small$\X\mapsto\mathbf{j}^{N}_{0,N\X}$} is only defined for {\small$\X\in N^{-1}\Z$}, we extend it to {\small$\X\in\R$} by linear interpolation.)

By a classical support theorem for Brownian motion, for any continuous {\small$\mathsf{f}:\R\to\R$}, any compact set {\small$\mathbb{K}\subseteq\R$}, and any $\e>0$, the probability that a standard Brownian motion is within $\e$ of $\mathsf{f}$ uniformly on $\mathbb{K}$ is bounded away from zero by a constant depending on $\e,\mathbb{K}$, and the size of {\small$\mathsf{f}$}. So, an allowable initial data is given by conditioning {\small$\mathbb{P}^{0}$} so that {\small$\X\mapsto\mathbf{j}^{N}_{0,N\X}$} is within $\e$ of a given continuous $\mathsf{f}:\R\to\R$ uniformly on $\mathbb{K}\subseteq\R$. Since the \abbr{RHS} of the constraint \eqref{eq:noneq} diverges in {\small$N$}, we can allow for $\mathbb{K}$ and $\mathsf{f}$ to grow in $N$, and for the error $\e$ to vanish in $N$.
\end{example}
\subsubsection{Universality for continuous initial data}
Our main result has two halves to it. The first focuses on initial data for {\small$\mathbf{Z}^{N}$} that is continuous with sufficiently controlled growth at $\infty$. Before we state it, let us (as above) first declare that any function which is defined on the lattice {\small$N^{-1}\Z$} extends to {\small$\R$} by linear interpolation. 
\begin{theorem}\label{theorem:main}
\fsp Suppose that in addition to Assumptions \ref{assump:potential} and \ref{assump:noneq}, the following assumptions are satisfied.
\begin{enumerate}
\item For any $p\geq1$ and $\zeta>0$, we have the following moment estimates for some $\kappa_{p}>0$ and any $|\mathfrak{l}|\lesssim N$:
\begin{align}
\sup_{\x\in\Z}\E|\mathbf{Z}^{N}_{0,\x}|^{2p}\lesssim_{p}\exp(\tfrac{\kappa_{p}|x|}{N})\quad\mathrm{and}\quad\sup_{\x\in\Z}\E|\grad^{\mathbf{X}}_{\mathfrak{l}}\mathbf{Z}^{N}_{0,\x}|^{2p}\lesssim_{p,\zeta}N^{-p+p\zeta}|\mathfrak{l}|^{p-p\zeta}\exp(\tfrac{\kappa_{p}|x|}{N}).\label{eq:mainI}
\end{align}
\item The initial data {\small$\X\mapsto\mathbf{Z}^{N}_{0,N\X}$} converges locally uniformly in {\small$\X\in\R$} to some function {\small$\X\mapsto\mathbf{Z}_{0,\X}$} in probability.
\end{enumerate}
Then, there exists a coupling between {\small$\mathbf{Z}^{N}$} and the solution $\mathbf{Z}$ to \eqref{eq:she} with initial data {\small$\mathbf{Z}_{0,\cdot}$} from bullet point (2) above such that {\small$\mathbf{Z}^{N}_{\t,N\X}-\mathbf{Z}_{\t,\X}\to0$} locally uniformly on $[0,1]\times\R$ in probability in the large-$N$ limit.
\end{theorem}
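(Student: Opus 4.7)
The plan is to show that the microscopic Cole-Hopf transform $\mathbf{Z}^{N}$ satisfies a discrete \abbr{SHE} perturbed by errors that vanish as $N\to\infty$, and then to identify the limit. First I would apply It\^{o}'s formula to \eqref{eq:ch} using \eqref{eq:curr} for $\mathbf{j}^{N}$. After Taylor expanding $\mathscr{U}'$ and $\mathbf{F}$ around the $\mathbb{P}^{0}$-mean zero and collecting the renormalization \eqref{eq:renorm}, the evolution takes the schematic form
\begin{align*}
\d\mathbf{Z}^{N}_{\t,\x} = \alpha N^{2}\Delta\mathbf{Z}^{N}_{\t,\x}\,\d\t + \lambda\sqrt{2}\,N^{\frac12}\mathbf{Z}^{N}_{\t,\x}\,\d\mathbf{b}_{\t,\x} + \mathbf{Z}^{N}_{\t,\x}\,\mathscr{E}^{N}_{\t,\x}\,\d\t,
\end{align*}
where $\mathscr{E}^{N}$ is a sum of local functionals of $\bphi_{\t,\cdot}$ with vanishing $\mathbb{P}^{0}$-average, of discrete gradients of such functionals, and of sub-leading pieces of $\mathbf{F}$. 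The identities $\lambda=\alpha^{-1}\beta$ and \eqref{eq:renorm} are precisely what cancel the $O(N)$ and $O(1)$ constant pieces of the drift that would otherwise blow up or survive in the limit.

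Second, I would pass to a mild formulation using a \emph{stochastic} heat kernel $\mathbf{H}^{N}_{\s,\t,\x,\y}$ associated with the operator $\alpha N^{2}\Delta$ plus a multiplicative potential tied to the realization of $\bphi$, which absorbs fluctuations of $\mathscr{U}''[\bphi_{\t,\cdot}]$ around the effective diffusivity $\alpha$. This is the object announced in the abstract. I would then prove Gaussian off-diagonal upper bounds and gradient estimates for $\mathbf{H}^{N}$ uniformly in the trajectory, and use the initial-data bounds \eqref{eq:mainI} together with a Gronwall iteration in exponentially-weighted $\mathrm{L}^{p}$ norms to propagate moment and spatial-regularity control on $\mathbf{Z}^{N}$ up to time $1$. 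These a priori estimates yield tightness of $\X\mapsto\mathbf{Z}^{N}_{\t,N\X}$ on $[0,1]\times\R$ in a space of continuous functions with exponential spatial weights.

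The main obstacle is controlling the Duhamel integral of $\mathbf{Z}^{N}\mathscr{E}^{N}$ against $\mathbf{H}^{N}$ on all of $\R$ rather than on a torus. To handle it I would run a multiscale localization based on the off-diagonal decay of $\mathbf{H}^{N}$: fixing a macroscopic point $\X$ and a mesoscopic length $L=N^{\theta}$, the contribution from sites $|\y-N\X|\gtrsim L$ is absorbed by the exponential tails of $\mathbf{H}^{N}$ and the weighted moments of $\mathbf{Z}^{N}$, while on the remaining box a quantitative Boltzmann-Gibbs / Kipnis-Varadhan replacement in the spirit of \cite{YEJP25} replaces the local average of $\mathscr{E}^{N}_{\t,\y}$ by zero up to a fluctuation of size $N^{-\gamma}$ for some $\gamma>0$. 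The $\mathrm{L}^{\infty}$ input \eqref{eq:noneq} of Assumption \ref{assump:noneq} is used precisely to run this replacement on each box in a much simpler form than the relative-entropy version of \cite{YEJP25}. Patching these local estimates over an overlapping cover of $\R$ and tracking how the loss propagates through $\mathbf{H}^{N}$ yields a weighted-$\mathrm{L}^{p}$ bound on the remainder that vanishes as $N\to\infty$. The subtlest piece will be the interplay of $\mathbf{F}_{2}$ with the It\^{o} quadratic variation, which does not cancel and must be tracked through a further single-step Taylor expansion of $\log(\mathbf{Z}^{N}_{\t,\x+1}/\mathbf{Z}^{N}_{\t,\x})$ to match the limiting coefficient $\lambda=\alpha^{-1}\beta$.

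Finally, given tightness and the vanishing remainder, I would identify any subsequential limit of $\X\mapsto\mathbf{Z}^{N}_{\t,N\X}$ as a mild solution of \eqref{eq:she} with initial data $\mathbf{Z}_{0,\X}$: the rescaled discrete kernel in $\mathbf{H}^{N}$ converges to the Gaussian $\mathbf{H}$, the discrete stochastic convolution converges in law to the It\^{o}-Walsh integral against $\xi$ with the expected coefficient $\beta/\alpha$, and the remainder vanishes in probability by the preceding step. Uniqueness for mild \abbr{SHE} solutions in the weighted function spaces of \cite{HL18} then upgrades subsequential convergence to the full locally-uniform-in-probability statement of the theorem.
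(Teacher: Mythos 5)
Your high-level plan correctly identifies several of the paper's moves (It\^{o} formula on the Cole-Hopf transform, a Kipnis-Varadhan replacement for the fluctuating drift, a heat kernel perspective for infinite volume), but the mechanism you describe for overcoming the infinite-volume obstacle is not the one the paper uses, and as written it has a genuine gap.

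First, the ``stochastic heat kernel'' in the paper is not, as you suggest, the kernel for $\alpha N^{2}\Delta$ plus a $\bphi$-dependent multiplicative potential absorbing fluctuations of $\mathscr{U}''$. It is the fundamental solution $\mathbf{K}^{N,\zeta}$ of the \emph{entire} cutoff Cole-Hopf equation \eqref{eq:kzeta-sde}, including the multiplicative noise $\sqrt{2}\lambda N^{1/2}\mathbf{R}^{N,\wedge}\d\mathbf{b}$ and the averaged error terms. The deterministic Laplacian coefficient is fixed (see $\mathscr{T}_{N}$ after \eqref{eq:s-sde}); the $\mathscr{V}'$ fluctuation is part of the admissible error functionals, not the operator. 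Because the kernel absorbs the noise and errors, the Duhamel identity \eqref{eq:duhamelszeta} leaves only the initial data, and the key bound \eqref{eq:stochheatI} is a weighted $\ell^{1}$-estimate on the kernel itself, not on a stochastic convolution against it.

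Second, your mesoscopic box $L=N^{\theta}$ with $\theta<1$ is too small. On a macroscopic time horizon the heat kernel for $\mathscr{T}_{N}$ spreads over length $O(N)$, and its off-diagonal decay at scale $N^{\theta}$, $\theta<1$, is not exponential (see Proposition \ref{prop:hke}: the exponential weight is $\exp(\kappa|\x-\y|/N)$, which is $O(1)$ at distance $N^{\theta}$). So ``absorbing'' the contribution from $|\y-N\X|\gtrsim N^{\theta}$ by the kernel tails does not work; one must go to scale $\gg N$, which is precisely why the paper's cutoff scales are $N^{1+\zeta}$.

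Third, and most importantly, your ``patching over an overlapping cover of $\R$'' does not address the core obstruction. The KV estimate (Proposition \ref{prop:stestimate}) is a \emph{weak-type} second-moment estimate: it bounds $\E|\mathbf{Av}^{\mathbf{T},\mathbf{X},\mathfrak{q}_{\n}}|^{2}$ pointwise in space-time, and when summed over a box of length $N^{1+\zeta}$ (as one must in the Duhamel formula), the resulting bound is $N^{-\beta}N^{\zeta}$, which blows up for $\zeta$ large. You assert a high-probability uniform-in-space replacement of the fluctuating drift by zero ``up to a fluctuation of size $N^{-\gamma}$,'' but such a bound, in the strength needed to close a Gronwall iteration on $\R$, is not available (the paper's uniform estimate Lemma \ref{lemma:avldptx} gives only $N^{-1/2+3\delta_{\mathbf{S}}/4+\delta}$, which multiplied by the $N$ prefactor is not small). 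The paper's resolution is the iterative $\zeta$-reduction of Notation \ref{notation:zeta} and Lemma \ref{lemma:stochheat}: the estimate $\exp(N^{-\beta}N^{\zeta})N^{\delta}$ lets one compare the $\zeta_{\i}$ and $\zeta_{\i+1}$ cutoffs with an exponentially small error, and this is repeated $O(1)$ times until $\zeta$ is as small as desired. No Gronwall on all of $\R$ is ever attempted. This iterative kernel-comparison scheme is the paper's central innovation, and your proposal would need it (or a genuine substitute) to close.
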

The restriction to a time-horizon of $1$ in Theorem \ref{theorem:main} is for convenience; any finite time-horizon is okay.
\subsubsection{Universality for wedge-type initial data}\label{subsubsection:mainwedge}
We will now illustrate an application of our methods to a special example of singular initial data that is necessarily far from equilibrium. For this setting, the relevant solution to \eqref{eq:she} is the narrow-wedge solution with initial data given by the Dirac point mass at $0$. Precisely, this solution {\small$\mathbf{Z}^{\mathrm{nw}}$} is the unique (adapted) solution to
\begin{align}
\mathbf{Z}^{\mathrm{nw}}_{\t,\X}&=\mathbf{H}_{0,\t,\X,0}+{\int_{0}^{\t}\int_{\R}}\mathbf{H}_{\s,\t,\X,\Y}\mathbf{Z}^{\mathrm{nw}}_{\s,\Y}\xi_{\s,\Y}\d\Y\d\s,
\end{align}
where $\mathbf{H}$ is the heat kernel for {\small$\partial_{\t}-\alpha\partial_{\X}^{2}$}. For the existence and uniqueness of solutions, see Section 1 of \cite{ACQ}.
\begin{theorem}\label{theorem:mainwedge}
\fsp There exists a choice of initial data for \eqref{eq:curr}-\eqref{eq:phi} such that Assumption \ref{assump:noneq} for small {\small$\gamma_{\mathrm{data}}>0$} is satisfied, such that the estimates in \eqref{eq:mainI} hold, and such that the following holds for some deterministic {\small$\mathcal{T}_{N}\in\R$}:
\begin{itemize}
\item There exists a coupling between {\small$\mathbf{Z}^{N}$} and the narrow-wedge solution {\small$\mathbf{Z}^{\mathrm{nw}}$} to \eqref{eq:she} such that for any fixed $\tau>0$, we have {\small$\mathcal{T}_{N}\mathbf{Z}^{N}_{\t,N\X}-\mathbf{Z}^{\mathrm{nw}}_{\t,\X}\to0$} locally uniformly on $[\tau,1]\times\R$ in probability. 
\end{itemize}
\end{theorem}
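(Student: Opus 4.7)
The plan is to reduce Theorem~\ref{theorem:mainwedge} to Theorem~\ref{theorem:main} via a short-time restart. For any fixed $\tau>0$, the narrow-wedge solution $\mathbf{Z}^{\mathrm{nw}}_{\tau,\cdot}$ is a continuous, sub-Gaussian function, so it suffices to (a) choose initial data for $\bphi_0$ making $\mathcal{T}_N\mathbf{Z}^N_{0,\cdot}$ an approximation of $\delta_{\X=0}$ (in the distributional sense, after rescaling $\x=N\X$), and (b) verify that after running the discrete dynamics for a vanishing time $\tau_N\to 0$, the profile $\mathcal{T}_N\mathbf{Z}^N_{\tau_N,N\X}$ converges locally uniformly (in probability) to $\mathbf{Z}^{\mathrm{nw}}_{\tau_N,\X}$ and satisfies the hypotheses of Theorem~\ref{theorem:main}. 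Theorem~\ref{theorem:main} applied with shifted time origin $\tau_N$ then delivers the conclusion on $[\tau,1]$ for any fixed $\tau>0$.

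\textbf{Choice of initial data.} A natural choice is to draw $\bphi_{0,\x}$ independently from the tilted marginal $\mathcal{Z}_{\upsilon_\x,\mathscr{U}}^{-1}\exp(-\mathscr{U}[a]+\upsilon_\x a)\d a$, where $\upsilon_\x=\upsilon_{\sigma_N(\x)}$ is chosen so that $\E\bphi_{0,\x}=\sigma_N(\x)$ is a slowly-growing $V$-shape of the form $-c_N\mathrm{sgn}(\x)$ (bounded in $\x$ but growing moderately in $N$). This produces a $\mathbf{j}^N_{0,\cdot}$ whose mean is an inverted $V$, and correspondingly a $\mathbf{Z}^N_{0,\x}$ which is a sharply peaked bi-exponential in $\x$. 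The constant $\mathcal{T}_N$ is then the reciprocal of the (Riemann-summed) total mass of $\mathbf{Z}^N_{0,\cdot}/N$, normalizing the peak to unit macroscopic mass. The Radon-Nikodym derivative $\mathfrak{P}^{\mathrm{init}}$ is then a product of exponential tilts; for a mild choice of $c_N$ this is controlled in $L^\infty$ by $N^{1/3-\gamma_{\mathrm{data}}}$, and the moment bounds \eqref{eq:mainI} follow from sub-Gaussianity of tilted $\bphi_{0,\x}$ and their independence across $\x$.

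\textbf{Short-time analysis.} Using the stochastic heat kernel $\mathbf{G}^N$ for the Cole-Hopf SPDE -- the central object constructed in this paper -- one has a mild equation of the form
\begin{align}
\mathbf{Z}^N_{t,\x}={\textstyle\sum_\y}\mathbf{G}^N_{0,t,\x,\y}\mathbf{Z}^N_{0,\y}+\mathcal{M}^N_{t,\x}+\mathcal{E}^N_{t,\x},
\end{align}
where $\mathcal{M}^N$ is the martingale from the leading multiplicative noise and $\mathcal{E}^N$ collects the remainder terms reflecting that the Cole-Hopf transform of \eqref{eq:glcont} is only an \emph{approximate} SHE. Using quantitative estimates on $\mathbf{G}^N$, the first term at $t=\tau_N$ rescales to $\mathbf{H}_{0,\tau_N,\X,0}$ up to $o(1)$ (since $\mathcal{T}_N\mathbf{Z}^N_{0,\cdot}$ tends weakly to $\delta_{\X=0}$); $\mathcal{M}^N$ converges to the It\^{o}-Walsh integral in the definition of $\mathbf{Z}^{\mathrm{nw}}$ on $[0,\tau_N]$; and $\mathcal{E}^N$ vanishes. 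This yields $\mathcal{T}_N\mathbf{Z}^N_{\tau_N,N\X}\to\mathbf{Z}^{\mathrm{nw}}_{\tau_N,\X}$ locally uniformly in probability, and the moment bounds \eqref{eq:mainI} propagate from time $0$ to time $\tau_N$ via the same heat-kernel estimates. Theorem~\ref{theorem:main} restarted at $\tau_N$ closes the argument.

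\textbf{Main obstacle.} The delicate step is the short-time analysis from singular initial data: naive $L^2$ estimates on $\mathcal{M}^N$ and $\mathcal{E}^N$ degrade as $t\to 0$ because $\mathbf{Z}^N_0$ is concentrated on a macroscopic width tending to $0$. Controlling the Duhamel evolution over $[0,\tau_N]$ requires sharp pointwise/weighted bounds on $\mathbf{G}^N$ and on its interaction with the non-SHE remainder terms. The choice of $\tau_N\to 0$ must be large enough to smooth the $\delta$-like data but small enough that the accumulated martingale and error contributions match their continuum counterparts. Balancing this is precisely where the paper's quantitative stochastic heat kernel machinery (Section~\ref{subsection:intro-method}) is essential.
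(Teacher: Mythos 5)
You propose to draw $\bphi_{0,\x}$ independently from the tilted marginals $\mathcal{Z}_{\upsilon_\x,\mathscr{U}}^{-1}\exp(-\mathscr{U}[a]+\upsilon_\x a)\,\mathrm{d}a$. The Radon--Nikodym derivative of this law with respect to $\mathbb{P}^0$ is then
\begin{align*}
\mathfrak{P}^{\mathrm{init}}[\bphi]=\prod_{\x}\tfrac{\mathcal{Z}_{0,\mathscr{U}}}{\mathcal{Z}_{\upsilon_\x,\mathscr{U}}}\exp(\upsilon_\x\bphi_{0,\x}),
\end{align*}
which is \emph{unbounded} in $\bphi\in\R^{\Z}$ as soon as any $\upsilon_\x\neq0$, since $\bphi_{0,\x}$ ranges over all of $\R$. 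So Assumption~\ref{assump:noneq} fails (the $L^\infty$ norm is infinite, not $\lesssim N^{1/3-\gamma_{\mathrm{data}}}$), no matter how mild the tilt $c_N$ is. The quantitative estimate \eqref{eq:noneq} is precisely why the paper does \emph{not} tilt: it instead conditions $\mathbb{P}^0$ on the event $\mathscr{E}_{\mathsf{f}}$ in \eqref{eq:mainwedge1} that the Cole--Hopf height $\mathbf{j}^N_{0,\cdot}$ is uniformly close to a $V$-shaped profile $\mathsf{f}$. Conditioning gives $\|\mathfrak{P}^{\mathrm{init}}\|_{L^\infty}\leq\mathbb{P}(\mathscr{E}_{\mathsf{f}})^{-1}$ automatically, and the content is the lower bound $\mathbb{P}(\mathscr{E}_{\mathsf{f}})\gtrsim N^{-1/10}$, which the paper obtains via KMT coupling, Cameron--Martin, and the reflection principle. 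Your construction would need to be replaced wholesale by something of this conditioning type.

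\textbf{The reduction to Theorem~\ref{theorem:main} via a vanishing-time restart is also problematic as stated.} Theorem~\ref{theorem:main} requires the rescaled initial data to converge locally uniformly to a \emph{fixed} continuous function $\mathbf{Z}_{0,\cdot}$. If you restart at time $\tau_N\to 0$, the profile $\mathbf{Z}^{\mathrm{nw}}_{\tau_N,\cdot}$ you hope to match is $N$-dependent and degenerates to a Dirac mass, so the hypothesis of Theorem~\ref{theorem:main} is not met; you would need a rate-quantified, uniform-in-initial-data version of Theorem~\ref{theorem:main}, which is not what is stated. If instead you fix $\tau>0$, then showing $\mathcal{T}_N\mathbf{Z}^N_{\tau,N\X}\to\mathbf{Z}^{\mathrm{nw}}_{\tau,\X}$ already requires essentially the full narrow-wedge stability argument, so nothing is gained by the restart. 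The paper avoids this by first invoking Proposition~\ref{prop:comparison} to replace $\mathbf{Z}^N$ with the \emph{exact} lattice SHE $\mathbf{Q}^N$ (whose evolution is linear and clean), and then proving Theorem~\ref{theorem:mainwedge} directly for $\mathbf{Q}^N$: decomposing $\mathbf{Q}^{N}_{0,\cdot}=\mathbf{Q}^{N,1}_{0,\cdot}+\mathbf{Q}^{N,2}_{0,\cdot}$, checking that $\mathcal{T}_N\mathbf{Q}^{N,1}_{0,N\X}$ converges as a measure to $\delta_0$ with the mass and heat-kernel bounds of Lemma~\ref{lemma:qstabilitywedge}, and sending the tail piece $\mathbf{Q}^{N,2}$ to zero. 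This is both a different decomposition and a different closing step (stability for SHE with Dirac data, not a restart of the continuous-data theorem), and it sidesteps your ``main obstacle'' entirely, since the Dirac-to-narrow-wedge analysis is done for the linear $\mathbf{Q}^N$ equation where the heat-kernel estimates are deterministic.
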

The initial data in Theorem \ref{theorem:mainwedge} will (eventually) be constructed by using Example \ref{example:niceinit} with {\small$\mathsf{f}$} therein becoming more and more singular in the large-{\small$N$} limit, ultimately converging to a Dirac point mass at {\small$0$}. (In particular, the {\small$\e>0$} parameter in Example \ref{example:niceinit} vanishes as {\small$N\to\infty$}.) This is explained in detail in Section \ref{subsection:mainproofs}.

By Corollary 1.3 in \cite{ACQ}, we know that {\small$\log\mathbf{Z}^{\mathrm{nw}}_{\t,\X}-\log\mathbf{H}_{0,\t,\X,0}$} converges weakly as {\small$\t\to\infty$} to the Tracy-Widom distribution for fluctuations of extreme eigenvalues of random Hermitian matrices. {Theorem \ref{theorem:mainwedge} shows that the same is true for {\small$\mathbf{Z}^{N}_{\t,N\X}$} in place of {\small$\mathbf{Z}^{\mathrm{nw}}_{\t,\X}$} under a double limit given by first taking {\small$N\to\infty$} then {\small$\t\to\infty$}}. (Because our coupling estimates in Theorems \ref{theorem:main} and \ref{theorem:mainwedge} are actually quantitative in {\small$N$}, one can actually estimate how the two scaling parameters in this double limit are allowed to relate to each other. We omit this estimate, however, since it both requires tracking optimal exponents throughout this paper for all the analysis and is likely far from the optimal ``{\small$1$:$2$:$3$}" \abbr{KPZ} scaling for \eqref{eq:curr}.)
\subsection{Plan for the paper}
In Section \ref{subsection:intro-method}, we give heuristics of the key technical ideas before we delve into details. In Section \ref{section:proofoutlinemain}, we break the proof of Theorems \ref{theorem:main} and \ref{theorem:mainwedge} into several propositions and lemmas, which we spend the rest of the paper proving. A more detailed outline for the proof of these ingredients is given in Section \ref{subsection:paper-outline}.
\subsection{Notation}\label{subsection:notation}
The key pieces of notation used throughout this paper are introduced in the current section and Section \ref{section:proofoutlinemain}, the latter of which contains the main steps in the proofs of Theorems \ref{theorem:main} and \ref{theorem:mainwedge}. Most other notation is introduced in the section where is used. In any case, to aid the reading of this paper, we briefly summarize the notation (and related conventions) used in this paper as follows, focusing on notation used in multiple sections.

\begin{enumerate}
\item We use big-Oh notation, i.e. we write {\small$\a=\mathrm{O}(\b)$} if {\small$|\a|\leq\mathrm{C}|\b|$} for some constant $\mathrm{C}>0$. The parameters on which $\mathrm{C}$ depends will be placed as subscripts in the $\mathrm{O}$-notation. We also write {\small$\a\lesssim\b$} to mean {\small$\a=\mathrm{O}(\b)$}, and we will also write {\small$\a=\mathrm{o}(\b)$} if $|\a|/|\b|$ vanishes in the large-$N$ limit.
\item For any {\small$\a\leq\b$}, we sometimes write {\small$\llbracket\a,\b\rrbracket:=[\a,\b]\cap\Z$}.
\item We will say that an event holds with \emph{high probability} if its probability is {\small$1-\mathrm{o}(1)$}.
\item The object {\small$\mathbf{Z}^{N}$} is the Cole-Hopf map \eqref{eq:ch}, in which {\small$\mathbf{j}^{N}$} is a microscopic height function \eqref{eq:curr} whose discrete derivative {\small$\bphi$} solves \eqref{eq:phi}. The object {\small$\mathbf{S}^{N}$} {\color{black}in \eqref{eq:zsmooth}} is a convolution of {\small$\mathbf{Z}^{N}$} with a smooth kernel {\small$\mathscr{S}^{N}$} on a length-scale {\small$N^{1-\delta_{\mathbf{S}}}$} just below the macroscopic length-scale {\small$N$}; here, {\small$\delta_{\mathbf{S}}>0$} is small, and these objects are introduced in Definition \ref{definition:zsmooth}. Also introduced there is the ratio {\small$\mathbf{R}^{N}:=\mathbf{Z}^{N}/\mathbf{S}^{N}$}; spatial regularity of {\small$\mathbf{Z}^{N}$} will help us show that {\small$\mathbf{R}^{N}$} is close to {\small$1$}, and a cutoff-version {\small$\mathbf{R}^{N,\wedge}$} of {\small$\mathbf{R}^{N}$} to reflect this is given in \eqref{eq:rwedge}.
\begin{itemize}
\item Any bold English letter with one space and one time subscript is a modification of {\small$\mathbf{Z}^{N}$} or {\small$\mathbf{S}^{N}$}. For example, {\small$\mathbf{S}^{N,\zeta}$} is a modification of {\small$\mathbf{S}^{N}$} given by implementing space-cutoffs in the dynamics of {\small$\mathbf{S}^{N}$}; see \eqref{eq:szeta-sde}. Also, {\small$\mathbf{Q}^{N}$} is the solution to the lattice \abbr{SHE} \eqref{eq:qsde}.
\item Bold and capitalized \emph{Greek} letters are often used to follow existing conventions in the literature (see {\small$\boldsymbol{\Psi}$} in Notation \ref{notation:localav}) or to conveniently package important terms in our analysis (see {\small$\boldsymbol{\Phi}$} in \eqref{eq:lkzetaIa}, for example).
\end{itemize}
\item The function {\small$\boldsymbol{\chi}^{(\zeta)}:\Z\to\R$} is a smoothened version of the indicator of a neighborhood of {\small$0$} of radius {\small$N^{1+\zeta}$}, i.e. a factor of {\small$N^{\zeta}$}-larger than the macroscopic length-scale.
\begin{itemize}
\item We will consider one large parameter {\small$\zeta_{\mathrm{large}}$}, as well as a separate sequence of slightly decreasing parameters {\small$\zeta_{\i}$} in Notation \ref{notation:zeta} that gets close to {\small$0$}.
\end{itemize}
\item Any bold letter with two space subscripts and two time subscripts is a space-time kernel. For example, {\small$\mathbf{H}^{N}$} is a classical (diffusively-scaled) heat kernel for a symmetric simple random walk on {\small$\Z$}; see \eqref{eq:heatkernel}, and {\small$\mathbf{K}^{N,\zeta}$} is the \emph{stochastic} kernel (i.e. fundamental solution) for the {\small$\mathbf{S}^{N,\zeta}$} evolution. There are other kernels, like {\small$\mathbf{K}^{N,\zeta,\sim}$} and {\small$\mathbf{L}^{N,\zeta,\sim}$} from Section \ref{section:stochheat} that are technical modifications of {\small$\mathbf{K}^{N,\zeta}$}, but these will be defined in context.
\item Notation such as {\small$\mathfrak{q}$} (possibly with subscripts) often denote functions of \eqref{eq:phi} that are error terms (in the {\small$\mathbf{Z}^{N}$} and {\small$\mathbf{S}^{N}$} dynamics); these errors are called \emph{admissible} and are classified in Definition \ref{definition:admissible}.
\item The space-time averaging operator {\small$\mathbf{Av}^{\mathbf{T},\mathbf{X}}$}, the space-averaging operator {\small$\mathbf{Av}^{\mathbf{X}}$}, and their averaging scales {\small$\mathfrak{n}_{\mathbf{Av}}:=N^{1-3\delta_{\mathbf{S}}/2}$} and {\small$\mathfrak{t}_{\mathbf{Av}}:=N^{-2/3-10\delta_{\mathbf{S}}}$} are introduced in Definition \ref{definition:eq-operators}. In the same place, we also introduce relevant space and time gradients {\small$\grad^{\mathbf{X}}$} and {\small$\grad^{\mathbf{T},\mathrm{av}}$}, respectively.
\end{enumerate}
%
%
%
\section{Heuristics behind Theorems \ref{theorem:main} and \ref{theorem:mainwedge}}\label{subsection:intro-method}
We now give a discussion of the main steps and ideas taken towards Theorems \ref{theorem:main} and \ref{theorem:mainwedge} without including all the rigorous details. The first (standard) step is to compute the evolution equation for {\small$\mathbf{Z}^{N}$} using \eqref{eq:curr}, \eqref{eq:ch}, and the It\^{o} formula. It is essentially given by 
\begin{align}
\d\mathbf{Z}^{N}_{\t,\x}&\approx N^{2}\alpha\Delta\mathbf{Z}^{N}_{\t,\x}+\sqrt{2}\lambda N^{\frac12}\mathbf{Z}^{N}_{\t,\x}\d\mathbf{b}_{\t,\x}+N\mathfrak{q}[\tau_{\x}\bphi_{\t}]\mathbf{Z}^{N}_{\t,\x}\d\t,\label{eq:heuristic1}
\end{align}
in which {\small$\mathfrak{q}$} is ``stochastically small" in the following sense. If we let {\small$\mathsf{Av}[\mathfrak{q}]_{\t,\x}$} be the space-time average of {\small$\mathfrak{q}[\tau_{\s}\bphi_{\z}]$} over {\small$(\s,\z)$} in a block of time-scale {\small$\mathfrak{t}$} and length-scale {\small$\mathfrak{l}$} around {\small$(\t,\x)$}, then we have the weak-type estimate
\begin{align}
\int_{0}^{1}\sum_{\x\in\mathbb{I}}\E|\mathsf{Av}[\mathfrak{q}]_{\t,\x}|^{2}\d\t\lesssim |\mathbb{I}|\cdot(N^{-2}\mathfrak{t}^{-1}|\mathfrak{l}|^{-1}+\ldots),\label{eq:heuristic2}
\end{align}
where {\small$\mathbb{I}$} denotes any interval in {\small$\Z$}, and the $\ldots$ on the \abbr{RHS} are lower-order error terms. (The time-horizon of {\small$1$} on the \abbr{LHS} of \eqref{eq:heuristic2} is not important, but it is convenient to fix.) We note that \eqref{eq:heuristic2} is essentially a \abbr{CLT}-type estimate, because it says that time-averaging and space-average both introduce square-root cancellations. (The extra {\small$N^{-2}$} factor appears because the speed of \eqref{eq:phi} is order {\small$N^{2}$}, and faster fluctuations means more cancellation.) We also clarify that the estimate \eqref{eq:heuristic2} deteriorates as the size {\small$|\mathbb{I}|$} increases. Indeed, we eventually take {\small$\mathfrak{t}\ll1$} and {\small$|\mathfrak{l}|\ll N$}, so \eqref{eq:heuristic2} is a \emph{local} stochastic estimate that we then sum over a possibly large set {\small$\mathbb{I}$}.

The quantity {\small$\mathsf{Av}[\mathfrak{q}]$} is of interest because, by regularity of {\small$\mathbf{Z}^{N}$}, it is natural to expect that if the averaging scales are sub-macroscopic (i.e. {\small$\mathfrak{t}\ll1$} and {\small$|\mathfrak{l}|\ll N$}), then we have 
\begin{align}
\d\mathbf{Z}^{N}_{\t,\x}&\approx N^{2}\alpha\Delta\mathbf{Z}^{N}_{\t,\x}+\sqrt{2}\lambda N^{\frac12}\mathbf{Z}^{N}_{\t,\x}\d\mathbf{b}_{\t,\x}+N\mathsf{Av}[\mathfrak{q}]_{\t,\x}\mathbf{Z}^{N}_{\t,\x}\d\t.\label{eq:heuristic3}
\end{align}
\emph{We warn the reader that this is possibly {not} quite true}, because {\small$\mathbf{Z}^{N}$} does not have ``great" regularity. In particular, when making these ideas rigorous, we will have to average both {\small$\mathfrak{q}$} and {\small$\mathbf{Z}^{N}$} together. We then use \eqref{eq:ch} to extract two factors from {\small$\mathbf{Z}^{N}$}. One is ``sufficiently smooth"; the other is some explicit functional of \eqref{eq:curr}-\eqref{eq:phi} that does not affect the \abbr{CLT}-type cancellation mechanism leading to \eqref{eq:heuristic2}. (In addition, for entirely technical reasons, we will also not look at {\small$\mathbf{Z}^{N}$} but rather its spatial-convolution with a smooth approximation to a delta function living on a length-scale of the type {\small$N^{1-\delta}$}, just below the macroscopic scale. We also have to consider the ratio between {\small$\mathbf{Z}^{N}$} and this small-scale smoothing, but these are rather technical points that we skip over for now.)
\begin{remark}\label{remark:localidea}
\fsp The a priori estimate \eqref{eq:noneq} will be used only to prove an appropriate, more detailed version of \eqref{eq:heuristic2} (as well as some more technical a priori estimates on spatial-averages of the {\small$\t\mapsto\bphi_{\t}$} process on scales {\small$\ll N$}). We also note that the methods in \cite{YEJP25} are designed to establish these local-type estimates under a much more relaxed assumption on the relative entropy (compared to \eqref{eq:noneq}). However, as noted after Assumption \ref{assump:noneq}, adopting \eqref{eq:noneq} {\color{black}instead of} a weaker relative entropy estimate allows for a much cleaner argument. Thus, we adopt \eqref{eq:noneq} to highlight and focus on the new key ideas for the infinite-volume problem.
\end{remark}
We now describe the key technical problem. The estimate \eqref{eq:heuristic2} is perhaps optimal in {\color{black}the sense} that current tools do not allow for a stronger version of it. This suggests that {\small$\mathsf{Av}[\mathfrak{q}]$} is not small uniformly in space-time, {\color{black}but} only after adding weights that deteriorate in space at infinity. This eventually leads to a non-close-ability issue when analyzing the Duhamel expansion of \eqref{eq:heuristic3} that often appears in the analysis of singular \abbr{SPDE}s in infinite-volume.

Thus, we avoid using the Duhamel expansion for \eqref{eq:heuristic3}, therefore departing from standard methods. Instead, we will consider the following \emph{hierarchy} of equations, parameterized by {\small$\zeta>0$}:
\begin{align}
\d\mathbf{Z}^{N,\zeta}_{\t,\x}&\approx N^{2}\alpha\Delta\mathbf{Z}^{N,\zeta}_{\t,\x}+\sqrt{2}\lambda N^{\frac12}\mathbf{Z}^{N,\zeta}_{\t,\x}{\color{black}\d\mathbf{b}_{\t,\x}}+N{\ocolor{blue}\mathbf{1}_{|\x|\leq N^{1+\zeta}}}\mathsf{Av}[\mathfrak{q}]_{\t,\x}\mathbf{Z}^{N,\zeta}_{\t,\x}\d\t.\label{eq:heuristic4}
\end{align}
We write {\small$\approx$} because of the error terms hidden in \eqref{eq:heuristic3}, but also because the exact defining equation for {\small$\mathbf{Z}^{N,\zeta}$} that we consider is slightly (for purely technical reasons) different. A short argument will let us compare \eqref{eq:heuristic3} and \eqref{eq:heuristic4} for {\small$\zeta$} large enough, but we want {\small$\zeta$} to be small in order to effectively reduce to the compact setting. To do this, we will let {\small$\mathbf{K}^{N,\zeta}_{\s,\t,\x,\y}$}, for {\small$(\s,\t,\x,\y)\in[0,\infty)^{2}\times\Z^{2}$} with {\small$\s\leq\t$}, denote the fundamental solution (or heat kernel) associated to \eqref{eq:heuristic4} (when viewed as an equation that is linear in its solution). \emph{Our key estimate is}
\begin{align}
\sup_{0\leq\s\leq\t\leq1}\sup_{\x\in\Z}\sum_{\y\in\Z}\exp(\tfrac{\kappa|\x-\y|}{N})|\mathbf{K}^{N,\zeta}_{\s,\t,\x,\y}|\lesssim \exp(N^{-\beta}N^{\zeta})N^{\delta},\label{eq:heuristic5}
\end{align}
where {\small$\kappa,\delta$} are arbitrary (but independent of {\small$N$}), and where {\small$\beta>0$} is independent of all other parameters in \eqref{eq:heuristic5}. (We clarify that the factor {\small$N^{\zeta}$} in \eqref{eq:heuristic5} comes from the support length of the blue indicator in \eqref{eq:heuristic4} and the factor {\small$|\mathbb{I}|$} in the estimate \eqref{eq:heuristic2}. The factor of {\small$N^{-\beta}$} comes from the \abbr{CLT}-cancellations as explained after \eqref{eq:heuristic2}. Moreover, the exponential nature of the \abbr{RHS} of \eqref{eq:heuristic5} comes from a Gronwall-type reason. The factor of {\small$N^{\delta}$} is technical.)

The estimate \eqref{eq:heuristic5} implies that because we only care about {\small$\mathbf{Z}^{N}_{\t,\x}$} for {\small$|\x|\lesssim1$} in Theorems \ref{theorem:main} and \ref{theorem:mainwedge}, we can modify \eqref{eq:heuristic4} at points outside the interval {\small$\llbracket-N^{1+\zeta-\e\beta},N^{1+\zeta-\e\beta}\rrbracket$} for any {\small$\e>0$} fixed up to an exponentially small (in {\small$N$}) error. In particular, we can slightly lower {\small$\zeta$} in \eqref{eq:heuristic4}, and doing this inductively lets us assume that {\small$\zeta>0$} is small in \eqref{eq:heuristic4}, thereby (effectively) reducing to the compact setting. \emph{We clarify that proving the estimate \eqref{eq:heuristic5} only uses \eqref{eq:heuristic2}} (and the other technical estimates for \eqref{eq:phi} as discussed in Remark \ref{remark:localidea}) \emph{as stochastic inputs}. Thus, if we condition on \eqref{eq:heuristic2} without the expectation as well as said estimates for \eqref{eq:phi}, then \eqref{eq:heuristic5} holds essentially deterministically (i.e. outside an event of probability {\small$\mathrm{O}(N^{-\mathrm{D}})$} for any {\small$\mathrm{D}=\mathrm{O}(1)$}). 

To establish \eqref{eq:heuristic5} rigorously, we will expand {\small$\mathbf{K}^{N,\zeta}$} around the fundamental solution to the \abbr{SHE} part of \eqref{eq:heuristic4}. However, the stochastic error term in \eqref{eq:heuristic4} is only small in a very weak sense, so classical expansion methods for the heat kernels do not seem to apply. A large portion of this paper is dedicated to devising an alternative scheme to analyze this expansion that is compatible with the weak-type estimate \eqref{eq:heuristic2} for the stochastic term.
\begin{remark}\label{remark:ws}
\fsp {\color{black}The estimation in \eqref{eq:heuristic5} of the {\small$\mathbf{K}^{N,\zeta}$}-kernel for different {\small$\zeta$}-parameters can be thought of as analogous to the use of weighted spaces (as in \cite{HL15,HL18}, for instance). It would be interesting to see a parallel between these two approaches in more detail. To this end, we reemphasize the key challenge that the equation \eqref{eq:heuristic3} of interest has a coefficient {\small$\mathsf{Av}[\mathfrak{q}]$} for which we currently have only the weak-type bound \eqref{eq:heuristic2} (and a few other inputs that are on the more technical side). In particular, one of the goals behind introducing the {\small$\mathbf{K}^{N,\zeta}$}-kernels is to analyze equations of this type, which do not appear to be addressed in \cite{HL15,HL18}, for example.}
\end{remark}
\begin{remark}\label{remark:sdeclarify}
\fsp {Let us briefly comment on \eqref{eq:heuristic3}. The main error term obtained by replacing {\small$\mathfrak{q}$} with {\small$\mathsf{Av}[\mathfrak{q}]$} (in going from \eqref{eq:heuristic1} to \eqref{eq:heuristic3}), which is suppressed in the {\small$\approx$} notation, is given by gradients of {\small$\mathfrak{q}$}. Ultimately, \eqref{eq:heuristic3} reads as
\begin{align}
\d\mathbf{Z}^{N}_{\t,\x}&\approx N^{2}\alpha\Delta\mathbf{Z}^{N}_{\t,\x}+\sqrt{2}\lambda N^{\frac12}\mathbf{Z}^{N}_{\t,\x}\d\mathbf{b}_{\t,\x}+N\mathsf{Av}[\mathfrak{q}]_{\t,\x}\mathbf{Z}^{N}_{\t,\x}\d\t+N\grad^{\star}(\mathfrak{q}[\tau_{\x}\bphi_{\t}]\mathbf{Z}^{N}_{\t,\x})\d\t,\label{eq:sdeclarify1}
\end{align}
where {\small$\grad^{\star}$} is a space-time average of space-time gradients on time-scales {\small$\lesssim\mathfrak{t}$} and length-scales {\small$\lesssim\mathfrak{l}$}. Here, {\small$\mathfrak{t},\mathfrak{l}$} are the averaging scales in {\small$\mathsf{Av}[\mathfrak{q}]$}; see \eqref{eq:heuristic2}. By the same token, we can further replace {\small$N\grad^{\star}(\mathfrak{q}[\tau_{\x}\bphi_{\t}]\mathbf{Z}^{N}_{\t,\x})\d\t$} with {\small$N\grad^{\star}(\mathsf{Av}[\mathfrak{q}]_{\t,\x}\mathbf{Z}^{N}_{\t,\x})\d\t+N\grad^{\star}\grad^{\star}(\mathfrak{q}[\tau_{\x}\bphi_{\t}]\mathbf{Z}^{N}_{\t,\x})\d\t$}, at which we point we iterate a finite number of times. Therefore, we are left to study a \emph{finite} number of terms of the form below (for all {\small$\ell=0,\ldots,\ell_{\star}$} for some {\small$\ell_{\star}=\mathrm{O}(1)$}):
\begin{align}
N(\grad^{\star})^{\ell}(\mathsf{Av}[\mathfrak{q}]_{\t,\x}\mathbf{Z}^{N}_{\t,\x})\d\t.\label{eq:sdeclarify2}
\end{align}
It turns out that only one time-averaged time-gradient will be necessary, not {\small$\ell$}-many. We will also need to study {\small$N\grad^{\mathbf{X}}_{\mathfrak{l}_{0}}$} applied to \eqref{eq:sdeclarify2} for all {\small$|\mathfrak{l}_{0}|\leq1$}, though this should be thought of as a bounded operator. Lastly, for entirely technical reasons, we will need to study not {\small$\mathbf{Z}^{N}$} but its convolution with a smooth approximation to the identity. In particular, all terms in \eqref{eq:sdeclarify1}-\eqref{eq:sdeclarify2} will be convolved with a mollifier (which absorbs all space-gradients).}
\end{remark}
%
%
\section{Proof outlines for the main results (Theorems \ref{theorem:main} and \ref{theorem:mainwedge})}\label{section:proofoutlinemain}
The goal of this section is to record the main ingredients needed to prove Theorems \ref{theorem:main} and \ref{theorem:mainwedge}. After stating these ingredients, we use them to prove Theorems \ref{theorem:main} and \ref{theorem:mainwedge}. The rest of this paper is dedicated to establishing each of the ingredients.
\subsection{Step 1 -- the basic \abbr{SDE}}
For various technical reasons, it will be convenient to not only consider the Cole-Hopf map \eqref{eq:ch}, but also a suitable space-regularization that we now introduce.
\begin{definition}\label{definition:zsmooth}
\fsp Consider a function {\small$\mathscr{S}\in\mathscr{C}^{\infty}_{\mathrm{c}}(\R)$} with non-negative values such that {\small$\int_{\R}\mathscr{S}_{\x}\d\x=1$}. We now define the following version of {\small$\mathscr{S}$} that is rescaled in space to length-scales of {\small$N^{1-\delta_{\mathbf{S}}}$}, where {\small$\delta_{\mathbf{S}}>0$} is a small parameter:
\begin{align}
\mathscr{S}^{N}_{\x}:=\mathrm{c}_{N}N^{-1+\delta_{\mathbf{S}}}\mathscr{S}_{N^{-1+\delta_{\mathbf{S}}}\cdot\x}
\end{align}
The constant {\small$\mathrm{c}_{N}=1+\mathrm{o}(1)$} is deterministic and chosen so that {\small$\|\mathscr{S}^{N}\|_{\ell^{1}(\Z)}=1$}. We will now define the following smoothing of {\small$\mathbf{Z}^{N}$} and the ratio between {\small$\mathbf{Z}^{N}$} and said smoothing:
\begin{align}
\mathbf{S}^{N}_{\t,\x}:=[\mathscr{S}^{N}\star\mathbf{Z}^{N}_{\t,\cdot}]_{\x}\quad\text{and}\quad \mathbf{R}^{N}_{\t,\x}:=\mathbf{Z}^{N}_{\t,\x}\Big/\mathbf{S}^{N}_{\t,\x}.\label{eq:zsmooth}
\end{align}
We clarify that {\small$\star$} denotes convolution of functions on {\small$\Z$}.
\end{definition}
We note that convolution against {\small$\mathscr{S}^{N}$} for sufficiently regular functions (like {\small$\mathbf{Z}^{N}$}) is approximately the identity map, because {\small$\mathscr{S}^{N}$} is supported on the length-scale {\small$N^{1-\delta_{\mathbf{S}}}$}, and functions like {\small$\mathbf{Z}^{N}$} have regularity on much larger scales of order {\small$N$}. In particular, replacing {\small$\mathbf{Z}^{N}$} with {\small$\mathbf{S}^{N}$} is harmless. On the other hand, the regularization in \eqref{eq:zsmooth} is enormously helpful (essentially) because we can now take as many derivatives as we want while only giving up small powers of {\small$N$} for each derivative; since {\small$\mathbf{Z}^{N}$} itself is not smooth, this upgrade is significant.

Next, we will introduce a convenient way to organize the main error terms in the {\small$\mathbf{S}^{N}$} equation.
\begin{definition}\label{definition:jets}
\fsp We say that {\small$\mathsf{F}:\R^{\Z}\to\R$} is a \emph{local function} if there exists a subset $\mathbb{I}\subseteq\Z$ independent of $N$ such that for any {\small${\boldsymbol{\phi}}\in\R^{\Z}$}, the value {\small$\mathsf{F}[{\boldsymbol{\phi}}]$} depends only on {\small${\boldsymbol{\phi}}_{\x}$} for $\x\in\mathbb{I}$; we often refer to {\small$\mathbb{I}$} as the \emph{support} of {\small$\mathsf{F}$}. 

Fix any {\small$\mathfrak{k}=0,1,2$}. We let {\small$\mathrm{Jet}_{\mathfrak{k}}^{\perp}$} tbe the set of local functions whose {\small$\mathfrak{k}$}-jet is zero, i.e. {\small$\mathsf{F}\in\mathrm{Jet}_{\mathfrak{k}}^{\perp}$} if and only if
\begin{align*}
\partial_{\sigma}^{\ell}\E^{\sigma}\mathsf{F}[\bphi]|_{\sigma=0}=0 \quad\text{for all } \ell=0,\ldots,\mathfrak{k}.
\end{align*}
\end{definition}
Intuitively, functions in {\small$\mathrm{Jet}_{\mathfrak{k}}^{\perp}$} should be thought of as having an additional factor of {\small$N^{-(\mathfrak{k}+1)/2}$}. Indeed, local equilibrium suggests that {\small$\mathsf{F}[\tau_{\x}\bphi_{\t}]$} averages out into {\small$\E^{\sigma[\tau_{\x}\bphi_{\t}]}\mathsf{F}$}, where {\small$\sigma[\tau_{\x}\bphi_{\t}]$} is a scale-{\small$\mathrm{o}(N)$} mollification of {\small$\bphi_{\t}$} near the point {\small$\x$}. The \abbr{CLT} suggests further that we essentially have {\small$|\sigma[\tau_{\x}\bphi_{\t}]|\lesssim N^{-1/2}$}, at which point the factor of {\small$N^{-(\mathfrak{k}+1)/2}$} comes from the Taylor expanding {\small$\E^{\sigma[\tau_{\x}\bphi_{\t}]}\mathsf{F}$} about {\small$0$}. In particular, any local function of the form {\small$N^{-1+\k/2}\mathfrak{f}$} with {\small$\mathfrak{f}\in\mathrm{Jet}_{\k}^{\perp}$} is thought of as {\small$\ll N^{-1}$}. Such a property, together with a technical support assumption and a priori estimate, will define the class of local functions which we must analyze.
\begin{definition}\label{definition:admissible}
\fsp We say that a local function {\small$\mathfrak{q}:\R^{\Z}\to\R$} is \emph{admissible} if:
\begin{enumerate}
\item it has the form {\small$\mathfrak{q}=N^{-1+\k/2}\mathfrak{f}$} with {\small$\mathfrak{f}\in\mathrm{Jet}_{\k}^{\perp}$} for some {\small$\k\in\{0,1,2\}$};
\item {\color{black}{\small$\mathfrak{f}_{}[\bphi]$}} depends only on {\small$\bphi_{\w}$} for either {\small$\w\in\{1,\ldots,\mathfrak{l}\}$} or {\small$\w\in\{-\mathfrak{l},\ldots,0\}$} for some {\small$\mathfrak{l}=\mathrm{O}(1)$};
\item we have the deterministic local polynomial estimate below for some positive {\small$\mathrm{C}\lesssim1$}:
\begin{align}
|{\color{black}\mathfrak{f}}[{\boldsymbol{\phi}}]|&\lesssim 1+ \sum_{|\w|\lesssim1}|\bphi_{\w}|^{\mathrm{C}}.\label{eq:f-estimate}
\end{align}
\end{enumerate}
\end{definition}
Lastly, before we record the dynamics for {\small$\mathbf{S}^{N}$}, we will introduce an efficient and intuitive way to package the terms therein. (Otherwise, the resulting equation is perhaps quite complicated, and its structure is easily lost.)
\begin{definition}\label{definition:eq-operators}
\fsp We will introduce the following operators for functions {\small$\mathsf{A}:[0,\infty)\times\R\to\R$}.
\begin{enumerate}
\item Let us introduce notation that will capture the main error terms. Suppose {\small$\mathfrak{q}\in\mathscr{C}^{\infty}(\R^{\Z})$} is a local function so that {\small$\mathfrak{q}[\bphi]$} depends only on {\small$\bphi_{\w}$} for {\small$\w\in\{1,\ldots,\mathfrak{l}_{\mathfrak{q}}\}$} for some {\small$1\leq\mathfrak{l}_{\mathfrak{q}}\lesssim1$}. We define the following space-time average, in which {\small$\mathfrak{t}_{\mathbf{Av}}=N^{-2/3-10\delta_{\mathbf{S}}}$} and {\small$\mathfrak{n}_{\mathbf{Av}}=N^{1-3\delta_{\mathbf{S}}/2}$}:
\begin{align}
\mathbf{Av}^{\mathbf{T},\mathbf{X},\mathfrak{q}}_{\t,\x}:=\mathfrak{t}_{\mathbf{Av}}^{-1}{\int_{0}^{\mathfrak{t}_{\mathbf{Av}}}}\mathfrak{n}_{\mathbf{Av}}^{-1}\sum_{\j=1,\ldots,\mathfrak{n}_{\mathbf{Av}}}\Big(\mathfrak{q}[\tau_{\x+\j}\bphi_{\t-\r}]\cdot\mathbf{Z}^{N}_{\t-\r,\x+\j}(\mathbf{Z}^{N}_{\t,\x})^{-1}\Big)\d\r.\label{eq:mult-op-1}
\end{align}
Note that {\small$\mathfrak{t}_{\mathbf{Av}}\ll1$} and {\small$\mathfrak{n}_{\mathrm{Av}}\ll N$}; thus, these scales are mesoscopic. {\color{black}We also note that the exact exponents of {\small$\approx-2/3$} and {\small$\approx1$} are not so meaningful; they are just sufficiently large to yield enough cancellations in \eqref{eq:mult-op-1}.} On the other hand, suppose {\small$\mathfrak{q}[\bphi]$} depends only on {\small$\bphi_{\w}$} for {\small$\w\in\{-\mathfrak{l}_{\mathfrak{q}},\ldots,0\}$} for some {\small$1\leq\mathfrak{l}_{\mathfrak{q}}\lesssim1$}. In this case, let us define the space-time average in the same way but with the ``left-wards" orientation:
\begin{align}
\mathbf{Av}^{\mathbf{T},\mathbf{X},\mathfrak{q}}_{\t,\x}:=\mathfrak{t}_{\mathbf{Av}}^{-1}{\int_{0}^{\mathfrak{t}_{\mathbf{Av}}}}\mathfrak{n}_{\mathbf{Av}}^{-1}\sum_{\j=1,\ldots,\mathfrak{n}_{\mathbf{Av}}}\Big(\mathfrak{q}[\tau_{\x-\j}\bphi_{\t-\r}]\cdot\mathbf{Z}^{N}_{\t-\r,\x-\j}(\mathbf{Z}^{N}_{\t,\x})^{-1}\Big)\d\r.\label{eq:mult-op-2}
\end{align}
We remark that \eqref{eq:mult-op-1} and \eqref{eq:mult-op-2} make sense for any other non-negative {\small$\mathfrak{t}_{\mathbf{Av}}$}. In particular, if {\small$\mathfrak{t}_{\mathbf{Av}}=0$}, then the meaning of \eqref{eq:mult-op-1}-\eqref{eq:mult-op-2} is obtained by setting {\small$\r=0$} and dropping the {\small$\d\r$} integration; for this, we write
\begin{align}
\mathbf{Av}^{\mathbf{X},\mathfrak{q}}_{\t,\x}:=\mathbf{Av}^{\mathbf{T},\mathbf{X},\mathfrak{q}}_{\t,\x}|_{\mathfrak{t}_{\mathbf{Av}}\mapsto0}.
\end{align}
In particular, by \eqref{eq:ch}, if {\small$\mathfrak{q}[\bphi]$} is supported on {\small$\bphi_{\w}$} for {\small$\w\geq1$}, then
\begin{align}
\mathbf{Av}^{\mathbf{X},\mathfrak{q}}_{\t,\x}&:=\mathfrak{n}_{\mathbf{Av}}^{-1}\sum_{\j=1,\ldots,\mathfrak{n}_{\mathbf{Av}}}\mathfrak{q}[\tau_{\x+\j}\bphi_{\t}]\cdot\exp\Big\{\lambda N^{-\frac12}(\bphi_{\t,\x+1}+\ldots+\bphi_{\t,\x+\j})\Big\}.\nonumber
\end{align}
On the other hand, if {\small$\mathfrak{q}[\bphi]$} is supported on {\small$\bphi_{\w}$} for {\small$\w\leq0$}, then
\begin{align}
\mathbf{Av}^{\mathbf{X},\mathfrak{q}}_{\t,\x}&:=\mathfrak{n}_{\mathbf{Av}}^{-1}\sum_{\j=1,\ldots,\mathfrak{n}_{\mathbf{Av}}}\mathfrak{q}[\tau_{\x-\j}\bphi_{\t}]\cdot\exp\Big\{\lambda N^{-\frac12}(\bphi_{\t,\x}+\ldots+\bphi_{\t,\x-\j+1})\Big\}.\nonumber
\end{align}
\item For any {\small$\mathfrak{t}_{0}\geq0$}, we define the following averaged time-gradient operator acting on functions {\small$\mathsf{f}:[0,1]\to\R$}:
\begin{align}
\grad^{\mathbf{T},\mathrm{av}}_{\mathfrak{t}_{0}}\mathsf{f}_{\t}:=\mathbf{1}_{\mathfrak{t}_{0}>0}\mathfrak{t}_{0}^{-1}{\int_{0}^{\mathfrak{t}_{0}}}\Big(\mathsf{f}_{\t}-\mathsf{f}_{\t-\r}\Big)\d\r.\label{eq:timegrad}
\end{align}
In \eqref{eq:timegrad}, we implicitly extend {\small$\mathsf{f}_{\t}$} to {\small$\t\not\in[0,1]$} via {\small$\mathsf{f}_{\t}=\mathsf{f}_{1}$} if {\small$\t\geq1$} and {\small$\mathsf{f}_{\t}=\mathsf{f}_{0}$} if {\small$\t\leq0$}.
\end{enumerate}
\end{definition}
Let us now record the evolution equation for {\small$\mathbf{S}^{N}$}; it is a lattice approximation to \abbr{SHE} with some error terms. These error terms more or less resemble convolutions with {\small$\mathscr{S}^{N}$} of \eqref{eq:sdeclarify2} (and its image under {\small$N\grad^{\mathbf{X}}_{\mathfrak{l}_{0}}$} for {\small$|\mathfrak{l}_{0}|\leq1$} as noted in Remark \ref{remark:sdeclarify}), in which the {\small$\mathfrak{q}$} are admissible local functions. However, it is convenient to use a notation that essentially unfolds each {\small$\grad^{\star}$} as a (weighted) average of gradients whose coefficients we write explicitly below (see the {\small$|\mathfrak{l}_{1}|^{-1}\ldots|\mathfrak{l}_{\m}|^{-1}\mathrm{c}_{N,\n,\mathfrak{l}_{1},\ldots,\mathfrak{l}_{\m}}$} in \eqref{eq:s-sdeI}-\eqref{eq:s-sdeII}). We clarify this notation immediately after Proposition \ref{prop:s-sde}, though let us again refer the reader to Remark \ref{remark:sdeclarify} for an intuitive version of the following result.
\begin{prop}\label{prop:s-sde}
\fsp There exist constants {\small$\mathrm{K},\mathrm{M}=\mathrm{O}(1)$} such that the following holds. We have the following \abbr{SDE} for {\small$\mathbf{S}^{N}$}, in which we use notation to be explained afterwards:
\begin{align}
\d\mathbf{S}^{N}_{\t,\x}&=\mathscr{T}_{N}\mathbf{S}^{N}_{\t,\x}\d\t+[\mathscr{S}^{N}\star(\sqrt{2}\lambda N^{\frac12}\mathbf{R}^{N}_{\t,\cdot}\mathbf{S}^{N}_{\t,\cdot}\d\mathbf{b}_{\t,\cdot})]_{\x}\label{eq:s-sde}\\
&+N\sum_{\substack{\n=1,\ldots,\mathrm{K}\\\m=0,\ldots,\mathrm{M}\\0<|\mathfrak{l}_{1}|,\ldots,|\mathfrak{l}_{\m}|\lesssim \mathfrak{n}_{\mathbf{Av}}}}\tfrac{1}{|\mathfrak{l}_{1}|\ldots|\mathfrak{l}_{\m}|}\mathrm{c}_{N,\n,\mathfrak{l}_{1},\ldots,\mathfrak{l}_{\m}}\grad^{\mathbf{X}}_{\mathfrak{l}_{1}}\ldots\grad^{\mathbf{X}}_{\mathfrak{l}_{\m}}[\mathscr{S}^{N}\star(\mathbf{Av}^{\mathbf{T},\mathbf{X},\mathfrak{q}_{\n}}_{\t,\cdot}\cdot\mathbf{R}^{N}_{\t,\cdot}\mathbf{S}^{N}_{\t,\cdot})]_{\x}\d\t\label{eq:s-sdeI}\\
&+N\sum_{\substack{\n=1,\ldots,\mathrm{K}\\\m=0,\ldots,\mathrm{M}\\0<|\mathfrak{l}_{1}|,\ldots,|\mathfrak{l}_{\m}|\lesssim \mathfrak{n}_{\mathbf{Av}}}}\tfrac{1}{|\mathfrak{l}_{1}|\ldots|\mathfrak{l}_{\m}|}\mathrm{c}_{N,\n,\mathfrak{l}_{1},\ldots,\mathfrak{l}_{\m}}\grad^{\mathbf{T},\mathrm{av}}_{\mathfrak{t}_{\mathbf{Av}}}\grad^{\mathbf{X}}_{\mathfrak{l}_{1}}\ldots\grad^{\mathbf{X}}_{\mathfrak{l}_{\m}}[\mathscr{S}^{N}\star(\mathbf{Av}^{\mathbf{X},\mathfrak{q}_{\n}}_{\t,\cdot}\cdot\mathbf{R}^{N}_{\t,\cdot}\mathbf{S}^{N}_{\t,\cdot})]_{\x}\d\t\label{eq:s-sdeII}\\
&+\mathrm{Err}[\mathbf{R}^{N}_{\t,\cdot}\mathbf{S}^{N}_{\t,\cdot}]_{\x}\d\t.\label{eq:s-sdeIII}
\end{align}
%
\begin{itemize}
\item We set {\small$\mathscr{T}_{N}:=(N^{2}\alpha+\frac12N\lambda^{2})\Delta$}, where {$\Delta$} is the discrete Laplacian as in \eqref{eq:phi}, and $\alpha,\lambda$ are defined after \eqref{eq:ch}. 
\item For each {\small$\n$} in the summations in \eqref{eq:s-sdeI}-\eqref{eq:s-sdeII}, the local function {\small$\mathfrak{q}_{\n}$} is admissible (see Definition \ref{definition:admissible}).
\item The parameters {\small$\mathfrak{t}_{\mathbf{Av}},\mathfrak{n}_{\mathbf{Av}}$} are from Definition \ref{definition:eq-operators}.
\item The {\small$\mathrm{c}_{N,\n,\mathfrak{l}_{1},\ldots,\mathfrak{l}_{\m}}\in\R$} are deterministic coefficients satisfying the following estimates.
\begin{itemize}
\item If {\small$\min\{|\mathfrak{l}_{1}|,\ldots,|\mathfrak{l}_{\m}|\}\geq2$}, then we have {\small$|\mathrm{c}_{N,\n,\mathfrak{l}_{1},\ldots,\mathfrak{l}_{\m}}|\lesssim 1$}. In general, we have {\small$|\mathrm{c}_{N,\n,\mathfrak{l}_{1},\ldots,\mathfrak{l}_{\m}}|\lesssim N$}.
\end{itemize}
\item The quantity {\small$\mathrm{Err}[\mathbf{R}^{N}\mathbf{S}^{N}]$} is a linear operator in {\small$\mathbf{S}^{N}$} which satisfies the following deterministic estimate for some positive {\small$\mathrm{C}\lesssim1$}, in which {\small$\mathsf{f}:\Z\to\R$} is any test function:
\begin{align}
|\mathrm{Err}[\mathbf{R}^{N}_{\t,\cdot}\mathsf{f}]_{\x}|\lesssim N^{-\frac12+\mathrm{C}\delta_{\mathbf{S}}}\cdot N^{-1+\delta_{\mathbf{S}}}\sum_{|\w|\lesssim N^{1-\delta_{\mathbf{S}}}}\Big(1+\sum_{|\z|\lesssim1}|\bphi_{\t,\x+\w+\z}|^{\mathrm{C}}\Big)\cdot|\mathbf{R}^{N}_{\t,\x+\w}|\cdot|\mathsf{f}_{\x+\w}|.\label{eq:err-estimate}
\end{align}
\end{itemize}
\end{prop}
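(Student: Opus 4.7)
\medskip

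\noindent\textbf{Proof proposal for Proposition \ref{prop:s-sde}.}
The plan is to derive the SDE for $\mathbf{Z}^N$ by a direct It\^o computation, then convolve with $\mathscr{S}^N$ and perform a controlled gradient expansion to massage the nonlinear drift into the form \eqref{eq:s-sdeI}--\eqref{eq:s-sdeII} with an admissible remainder placed into \eqref{eq:s-sdeIII}. First, I would apply It\^o's formula to $\mathbf{Z}^{N}_{\t,\x}=\exp(\lambda\mathbf{j}^{N}_{\t,\x}-\lambda\mathscr{R}_{\lambda}\t)$ using the SDE \eqref{eq:curr} for $\mathbf{j}^{N}$. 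The drift of $\mathbf{j}^{N}$ contributes a term $\lambda N^{3/2}\grad^{\mathbf{X}}_{1}\mathscr{U}'[\bphi_{\t,\x}]\mathbf{Z}^{N}_{\t,\x}$, the noise $\sqrt{2}N^{1/2}\d\mathbf{b}_{\t,\x}$ contributes $\sqrt{2}\lambda N^{1/2}\mathbf{Z}^{N}\d\mathbf{b}$ together with a quadratic-variation term $\lambda^{2}N\mathbf{Z}^{N}\d\t$, the $N\mathbf{F}[\tau_{\x}\bphi_{\t}]\mathbf{Z}^{N}$ contribution appears directly, and $-\lambda\mathscr{R}_{\lambda}\mathbf{Z}^{N}\d\t$ appears from the explicit $\t$-dependence.

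\medskip

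Next, I would isolate the diffusive part of the drift. The key identity comes from a discrete ``Cole--Hopf" rearrangement: using $\mathscr{U}'[\bphi_{\t,\x}]=\alpha\bphi_{\t,\x}+(\mathscr{U}'[\bphi]-\alpha\bphi)$ and Taylor-expanding $\mathrm{exp}(\lambda N^{-1/2}\bphi)$ (which is what $\grad^{\mathbf{X}}$ of $\mathbf{Z}^{N}$ produces), one extracts a piece proportional to $N^{2}\alpha\Delta\mathbf{Z}^{N}$. Together with $\frac{1}{2}\lambda^{2}N\Delta\mathbf{Z}^{N}$ coming from the bracket of the noise, this assembles into $\mathscr{T}_{N}\mathbf{Z}^{N}$. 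The residual terms collected during this step have the form $N\cdot(\text{local polynomial in }\bphi)\cdot\mathbf{Z}^{N}$; the role of $\mathscr{R}_{\lambda}$ from \eqref{eq:renorm} is to cancel the nonzero $0$-jet among these, and the choice of $\beta=\alpha\lambda$ from \eqref{eq:beta} is what kills the relevant piece of the $2$-jet of the $\mathbf{F}_{2}$-contribution. What remains after these cancellations is, by construction, a sum of terms of the shape $N\mathfrak{f}[\tau_{\x}\bphi_{\t}]\mathbf{Z}^{N}_{\t,\x}\d\t$ with each $\mathfrak{f}\in\mathrm{Jet}_{\mathfrak{k}}^{\perp}$ of the appropriate order $\mathfrak{k}$ (and with coefficient $N^{-1+\mathfrak{k}/2}$ absorbed), i.e.\ admissible in the sense of Definition \ref{definition:admissible}. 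This identification is what I expect to be the most delicate bookkeeping step.

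\medskip

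With the equation for $\mathbf{Z}^N$ in hand, I would convolve against $\mathscr{S}^{N}$. Since $\Delta$ commutes with $\mathscr{S}^{N}\star$, the diffusion becomes $\mathscr{T}_{N}\mathbf{S}^{N}$, and the martingale becomes the convolved noise in \eqref{eq:s-sde}. Each remaining term $N\mathfrak{q}[\tau_{\x}\bphi_{\t}]\mathbf{Z}^{N}_{\t,\x}$ is then rewritten using the factorization $\mathbf{Z}^{N}_{\t,\x}=\mathbf{R}^{N}_{\t,\x}\mathbf{S}^{N}_{\t,\x}$. To replace $\mathfrak{q}[\tau_{\x}\bphi_{\t}]$ by the space-time average $\mathbf{Av}^{\mathbf{T},\mathbf{X},\mathfrak{q}}_{\t,\x}$, I would carry out the Abel/telescoping identity
\begin{align*}
\mathfrak{q}[\tau_{\x}\bphi_{\t}]\,\mathbf{Z}^{N}_{\t,\x}\;=\;\mathbf{Av}^{\mathbf{X},\mathfrak{q}}_{\t,\x}\mathbf{Z}^{N}_{\t,\x}\;+\;\mathfrak{n}_{\mathbf{Av}}^{-1}\sum_{\j=1}^{\mathfrak{n}_{\mathbf{Av}}}\bigl(\mathfrak{q}[\tau_{\x}\bphi_{\t}]\mathbf{Z}^{N}_{\t,\x}-\mathfrak{q}[\tau_{\x\pm\j}\bphi_{\t}]\mathbf{Z}^{N}_{\t,\x\pm\j}\bigr),
\end{align*}
and then a second Abel summation on the $\j$-sum to produce a weighted average of spatial gradients $\grad^{\mathbf{X}}_{\mathfrak{l}_{1}}$ with weights precisely $|\mathfrak{l}_{1}|^{-1}\mathrm{c}_{N,\n,\mathfrak{l}_{1}}$. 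An analogous telescoping in time produces the $\grad^{\mathbf{T},\mathrm{av}}_{\mathfrak{t}_{\mathbf{Av}}}$ term. Iterating this replacement $\mathrm{M}$ times inside each residual $\mathfrak{q}[\tau_{\x+\cdot}\bphi_{\t-\cdot}]\mathbf{Z}^{N}_{\t+\cdot,\x+\cdot}$ that still appears inside a gradient gives the nested gradient structure in \eqref{eq:s-sdeI}--\eqref{eq:s-sdeII}. The constants $\mathrm{c}_{N,\n,\mathfrak{l}_{1},\ldots,\mathfrak{l}_{\m}}$ are then read off from these discrete integration-by-parts identities; the claimed bound $|\mathrm{c}_{N,\n,\mathfrak{l}_{1},\ldots,\mathfrak{l}_{\m}}|\lesssim 1$ when all $|\mathfrak{l}_{i}|\geq 2$ comes from the fact that the telescoping weights are uniformly bounded, while the worst $\lesssim N$ bound allows for isolated endpoint terms produced by the It\^o differential of $\mathscr{S}^{N}$-convolution.

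\medskip

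Finally, I would collect into $\mathrm{Err}[\mathbf{R}^{N}\mathbf{S}^{N}]$ two kinds of leftover pieces: (i) the commutator between $\mathscr{S}^{N}\star$ and the pointwise multiplications $\mathfrak{q}[\tau_{\x}\bphi_{\t}]\mathbf{Z}^{N}_{\t,\x}=\mathbf{R}^{N}_{\t,\x}\cdot(\text{local})\cdot\mathbf{S}^{N}_{\t,\x}$, which produces a factor $N^{-1+\delta_{\mathbf{S}}}$ from the width of $\mathscr{S}^{N}$ together with one gradient of the smooth kernel, and (ii) the ``stopping'' errors of the Abel telescoping at iteration depth $\mathrm{M}$, which contribute an additional small power $N^{-1/2+\mathrm{C}\delta_{\mathbf{S}}}$ after combining the jet order with the smoothing scale. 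Both classes are estimated by a single polynomial moment in $\bphi$, verifying \eqref{eq:err-estimate}. The main obstacle, as noted above, is the step of reducing the raw It\^o drift into an admissible list of local functions $\mathfrak{q}_{\n}$ with the correct jet order; once this is achieved, the rest is a mechanical gradient-expansion bookkeeping whose constants are absorbed into the $\mathrm{c}_{N,\n,\mathfrak{l}_{1},\ldots,\mathfrak{l}_{\m}}$.
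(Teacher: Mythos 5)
Your proposal follows essentially the same route as the paper: It\^o formula for $\mathbf{Z}^{N}$, a discrete Cole--Hopf rearrangement isolating the $\mathscr{T}_{N}$ part, convolution with $\mathscr{S}^{N}$, and an Abel/telescoping argument converting the pointwise nonlinear drift into space-time averages with weighted gradient corrections. The high-level skeleton is correct, and your identification of ``reducing the raw It\^o drift to admissible local functions'' as the delicate step is exactly right. That said, there are two substantive issues worth flagging.

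First, a factual misattribution: you write that $\tfrac12\lambda^{2}N\Delta\mathbf{Z}^{N}$ ``comes from the bracket of the noise.'' It does not. Since $\d\mathbf{j}^{N}_{\t,\x}$ carries a $\sqrt{2}N^{1/2}\d\mathbf{b}$ term, the It\^o bracket contributes $\tfrac12\lambda^{2}\mathbf{Z}^{N}\cdot 2N\,\d\t=\lambda^{2}N\mathbf{Z}^{N}\d\t$, with no Laplacian. The $\Delta$-piece of $\mathscr{T}_{N}=(N^{2}\alpha+\tfrac12N\lambda^{2})\Delta$ instead arises from expanding $N^{2}\Delta\mathbf{Z}^{N}$ via Taylor expansion of the exponential and then writing $\alpha\bphi^{2}=1+(\alpha\bphi^{2}-1)$, which isolates the $\tfrac12\lambda^{2}N\Delta\mathbf{Z}^{N}$ term (the $(\alpha\bphi^{2}-1)$ leftover is then a $\mathrm{Jet}_{0}^{\perp}$ remainder). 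The $\lambda^{2}N\mathbf{Z}^{N}$ from the bracket is what combines with $N\mathbf{F}_{2}$ and $\mathscr{R}_{\lambda}$ to produce the $\mathrm{Jet}_{2}^{\perp}$ renormalized quadratic $\mathscr{Q}$. This distinction matters because it determines which terms need to be renormalized and which land in which jet class.

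Second, and more importantly, you do not mention the one-sided support requirement in Definition~\ref{definition:admissible}(2): each admissible $\mathfrak{f}_{\n}[\bphi]$ must depend only on $\bphi_{\w}$ for $\w\in\{1,\ldots,\mathfrak{l}\}$ or $\w\in\{-\mathfrak{l},\ldots,0\}$. This is not a cosmetic requirement --- it is what makes the subsequent martingale/CLT estimates for $\mathbf{Av}^{\mathbf{X},\mathfrak{q}}$ and the Kipnis--Varadhan estimate for $\mathbf{Av}^{\mathbf{T},\mathbf{X},\mathfrak{q}}$ work (independence of the $\mathfrak{q}$-factor from the Cole--Hopf weight $\mathbf{G}_{\j}$). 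The raw drift terms from It\^o, e.g. $\mathbf{F}_{2}[\tau_{\x}\bphi]$, are symmetric around $\x$, so every one of them has to be shifted in space. Each such shift, which is done via the identity $\mathsf{f}_{\x}\mathbf{Z}^{N}_{\t,\x}=\mathsf{f}_{\x+1}\mathbf{Z}^{N}_{\t,\x}-\grad^{\mathbf{X}}_{1}(\mathsf{f}\mathbf{Z}^{N})_{\x}+\mathsf{f}_{\x+1}\grad^{\mathbf{X}}_{1}\mathbf{Z}^{N}_{\t,\x}$ followed by Taylor expansion, spawns further error terms that must themselves be shifted and classified by jet order. This shifting cascade is where most of the length of Lemma~\ref{lemma:mshe} lives, producing the elaborate $\mathfrak{e}_{2},\mathfrak{e}_{1},\mathfrak{e}_{0},\mathfrak{f}_{\mathrm{err}}$ decomposition. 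Your sketch flattens this into ``delicate bookkeeping,'' which is accurate, but without the one-sided constraint the reader would have no idea why the bookkeeping is needed or how the error operator $\mathrm{Err}$ acquires its specific form \eqref{eq:err-estimate}. Relatedly, your description of $\mathrm{Err}$ as coming partly from a ``commutator'' between $\mathscr{S}^{N}\star$ and pointwise multiplication is misleading --- the paper never commutes them; the error is instead the terms of order $N^{1/2}\grad^{\mathbf{X}}$ or $N^{-1/2}$ from the Taylor expansion (i.e. $\mathfrak{f}_{\mathrm{err}}$), together with the stopping error at iteration depth $\mathrm{M}$ in the Abel expansion.
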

The parameter {\small$\mathrm{K}$} controls the number of error terms appearing in the {\small$\mathbf{S}^{N}$} \abbr{SDE}. The parameter {\small$\mathrm{M}$} controls the number of replacements by space-time block averages that we alluded to in Remark \ref{remark:sdeclarify}. 

We will now explain the structure of these last three terms in \eqref{eq:s-sdeI}-\eqref{eq:s-sdeIII} in some more detail.
\begin{itemize}
\item The weight {\small$|\mathfrak{l}_{1}|^{-1}\ldots|\mathfrak{l}_{\m}|^{-1}$} in \eqref{eq:s-sdeI}-\eqref{eq:s-sdeII} should be thought of as inducing an average over {\small$\mathfrak{l}_{1},\ldots,\mathfrak{l}_{\m}$}. Indeed, the sum of this weight over such {\small$\mathfrak{l}_{1},\ldots,\mathfrak{l}_{\m}$} is {\small$\lesssim(\log N)^{\m}$}; since our estimates will be quantitative in powers of {\small$N$}, all {\small$\log$} factors will be unimportant. However, using this factor instead of {\small$\mathfrak{n}_{\mathbf{Av}}^{-\m}$} turns out to be convenient for various technical reasons.
\item The bound {\small$|\mathfrak{c}_{\n,\mathfrak{l}_{1},\ldots,\mathfrak{l}_{\m}}|\lesssim N$} for {\small$\min\{|\mathfrak{l}_{1}|,\ldots,|\mathfrak{l}_{\m}|\}\leq1$} is harmless. Indeed, if {\small$\min\{|\mathfrak{l}_{1}|,\ldots,|\mathfrak{l}_{\m}|\}\leq1$}, one of the gradients in \eqref{eq:s-sdeI} has length-scale {\small$\mathrm{O}(1)$} and thus has a natural scaling of {\small$N^{-1}$}.
\item The term \eqref{eq:s-sdeII} is the error in replacing the space-average {\small$\mathbf{Av}^{\mathbf{X},\mathfrak{q}_{\n}}$} by its scale-{\small$\mathfrak{t}_{\mathbf{Av}}$}-average in time. 
\item The term \eqref{eq:s-sdeIII} should be thought of as small because of \eqref{eq:err-estimate}. The exact form of {\small$\mathrm{Err}$} is not so important.
\end{itemize}
The immediate problem we run into when we study \eqref{eq:s-sde}-\eqref{eq:s-sdeIII} is that the error terms (i.e. everything but the first line) are supported everywhere in space. Thus, the next phase of this step is to give an a priori compactification of \eqref{eq:s-sde}-\eqref{eq:s-sdeIII}. Let us make this precise. First, we introduce the following notation for a cutoff version of {\small$\mathbf{R}^{N}$} from \eqref{eq:zsmooth} (which will equal {\small$\mathbf{R}^{N}$} itself with high probability by spatial regularity of {\small$\mathbf{Z}^{N}$}):
\begin{align}
\mathbf{R}^{N,\wedge}_{\t,\x}:=\mathbf{R}^{N}_{\t,\x}\cdot\mathbf{1}[N^{\frac13\delta_{\mathbf{S}}}|\mathbf{R}^{N}_{\t,\x}-1|\leq1].\label{eq:rwedge}
\end{align}
(Recall from Section \ref{subsection:notation} that an event is said to hold with high probability if its probability is {\small$1-\mathrm{o}(1)$}.) For any {\small$\zeta>0$}, we also let {\small$\boldsymbol{\chi}^{(\zeta)}_{\x}:=[\mathscr{S}^{N}\star\mathbf{1}_{|\cdot|\leq N^{1+\zeta}}]_{\x}$} be a regularization of the indicator function of the radius-{\small$N^{1+\zeta}$} neighborhood of the origin; the smoothness will be technically important. We also let {\small$\mathbf{S}^{N,\zeta}$} be the solution to the following \abbr{SDE} with the same initial data as {\small$\mathbf{S}^{N}$}, where {\small$\zeta_{\mathrm{large}}>0$} is a (large) parameter independent of {\small$\zeta$}:
\begin{align}
&\d\mathbf{S}^{N,\zeta}_{\t,\x}=\mathscr{T}_{N}\mathbf{S}^{N,\zeta}_{\t,\x}\d\t+{\ocolor{blue}\boldsymbol{\chi}^{(\zeta_{\mathrm{large}})}_{\x}}[\mathscr{S}^{N}\star(\sqrt{2}\lambda N^{\frac12}{\ocolor{blue}\mathbf{R}^{N,\wedge}_{\t,\cdot}}\mathbf{S}^{N,\zeta}_{\t,\cdot}\d\mathbf{b}_{\t,\cdot})]_{\x}\nonumber\\
&+N{\ocolor{blue}\boldsymbol{\chi}^{(\zeta)}_{\x}}\sum_{\substack{\n=1,\ldots,\mathrm{K}\\\m=0,\ldots,\mathrm{M}\\0<|\mathfrak{l}_{1}|,\ldots,|\mathfrak{l}_{\m}|\lesssim \mathfrak{n}_{\mathbf{Av}}}}\tfrac{1}{|\mathfrak{l}_{1}|\ldots|\mathfrak{l}_{\m}|}\mathrm{c}_{N,\n,\mathfrak{l}_{1},\ldots,\mathfrak{l}_{\m}}\grad^{\mathbf{X}}_{\mathfrak{l}_{1}}\ldots\grad^{\mathbf{X}}_{\mathfrak{l}_{\m}}[\mathscr{S}^{N}\star(\mathbf{Av}^{\mathbf{T},\mathbf{X},\mathfrak{q}_{\n}}_{\t,\cdot}\cdot\mathbf{R}^{N}_{\t,\cdot}\mathbf{S}^{N,\zeta}_{\t,\cdot})]_{\x}\d\t\nonumber\\
&+N{\ocolor{blue}\boldsymbol{\chi}^{(\zeta)}_{\x}}\sum_{\substack{\n=1,\ldots,\mathrm{K}\\\m=0,\ldots,\mathrm{M}\\0<|\mathfrak{l}_{1}|,\ldots,|\mathfrak{l}_{\m}|\lesssim \mathfrak{n}_{\mathbf{Av}}}}\tfrac{1}{|\mathfrak{l}_{1}|\ldots|\mathfrak{l}_{\m}|}\mathrm{c}_{N,\n,\mathfrak{l}_{1},\ldots,\mathfrak{l}_{\m}}\grad^{\mathbf{T},\mathrm{av}}_{\mathfrak{t}_{\mathbf{Av}}}\grad^{\mathbf{X}}_{\mathfrak{l}_{1}}\ldots\grad^{\mathbf{X}}_{\mathfrak{l}_{\m}}[\mathscr{S}^{N}\star(\mathbf{Av}^{\mathbf{X},\mathfrak{q}_{\n}}_{\t,\cdot}\cdot\mathbf{R}^{N}_{\t,\cdot}\mathbf{S}^{N,\zeta}_{\t,\cdot})]_{\x}\d\t\nonumber\\
&+{\ocolor{blue}\boldsymbol{\chi}^{(\zeta)}_{\x}}\cdot\mathrm{Err}[\mathbf{R}^{N}_{\t,\cdot}\mathbf{S}^{N,\zeta}_{\t,\cdot}]_{\x}\d\t.\label{eq:szeta-sde}
\end{align}
The difference between \eqref{eq:szeta-sde} and \eqref{eq:s-sde}-\eqref{eq:s-sdeIII} are the space-cutoffs and the {\small$\mathbf{R}^{N,\wedge}$} in the noise term (which we write in blue for emphasis). Note that {\small$\mathbf{S}^{N,\zeta}$} is adapted to the Brownian filtration since the time-average {\small$\mathbf{Av}^{\mathbf{T},\mathbf{X},\mathfrak{q}_{\n}}$} is backwards in time (see Definition \ref{definition:eq-operators}). Well-posedness of continuous path-wise solutions to \eqref{eq:szeta-sde} with initial data {\small$\mathbf{S}^{N}_{0,\cdot}$} is standard; it is a (linear) perturbation of the heat equation associated to {\small$\mathscr{T}_{N}$} with bounded coefficients (by well-posedness of \eqref{eq:phi}). For example, solutions can be generated via Picard iteration; {by using said Picard iteration, we obtain the following, which is essentially an elementary polynomial-in-{\small$N$} bound on the propagation speed of \eqref{eq:curr}-\eqref{eq:phi} in space. (The proof is elementary and thus omitted.)}
\begin{lemma}\label{lemma:aprioricompact}
\fsp Fix {\small$\mathrm{L}>0$}. If {\small$\zeta,\zeta_{\mathrm{large}}=\mathrm{O}(1)$} are large enough, then with probability $1-\mathrm{o}(1)$, we have
\begin{align}
\sup_{\t\in[0,1]}\sup_{|\x|\leq\mathrm{L}N}|\mathbf{S}^{N,\zeta}_{\t,\x}-\mathbf{S}^{N}_{\t,\x}|\lesssim_{\mathrm{L}}N^{-\gamma_{\mathrm{L}}}, \quad\text{where } \gamma_{\mathrm{L}}>0.
\end{align}
\end{lemma}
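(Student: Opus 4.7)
The plan is to compare \eqref{eq:s-sde}-\eqref{eq:s-sdeIII} with \eqref{eq:szeta-sde} via Picard iteration, exploiting the elementary fact that on a high-probability event, the linear operator driving $\mathbf{S}^{N,\zeta}$ has polynomial-in-$N$ coefficients and kernel support of length $\mathrm{O}(N)$. Consequently, the cutoffs $\boldsymbol{\chi}^{(\zeta)}, \boldsymbol{\chi}^{(\zeta_{\mathrm{large}})}$ and the substitution $\mathbf{R}^{N}\to\mathbf{R}^{N,\wedge}$ cannot affect the window $|\x|\leq\mathrm{L}N$ within time $[0,1]$ beyond super-polynomially small corrections, provided $\zeta,\zeta_{\mathrm{large}}$ are chosen large enough depending on $\mathrm{L}$.

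\textbf{Difference equation.} Set $\mathbf{D}^{N,\zeta}:=\mathbf{S}^{N}-\mathbf{S}^{N,\zeta}$. Subtracting \eqref{eq:szeta-sde} from \eqref{eq:s-sde}-\eqref{eq:s-sdeIII} and using linearity in $\mathbf{S}^{N}$ of every term, one obtains a linear SDE
\[
\d\mathbf{D}^{N,\zeta}_{\t,\x} \;=\; \mathscr{T}_{N}\mathbf{D}^{N,\zeta}_{\t,\x}\,\d\t \;+\; \mathscr{A}^{(\zeta)}_{\t}\mathbf{D}^{N,\zeta}_{\t,\x}\,\d\t \;+\; \d\mathscr{M}^{(\zeta)}_{\t}\mathbf{D}^{N,\zeta}_{\t,\x} \;+\; \mathbf{F}_{\t,\x}\,\d\t \;+\; \d\mathbf{G}_{\t,\x},
\]
where $\mathscr{A}^{(\zeta)}_{\t}$ (resp.\ $\mathscr{M}^{(\zeta)}_{\t}$) is the $\boldsymbol{\chi}^{(\zeta)}$- (resp.\ $\boldsymbol{\chi}^{(\zeta_{\mathrm{large}})}$-) cutoff version of the drift (resp.\ noise) operator in \eqref{eq:s-sdeI}-\eqref{eq:s-sdeIII}, and the forcings $\mathbf{F}, \mathbf{G}$ are spatially supported in $|\x|\geq cN^{1+\zeta}$ and $|\x|\geq cN^{1+\zeta_{\mathrm{large}}}$ respectively. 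The replacement $\mathbf{R}^{N}\to\mathbf{R}^{N,\wedge}$ in the noise is inert on the high-probability event $\{\mathbf{R}^{N}=\mathbf{R}^{N,\wedge}\text{ on }[0,1]\times\llbracket-N^{\mathrm{A}},N^{\mathrm{A}}\rrbracket\}$ (any $\mathrm{A}=\mathrm{O}(1)$), which holds by the moment bounds \eqref{eq:mainI} combined with discrete Kolmogorov continuity.

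\textbf{Polynomial-reach Picard expansion.} On an additional high-probability event, Assumption \ref{assump:noneq}, \eqref{eq:mainI}, and standard a priori estimates for \eqref{eq:phi} yield $|\bphi_{\t,\x}|\leq N^{C}$ uniformly on $[0,1]\times\llbracket-N^{\mathrm{A}},N^{\mathrm{A}}\rrbracket$ and the pointwise growth $|\mathbf{S}^{N}_{\t,\x}|\lesssim\exp(\kappa|\x|/N)$. Hence the coefficients of $\mathscr{A}^{(\zeta)}_{\t}$ and $\mathscr{M}^{(\zeta)}_{\t}$ are polynomially bounded in $N$. Moreover, since $\mathscr{S}^{N}$ has support width $N^{1-\delta_{\mathbf{S}}}$ and $\mathfrak{n}_{\mathbf{Av}}=N^{1-3\delta_{\mathbf{S}}/2}$, the kernels of $\mathscr{A}^{(\zeta)}_{\t}$ and $\mathscr{M}^{(\zeta)}_{\t}$ have support of length at most $\mathrm{O}(N)$. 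Expand $\mathbf{D}^{N,\zeta}$ as an $L^{2}$-convergent Picard series around the heat semigroup $e^{\cdot\mathscr{T}_{N}}$: at order $k$, the contribution couples the forcings $\mathbf{F},\mathbf{G}$ to $\mathbf{D}^{N,\zeta}_{\t,\x}$ through $k+1$ heat-semigroup factors interleaved with $k$ applications of $\mathscr{A}^{(\zeta)}_{\t}+\mathscr{M}^{(\zeta)}_{\t}$ (with Ito isometry handling the noise integrals). To transport mass from $|\y|\geq cN^{1+\zeta}$ to $|\x|\leq\mathrm{L}N$ in time $\leq 1$, either (a) $k\geq cN^{\zeta}$, in which case the order-$k$ operator norm is bounded by $(N^{C})^{k}/k!\leq(\mathrm{O}(N^{C-\zeta}))^{cN^{\zeta}}$, super-polynomially small for $\zeta>C$; or (b) $k<cN^{\zeta}$, so the $k$ applications of $\mathscr{A}^{(\zeta)}_{\t}+\mathscr{M}^{(\zeta)}_{\t}$ can advance mass by at most $cN^{1+\zeta}/2$, forcing at least one factor of $e^{\cdot\mathscr{T}_{N}}$ to propagate distance $\geq cN^{1+\zeta}/2$ and contribute a Gaussian factor $\exp(-cN^{2\zeta})$ via the standard discrete heat-kernel estimate $|e^{\t\mathscr{T}_{N}}(\x,\y)|\lesssim(N^{2}\t)^{-1/2}\exp(-c(\x-\y)^{2}/(N^{2}\t))$ on $\Z$.

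\textbf{Conclusion and main obstacle.} Summing over $k$ and combining with the exponential-in-$|\y|/N$ bound on $\mathbf{F},\mathbf{G}$ (which is dominated by the Gaussian kernel factor for $|\y|\geq cN^{1+\zeta}$, $|\x|\leq\mathrm{L}N$, and $\zeta,\zeta_{\mathrm{large}}$ large enough relative to $\kappa,\mathrm{L}$), one concludes that $\|\mathbf{D}^{N,\zeta}_{\t,\cdot}\|_{\ell^{\infty}(\llbracket-\mathrm{L}N,\mathrm{L}N\rrbracket)}$ is super-polynomially small on the high-probability event, hence in particular at most $N^{-\gamma_{\mathrm{L}}}$. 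The main obstacle is making the polynomial-reach and polynomial-coefficient claims rigorous given that $\mathscr{A}^{(\zeta)}_{\t}+\mathscr{M}^{(\zeta)}_{\t}$ has random, time-dependent coefficients; this is handled via a stopping time at the first failure of the pointwise a priori bounds on $\bphi, \mathbf{R}^{N}, \mathbf{S}^{N}$ inside $\llbracket-N^{\mathrm{A}},N^{\mathrm{A}}\rrbracket$, combined with moment-inequality and union-bound estimates showing the stopping time exceeds $1$ with probability $1-\mathrm{o}(1)$.
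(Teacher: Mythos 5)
Your proposal is correct in spirit and, at a high level, it captures the same core idea as the paper's (omitted) argument: the equations \eqref{eq:s-sde}--\eqref{eq:s-sdeIII} and \eqref{eq:szeta-sde} have the same initial data, their coefficients agree in a window $|\x|\lesssim N^{1+\zeta/2}$ (where the $\boldsymbol{\chi}$-cutoffs equal $1$) on a high-probability stopping-time event where the a priori bounds \eqref{eq:tap1}--\eqref{eq:tap3} hold, and the polynomial-in-$N$ propagation speed makes any influence from $|\y|\gtrsim N^{1+\zeta}$ super-polynomially small in $|\x|\leq\mathrm{L}N$ over $[0,1]$. Where you differ from the paper is in implementation. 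You write a Duhamel expansion around $e^{\cdot\mathscr{T}_{N}}$ and exploit the Gaussian tails of the discrete heat kernel to kill the low-order Picard terms where the $1/k!$ factor is not yet dominant. The paper's (commented-out, and in print declared "elementary and omitted") argument instead never exposes the heat semigroup: it iterates a second-moment Gronwall inequality for the SDE itself, pretending $\mathscr{T}_{N}$ has finite reach $\lesssim N$, so that after $J$ iterations the estimate reaches only distance $JN$. Taking $J\sim N^{\zeta/3}\wedge N^{\zeta_{\mathrm{large}}/3}$, the whole difference is bounded by $\exp(N^{\mathrm{c}})\cdot\mathrm{C}^{J}N^{2\mathrm{C}J}/J!$, where $\exp(N^{\mathrm{c}})$ is a crude a priori $L^{2}$ bound on $\mathbf{S}^{N},\mathbf{S}^{N,\zeta}$ up to the stopping time; for $\zeta,\zeta_{\mathrm{large}}$ large the $1/J!\sim J^{-J}$ factor annihilates everything. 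The upshot is that the paper's route dispenses with Gaussian heat-kernel estimates entirely, which is why it is called "elementary": the factorial decay alone suffices once all operators are regarded as having finite reach $\lesssim N$.

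One concrete error in your case (b): the claim that "at least one factor of $e^{\cdot\mathscr{T}_{N}}$ [must] propagate distance $\geq cN^{1+\zeta}/2$" is false. The aggregate heat-kernel displacement must be $\gtrsim N^{1+\zeta}$, but it can be distributed roughly evenly over the $k+1$ heat-kernel factors, each of which individually only travels $\sim N^{1+\zeta}/(k+1)$ — which could be as small as $\sim N$ near the boundary $k\sim N^{\zeta}$. The stated conclusion $\exp(-cN^{2\zeta})$ still holds, but the correct justification uses either the semigroup composition property (compose the $k+1$ heat kernels, which gives a single heat kernel at time $\sum\t_{i}\leq 1$ over the total displacement) or the Cauchy--Schwarz inequality $\sum_{i}|\d_{i}|^{2}/\t_{i}\geq(\sum_{i}|\d_{i}|)^{2}/\sum_{i}\t_{i}\gtrsim N^{2+2\zeta}$. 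You would also need to handle the interleaved local operators by Gaussian domination. This is all workable, but it is precisely the bookkeeping the paper's Gronwall iteration avoids.

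Finally, two smaller points to make fully rigorous: (i) the forcings $\mathbf{F},\mathbf{G}$ in your difference equation involve $\mathbf{S}^{N,\zeta}$ (not $\mathbf{D}^{N,\zeta}$) at distances $|\y|\gtrsim N^{1+\zeta}$, so you need a separate a priori second-moment bound on $\mathbf{S}^{N,\zeta}$ there (not just on $\mathbf{S}^{N}$, which is what \eqref{eq:mainI} gives); this is the role of the $\exp(N^{\mathrm{c}})$ bound in the paper. (ii) Upgrading the moment estimate to a pointwise, high-probability, uniform-in-$(\t,\x)$ estimate needs a union bound over a time-net plus a short-time continuity argument; you state the conclusion but should spell out this step.
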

\subsection{Step 2 -- stochastic heat kernel}
Our strategy now focuses on reducing the parameter {\small$\zeta$} in Lemma \ref{lemma:aprioricompact} until it is small, thereby comparing {\small$\mathbf{S}^{N}$} to a ``compactified version". (We will keep {\small$\zeta_{\mathrm{large}}$} in Lemma \ref{lemma:aprioricompact} fixed.)

Note that \eqref{eq:szeta-sde} is linear in the solution {\small$\mathbf{S}^{N,\zeta}$}. So, it can be solved via the fundamental solution, or (stochastic) heat kernel, for \eqref{eq:szeta-sde}. Precisely, let {\small$\mathbf{K}^{N,\zeta}_{\s,\t,\x,\y}$} be a function of {\small$(\s,\t,\x,\y)\in[0,\infty)^{2}\times\Z^{2}$} with {\small$\s\leq\t$} such that
\begin{align}
&\d\mathbf{K}^{N,\zeta}_{\s,\t,\x,\y}=\mathscr{T}_{N}\mathbf{K}^{N,\zeta}_{\s,\t,\x,\y}\d\t+{\ocolor{blue}\boldsymbol{\chi}^{(\zeta_{\mathrm{large}})}_{\x}}[\mathscr{S}^{N}\star(\sqrt{2}\lambda N^{\frac12}{\ocolor{blue}\mathbf{R}^{N,\wedge}_{\t,\cdot}}\mathbf{K}^{N,\zeta}_{\s,\t,\cdot,\y}\d\mathbf{b}_{\t,\cdot})]_{\x}\nonumber\\
&+N{\ocolor{blue}\boldsymbol{\chi}^{(\zeta)}_{\x}}\sum_{\substack{\n=1,\ldots,\mathrm{K}\\\m=0,\ldots,\mathrm{M}\\0<|\mathfrak{l}_{1}|,\ldots,|\mathfrak{l}_{\m}|\lesssim \mathfrak{n}_{\mathbf{Av}}}}\tfrac{1}{|\mathfrak{l}_{1}|\ldots|\mathfrak{l}_{\m}|}\mathrm{c}_{N,\n,\mathfrak{l}_{1},\ldots,\mathfrak{l}_{\m}}\grad^{\mathbf{X}}_{\mathfrak{l}_{1}}\ldots\grad^{\mathbf{X}}_{\mathfrak{l}_{\m}}[\mathscr{S}^{N}\star(\mathbf{Av}^{\mathbf{T},\mathbf{X},\mathfrak{q}_{\n}}_{\t,\cdot}\cdot\mathbf{R}^{N}_{\t,\cdot}\mathbf{K}^{N,\zeta}_{\s,\t,\cdot,\y})]_{\x}\d\t\nonumber\\
&+N{\ocolor{blue}\boldsymbol{\chi}^{(\zeta)}_{\x}}\sum_{\substack{\n=1,\ldots,\mathrm{K}\\\m=0,\ldots,\mathrm{M}\\0<|\mathfrak{l}_{1}|,\ldots,|\mathfrak{l}_{\m}|\lesssim \mathfrak{n}_{\mathbf{Av}}}}\tfrac{1}{|\mathfrak{l}_{1}|\ldots|\mathfrak{l}_{\m}|}\mathrm{c}_{N,\n,\mathfrak{l}_{1},\ldots,\mathfrak{l}_{\m}}\grad^{\mathbf{T},\mathrm{av}}_{\mathfrak{t}_{\mathbf{Av}}}\grad^{\mathbf{X}}_{\mathfrak{l}_{1}}\ldots\grad^{\mathbf{X}}_{\mathfrak{l}_{\m}}[\mathscr{S}^{N}\star(\mathbf{Av}^{\mathbf{X},\mathfrak{q}_{\n}}_{\t,\cdot}\cdot\mathbf{R}^{N}_{\t,\cdot}\mathbf{K}^{N,\zeta}_{\s,\t,\cdot,\y})]_{\x}\d\t\nonumber\\
&+{\ocolor{blue}\boldsymbol{\chi}^{(\zeta)}_{\x}}\cdot\mathrm{Err}[\mathbf{R}^{N}_{\t,\cdot}\mathbf{K}^{N,\zeta}_{\s,\t,\cdot,\y}]_{\x}\d\t.\label{eq:kzeta-sde}
\end{align}
We clarify that {\small$\mathscr{T}_{N}$} in the first line acts on the {\small$\x$}-variable. In particular, the function {\small$(\t,\x)\mapsto\mathbf{K}^{N,\zeta}_{\s,\t,\x,\y}$} satisfies the same equation as {\small$\mathbf{S}^{N,\zeta}$}, but we give it the initial data {\small$\mathbf{K}^{N,\zeta}_{\s,\s,\x,\y}=\mathbf{1}_{\x=\y}$}. Thus, we have the representation 
\begin{align}
\mathbf{S}^{N,\zeta}_{\t,\x}=\sum_{\y\in\Z}\mathbf{K}^{N,\zeta}_{0,\t,\x,\y}\mathbf{S}^{N}_{0,\y}.\label{eq:duhamelszeta}
\end{align}
Solutions to \eqref{eq:kzeta-sde} can be generated by a standard Picard iteration (which converges for the same reason that was given after \eqref{eq:szeta-sde}). Moreover, {\small$(\s,\t)\mapsto\mathbf{K}^{N,\zeta}_{\s,\t,\x,\y}$} is continuous for {\small$0\leq\s\leq\t\leq1$} with probability {\small$1$}.

The key estimate for stochastic heat kernels {\small$\mathbf{K}^{N,\zeta}$} is given below. We explain its utility afterwards. (Throughout this paper, we will say that an event holds with \emph{high probability} if its probability is {\small$1-\mathrm{o}(1)$}.)
\begin{prop}\label{prop:stochheat}
\fsp There exists {\color{black}{\small$\beta_{0}>0$}} so that for any finite {\small$\zeta,\kappa,\delta>0$}, the following holds with high probability:
\begin{align}
\sup_{0\leq\s\leq\t\leq1}\sup_{\x\in\Z}\sum_{\y\in\Z}\exp(\tfrac{\kappa|\x-\y|}{N})|\mathbf{K}^{N,\zeta}_{\s,\t,\x,\y}|\lesssim_{\kappa,\delta}\exp(N^{-{\color{black}\beta_{0}}}N^{\zeta})N^{\delta}.\label{eq:stochheatI}
\end{align}
\end{prop}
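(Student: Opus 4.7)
I would prove Proposition~\ref{prop:stochheat} by expanding $\mathbf{K}^{N,\zeta}$ around the ``free'' stochastic heat kernel $\mathbf{K}^{N,\mathrm{free}}$ associated to the linear-in-$\mathbf{K}$ part of \eqref{eq:kzeta-sde} (namely the $\mathscr{T}_N$ drift and the Walsh noise term), and then running a Gronwall-type bootstrap on a weighted $\ell^1$-in-$\y$, sup-in-$\x$ norm. The gain $N^{-\beta}$ in the exponent of the target bound must come from \abbr{CLT}-cancellation of the admissible local functions $\mathfrak{q}_\n$ in \eqref{eq:heuristic2}; the factor $N^\zeta$ records the size of the window $\{|\x|\leq N^{1+\zeta}\}$ where the forcing is active via $\boldsymbol{\chi}^{(\zeta)}$. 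The factor $\exp(N^{-\beta}N^\zeta)$ is precisely the Gronwall output of iterating a forcing with operator norm $\lesssim N^{-\beta+\zeta}$ on the time interval $[0,1]$.

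\textbf{Step 1.} Establish the clean bound
\begin{align*}
\sup_{0\leq\s\leq\t\leq1}\sup_{\x}\sum_{\y}\exp\bigl(\tfrac{\kappa|\x-\y|}{N}\bigr)\,|\mathbf{K}^{N,\mathrm{free}}_{\s,\t,\x,\y}|\lesssim_{\kappa,\delta}N^{\delta}
\end{align*}
with high probability. This is a lattice \abbr{SHE} estimate: the noise coefficient is bounded uniformly (thanks to the cutoff $\mathbf{R}^{N,\wedge}$ in \eqref{eq:rwedge}), so Nash-Aronson-type estimates for $\mathscr{T}_N$ combined with a moment/Chebyshev argument on the martingale part give the exponential-weighted $\ell^1$ control up to $N^\delta$ slack.

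\textbf{Step 2.} Write the Duhamel identity
\begin{align*}
\mathbf{K}^{N,\zeta}_{\s,\t,\x,\y}=\mathbf{K}^{N,\mathrm{free}}_{\s,\t,\x,\y}+\int_{\s}^{\t}\sum_{\z}\mathbf{K}^{N,\mathrm{free}}_{\r,\t,\x,\z}\,\mathcal{E}_{\r}\bigl[\mathbf{K}^{N,\zeta}_{\s,\r,\cdot,\y}\bigr]_{\z}\,\d\r,
\end{align*}
where $\mathcal{E}$ is the operator collecting the remaining lines of \eqref{eq:kzeta-sde} (each carrying the factor $\boldsymbol{\chi}^{(\zeta)}$). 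The task reduces to showing that on a high-probability event, $\mathcal{E}$ acts on the weighted-$\ell^1$-in-$\y$ norm with an operator norm $\lesssim N^{-\beta+\zeta}$ when integrated over $[0,1]$, at which point iteration and the bound from Step~1 give the claim with an $N^\delta$ factor absorbing combinatorial losses and $\log N$ factors from the harmonic sums in $|\mathfrak{l}_1|^{-1}\cdots|\mathfrak{l}_\m|^{-1}$.

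\textbf{Step 3 (the crux).} For the operator norm of $\mathcal{E}$, the input is \eqref{eq:heuristic2} applied to each $\mathfrak{q}_\n$, integrated over the window of length $N^{1+\zeta}$. Heuristically, $\int_0^1\sum_{|\x|\leq N^{1+\zeta}}|\mathbf{Av}^{\mathbf{T},\mathbf{X},\mathfrak{q}_\n}_{\t,\x}|\d\t$ is bounded, via Cauchy-Schwarz and a dyadic block cover plus a union bound, by $N^{(1+\zeta)/2}(N^{-2}\mathfrak{t}_{\mathbf{Av}}^{-1}\mathfrak{n}_{\mathbf{Av}}^{-1})^{1/2}\cdot N^{(1+\zeta)/2}$, and with $\mathfrak{t}_{\mathbf{Av}}=N^{-2/3-10\delta_{\mathbf{S}}}$, $\mathfrak{n}_{\mathbf{Av}}=N^{1-3\delta_{\mathbf{S}}/2}$ this is $N^{-\beta_0+\zeta}$ for an explicit $\beta_0>0$. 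The extra $N\grad^{\mathbf{X}}_{\mathfrak{l}_1}\cdots\grad^{\mathbf{X}}_{\mathfrak{l}_\m}$ and $\grad^{\mathbf{T},\mathrm{av}}_{\mathfrak{t}_{\mathbf{Av}}}$ factors are absorbed by summation-by-parts, moving them onto $\mathbf{K}^{N,\mathrm{free}}$ convolved with $\mathscr{S}^N$ (whose gradients cost only $N^{-1+\delta_{\mathbf{S}}}$ per unit of $|\mathfrak{l}_i|$, offsetting the prefactor $N|\mathfrak{l}_i|$); the weights $|\mathfrak{l}_1|^{-1}\cdots|\mathfrak{l}_\m|^{-1}$ produce logarithmic losses only. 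Finally, the $\mathrm{Err}$ term is handled pathwise via \eqref{eq:err-estimate} and polynomial moment control on $\bphi_{\t,\cdot}$.

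\textbf{Main obstacle.} The hard part is that \eqref{eq:heuristic2} is only an averaged, $L^2$-in-expectation estimate on the forcing, whereas closing a weighted-supremum Duhamel iteration naively demands pointwise (or at least uniform-in-space) pathwise control. A direct expansion in $\mathcal{E}$ will produce terms whose individual sizes, in a pathwise $\ell^1$ sense, grossly overshoot $N^{-\beta+\zeta}$. The alternative scheme alluded to in Section~\ref{subsection:intro-method} must therefore (i) condition on a high-probability ``good'' event obtained by a dyadic union bound over all scales $\mathfrak{t}\leq\mathfrak{t}_{\mathbf{Av}}$ and $\mathfrak{l}\leq\mathfrak{n}_{\mathbf{Av}}$ on which the averaged-stochastic inputs hold deterministically with the full \abbr{CLT} gain, and (ii) distribute the gradients via summation-by-parts so that the $\ell^1$-in-$\y$ norm on $\mathcal{E}[\mathbf{K}^{N,\zeta}]$ is controlled by the same norm on $\mathbf{K}^{N,\zeta}$ itself (making Gronwall applicable), without losing any power of $N$ from the $N\grad^{\mathbf{X}}$ prefactors. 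This careful reorganization of the Duhamel expansion---compatible with only averaged stochastic inputs---is the genuinely new ingredient.
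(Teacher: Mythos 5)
Your overall architecture --- Duhamel expansion around a simpler kernel, forcing controlled by the averaged \abbr{CLT} estimate, gradients moved onto the mollifier by summation-by-parts --- matches the paper in spirit, and you correctly flag the central obstruction that an $L^2$-in-time stochastic input cannot drive a naive pointwise Gronwall. But the concrete plan has a real gap, and the resolution the paper actually uses is structurally different in three places. First, the auxiliary kernel is not your $\mathbf{K}^{N,\mathrm{free}}$ (drift plus noise); the kernel $\mathbf{L}^{N,\zeta,\sim}$ from \eqref{eq:lzetasim-sde} keeps not only the Walsh noise but also the $\grad^{\mathbf{T},\mathrm{av}}\mathbf{Av}^{\mathbf{X}}$ line and the $\mathrm{Err}$ line, and the Duhamel forcing $\boldsymbol{\Phi}$ in Lemma~\ref{lemma:lkzeta} consists \emph{only} of the $\mathbf{Av}^{\mathbf{T},\mathbf{X}}$ term, which is the one with genuine space-time cancellations. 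Treating $\grad^{\mathbf{T},\mathrm{av}}$ ``alongside the space-gradients'' by summation-by-parts, as you propose, does not work: it must be integrated by parts in time against the $\mathscr{T}_N$-semigroup (see \eqref{eq:lzetasimestimateII5a}--\eqref{eq:lzetasimestimateII5d}), which is a large fraction of the effort in Lemma~\ref{lemma:lzetasimestimate}. Second, the estimate to aim for in your Step 1 is not a weighted-$\ell^1$ bound on the noisy kernel, but the weighted $\ell^2$-in-$\y$ bound $\sum_\y \exp(2\kappa|\x-\y|/N)|\mathbf{L}^{N,\zeta,\sim}_{\s,\t,\x,\y}-\mathbf{H}^N_{\s,\t,\x,\y}|^2 \lesssim N^{-1+\delta}$ of Lemma~\ref{lemma:lzetasimestimate}. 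Its point is that the right-hand side has \emph{no} $|\t-\s|^{-1/2}$ singularity, i.e.\ the noisy/lower-order pieces are perturbations of the deterministic kernel $\mathbf{H}^N$ in an $\ell^2$ topology; this is exactly what is plugged in through Cauchy-Schwarz at \eqref{eq:kzetaestimate-shortI3a} and is not recoverable from an $\ell^1$ bound on the kernel itself.

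Third, and decisively, the factor $\exp(N^{-\beta}N^\zeta)$ does not arise from a Gronwall on $[0,1]$; the phrase ``iteration and the bound from Step~1 give the claim'' hides the mechanism. After Cauchy-Schwarz in time, the Duhamel remainder carries a factor $|\t-\s|^{1/2}$, which must cancel the $N^{\zeta/2}$ coming from the window of size $N^{1+\zeta}$ in \eqref{eq:kzetaestimate-shortI-key}; the product is small only when $|\t-\s|\leq N^{\beta}N^{-\zeta}$. So the proof first shows that on these short windows the Duhamel contribution is bounded by $N^{-\omega_0}$ times the supremum of the weighted $\ell^1$-norm of $\mathbf{K}^{N,\zeta,\sim}$ (Lemma~\ref{lemma:kzetaestimate-short}), which closes directly without Gronwall to give $\lesssim N^\delta$ on short times. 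The exponential then appears from iterating the Chapman-Kolmogorov identity $\mathbf{K}^{N,\zeta,\sim}_{\s,\t}=\sum\mathbf{K}^{N,\zeta,\sim}_{\r,\t}\mathbf{K}^{N,\zeta,\sim}_{\s,\r}$ about $N^{-\beta}N^\zeta$ times, yielding $(\mathrm{C}N^\delta)^{N^{-\beta}N^\zeta}\lesssim\exp(N^{-\beta/2}N^\zeta)N^\delta$ (Lemma~\ref{lemma:kzetaestimate-long}). A direct global Gronwall would require a pointwise-in-time operator bound on the $\mathbf{Av}^{\mathbf{T},\mathbf{X}}$ forcing, which the pathwise input (Lemma~\ref{lemma:avldptx}) gives only at order $N^{1/2}$ after multiplying by the $N$-prefactor --- far too large. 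Making the scheme close on short times and then iterating multiplicatively is the missing idea, and it is exactly the content of Section~\ref{section:stochheat}.
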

{\color{black}We note that Proposition \ref{prop:stochheat} holds for all $\zeta>0$ fixed, not necessarily large.}

The quantity {\small$|\mathbf{K}^{N,\zeta}_{\s,\t,\x,\y}|$} on the \abbr{LHS} of \eqref{eq:stochheatI} is therefore exponentially small in {\small$N$} if {\small$|\x-\y|\gtrsim N^{1+\zeta-\nu}$} for any {\small$\nu>0$} independent of {\small$\zeta$}. This will allow us to reduce the length-scale parameter {\small$\zeta$} by a small amount. In order to neatly present this successive decreasing of {\small$\zeta$}, we introduce the following sequence of ``cutoff exponents".
\begin{notation}\label{notation:zeta}
\fsp Fix a small parameter {\small$\e_{\mathrm{cut}}>0$}, and let {\small$\delta_{\mathrm{cut}}>0$} be another small parameter depending only on {\small$\e_{\mathrm{cut}}>0$}; we will clarify the dependence shortly, and we require that {\small$\e_{\mathrm{cut}},\delta_{\mathrm{cut}}>0$} are independent of {\small$N$}.
\begin{itemize}
\item Let {\small$\zeta_{1}$} be any {\small$\mathrm{O}(1)$} choice of {\small$\zeta$} for which the estimate in Lemma \ref{lemma:aprioricompact} holds.
\item For any {\small$\i\geq1$}, we define {\small$\zeta_{\i+1}=\zeta_{\i}-\delta_{\mathrm{cut}}$}.
\item We will stop this sequence at the index {\small$\i_{\e_{\mathrm{cut}}}$}; it is the first index for which {\small$\zeta_{\i_{\e_{\mathrm{cut}}}+1}\in[\e_{\mathrm{cut}},2\e_{\mathrm{cut}}]$}, and this can be achieved if we take {\small$\delta_{\mathrm{cut}}$} small enough depending only on {\small$\e_{\mathrm{cut}}$} (this is the aforementioned dependence). For convenience, we will also use the notation {\small$\zeta_{\mathrm{final}}:=\zeta_{\i_{\e_{\mathrm{cut}}}}$}.
\end{itemize}
\end{notation}
We now state the aforementioned successive decreasing of {\small$\zeta$}. The following result essentially follows from Proposition \ref{prop:stochheat} and some elementary manipulations; its proof is given in Section \ref{section:compact}.
\begin{lemma}\label{lemma:stochheat}
\fsp Fix any index {\small$\i=1,\ldots,\i_{\e_{\mathrm{cut}}}$} and any {\small$\mathrm{D}>0$}. We have the following estimate with high probability, simultaneously over {\small$\t\in[0,1]$} and {\small$\x\in\Z$} so that {\small$|\x|\lesssim N$}:
\begin{align}
|\mathbf{S}^{N,\zeta_{\i}}_{\t,\x}-\mathbf{S}^{N,\zeta_{\i+1}}_{\t,\x}|\lesssim_{\mathrm{D}}N^{-\mathrm{D}}.\label{eq:stochheatII}
\end{align}
\end{lemma}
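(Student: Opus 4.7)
The plan is to derive $\mathbf{D}^{N}_{\t,\x} := \mathbf{S}^{N,\zeta_{\i}}_{\t,\x} - \mathbf{S}^{N,\zeta_{\i+1}}_{\t,\x}$ via a Duhamel expansion against the stochastic heat kernel $\mathbf{K}^{N,\zeta_{\i}}$, and to exploit the fact that the resulting forcing is supported where $|\y|\gtrsim N^{1+\zeta_{\i+1}}$, far from the target region $|\x|\lesssim N$. Comparing \eqref{eq:szeta-sde} at parameters $\zeta_{\i}$ and $\zeta_{\i+1}$, the two SPDEs have identical initial data and identical coefficients \emph{except} for the factors $\boldsymbol{\chi}^{(\zeta_{\i})}_{\x}$ vs.\ $\boldsymbol{\chi}^{(\zeta_{\i+1})}_{\x}$ that multiply the drift errors (the noise cutoff $\boldsymbol{\chi}^{(\zeta_{\mathrm{large}})}$ and the $\mathbf{R}^{N,\wedge}$ coefficient in front of $\d\mathbf{b}$ are common to both). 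Subtracting and rearranging, $\mathbf{D}^{N}$ solves the same homogeneous linear SPDE that defines $\mathbf{K}^{N,\zeta_{\i}}$ (cf.\ \eqref{eq:kzeta-sde}), with zero initial data and an additional inhomogeneity $\bigl(\boldsymbol{\chi}^{(\zeta_{\i})}_{\y} - \boldsymbol{\chi}^{(\zeta_{\i+1})}_{\y}\bigr)\,\mathrm{G}[\mathbf{S}^{N,\zeta_{\i+1}}]_{\s,\y}$, where $\mathrm{G}[\cdot]$ is the sum of all drift error terms in \eqref{eq:szeta-sde} (the $\mathbf{Av}^{\mathbf{T},\mathbf{X},\mathfrak{q}_{\n}}$-pieces, the $\grad^{\mathbf{T},\mathrm{av}}_{\mathfrak{t}_{\mathbf{Av}}}\mathbf{Av}^{\mathbf{X},\mathfrak{q}_{\n}}$-pieces, and $\mathrm{Err}$), viewed as a linear functional of $\mathbf{S}^{N,\zeta_{\i+1}}$. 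By Duhamel,
\begin{align*}
\mathbf{D}^{N}_{\t,\x} = \int_{0}^{\t} \sum_{\y \in \Z} \mathbf{K}^{N,\zeta_{\i}}_{\s,\t,\x,\y} \bigl(\boldsymbol{\chi}^{(\zeta_{\i})}_{\y} - \boldsymbol{\chi}^{(\zeta_{\i+1})}_{\y}\bigr) \mathrm{G}[\mathbf{S}^{N,\zeta_{\i+1}}]_{\s,\y} \, \d\s.
\end{align*}

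Next I would apply Proposition \ref{prop:stochheat} in its weighted form. Since $\boldsymbol{\chi}^{(\zeta)}$ is the $\mathscr{S}^{N}$-smoothing of $\mathbf{1}_{|\cdot|\leq N^{1+\zeta}}$, the difference $\boldsymbol{\chi}^{(\zeta_{\i})} - \boldsymbol{\chi}^{(\zeta_{\i+1})}$ vanishes on $\{|\y|\leq N^{1+\zeta_{\i+1}} - \mathrm{O}(N^{1-\delta_{\mathbf{S}}})\}$, so on its support $|\x-\y|/N \geq \tfrac12 N^{\zeta_{\i+1}}$ when $|\x|\lesssim N$ and $N$ is large. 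Factoring $|\mathbf{K}^{N,\zeta_{\i}}_{\s,\t,\x,\y}| = e^{-\kappa|\x-\y|/N}\cdot e^{\kappa|\x-\y|/N}|\mathbf{K}^{N,\zeta_{\i}}_{\s,\t,\x,\y}|$ and applying Proposition \ref{prop:stochheat} to the second factor (after summation in $\y$) yields $\exp(N^{-\beta}N^{\zeta_{\i}})N^{\delta}$, while the first factor on the support contributes $\exp(-\tfrac12\kappa N^{\zeta_{\i+1}})$. Here the constraint $\delta_{\mathrm{cut}}<\beta$ enters: by Notation \ref{notation:zeta}, $N^{-\beta}N^{\zeta_{\i}} = N^{\delta_{\mathrm{cut}}-\beta}\cdot N^{\zeta_{\i+1}} \ll N^{\zeta_{\i+1}}$, so for $\kappa$ sufficiently large (but $\mathrm{O}(1)$ in $N$) the kernel contribution is dominated and one obtains an overall factor $\exp(-\tfrac14 \kappa N^{\zeta_{\i+1}})$ that annihilates any polynomial in $N$.

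The remaining task is to verify the inhomogeneity bound $|\mathrm{G}[\mathbf{S}^{N,\zeta_{\i+1}}]_{\s,\y}|\lesssim N^{\mathrm{C}}e^{\kappa'|\y|/N}$ with a $\kappa'$ independent of the above choice of $\kappa$, on a high-probability event. Each summand in $\mathrm{G}$ is a bounded-degree local polynomial in $\bphi$ composed with finitely many gradients of $\mathscr{S}^{N}$-convolutions of $\mathbf{S}^{N,\zeta_{\i+1}}$: the $\bphi$-factors are uniformly-in-$N$ polynomially bounded on a high-probability event via Assumption \ref{assump:noneq} plus a union bound on polynomially many sites; the $\mathbf{R}^{N}$ (and $\mathbf{R}^{N,\wedge}$) factors are controlled by \eqref{eq:err-estimate} and \eqref{eq:rwedge}; and $\mathbf{S}^{N,\zeta_{\i+1}}$ is itself controlled by \eqref{eq:duhamelszeta} together with Proposition \ref{prop:stochheat} applied to $\mathbf{K}^{N,\zeta_{\i+1}}$ and the initial moment bound \eqref{eq:mainI} (again upgraded pointwise by Markov and a union bound), yielding $|\mathbf{S}^{N,\zeta_{\i+1}}_{\s,\y}|\lesssim N^{\delta''}e^{\kappa'|\y|/N}$. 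Combining gives $|\mathbf{D}^{N}_{\t,\x}|\lesssim N^{\mathrm{C}'}\exp(-\tfrac18\kappa N^{\zeta_{\i+1}})$, which is much smaller than any $N^{-\mathrm{D}}$; a final union bound over a polynomially-dense grid in $(\t,\x)$, combined with a coarse Lipschitz estimate from \eqref{eq:szeta-sde}, extends the pointwise bound to the simultaneous one. \textbf{The main obstacle} is not any single step but rather the bookkeeping in controlling $\mathrm{G}$: it is a sum of many structurally different terms (space-averages, time-averages, multi-indexed gradients, and the $\mathrm{Err}$ remainder) and each must be checked to admit the required polynomial-in-$N$ bound with the weight $e^{\kappa'|\y|/N}$ on a common high-probability event, so that all pieces can be absorbed by the single exponentially small factor produced by Proposition \ref{prop:stochheat}.
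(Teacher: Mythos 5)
Your proposal is correct and matches the paper's argument in all essential respects: the Duhamel representation of $\mathbf{S}^{N,\zeta_{\i}}-\mathbf{S}^{N,\zeta_{\i+1}}$ against $\mathbf{K}^{N,\zeta_{\i}}$ with forcing supported on $|\y|\gtrsim N^{1+\zeta_{\i+1}}$, the splitting of the exponential weight against the kernel to combine Proposition~\ref{prop:stochheat} with the decay of $e^{-\kappa|\x-\y|/N}$ over the spatial gap, the observation that $\delta_{\mathrm{cut}}<\beta$ is what makes $\exp(N^{-\beta}N^{\zeta_{\i}})$ subordinate to $\exp(-\mathrm{c}N^{\zeta_{\i+1}})$, and the control of the forcing via the a~priori estimates of Section~\ref{section:technical} together with a second application of Proposition~\ref{prop:stochheat} (at level $\zeta_{\i+1}$) to bound $\mathbf{S}^{N,\zeta_{\i+1}}$. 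The only cosmetic difference is your closing union-bound-over-a-grid step: it is harmless but unnecessary, since all the stochastic inputs you invoke are already formulated to hold simultaneously over $(\t,\x)$ on a single high-probability event, so once that event is fixed the estimate is deterministic.
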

\subsection{Step 3 -- analysis in the ``almost compact" setting}
Lemma \ref{lemma:stochheat} essentially reduces the error term analysis to the compact case, in which we can compare {\small$\mathbf{S}^{N,\zeta_{\mathrm{final}}}$} to a lattice \abbr{SHE}. Let us make this precise. We let {\small$\mathbf{Q}^{N}$} be the solution to the following \abbr{SDE} with initial data {\small$\mathbf{Q}^{N}_{0,\cdot}=\mathbf{Z}^{N}_{0,\cdot}$}.
\begin{align}
\d\mathbf{Q}^{N}_{\t,\x}&=\mathscr{T}_{N}\mathbf{Q}^{N}_{\t,\x}\d\t+\sqrt{2}\lambda N^{\frac12}\mathbf{Q}^{N}_{\t,\x}\d\mathbf{b}_{\t,\x}.\label{eq:qsde}
\end{align}
The following result compares {\small$\mathbf{Q}^{N}$} to {\small$\mathbf{Z}^{N}$}; it will eventually be shown by comparing each to {\small$\mathbf{S}^{N,\zeta_{\mathrm{final}}}$}.
\begin{prop}\label{prop:comparison}
\fsp Fix any $\mathrm{L}>0$. With high probability, we have
\begin{align}
\sup_{\t\in[0,1]}\sup_{|\x|\leq\mathrm{L}N}|\mathbf{Z}^{N}_{\t,\x}-\mathbf{Q}^{N}_{\t,\x}|\lesssim_{\mathrm{L}}N^{-\gamma_{\mathrm{L}}}, \quad\text{where } \gamma_{\mathrm{L}}>0.
\end{align}
\end{prop}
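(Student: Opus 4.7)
The plan is to chain together several comparisons, using $\mathbf{S}^{N,\zeta_{\mathrm{final}}}$ as an intermediary between $\mathbf{Z}^{N}$ and $\mathbf{Q}^{N}$. Recall $\zeta_{\mathrm{final}}\in[\e_{\mathrm{cut}},2\e_{\mathrm{cut}}]$, so by taking $\e_{\mathrm{cut}}$ small the spatial cutoff in \eqref{eq:szeta-sde} is essentially of scale $N^{1+o(1)}$. This is the ``almost compact'' regime in which the finite-volume techniques of \cite{HQ,HS,HX,KWX,KZ,YEJP,YEJP25} can be adapted.

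\textbf{Step 1: From $\mathbf{Z}^{N}$ down to $\mathbf{S}^{N,\zeta_{\mathrm{final}}}$.} Since $\mathscr{S}^{N}$ is supported on scales $N^{1-\delta_{\mathbf{S}}}\ll N$ while $\mathbf{Z}^{N}$ has continuous-in-$\X$ regularity on scales of order $N$, a short Taylor expansion together with the moment bounds \eqref{eq:mainI} gives $\sup_{\t\in[0,1]}\sup_{|\x|\leq\mathrm{L}N}|\mathbf{S}^{N}_{\t,\x}-\mathbf{Z}^{N}_{\t,\x}|\lesssim_{\mathrm{L}} N^{-\gamma}$ with high probability. Lemma \ref{lemma:aprioricompact} then yields $\mathbf{S}^{N}\approx\mathbf{S}^{N,\zeta_{1}}$, and iterating Lemma \ref{lemma:stochheat} along the $O_{\e_{\mathrm{cut}}}(1)$ steps $\zeta_{1}\to\zeta_{2}\to\cdots\to\zeta_{\mathrm{final}}$ brings us to $\mathbf{S}^{N,\zeta_{\mathrm{final}}}$.

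\textbf{Step 2: Compare $\mathbf{S}^{N,\zeta_{\mathrm{final}}}$ to a mollified $\mathbf{Q}^{N}$.} Let $\widetilde{\mathbf{Q}}^{N}:=\mathscr{S}^{N}\star\mathbf{Q}^{N}$, which solves the convolved version of \eqref{eq:qsde}. Set $\mathbf{D}:=\mathbf{S}^{N,\zeta_{\mathrm{final}}}-\widetilde{\mathbf{Q}}^{N}$; this obeys a linear SDE driven by (a) the martingale discrepancy coming from $\boldsymbol{\chi}^{(\zeta_{\mathrm{large}})}\mathbf{R}^{N,\wedge}-1$, which is small on $|\x|\leq\mathrm{L}N$ because $\mathbf{R}^{N}\to 1$ with high probability, and (b) the error terms \eqref{eq:s-sdeI}--\eqref{eq:s-sdeIII} now supported in $|\x|\lesssim N^{1+O(\e_{\mathrm{cut}})}$. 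Expanding $\mathbf{D}$ via Duhamel against the kernel associated to \eqref{eq:qsde} and iterating a bounded number of times against the time-gradient and space-average structure of \eqref{eq:s-sdeI}--\eqref{eq:s-sdeII}, one controls each error contribution by the weak-type CLT estimate \eqref{eq:heuristic2} applied on $|\mathbb{I}|\lesssim N^{1+O(\e_{\mathrm{cut}})}$. Since the $\mathfrak{q}_{\n}$ are admissible (Definition \ref{definition:admissible}), this yields a net gain of $N^{-\beta+O(\e_{\mathrm{cut}})}$ per iteration. A Gronwall loop then gives $\sup_{[0,1]\times[-\mathrm{L}N,\mathrm{L}N]}|\mathbf{D}|\lesssim N^{-\gamma}$, provided $\e_{\mathrm{cut}}$ is chosen small relative to $\beta$. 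Finally, standard gradient estimates for the linear lattice SHE \eqref{eq:qsde} allow us to replace $\widetilde{\mathbf{Q}}^{N}$ by $\mathbf{Q}^{N}$ on compacts with error $N^{-\gamma}$, analogously to Step 1. Assembling these with the triangle inequality yields the claim.

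The main obstacle is Step 2. In contrast to Proposition \ref{prop:stochheat}, which handles the infinite-volume heat-kernel estimate via a purely expansion-based mechanism, here we must genuinely close an equation for the difference $\mathbf{D}$ by exploiting both the smallness of the \emph{averaged} error terms (via \eqref{eq:heuristic2}) and the near-compactness of their effective support. The critical accounting is that the length of the spatial support of the error contributes at most $N^{O(\e_{\mathrm{cut}})}$, which is dominated by the CLT gain $N^{-\beta}$ once $\e_{\mathrm{cut}}$ is chosen small; this is the same mechanism that powers the compact-setting results of \cite{YEJP,YEJP25}, but combined with the global-in-space reduction enabled by Lemma \ref{lemma:stochheat}.
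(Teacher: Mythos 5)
Your high-level strategy — reduce to the ``almost-compact'' object $\mathbf{S}^{N,\zeta_{\mathrm{final}}}$, then compare to (a mollified) $\mathbf{Q}^{N}$ using the CLT gain $N^{-\beta}$ dominating the spatial-support factor $N^{O(\e_{\mathrm{cut}})}$ — matches the paper's. Step~1 is essentially the paper's \eqref{eq:comparisonstep1I1}--\eqref{eq:comparisonstep1I3} (though you should invoke Lemma~\ref{lemma:rbound} rather than a Taylor expansion of \eqref{eq:mainI}, since the latter only concerns initial data). The genuine divergence is in Step~2, and there I think your sketch hides the bulk of the work.

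The paper does not compare $\mathbf{S}^{N,\zeta_{\mathrm{final}}}$ directly to $\mathscr{S}^{N}\star\mathbf{Q}^{N}$ by closing a Gronwall loop for the difference $\mathbf{D}$; instead it propagates the initial data through a chain of stochastic heat kernels $\mathbf{K}^{N,\zeta_{\mathrm{final}}}\to\mathbf{L}^{N,\zeta_{\mathrm{final}},\sim}\to\mathbf{B}^{N,\sim}$ (Lemmas~\ref{lemma:comparisonstep1}, \ref{lemma:comparisonstep2}), and only at the end replaces the convolved/cut-off kernel by the pure lattice SHE \eqref{eq:qsde} (Lemma~\ref{lemma:comparisonstep3}). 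This staging is not merely cosmetic: the $\mathbf{Av}^{\mathbf{T},\mathbf{X},\mathfrak{q}_\n}$ error term is small only in $L^{2}$-in-time (Proposition~\ref{prop:stestimate}), whereas pointwise it is only $O(N^{-1/2+\delta})$ (Lemma~\ref{lemma:avldptx}), so a naive $\sup$-Gronwall with pointwise error bounds would give a Gronwall coefficient of order $N^{1/2+\delta}$ and not close. The paper's $\mathbf{K}\to\mathbf{L}$ step absorbs exactly this term via the Duhamel expansion around $\mathbf{L}$ (Lemma~\ref{lemma:lkzeta}) in a weighted $\ell^{1}$-in-$\y$ norm, while the $\mathbf{L}\to\mathbf{B}$ step handles the $\mathbf{Av}^{\mathbf{X},\mathfrak{q}_\n}$, time-gradient, and $\mathrm{Err}$ terms in the weighted $\ell^{2}$-in-$\y$ norm of Lemma~\ref{lemma:lzetasimestimate}, where BDG for the stochastic integral is available. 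Your Gronwall loop for $\mathbf{D}$ must reconstruct both mechanisms at once, plus a BDG argument for the martingale term — the paper effectively re-uses the proofs of Proposition~\ref{prop:stochheat} and Lemma~\ref{lemma:lzetasimestimate} for this, which is the main economy of its approach.

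Two more concrete issues with your Step~2 as written. First, your claim that $\widetilde{\mathbf{Q}}^{N}=\mathscr{S}^{N}\star\mathbf{Q}^{N}$ ``solves the convolved version of \eqref{eq:qsde}'' is imprecise: the convolved equation has noise $[\mathscr{S}^{N}\star(\mathbf{Q}^{N}\d\mathbf{b})]$, which is not a function of $\widetilde{\mathbf{Q}}^{N}$ itself, so $\widetilde{\mathbf{Q}}^{N}$ is not the solution of a closed SDE. The noise discrepancy between the $\mathbf{S}^{N,\zeta_{\mathrm{final}}}$ equation and $\widetilde{\mathbf{Q}}^{N}$ therefore produces a source term involving $\mathbf{S}^{N,\zeta_{\mathrm{final}}}-\mathbf{Q}^{N}$, which is $\mathbf{D}+(\widetilde{\mathbf{Q}}^{N}-\mathbf{Q}^{N})$, not $\mathbf{D}$ alone; you need a separate regularity estimate to control $\widetilde{\mathbf{Q}}^{N}-\mathbf{Q}^{N}$ before the loop closes. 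Second, the smallness of $\chi^{(\zeta_{\mathrm{large}})}\mathbf{R}^{N,\wedge}-1$ ``on $|\x|\leq\mathrm{L}N$'' is not quite the right accounting — the Duhamel integral for $\mathbf{D}$ sees the noise over $|\x|\lesssim N^{1+\zeta_{\mathrm{large}}}$, so you must use Lemma~\ref{lemma:rbound} on that larger region (which it does cover, but the argument should say so). In summary, your outline captures the right mechanism, but Step~2 would have to be substantially fleshed out to be a proof; the paper's kernel decomposition is where that fleshing-out actually lives.
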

\subsection{Proofs of the main theorems}\label{subsection:mainproofs}
We will now use the results of this section to prove the main results.
\begin{proof}[Proof of Theorem \ref{theorem:main}]
By Proposition \ref{prop:comparison}, it suffices to obtain that Theorem \ref{theorem:main} holds if we replace {\small$\mathbf{Z}^{N}$} by {\small$\mathbf{Q}^{N}$}. This is a standard stability of the \abbr{SHE} (see \eqref{eq:she}) under spatial-regularizations, in this case a spatial-discretization; said argument is identical to the proof of Theorem 2.1 in \cite{BG}, for example, so we do not produce it here.
\end{proof}
\begin{proof}[Proof of Theorem \ref{theorem:mainwedge}]
We claim that Theorem \ref{theorem:mainwedge} holds if we replace {\small$\mathbf{Z}^{N}$} with {\small$\mathbf{Q}^{N}$} for some constant {\small$\mathscr{T}_{N}$} which satisfies {\small$|\mathscr{T}_{N}|\lesssim_{\e}N^{\e}$} for any {\small$\e>0$}. By Proposition \ref{prop:comparison}, this would complete the proof of Theorem \ref{theorem:mainwedge} as written. To prove the claim for {\small$\mathbf{Q}^{N}$}, we start by constructing the appropriate initial data {\small$\mathbf{Q}^{N}_{0,\cdot}=\mathbf{Z}^{N}_{0,\cdot}$}.
\begin{enumerate}
\item Recall {\small$\lambda:=\alpha^{-1}\beta$} from \eqref{eq:ch}; as shown in Appendix \ref{subsection:alpha-beta}, we have {\small$\lambda\neq0$}. Thus, we can consider a function {\small$\mathsf{f}:\Z\to\R$} satisfying {\small$\lambda\mathsf{f}_{\x}=-(\log N)^{1/9}N^{-1}|\x|$} for all {\small$\x\in\Z$}.
\item We choose the following initial data {\small$\mathbf{j}^{N}_{0,\cdot}$}. First, set {\small$\mathbf{j}^{N}_{0,0}=0$}, and recall the relation {\small$\bphi_{0,\x}=N^{1/2}(\mathbf{j}^{N}_{0,\x}-\mathbf{j}^{N}_{0,\x-1})$}. In particular, {\small$\mathbf{j}^{N}_{0,\cdot}$} is now a function of the initial data {\small$\bphi_{0,\cdot}$} only. With this in mind, we will let the initial data for {\small$\mathbf{j}^{N}_{0,\cdot}$} be given by sampling {\small$\bphi_{0,\cdot}\sim\mathbb{P}^{0}$}, but we condition on the following event:
\begin{align}
\mathscr{E}_{\mathsf{f}}:=\Big\{\sup_{|\x|\leq (\log N)^{2/9}N}|\mathbf{j}^{N}_{0,\x}-\mathsf{f}_{\x}|\leq(\log N)^{-\frac19}\Big\}.\label{eq:mainwedge1}
\end{align}
\end{enumerate}
We now show that this choice of initial data satisfies Assumption \ref{assump:noneq} by arguing as follows. (Below, all suprema are taken over {\small$\x\in\R$}, with the understanding that functions of {\small$\x\in\Z$} extend to {\small$\x\in\R$} by linear interpolation.)
\begin{enumerate}
\item If we let {\small$\mathbb{P}^{\mathrm{init}}$} denote the law of the initial data chosen in the previous bullet points (as a probability measure on {\small$\R^{\Z}$}), then by definition of conditional probability, we have 
\begin{align}
\|\tfrac{\d\mathbb{P}^{\mathrm{init}}}{\d\mathbb{P}^{0}}\|_{\mathrm{L}^{\infty}(\R^{\Z})}\leq\mathbb{P}(\mathscr{E}_{\mathsf{f}})^{-1}.\label{eq:mainwedge2}
\end{align}
\item Recall that the initial data {\small$\x\mapsto\mathbf{j}^{N}_{0,\x}$} is a process with {\small$\mathbf{j}^{N}_{0,0}=0$} and whose increments are given by {\small$N^{-1/2}$} times i.i.d. sub-Gaussian random variables; see Assumption \ref{assump:potential}. By appealing to the \abbr{KMT} coupling (see \cite{KMT}), we can construct a two-sided Brownian motion {\small$\w\mapsto\mathsf{b}_{\w}$} for {\small$\w\in\Z$} such that 
\begin{align}
\mathbb{P}\Big(\sup_{|\x|\leq(\log N)^{1/9}N}|\mathbf{j}^{N}_{0,\x}-\mathsf{b}_{\upsilon N^{-1}\x}|\geq N^{-2\delta}\Big)\lesssim\exp(-N^{\frac12-2\delta}),
\end{align}
where {\small$\delta>0$} is fixed, and {\small$\upsilon:=\E^{0}|\bphi_{0}|^{2}$}. (We clarify that the {\small$N^{-1}$}-scaling in the Brownian motion parameter is there because the increments of {\small$\mathbf{j}^{N}_{0,\cdot}$} are sub-Gaussian with variance {\small$N^{-1}$}, so that the increments of the two processes on the \abbr{LHS} above have the same variance; this is also the reason why we need to introduce {\small$\upsilon$}.) By the previous display and by the definition of {\small$\mathscr{E}_{\mathsf{f}}$} from \eqref{eq:mainwedge1}, we have
\begin{align}
\mathbb{P}(\mathscr{E}_{\mathsf{f}})\geq\mathbb{P}(\mathscr{E}_{\mathsf{f},\mathsf{b}})-\mathrm{O}_{\delta}(\exp(-N^{\frac12-2\delta})),\label{eq:mainwedge3a}
\end{align}
where {\small$\mathscr{E}_{\mathsf{f},\mathsf{b}}$} is the following event (in which we compare {\small$\mathsf{f},\mathsf{b}$}):
\begin{align}
\mathscr{E}_{\mathsf{f},\mathsf{b}}:=\Big\{\sup_{|\x|\leq(\log N)^{2/9}N}|\mathsf{f}_{\x}-\mathsf{b}_{\upsilon N^{-1}\x}|\lesssim (\log N)^{-\frac19}\Big\}=\Big\{\sup_{|\x|\leq(\log N)^{2/9}}|\mathsf{f}_{N\x}-\mathsf{b}_{\upsilon\x}|\lesssim (\log N)^{-\frac19}\Big\}.\nonumber
\end{align}
\item Observe that the process {\small$\x\mapsto\mathsf{f}_{N\x}-\mathbf{b}_{\upsilon\x}$}, if we allow for all {\small$\x\in\R$}, is a perturbation of a Brownian motion {\small$\mathbf{b}_{\upsilon\x}$} by a function {\small$\mathsf{f}_{N\x}$} whose Cameron-Martin norm on the interval {\small$[-(\log N)^{2/9}N,(\log N)^{2/9}N]$} is {\small$\lesssim(\log N)^{1/2}$}. Thus, using the classical Cameron-Martin theorem for Brownian motion, we can compute the density of the law of the Brownian motion {\small$\x\mapsto-\mathbf{b}_{\upsilon\x}$} with respect to the law of the process {\small$\x\mapsto\mathsf{f}_{N\x}-\mathbf{b}_{\upsilon\x}$} , and this density has a second moment that is {\small$\lesssim\exp(\mathrm{O}(|\log N|^{1/2}))$}. So, by Cauchy-Schwarz (to change measure), we have 
\begin{align*}
\mathbb{P}\Big(\sup_{|\x|\leq(\log N)^{2/9}}|\mathsf{b}_{\upsilon\x}|\lesssim(\log N)^{-\frac19}\Big)\lesssim\exp(\mathrm{O}(|\log N|^{\frac12}))\mathbb{P}(\mathscr{E}_{\mathsf{f},\mathsf{b}})^{\frac12}.
\end{align*}
We can bound the \abbr{LHS} of the previous display using scaling properties of Brownian motion and the standard reflection principle for Brownian motion (which implies that {\small$\sup_{|\x|\leq1}|\mathsf{b}_{\x}|$} has a smooth, non-vanishing density in a neighborhood of {\small$0$}). In particular, we obtain the following calculation:
\begin{align*}
\mathbb{P}\Big(\sup_{|\x|\leq(\log N)^{2/9}}|\mathsf{b}_{\upsilon\x}|\leq\mathrm{C}(\log N)^{-\frac19}\Big)&=1-\mathbb{P}\Big(\sup_{|\x|\leq(\log N)^{2/9}}|\mathsf{b}_{\upsilon\x}|>\mathrm{C}(\log N)^{-\frac19}\Big)\\
&=1-\mathbb{P}\Big(\sup_{|\x|\leq1}|\mathsf{b}_{\x}|>\mathrm{C}\upsilon^{\frac12}(\log N)^{-\frac{2}{9}}\Big)\gtrsim (\log N)^{-\frac{2}{9}}.
\end{align*}
We combine the last two displays with \eqref{eq:mainwedge3a} to get {\small$\mathbb{P}(\mathscr{E}_{\mathsf{f},\mathsf{b}})\gtrsim\exp(-\mathrm{c}|\log N|^{1/2})\cdot(\log N)^{-4/9}\gtrsim N^{-1/10}$}. Putting this together with \eqref{eq:mainwedge2} shows that our choice of {\small$\mathbb{P}^{\mathrm{init}}$} satisfies the constraint \eqref{eq:noneq}.
\end{enumerate}
Now, we show that this choice of initial data for {\small$\mathbf{Z}^{N}$} satisfies the first estimate in \eqref{eq:mainI}. To this end, we first note the following formula for the initial data constructed earlier:
\begin{align}
\mathbf{Z}^{N}_{0,\x}&=\exp\Big(-\tfrac{(\log N)^{1/9}}{N}|\x|+\mathrm{O}(\tfrac{1}{(\log N)^{1/9}})\Big)\cdot\exp(\mathsf{w}_{\x}).\label{eq:mainwedge4}
\end{align}
Above, the process {\small$\x\mapsto\mathsf{w}_{\x}$} is supported for {\small$|\x|>(\log N)^{2/9}N$}, and it is given by {\small$\lambda N^{-1/2}$} times sum of i.i.d. {\small$\bphi_{\w}$} random variables for {\small$\w$} between the boundary of {\small$\llbracket-(\log N)^{2/9}N,(\log N)^{2/9}N\rrbracket$} and {\small$\x$}, where {\small$\llbracket\a,\b\rrbracket:=[\a,\b]\cap\Z$}. It comes from the lack of conditioning {\small$\mathbb{P}^{0}$} outside of the interval {\small$\llbracket-(\log N)^{2/9}N,(\log N)^{2/9}N\rrbracket$} when constructing the initial data, and more precisely, it has the following form:
\begin{align}
\mathsf{w}_{\x}&:=\mathbf{1}_{|\x|\geq(\log N)^{\frac29}N}\cdot\begin{cases}\lambda N^{-\frac12}\sum_{\w\in\llbracket(\log N)^{2/9}N,\x\rrbracket}\bphi_{\w}&\x\geq(\log N)^{\frac29}N\\ \lambda N^{-\frac12}\sum_{\w\in\llbracket\x,-(\log N)^{2/9}N\rrbracket}\bphi_{\w}&\x\leq-(\log N)^{\frac29}N\end{cases}\label{eq:mainwedge4b}
\end{align}
The first exponential factor in \eqref{eq:mainwedge4} is deterministically {\small$\lesssim1$}. On the other hand, since the potential {\small$\mathscr{U}$} is bounded below by a quadratic polynomial (see Assumption \ref{assump:potential}), if {\small$\bphi\sim\mathbb{P}^{0}$}, we have {\small$\exp(2p\mathsf{w}_{\x})\lesssim\exp(\kappa_{p}|\x|/N)$} just by comparison with the case of Gaussian {\small$\bphi$} and joint independence of {\small$\bphi_{\w}$} over {\small$\w\in\Z$}. Therefore, our choice \eqref{eq:mainwedge4} of initial data satisfies the first estimate in \eqref{eq:mainI}.

We now show convergence of {\small$\mathbf{Q}^{N}$} with initial data \eqref{eq:mainwedge4} to the narrow-wedge solution of \eqref{eq:she}. To this end, we recall that {\small$\mathsf{w}_{\x}$} in \eqref{eq:mainwedge4}-\eqref{eq:mainwedge4b} is supported on {\small$|\x|\geq(\log N)^{1/9}N$}. Thus, we can rewrite \eqref{eq:mainwedge4} as 
\begin{align*}
\mathbf{Q}^{N}_{0,\x}=\mathbf{Q}^{N,1}_{0,\x}+\mathbf{Q}^{N,2}_{0,\x}&:=\exp\Big(-\tfrac{(\log N)^{1/9}}{N}|\x|+\mathrm{O}(\tfrac{1}{(\log N)^{1/9}})\Big)\mathbf{1}_{|\x|<(\log N)^{2/9}N}\\
&+\exp\Big(-\tfrac{(\log N)^{1/9}}{N}|\x|+\mathrm{O}(\tfrac{1}{(\log N)^{1/9}})\Big)\exp(\mathsf{w}_{\x})\mathbf{1}_{|\x|\geq(\log N)^{2/9}N}.
\end{align*}
Let {\small$\mathbf{Q}^{N,\i}$} be the solution to \eqref{eq:qsde} with initial data {\small$\mathbf{Q}^{N,\i}_{0,\cdot}$} for {\small$\i=1,2$}. We now claim the following.
\begin{enumerate}
\item There exists a deterministic constant {\small$\mathscr{T}_{N}\lesssim(\log N)^{1/9}$} such that {\small$\mathscr{T}_{N}\mathbf{Q}^{N,1}_{\t,N\X}$} converges to the narrow-wedge solution of \eqref{eq:she} locally uniformly in {\small$(\t,\X)$} in probability.
\item For the same {\small$\mathscr{T}_{N}\lesssim(\log N)^{1/9}$}, we have {\small$\mathscr{T}_{N}\mathbf{Q}^{N,2}_{\t,N\X}\to0$} locally uniformly in probability.
\end{enumerate}
Because the \abbr{SDE} \eqref{eq:qsde} is linear in the solution, we have {\small$\mathbf{Q}^{N}=\mathbf{Q}^{N,1}+\mathbf{Q}^{N,2}$}, so the previous two claims would yield Theorem \ref{theorem:mainwedge}. To establish the second claim, we can use the moment bounds on {\small$\exp(\mathsf{w}_{\x})$} from after \eqref{eq:mainwedge4b} and an elementary calculation to show that {\small$\mathbf{Q}^{N,2}_{0,\cdot}$} satisfies the estimates in \eqref{eq:mainI}. Moreover, since we restrict to {\small$|\x|\geq(\log N)^{2/9}N$} in {\small$\mathbf{Q}^{N,2}_{0,\cdot}$}, the exponential factor in {\small$\mathbf{Q}^{N,2}_{0,\cdot}$} vanishes uniformly in {\small$\x$}. Ultimately, we can apply the same stability of \abbr{SHE} under space-discretization as in the proof of Theorem \ref{theorem:main} to deduce claim (2) above.

It remains to prove (1). A geometric series shows that for some {\small$\mathscr{T}_{N}>0$} of order {\small$(\log N)^{1/9}$}, we have
\begin{align*}
\tfrac{1}{N}\sum_{\x\in\Z}\mathscr{T}_{N}\mathbf{Q}^{N,0}_{0,\x}&=1+\mathrm{o}(1)\quad\text{and}\quad\tfrac{1}{N}\sum_{\x\in\Z}\mathbf{1}_{|\x|\geq(\log N)^{-1/10}N}\mathscr{T}_{N}\mathbf{Q}^{N,0}_{0,\x}=\mathrm{o}(1).
\end{align*}
Thus, {\small$\mathbf{Q}^{N,0}_{0,N\X}$} converges as a Borel measure in {\small$\X\in\R$} (after linear interpolation of its values from {\small$N^{-1}\Z$} to {\small$\R$}) to a Dirac point mass at the origin. Now, let {\small$\mathbf{H}^{N}_{\s,\t,\x,\y}$} be a function of {\small$(\s,\t,\x,\y)\in[0,\infty)^{2}\times\Z^{2}$} with {\small$\s\leq\t$} that solves
\begin{align}
\partial_{\t}\mathbf{H}^{N}_{\s,\t,\x,\y}&=\mathscr{T}_{N}\mathbf{H}^{N}_{\s,\t,\x,\y}\quad\text{and}\quad \mathbf{H}^{N}_{\s,\s,\x,\y}=\mathbf{1}_{\x=\y}.\label{eq:heatkernel}
\end{align}
(A.8) in \cite{DT} gives {\small$N\mathbf{H}^{N}_{\s,\t,N\X,N\Y}\to\mathbf{H}_{\s,\t,\X,\Y}$} uniformly in {\small$\X,\Y\in\R$}, where {\small$\mathbf{H}^{N}_{\s,\t,N\X,N\Y}$} extends from {\small$N^{-1}\Z$} to {\small$\R$} via linear interpolation; this holds locally uniformly on the domain {\small$\{0\leq\s<t\leq\T\}$} for any fixed {\small$\T>0$}. (We clarify that (A.8) in \cite{DT} states pointwise convergence, but this upgrades to uniform convergence because of the space-time regularity and sub-exponential decay-in-space estimates for {\small$\mathbf{H}^{N}$} in Proposition A.1 in \cite{DT}, followed by an application of the Arzela-Ascoli lemma). Moreover, by Proposition \ref{prop:hke}, we obtain the following estimate for the Riemann sum approximation for the integration of the heat kernel {\small$N\mathbf{H}^{N}$} against the initial data {\small$\mathbf{Q}^{N,0}$}:
\begin{align*}
\tfrac{1}{N}\sum_{\y\in\Z}N\mathbf{H}^{N}_{\s,\t,\x,\y}\cdot\mathscr{T}_{N}\mathbf{Q}^{N,0}_{0,\y}\lesssim|\t-\s|^{-\frac12}\cdot\tfrac{1}{N}\sum_{\y\in\Z}\mathscr{T}_{N}\mathbf{Q}^{N,0}_{0,\y}\lesssim|\t-\s|^{-\frac12}.
\end{align*}
To complete the proof, we can now use Lemma \ref{lemma:qstabilitywedge}, a standard stability for \abbr{SHE} with Dirac initial data.
\end{proof}
\subsection{Outline of the rest of the paper}\label{subsection:paper-outline}
We now explain the goal of each remaining (non-appendix) section.
\begin{itemize}
\item In Section \ref{section:s-sde}, we prove Proposition \ref{prop:s-sde}, the formula for the evolution equation for {\small$\mathbf{Z}^{N}$}.
\item In Section \ref{section:technical}, we prove the necessary \emph{stochastic} estimates that will be used in later sections.
\item In Section \ref{section:stochheat}, we use an \emph{analytic} argument, assuming the stochastic estimates in Section \ref{section:technical}, to show Proposition \ref{prop:stochheat}. (In particular, Sections \ref{section:technical} and \ref{section:stochheat} separate the probabilistic and analytic methodologies in this paper.)
\item In Section \ref{section:compact}, we prove {\color{black}Lemma \ref{lemma:stochheat}}. In Section \ref{section:comparison}, we prove Proposition \ref{prop:comparison}. Both follow almost immediately from Proposition \ref{prop:stochheat} and some elementary manipulations.
\end{itemize}
%
%
%
\section{Proof of Proposition \ref{prop:s-sde}}\label{section:s-sde}
The goal of this section is to derive the \abbr{SDE} \eqref{eq:s-sde}-\eqref{eq:s-sdeIII}. Before we start, let us recall the relevant notation in section from Definition \ref{definition:zsmooth}, \ref{definition:jets}, \ref{definition:eq-operators}, and Proposition \ref{prop:s-sde}.
\subsection{The \abbr{SDE} for {\small$\mathbf{Z}^{N}$}}
We start by computing the evolution equation for {\small$\mathbf{Z}^{N}$} from \eqref{eq:ch}. As explained in Section \ref{subsection:intro-method}, it is a lattice \abbr{SHE} with an error term denoted by {\small$\mathfrak{e}$} below. In recording the \abbr{SDE} for {\small$\mathbf{Z}^{N}$} below, we will decompose the error term {\small$\mathfrak{e}$} into groups of terms with the same power of {\small$N$}. We emphasize that said error term essentially consists of {\small$\mathbf{Z}^{N}$} multiplied by local functions which are either supported to the right of {\small$0$} or to the left of {\small$1$}. In view of Definition \ref{definition:admissible}, this property will be important as it will lead to {\small$\mathfrak{e}$} consisting only of admissible local functions. However, {\color{black}ensuring} this property requires shifting various local functions in space and gathering the resulting error terms, which leads to the error term {\small$\mathfrak{e}$} involving a lot of different terms.

Before we state the result for the \abbr{SDE} for {\small$\mathbf{Z}^{N}$}, let us now carefully introduce the components to the error term {\small$\mathfrak{e}$} that will appear in the \abbr{SDE} for {\small$\mathbf{Z}^{N}$}. It is given by 
\begin{align}
\mathfrak{e}[\tau_{\x}{\boldsymbol{\phi}}_{\t}]\mathbf{Z}^{N}_{\t,\x}:=\mathfrak{e}_{2}[\tau_{\x}{\boldsymbol{\phi}}_{\t}]\mathbf{Z}^{N}_{\t,\x}+\mathfrak{e}_{1}[\tau_{\x}{\boldsymbol{\phi}}_{\t}]\mathbf{Z}^{N}_{\t,\x}+\mathfrak{e}_{0}[\tau_{\x}{\boldsymbol{\phi}}_{\t}]\mathbf{Z}^{N}_{\t,\x}+\mathfrak{f}_{\mathrm{err}}[\tau_{\x}{\boldsymbol{\phi}}_{\t}]\mathbf{Z}^{N}_{\t,\x},\label{eq:msheII}
\end{align}
where the terms on the \abbr{RHS} of \eqref{eq:msheII} are given as follows. 
\begin{enumerate}
\item First, recall {\small$\beta_{2}$} from \eqref{eq:nonlinearity}, and define the ``renormalized quadratic polynomial" as
\begin{align}
\mathscr{Q}[{\boldsymbol{\phi}}]:=\tfrac12\beta_{2}\mathscr{U}'[{\boldsymbol{\phi}}_{0}]\mathscr{U}'[{\boldsymbol{\phi}}_{1}]-\tfrac12\lambda(\mathscr{U}'[{\boldsymbol{\phi}}_{0}]{\boldsymbol{\phi}}_{0}-1).\label{eq:quadratic_renorm}
\end{align}
We have the following, where {\small$\deg\geq3$} is from \eqref{eq:nonlinearity}:
\begin{align}
\mathfrak{e}_{2}[\tau_{\x}{\boldsymbol{\phi}}_{\t}]\mathbf{Z}^{N}_{\t,\x}&:=\lambda N\mathscr{Q}[\bphi_{\t,\x+1}]\mathbf{Z}^{N}_{\t,\x}+\lambda N\mathscr{Q}[\bphi_{\t,\x-1}]\mathbf{Z}^{N}_{\t,\x}\label{eq:msheII(2)a}\\
&+\lambda N\mathbf{F}_{>2}[\tau_{\x+2\deg}\bphi_{\t}]\mathbf{Z}^{N}_{\t,\x}\d\t\label{eq:msheII(2)aa}\\
&-\tfrac16\beta_{2}\lambda N(\Delta\{\mathscr{U}'[\bphi_{\t,\x+2}]\mathscr{U}'[\bphi_{\t,\x+3}]\}\mathbf{Z}^{N}_{\t,\x})\d\t.\label{eq:msheII(2)b}
\end{align}
The \abbr{RHS} of \eqref{eq:msheII(2)a} will come from the contribution of {\small$\mathbf{F}_{2}$} (see \eqref{eq:nonlinearity}) as well as elementary manipulations for the discrete Laplacian acting on {\small$\mathbf{Z}^{N}$}. The term \eqref{eq:msheII(2)aa} is the contribution of {\small$\mathbf{F}_{>2}$} (after shifting in space by {\small$2\deg$}). The last line \eqref{eq:msheII(2)b} is an error term that appears from shifting the contribution of {\small$\mathbf{F}_{2}$} in space.
\item Second, define {\small$\mathscr{V}[{\boldsymbol{\phi}}_{\x}]:=\mathscr{U}[{\boldsymbol{\phi}}_{\x}]-\frac12\alpha{\boldsymbol{\phi}}_{\x}^{2}$}, so that {\small$\mathscr{V}'[\bphi_{\x}]=\mathscr{U}'[\bphi_{\x}]-\alpha\bphi_{\x}$}. Let us also define
\begin{align}
\mathscr{G}[\bphi]:=\bphi_{2}\mathscr{U}'[\bphi_{2}]\cdot(\mathscr{U}'[\bphi_{1}]-\mathscr{U}'[\bphi_{3}])+\bphi_{2}\cdot(\mathscr{U}'[\bphi_{4}]\mathscr{U}'[\bphi_{5}]-\mathscr{U}'[\bphi_{3}]\mathscr{U}'[\bphi_{4}]).\label{eq:funnygradient}
\end{align}
We have the following, where {\small$c_{\ell}\in\R$} are fixed constants:
\begin{align}
\mathfrak{e}_{1}[\tau_{\x}{\boldsymbol{\phi}}_{\t}]\mathbf{Z}^{N}_{\t,\x}&:=\tfrac12\lambda N^{\frac32}\grad^{\mathbf{X}}_{1}(\mathscr{V}'[{\boldsymbol{\phi}}_{\t,\x}]\mathbf{Z}^{N}_{\t,\x})-\tfrac12\lambda N^{\frac32}\grad^{\mathbf{X}}_{-1}(\mathscr{V}'[{\boldsymbol{\phi}}_{\t,\x+1}]\mathbf{Z}^{N}_{\t,\x})\label{eq:msheII(1)a}\\
&-\tfrac16\beta_{2}\lambda^{2} N^{\frac12}\mathscr{G}[\tau_{\x}\bphi_{\t}]\mathbf{Z}^{N}_{\t,\x}\d\t\label{eq:msheII(1)b}\\
&+N^{\frac12}\Big(\mathbf{F}_{>2}[\tau_{\x+2\deg}\bphi_{\t}]\cdot\sum_{\ell=1,\ldots,2\deg}c_{\ell}\bphi_{\t,\x+\ell}\Big)\mathbf{Z}^{N}_{\t,\x}\d\t\label{eq:msheII(1)c}
\end{align}
The \abbr{RHS} of \eqref{eq:msheII(1)a} will come from replacing the contribution of {\small$N^{3/2}\grad^{\mathbf{X}}_{1}\mathscr{U}'[\bphi_{\t,\x}]$} by {\small$N^{3/2}\grad^{\mathbf{X}}_{1}\bphi_{\t,\x}$}, the latter of which is related to the Laplacian of {\small$\mathbf{Z}^{N}$} by means of Taylor expanding the exponential in \eqref{eq:ch} (as in \cite{BG}). The last two lines \eqref{eq:msheII(1)b}-\eqref{eq:msheII(1)c} will come as error terms obtained by shifting {\small$\mathbf{F}_{2},\mathbf{F}_{>2}$} in space, respectively.
\item Third, we have the following, where {\small$c_{\ell_{1},\ell_{2}}\in\R$} are fixed constants:
\begin{align}
\mathfrak{e}_{0}[\tau_{\x}{\boldsymbol{\phi}}_{\t}]\mathbf{Z}^{N}_{\t,\x}&:=\tfrac{1}{12}\lambda^{4}\left(\mathscr{U}'[{\boldsymbol{\phi}}_{\t,\x+1}]{\boldsymbol{\phi}}_{\t,\x+1}^{3}-\E^{0}\left\{\mathscr{U}'[{\boldsymbol{\phi}}_{0}]{\boldsymbol{\phi}}_{0}^{3}\right\}\right)\mathbf{Z}^{N}_{\t,\x}\label{eq:msheII(0)a}\\
&-\tfrac16\beta_{2}\lambda^{3}(\mathscr{G}[\tau_{\x}\bphi_{\t}]\bphi_{\t,\x+1}-1)\mathbf{Z}^{N}_{\t,\x}\d\t\label{eq:msheII(0)a1}\\
&+\Big(\mathbf{F}_{>2}[\tau_{\x+2\deg}\bphi_{\t}]\cdot\sum_{\ell_{1},\ell_{2}=1,\ldots,2\deg}c_{\ell_{1},\ell_{2}}\bphi_{\t,\x+\ell_{1}}\bphi_{\t,\x+\ell_{2}}\Big)\cdot\mathbf{Z}^{N}_{\t,\x}\d\t\label{eq:msheII(0)a2}\\
&-\lambda N\grad^{\mathbf{X}}_{1}\Big(\mathbf{F}_{>2}[\tau_{\x+2\deg}\bphi_{\t}]\mathbf{Z}^{N}_{\t,\x}\Big)\d\t\label{eq:msheII(0)a3}\\
&+\tfrac12\lambda^{2}N\grad^{\mathbf{X}}_{1}\left(\left[\alpha{\boldsymbol{\phi}}_{\t,\x}^{2}-1\right]\mathbf{Z}^{N}_{\t,\x}\right)\label{eq:msheII(0)b}\\
&+\tfrac12\lambda^{2}N\grad^{\mathbf{X}}_{-1}\left(\left[\alpha{\boldsymbol{\phi}}_{\t,\x+1}^{2}-1\right]\mathbf{Z}^{N}_{\t,\x}\right)\label{eq:msheII(0)c}\\
&+\tfrac13\beta_{2}\lambda N\grad^{\mathbf{X}}_{1}(\Delta\{\mathscr{U}'[\bphi_{\t,\x+2}]\mathscr{U}'[\bphi_{\t,\x+3}]\}\mathbf{Z}^{N}_{\t,\x})\label{eq:msheII(0)d}\\
&-\tfrac{1}{12}\beta_{2}\lambda^{3}(\Delta\{\mathscr{U}'[\bphi_{\t,\x+2}]\mathscr{U}'[\bphi_{\t,\x+3}]\})\bphi_{\t,\x+1}^{2}\mathbf{Z}^{N}_{\t,\x}\d\t.\label{eq:msheII(0)e}\\
&-\tfrac{1}{12}\beta_{2}\lambda^{3}(\Delta\{\mathscr{U}'[\bphi_{\t,\x+2}]\mathscr{U}'[\bphi_{\t,\x+3}]\})\bphi_{\t,\x+2}^{2}\mathbf{Z}^{N}_{\t,\x}\d\t.\label{eq:msheII(0)e1}\\
&+\tfrac{1}{12}\lambda^{4}\left(\grad^{\mathbf{X}}_{1}\{\mathscr{U}'[{\boldsymbol{\phi}}_{\t,\x+1}]{\boldsymbol{\phi}}_{\t,\x+1}^{3}\}\right)\mathbf{Z}^{N}_{\t,\x}\d\t.\label{eq:msheII(0)f}
\end{align}
The terms in the previous display are all error terms obtained essentially by Taylor expanding the exponential factor in {\small$\mathbf{Z}^{N}$} as in \cite{BG}, as well as from shifting {\small$\mathbf{F}_{2},\mathbf{F}_{>2}$} in space. 
\item Finally, the last error term, which ultimately contributes to \eqref{eq:s-sdeIII}, is given by
\begin{align}
\mathfrak{f}_{\mathrm{err}}[\tau_{\x}{\boldsymbol{\phi}}_{\t}]\mathbf{Z}^{N}_{\t,\x}&=N^{\frac12}\grad^{\mathbf{X}}_{1}\left(\mathfrak{f}_{1}[\tau_{\x}{\boldsymbol{\phi}}_{\t}]\mathbf{Z}^{N}_{\t,\x}\right)\label{eq:msheIIfa}\\
&+N^{\frac12}\grad^{\mathbf{X}}_{-1}\left(\mathfrak{f}_{2}[\tau_{\x}{\boldsymbol{\phi}}_{\t}]\mathbf{Z}^{N}_{\t,\x}\right)\label{eq:msheIIfb}\\
&+N^{-\frac12}\mathfrak{f}_{3}[\tau_{\x}{\boldsymbol{\phi}}_{\t}]\mathbf{Z}^{N}_{\t,\x}\label{eq:msheIIfc}\\
&+N^{\frac32}\grad^{\mathbf{X}}_{1}\grad^{\mathbf{X}}_{1}(\mathfrak{f}_{4}[\tau_{\x}\bphi_{\t}]\mathbf{Z}^{N}_{\t,\x})\label{eq:msheIIfd}\\
&+N\Delta(\mathfrak{f}_{5}[\tau_{\x}\bphi_{\t}]\mathbf{Z}^{N}_{\t,\x}),\label{eq:msheIIfe}
\end{align}
where the {\small$\mathfrak{f}_{i}$} satisfy the following for some $\mathrm{c},\mathrm{C}=\mathrm{O}(1)$:
\begin{align}
|\mathfrak{f}_{i}[\bphi]|\lesssim1+\sum_{|\w|\leq\mathrm{c}}|\bphi_{\w}|^{\mathrm{C}}.\label{eq:msheIIff}
\end{align}
\end{enumerate}
\begin{lemma}\label{lemma:mshe}
\fsp With notation to be explained afterwards, we have the following \abbr{SPDE}:
\begin{align}
\d\mathbf{Z}^{N}_{\t,\x}&=\mathscr{T}_{N}\mathbf{Z}^{N}_{\t,\x}\d\t+\sqrt{2}\lambda N^{\frac12}\mathbf{Z}^{N}_{\t,\x}\d\mathbf{b}_{\t,\x}+\mathfrak{e}[\tau_{\x}{\boldsymbol{\phi}}_{\t}]\mathbf{Z}^{N}_{\t,\x}\d\t.\label{eq:msheI}
\end{align}
Above, the Brownian motions $\mathbf{b}$ are exactly the ones from \eqref{eq:curr}. Also, {\small$\mathscr{T}_{N}:=N^{2}\alpha\Delta+\frac14N\lambda^{2}\Delta$} as in Proposition \ref{prop:s-sde}, and $\alpha,\lambda$ are defined after \eqref{eq:ch}. Finally, the error term $\mathfrak{e}$ is given in \eqref{eq:msheII}.
\end{lemma}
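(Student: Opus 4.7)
The plan is a direct It\^{o} calculation on $\mathbf{Z}^{N}_{\t,\x}=\exp(\lambda\mathbf{j}^N_{\t,\x}-\lambda\mathscr{R}_\lambda\t)$ followed by an order-by-order bookkeeping. Using \eqref{eq:curr} and the quadratic variation $d\langle\mathbf{j}^N_{\cdot,\x}\rangle_\t=2N\,d\t$, It\^{o}'s formula immediately produces the martingale $\sqrt{2}\lambda N^{1/2}\mathbf{Z}^N_{\t,\x}d\mathbf{b}_{\t,\x}$ and a drift equal to
\begin{align*}
\mathbf{Z}^N_{\t,\x}\bigl[\lambda N^{3/2}\grad^{\mathbf{X}}_1\mathscr{U}'[\bphi_{\t,\x}]+\lambda N\mathbf{F}[\tau_\x\bphi_\t]+\lambda^2 N-\lambda\mathscr{R}_\lambda\bigr]\,d\t,
\end{align*}
which we must rewrite as $\mathscr{T}_N\mathbf{Z}^N_{\t,\x}+\mathfrak{e}[\tau_\x\bphi_\t]\mathbf{Z}^N_{\t,\x}$ in the decomposition \eqref{eq:msheII}.

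The key algebraic input is the ratio identity $\mathbf{Z}^N_{\t,\x+1}/\mathbf{Z}^N_{\t,\x}=\exp(\lambda N^{-1/2}\bphi_{\t,\x+1})$, which follows from \eqref{eq:ch} together with $\mathbf{j}^N_{\t,\x+1}-\mathbf{j}^N_{\t,\x}=N^{-1/2}\bphi_{\t,\x+1}$. Taylor-expanding this and its analogue at $\x-1$ yields
\begin{align*}
\Delta\mathbf{Z}^N_{\t,\x}/\mathbf{Z}^N_{\t,\x}=\lambda N^{-\frac12}(\bphi_{\t,\x+1}-\bphi_{\t,\x})+\tfrac{\lambda^2}{2}N^{-1}(\bphi_{\t,\x+1}^2+\bphi_{\t,\x}^2)+\tfrac{\lambda^3}{6}N^{-\frac32}(\bphi_{\t,\x+1}^3-\bphi_{\t,\x}^3)+\cdots.
\end{align*}
Splitting $\mathscr{U}'=\alpha(\cdot)+\mathscr{V}'$, the leading $N^{3/2}$-piece $\lambda\alpha N^{3/2}\grad^{\mathbf{X}}_1\bphi_{\t,\x}$ of the It\^{o} drift cancels against $N^2\alpha$ times the leading Laplacian term, producing $N^2\alpha\Delta\mathbf{Z}^N_{\t,\x}$ on the RHS together with the $\mathscr{V}'$-residue $\lambda N^{3/2}\grad^{\mathbf{X}}_1\mathscr{V}'[\bphi_{\t,\x}]\mathbf{Z}^N$ and a tower of Laplacian tails at scales $N,N^{1/2},1$, and smaller. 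Adding and subtracting $\tfrac12 N\lambda^2\Delta\mathbf{Z}^N$ via its own Taylor expansion completes $\mathscr{T}_N$, and its residues enter only at orders $\leq N^{1/2}$.

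With $\mathscr{T}_N$ isolated, one reads off $\mathfrak{e}$ order by order. At scale $N$: the quadratic Laplacian residue, the It\^{o} correction $\lambda^2 N\mathbf{Z}^N$, and the $\mathbf{F}_2$-drift combine; using $\E^0[\alpha\bphi_0^2]=1$ (obtained by differentiating $\E^\sigma\bphi_0=\sigma$ at $\sigma=0$) and the integration-by-parts identity $\E^0[\bphi_0\mathscr{U}'[\bphi_0]]=1$, one produces the centered quantity $\alpha\bphi^2-1$ of \eqref{eq:msheII(0)b}-\eqref{eq:msheII(0)c} and the renormalized quadratic $\mathscr{Q}$ of \eqref{eq:quadratic_renorm} in \eqref{eq:msheII(2)a}. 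At scale $N^{1/2}$: the cubic residue is centered against $-\tfrac{1}{12}\lambda^4\E^0(\mathscr{U}'[\bphi_0]\bphi_0^3)$ coming from $-\lambda\mathscr{R}_\lambda$ to give \eqref{eq:msheII(0)a}; the $\mathscr{V}'$-residue is symmetrized into \eqref{eq:msheII(1)a}, with leftover $\grad$-of-$\mathbf{Z}^N$ discrepancies absorbed into $\mathfrak{f}_{\mathrm{err}}$. At scale $1$: the quartic tail, together with $+\tfrac16\beta_2\lambda^3$ from $-\lambda\mathscr{R}_\lambda$, feeds \eqref{eq:msheII(0)a1} and \eqref{eq:msheII(0)f}.

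The main obstacle is the support requirement in Definition \ref{definition:admissible}: every local function appearing in $\mathfrak{e}$ must be supported on $\{\w\geq 1\}$ or $\{\w\leq 0\}$. Since $\mathbf{F}_2$ and $\mathbf{F}_{>2}$ are symmetric sums around $0$, they violate this and must be shifted forward by a bounded amount (e.g.\ $2\deg$); the resulting telescoping correction $\mathbf{F}[\tau_\x\bphi_\t]-\mathbf{F}[\tau_{\x+2\deg}\bphi_\t]$ is a finite sum of discrete gradients. Pairing each such gradient with the Cole-Hopf Taylor expansion (which contributes extra factors of $\lambda N^{-1/2}\bphi$) distributes the correction across \eqref{eq:msheII(2)aa}, \eqref{eq:msheII(2)b}, \eqref{eq:msheII(1)b}-\eqref{eq:msheII(1)c}, \eqref{eq:msheII(0)a2}-\eqref{eq:msheII(0)a3}, and \eqref{eq:msheII(0)d}-\eqref{eq:msheII(0)e1}. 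All super-quartic tails of the exponential and all residual double-gradient pieces are absorbed into the $\mathfrak{f}_i$ of \eqref{eq:msheIIfa}-\eqref{eq:msheIIfe}, with the polynomial bound \eqref{eq:msheIIff} immediate from smoothness of $\mathscr{U}$ and the polynomial structure of $\mathbf{F}$. Matching the resulting drift summand-by-summand with \eqref{eq:msheII} concludes the proof.
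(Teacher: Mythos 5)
Your proposal follows the same overall route as the paper: It\^{o} on the Cole--Hopf map, Taylor expansion of the ratios $\mathbf{Z}^{N}_{\t,\x\pm1}/\mathbf{Z}^{N}_{\t,\x}$, splitting $\mathscr{U}'=\alpha(\cdot)+\mathscr{V}'$, order-by-order matching, and finally shifting $\mathbf{F}$ in space so that the error terms have one-sided support. However, the order-$N$ bookkeeping contains a concrete error, and it traces back to how you handle the discrete Leibniz rule.

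Your $N^{3/2}$-scale residue is $\lambda N^{3/2}(\grad^{\mathbf{X}}_{1}\mathscr{V}'[\bphi_{\t,\x}])\mathbf{Z}^N_{\t,\x}$, with the gradient acting on $\mathscr{V}'$ only. To get \eqref{eq:msheII(1)a} you must convert this to the outer-gradient form $\grad^{\mathbf{X}}_{\pm1}(\mathscr{V}'\,\mathbf{Z}^N)$ via Leibniz, and that conversion produces a correction $\approx\frac12\lambda^2 N\bigl(\mathscr{V}'[\bphi_{\x+1}]\bphi_{\x+1}+\mathscr{V}'[\bphi_{\x}]\bphi_{\x}\bigr)\mathbf{Z}^N$ after Taylor-expanding $\grad_{\pm1}\mathbf{Z}^N$. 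You dismiss this as a "leftover $\grad$-of-$\mathbf{Z}^N$ discrepancy absorbed into $\mathfrak{f}_{\mathrm{err}}$," but it is an order-$N$ term, not order $\leq N^{1/2}$, and $\mathfrak{f}_{\mathrm{err}}$ (see \eqref{eq:msheIIfa}--\eqref{eq:msheIIfe}) cannot carry it. Instead this Leibniz correction is precisely what upgrades the $\alpha\bphi^2$-type centering to the $\mathscr{U}'\bphi$-type centering that appears in $\mathscr{Q}$. As written, your order-$N$ error is $\lambda N\bigl[\mathbf{F}_2-\tfrac{\lambda}{2}(\alpha\bphi_{\x+1}^2-1)-\tfrac{\lambda}{2}(\alpha\bphi_{\x}^2-1)\bigr]\mathbf{Z}^N$, whereas the renormalized quadratic $\mathscr{Q}$ of \eqref{eq:quadratic_renorm} subtracts $\frac{\lambda}{2}(\mathscr{U}'[\bphi]\bphi-1)$. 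The difference $\frac{\lambda}{2}\mathscr{V}'[\bphi]\bphi$ satisfies $\partial_\sigma\E^\sigma\mathscr{V}'[\bphi_0]\bphi_0|_{\sigma=0}=-\alpha\,\partial_\sigma\sigma^2_\rho|_{\sigma=0}$, which is generically nonzero; so your order-$N$ local function is only $\mathrm{Jet}_0^\perp$, not $\mathrm{Jet}_2^\perp$, and hence fails the admissibility requirement of Definition \ref{definition:admissible}. The fix is exactly what the paper does (see \eqref{eq:u-grad-1}--\eqref{eq:u-grad-4} and \eqref{eq:zbasic3}): apply the discrete Leibniz rule symmetrically, keep the $N$-scale correction, and observe that it cancels the $\mathscr{V}'\bphi$ mismatch, leaving the $\mathrm{Jet}_2^\perp$ quantity $\mathscr{Q}$. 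The "$(\alpha\bphi^2-1)$" pieces then appear not as order-$N$ multiplicative errors but inside the outer gradients \eqref{eq:msheII(0)b}--\eqref{eq:msheII(0)c}.
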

\begin{proof}
\emph{In this argument only}, we will write $\a\approx\b$ if {\small$\a,\b$} are equal modulo terms of the form \eqref{eq:msheIIfa}-\eqref{eq:msheIIfe}. This will clean up many identities. (Functions that satisfy \eqref{eq:msheIIff} to appear in the argument below will be polynomials in {\small$\bphi_{\w},\mathscr{U}'[\bphi_{\w}]$} for {\small$|\w|\lesssim1$}.) First, by \eqref{eq:ch}, the formula {\small$\mathbf{j}^{N}_{\t,\x}-\mathbf{j}^{N}_{\t,\x-1}=N^{-1/2}{\boldsymbol{\phi}}_{\t,\x}$}, and Taylor expansion, we have
\begin{align}
&\grad^{\mathbf{X}}_{1}\mathbf{Z}^{N}_{\t,\x}=\mathbf{Z}^{N}_{\t,\x+1}-\mathbf{Z}^{N}_{\t,\x}=\left[\exp(\lambda\grad^{\mathbf{X}}_{1}\mathbf{j}^{N}_{\t,\x})-1\right]\mathbf{Z}^{N}_{\t,\x}\\
&\approx(\lambda N^{-\frac12}{\boldsymbol{\phi}}_{\t,\x+1}+\tfrac12\lambda^{2}N^{-1}{\boldsymbol{\phi}}_{\t,\x+1}^{2}+\tfrac16\lambda^{3}N^{-\frac32}{\boldsymbol{\phi}}_{\t,\x+1}^{3}+\tfrac{1}{24}\lambda^{4}N^{-2}{\boldsymbol{\phi}}_{\t,\x+1}^{4})\mathbf{Z}^{N}_{\t,\x}.\label{eq:grad-z-plus}
\end{align}
By the same token, we have
\begin{align}
&\grad^{\mathbf{X}}_{-1}\mathbf{Z}^{N}_{\t,\x}=\mathbf{Z}^{N}_{\t,\x-1}-\mathbf{Z}^{N}_{\t,\x}=\left[\exp(\lambda\grad^{\mathbf{X}}_{-1}\mathbf{j}^{N}_{\t,\x})-1\right]\mathbf{Z}^{N}_{\t,\x}\\
&\approx(-\lambda N^{-\frac12}{\boldsymbol{\phi}}_{\t,\x}+\tfrac12\lambda^{2}N^{-1}{\boldsymbol{\phi}}_{\t,\x}^{2}-\tfrac16\lambda^{3}N^{-\frac32}{\boldsymbol{\phi}}_{\t,\x}^{3}+\tfrac{1}{24}\lambda^{4}N^{-2}{\boldsymbol{\phi}}_{\t,\x}^{4})\mathbf{Z}^{N}_{\t,\x}.\label{eq:grad-z-minus}
\end{align}
Now, by \eqref{eq:grad-z-minus}, we have the following formula for the Laplacian {\small$\Delta\mathbf{Z}^{N}:=-\grad^{\mathbf{X}}_{1}\grad^{\mathbf{X}}_{-1}\mathbf{Z}^{N}$}:
\begin{align}
N^{2}\Delta\mathbf{Z}^{N}_{\t,\x}&\approx\lambda N^{\frac32}\grad^{\mathbf{X}}_{1}\left({\boldsymbol{\phi}}_{\t,\x}\mathbf{Z}^{N}_{\t,\x}\right)-\tfrac12\lambda^{2}N\grad^{\mathbf{X}}_{1}\left({\boldsymbol{\phi}}_{\t,\x}^{2}\mathbf{Z}^{N}_{\t,\x}\right).\label{eq:lap-z-1}
\end{align}
(Indeed, when we take gradients of the third and fourth terms in \eqref{eq:grad-z-minus}, we obtain terms of the form given by the \abbr{RHS} of \eqref{eq:msheIIfa} since the corresponding power of {\small$N$} is {\small$\mathrm{O}(N^{1/2})$}.) Since {\small$\grad^{\mathbf{X}}_{\ell}$} operators all commute with each other, we also have {\small$\Delta\mathbf{Z}^{N}:=-\grad^{\mathbf{X}}_{-1}\grad^{\mathbf{X}}_{1}\mathbf{Z}^{N}$}; thus, by \eqref{eq:grad-z-plus}, we get
\begin{align}
N^{}\Delta\mathbf{Z}^{N}_{\t,\x}&\approx-\lambda N^{\frac32}\grad^{\mathbf{X}}_{-1}\left({\boldsymbol{\phi}}_{\t,\x+1}\mathbf{Z}^{N}_{\t,\x}\right)-\tfrac12\lambda^{2} N\grad^{\mathbf{X}}_{-1}\left({\boldsymbol{\phi}}_{\t,\x+1}^{2}\mathbf{Z}^{N}_{\t,\x}\right).\label{eq:lap-z-2}
\end{align}
By averaging the two expressions \eqref{eq:lap-z-1} and \eqref{eq:lap-z-2}, we obtain
\begin{align*}
\alpha N^{2}\Delta\mathbf{Z}^{N}_{\t,\x}&\approx\tfrac12\lambda \alpha N^{\frac32}\grad^{\mathbf{X}}_{1}\left({\boldsymbol{\phi}}_{\t,\x}\mathbf{Z}^{N}_{\t,\x}\right)-\tfrac12\lambda \alpha N^{\frac32}\grad^{\mathbf{X}}_{-1}\left({\boldsymbol{\phi}}_{\t,\x+1}\mathbf{Z}^{N}_{\t,\x}\right)\\
&-\tfrac14\lambda^{2}N\alpha \grad^{\mathbf{X}}_{1}\left({\boldsymbol{\phi}}^{2}_{\t,\x}\mathbf{Z}^{N}_{\t,\x}\right)-\tfrac14\lambda^{2}\alpha N\grad^{\mathbf{X}}_{-1}\left({\boldsymbol{\phi}}^{2}_{\t,\x+1}\mathbf{Z}^{N}_{\t,\x}\right).
\end{align*}
We now write {\small$\alpha\bphi_{\t,\cdot}^{2}=1+\alpha\bphi_{\t,\cdot}^{2}-1$} in the second line. Since {\small$\Delta=\grad^{\mathbf{X}}_{1}+\grad^{\mathbf{X}}_{-1}$}, this gives
\begin{align}
\mathscr{T}_{N}\mathbf{Z}^{N}_{\t,\x}&:=\alpha N^{2}\Delta\mathbf{Z}^{N}_{\t,\x}+\tfrac14\lambda^{2}N\Delta\mathbf{Z}^{N}_{\t,\x}\nonumber\\
&\approx\tfrac12\lambda \alpha N^{\frac32}\grad^{\mathbf{X}}_{1}\left({\boldsymbol{\phi}}_{\t,\x}\mathbf{Z}^{N}_{\t,\x}\right)-\tfrac12\lambda \alpha N^{\frac32}\grad^{\mathbf{X}}_{-1}\left({\boldsymbol{\phi}}_{\t,\x+1}\mathbf{Z}^{N}_{\t,\x}\right)\nonumber\\
&-\tfrac14\lambda^{2}N\grad^{\mathbf{X}}_{1}\left([\alpha\bphi_{\t,\x}^{2}-1]\mathbf{Z}^{N}_{\t,\x}\right)-\tfrac14\lambda^{2} N\grad^{\mathbf{X}}_{-1}\left([\alpha\bphi_{\t,\x+1}^{2}-1]\mathbf{Z}^{N}_{\t,\x}\right).\label{eq:lap-z}
\end{align}
We now compute the dynamics of {\small$\mathbf{Z}^{N}$}. By \eqref{eq:ch}, \eqref{eq:curr}, and the It\^{o} formula, we have 
\begin{align}
&\d\mathbf{Z}^{N}_{\t,\x}=\lambda\mathbf{Z}^{N}_{\t,\x}\d\mathbf{j}^{N}_{\t,\x}+\tfrac12\lambda^{2}\mathbf{Z}^{N}_{\t,\x}\d[\mathbf{j}^{N}_{\t,\x},\mathbf{j}^{N}_{\t,\x}]-\lambda\mathscr{R}_{\lambda}\mathbf{Z}^{N}_{\t,\x}\d\t\nonumber\\
&=\lambda N^{\frac32}\left(\grad^{\mathbf{X}}_{1}\mathscr{U}'[{\boldsymbol{\phi}}_{\t,\x}]\right)\mathbf{Z}^{N}_{\t,\x}\d\t+(\lambda N\mathbf{F}[\tau_{\x}{\boldsymbol{\phi}}_{\t}]+\lambda^{2}N-\lambda\mathscr{R}_{\lambda})\mathbf{Z}^{N}_{\t,\x}\d\t+\sqrt{2}\lambda N^{\frac12}\mathbf{Z}^{N}_{\t,\x}\d\mathbf{b}_{\t,\x}.\label{eq:mshe-ito}
\end{align}
Let us unfold the first term in the second line \eqref{eq:mshe-ito}. For this, we use a discrete Leibniz rule: for any {\small$\mathsf{f},\mathsf{g}:\Z\to\R$}, we have {\small$\mathsf{f}_{\x}\grad^{\mathbf{X}}_{\mathfrak{l}}\mathsf{g}_{\x}=\grad^{\mathbf{X}}_{1}(\mathsf{f}\mathsf{g})_{\x}-\mathsf{g}_{\x+\mathfrak{l}}\grad^{\mathbf{X}}_{\mathfrak{l}}\mathsf{f}_{\x}$}. This gives
\begin{align*}
\lambda N^{\frac32}\left(\grad^{\mathbf{X}}_{1}\mathscr{U}'[{\boldsymbol{\phi}}_{\t,\x}]\right)\mathbf{Z}^{N}_{\t,\x}&=\lambda N^{\frac32}\grad^{\mathbf{X}}_{1}\left(\mathscr{U}'[{\boldsymbol{\phi}}_{\t,\x}]\mathbf{Z}^{N}_{\t,\x}\right)-\lambda N^{\frac32}\mathscr{U}'[{\boldsymbol{\phi}}_{\t,\x+1}]\grad^{\mathbf{X}}_{1}\mathbf{Z}^{N}_{\t,\x}\\
&\approx\lambda N^{\frac32}\grad^{\mathbf{X}}_{1}\left(\mathscr{U}'[{\boldsymbol{\phi}}_{\t,\x}]\mathbf{Z}^{N}_{\t,\x}\right)-\lambda^{2}N\mathscr{U}'[{\boldsymbol{\phi}}_{\t,\x+1}]{\boldsymbol{\phi}}_{\t,\x+1}\mathbf{Z}^{N}_{\t,\x}\\
&-\tfrac12\lambda^{3}N^{\frac12}\mathscr{U}'[{\boldsymbol{\phi}}_{\t,\x+1}]{\boldsymbol{\phi}}_{\t,\x+1}^{2}\mathbf{Z}^{N}_{\t,\x}-\tfrac16\lambda^{4}\mathscr{U}'[{\boldsymbol{\phi}}_{\t,\x+1}]{\boldsymbol{\phi}}_{\t,\x+1}^{3}\mathbf{Z}^{N}_{\t,\x},
\end{align*}
where the second step follows by \eqref{eq:grad-z-plus}. Next, we can also write {\small$\grad^{\mathbf{X}}_{1}(\mathscr{U}'[{\boldsymbol{\phi}}_{\t,\x}])=-\grad^{\mathbf{X}}_{-1}(\mathscr{U}'[{\boldsymbol{\phi}}_{\t,\x+1}])$}. Thus, by the same reasoning applied to give us the previous display (but using \eqref{eq:grad-z-minus} instead of \eqref{eq:grad-z-plus}), we get
\begin{align*}
\lambda N^{\frac32}\left(\grad^{\mathbf{X}}_{1}\mathscr{U}'[{\boldsymbol{\phi}}_{\t,\x}]\right)\mathbf{Z}^{N}_{\t,\x}&=-\lambda N^{\frac32}\grad^{\mathbf{X}}_{-1}\left(\mathscr{U}'[{\boldsymbol{\phi}}_{\t,\x+1}]\mathbf{Z}^{N}_{\t,\x}\right)+\lambda N^{\frac32}\mathscr{U}'[{\boldsymbol{\phi}}_{\t,\x}]\grad^{\mathbf{X}}_{-1}\mathbf{Z}^{N}_{\t,\x}\\
&\approx-\lambda N^{\frac32}\grad^{\mathbf{X}}_{-1}\left(\mathscr{U}'[{\boldsymbol{\phi}}_{\t,\x+1}]\mathbf{Z}^{N}_{\t,\x}\right)-\lambda^{2}N\mathscr{U}'[{\boldsymbol{\phi}}_{\t,\x}]{\boldsymbol{\phi}}_{\t,\x}\mathbf{Z}^{N}_{\t,\x}\\
&+\tfrac12\lambda^{3}N^{\frac12}\mathscr{U}'[{\boldsymbol{\phi}}_{\t,\x}]{\boldsymbol{\phi}}_{\t,\x}^{2}\mathbf{Z}^{N}_{\t,\x}-\tfrac16\lambda^{4}\mathscr{U}'[{\boldsymbol{\phi}}_{\t,\x}]{\boldsymbol{\phi}}_{\t,\x}^{3}\mathbf{Z}^{N}_{\t,\x}.
\end{align*}
By averaging the previous two displays, we obtain
\begin{align}
\lambda N^{\frac32}\left(\grad^{\mathbf{X}}_{1}\mathscr{U}'[{\boldsymbol{\phi}}_{\t,\x}]\right)\mathbf{Z}^{N}_{\t,\x}&\approx\tfrac12\lambda N^{\frac32}\grad^{\mathbf{X}}_{1}\left(\mathscr{U}'[{\boldsymbol{\phi}}_{\t,\x}]\mathbf{Z}^{N}_{\t,\x}\right)-\tfrac12\lambda N^{\frac32}\grad^{\mathbf{X}}_{-1}\left(\mathscr{U}'[{\boldsymbol{\phi}}_{\t,\x+1}]\mathbf{Z}^{N}_{\t,\x}\right)\label{eq:u-grad-1}\\
&-\tfrac12\lambda^{2}N\mathscr{U}'[{\boldsymbol{\phi}}_{\t,\x+1}]{\boldsymbol{\phi}}_{\t,\x+1}\mathbf{Z}^{N}_{\t,\x}-\tfrac12\lambda^{2}N\mathscr{U}'[{\boldsymbol{\phi}}_{\t,\x}]{\boldsymbol{\phi}}_{\t,\x}\mathbf{Z}^{N}_{\t,\x}\label{eq:u-grad-2}\\
&-\tfrac14\lambda^{3}N^{\frac12}\left(\grad^{\mathbf{X}}_{1}\{\mathscr{U}'[{\boldsymbol{\phi}}_{\t,\x}]{\boldsymbol{\phi}}_{\t,\x}^{2}\}\right)\mathbf{Z}^{N}_{\t,\x}-\tfrac16\lambda^{4}\mathscr{U}'[{\boldsymbol{\phi}}_{\t,\x+1}]{\boldsymbol{\phi}}_{\t,\x+1}^{3}\mathbf{Z}^{N}_{\t,\x}\label{eq:u-grad-3}\\
&+\tfrac{1}{12}\lambda^{4}\left(\grad^{\mathbf{X}}_{1}\{\mathscr{U}'[{\boldsymbol{\phi}}_{\t,\x}]{\boldsymbol{\phi}}_{\t,\x}^{3}\}\right)\mathbf{Z}^{N}_{\t,\x}.\label{eq:u-grad-4}
\end{align}
We must now further expand the \abbr{RHS} of the {\color{black}display above}. We start with the \abbr{RHS} of \eqref{eq:u-grad-1}. We subtract {\small$\alpha\bphi_{\t,\x}$} and {\small$\alpha\bphi_{\t,\x+1}$} from {\small$\mathscr{U}'[\bphi_{\t,\x}]$} and {\small$\mathscr{U}'[\bphi_{\t,\x+1}]$} inside the gradients, respectively. This gives
\begin{align*}
{\textstyle\mathsf{RHS}}\eqref{eq:u-grad-1}&=\tfrac12\lambda N^{\frac32}\alpha\grad^{\mathbf{X}}_{1}(\bphi_{\t,\x}\mathbf{Z}^{N}_{\t,\x})-\tfrac12\lambda N^{\frac32}\alpha\grad^{\mathbf{X}}_{-1}(\bphi_{\t,\x+1}\mathbf{Z}^{N}_{\t,\x})\\
&+\tfrac12\lambda N^{\frac32}\grad^{\mathbf{X}}_{1}(\mathscr{V}'[\bphi_{\t,\x}]\mathbf{Z}^{N}_{\t,\x})-\tfrac12\lambda N^{\frac32}\grad^{\mathbf{X}}_{-1}(\mathscr{V}'[\bphi_{\t,\x+1}]\mathbf{Z}^{N}_{\t,\x})\\
&\approx\mathscr{T}_{N}\mathbf{Z}^{N}_{\t,\x}+\tfrac12\lambda N^{\frac32}\grad^{\mathbf{X}}_{1}(\mathscr{V}'[\bphi_{\t,\x}]\mathbf{Z}^{N}_{\t,\x})-\tfrac12\lambda N^{\frac32}\grad^{\mathbf{X}}_{-1}(\mathscr{V}'[\bphi_{\t,\x+1}]\mathbf{Z}^{N}_{\t,\x})\\
&+\tfrac12\lambda^{2}N\grad^{\mathbf{X}}_{1}\left([\alpha\bphi_{\t,\x}^{2}-1]\mathbf{Z}^{N}_{\t,\x}\right)+\tfrac12\lambda^{2} N\grad^{\mathbf{X}}_{-1}\left([\alpha\bphi_{\t,\x+1}^{2}-1]\mathbf{Z}^{N}_{\t,\x}\right),
\end{align*}
where we recall {\small$\mathscr{V}[\bphi_{0}]:=\mathscr{U}[\bphi_{0}]-\alpha\bphi_{0}^{2}$}, and where the second identity follows by \eqref{eq:lap-z} applied to the \abbr{RHS} of the first line. We leave \eqref{eq:u-grad-2} alone. We move to the first term in \eqref{eq:u-grad-3}. By another discrete Leibniz rule,
\begin{align}
-\tfrac14\lambda^{3}N^{\frac12}\left(\grad^{\mathbf{X}}_{1}\{\mathscr{U}'[{\boldsymbol{\phi}}_{\t,\x}]{\boldsymbol{\phi}}_{\t,\x}^{2}\}\right)\mathbf{Z}^{N}_{\t,\x}\d\t&=-\tfrac14\lambda^{3}N^{\frac12}\grad^{\mathbf{X}}_{1}\left(\{\mathscr{U}'[{\boldsymbol{\phi}}_{\t,\x}]{\boldsymbol{\phi}}_{\t,\x}^{2}\}\mathbf{Z}^{N}_{\t,\x}\right)\d\t\label{eq:z-merge-4-1a}\\
&\quad+\tfrac14\lambda^{3}N^{\frac12}\mathscr{U}'[{\boldsymbol{\phi}}_{\t,\x+1}]{\boldsymbol{\phi}}_{\t,\x+1}^{2}\grad^{\mathbf{X}}_{1}\mathbf{Z}^{N}_{\t,\x}\d\t\label{eq:z-merge-4-1b}\\
&\approx\tfrac14\lambda^{4}\mathscr{U}'[{\boldsymbol{\phi}}_{\t,\x+1}]{\boldsymbol{\phi}}_{\t,\x+1}^{3}\mathbf{Z}^{N}_{\t,\x}\d\t,\label{eq:z-merge-4-1d}
\end{align}
where the second line follows by \eqref{eq:grad-z-plus} and by noting that the \abbr{RHS} of the first line above has the form \eqref{eq:msheIIfa}. If we combine the previous three displays, then we arrive at 
\begin{align}
\lambda  N^{\frac32}\Big(\grad^{\mathbf{X}}_{1}\mathscr{U}'[\bphi_{\t,\x}]\Big)\mathbf{Z}^{N}_{\t,\x}&\approx\mathscr{T}_{N}\mathbf{Z}^{N}_{\t,\x}+\tfrac12\lambda N^{\frac32}\grad^{\mathbf{X}}_{1}(\mathscr{V}'[\bphi_{\t,\x}]\mathbf{Z}^{N}_{\t,\x})-\tfrac12\lambda N^{\frac32}\grad^{\mathbf{X}}_{-1}(\mathscr{V}'[\bphi_{\t,\x+1}]\mathbf{Z}^{N}_{\t,\x})\\
&+\tfrac12\lambda^{2}N\grad^{\mathbf{X}}_{1}\left([\alpha\bphi_{\t,\x}^{2}-1]\mathbf{Z}^{N}_{\t,\x}\right)+\tfrac12\lambda^{2} N\grad^{\mathbf{X}}_{-1}\left([\alpha\bphi_{\t,\x+1}^{2}-1]\mathbf{Z}^{N}_{\t,\x}\right)\\
&-\tfrac12\lambda^{2}N\mathscr{U}'[{\boldsymbol{\phi}}_{\t,\x+1}]{\boldsymbol{\phi}}_{\t,\x+1}\mathbf{Z}^{N}_{\t,\x}-\tfrac12\lambda^{2}N\mathscr{U}'[{\boldsymbol{\phi}}_{\t,\x}]{\boldsymbol{\phi}}_{\t,\x}\mathbf{Z}^{N}_{\t,\x}\\
&+\tfrac{1}{12}\lambda^{4}\mathscr{U}'[\bphi_{\t,\x+1}]\bphi_{\t,\x+1}^{3}\mathbf{Z}^{N}_{\t,\x}+\tfrac{1}{12}\lambda^{4}\left(\grad^{\mathbf{X}}_{1}\{\mathscr{U}'[{\boldsymbol{\phi}}_{\t,\x}]{\boldsymbol{\phi}}_{\t,\x}^{3}\}\right)\mathbf{Z}^{N}_{\t,\x}.
\end{align}
We now plug the previous display into the first term in \eqref{eq:mshe-ito}. When we do so, we split {\small$\mathbf{F}=\mathbf{F}_{2}+\mathbf{F}_{>2}$}, and we combine the third line above with {\small$\lambda N\mathbf{F}_{2}[\tau_{\x}\bphi_{\t}]\mathbf{Z}^{N}_{\t,\x}$} and {\small$\lambda^{2}N\mathbf{Z}^{N}_{\t,\x}$}. Recall {\small$\lambda\mathscr{R}_{\lambda}:=(1/12)\lambda^{4}\E^{0}(\mathscr{U}'[\bphi_{0}]\bphi_{0}^{3}){-\tfrac16\beta_{2}\lambda^{3}}$} from \eqref{eq:renorm}; we combine the first term in this with the first term on the fourth line above. Ultimately, we get
\begin{align}
\d\mathbf{Z}^{N}_{\t,\x}&\approx\mathscr{T}_{N}\mathbf{Z}^{N}_{\t,\x}\d\t+\sqrt{2}\lambda N^{\frac12}\mathbf{Z}^{N}_{\t,\x}\d\mathbf{b}_{\t,\x}{+\tfrac16\beta_{2}\lambda^{3}}\label{eq:zbasic1}\\
&+\tfrac12\lambda N^{\frac32}\grad^{\mathbf{X}}_{1}(\mathscr{V}'[\bphi_{\t,\x}]\mathbf{Z}^{N}_{\t,\x})\d\t-\tfrac12\lambda N^{\frac32}\grad^{\mathbf{X}}_{-1}(\mathscr{V}'[\bphi_{\t,\x+1}]\mathbf{Z}^{N}_{\t,\x})\d\t\label{eq:zbasic2}\\
&+\lambda N(\mathbf{F}_{2}[\tau_{\x}\bphi_{\t}]-\tfrac12\lambda\{\mathscr{U}'[\bphi_{\t,\x+1}]\bphi_{\t,\x+1}-1\}-\tfrac12\lambda\{\mathscr{U}'[\bphi_{\t,\x}]\bphi_{\t,\x}-1\})\mathbf{Z}^{N}_{\t,\x}\d\t\label{eq:zbasic3}\\
&+\lambda N\mathbf{F}_{>2}[\tau_{\x}\bphi_{\t}]\mathbf{Z}^{N}_{\t,\x}\d\t\label{eq:zbasic3b}\\
&+\tfrac14\lambda^{2}N\grad^{\mathbf{X}}_{1}\left([\alpha\bphi_{\t,\x}^{2}-1]\mathbf{Z}^{N}_{\t,\x}\right)\d\t+\tfrac14\lambda^{2} N\grad^{\mathbf{X}}_{-1}\left([\alpha\bphi_{\t,\x+1}^{2}-1]\mathbf{Z}^{N}_{\t,\x}\right)\d\t\label{eq:zbasic4}\\
&+\tfrac{1}{12}\lambda^{4}(\mathscr{U}'[\bphi_{\t,\x+1}]\bphi_{\t,\x+1}^{3}-\E^{0}\{\mathscr{U}'[\bphi_{0}]\bphi_{0}^{3}\})\mathbf{Z}^{N}_{\t,\x}\d\t\label{eq:zbasic5}\\
&+\tfrac{1}{12}\lambda^{4}\left(\grad^{\mathbf{X}}_{1}\{\mathscr{U}'[{\boldsymbol{\phi}}_{\t,\x}]{\boldsymbol{\phi}}_{\t,\x}^{3}\}\right)\mathbf{Z}^{N}_{\t,\x}\d\t.\label{eq:zbasic6}
\end{align}
Now, recall {\small$\mathscr{Q}$} from \eqref{eq:quadratic_renorm} and {\small$\mathbf{F}$} from \eqref{eq:nonlinearity}; we have
\begin{align}
\eqref{eq:zbasic3}&=\lambda N\mathscr{Q}[\tau_{\x+1}\bphi_{\t}]\mathbf{Z}^{N}_{\t,\x}\d\t+\lambda N\mathscr{Q}[\tau_{\x-1}\bphi_{\t}]\mathbf{Z}^{N}_{\t,\x}\d\t\label{eq:zshift31a}\\
&-\tfrac16\beta_{2}\lambda N\Big(\Delta\{\mathscr{U}'[\bphi_{\t,\x}]\mathscr{U}'[\bphi_{\t,\x+1}]\}\Big)\cdot\mathbf{Z}^{N}_{\t,\x}\d\t.\label{eq:zshift31b}
\end{align}
For the rest of this proof, the following calculation, whose last line uses \eqref{eq:grad-z-plus}, will be important:
\begin{align}
\mathsf{f}_{\x}\mathbf{Z}^{N}_{\t,\x}&=\mathsf{f}_{\x+1}\mathbf{Z}^{N}_{\t,\x}-(\grad^{\mathbf{X}}_{1}\mathsf{f}_{\x})\mathbf{Z}^{N}_{\t,\x}\nonumber\\
&=\mathsf{f}_{\x+1}\mathbf{Z}^{N}_{\t,\x}-\grad^{\mathbf{X}}_{1}(\mathsf{f}_{\x}\mathbf{Z}^{N}_{\t,\x})+\mathsf{f}_{\x+1}\grad^{\mathbf{X}}_{1}\mathbf{Z}^{N}_{\t,\x}\nonumber\\
&\approx\mathsf{f}_{\x+1}\mathbf{Z}^{N}_{\t,\x}-\grad^{\mathbf{X}}_{1}(\mathsf{f}_{\x}\mathbf{Z}^{N}_{\t,\x})+\mathsf{f}_{\x+1}(\lambda N^{-\frac12}\bphi_{\t,\x+1}+\tfrac12\lambda^{2}N^{-1}\bphi_{\t,\x+1}^{2})\mathbf{Z}^{N}_{\t,\x}.\label{eq:zshift}
\end{align}
(In \eqref{eq:zshift}, we will always take {\small$\mathsf{f}$} to be a polynomial in {\small$\mathscr{U}'[\bphi_{\w}],\bphi_{\w}$} for {\small$|\w|\lesssim1$} times a factor {\small$\mathrm{O}(N)$}. So, the order {\small$N^{-3/2}$} term that is missing from the last line of \eqref{eq:zshift} is actually absorbed by the {\small$\approx$} notation.) We will now apply \eqref{eq:zshift} to the term \eqref{eq:zshift31b}. This gives us
\begin{align*}
\eqref{eq:zshift31b}&\approx-\tfrac16\beta_{2}\lambda N\Big(\Delta\{\mathscr{U}'[\bphi_{\t,\x+1}]\mathscr{U}'[\bphi_{\t,\x+2}]\}\Big)\mathbf{Z}^{N}_{\t,\x}\d\t\\
&+\tfrac16\beta_{2}\lambda N\grad^{\mathbf{X}}_{1}\Big(\Delta\{\mathscr{U}'[\bphi_{\t,\x}]\mathscr{U}'[\bphi_{\t,\x+1}]\}\mathbf{Z}^{N}_{\t,\x}\Big)\d\t\\
&-\tfrac16\beta_{2}\lambda^{2} N^{\frac12}\Big(\Delta\{\mathscr{U}'[\bphi_{\t,\x+1}]\mathscr{U}'[\bphi_{\t,\x+2}]\}\Big)\bphi_{\t,\x+1}\mathbf{Z}^{N}_{\t,\x}\d\t\\
&-\tfrac{1}{12}\beta_{2}\lambda^{3}\Big(\Delta\{\mathscr{U}'[\bphi_{\t,\x+1}]\mathscr{U}'[\bphi_{\t,\x+2}]\}\Big)\bphi_{\t,\x+1}^{2}\mathbf{Z}^{N}_{\t,\x}\d\t.
\end{align*}
We again apply \eqref{eq:zshift} to the \abbr{RHS} of the first line above:
\begin{align*}
-\tfrac16\beta_{2}\lambda N\Big(\Delta\{\mathscr{U}'[\bphi_{\t,\x+1}]\mathscr{U}'[\bphi_{\t,\x+2}]\}\Big)\mathbf{Z}^{N}_{\t,\x}\d\t&\approx-\tfrac16\beta_{2}\lambda N\Big(\Delta\{\mathscr{U}'[\bphi_{\t,\x+2}]\mathscr{U}'[\bphi_{\t,\x+3}]\}\Big)\mathbf{Z}^{N}_{\t,\x}\d\t\\
&+\tfrac16\beta_{2}\lambda N\grad^{\mathbf{X}}_{1}\Big(\Delta\{\mathscr{U}'[\bphi_{\t,\x+1}]\mathscr{U}'[\bphi_{\t,\x+2}]\}\mathbf{Z}^{N}_{\t,\x}\Big)\d\t\\
&-\tfrac16\beta_{2}\lambda^{2} N^{\frac12}\Big(\Delta\{\mathscr{U}'[\bphi_{\t,\x+2}]\mathscr{U}'[\bphi_{\t,\x+3}]\}\Big)\bphi_{\t,\x+1}\mathbf{Z}^{N}_{\t,\x}\d\t\\
&-\tfrac{1}{12}\beta_{2}\lambda^{3}\Big(\Delta\{\mathscr{U}'[\bphi_{\t,\x+2}]\mathscr{U}'[\bphi_{\t,\x+3}]\}\Big)\bphi_{\t,\x+1}^{2}\mathbf{Z}^{N}_{\t,\x}\d\t.
\end{align*}
We now combine the previous two displays. When doing so, the order {\small$N^{1/2}$} terms combine to {\small$-\frac16\beta_{2}\lambda^{2}\mathscr{G}[\tau_{\x-1}\bphi_{\t}]$}; see \eqref{eq:funnygradient}. Moreover, by using \eqref{eq:zshift}, we can shift the order {\small$1$} terms in space modulo terms of the form \eqref{eq:msheIIfa}-\eqref{eq:msheIIfe}. The same reasoning applies to terms with {\small$N\grad^{\mathbf{X}}_{1}$}. In particular, we ultimately arrive at 
\begin{align*}
\eqref{eq:zshift31b}&\approx-\tfrac16\beta_{2}\lambda N\Big(\Delta\{\mathscr{U}'[\bphi_{\t,\x+2}]\mathscr{U}'[\bphi_{\t,\x+3}]\}\Big)\mathbf{Z}^{N}_{\t,\x}\d\t\\
&+\tfrac13\beta_{2}\lambda N\grad^{\mathbf{X}}_{1}\Big(\Delta\{\mathscr{U}'[\bphi_{\t,\x+2}]\mathscr{U}'[\bphi_{\t,\x+3}]\}\mathbf{Z}^{N}_{\t,\x}\Big)\d\t\\
&-\tfrac16\beta_{2}\lambda^{2}N^{\frac12}\mathscr{G}[\tau_{\x-1}\bphi_{\t}]\mathbf{Z}^{N}_{\t,\x}\d\t\\
&-\tfrac{1}{12}\beta_{2}\lambda^{3}\Big(\Delta\{\mathscr{U}'[\bphi_{\t,\x+2}]\mathscr{U}'[\bphi_{\t,\x+3}]\}\Big)\bphi_{\t,\x+2}^{2}\mathbf{Z}^{N}_{\t,\x}\d\t\\
&-\tfrac{1}{12}\beta_{2}\lambda^{3}\Big(\Delta\{\mathscr{U}'[\bphi_{\t,\x+2}]\mathscr{U}'[\bphi_{\t,\x+3}]\}\Big)\bphi_{\t,\x+1}^{2}\mathbf{Z}^{N}_{\t,\x}\d\t.
\end{align*}
We now apply \eqref{eq:zshift} to third line of the previous display. We get
\begin{align*}
-\tfrac16\beta_{2}\lambda^{2}N^{\frac12}\mathscr{G}[\tau_{\x-1}\bphi_{\t}]\mathbf{Z}^{N}_{\t,\x}\d\t&\approx-\tfrac16\beta_{2}\lambda^{2}N^{\frac12}\mathscr{G}[\tau_{\x}\bphi_{\t}]\mathbf{Z}^{N}_{\t,\x}\d\t-\tfrac16\beta_{2}\lambda^{3}\mathscr{G}[\tau_{\x}\bphi_{\t}]\bphi_{\t,\x+1}\mathbf{Z}^{N}_{\t,\x}\d\t.
\end{align*}
In particular, if we combine the previous two displays, then we have
\begin{align}
\eqref{eq:zshift31b}&\approx-\tfrac16\beta_{2}\lambda N\Big(\Delta\{\mathscr{U}'[\bphi_{\t,\x+2}]\mathscr{U}'[\bphi_{\t,\x+3}]\}\Big)\mathbf{Z}^{N}_{\t,\x}\d\t\label{eq:zshiftapply1}\\
&+\tfrac13\beta_{2}\lambda N\grad^{\mathbf{X}}_{1}\Big(\Delta\{\mathscr{U}'[\bphi_{\t,\x+2}]\mathscr{U}'[\bphi_{\t,\x+3}]\}\mathbf{Z}^{N}_{\t,\x}\Big)\d\t\label{eq:zshiftapply2}\\
&-\tfrac16\beta_{2}\lambda^{2}N^{\frac12}\mathscr{G}[\tau_{\x}\bphi_{\t}]\mathbf{Z}^{N}_{\t,\x}\d\t\label{eq:zshiftapply3}\\
&-\tfrac16\beta_{2}\lambda^{3}\mathscr{G}[\tau_{\x}\bphi_{\t}]\bphi_{\t,\x+1}\mathbf{Z}^{N}_{\t,\x}\d\t\label{eq:zshiftapply4}\\
&-\tfrac{1}{12}\beta_{2}\lambda^{3}\Big(\Delta\{\mathscr{U}'[\bphi_{\t,\x+2}]\mathscr{U}'[\bphi_{\t,\x+3}]\}\Big)\bphi_{\t,\x+2}^{2}\mathbf{Z}^{N}_{\t,\x}\d\t\label{eq:zshiftapply5}\\
&-\tfrac{1}{12}\beta_{2}\lambda^{3}\Big(\Delta\{\mathscr{U}'[\bphi_{\t,\x+2}]\mathscr{U}'[\bphi_{\t,\x+3}]\}\Big)\bphi_{\t,\x+1}^{2}\mathbf{Z}^{N}_{\t,\x}\d\t.\label{eq:zshiftapply6}
\end{align}
We now plug \eqref{eq:zshiftapply1}-\eqref{eq:zshiftapply6} into \eqref{eq:zshift31b}, which we then plug into \eqref{eq:zbasic1}-\eqref{eq:zbasic6}. The final result is the following display, which we shortly match to the desired equation \eqref{eq:msheI}-\eqref{eq:msheII} (especially given how complicated it is):
\begin{align}
\d\mathbf{Z}^{N}_{\t,\x}&\approx\mathscr{T}_{N}\mathbf{Z}^{N}_{\t,\x}\d\t+\sqrt{2}\lambda N^{\frac12}\mathbf{Z}^{N}_{\t,\x}\d\mathbf{b}_{\t,\x}{+\tfrac16\beta_{2}\lambda^{3}}\label{eq:zfinal1}\\
&+\tfrac12\lambda N^{\frac32}\grad^{\mathbf{X}}_{1}(\mathscr{V}'[\bphi_{\t,\x}]\mathbf{Z}^{N}_{\t,\x})\d\t-\tfrac12\lambda N^{\frac32}\grad^{\mathbf{X}}_{-1}(\mathscr{V}'[\bphi_{\t,\x+1}]\mathbf{Z}^{N}_{\t,\x})\d\t\label{eq:zfinal2}\\
&+\lambda N\mathscr{Q}[\tau_{\x+1}\bphi_{\t}]\mathbf{Z}^{N}_{\t,\x}\d\t+\lambda N\mathscr{Q}[\tau_{\x-1}\bphi_{\t}]\mathbf{Z}^{N}_{\t,\x}\d\t\label{eq:zfinal3}\\
&+\lambda N\mathbf{F}_{>2}[\tau_{\x}\bphi_{\t}]\mathbf{Z}^{N}_{\t,\x}\d\t\label{eq:zfinal3a}\\
&-\tfrac16\beta_{2}\lambda N\Big(\Delta\{\mathscr{U}'[\bphi_{\t,\x+2}]\mathscr{U}'[\bphi_{\t,\x+3}]\}\Big)\mathbf{Z}^{N}_{\t,\x}\d\t\label{eq:zfinal3b}\\
&+\tfrac13\beta_{2}\lambda N\grad^{\mathbf{X}}_{1}\Big(\Delta\{\mathscr{U}'[\bphi_{\t,\x+2}]\mathscr{U}'[\bphi_{\t,\x+3}]\}\mathbf{Z}^{N}_{\t,\x}\Big)\d\t\label{eq:zfinal3c}\\
&-\tfrac16\beta_{2}\lambda^{2} N^{\frac12}\mathscr{G}[\tau_{\x}\bphi_{\t}]\mathbf{Z}^{N}_{\t,\x}\d\t\label{eq:zfinal3d}\\
&-\tfrac16\beta_{2}\lambda^{3}\mathscr{G}[\tau_{\x}\bphi_{\t}]\bphi_{\t,\x+1}\mathbf{Z}^{N}_{\t,\x}\d\t\label{eq:zfinal3d1}\\
&-\tfrac{1}{12}\beta_{2}\lambda^{3}\Big(\Delta\{\mathscr{U}'[\bphi_{\t,\x+2}]\mathscr{U}'[\bphi_{\t,\x+3}]\}\Big)\bphi_{\t,\x+2}^{2}\mathbf{Z}^{N}_{\t,\x}\d\t\label{eq:zfinal3e}\\
&-\tfrac{1}{12}\beta_{2}\lambda^{3}\Big(\Delta\{\mathscr{U}'[\bphi_{\t,\x+2}]\mathscr{U}'[\bphi_{\t,\x+3}]\}\Big)\bphi_{\t,\x+1}^{2}\mathbf{Z}^{N}_{\t,\x}\d\t\label{eq:zfinal3e1}\\
&+\tfrac14\lambda^{2}N\grad^{\mathbf{X}}_{1}\left([\alpha\bphi_{\t,\x}^{2}-1]\mathbf{Z}^{N}_{\t,\x}\right)\d\t+\tfrac14\lambda^{2} N\grad^{\mathbf{X}}_{-1}\left([\alpha\bphi_{\t,\x+1}^{2}-1]\mathbf{Z}^{N}_{\t,\x}\right)\d\t\label{eq:zfinal4}\\
&+\tfrac{1}{12}\lambda^{4}(\mathscr{U}'[\bphi_{\t,\x+1}]\bphi_{\t,\x+1}^{3}-\E^{0}\{\mathscr{U}'[\bphi_{0}]\bphi_{0}^{3}\})\mathbf{Z}^{N}_{\t,\x}\d\t\label{eq:zfinal5}\\
&+\tfrac{1}{12}\lambda^{4}\left(\grad^{\mathbf{X}}_{1}\{\mathscr{U}'[{\boldsymbol{\phi}}_{\t,\x}]{\boldsymbol{\phi}}_{\t,\x}^{3}\}\right)\mathbf{Z}^{N}_{\t,\x}\d\t.\label{eq:zfinal6}
\end{align}
We conclude by matching \eqref{eq:zfinal1}-\eqref{eq:zfinal6} to \eqref{eq:msheI}-\eqref{eq:msheII}. The first two terms on the \abbr{RHS} of \eqref{eq:zfinal1} matches the first two terms from the \abbr{RHS} of \eqref{eq:msheI}. The terms \eqref{eq:zfinal3}-\eqref{eq:zfinal3b} match \eqref{eq:msheII(2)a}-\eqref{eq:msheII(2)b}. The terms \eqref{eq:zfinal2} and \eqref{eq:zfinal3d} match \eqref{eq:msheII(1)a}-\eqref{eq:msheII(1)b}. The last term in \eqref{eq:zfinal1} and \eqref{eq:zfinal3d1} combine to match \eqref{eq:msheII(0)a1}. The term \eqref{eq:zfinal4} matches \eqref{eq:msheII(0)b}-\eqref{eq:msheII(0)c}. The term \eqref{eq:zfinal5} matches the \abbr{RHS} of \eqref{eq:msheII(0)a}. The terms \eqref{eq:zfinal3e}-\eqref{eq:zfinal3e1} match \eqref{eq:msheII(0)e}-\eqref{eq:msheII(0)e1}. The term \eqref{eq:zfinal6} matches \eqref{eq:msheII(0)f} up to a spatial shift (that is made up for by an error term of the form in \eqref{eq:msheIIfa}-\eqref{eq:msheIIfe} due to \eqref{eq:zshift}). Finally, the term \eqref{eq:zfinal3a} matches (in the {\small$\approx$} sense) the sum of \eqref{eq:msheII(2)aa}, \eqref{eq:msheII(1)c}, and \eqref{eq:msheII(0)a2}-\eqref{eq:msheII(0)a3}; this follows by a similar repeated application of \eqref{eq:zshift} a total of {\small$2\deg$}-many times.
\end{proof}
\begin{corollary}\label{corollary:mshe}
\fsp Retain the setting of Lemma \ref{lemma:mshe}, and recall the function {\small$\mathscr{S}^{N}$} from Definition \ref{definition:zsmooth}. We have 
\begin{align}
\d\mathbf{S}^{N}_{\t,\x}&=\mathscr{T}_{N}\mathbf{S}^{N}_{\t,\x}\d\t+[\mathscr{S}^{N}\star(\sqrt{2}\lambda N^{\frac12}\mathbf{Z}^{N}_{\t,\cdot}\d\mathbf{b}_{\t,\cdot})]_{\x}+[\mathscr{S}^{N}\star(\mathfrak{e}[\tau_{\cdot}\bphi_{\t}]\mathbf{Z}^{N}_{\t,\cdot})]_{\x}\d\t\label{eq:smsheI}\\
&=\mathscr{T}_{N}\mathbf{S}^{N}_{\t,\x}\d\t+[\mathscr{S}^{N}\star(\sqrt{2}\lambda N^{\frac12}\mathbf{Z}^{N}_{\t,\cdot}\d\mathbf{b}_{\t,\cdot})]_{\x}+[\mathscr{S}^{N}\star(\mathfrak{e}[\tau_{\cdot}\bphi_{\t}]\mathbf{R}^{N}_{\t,\cdot}\mathbf{S}^{N}_{\t,\cdot})]_{\x}\d\t\label{eq:smsheII}
\end{align}
\end{corollary}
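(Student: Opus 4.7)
The plan is to apply the spatial convolution operator $[\mathscr{S}^{N}\star\cdot]_{\x}$ to both sides of the \abbr{SDE} \eqref{eq:msheI} from Lemma \ref{lemma:mshe} and simplify term by term. Since $\mathscr{S}^{N}$ has compact (in particular finite) support on $\Z$ by Definition \ref{definition:zsmooth}, there are no convergence issues anywhere in what follows: every convolution is a finite sum.

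First I would handle the linear heat-operator term. The key observation is that $\mathscr{T}_{N}=(N^{2}\alpha+\tfrac12N\lambda^{2})\Delta$ is itself a convolution operator on $\Z$ (its kernel is a constant multiple of $\delta_{1}+\delta_{-1}-2\delta_{0}$). Since convolution on $\Z$ is commutative, $[\mathscr{S}^{N}\star\mathscr{T}_{N}\mathbf{Z}^{N}_{\t,\cdot}]_{\x}=\mathscr{T}_{N}[\mathscr{S}^{N}\star\mathbf{Z}^{N}_{\t,\cdot}]_{\x}=\mathscr{T}_{N}\mathbf{S}^{N}_{\t,\x}$, which is exactly the first term on the \abbr{RHS} of \eqref{eq:smsheI}. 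Next, for the martingale term, I would use that $\{\mathbf{b}_{\cdot,\y}\}_{\y\in\Z}$ are jointly independent Brownian motions and that the convolution defining $\mathscr{S}^{N}\star\cdot$ is a \emph{finite} linear combination of them at each fixed $\x$. Thus a trivial stochastic Fubini (i.e.\ linearity of the It\^o integral under finite sums) gives
\begin{align*}
\sum_{\y\in\Z}\mathscr{S}^{N}_{\x-\y}\int_{0}^{\t}\sqrt{2}\lambda N^{\frac12}\mathbf{Z}^{N}_{\s,\y}\d\mathbf{b}_{\s,\y}=\int_{0}^{\t}[\mathscr{S}^{N}\star(\sqrt{2}\lambda N^{\frac12}\mathbf{Z}^{N}_{\s,\cdot}\d\mathbf{b}_{\s,\cdot})]_{\x},
\end{align*}
matching the noise term on the \abbr{RHS} of \eqref{eq:smsheI}. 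The drift contribution $\mathfrak{e}[\tau_{\x}\bphi_{\t}]\mathbf{Z}^{N}_{\t,\x}$ is transformed by linearity of convolution into $[\mathscr{S}^{N}\star(\mathfrak{e}[\tau_{\cdot}\bphi_{\t}]\mathbf{Z}^{N}_{\t,\cdot})]_{\x}\d\t$, which is the last term of \eqref{eq:smsheI}.

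Finally, to pass from \eqref{eq:smsheI} to \eqref{eq:smsheII}, I would simply substitute the factorization $\mathbf{Z}^{N}_{\t,\x}=\mathbf{R}^{N}_{\t,\x}\mathbf{S}^{N}_{\t,\x}$ inside the error-term convolution. This identity holds pointwise by definition of $\mathbf{R}^{N}$ in \eqref{eq:zsmooth}, provided $\mathbf{S}^{N}_{\t,\x}\neq0$; but $\mathbf{Z}^{N}_{\t,\x}>0$ everywhere by \eqref{eq:ch}, and since $\mathscr{S}^{N}\geq0$ is nontrivial, $\mathbf{S}^{N}_{\t,\x}>0$ as well, so $\mathbf{R}^{N}$ is everywhere well-defined and the substitution is justified.

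There is no real obstacle to this corollary; it is pure bookkeeping built on Lemma \ref{lemma:mshe} and Definition \ref{definition:zsmooth}. The only points requiring any care are (i) the commutation of $\mathscr{S}^{N}\star\cdot$ with $\mathscr{T}_{N}$ (which reduces to commutativity of convolution on $\Z$) and (ii) the stochastic Fubini for the noise (which is elementary because $\mathscr{S}^{N}$ is finitely supported on $\Z$, so no infinite-series justification is needed). Both are immediate, so the argument is short.
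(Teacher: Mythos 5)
Your proposal is correct and coincides with the paper's own (very terse) argument: convolve \eqref{eq:msheI} with $\mathscr{S}^{N}$, use that convolution commutes with $\mathscr{T}_{N}$, and substitute $\mathbf{Z}^{N}=\mathbf{R}^{N}\mathbf{S}^{N}$ for the second line. Your added remarks on finite support of $\mathscr{S}^{N}$, the elementary stochastic Fubini, and strict positivity of $\mathbf{Z}^{N}$ and $\mathbf{S}^{N}$ are all accurate bookkeeping that the paper leaves implicit.
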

\begin{proof}
\eqref{eq:smsheI} follows by convolving \eqref{eq:msheI} with {\small$\mathscr{S}^{N}$} and noting that this convolution operator commutes with the (re-scaled) discrete Laplacian {\small$\mathscr{T}_{N}$}. \eqref{eq:smsheII} follows by {\small$\mathbf{Z}^{N}=\mathbf{R}^{N}\cdot\mathbf{S}^{N}$}; see \eqref{eq:zsmooth}.
\end{proof}
We now make another fairly quick observation; it gives an estimate for the product {\small$\mathfrak{f}[\tau_{\x}\bphi_{\t}]\mathbf{Z}^{N}_{\t,\x}$} in \eqref{eq:msheII} which resembles the estimate \eqref{eq:err-estimate} for the error operator in Proposition \ref{prop:s-sde}. Before we state this result, we recall the smoothing kernel {\small$\mathscr{S}^{N}$} and the object {\small$\mathbf{R}^{N}$} from Definition \ref{definition:zsmooth}.
\begin{lemma}\label{lemma:ferrestimate}
\fsp Consider the function {\small$\mathfrak{f}_{\mathrm{err}}$} from \eqref{eq:msheIIfa}-\eqref{eq:msheIIfe}. We have the identity 
\begin{align}
[\mathscr{S}^{N}\star(\mathfrak{f}_{\mathrm{err}}[\tau_{\cdot}\bphi_{\t}]\mathbf{Z}^{N}_{\t,\cdot})]_{\x}&=[\mathscr{S}^{N}\star(\mathfrak{f}_{\mathrm{err}}[\tau_{\cdot}\bphi_{\t}]\mathbf{R}^{N}_{\t,\cdot}\mathbf{S}^{N}_{\t,\cdot})]_{\x}.\label{eq:ferrestimateI}
\end{align}
Also, the operator {\small$\mathsf{f}_{\cdot}\mapsto\mathscr{S}^{N}\star(\mathfrak{f}_{\mathrm{err}}[\tau_{\cdot}\bphi_{\t}]\mathbf{R}^{N}_{\t,\cdot}\mathsf{f}_{\cdot})$}, which acts on functions {\small$\mathsf{f}:\Z\to\R$}, satisfies the following pointwise estimate (in which {\small$\delta_{\mathbf{S}}$} is a small parameter from Definition \ref{definition:zsmooth} and {\small$\mathrm{C}=\mathrm{O}(1)$}):
\begin{align}
|[\mathscr{S}^{N}\star(\mathfrak{f}_{\mathrm{err}}[\tau_{\cdot}\bphi_{\t}]\mathbf{R}^{N}_{\t,\cdot}\mathsf{f}_{\cdot})]_{\x}|\lesssim N^{-\frac12+\mathrm{C}\delta_{\mathbf{S}}}\cdot N^{-1+\delta_{\mathbf{S}}}\sum_{|\w|\lesssim N^{1-\delta_{\mathbf{S}}}}\Big(1+\sum_{|\z|\lesssim1}|\bphi_{\t,\x+\w+\z}|^{\mathrm{C}}\Big)\cdot|\mathbf{R}^{N}_{\t,\x+\w}|\cdot|\mathsf{f}_{\x+\w}|.\label{eq:ferrestimateII}
\end{align}
\end{lemma}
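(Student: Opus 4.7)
The first assertion \eqref{eq:ferrestimateI} is immediate from the defining identity $\mathbf{Z}^{N}_{\t,\y}=\mathbf{R}^{N}_{\t,\y}\mathbf{S}^{N}_{\t,\y}$ for every $\y\in\Z$ (see \eqref{eq:zsmooth}); we simply substitute pointwise inside the convolution. The bulk of the work is thus in proving the operator bound \eqref{eq:ferrestimateII}, which we do by treating the five summands \eqref{eq:msheIIfa}--\eqref{eq:msheIIfe} of $\mathfrak{f}_{\mathrm{err}}[\tau_\cdot\bphi_\t]\mathbf{Z}^{N}_{\t,\cdot}$ separately, convolving each with $\mathscr{S}^{N}$, and performing summation by parts to transfer every discrete gradient $\grad^{\mathbf{X}}_{\pm1}$ off the (to-be-generalized) $\mathfrak{f}_{i}\mathbf{R}^{N}\mathsf{f}$-product and onto $\mathscr{S}^{N}$.

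The power counting is the key observation. Because $\mathscr{S}^{N}_{\x}=\mathrm{c}_{N}N^{-1+\delta_{\mathbf{S}}}\mathscr{S}(N^{-1+\delta_{\mathbf{S}}}\x)$ lives on length scale $N^{1-\delta_{\mathbf{S}}}$, it satisfies $\|\mathscr{S}^{N}\|_{\ell^{\infty}(\Z)}\lesssim N^{-1+\delta_{\mathbf{S}}}$ and, by smoothness of $\mathscr{S}$, each discrete gradient applied to $\mathscr{S}^{N}$ reduces its pointwise size by a further factor $\lesssim N^{-1+\delta_{\mathbf{S}}}$. Therefore: the zeroth-order piece \eqref{eq:msheIIfc} contributes a prefactor $N^{-1/2}$; the single-gradient pieces \eqref{eq:msheIIfa}--\eqref{eq:msheIIfb} contribute $N^{1/2}\cdot N^{-1+\delta_{\mathbf{S}}}=N^{-1/2+\delta_{\mathbf{S}}}$ after one summation by parts; and the second-order-gradient pieces \eqref{eq:msheIIfd}--\eqref{eq:msheIIfe} contribute $N^{3/2}\cdot N^{-2+2\delta_{\mathbf{S}}}=N^{-1/2+2\delta_{\mathbf{S}}}$ and $N\cdot N^{-2+2\delta_{\mathbf{S}}}=N^{-1+2\delta_{\mathbf{S}}}$ respectively after two summations by parts. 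All of these are uniformly $\lesssim N^{-1/2+\mathrm{C}\delta_{\mathbf{S}}}$ for some $\mathrm{C}=\mathrm{O}(1)$, matching the leading factor in \eqref{eq:ferrestimateII}.

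To conclude, note that after the summation by parts the residual convolution sum is localized to the support of (the gradient-ed) $\mathscr{S}^{N}$, namely $|\w|\lesssim N^{1-\delta_{\mathbf{S}}}$, and the remaining $\mathscr{S}^{N}$-weight normalizes to the factor $N^{-1+\delta_{\mathbf{S}}}$ outside the sum in \eqref{eq:ferrestimateII}. Inserting the pointwise bound \eqref{eq:msheIIff} to control $|\mathfrak{f}_{i}[\tau_{\x+\w+\z_{i}}\bphi_{\t}]|$ by $1+\sum_{|\z|\lesssim1}|\bphi_{\t,\x+\w+\z}|^{\mathrm{C}}$ (absorbing the $\mathrm{O}(1)$ shifts $\z_{i}$ introduced by integration by parts into the inner sum over $\z$), and substituting $\mathbf{Z}^{N}\mapsto\mathbf{R}^{N}\mathsf{f}$ (the operator is linear in whatever multiplies $\mathfrak{f}_{i}$ inside the gradient expression), yields exactly \eqref{eq:ferrestimateII}.

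The only real obstacle in this argument is purely bookkeeping: keeping careful track of the small index shifts produced by each discrete integration by parts, and verifying that the polynomial-in-$\bphi$ bound \eqref{eq:msheIIff} is preserved under these shifts. Both are handled by the inner sum $\sum_{|\z|\lesssim1}$ and by allowing $\mathrm{C}$ to depend on the (fixed) number of summations by parts. No genuinely new ideas are required beyond the fact, used repeatedly in Proposition \ref{prop:s-sde}, that discrete gradients of $\mathscr{S}^{N}$ pay $N^{-1+\delta_{\mathbf{S}}}$ per derivative.
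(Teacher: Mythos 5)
Your proposal is correct and follows the same approach as the paper: prove \eqref{eq:ferrestimateI} by the pointwise substitution $\mathbf{Z}^{N}=\mathbf{R}^{N}\mathbf{S}^{N}$, then for \eqref{eq:ferrestimateII} move each discrete gradient from \eqref{eq:msheIIfa}--\eqref{eq:msheIIfe} onto $\mathscr{S}^{N}$ (since convolution commutes with discrete gradients), pay $N^{-1+\delta_{\mathbf{S}}}$ per gradient plus the $\ell^{\infty}$-normalization, and insert the polynomial bound \eqref{eq:msheIIff}. Your explicit power-counting matches the paper's single illustration on \eqref{eq:msheIIfd}, so the two proofs are essentially the same.
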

\begin{proof}
The identity \eqref{eq:ferrestimateI} is immediate by {\small$\mathbf{Z}^{N}=\mathbf{R}^{N}\mathbf{S}^{N}$}. We now prove \eqref{eq:ferrestimateII} by giving the general reasoning and then one concrete illustration. This estimate \eqref{eq:ferrestimateII} follows because each term on the \abbr{RHS} of \eqref{eq:msheIIfa}-\eqref{eq:msheIIfe} has a scaling factor of {\small$N^{-1/2}$} (each gradient introduces a scaling factor of {\small$N^{-1}$}, modulo powers of {\small$N^{\delta_{\mathbf{S}}}$}, after we use summation-by-parts to move the gradients to the kernel {\small$\mathscr{S}^{N}$}). Moreover, each {\small$\mathfrak{f}_{i}$} on the \abbr{RHS} of \eqref{eq:msheIIfa}-\eqref{eq:msheIIfe} admits the polynomial estimate \eqref{eq:msheIIff}, which, after we average against {\small$\mathscr{S}^{N}$}, turns into the averaged polynomial estimate on the \abbr{RHS} of \eqref{eq:ferrestimateII}.

Let us illustrate this reasoning with an example. We focus on \eqref{eq:msheIIfd}, as it has the highest number of gradients and is scaled larger than \eqref{eq:msheIIfe}. Because convolution commutes with discrete gradients, we have 
\begin{align*}
[\mathscr{S}^{N}\star(N^{\frac32}\grad^{\mathbf{X}}_{1}\grad^{\mathbf{X}}_{1}\mathfrak{f}_{4}[\tau_{\cdot}\bphi_{\t}]\mathbf{R}^{N}_{\t,\cdot}\mathsf{f}_{\cdot})]_{\x}&=[N^{2}\grad^{\mathbf{X}}_{1}\grad^{\mathbf{X}}_{1}\mathscr{S}^{N}\star(N^{-\frac12}\mathfrak{f}_{4}[\tau_{\cdot}\bphi_{\t}]\mathbf{R}^{N}_{\t,\cdot}\mathsf{f}_{\cdot})]_{\x}.
\end{align*}
Next, note that {\small$|N^{2}\grad^{\mathbf{X}}_{1}\grad^{\mathbf{X}}_{1}\mathscr{S}^{N}_{\x-\w}|\lesssim N^{2\delta_{\mathbf{S}}}N^{-1+\delta_{\mathbf{S}}}\mathbf{1}_{|\w|\lesssim N^{1-\delta_{\mathbf{S}}}}$}, since {\small$\mathscr{S}^{N}$} is smooth and supported on the length-scale {\small$N^{1-\delta_{\mathbf{S}}}$} (see Definition \ref{definition:zsmooth}). Thus, we have 
\begin{align*}
|[N^{2}\grad^{\mathbf{X}}_{1}\grad^{\mathbf{X}}_{1}\mathscr{S}^{N}\star(N^{-\frac12}\mathfrak{f}_{4}[\tau_{\cdot}\bphi_{\t}]\mathbf{R}^{N}_{\t,\cdot}\mathsf{f}_{\cdot})]_{\x}|&\lesssim N^{-\frac12+2\delta_{\mathbf{S}}}\cdot N^{-1+\delta_{\mathbf{S}}}\sum_{|\w|\lesssim N^{1-\delta_{\mathbf{S}}}}|\mathfrak{f}_{4}[\tau_{\x+\w}\bphi_{\t}]|\cdot|\mathbf{R}^{N}_{\t,\x+\w}|\cdot|\mathsf{f}_{\x+\w}|.
\end{align*}
If we now plug the estimate \eqref{eq:msheIIff} for {\small$i=4$} into the \abbr{RHS} above, we complete the proof.
\end{proof}
\subsection{Analyzing the $\mathfrak{e}$-coefficient in \eqref{eq:msheII}}
We now examine {\small$\mathfrak{e}_{\k}$} further for {\small$\k=0,1,2$}. In particular, the following result shows that the coefficients in {\small$\mathfrak{e}_{\k}$} belong to {\small$\mathrm{Jet}_{\k}^{\perp}$}. 
\begin{lemma}\label{lemma:classify}
\fsp Recall the notation in Lemma \ref{lemma:mshe} and Definition \ref{definition:jets}.
\begin{enumerate}
\item The local function {\small$\mathscr{Q}[\bphi]$} from \eqref{eq:quadratic_renorm} belongs to {\small$\mathrm{Jet}_{2}^{\perp}$}.
\item The local function {\small$\mathscr{V}'[{\boldsymbol{\phi}}_{0}]=\mathscr{U}'[\bphi_{0}]-\alpha\bphi_{0}$} belongs to {\small$\mathrm{Jet}_{1}^{\perp}$}.
\item The local function {\small$\mathscr{U}'[\bphi_{0}]\bphi_{0}-1$} belongs to {\small$\mathrm{Jet}_{1}^{\perp}$}.
\item The local function {\small$\alpha{\boldsymbol{\phi}}_{0}^{2}-1$} belongs to {\small$\mathrm{Jet}_{0}^{\perp}$}. 
\item The local function {\small$\mathscr{U}'[\bphi_{0}]$} belongs to {\small$\mathrm{Jet}_{0}^{\perp}$}
\item Any local function of the form {\small$\mathsf{F}[{\boldsymbol{\phi}}]-\E^{0}\mathsf{F}[{\boldsymbol{\phi}}]$} belongs to {\small$\mathrm{Jet}_{0}^{\perp}$}.
\item Any local function of the form {\small$\mathsf{F}[\boldsymbol{\phi}]-\mathsf{F}[\tau_{\ell}\boldsymbol{\phi}]$} for some {\small$\ell$} belongs to {\small$\mathrm{Jet}_{2}^{\perp}$}.
\item The local function {\small$\mathscr{G}[\bphi]$} from \eqref{eq:funnygradient} belongs to {\small$\mathrm{Jet}_{1}^{\perp}$}.
\item The local function {\small$\mathscr{G}[\bphi]\bphi_{1}-1$} belongs to {\small$\mathrm{Jet}_{0}^{\perp}$}.
\item The local function {\small$\mathbf{F}_{>2}[\bphi]$} belongs to {\small$\mathrm{Jet}_{2}^{\perp}$}.
\item For any {\small$\w,\w'\in\Z$}, the local function {\small$\mathbf{F}_{>2}[\bphi]\bphi_{\w}$} belongs to {\small$\mathrm{Jet}_{1}^{\perp}$}, and {\small$\mathbf{F}_{>2}[\bphi]\bphi_{\w}\bphi_{\w'}$} belongs to {\small$\mathrm{Jet}_{0}^{\perp}$}.
\end{enumerate}
\end{lemma}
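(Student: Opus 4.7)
The proof is a sequence of explicit computations with the product measure $\mathbb{P}^\sigma$; my plan is to carry them out after first recording two workhorse identities and the resulting basic expansions.

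\textbf{Tools.} Let $g(\sigma):=\E^\sigma\mathscr{U}'[\bphi_0]$ and $h(\sigma):=\upsilon_\sigma$. The starting facts are $g(0)=0$ and $h(0)=0$: the first follows from integrating $e^{-\mathscr{U}+h(\sigma)\phi}$ by parts in $\phi$, and the second is Assumption \ref{assump:potential}. Applying the same integration by parts to a test function $F$ gives the key identity
\begin{align*}
\E^\sigma\bigl(\mathscr{U}'[\bphi_0]\,F[\bphi_0]\bigr) &= \E^\sigma F'[\bphi_0] + h(\sigma)\,\E^\sigma F[\bphi_0].
\end{align*}
Taking $F(\phi)=\phi$ yields $\E^\sigma(\mathscr{U}'[\bphi_0]\bphi_0)=1+h(\sigma)\sigma$. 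Combining this with the chain rule $\partial_\sigma\E^\sigma f[\bphi_0]=h'(\sigma)\,\mathrm{Cov}^\sigma(f[\bphi_0],\bphi_0)$ applied to $f=\mathrm{id}$ forces $h'(\sigma)\,\mathrm{Var}^\sigma(\bphi_0)=1$, so $h'(0)=1/\E^0\bphi_0^2=\alpha$ once we know $\E^0\bphi_0^2=1/\alpha$ (which itself follows from $\E^0(\mathscr{U}'[\bphi_0]\bphi_0)=1$ and the definition $g'(0)=\alpha$). These four numerical facts $g(0)=h(0)=0$ and $g'(0)=h'(0)=\alpha$ drive everything else.

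\textbf{Easy items.} With these in hand, (2), (3), (4), (5), (6), (7) each reduce to one line: (5) is $g(0)=0$; (2) subtracts $\alpha\sigma$ from $g(\sigma)$, killing both the value and first derivative at $0$; (3) reads $\E^\sigma(\mathscr{U}'[\bphi_0]\bphi_0-1)=h(\sigma)\sigma$, which vanishes to order $\geq 2$; (4) is immediate from $\E^0\bphi_0^2=1/\alpha$; (6) is tautological; and (7) follows because $\mathbb{P}^\sigma$ is translation-invariant, so $\E^\sigma(\mathsf{F}[\bphi]-\mathsf{F}[\tau_\ell\bphi])\equiv 0$ as a function of $\sigma$.

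\textbf{The $\mathscr{G}$ and $\mathbf{F}_{>2}$ items.} For (8), each summand of $\mathscr{G}$ is a difference of two products on disjoint sites, which the product structure and translation invariance of $\mathbb{P}^\sigma$ force to have equal expectations; hence $\E^\sigma\mathscr{G}\equiv 0$. For (9), after expanding $\mathscr{G}[\bphi]\bphi_1$, only the term $\bphi_1\mathscr{U}'[\bphi_1]\cdot\bphi_2\mathscr{U}'[\bphi_2]$ contributes to $\E^0$, and it contributes exactly $1$ by the IBP identity, matching the subtracted constant. For (10) and (11), each monomial in $\mathbf{F}_{>2}$ is a product $\prod_{\ell=1}^d\mathscr{U}'[\bphi_?]$ over $d\geq 3$ distinct sites, so its $\E^\sigma$ equals $g(\sigma)^d=O(\sigma^d)$; multiplying by $\bphi_\w$ or $\bphi_\w\bphi_{\w'}$ and applying IBP at any collision reduces the order of vanishing by at most one per extra $\bphi$, leaving order $\geq d-2\geq 1$ in the worst case, which is enough for $\mathrm{Jet}_1^\perp$ and $\mathrm{Jet}_0^\perp$, respectively.

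\textbf{The main item (1).} The only computation with real content is $\mathscr{Q}\in\mathrm{Jet}_2^\perp$. Expanding,
\begin{align*}
\E^\sigma\mathscr{Q}[\bphi] &= \tfrac12\beta_2\,g(\sigma)^2 - \tfrac12\lambda\,h(\sigma)\sigma,
\end{align*}
whose value and first derivative at $\sigma=0$ vanish from $g(0)=h(0)=0$. Differentiating twice and evaluating at $0$ gives $\beta_2\,g'(0)^2-\lambda\,h'(0)=\beta_2\alpha^2-\lambda\alpha$. To see this vanishes, I compute $\E^\sigma\mathbf{F}_2[\bphi]=\beta_2\,g(\sigma)^2$ (the three summands in \eqref{eq:nonlinearity} being equal by translation invariance and factorization), so by \eqref{eq:beta} and the definition of $\lambda$, we get $\beta=\tfrac12\partial_\sigma^2\E^\sigma\mathbf{F}_2|_{\sigma=0}=\beta_2\alpha^2$ and hence $\lambda=\beta/\alpha=\beta_2\alpha$. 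This identification $\lambda=\beta_2\alpha$ is the only delicate algebraic point in the argument, and once it is in place the second-order cancellation in (1) is automatic.
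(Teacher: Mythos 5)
Your proof is correct and follows essentially the same route as the paper's: both rest on the integration-by-parts identity $\E^{\sigma}(\mathscr{U}'[\bphi_0]\mathsf{F}[\bphi_0])=\upsilon_{\sigma}\E^{\sigma}\mathsf{F}+\E^{\sigma}\mathsf{F}'$, the product/translation-invariance structure of $\mathbb{P}^{\sigma}$, and, for item (1), the cancellation $\beta_2\alpha^2-\lambda\alpha=0$ coming from the definitions of $\alpha,\beta,\lambda$ together with $\upsilon_0=0$. Your presentation is slightly more streamlined in that it isolates the two functions $g=h=\upsilon_{\sigma}$ and records the explicit formula $\lambda=\beta_2\alpha$, but the logical content and the verification of every item match the paper's argument.
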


\begin{proof}
We first record the following for any smooth {\small$\mathsf{F}:\R\to\R$} for which the expectations below are well-defined; it follows by integration-by-parts, and it generalizes Gaussian-integration-by-parts:
\begin{align}
\E^{\sigma}\left(\mathsf{F}[{\boldsymbol{\phi}}_{0}]\mathscr{U}'[{\boldsymbol{\phi}}_{0}]\right)&=\upsilon_{\sigma}\E^{\sigma}\mathsf{F}[{\boldsymbol{\phi}}_{0}]+\E^{\sigma}\mathsf{F}'[{\boldsymbol{\phi}}_{0}].\label{eq:classify1}
\end{align}
We now prove point (1) in Lemma \ref{lemma:classify}. We first claim that {\small$\E^{\sigma}(\mathscr{U}'[\bphi_{0}]\mathscr{U}'[\bphi_{1}])=(\E^{\sigma}\mathscr{U}'[\bphi_{0}])^{2}=\upsilon_{\sigma}^{2}$}. This follows because {\small$\bphi_{\x}$} are i.i.d. over {\small$\x\in\Z$} and by \eqref{eq:classify1} for {\small$\mathsf{F}\equiv1$}. On the other hand, we use \eqref{eq:classify1} for {\small$\mathsf{F}[{\boldsymbol{\phi}}_{0}]={\boldsymbol{\phi}}_{0}$} to get
\begin{align}
\E^{\sigma}\left(\mathscr{U}'[{\boldsymbol{\phi}}_{\x}]{\boldsymbol{\phi}}_{\x}-1\right)&=\upsilon_{\sigma}\E^{\sigma}{\boldsymbol{\phi}}_{\x}+1-1=\upsilon_{\sigma}\sigma.\label{eq:classify2}
\end{align}
If we combine the previous two displays, we deduce
\begin{align*}
\E^{\sigma}\mathscr{Q}[\bphi]&=\tfrac12\beta_{2}\upsilon_{\sigma}^{2}-\tfrac12\lambda\upsilon_{\sigma}\sigma.
\end{align*}
Since {\small$\upsilon_{\sigma}|_{\sigma=0}=0$}, we have {\small$\partial_{\sigma}^{\j}\E^{\sigma}\mathscr{Q}[\bphi]|_{\sigma=0}=0$} for {\small$\j=0,1$}. On the other hand, we know that {\small$\lambda=\beta\alpha^{-1}$}, where {\small$2\beta=\partial_{\sigma}^{2}\E^{\sigma}\mathbf{F}_{2}[\bphi]|_{\sigma=0}=\beta_{2}\partial_{\sigma}^{2}\E^{\sigma}\mathscr{U}'[\bphi_{0}]\mathscr{U}'[\bphi_{1}]|_{\sigma=0}=\beta_{2}\partial_{\sigma}^{2}\upsilon_{\sigma}^{2}|_{\sigma=0}$} by \eqref{eq:nonlinearity} and the i.i.d. property of {\small$\phi_{\x}$} variables. Moreover, since {\small$\upsilon_{0}=0$} by Assumption \ref{assump:noneq}, when we evaluate the second-$\sigma$ derivative of {\small$\upsilon_{\sigma}\sigma$} at zero, only the cross-term {\small$2\partial_{\sigma}\upsilon_{\sigma}|_{\sigma=0}\cdot\partial_{\sigma}\sigma|_{\sigma=0}=2\partial_{\sigma}\upsilon_{\sigma}|_{\sigma=0}$} contributes. In particular, we have 
\begin{align*}
\partial_{\sigma}^{2}\E^{\sigma}\mathscr{Q}[\bphi]|_{\sigma=0}&=\tfrac12\beta_{2}(\partial_{\sigma}^{2}\upsilon_{\sigma}^{2})|_{\sigma=0}-\lambda\partial_{\sigma}\upsilon_{\sigma}|_{\sigma=0}.
\end{align*}
Now, recall that {\small$\alpha=\partial_{\sigma}\E^{\sigma}\mathscr{U}'[\bphi_{0}]|_{\sigma=0}$}. In particular, by \eqref{eq:classify1} with {\small$\mathsf{F}\equiv1$}, {\color{black}the last term} in the previous display is equal to $-\lambda\alpha=-\beta$.  We argued in the previous paragraph that this is equal to {\small$-\beta_{2}(\partial_{\sigma}^{2}\upsilon_{\sigma}^{2})|_{\sigma=0}/2$}. Hence, the previous display vanishes, and point (1) follows. We now prove point (2) in Lemma \ref{lemma:classify}. If we use \eqref{eq:classify1} with $\mathsf{F}\equiv1$, and if we recall that {\small$\E^{\sigma}{\boldsymbol{\phi}}_{0}=\sigma$}, then we deduce
\begin{align*}
\E^{\sigma}\mathscr{V}'[{\boldsymbol{\phi}}_{0}]&=\upsilon_{\sigma}-\alpha\sigma.
\end{align*}
When we set {\small$\sigma=0$}, then the \abbr{RHS} vanishes. When we differentiate in $\sigma$ and set {\small$\sigma=0$}, then we get {\small$\partial_{\sigma}\upsilon_{\sigma}|_{\sigma=0}-\alpha=0$}, where the last identity follows by \eqref{eq:classify1} with {\small$\mathsf{F}\equiv1$} and {\small$\alpha=\partial_{\sigma}\E^{\sigma}\mathscr{U}'[\bphi_{0}]|_{\sigma=0}$}. This shows point (2) in Lemma \ref{lemma:classify}. We now show point (3). This follows immediately by \eqref{eq:classify2} and {\small$\upsilon_{0}=0$}. We now show point (4). Note that
\begin{align*}
1&=\partial_{\sigma}\E^{\sigma}{\boldsymbol{\phi}}_{0}={\int_{\R}}{\boldsymbol{\phi}}_{0}\partial_{\sigma}\left(\mathcal{Z}_{\upsilon_{\sigma},\mathscr{U}}^{-1}\exp\left\{-\mathscr{U}[{\boldsymbol{\phi}}_{0}]+\upsilon_{\sigma}{\boldsymbol{\phi}}_{0}\right\}\right)\d{\boldsymbol{\phi}}_{0}=-\tfrac{\partial_{\sigma}\mathcal{Z}_{\upsilon_{\sigma},\mathscr{U}}}{\mathcal{Z}_{\upsilon_{\sigma},\mathscr{U}}}\E^{\sigma}{\boldsymbol{\phi}}_{0}+\partial_{\sigma}\upsilon_{\sigma}\E^{\sigma}{\boldsymbol{\phi}}_{0}^{2}.
\end{align*}
{\color{black}If} {\small$\sigma=0$}, then the first term on the far \abbr{RHS} is equal to $0$, since {\small$\E^{\sigma}{\boldsymbol{\phi}}_{0}=\sigma$} for any $\sigma$. We also recall from earlier in this proof that {\small$\partial_{\sigma}\upsilon_{\sigma}|_{\sigma=0}=\alpha$}. Thus, we deduce that {\small$\E^{0}(\alpha{\boldsymbol{\phi}}_{0}^{2}-1)=\alpha\E^{0}{\boldsymbol{\phi}}_{0}^{2}-1=0$}, so that {\small$\alpha{\boldsymbol{\phi}}_{0}^{2}-1$} belongs to {\small$\mathrm{Jet}_{0}^{\perp}$}. This completes the proof of point (4). We now show point (5). This follows from \eqref{eq:classify1} for {\small$\mathsf{F}\equiv1$} (recall that {\small$\upsilon_{0}=0$}). Point (6) is clear by construction. Point (7) follows because {\small$\bphi_{\w}$} are exchangeable. 

We now show point (8). This follows from the observation that for any {\small$\sigma$}, we have {\small$\E^{\sigma}\mathscr{G}[\bphi]=0$} by using \eqref{eq:funnygradient} and the independence and exchange-ability of {\small$\bphi_{\w}$} under the product measure {\small$\mathbb{P}^{\sigma}$}. We now show point (9). By independence of {\small$\bphi_{\w}$} under {\small$\mathbb{P}^{0}$}, and by {\small$\E^{0}\mathscr{U}'[\bphi_{0}]=0$}, we can use \eqref{eq:funnygradient} to get
\begin{align*}
\E[\mathscr{G}[\bphi]\bphi_{1}-1]=\E\Big(\mathscr{U}'[\bphi_{2}]\bphi_{2}\mathscr{U}'[\bphi_{1}]\bphi_{1}-1\Big)=(\E[\mathscr{U}'[\bphi_{0}]\bphi_{0}])^{2}-1.
\end{align*}
We use \eqref{eq:classify1} with {\small$\mathsf{F}[\bphi_{0}]=\bphi_{0}$} and {\small$\upsilon_{0}=0$} (see Assumption \ref{assump:potential}) to get that {\color{black}the rightmost term} is zero, so (9) follows. To prove (10), note that {\small$\E^{\sigma}\mathbf{F}_{>2}[\bphi]$} is a polynomial in {\small$\E^{\sigma}\mathscr{U}'[\bphi_{0}]=\upsilon_{\sigma}$} of degree {\small$\geq3$}. However, {\small$\upsilon_{0}=0$}, so (10) follows by calculus. To prove (11), {\small$\mathbf{F}_{>2}[\bphi]\bphi_{\w}$} has a factor of the form {\small$\mathscr{U}'[\bphi_{\z}]\mathscr{U}'[\bphi_{\z'}]$} such that {\small$\mathbf{F}_{>2}[\bphi]\bphi_{\w}\cdot(\mathscr{U}'[\bphi_{\z}]\mathscr{U}'[\bphi_{\z'}])^{-1}$} is independent of {\small$\bphi_{\z},\bphi_{\z'}$}. In particular, {\small$\E^{\sigma}(\mathbf{F}_{>2}[\bphi]\bphi_{\w})$} has a factor of {\small$\upsilon_{\sigma}^{2}$}, and thus {\small$\mathbf{F}_{>2}[\bphi]\bphi_{\w}$} is in {\small$\mathrm{Jet}_{1}^{\perp}$}. By the same token, {\small$\E^{\sigma}\mathbf{F}_{>2}[\bphi]\bphi_{\w}\bphi_{\w'}$} has a factor of {\small$\upsilon_{\sigma}$}, and thus {\small$\mathbf{F}_{>2}[\bphi]\bphi_{\w}\bphi_{\w'}$} belongs to {\small$\mathrm{Jet}_{0}^{\perp}$}.   
\end{proof}
We now use Lemma \ref{lemma:classify} and re-organize the terms in \eqref{eq:msheII}-\eqref{eq:msheII(0)f} to realize them as {\small$\mathfrak{q}_{\n}$} terms in \eqref{eq:s-sde}-\eqref{eq:s-sdeIII}. In particular, we argue below that by Lemma \ref{lemma:classify}, the coefficients in {\small$\mathfrak{e}[\tau_{\x}\bphi_{\t}]$} from \eqref{eq:msheI} are admissible.
\begin{corollary}\label{corollary:qnform}
\fsp Retain the setting of Lemma \ref{lemma:mshe} and Corollary \ref{corollary:mshe}. We have the following for any index {\small$\i=0,1,2$}, which uses notation to be explained afterwards:
\begin{align}
[\mathscr{S}^{N}\star(\mathfrak{e}_{\i}[\tau_{\cdot}\bphi_{\t}]\mathbf{Z}^{N}_{\t,\cdot})]_{\x}\d\t&=N\sum_{\n_{1}=1,\ldots,\mathrm{K}_{1}}\mathrm{c}_{\n_{1}}[\mathscr{S}^{N}\star(\mathfrak{q}_{\n_{1}}[\tau_{\cdot}\bphi_{\t}]\mathbf{Z}^{N}_{\t,\cdot})]_{\x}\d\t\label{eq:qnformI1}\\
&+N\sum_{\n_{2}=\mathrm{K}_{1}+1,\ldots,\mathrm{K}_{2}}\sum_{|\mathfrak{l}_{0}|\leq1}\mathrm{c}_{\n_{2}}\cdot N\grad^{\mathbf{X}}_{\mathfrak{l}_{0}}[\mathscr{S}^{N}\star(\mathfrak{q}_{\n_{2}}[\tau_{\cdot}\bphi_{\t}]\mathbf{Z}^{N}_{\t,\cdot})]_{\x}\d\t.\label{eq:qnformI2}
\end{align}
%
\begin{itemize}
\item Above, {\small$\mathrm{K}_{1}\leq\mathrm{K}_{2}$} are uniformly bounded positive integers, and {\small$\mathrm{c}_{\n_{1}},\mathrm{c}_{\n_{2}}=\mathrm{O}(1)$} are deterministic.
\item The functions {\small$\mathfrak{q}_{\n}$} are admissible in the sense of Definition \ref{definition:admissible}.
\end{itemize}
\end{corollary}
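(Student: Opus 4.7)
The plan is to prove the corollary by a direct term-by-term matching, reading off the admissible form for each summand appearing in $\mathfrak{e}_\i$ from the list \eqref{eq:msheII(2)a}--\eqref{eq:msheII(0)f}. Each such summand is one of two shapes: either a local function coefficient times $\mathbf{Z}^N$ (contributing to \eqref{eq:qnformI1}) or a single discrete gradient $\grad^{\mathbf{X}}_{\pm 1}$ of such a product (contributing to \eqref{eq:qnformI2}). The power of $N$ tracks the index $\i$: if the summand carries a prefactor $N^{\i/2}$ in front of a local function $\mathfrak{f}$, then setting $\k=\i$ and $\mathfrak{q}_\n := N^{-1+\k/2}\mathfrak{f}$ makes the pre-factors of $N$ in \eqref{eq:qnformI1}--\eqref{eq:qnformI2} line up. So the real content is to certify, for each $\mathfrak{f}$, that $\mathfrak{f}\in\mathrm{Jet}_\k^\perp$ with the appropriate $\k$, together with the support and growth conditions of Definition \ref{definition:admissible}.

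For the jet condition, the plan is to apply Lemma \ref{lemma:classify} in turn. Concretely, \eqref{eq:msheII(2)a} (with $\k=2$) is handled by point (1) on $\mathscr{Q}$; \eqref{eq:msheII(2)aa} by point (10) on $\mathbf{F}_{>2}$; \eqref{eq:msheII(2)b} by point (7), since $\Delta F = (F-F\circ\tau_1) + (F-F\circ\tau_{-1})$. For $\mathfrak{e}_1$ (so $\k=1$), \eqref{eq:msheII(1)a} uses point (2) on $\mathscr{V}'$ and lands in the gradient bucket \eqref{eq:qnformI2}; \eqref{eq:msheII(1)b} uses point (8) on $\mathscr{G}$; \eqref{eq:msheII(1)c} uses point (11). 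For $\mathfrak{e}_0$ (so $\k=0$), \eqref{eq:msheII(0)a} uses point (6); \eqref{eq:msheII(0)a1} uses point (9); \eqref{eq:msheII(0)a2} uses point (11); \eqref{eq:msheII(0)a3}, \eqref{eq:msheII(0)b}--\eqref{eq:msheII(0)c}, and \eqref{eq:msheII(0)d} sit inside an outer $\grad^{\mathbf{X}}_{\pm 1}$ and are placed in \eqref{eq:qnformI2}, with the jet condition supplied by points (10), (4), and (7) respectively; \eqref{eq:msheII(0)f} uses point (7) directly after rewriting the gradient as a shift difference.

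The one place I expect friction is \eqref{eq:msheII(0)e}--\eqref{eq:msheII(0)e1}: the coefficient there is $(\Delta\{\mathscr{U}'[\bphi_{\t,\x+2}]\mathscr{U}'[\bphi_{\t,\x+3}]\})\bphi_{\t,\x+j}^{2}$ for $j\in\{1,2\}$, which does not sit inside an outer gradient, and the pointwise Laplacian does not map into $\mathrm{Jet}_0^\perp$ in an obvious way through Lemma \ref{lemma:classify}. However, I will check the condition $\E^0\{\cdot\}=0$ directly by expanding the three summands of $\Delta$, using that the $\bphi_\w$ are i.i.d.\ under $\mathbb{P}^0$ and that $\E^0\mathscr{U}'[\bphi_0]=\upsilon_0=0$ (Assumption \ref{assump:potential}): in each of the three resulting terms, at least one factor $\mathscr{U}'[\bphi_\w]$ is independent of everything else and has mean zero, so the whole expectation vanishes. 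This gives membership in $\mathrm{Jet}_0^\perp$ with $\k=0$, hence admissibility.

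Finally, I must verify the support and polynomial-growth parts of Definition \ref{definition:admissible}. The deliberate shifts built into \eqref{eq:msheII(2)a}--\eqref{eq:msheII(0)f} (for instance the $\tau_{\x+1}$ and $\tau_{\x-1}$ in the two copies of $\mathscr{Q}$, the $\tau_{\x+2\deg}$ attached to $\mathbf{F}_{>2}$, and the choice $\x+2,\x+3$ for the Laplacian-of-product terms) are exactly what places each $\mathfrak{f}$ either in $\{1,\dots,\mathfrak{l}\}$ or in $\{-\mathfrak{l},\dots,0\}$ in its dependence on $\bphi$, with $\mathfrak{l}=\O(1)$. The polynomial bound \eqref{eq:f-estimate} is immediate from the explicit form of each $\mathfrak{f}$ (products of $\mathscr{U}'[\bphi_\w]$ and monomials in $\bphi_\w$) together with the smoothness of $\mathscr{U}$ and the linear-at-infinity bound $|\mathscr{U}'(\a)|\lesssim 1+|\a|$ from Assumption \ref{assump:potential}. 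Collecting all summands and reading off $\mathrm{K}_1,\mathrm{K}_2$ as their counts yields \eqref{eq:qnformI1}--\eqref{eq:qnformI2} with uniformly bounded deterministic $\mathrm{c}_{\n_1},\mathrm{c}_{\n_2}$. The main obstacle is thus not conceptual but organizational: keeping the bookkeeping of all roughly fifteen summands consistent, especially matching each gradient in $\mathfrak{e}_\i$ with the right $(\k,\mathfrak{l}_0)$ slot.
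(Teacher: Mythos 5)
Your proposal is correct and follows essentially the same route as the paper's proof: a term-by-term matching of the summands in \eqref{eq:msheII(2)a}--\eqref{eq:msheII(0)f} to the two buckets \eqref{eq:qnformI1}--\eqref{eq:qnformI2}, certifying admissibility via Lemma~\ref{lemma:classify} and tracking the powers of $N$ through the rule $\mathfrak{q}=N^{-1+\k/2}\mathfrak{f}$ with $\mathfrak{f}\in\mathrm{Jet}_{\k}^{\perp}$.

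The one place where you go beyond the paper's stated argument is \eqref{eq:msheII(0)e}--\eqref{eq:msheII(0)e1}. The paper appeals to Lemma~\ref{lemma:classify} as a blanket justification that every local coefficient in $\mathfrak{e}_0$ lies in $\mathrm{Jet}_0^{\perp}$, but none of the eleven enumerated points of that lemma directly covers a product of the form $(\Delta\{\mathscr{U}'[\bphi_{\cdot+2}]\mathscr{U}'[\bphi_{\cdot+3}]\})\bphi_{\cdot+j}^{2}$ (point~(7) places $\Delta F$ in $\mathrm{Jet}_2^{\perp}$, but multiplication by $\bphi^{2}$ does not preserve that in general). Your fix — expand the discrete Laplacian into its three summands $F_{\x+1}-2F_\x+F_{\x-1}$, then note that in each resulting monomial at least one $\mathscr{U}'$-factor is $\mathbb{P}^{0}$-independent of the remaining factors and has $\E^{0}\mathscr{U}'[\bphi_0]=\upsilon_0=0$ — is correct in both cases $j\in\{1,2\}$, and is in fact the verification the paper implicitly relies on. In short: same proof, but you have made one step that the paper elides fully explicit, and correctly so.
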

\begin{proof}
Recall the notation \eqref{eq:msheII}-\eqref{eq:msheII(0)f}. We will show that each term appearing there has the form of one of the summands in \eqref{eq:qnformI1}-\eqref{eq:qnformI2} (after we take convolution with {\small$\mathscr{S}^{N}$}). Since there are {\small$\mathrm{O}(1)$}-many terms in \eqref{eq:msheII}-\eqref{eq:msheII(0)f}, this would complete the proof.
\begin{enumerate}
\item Consider the function {\small$\mathfrak{q}[\bphi]:=\lambda \mathscr{Q}[\tau_{1}\bphi]$}, where {\small$\mathscr{Q}$} is defined in \eqref{eq:quadratic_renorm}. By point (1) in Lemma \ref{lemma:classify}, the function {\small$\bphi\mapsto\mathscr{Q}[\tau_{1}\bphi]$} is in {\small$\mathrm{Jet}_{2}^{\perp}$}; because {\small$\E^{\sigma}$} measures are translation-invariant (in space), the shift {\small$\tau_{1}$} does not change this. Moreover, by inspection in \eqref{eq:quadratic_renorm}, the function {\small$\bphi\mapsto\mathscr{Q}[\tau_{1}\bphi]$} depends only on {\small$\bphi_{\w}$} for {\small$\w$} in a uniformly bounded interval to the right of {\small$0\in\Z$}. Thus, {\small$\mathfrak{q}[\bphi]:=\lambda \mathscr{Q}[\tau_{1}\bphi]$} is admissible. The same reasoning also holds for the other local functions in \eqref{eq:msheII(2)a}-\eqref{eq:msheII(2)b} (except that we also use (7) and (10) in Lemma \ref{lemma:classify}). So, we know that {\small$[\mathscr{S}^{N}\star(\mathfrak{e}_{2}[\tau_{\cdot}\bphi_{\t}]\mathbf{Z}^{N}_{\t,\cdot})]_{\x}\d\t$} (see \eqref{eq:msheII(2)a}-\eqref{eq:msheII(2)b}) has the form of the \abbr{RHS} of \eqref{eq:qnformI1}.
\item Take the function {\small$\mathfrak{q}[\bphi]:=N^{-1/2}\mathscr{V}'[\bphi_{0}]$}. This depends on {\small$\bphi_{\w}$} for {\small$\w\in\{-\mathfrak{l},\ldots,0\}$} for some {\small$\mathfrak{l}=\mathrm{O}(1)$}. Moreover, by point (2) in Lemma \ref{lemma:classify}, this function is in {\small$\mathrm{Jet}_{-1}^{\perp}$} and thus admissible. Now, because convolution commutes with the discrete gradient, we have
\begin{align*}
[\mathscr{S}^{N}\star\{N^{\frac32}\grad^{\mathbf{X}}_{1}(\mathscr{V}'[\bphi_{\t,\cdot}]\mathbf{Z}^{N}_{\t,\cdot})\}]_{\x}&=N\cdot N\grad^{\mathbf{X}}_{1}[\mathscr{S}^{N}\star\{N^{-\frac12}\mathscr{V}'[\tau_{\cdot}\bphi_{\t}]\mathbf{Z}^{N}_{\t,\cdot}\}]_{\x}\d\t.
\end{align*}
In particular, the \abbr{LHS} above has the form of \eqref{eq:qnformI2}. The same reasoning holds for the other local functions in \eqref{eq:msheII(1)a}-\eqref{eq:msheII(1)c}) (except we use (8) and (11) in Lemma \ref{lemma:classify}). Ultimately, we deduce that  {\small$[\mathscr{S}^{N}\star(\mathfrak{e}_{1}[\tau_{\cdot}\bphi_{\t}]\mathbf{Z}^{N}_{\t,\cdot})]_{\x}\d\t$} (see \eqref{eq:msheII(1)a}-\eqref{eq:msheII(1)c}) has the form on the \abbr{RHS} of \eqref{eq:qnformI1}-\eqref{eq:qnformI2}.
\item Take \eqref{eq:msheII(0)a}-\eqref{eq:msheII(0)f}; by Lemma \ref{lemma:classify}, all of the local functions therein that are being multiplied by {\small$\mathbf{Z}^{N}$} belong to {\small$\mathrm{Jet}_{0}^{\perp}$}. This is a lower-degree {\small$\mathrm{Jet}_{\k}^{\perp}$}-space than in point (2) above by one degree. But, we now have an extra factor of {\small$N^{-1/2}$} compared to point (2). Thus, the same reasoning in the previous bullet point also shows that {\small$[\mathscr{S}^{N}\star(\mathfrak{e}_{0}[\tau_{\cdot}\bphi_{\t}]\mathbf{Z}^{N}_{\t,\cdot})]_{\x}\d\t$} has the form of the \abbr{RHS} of \eqref{eq:qnformI1}-\eqref{eq:qnformI2}.
\end{enumerate}
This completes the proof.
\end{proof}
\subsection{Introducing space-time averages}
The goal is to now replace {\small$\mathfrak{q}_{\n_{1}},\mathfrak{q}_{\n_{2}}$} by their respective space-time averages. Recall the notation from Definition \ref{definition:eq-operators} and \eqref{eq:zsmooth}.
\begin{lemma}\label{lemma:spacetimeaverage}
\fsp Fix any {\small$\mathfrak{q}_{\n_{1}}$} on the \abbr{RHS} of \eqref{eq:qnformI1}. We have the following (with notation to be explained afterwards):
\begin{align}
[\mathscr{S}^{N}\star(\mathfrak{q}_{\n_{1}}[\tau_{\cdot}\bphi_{\t}]\mathbf{Z}^{N}_{\t,\cdot})]_{\x}&=\sum_{\substack{\m=0,\ldots,\mathrm{M}\\0<|\mathfrak{l}_{1}|,\ldots,|\mathfrak{l}_{\m}|\leq\mathfrak{n}_{\mathbf{Av}}}}\tfrac{1}{|\mathfrak{l}_{1}|\ldots|\mathfrak{l}_{\m}|}\mathrm{b}_{\m,\mathfrak{l}_{1},\ldots,\mathfrak{l}_{\m}}\grad^{\mathbf{X}}_{\mathfrak{l}_{1}}\ldots\grad^{\mathbf{X}}_{\mathfrak{l}_{\m}}[\mathscr{S}^{N}\star(\mathbf{Av}^{\mathbf{T},\mathbf{X},\mathfrak{q}_{\n_{1}}}_{\t,\cdot}\cdot\mathbf{R}^{N}_{\t,\cdot}\mathbf{S}^{N}_{\t,\cdot})]_{\x}\label{eq:spacetimeaverageIa}\\
&+\sum_{\substack{\m=0,\ldots,\mathrm{M}\\0<|\mathfrak{l}_{1}|,\ldots,|\mathfrak{l}_{\m}|\leq\mathfrak{n}_{\mathbf{Av}}}}\tfrac{1}{|\mathfrak{l}_{1}|\ldots|\mathfrak{l}_{\m}|}\mathrm{b}_{\m,\mathfrak{l}_{1},\ldots,\mathfrak{l}_{\m}}\grad^{\mathbf{T},\mathrm{av}}_{\mathfrak{t}_{\mathbf{Av}}}\grad^{\mathbf{X}}_{\mathfrak{l}_{1}}\ldots\grad^{\mathbf{X}}_{\mathfrak{l}_{\m}}[\mathscr{S}^{N}\star(\mathbf{Av}^{\mathbf{X},\mathfrak{q}_{\n_{1}}}_{\t,\cdot}\cdot\mathbf{R}^{N}_{\t,\cdot}\mathbf{S}^{N}_{\t,\cdot})]_{\x}\label{eq:spacetimeaverageIb}\\
&+\mathrm{Err}_{1}[\mathbf{R}^{N}_{\t,\cdot}\mathbf{S}^{N}_{\t,\cdot}]_{\x}.\label{eq:spacetimeaverageIc}
\end{align}
%
\begin{itemize}
\item The parameter {\small$\mathrm{M}=\mathrm{O}(1)$} is a positive integer.
\item The coefficients {\small$\mathrm{b}_{\m,\mathfrak{l}_{1},\ldots,\mathfrak{l}_{\m}}$} are deterministic and {\small$\mathrm{O}(1)$}.
\item The term {\small$\mathrm{Err}_{1}[\mathbf{R}^{N}\mathbf{S}^{N}]_{\t,\x}$} is a linear operator in {\small$\mathbf{S}^{N}$} evaluated at {\small$(\t,\x)$}, and for {\small$\mathrm{C}=\mathrm{O}(1)$} is large, we have
\begin{align}
|\mathrm{Err}_{1}[\mathbf{R}^{N}_{\t,\cdot}\mathbf{S}^{N}_{\t,\cdot}]_{\x}|\lesssim N^{-\frac32}\cdot N^{-1+\delta_{\mathbf{S}}}\sum_{|\w|\lesssim N^{1-\delta_{\mathbf{S}}}}\Big(1+\sum_{|\z|\lesssim1}|\bphi_{\t,\x+\w+\z}|^{\mathrm{C}}\Big)\cdot|\mathbf{R}^{N}_{\t,\x+\w}|\cdot|\mathbf{S}^{N}_{\t,\x+\w}|.\label{eq:spacetimeaverageII}
\end{align}
\end{itemize}
\end{lemma}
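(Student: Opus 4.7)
The plan is to iterate a one-step spatial Abel-summation identity $\mathrm{M}$ times to replace $\mathfrak{q}_{\n_1}[\tau_\x\bphi_\t]\mathbf{Z}^N_{\t,\x}$ by weighted spatial gradients of $\mathbf{Av}^{\mathbf{X},\mathfrak{q}_{\n_1}}_{\t,\x}\mathbf{Z}^N_{\t,\x}$, and then to apply a single temporal splitting that separates $\mathbf{Av}^{\mathbf{X},\mathfrak{q}_{\n_1}}_{\t,\x}\mathbf{Z}^N_{\t,\x}$ into $\mathbf{Av}^{\mathbf{T},\mathbf{X},\mathfrak{q}_{\n_1}}_{\t,\x}\mathbf{Z}^N_{\t,\x}$ plus its $\grad^{\mathbf{T},\mathrm{av}}_{\mathfrak{t}_{\mathbf{Av}}}$. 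I assume $\mathfrak{q}_{\n_1}[\bphi]$ is right-supported (the left-supported case is symmetric) and abbreviate $f_\x := \mathfrak{q}_{\n_1}[\tau_\x\bphi_\t]\mathbf{Z}^N_{\t,\x}$. The starting identity is
\begin{align*}
f_\x\;=\;\tfrac{1}{\mathfrak{n}_{\mathbf{Av}}}\sum_{\j=1}^{\mathfrak{n}_{\mathbf{Av}}}f_{\x+\j}\;-\;\sum_{\j=1}^{\mathfrak{n}_{\mathbf{Av}}}\tfrac{1}{|\j|}\cdot\tfrac{|\j|}{\mathfrak{n}_{\mathbf{Av}}}\grad^{\mathbf{X}}_{\j}f_\x,
\end{align*}
whose first sum is exactly $\mathbf{Av}^{\mathbf{X},\mathfrak{q}_{\n_1}}_{\t,\x}\mathbf{Z}^N_{\t,\x}$ by Definition \ref{definition:eq-operators}, and whose coefficients $|\j|/\mathfrak{n}_{\mathbf{Av}}\in[0,1]$ already produce the $|\mathfrak{l}|^{-1}$-weighted structure required in \eqref{eq:spacetimeaverageIa}--\eqref{eq:spacetimeaverageIb} with uniformly bounded $\mathrm{b}$-coefficients.

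The next step is to iterate this identity $\mathrm{M}$ times on the remainder: at each pass, I apply the same identity to $\grad^{\mathbf{X}}_{\mathfrak{l}_{i_1}}\cdots\grad^{\mathbf{X}}_{\mathfrak{l}_{i_k}}f$ viewed as a function of $\x$. Because spatial gradients commute with the finite-sum averaging used to build $\mathbf{Av}^{\mathbf{X},\mathfrak{q}_{\n_1}}$, after $\mathrm{M}$ iterations $f_\x$ becomes a sum (over $\m=0,\ldots,\mathrm{M}-1$ and $0<|\mathfrak{l}_i|\leq\mathfrak{n}_{\mathbf{Av}}$) of weighted $\m$-fold gradients of $\mathbf{Av}^{\mathbf{X},\mathfrak{q}_{\n_1}}_{\t,\x}\mathbf{Z}^N_{\t,\x}$, plus an $\mathrm{M}$-fold gradient remainder $\mathfrak{R}_{\t,\x}$ still applied to $f_\cdot$ itself; each $\mathrm{b}_{\m,\mathfrak{l}_1,\ldots,\mathfrak{l}_\m}$ is a product of $[0,1]$-factors $|\mathfrak{l}_i|/\mathfrak{n}_{\mathbf{Av}}$ and hence uniformly $\mathrm{O}(1)$. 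I will then use the splitting
\begin{align*}
\mathbf{Av}^{\mathbf{X},\mathfrak{q}_{\n_1}}_{\t,\x}\mathbf{Z}^N_{\t,\x}\;=\;\mathbf{Av}^{\mathbf{T},\mathbf{X},\mathfrak{q}_{\n_1}}_{\t,\x}\mathbf{Z}^N_{\t,\x}\;+\;\grad^{\mathbf{T},\mathrm{av}}_{\mathfrak{t}_{\mathbf{Av}}}\!\bigl(\mathbf{Av}^{\mathbf{X},\mathfrak{q}_{\n_1}}_{\t,\x}\mathbf{Z}^N_{\t,\x}\bigr),
\end{align*}
which is immediate from Definition \ref{definition:eq-operators} since $\mathbf{Av}^{\mathbf{T},\mathbf{X},\mathfrak{q}_{\n_1}}_{\t,\x}\mathbf{Z}^N_{\t,\x}$ is by construction the $\r\in[0,\mathfrak{t}_{\mathbf{Av}}]$-time-average of $\mathbf{Av}^{\mathbf{X},\mathfrak{q}_{\n_1}}_{\t-\r,\x}\mathbf{Z}^N_{\t-\r,\x}$. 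Convolving the resulting decomposition of $f$ against $\mathscr{S}^N$ (which commutes with both $\grad^{\mathbf{X}}_{\mathfrak{l}}$ and $\grad^{\mathbf{T},\mathrm{av}}_{\mathfrak{t}_{\mathbf{Av}}}$) and then substituting $\mathbf{Z}^N=\mathbf{R}^N\mathbf{S}^N$ from \eqref{eq:zsmooth} will yield the two main sums \eqref{eq:spacetimeaverageIa}--\eqref{eq:spacetimeaverageIb} exactly.

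The hard part is the error estimate \eqref{eq:spacetimeaverageII} for $\mathrm{Err}_1 := [\mathscr{S}^N\star\mathfrak{R}_{\t,\cdot}]_\x$. I would transfer all $\mathrm{M}$ gradients onto $\mathscr{S}^N$ by summation by parts and invoke smoothness of $\mathscr{S}^N$ on scale $N^{1-\delta_{\mathbf{S}}}$ to obtain $|\grad^{\mathbf{X}}_{\mathfrak{l}_1}\cdots\grad^{\mathbf{X}}_{\mathfrak{l}_\mathrm{M}}\mathscr{S}^N_\w|\lesssim|\mathfrak{l}_1|\cdots|\mathfrak{l}_\mathrm{M}|\cdot N^{-1-\mathrm{M}+(\mathrm{M}+1)\delta_{\mathbf{S}}}\mathbf{1}_{|\w|\lesssim N^{1-\delta_{\mathbf{S}}}}$. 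The factors $|\mathfrak{l}_i|$ cancel the weights $|\mathfrak{l}_i|^{-1}$, and summing each $|\mathfrak{l}_i|$ up to $\mathfrak{n}_{\mathbf{Av}}=N^{1-3\delta_{\mathbf{S}}/2}$ contributes a further $\mathfrak{n}_{\mathbf{Av}}^{\mathrm{M}}$. Combined with the admissibility prefactor $N^{-1+\k/2}\leq 1$ of $\mathfrak{q}_{\n_1}$ (Definition \ref{definition:admissible}) and the polynomial bound \eqref{eq:f-estimate}, the overall scaling is $N^{-1+\delta_{\mathbf{S}}-\mathrm{M}\delta_{\mathbf{S}}/2}$ multiplied by the spatial sum on the right-hand side of \eqref{eq:spacetimeaverageII}. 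The key bookkeeping step is then to choose $\mathrm{M}=\mathrm{O}(\delta_{\mathbf{S}}^{-1})$ large enough (say $\mathrm{M}\delta_{\mathbf{S}}/2\geq 3/2$) so that the prefactor drops below $N^{-3/2}\cdot N^{-1+\delta_{\mathbf{S}}}$, matching \eqref{eq:spacetimeaverageII}; one must simultaneously verify that each Abel pass applied \emph{after} already-existing gradients preserves the $|\mathfrak{l}|^{-1}$-weighted form with bounded $\mathrm{b}$-coefficients, so that the iteration closes on the stated class of terms.
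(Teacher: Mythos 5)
Your proposal is correct and follows essentially the same approach as the paper's proof: start from the one-step Abel-summation identity that expresses $\mathfrak{q}_{\n_{1}}\mathbf{Z}^{N}$ as $\mathbf{Av}^{\mathbf{X},\mathfrak{q}_{\n_{1}}}\mathbf{Z}^{N}$ minus an average of spatial gradients, iterate $\mathrm{M}$ times (the paper iterates on $\mathscr{S}^{N}\star(\mathfrak{q}_{\n_{1}}\mathbf{Z}^{N})$, but since convolution commutes with $\grad^{\mathbf{X}}_{\mathfrak{l}}$ this is the same thing), split each averaged term by the identity $\mathbf{Av}^{\mathbf{X},\mathfrak{q}_{\n_{1}}}\mathbf{Z}^{N}=\mathbf{Av}^{\mathbf{T},\mathbf{X},\mathfrak{q}_{\n_{1}}}\mathbf{Z}^{N}+\grad^{\mathbf{T},\mathrm{av}}_{\mathfrak{t}_{\mathbf{Av}}}(\mathbf{Av}^{\mathbf{X},\mathfrak{q}_{\n_{1}}}\mathbf{Z}^{N})$, and bound the $(\mathrm{M}+1)$-fold-gradient remainder by transferring gradients to $\mathscr{S}^{N}$ and using its $N^{1-\delta_{\mathbf{S}}}$-smoothness, with $\mathrm{M}=\mathrm{O}(\delta_{\mathbf{S}}^{-1})=\mathrm{O}(1)$ chosen so that the extra factor of $N^{-\mathrm{M}\delta_{\mathbf{S}}/2}$ absorbs the required $N^{-3/2}$. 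Your weight bookkeeping (carrying $|\mathfrak{l}_{i}|^{-1}\cdot(|\mathfrak{l}_{i}|/\mathfrak{n}_{\mathbf{Av}})$ explicitly from the start) versus the paper's (carrying $\mathfrak{n}_{\mathbf{Av}}^{-\m}$ through the iteration and rewriting only at the end) is a cosmetic difference, as is the $\m\leq\mathrm{M}-1$ versus $\m\leq\mathrm{M}$ indexing of the remainder.
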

\begin{proof}
Let us assume that {\small$\mathfrak{q}_{\n_{1}}[\bphi]$} depends only on {\small$\bphi_{\w}$} for {\small$\w\in\{1,\ldots,\mathfrak{l}\}$} for some {\small$\mathfrak{l}=\mathrm{O}(1)$}. The argument in the case where {\small$\w\in\{-\mathfrak{l},\ldots,0\}$} is analogous. We start with the following elementary computation:
\begin{align*}
\mathfrak{q}_{\n_{1}}[\tau_{\cdot}\bphi_{\t}]\mathbf{Z}^{N}_{\t,\cdot}&=\mathfrak{n}_{\mathbf{Av}}^{-1}\sum_{\j=1,\ldots,\mathfrak{n}_{\mathbf{Av}}}\mathfrak{q}_{\n_{1}}[\tau_{\cdot+\j}\bphi_{\t}]\mathbf{Z}^{N}_{\t,\cdot+\j}-\mathfrak{n}_{\mathbf{Av}}^{-1}\sum_{\j=1,\ldots,\mathfrak{n}_{\mathbf{Av}}}\grad^{\mathbf{X}}_{\j}\mathfrak{q}_{\n_{1}}[\tau_{\cdot}\bphi_{\t}]\mathbf{Z}^{N}_{\t,\cdot},
\end{align*}
where the gradient acts on the implicit space-variable. When we convolve this against {\small$\mathscr{S}^{N}$}, we get
\begin{align*}
\mathscr{S}^{N}\star(\mathfrak{q}_{\n_{1}}[\tau_{\cdot}\bphi_{\t}]\mathbf{Z}^{N}_{\t,\cdot})&=\mathscr{S}^{N}\star\Big(\mathfrak{n}_{\mathbf{Av}}^{-1}\sum_{\j=1,\ldots,\mathfrak{n}_{\mathbf{Av}}}\mathfrak{q}_{\n_{1}}[\tau_{\cdot+\j}\bphi_{\t}]\mathbf{Z}^{N}_{\t,\cdot+\j}\Big)-\mathscr{S}^{N}\star\Big(\mathfrak{n}_{\mathbf{Av}}^{-1}\sum_{\j=1,\ldots,\mathfrak{n}_{\mathbf{Av}}}\grad^{\mathbf{X}}_{\j}\mathfrak{q}_{\n_{1}}[\tau_{\cdot}\bphi_{\t}]\mathbf{Z}^{N}_{\t,\cdot}\Big)\\
&=\mathscr{S}^{N}\star\Big(\mathfrak{n}_{\mathbf{Av}}^{-1}\sum_{\j=1,\ldots,\mathfrak{n}_{\mathbf{Av}}}\mathfrak{q}_{\n_{1}}[\tau_{\cdot+\j}\bphi_{\t}]\mathbf{Z}^{N}_{\t,\cdot+\j}\Big)-\mathfrak{n}_{\mathbf{Av}}^{-1}\sum_{\j=1,\ldots,\mathfrak{n}_{\mathbf{Av}}}\grad^{\mathbf{X}}_{\j}\mathscr{S}^{N}\star(\mathfrak{q}_{\n_{1}}[\tau_{\cdot}\bphi_{\t}]\mathbf{Z}^{N}_{\t,\cdot}),
\end{align*}
where the second identity follows because convolution commutes with discrete gradients. Note that the last term in the previous display is an average of discrete gradients acting on the far \abbr{LHS}. Thus, we can iterate this identity {\small$\mathrm{M}$}-many times and obtain the following (which we explain immediately afterwards):
\begin{align}
\mathscr{S}^{N}\star(\mathfrak{q}_{\n_{1}}[\tau_{\cdot}\bphi_{\t}]\mathbf{Z}^{N}_{\t,\cdot})&=\sum_{\substack{\m=0,\ldots,\mathrm{M}\\0<|\mathfrak{l}_{1}|,\ldots,|\mathfrak{l}_{\m}|\leq\mathfrak{n}_{\mathbf{Av}}}}\tfrac{1}{\mathfrak{n}_{\mathbf{Av}}^{\m}}\mathrm{a}_{\m,\mathfrak{l}_{1},\ldots,\mathfrak{l}_{\m}}(\grad^{\mathbf{X}}_{\mathfrak{l}_{1}}\ldots\grad^{\mathbf{X}}_{\mathfrak{l}_{\m}})\mathscr{S}^{N}\star\Big(\mathfrak{n}_{\mathbf{Av}}^{-1}\sum_{\j=1,\ldots,\mathfrak{n}_{\mathbf{Av}}}\mathfrak{q}_{\n_{1}}[\tau_{\cdot+\j}\bphi_{\t}]\mathbf{Z}^{N}_{\t,\cdot+\j}\Big)\label{eq:spacetimeaverage1a}\\
&+\sum_{0<|\mathfrak{l}_{1}|,\ldots,|\mathfrak{l}_{\mathrm{M}+1}|\leq\mathfrak{n}_{\mathbf{Av}}}\tfrac{1}{\mathfrak{n}_{\mathbf{Av}}^{\mathrm{M}+1}}\mathrm{a}_{\mathrm{M}+1,\mathfrak{l}_{1},\ldots,\mathfrak{l}_{\mathrm{M}+1}}(\grad^{\mathbf{X}}_{\mathfrak{l}_{1}}\ldots\grad^{\mathbf{X}}_{\mathfrak{l}_{\mathrm{M}+1}})\mathscr{S}^{N}\star(\mathfrak{q}_{\n_{1}}[\tau_{\cdot}\bphi_{\t}]\mathbf{Z}^{N}_{\t,\cdot}).\label{eq:spacetimeaverage1b}
\end{align}
Above, the coefficients {\small$\mathrm{a}_{\m,\mathfrak{l}_{1},\ldots,\mathfrak{l}_{\m}}$} are deterministic and {\small$\mathrm{O}(1)$} for {\small$\mathrm{m}=0,\ldots,\mathrm{M}+1$}. Also, the parameter {\small$\mathrm{M}$} is any positive integer; we will choose it to be large but {\small$\mathrm{O}(1)$} (to be specified shortly). We now control \eqref{eq:spacetimeaverage1b}. Recall the notation from \eqref{eq:zsmooth}; this lets write
\begin{align}
\eqref{eq:spacetimeaverage1b}&=\sum_{0<|\mathfrak{l}_{1}|,\ldots,|\mathfrak{l}_{\mathrm{M}+1}|\leq\mathfrak{n}_{\mathbf{Av}}}\tfrac{1}{\mathfrak{n}_{\mathbf{Av}}^{\mathrm{M}+1}}\mathrm{a}_{\mathrm{M}+1,\mathfrak{l}_{1},\ldots,\mathfrak{l}_{\mathrm{M}+1}}(\grad^{\mathbf{X}}_{\mathfrak{l}_{1}}\ldots\grad^{\mathbf{X}}_{\mathfrak{l}_{\mathrm{M}+1}})\mathscr{S}^{N}\star(\mathfrak{q}_{\n_{1}}[\tau_{\cdot}\bphi_{\t}]\mathbf{R}^{N}_{\t,\cdot}\mathbf{S}^{N}_{\t,\cdot}).\nonumber
\end{align}
We note that the \abbr{RHS} of the previous display is a linear operator in the input function {\small$\mathbf{S}^{N}_{\t,\cdot}$}. Now, note that each gradient has length {\small$\mathrm{O}(\mathfrak{n}_{\mathbf{Av}})$}. Moreover, {\small$\mathscr{S}^{N}$} is a smooth kernel at scale {\small$N^{-1+\delta_{\mathbf{S}}}$} (with support on this scale as well); see Definition \ref{definition:zsmooth}. In particular, each gradient introduces a factor of {\small$\lesssim|\mathfrak{n}_{\mathbf{Av}}|N^{-1+\delta_{\mathbf{S}}}$} when it acts on {\small$\mathscr{S}^{N}$}. Moreover, if we recall that {\small$|\mathfrak{n}_{\mathbf{Av}}|\lesssim N^{1-3\delta_{\mathbf{S}}/2}$} (see Definition \ref{definition:eq-operators}), then we have {\small$|\mathfrak{n}_{\mathbf{Av}}|N^{-1+\delta_{\mathbf{S}}}\lesssim N^{-\delta}$} for some {\small$\delta>0$}. Therefore, if we take {\small$\mathrm{M}$} large enough, then we have the pointwise gradient estimate 
\begin{align*}
|\grad^{\mathbf{X}}_{\mathfrak{l}_{1}}\ldots\grad^{\mathbf{X}}_{\mathfrak{l}_{\mathrm{M}+1}}\mathscr{S}^{N}_{\x-\w}|\lesssim |\mathfrak{n}_{\mathbf{Av}}|^{\mathrm{M}+1}(N^{-1+\delta_{\mathbf{S}}})^{\mathrm{M}+1}\cdot N^{-1+\delta_{\mathbf{S}}}\mathbf{1}_{|\x-\w|\lesssim N^{1-\delta_{\mathbf{S}}}}\lesssim N^{-\frac32}\cdot N^{-1+\delta_{\mathbf{S}}}\mathbf{1}_{|\x-\w|\lesssim N^{1-\delta_{\mathbf{S}}}}.
\end{align*}
This estimate is uniform in {\small$\mathfrak{l}_{1},\ldots,\mathfrak{l}_{\mathrm{M}+1}$} parameters in \eqref{eq:spacetimeaverage1b}. Thus, after we average over these parameters and multiply by {\small$\mathrm{a}_{\mathrm{M}+1,\mathfrak{l}_{1},\ldots,\mathfrak{l}_{\mathrm{M}+1}}=\mathrm{O}(1)$}, we deduce the following (similar to the proof of Lemma \ref{lemma:ferrestimate}), \emph{whose proof does not use anything about {\small$\mathbf{S}^{N}$}}:
\begin{align}
|\eqref{eq:spacetimeaverage1b}_{\x}|&\lesssim N^{-\frac32}\cdot N^{-1+\delta_{\mathbf{S}}}\sum_{|\w|\lesssim N^{1-\delta_{\mathbf{S}}}}|\mathfrak{q}_{\n_{1}}[\tau_{\x+\w}\bphi_{\t}]|\cdot|\mathbf{R}^{N}_{\t,\x+\w}|\cdot|\mathbf{S}^{N}_{\t,\x+\w}|\nonumber\\
&\lesssim N^{-\frac32}\cdot N^{-1+\delta_{\mathbf{S}}}\sum_{|\w|\lesssim N^{1-\delta_{\mathbf{S}}}}\Big(1+\sum_{|\z|\lesssim1}|\bphi_{\t,\x+\w+\z}|^{\mathrm{C}}\Big)\cdot|\mathbf{S}^{N}_{\t,\x+\w}|.\label{eq:spacetimeaverage1bestimate}
\end{align}
(The second estimate holds since {\small$\mathfrak{q}_{\n_{1}}[\bphi]$} has a local polynomial estimate; see Definition \ref{definition:admissible}.) We now return to the \abbr{RHS} of \eqref{eq:spacetimeaverage1a}. For each summand, we use the time-gradient \eqref{eq:timegrad} to get the following:
\begin{align*}
\mathscr{S}^{N}\star\Big(\mathfrak{n}_{\mathbf{Av}}^{-1}\sum_{\j=1,\ldots,\mathfrak{n}_{\mathbf{Av}}}\mathfrak{q}_{\n_{1}}[\tau_{\cdot+\j}\bphi_{\t}]\mathbf{Z}^{N}_{\t,\cdot+\j}\Big)&=\tfrac{1}{\mathfrak{t}_{\mathbf{Av}}}\int_{0}^{\mathfrak{t}_{\mathbf{Av}}}\mathscr{S}^{N}\star\Big(\mathfrak{n}_{\mathbf{Av}}^{-1}\sum_{\j=1,\ldots,\mathfrak{n}_{\mathbf{Av}}}\mathfrak{q}_{\n_{1}}[\tau_{\cdot+\j}\bphi_{\t-\r}]\mathbf{Z}^{N}_{\t-\r,\cdot+\j}\Big)\d\r\\
&+\grad^{\mathbf{T},\mathrm{av}}_{\mathfrak{t}_{\mathbf{Av}}}\mathscr{S}^{N}\star\Big(\mathfrak{n}_{\mathbf{Av}}^{-1}\sum_{\j=1,\ldots,\mathfrak{n}_{\mathbf{Av}}}\mathfrak{q}_{\n_{1}}[\tau_{\cdot+\j}\bphi_{\t}]\mathbf{Z}^{N}_{\t,\cdot+\j}\Big).
\end{align*}
Now, for the first term on the \abbr{RHS}, we may move the time-integration into the convolution operator (by linearity of integration). Then, we write {\small$\mathbf{Z}^{N}_{\t-\r,\cdot+\j}=\mathbf{Z}^{N}_{\t-\r,\cdot+\j}(\mathbf{Z}^{N}_{\t,\cdot})^{-1}\cdot\mathbf{Z}^{N}_{\t,\cdot}$} and {\small$\mathbf{Z}^{N}=\mathbf{R}^{N}\mathbf{S}^{N}$}. The result is
\begin{align*}
&\tfrac{1}{\mathfrak{t}_{\mathbf{Av}}}\int_{0}^{\mathfrak{t}_{\mathbf{Av}}}\Big[\mathscr{S}^{N}\star\Big(\mathfrak{n}_{\mathbf{Av}}^{-1}\sum_{\j=1,\ldots,\mathfrak{n}_{\mathbf{Av}}}\mathfrak{q}_{\n_{1}}[\tau_{\cdot+\j}\bphi_{\t-\r}]\mathbf{Z}^{N}_{\t-\r,\cdot+\j}\Big)\Big]_{\x}\d\r\\
&=[\mathscr{S}^{N}\star(\mathbf{Av}^{\mathbf{T},\mathbf{X},\mathfrak{q}_{\n_{1}}}_{\t,\cdot}\mathbf{Z}^{N}_{\t,\cdot})]_{\x}=[\mathscr{S}^{N}\star(\mathbf{Av}^{\mathbf{T},\mathbf{X},\mathfrak{q}_{\n_{1}}}_{\t,\cdot}\cdot\mathbf{R}^{N}_{\t,\cdot}\mathbf{S}^{N}_{\t,\cdot})]_{\x}.
\end{align*}
(See Definition \ref{definition:eq-operators} for the relevant notation in the above display.) By the same token, we also have 
\begin{align*}
\Big[\mathscr{S}^{N}\star\Big(\mathfrak{n}_{\mathbf{Av}}^{-1}\sum_{\j=1,\ldots,\mathfrak{n}_{\mathbf{Av}}}\mathfrak{q}_{\n_{1}}[\tau_{\cdot+\j}\bphi_{\t}]\mathbf{Z}^{N}_{\t,\cdot+\j}\Big)\Big]_{\x}&=\Big[\mathscr{S}^{N}\star\Big(\mathfrak{n}_{\mathbf{Av}}^{-1}\sum_{\j=1,\ldots,\mathfrak{n}_{\mathbf{Av}}}\mathfrak{q}_{\n_{1}}[\tau_{\cdot+\j}\bphi_{\t}]\mathbf{Z}^{N}_{\t,\cdot+\j}(\mathbf{Z}^{N}_{\t,\cdot})^{-1}\cdot\mathbf{Z}^{N}_{\t,\cdot}\Big)\Big]_{\x}\\
&=[\mathscr{S}^{N}\star(\mathbf{Av}^{\mathbf{X},\mathfrak{q}_{\n_{1}}}_{\t,\cdot}\mathbf{Z}^{N}_{\t,\cdot})]_{\x}=[\mathscr{S}^{N}\star(\mathbf{Av}^{\mathbf{X},\mathfrak{q}_{\n_{1}}}_{\t,\cdot}\mathbf{R}^{N}_{\t,\cdot}\mathbf{S}^{N}_{\t,\cdot})]_{\x}.
\end{align*}
By combining the previous three displays and plugging the result into the \abbr{RHS} of \eqref{eq:spacetimeaverage1a}, we deduce that 
\begin{align*}
\mathsf{RHS}\eqref{eq:spacetimeaverage1a}_{\x}&=\sum_{\substack{\m=0,\ldots,\mathrm{M}\\0<|\mathfrak{l}_{1}|,\ldots,|\mathfrak{l}_{\m}|\leq\mathfrak{n}_{\mathbf{Av}}}}\tfrac{1}{\mathfrak{n}_{\mathbf{Av}}^{\m}}\mathrm{a}_{\m,\mathfrak{l}_{1},\ldots,\mathfrak{l}_{\m}}\grad^{\mathbf{X}}_{\mathfrak{l}_{1}}\ldots\grad^{\mathbf{X}}_{\mathfrak{l}_{\m}}[\mathscr{S}^{N}\star(\mathbf{Av}^{\mathbf{T},\mathbf{X},\mathfrak{q}_{\n_{1}}}_{\t,\cdot}\cdot\mathbf{R}^{N}_{\t,\cdot}\mathbf{S}^{N}_{\t,\cdot})]_{\x}\\
&+\sum_{\substack{\m=0,\ldots,\mathrm{M}\\0<|\mathfrak{l}_{1}|,\ldots,|\mathfrak{l}_{\m}|\leq\mathfrak{n}_{\mathbf{Av}}}}\tfrac{1}{\mathfrak{n}_{\mathbf{Av}}^{\m}}\mathrm{a}_{\m,\mathfrak{l}_{1},\ldots,\mathfrak{l}_{\m}}\grad^{\mathbf{T},\mathrm{av}}_{\mathfrak{t}_{\mathbf{Av}}}\grad^{\mathbf{X}}_{\mathfrak{l}_{1}}\ldots\grad^{\mathbf{X}}_{\mathfrak{l}_{\m}}[\mathscr{S}^{N}\star(\mathbf{Av}^{\mathbf{X},\mathfrak{q}_{\n_{1}}}_{\t,\cdot}\cdot\mathbf{R}^{N}_{\t,\cdot}\mathbf{S}^{N}_{\t,\cdot})]_{\x}
\end{align*}
Since {\small$|\mathfrak{l}_{1}|,\ldots,|\mathfrak{l}_{\m}|\leq\mathfrak{n}_{\mathbf{Av}}$}, we can rewrite {\small$\mathfrak{n}_{\mathbf{Av}}^{-\m}\mathrm{a}_{\m,\mathfrak{l}_{1},\ldots,\mathfrak{l}_{\m}}=|\mathfrak{l}_{1}|^{-1}\ldots|\mathfrak{l}_{\m}|^{-1}\mathrm{a}'_{\m,\mathfrak{l}_{1},\ldots,\mathfrak{l}_{\m}}$} for deterministic and {\small$\mathrm{O}(1)$} coefficients {\small$\mathrm{a}'_{\m,\mathfrak{l}_{1},\ldots,\mathfrak{l}_{\m}}$}. Thus, the result follows by combining \eqref{eq:spacetimeaverage1a}-\eqref{eq:spacetimeaverage1b}, \eqref{eq:spacetimeaverage1bestimate}, and the previous display. 
\end{proof}
By the same argument (or applying {\small$N\grad^{\mathbf{X}}_{\mathfrak{l}_{0}}$} to \eqref{eq:spacetimeaverageIa}-\eqref{eq:spacetimeaverageIc}), we get a similar representation for the summands in \eqref{eq:qnformI2}. The only difference is that the summands in \eqref{eq:qnformI2} have an extra {\small$N\grad^{\mathbf{X}}_{\mathfrak{l}_{0}}$} for {\small$|\mathfrak{l}_{0}|\leq1$}. {\color{black}This} changes the structure of the coefficients appearing in \eqref{eq:spacetimeaverageagainIa}-\eqref{eq:spacetimeaverageagainIc} as explained below. (In particular, the coefficients are now {\small$\mathrm{O}(N)$} instead of {\small$\mathrm{O}(1)$}, but we are now allowed to restrict to terms where one gradient has length-scale {\small$|\mathfrak{l}|\leq1$}. Compare this to the condition on the coefficients {\small$\mathrm{c}_{N,\mathfrak{l}_{1},\ldots,\mathfrak{l}_{\m}}$} in Proposition \ref{prop:s-sde}.)
\begin{lemma}\label{lemma:spacetimeaverageagain}
\fsp Fix any {\small$\mathfrak{q}_{\n_{2}}$} in \eqref{eq:qnformI2}. We have the following (with notation to be explained afterwards):
\begin{align}
N\grad^{\mathbf{X}}_{\mathfrak{l}_{0}}[\mathscr{S}^{N}\star\mathfrak{q}_{\n_{2}}[\tau_{\cdot}\bphi_{\t}]\mathbf{Z}^{N}_{\t,\cdot}]_{\x}&=\sum_{\substack{\m=0,\ldots,\mathrm{M}\\0<|\mathfrak{l}_{1}|,\ldots,|\mathfrak{l}_{\m}|\leq\mathfrak{n}_{\mathbf{Av}}}}\tfrac{1}{|\mathfrak{l}_{1}|\ldots|\mathfrak{l}_{\m}|}\mathrm{b}'_{N,\m,\mathfrak{l}_{1},\ldots,\mathfrak{l}_{\m}}\grad^{\mathbf{X}}_{\mathfrak{l}_{1}}\ldots\grad^{\mathbf{X}}_{\mathfrak{l}_{\m}}[\mathscr{S}^{N}\star(\mathbf{Av}^{\mathbf{T},\mathbf{X},\mathfrak{q}_{\n_{2}}}_{\t,\cdot}\cdot\mathbf{R}^{N}_{\t,\cdot}\mathbf{S}^{N}_{\t,\cdot})]_{\x}\label{eq:spacetimeaverageagainIa}\\
&+\sum_{\substack{\m=0,\ldots,\mathrm{M}\\0<|\mathfrak{l}_{1}|,\ldots,|\mathfrak{l}_{\m}|\leq\mathfrak{n}_{\mathbf{Av}}}}\tfrac{1}{|\mathfrak{l}_{1}|\ldots|\mathfrak{l}_{\m}|}\mathrm{b}'_{N,\m,\mathfrak{l}_{1},\ldots,\mathfrak{l}_{\m}}\grad^{\mathbf{T},\mathrm{av}}_{\mathfrak{t}_{\mathbf{Av}}}\grad^{\mathbf{X}}_{\mathfrak{l}_{1}}\ldots\grad^{\mathbf{X}}_{\mathfrak{l}_{\m}}[\mathscr{S}^{N}\star(\mathbf{Av}^{\mathbf{T},\mathbf{X},\mathfrak{q}_{\n_{2}}}_{\t,\cdot}\cdot\mathbf{R}^{N}_{\t,\cdot}\mathbf{S}^{N}_{\t,\cdot})]_{\x}\label{eq:spacetimeaverageagainIb}\\
&+\mathrm{Err}_{2}[\mathbf{R}^{N}_{\t,\cdot}\mathbf{S}^{N}_{\t,\cdot}]_{\x}.\label{eq:spacetimeaverageagainIc}
\end{align}
%
\begin{itemize}
\item The parameter {\small$\mathrm{M}=\mathrm{O}(1)$} is a positive integer.
\item The {\small$\mathrm{b}'_{N,\m,\mathfrak{l}_{1},\ldots,\mathfrak{l}_{\m}}$} are deterministic and {\small$\mathrm{O}(N)$}. Moreover, {\small$\mathrm{b}'_{N,\m,\mathfrak{l}_{1},\ldots,\mathfrak{l}_{\m}}=0$} if {\small$\min\{|\mathfrak{l}_{1}|,\ldots,|\mathfrak{l}_{\m}|\}\geq2$}.
\item The term {\small$\mathrm{Err}_{2}[\mathbf{R}^{N}\mathbf{S}^{N}]_{\t,\x}$} is a linear operator in {\small$\mathbf{S}^{N}$} evaluated at {\small$(\t,\x)$}, and it satisfies the deterministic estimate below as long as {\small$\mathrm{C}=\mathrm{O}(1)$} is sufficiently large:
\begin{align}
|\mathrm{Err}_{2}[\mathbf{R}^{N}_{\t,\cdot}\mathbf{S}^{N}_{\t,\cdot}]_{\x}|\lesssim N^{-\frac32}\cdot N^{-1+\delta_{\mathbf{S}}}\sum_{|\w|\lesssim N^{1-\delta_{\mathbf{S}}}}\Big(1+\sum_{|\z|\lesssim1}|\bphi_{\t,\x+\w+\z}|^{\mathrm{C}}\Big)\cdot|\mathbf{R}^{N}_{\t,\x+\w}|\cdot|\mathbf{S}^{N}_{\t,\x+\w}|.\label{eq:spacetimeaverageagainII}
\end{align}
\end{itemize}
\end{lemma}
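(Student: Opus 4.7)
The plan is to reduce directly to Lemma \ref{lemma:spacetimeaverage} by commuting the operator $N\grad^{\mathbf{X}}_{\mathfrak{l}_{0}}$ through its output representation. I would first apply Lemma \ref{lemma:spacetimeaverage} to $\mathfrak{q}_{\n_{2}}$, but with the parameter $\mathrm{M}$ in that application chosen a fixed constant amount larger than the final $\mathrm{M}$ needed in Lemma \ref{lemma:spacetimeaverageagain}. This extra margin is precisely to absorb the $\mathrm{O}(N^{\delta_{\mathbf{S}}})$ inflation of the error kernel caused by the additional outer gradient.

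For the two main sums \eqref{eq:spacetimeaverageIa}--\eqref{eq:spacetimeaverageIb}, the operator $\grad^{\mathbf{X}}_{\mathfrak{l}_{0}}$ is a spatial difference in the free variable $\x$, so it commutes with convolution against $\mathscr{S}^{N}$, with each inner spatial gradient $\grad^{\mathbf{X}}_{\mathfrak{l}_{j}}$, and with the time-averaging operator $\grad^{\mathbf{T},\mathrm{av}}_{\mathfrak{t}_{\mathbf{Av}}}$ (the last of these acts on an entirely separate variable). Each summand therefore acquires one extra spatial gradient $\grad^{\mathbf{X}}_{\mathfrak{l}_{0}}$. I would reindex by setting $\m':=\m+1$ and $(\mathfrak{l}_{1}',\ldots,\mathfrak{l}_{\m'}'):=(\mathfrak{l}_{0},\mathfrak{l}_{1},\ldots,\mathfrak{l}_{\m})$, and define
\begin{align*}
\mathrm{b}'_{N,\m+1,\mathfrak{l}_{0},\mathfrak{l}_{1},\ldots,\mathfrak{l}_{\m}}:=N|\mathfrak{l}_{0}|\cdot\mathrm{b}_{\m,\mathfrak{l}_{1},\ldots,\mathfrak{l}_{\m}},
\end{align*}
declaring $\mathrm{b}'_{N,\cdots}$ to vanish on any tuple all of whose entries have absolute value $\geq 2$. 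This is consistent because every term actually produced by the procedure above has $\mathfrak{l}_{0}$ among its length-scales with $|\mathfrak{l}_{0}|\leq 1$. The bound $\mathrm{b}'_{N,\cdots}=\mathrm{O}(N)$ follows immediately from $|\mathfrak{l}_{0}|\leq 1$ and the $\mathrm{O}(1)$ bound on the original $\mathrm{b}_{\cdots}$ provided by Lemma \ref{lemma:spacetimeaverage}.

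For the error term, I must show that $N\grad^{\mathbf{X}}_{\mathfrak{l}_{0}}\mathrm{Err}_{1}[\mathbf{R}^{N}_{\t,\cdot}\mathbf{S}^{N}_{\t,\cdot}]_{\x}$ obeys the bound \eqref{eq:spacetimeaverageagainII}. Since $\mathrm{Err}_{1}$ is explicitly a finite sum of $(\mathrm{M}+1)$-fold spatial-gradient convolutions against $\mathscr{S}^{N}$ (see \eqref{eq:spacetimeaverage1b}), I would move $\grad^{\mathbf{X}}_{\mathfrak{l}_{0}}$ onto $\mathscr{S}^{N}$; the estimate $|N\grad^{\mathbf{X}}_{\mathfrak{l}_{0}}\mathscr{S}^{N}_{\z}|\lesssim N^{-1+2\delta_{\mathbf{S}}}\mathbf{1}_{|\z|\lesssim N^{1-\delta_{\mathbf{S}}}}$ (valid for $|\mathfrak{l}_{0}|\leq 1$ by smoothness of $\mathscr{S}^{N}$ at scale $N^{1-\delta_{\mathbf{S}}}$) inflates the kernel by at most $N^{\delta_{\mathbf{S}}}$. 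This is absorbed by the extra margin in $\mathrm{M}$: each additional spatial gradient of $\mathscr{S}^{N}$ at scale $|\mathfrak{l}|\lesssim\mathfrak{n}_{\mathbf{Av}}$ supplies a factor $|\mathfrak{n}_{\mathbf{Av}}|N^{-1+\delta_{\mathbf{S}}}\lesssim N^{-\delta_{\mathbf{S}}/2}$. Combined with the polynomial bound on $\mathfrak{q}_{\n_{2}}$ from admissibility (Definition \ref{definition:admissible}) and the explicit form of $\mathscr{S}^{N}$ (Definition \ref{definition:zsmooth}), this yields \eqref{eq:spacetimeaverageagainII}.

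The only genuine obstacle is the bookkeeping needed to match the precise target form (coefficients $\mathrm{O}(N)$ supported on tuples with $\min_{i}|\mathfrak{l}_{i}'|\leq 1$) to what falls out of $N\grad^{\mathbf{X}}_{\mathfrak{l}_{0}}$ applied to the representation from Lemma \ref{lemma:spacetimeaverage}; no new analytic input is needed beyond carefully choosing $\mathrm{M}$ strictly larger there than in the current statement.
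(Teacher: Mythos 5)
Your proposal is correct and matches the paper's intended argument; the paper offers essentially no proof, only the one-sentence remark preceding the lemma that one should ``apply $N\grad^{\mathbf{X}}_{\mathfrak{l}_{0}}$ to \eqref{eq:spacetimeaverageIa}--\eqref{eq:spacetimeaverageIc},'' so your fleshed-out version is exactly what is being sketched. Two comments.

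First, you correctly identify the only nontrivial point: $N\grad^{\mathbf{X}}_{\mathfrak{l}_0}$ commutes harmlessly through the main sums (inserting one extra gradient, which reindexes $\m\mapsto\m+1$ and produces a coefficient $N|\mathfrak{l}_0|\mathrm{b}_{\m,\cdots}=\mathrm{O}(N)$ supported on tuples with $|\mathfrak{l}_1'|=|\mathfrak{l}_0|\leq 1$), but it inflates the error kernel by a factor $N^{\delta_{\mathbf{S}}}$, which must be absorbed by taking the $\mathrm{M}$ in the application of Lemma~\ref{lemma:spacetimeaverage} a couple of units larger than needed there. Quantitatively, since $|\mathfrak{n}_{\mathbf{Av}}|N^{-1+\delta_{\mathbf{S}}}=N^{-\delta_{\mathbf{S}}/2}$, each extra inner gradient buys $N^{-\delta_{\mathbf{S}}/2}$, so $\mathrm{M}\mapsto\mathrm{M}+2$ suffices. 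The reindexing also produces no $\m'=0$ contribution, so setting $\mathrm{b}'_{N,0}:=0$ and declaring $\mathrm{b}'$ to vanish whenever all $|\mathfrak{l}_i'|\geq 2$ is consistent, as you note.

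Second, a bookkeeping mismatch worth flagging: applying $N\grad^{\mathbf{X}}_{\mathfrak{l}_0}$ to \eqref{eq:spacetimeaverageIb} produces the time-gradient term with $\mathbf{Av}^{\mathbf{X},\mathfrak{q}_{\n_2}}$ inside, consistent with \eqref{eq:s-sdeII}, whereas the stated \eqref{eq:spacetimeaverageagainIb} displays $\mathbf{Av}^{\mathbf{T},\mathbf{X},\mathfrak{q}_{\n_2}}$; this appears to be a typo in the lemma as written, and your proof yields the correct version. You should note that your argument produces $\mathbf{Av}^{\mathbf{X}}$ in that slot, not $\mathbf{Av}^{\mathbf{T},\mathbf{X}}$, since that is the form actually used downstream in Proposition~\ref{prop:s-sde}.
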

\subsection{Proof of Proposition \ref{prop:s-sde}}
Use Corollary \ref{corollary:mshe}, Lemma \ref{lemma:ferrestimate}, Corollary \ref{corollary:qnform}, Lemma \ref{lemma:spacetimeaverage}, and Lemma \ref{lemma:spacetimeaverageagain}. \qed
%
%
%
\section{Stochastic estimates}\label{section:technical}
The goal of this section is to gather the key estimates for stochastic objects (such as the {\small$\mathbf{Av}$} objects appearing in \eqref{eq:s-sde}-\eqref{eq:s-sdeIII}, the {\small$\t\mapsto\bphi_{\t}$} process, and the {\small$\mathbf{R}^{N}$} and {\small$\mathbf{R}^{N,\wedge}$} quantities from \eqref{eq:zsmooth} and \eqref{eq:rwedge}, respectively). We first give estimates for {\small$\t\mapsto\bphi_{\t}$} and {\small$\mathbf{R}^{N}$} and {\small$\mathbf{R}^{N,\wedge}$}. Then, we give bounds for {\color{black}the} {\small$\mathbf{Av}^{\mathbf{X}}$} terms. We conclude with estimates for {\color{black}the} {\small$\mathbf{Av}^{\mathbf{X},\mathbf{T}}$} terms, as the proof of these are longer (though these estimates are the most interesting and difficult).
\subsection{Estimates for \eqref{eq:phi} and \eqref{eq:rwedge}}
We start with a result which shows that {\small$\mathbf{R}^{N}=\mathbf{R}^{N,\wedge}$} with high probability on ``large" space-time sets. First, recall {\small$\mathbf{R}^{N}$} and {\small$\mathbf{R}^{N,\wedge}$} from \eqref{eq:zsmooth} and \eqref{eq:rwedge}, and recall {\small$\delta_{\mathbf{S}}$} from Definition \ref{definition:zsmooth}.
\begin{lemma}\label{lemma:rbound}
\fsp Fix any {\small$\mathrm{D}=\mathrm{O}(1)$} and {\small$\T>0$} independent of {\small$N$}. With high probability, we have 
\begin{align}
\sup_{\t\in[0,\T]}\sup_{|\x|\leq N^{\mathrm{D}}}|\mathbf{R}^{N}_{\t,\x}-\mathbf{R}^{N,\wedge}_{\t,\x}|=0.\label{eq:rboundI}
\end{align}
\end{lemma}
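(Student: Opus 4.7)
The plan is to reduce the desired equality $\mathbf{R}^N_{\t,\x}=\mathbf{R}^{N,\wedge}_{\t,\x}$ to the uniform smallness bound
\begin{align*}
\sup_{\t\in[0,\T]}\sup_{|\x|\leq N^{\mathrm{D}}}|\mathbf{R}^N_{\t,\x}-1|<N^{-\delta_{\mathbf{S}}/3}
\end{align*}
with high probability, since on this event the indicator in \eqref{eq:rwedge} equals $1$ and both expressions coincide. Writing $\mathbf{R}^N_{\t,\x}-1=(\mathbf{Z}^N_{\t,\x}-\mathbf{S}^N_{\t,\x})/\mathbf{S}^N_{\t,\x}$, the argument splits into (i) controlling the numerator using spatial regularity of $\mathbf{Z}^N$ at the mollification scale $N^{1-\delta_{\mathbf{S}}}$, and (ii) providing a matching lower bound on $\mathbf{S}^N_{\t,\x}$.

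For the numerator, since $\mathscr{S}^N$ has unit mass and is supported at scale $N^{1-\delta_{\mathbf{S}}}$, a telescoping estimate yields
\begin{align*}
|\mathbf{Z}^N_{\t,\x}-\mathbf{S}^N_{\t,\x}|\leq\sum_{|\mathfrak{l}|\lesssim N^{1-\delta_{\mathbf{S}}}}\mathscr{S}^N_\mathfrak{l}\,|\grad^{\mathbf{X}}_\mathfrak{l}\mathbf{Z}^N_{\t,\x-\mathfrak{l}}|.
\end{align*}
I would then propagate the initial-time gradient estimate in \eqref{eq:mainI} to all $\t\in[0,\T]$ to obtain $\E|\grad^{\mathbf{X}}_\mathfrak{l}\mathbf{Z}^N_{\t,\x}|^{2p}\lesssim_{p,\zeta}N^{-p+p\zeta}|\mathfrak{l}|^{p-p\zeta}\exp(\kappa_p|\x|/N)$ for any $\zeta>0$, via a Duhamel expansion of \eqref{eq:msheI} around the heat kernel of $\mathscr{T}_N$ combined with the admissibility of the correction $\mathfrak{e}$ established in Section \ref{section:s-sde}. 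Substituting $|\mathfrak{l}|\sim N^{1-\delta_{\mathbf{S}}}$ and applying Minkowski yields $\E|\mathbf{Z}^N_{\t,\x}-\mathbf{S}^N_{\t,\x}|^{2p}\lesssim N^{-p\delta_{\mathbf{S}}+p\zeta'}\exp(\kappa_p|\x|/N)$ for arbitrarily small $\zeta'>0$.

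For the denominator, recall $\mathbf{Z}^N_{\t,\x}=\exp(\lambda\mathbf{j}^N_{\t,\x}-\lambda\mathscr{R}_\lambda\t)>0$. The same gradient moment estimate, together with a Taylor expansion of $\log(\mathbf{Z}^N_{\t,\y}/\mathbf{Z}^N_{\t,\x})$ for $|\y-\x|\lesssim N^{1-\delta_{\mathbf{S}}}$, forces ratios $\mathbf{Z}^N_{\t,\y}/\mathbf{Z}^N_{\t,\x}=1+\mathrm{O}(N^{-\delta_{\mathbf{S}}/2+\zeta'})$ with high probability, so that $\mathbf{S}^N_{\t,\x}\geq\tfrac12\mathbf{Z}^N_{\t,\x}$ on the region of interest. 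Combining yields $\E|\mathbf{R}^N_{\t,\x}-1|^{2p}\lesssim N^{-p\delta_{\mathbf{S}}+p\zeta'}\exp(\kappa_p|\x|/N)$ pointwise. A Markov inequality at sufficiently large $p$ (chosen to beat the polynomial weights $N^{\mathrm{D}}$ and $\exp(\kappa_p|\x|/N)$ for $|\x|\leq N^{\mathrm{D}}$) combined with a union bound over the spatial window and over a polynomially fine time-mesh, supplemented by time-continuity of $\t\mapsto\mathbf{Z}^N_{\t,\x}$ via Kolmogorov--Chentsov applied to moment bounds on $\mathbf{Z}^N_{\t,\x}-\mathbf{Z}^N_{\s,\x}$ extracted from \eqref{eq:msheI}, bridges the mesh and delivers the uniform estimate.

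The main obstacle is the propagation of the $\t=0$ gradient bound \eqref{eq:mainI} to all $\t\in[0,\T]$: because \eqref{eq:msheI} is not the bare lattice \abbr{SHE} but carries the admissible correction $\mathfrak{e}$, one must verify these terms do not degrade the Hölder-type control. This is handled by the admissibility of each summand in $\mathfrak{e}$ (Lemma \ref{lemma:classify} and Corollary \ref{corollary:qnform}) together with heat-kernel decay of the type used later for $\mathbf{H}^N$; the argument parallels the classical one for the lattice \abbr{SHE} (see \cite{BG,DT}), with the admissible corrections contributing only a bounded amplification factor uniformly in $N$.
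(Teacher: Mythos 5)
The reduction to showing $\sup_{\t,\x}|\mathbf{R}^N_{\t,\x}-1|\leq N^{-\delta_{\mathbf{S}}/3}$ with high probability is the same as the paper's, but from there your route diverges substantially and contains a genuine gap. You propose to control the numerator $|\mathbf{Z}^N_{\t,\x}-\mathbf{S}^N_{\t,\x}|$ by propagating the $\t=0$ gradient moment estimate \eqref{eq:mainI} to all $\t\in[0,\T]$ via a Duhamel expansion of \eqref{eq:msheI} around the heat kernel of $\mathscr{T}_N$, claiming the admissible corrections in $\mathfrak{e}$ ``contribute only a bounded amplification factor.'' This last claim is unjustified and, as written, false: the leading terms in $\mathfrak{e}$ (for instance $\lambda N\mathscr{Q}[\tau_{\x+1}\bphi_\t]\mathbf{Z}^N_{\t,\x}$ in \eqref{eq:msheII(2)a}) carry an explicit factor of $N$ and are \emph{not} pointwise small. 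Their smallness is entirely a consequence of space-time averaging, which is exactly the content of the ergodic machinery in Section~\ref{section:technical} and the heat-kernel analysis in Section~\ref{section:stochheat}. Admissibility (membership in $\mathrm{Jet}_\k^\perp$) only flags these terms as candidates for cancellation; it does not provide it. In effect your proposed propagation would require most of the hard part of the paper as a prerequisite, and it would even run against the circularity that some of those later estimates themselves call on Lemma~\ref{lemma:rbound} (e.g., \eqref{eq:rboundI3} is invoked inside the proof of Lemma~\ref{lemma:avldp} and in the definition of the stopping time $\mathfrak{t}_{\mathrm{ap}}$).

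You also miss a structural simplification that the paper exploits and that renders your separate lower bound on the denominator unnecessary. Writing
\begin{align*}
\mathbf{Z}^N_{\t,\x}-\mathbf{S}^N_{\t,\x}=\sum_{\w}\mathscr{S}^N_\w\,\mathbf{Z}^N_{\t,\x-\w}\Big(\exp(\lambda\mathbf{j}^N_{\t,\x}-\lambda\mathbf{j}^N_{\t,\x-\w})-1\Big),
\end{align*}
the positive prefactor $\sum_\w\mathscr{S}^N_\w\mathbf{Z}^N_{\t,\x-\w}=\mathbf{S}^N_{\t,\x}$ falls out automatically once you take a supremum of the parenthetical factor, so the difference is directly bounded by $\mathbf{S}^N_{\t,\x}$ times a small quantity. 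Moreover, by \eqref{eq:ch} the parenthetical factor is $\exp\{\lambda N^{-1/2}(\bphi_{\t,\x-\w+1}+\ldots+\bphi_{\t,\x})\}-1$, i.e.\ an explicit function of a block sum of $\bphi$ over a window of length $\lesssim N^{1-\delta_{\mathbf{S}}}$. Controlling this uniformly over $\t\in[0,\T]$, $|\x|\leq N^{\mathrm{D}}$, and $|\w|\lesssim N^{1-\delta_{\mathbf{S}}}$ requires no dynamical Duhamel input at all: under $\mathbb{P}^0$ the $\bphi_{\t,\z}$ are i.i.d.\ sub-Gaussian, invariance of $\mathbb{P}^0$ makes this hold at every fixed time, a Chernoff bound plus a union bound over space and a polynomially fine time-mesh gives the pointwise-in-time estimate, and Assumption~\ref{assump:noneq} allows a change of measure at cost $N^{1/3-\gamma_{\mathrm{data}}}$ which is swallowed by the exponentially small tail; a short-time continuity argument then bridges the mesh. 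That is the content of \eqref{eq:rboundI3} in the paper's proof, and it is far lighter than anything involving the SDE for $\mathbf{Z}^N$ or the admissibility structure of $\mathfrak{e}$.
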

\begin{proof}
By \eqref{eq:rwedge} and \eqref{eq:zsmooth}, we have 
\begin{align}
|\mathbf{R}^{N}_{\t,\x}-\mathbf{R}^{N,\wedge}_{\t,\x}|&=|\mathbf{R}^{N}_{\t,\x}|\cdot\mathbf{1}\Big[|\tfrac{\mathbf{Z}^{N}_{\t,\x}}{\mathbf{S}^{N}_{\t,\x}}-1|> N^{-\frac13\delta_{\mathbf{S}}}\Big].\nonumber
\end{align}
In particular, it suffices to show that with high probability, we have 
\begin{align}
\sup_{\t\in[0,\T]}\sup_{|\x|\leq N^{\mathrm{D}}}|\tfrac{\mathbf{Z}^{N}_{\t,\x}}{\mathbf{S}^{N}_{\t,\x}}-1|\leq N^{-\frac13\delta_{\mathbf{S}}}.\label{eq:rboundI1}
\end{align}
Using \eqref{eq:zsmooth} and \eqref{eq:ch}, we can directly compute
\begin{align}
\mathbf{Z}^{N}_{\t,\x}-\mathbf{S}^{N}_{\t,\x}&=[\mathscr{S}^{N}\star(\mathbf{Z}^{N}_{\t,\x}-\mathbf{Z}^{N}_{\t,\cdot})]_{\x}=\sum_{\w\in\Z}\mathscr{S}^{N}_{\w}\cdot(\mathbf{Z}^{N}_{\t,\x}-\mathbf{Z}^{N}_{\t,\x-\w})\nonumber\\
&=\sum_{\w\in\Z}\mathscr{S}^{N}_{\w}\mathbf{Z}^{N}_{\t,\x-\w}\cdot\Big\{\exp(\lambda\mathbf{j}^{N}_{\t,\x}-\lambda\mathbf{j}^{N}_{\t,\x-\w})-1\Big\}\nonumber\\
&\lesssim\mathbf{S}^{N}_{\t,\x}\cdot\sup_{\t\in[0,\T]}\sup_{|\x|\leq N^{\mathrm{D}}}\sup_{|\w|\lesssim N^{1-\delta_{\mathbf{S}}}}\Big|\exp(\lambda\mathbf{j}^{N}_{\t,\x}-\lambda\mathbf{j}^{N}_{\t,\x-\w})-1\Big|.\label{eq:rboundI2}
\end{align}
We are left to control the triple supremum in \eqref{eq:rboundI2}. To this end, we recall that {\small$\mathbf{j}^{N}_{\t,\x}-\mathbf{j}^{N}_{\t,\x-1}=N^{-1/2}\bphi_{\t,\x}$} for any {\small$(\t,\x)\in[0,\infty)\times\Z$}. This gives the following deterministic identity:
\begin{align}
\exp(\lambda\mathbf{j}^{N}_{\t,\x}-\lambda\mathbf{j}^{N}_{\t,\x-\w})&=\exp\Big\{\lambda N^{-\frac12}(\bphi_{\t,\x-\w+1}+\ldots+\bphi_{\t,\x})\Big\}.\label{eq:rboundI2a}
\end{align}
Now, suppose that {\small$\bphi_{0,\cdot}$} is distributed according to the measure {\small$\mathbb{P}^{0}$} from \eqref{eq:gcmeasure}. Because {\small$\mathbb{P}^{0}$} is an invariant measure for \eqref{eq:phi}, we also know that {\small$\bphi_{\t,\cdot}\sim\mathbb{P}^{0}$} for any deterministic {\small$\t\geq0$} as well. In this case, the random variables {\small$\bphi_{\t,\z}$} are i.i.d., mean zero, and sub-Gaussian in the sense that {\small$\E^{0}\exp(\kappa\bphi_{\t,\z})\lesssim\exp(\mathrm{C}\kappa^{2})$} for some {\small$\mathrm{C}=\mathrm{O}(1)$} and any {\small$\kappa\in\R$}. By a standard moment generating function calculation, we then deduce the following estimate, in which {\small$\E^{\mathrm{path},0}$} is {\color{black}the} expectation with respect to the law of the process {\small$\t\mapsto\bphi_{\t,\cdot}$} with initial data sampled via {\small$\mathbb{P}^{0}$}: 
\begin{align}
\E^{\mathrm{path},0}\exp\Big\{\kappa\lambda N^{-\frac12}(\bphi_{\t,\x-\w+1}+\ldots+\bphi_{\t,\x})\Big\}\lesssim\exp(\mathrm{C}\kappa^{2}\lambda^{2}N^{-1}|\w|)\lesssim\exp(\mathrm{C}'\kappa^{2}\lambda^{2}N^{-\delta_{\mathbf{S}}}),\nonumber
\end{align}
where the last estimate above follows since we restrict to {\small$|\w|\lesssim N^{1-\delta_{\mathbf{S}}}$}. Now, we let {\small$\mathbb{P}^{\mathrm{path},0}$} denote the law of the process {\small$\t\mapsto\bphi_{\t,\cdot}$} with initial data sampled according to {\small$\mathbb{P}^{0}$}. It is a standard Chernoff inequality (see \cite{V18}) to get the following from the previous display; it states that a sub-Gaussian random variable with an effective variance {\small$\lesssim N^{-\delta_{\mathbf{S}}}$} exceeds the value of {\small$N^{-2\delta_{\mathbf{S}}/5}\gg(N^{-\delta_{\mathbf{S}}})^{1/2}$} with exponentially small probability (in {\small$N$}):
\begin{align}
\mathbb{P}^{\mathrm{path},0}\Big(N^{-\frac12}|\bphi_{\t,\x-\w+1}+\ldots+\bphi_{\t,\x}|\geq N^{-\frac25\delta_{\mathbf{S}}}\Big)\lesssim\exp(-\mathrm{c}N^{\frac15\delta_{\mathbf{S}}}).\nonumber
\end{align}
Now, fix any {\small$\mathrm{D}>0$}. A union bound turns the previous display into
\begin{align}
&\mathbb{P}^{\mathrm{path},0}\Big(\sup_{\t\in[0,\T]\cap N^{-\mathrm{D}}\Z}\sup_{|\x|\leq N^{\mathrm{D}}}N^{-\frac12}|\bphi_{\t,\x-\w+1}+\ldots+\bphi_{\t,\x}|\geq N^{-\frac25\delta_{\mathbf{S}}}\Big)\nonumber\\
&\lesssim N^{2\mathrm{D}}\exp(-\mathrm{c}N^{\frac15\delta_{\mathbf{S}}})\lesssim\exp(-\tfrac12\mathrm{c}N^{\frac15\delta_{\mathbf{S}}}).\nonumber
\end{align}
In the above estimate, the constant {\small$\mathrm{c}>0$} is independent of {\small$N$}. Now, let {\small$\mathbb{P}^{\mathrm{path}}$} be the law of the process {\small$\t\mapsto\bphi_{\t,\cdot}$}, and recall that {\small$\mathbb{P}^{\mathrm{path},0}$} is the law of the same but assuming its initial data has distribution {\small$\mathbb{P}^{0}$}. Since the dynamics behind the path-space measures {\small$\mathbb{P}^{\mathrm{path}}$} and {\small$\mathbb{P}^{\mathrm{path},0}$} are equal, the density {\small$\d\mathbb{P}^{\mathrm{path}}/\d\mathbb{P}^{\mathrm{path},0}$} is equal to the density of the distribution of the respective initial data. In particular, we have the uniform {\small$\mathrm{L}^{\infty}$}-bound of {\small$|\d\mathbb{P}^{\mathrm{path}}/\d\mathbb{P}^{\mathrm{path},0}|\lesssim N^{1/3}$} by \eqref{eq:noneq}. Thus, by a change-of-measure, we have
\begin{align}
&\mathbb{P}^{\mathrm{path}}\Big(\sup_{|\w|\lesssim N^{1-\delta_{\mathbf{S}}}}\sup_{\t\in[0,\T]\cap N^{-\mathrm{D}}\Z}\sup_{|\x|\leq N^{\mathrm{D}}}N^{-\frac12}|\bphi_{\t,\x-\w+1}+\ldots+\bphi_{\t,\x}|\geq N^{-\frac25\delta_{\mathbf{S}}}\Big)\nonumber\\
&\lesssim N^{\frac13}\exp(-\tfrac12\mathrm{c}N^{\frac15\delta_{\mathbf{S}}})\lesssim \exp(-\tfrac13\mathrm{c}N^{\frac15\delta_{\mathbf{S}}}).\nonumber
\end{align}
Because {\small$\mathrm{D}>0$} can be arbitrarily large in the previous display, so that {\small$N^{-\mathrm{D}}$} is much smaller than the microscopic time-scale of {\small$N^{-2}$}, it is a standard continuity argument to extend the previous display to the following estimate (which holds with high probability) over the entire continuum time-interval {\small$[0,1]$} as opposed to just a net:
\begin{align}
\sup_{|\w|\lesssim N^{1-\delta_{\mathbf{S}}}}\sup_{\t\in[0,\T]}\sup_{|\x|\leq N^{\mathrm{D}}}N^{-\frac12}|\bphi_{\t,\x-\w+1}+\ldots+\bphi_{\t,\x}|\lesssim N^{-\frac25\delta_{\mathbf{S}}}.\label{eq:rboundI3}
\end{align}
Let us restrict to the high probability event where \eqref{eq:rboundI3} holds. We plug this into \eqref{eq:rboundI2a} and use a Taylor expansion to deduce that the triple supremum in \eqref{eq:rboundI2} is {\small$\lesssim N^{-2\delta_{\mathbf{S}}/5}$}. In particular, we establish {\small$|\mathbf{Z}^{N}_{\t,\x}-\mathbf{S}^{N}_{\t,\x}|\lesssim N^{-2\delta_{\mathbf{S}}/5}\mathbf{S}^{N}_{\t,\x}$} with high probability simultaneously for all {\small$\t\in[0,\T]$} and {\small$|\x|\leq N^{\mathrm{D}}$}. This implies the estimate \eqref{eq:rboundI1}; as we noted right before \eqref{eq:rboundI1}, this completes the proof.
\end{proof}
We now present an a priori bound on {\small$\bphi_{\t,\x}$} from \eqref{eq:phi}, also on ``large" space-time sets.
\begin{lemma}\label{lemma:phibound}
\fsp Fix any {\small$\mathrm{D}=\mathrm{O}(1)$} and {\small$\delta>0$} and {\small$\T>0$}, all independent of {\small$N$}. With high probability, we have 
\begin{align}
\sup_{\t\in[0,\T]}\sup_{|\x|\leq N^{\mathrm{D}}}|\bphi_{\t,\x}|\leq N^{\delta}.\label{eq:phiboundI}
\end{align}
\end{lemma}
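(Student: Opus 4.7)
The plan is to mirror the strategy of Lemma \ref{lemma:rbound}: establish the bound first under the stationary path-space measure $\mathbb{P}^{\mathrm{path},0}$ (the law of $\t\mapsto\bphi_{\t,\cdot}$ with initial data sampled from $\mathbb{P}^{0}$) and then transfer to the actual law $\mathbb{P}^{\mathrm{path}}$ via the $\mathrm{L}^{\infty}$-bound on the Radon-Nikodym derivative from Assumption \ref{assump:noneq}.

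For the stationary case, I exploit that at any deterministic time $\t\geq0$, invariance of $\mathbb{P}^{0}$ under \eqref{eq:phi} gives $\bphi_{\t,\cdot}\sim\mathbb{P}^{0}$, and under this product measure the coordinates $\bphi_{\t,\x}$ are i.i.d. and sub-Gaussian (because Assumption \ref{assump:potential} guarantees that $\mathscr{U}$ is bounded below by a quadratic modulo a bounded term, so the density in \eqref{eq:gcmeasure} has super-quadratic tails). A Chernoff bound then yields $\mathbb{P}^{\mathrm{path},0}(|\bphi_{\t,\x}|\geq N^{\delta/2})\lesssim\exp(-\mathrm{c}N^{\delta})$, and a union bound over a polynomial-size space-time net $([0,\T]\cap N^{-\mathrm{D}'}\Z)\times(\llbracket-N^{\mathrm{D}},N^{\mathrm{D}}\rrbracket)$ for a large fixed $\mathrm{D}'=\mathrm{O}(1)$ gives
\begin{align*}
\mathbb{P}^{\mathrm{path},0}\Big(\sup_{\t\in[0,\T]\cap N^{-\mathrm{D}'}\Z}\sup_{|\x|\leq N^{\mathrm{D}}}|\bphi_{\t,\x}|\geq N^{\delta/2}\Big)\lesssim\exp(-\tfrac12\mathrm{c}N^{\delta}).
\end{align*}

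To upgrade this estimate from the net to continuous time I will use the \abbr{SDE} \eqref{eq:phi}: on any subinterval of length $\lesssim N^{-\mathrm{D}'}$ between consecutive net points, the drift contribution to $|\bphi_{\t,\x}-\bphi_{\s,\x}|$ is controlled by $\lesssim N^{2-\mathrm{D}'}(1+\sup|\bphi|^{\mathrm{C}})$ and the martingale contribution has quadratic variation $\lesssim N^{2-\mathrm{D}'}$; both are much smaller than $N^{\delta/2}$ once $\mathrm{D}'$ is chosen large enough, so that the supremum over all $\t\in[0,\T]$ only loses a factor $N^{\delta/2}$, yielding the continuum-time bound with exponent $N^{\delta}$ in place of $N^{\delta/2}$ (a \abbr{BDG} estimate on a slightly finer net controls the martingale term uniformly). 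Finally, Assumption \ref{assump:noneq} gives $\|\d\mathbb{P}^{\mathrm{path}}/\d\mathbb{P}^{\mathrm{path},0}\|_{\mathrm{L}^{\infty}}\lesssim N^{1/3-\gamma_{\mathrm{data}}}$ (the shared dynamics collapse the density to that of the initial data), and multiplying an exponentially small probability by $N^{1/3}$ preserves exponential smallness.

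The main technical obstacle is the continuity-in-time step, since the drift $N^{2}\Delta\mathscr{U}'[\bphi]$ involves a function that is only locally Lipschitz, so the naive increment bound needs a preliminary polynomial-in-$N$ a priori bound to close. Such a bound follows from the Picard construction of \eqref{eq:phi} in Appendix A of \cite{DGP}, and because $\mathrm{D}'$ can be chosen arbitrarily large at essentially no cost (the union bound only burns a polynomial factor against an exponential saving in $N$), the bootstrap is quantitatively painless.
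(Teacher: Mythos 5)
Your proposal matches the paper's argument step-for-step: reduce to the stationary measure $\mathbb{P}^{0}$ via invariance, exploit i.i.d.\ sub-Gaussianity of $\bphi_{\t,\x}$ for a Chernoff/tail bound, union-bound over a polynomial-size space-time net, extend to continuum time by a short-time continuity estimate, and transfer to the true law via the $\mathrm{L}^{\infty}$-bound from Assumption \ref{assump:noneq}. The only difference is that you spell out the continuity step (drift and martingale increments controlled via \eqref{eq:phi}) which the paper dispatches as ``a standard continuity argument,'' so the proofs are in essence identical.
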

\begin{proof}
With respect to the measure {\small$\mathbb{P}^{0}$}, the random variable {\small$\bphi_{\x}$} is sub-Gaussian with effective variance of {\small$\mathrm{O}(1)$}; see \eqref{eq:gcmeasure} and Assumption \ref{assump:potential}. Thus, we have the following sub-Gaussian tail probability estimate for some {\small$\mathrm{c}>0$} independent of {\small$N$}:
\begin{align}
\mathbb{P}^{0}(|\bphi_{\x}|\geq N^{\delta})\lesssim\exp(-\mathrm{c}N^{2\delta}).\nonumber
\end{align}
Now, suppose that {\small$\bphi_{0,\cdot}\sim\mathbb{P}^{0}$}, so that {\small$\bphi_{\t,\cdot}\sim\mathbb{P}^{0}$} for any deterministic {\small$\t\geq0$} as well. Thus, for any {\small$\mathrm{D}>0$}, we can use the previous display and a union bound to deduce the following estimate, in which we again let {\small$\mathbb{P}^{\mathrm{path},0}$} be the law of the process {\small$\t\mapsto\bphi_{\t,\cdot}$} with initial data sampled via {\small$\mathbb{P}^{0}$}:
\begin{align}
\mathbb{P}^{\mathrm{path},0}(\sup_{\t\in[0,\T]\cap N^{-\mathrm{D}}\Z}\sup_{|\x|\leq N^{\mathrm{D}}}|\bphi_{\t,\x}|\geq N^{\delta})\lesssim N^{2\mathrm{D}}\exp(-\mathrm{c}N^{2\delta})\lesssim\exp(-\tfrac12\mathrm{c}N^{2\delta}).\nonumber
\end{align}
As in the proof of Lemma \ref{lemma:rbound}, we can change measures to obtain the following, in which {\small$\mathbb{P}^{\mathrm{path}}$} is the law of the process {\small$\t\mapsto\bphi_{\t,\cdot}$} with the initial data of interest in Theorems \ref{theorem:main} and \ref{theorem:mainwedge}:
\begin{align}
\mathbb{P}^{\mathrm{path}}(\sup_{\t\in[0,\T]\cap N^{-\mathrm{D}}\Z}\sup_{|\x|\leq N^{\mathrm{D}}}|\bphi_{\t,\x}|\geq N^{\delta})\lesssim N^{\frac13}\exp(-\tfrac12\mathrm{c}N^{2\delta})\lesssim\exp(-\tfrac13\mathrm{c}N^{2\delta}).\nonumber
\end{align}
Again, as in the proof of Lemma \ref{lemma:rbound}, because {\small$\mathrm{D}>0$} is arbitrarily large, a standard continuity argument extends the previous display to the desired estimate \eqref{eq:phiboundI}, so the proof is complete.
\end{proof}
\subsection{Estimates for {\color{black}the} {\small$\mathbf{Av}^{\mathbf{X},\mathfrak{q}_{\n}}$} objects}
We now focus on {\color{black}the} {\small$\mathbf{Av}^{\mathbf{X},\mathfrak{q}_{\n}}$} terms from Definition \ref{definition:eq-operators}, where the {\small$\mathfrak{q}_{\n}$} terms are admissible (see Definition \ref{definition:admissible}). The main estimate in this direction is a \abbr{CLT}-type bound for {\small$\mathbf{Av}^{\mathbf{X},\mathfrak{q}_{\n}}$} (which turns out to essentially behave like an average of {\small$\mathfrak{n}_{\mathbf{Av}}=N^{1-3\delta_{\mathbf{S}}/2}$}-many independent, fluctuating terms).
\begin{lemma}\label{lemma:avldp}
\fsp Fix any {\small$\mathrm{D}=\mathrm{O}(1)$} and {\small$\delta>0$}. With high probability, we have the following for any admissible {\small$\mathfrak{q}_{\n}$}:
\begin{align}
\sup_{\t\in[0,1]}\sup_{|\x|\leq N^{\mathrm{D}}}|\mathbf{Av}^{\mathbf{X},\mathfrak{q}_{\n}}_{\t,\x}|\lesssim N^{-\frac12+\frac34\delta_{\mathbf{S}}+\delta}.\label{eq:avldpI}
\end{align}
\end{lemma}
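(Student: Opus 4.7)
The plan is a high-moment method under the invariant measure $\mathbb{P}^0$, combined with a change-of-measure via Assumption \ref{assump:noneq}. By arguing as in the last steps of the proof of Lemma \ref{lemma:rbound}, it suffices to establish the bound under the path law $\mathbb{P}^{\mathrm{path},0}$ (started from $\bphi_{0,\cdot}\sim\mathbb{P}^0$), with failure probability $o(N^{-1/3})$; the $N^{1/3}$ factor is the allowed cost of transferring back to $\mathbb{P}^{\mathrm{path}}$ under Assumption \ref{assump:noneq}. Under $\mathbb{P}^{\mathrm{path},0}$, invariance of $\mathbb{P}^0$ under \eqref{eq:phi} gives $\bphi_\t\sim\mathbb{P}^0$ for every deterministic $\t$, reducing the estimate at any fixed time to a product-measure computation.

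The core is a high-moment estimate at a fixed $(\t,\x)$. Writing $\mathfrak{q}_\n=N^{-1+\k/2}\mathfrak{f}$ with $\mathfrak{f}\in\mathrm{Jet}_\k^\perp$ ($\k\in\{0,1,2\}$) and $X_\j:=\mathfrak{f}[\tau_{\x+\j}\bphi_\t]\exp(\lambda N^{-1/2}(\bphi_{\t,\x+1}+\ldots+\bphi_{\t,\x+\j}))$, the target is
\begin{align*}
\E^0\Big|\mathfrak{n}_{\mathbf{Av}}^{-1}{\textstyle\sum_{\j=1}^{\mathfrak{n}_{\mathbf{Av}}}} X_\j\Big|^{2p}\lesssim_p \mathfrak{n}_{\mathbf{Av}}^{-p}=N^{(-1+\frac{3}{2}\delta_{\mathbf{S}})p}.
\end{align*}
To prove it, expand the $2p$-th power as a sum over $(\j_1,\ldots,\j_{2p})$ and sort $\j_1\leq\ldots\leq\j_{2p}$. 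The key observation is that the support of $\mathfrak{f}[\tau_{\x+\j_{2p}}\bphi_\t]$, namely $\{\x+\j_{2p}+1,\ldots,\x+\j_{2p}+\mathfrak{l}\}$, is disjoint both from the supports of the other $\mathfrak{f}[\tau_{\x+\j_i}\bphi_\t]$'s and from every exponential exponent appearing in $\prod X_{\j_i}$ (those involve only $\bphi_{\t,m}$ with $m\leq\x+\j_{2p}$). Hence when $\j_{2p}-\j_{2p-1}>\mathfrak{l}$, independence of coordinates under $\mathbb{P}^0$ forces $\E^0\prod X_{\j_i}=\E^0\mathfrak{f}\cdot(\text{rest})=0$, using $\E^0\mathfrak{f}=0$ (from $\mathrm{Jet}_0^\perp\supseteq\mathrm{Jet}_\k^\perp$). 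Iterating this ``peeling'' step, nonzero tuples are precisely those whose indices cluster into groups of size $\geq 2$ at mutual distance $\leq\mathfrak{l}$; there are $\lesssim_p \mathfrak{l}^p\mathfrak{n}_{\mathbf{Av}}^p$ such tuples, and on each $|\E^0\prod X_{\j_i}|\lesssim_p 1$ by the polynomial estimate \eqref{eq:f-estimate}, sub-Gaussianity of $\bphi$ under $\mathbb{P}^0$, and the factorized exponential bound $\prod_{m}\E^0 e^{c_m\lambda N^{-1/2}\bphi}\leq \exp(O(p^2\lambda^2\mathfrak{n}_{\mathbf{Av}}/N))=O(1)$. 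Multiplying by the outer prefactor $N^{(\k-2)p}\leq 1$ yields the claim.

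Markov's inequality then gives $\mathbb{P}^{\mathrm{path},0}(|\mathbf{Av}^{\mathbf{X},\mathfrak{q}_\n}_{\t,\x}|\geq N^{-\frac12+\frac34\delta_{\mathbf{S}}+\delta})\lesssim_p N^{-2p\delta}$ pointwise in $(\t,\x)$. A union bound over a net of $[0,1]\times\llbracket-N^{\mathrm{D}},N^{\mathrm{D}}\rrbracket$ with spacing $N^{-\mathrm{D}'}$ (for $\mathrm{D}'$ large), followed by a continuity extension based on the polynomial-in-$N$ propagation of \eqref{eq:phi} (as in the proofs of Lemmas \ref{lemma:rbound} and \ref{lemma:phibound}), upgrades this to the uniform estimate with probability $\lesssim_p N^{2\mathrm{D}+\mathrm{D}'-2p\delta}$; choosing $p$ large absorbs the $N^{1/3}$ change-of-measure cost from Assumption \ref{assump:noneq}, and the leftward case of Definition \ref{definition:eq-operators} is identical by symmetry. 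The main technical obstacle is verifying that the exponential tilts $\exp(\lambda N^{-1/2}S_\j)$ do not resurrect the $\mathfrak{f}$-expectation that $\mathrm{Jet}_\k^\perp$ is supposed to kill; the support-disjointness observation in the second paragraph is precisely what rules this out, by showing the ``outermost'' $\mathfrak{f}$-factor decouples from every exponential factor before $\E^0$ is taken, so the apparent density shift $\sigma(\kappa)=O(N^{-1/2})$ induced by the tilt never enters the computation.
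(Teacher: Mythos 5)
Your approach (a high-moment expansion plus peeling) is genuinely different from the paper's, which partitions the index range into $\mathrm{O}(1)$ residue classes, builds an explicit spatial martingale by conditioning on the coordinate filtration $\mathscr{F}_\i$ generated by the supports $\mathbb{I}_{\k_{2,\j},1}\cup\mathbb{I}_{\k_{2,\j},2}$ for $\j\leq\i$, and applies Azuma--Hoeffding after truncating $\mathfrak{q}_\n$ and the exponential factor $\mathbf{G}_\k$. The martingale route gives the conditional-mean-zero property for free because $\mathbf{G}_{\k_2}$ lives strictly to the left of $\x+\k_2$ while $\wt{\mathfrak{q}}_\n[\tau_{\x+\k_2}\bphi]$ lives strictly to the right, so the exponential tilt never couples to the $\mathfrak{q}$-factor being conditioned.

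The high-moment route does not get this for free, and as written your argument has a gap exactly at the point you flag in the last paragraph. Peeling the \emph{outermost} index $\j_{2p}$ is valid: when $\j_{2p}-\j_{2p-1}\geq\mathfrak{l}$, the support $\{\x+\j_{2p}+1,\ldots,\x+\j_{2p}+\mathfrak{l}\}$ is disjoint from everything else, including every exponent $S_{\j_i}$, so $\E^0\mathfrak{f}=0$ kills the term. But this does \emph{not} iterate. If $\j_m$ is an isolated \emph{middle} index, the exponential tilts coming from the indices $\j_i>\j_m$ do overlap the support $\{\x+\j_m+1,\ldots,\x+\j_m+\mathfrak{l}\}$ of $\mathfrak{f}[\tau_{\x+\j_m}\bphi]$; the relevant block expectation is $\E^0[\mathfrak{f}(\bphi)\exp(\theta\bphi)]$ with $\theta=\mathrm{O}(p\lambda N^{-1/2})\neq 0$, which is $\mathrm{O}(N^{-1/2})$, not zero. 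So the set of nonzero tuples is much larger than the clustered ones you count; already for $\mathfrak{l}=1$, the only constraint from peeling is that the maximum index repeats, which allows $\mathrm{O}(\mathfrak{n}_{\mathbf{Av}}^{2p-1})$ tuples. Your claimed count $\mathrm{O}(\mathfrak{l}^p\mathfrak{n}_{\mathbf{Av}}^p)$ is therefore unjustified as stated. The bound you want is still true, but you need to track both the count \emph{and} the magnitude: a tuple with $r$ isolated non-rightmost indices comes in $\lesssim\mathfrak{l}^{2p}\mathfrak{n}_{\mathbf{Av}}^{p+r/2}$ varieties, each with $|\E^0\prod X_{\j_i}|\lesssim N^{-r/2}$, and since $\mathfrak{n}_{\mathbf{Av}}/N=N^{-3\delta_{\mathbf{S}}/2}<1$ the total is still $\lesssim\mathfrak{l}^{2p}\mathfrak{n}_{\mathbf{Av}}^{-p}$. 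Without this magnitude estimate (which itself needs the jet property and a Taylor expansion of $\sigma\mapsto\E^\sigma\mathfrak{f}$, not merely $\E^0\mathfrak{f}=0$), the moment bound does not follow.
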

Before we proceed, we clarify what \eqref{eq:avldpI} really says, as the proof requires a few fairly technical points. It turns out that {\small$\mathbf{Av}^{\mathbf{X},\mathfrak{q}_{\n}}$} is essentially an average of quantities of the form {\small$\mathfrak{q}_{\n}[\tau_{\k}\bphi_{\t}]\mathbf{G}_{\k}[\bphi_{\t}]$} (see Definition \ref{definition:eq-operators}), where the shifts {\small$\mathfrak{q}_{\n}[\tau_{\k}\bphi_{\t}]$} are (after some technical modification) essentially independent and sufficiently bounded mean-zero quantities with respect to the measure {\small$\mathbb{P}^{0}$}. At this point, since {\small$\mathfrak{n}_{\mathbf{Av}}=N^{1-3\delta_{\mathbf{S}}/2}$}, the bound \eqref{eq:avldpI} would be a consequence of standard concentration bounds for averages of independent, mean-zero random variables, except that we must also deal with the {\small$\mathbf{G}_{\k}[\bphi_{\t}]$}-factors. But if {\small$\mathfrak{q}_{\n}[\tau_{\k}\bphi_{\t}]$} depends on {\small$\bphi_{\t,\w}$} for {\small$\w>\k$}, then by construction in Definition \ref{definition:eq-operators}, we know that {\small$\mathbf{G}_{\k}[\bphi_{\t}]$} depends only on {\small$\bphi_{\t,\w}$} for {\small$\w\leq\k$}. Thus in this case, the two factors {\small$\mathfrak{q}_{\n}[\tau_{\k}\bphi_{\t}]$} and {\small$\mathbf{G}_{\k}[\bphi_{\t}]$} are still independent, and we still have \abbr{CLT}-type bounds. It is also possible that {\small$\mathfrak{q}_{\n}[\tau_{\k}\bphi_{\t}]$} depends on {\small$\bphi_{\t,\w}$} for {\small$\w\leq\k$}, but a similar independence between {\small$\mathfrak{q}_{\n}[\tau_{\k}\bphi_{\t}]$} and {\small$\mathbf{G}_{\k}[\bphi_{\t}]$} turns out to hold in this case. Finally, to go beyond {\small$\mathbb{P}^{0}$} initial data, we use another change-of-measure argument as in the proofs of Lemmas \ref{lemma:rbound} and \ref{lemma:phibound}.
\begin{proof}[{\color{black}Proof of Lemma \ref{lemma:avldp}}]
In the argument below, we assume that {\small$\mathfrak{q}_{\n}[\bphi]$} depends on {\small$\bphi_{\w}$} for {\small$\w\in\{1,\ldots,\mathfrak{l}\}$} for some integer {\small$\mathfrak{l}\lesssim1$}. The case of {\small$\w\in\{-\mathfrak{l},\ldots,0\}$} follows from the same argument (after taking the mirror image in space about {\small$0.5$}, so that {\small$0\leftrightarrow1$}, for example). We first define the following cutoff of {\small$\mathfrak{q}_{\n}$} (as a function of {\small$\bphi\in\R^{\Z}$}):
\begin{align*}
\wt{\mathfrak{q}}_{\n}[\bphi]:=\mathfrak{q}_{\n}[\bphi]\cdot\mathbf{1}[|\mathfrak{q}_{\n}[\bphi]|\leq N^{\delta_{0}}] - \E^{0}\Big(\mathfrak{q}_{\n}[\boldsymbol{\psi}]\cdot\mathbf{1}[|\mathfrak{q}_{\n}[\boldsymbol{\psi}]|\leq N^{\delta_{0}}]\Big).
\end{align*}
We clarify that {\small$\boldsymbol{\psi}\in\R^{\Z}$} is the integration-variable in the expectation above. Also, the constant {\small$\delta_{0}>0$} is any small constant that is independent of {\small$N$}. We also define the following for any {\small$\k\geq1$}:
\begin{align}
\mathbf{G}_{\k}[\tau_{\x}\bphi_{\t}]:=\exp\Big\{\lambda N^{-\frac12}(\bphi_{\t,\x+1}+\ldots+\bphi_{\t,\x+\k})\Big\}\cdot\mathbf{1}\Big[|\exp\Big(\lambda N^{-\frac12}(\bphi_{\t,\x+1}+\ldots+\bphi_{\t,\x+\k})\Big)|\lesssim1\Big].\label{eq:gkterm}
\end{align}
We claim the above identity holds without the indicator on the \abbr{RHS} with high probability simultaneously for all {\color{black}{\small$\t\in[0,1]$}, {\small$|\x|\leq N^{\mathrm{D}}$}, and {\small$\k=1,\ldots,\mathfrak{n}_{\mathbf{Av}}$}}, where {\small$\mathfrak{n}_{\mathbf{Av}}=N^{1-3\delta_{\mathbf{S}}/2}$}. This follows by \eqref{eq:rboundI3}. We also claim that the following holds with high probability for any {\small$\mathrm{D}_{1},\mathrm{D}_{2}=\mathrm{O}(1)$}:
\begin{align}
\sup_{1\leq\k\leq\mathfrak{n}_{\mathbf{Av}}}\sup_{\t\in[0,1]}\sup_{|\x|\leq N^{\mathrm{D}_{1}}}|\wt{\mathfrak{q}}_{\n}[\tau_{\x+\k}\bphi_{\t}]-\mathfrak{q}_{\n}[\tau_{\x+\k}\bphi_{\t}]|\lesssim N^{-\mathrm{D}_{2}}.\label{eq:avldpI-1}
\end{align}
{\color{black}To see the previous claim}, we note that by Lemma \ref{lemma:phibound} and the local polynomial bound \eqref{eq:f-estimate} for {\small$\mathfrak{q}_{\n}$}, we have {\small$\mathfrak{q}_{\n}[\tau_{\x+\k}\bphi_{\t}]=\mathfrak{q}_{\n}[\tau_{\x+\k}\bphi_{\t}]\cdot\mathbf{1}[|\mathfrak{q}_{\n}[\tau_{\x+\k}\bphi_{\t}]|\leq N^{\delta_{0}}]$} for all {\small$\k,\t,\x$} on the \abbr{LHS} above with high probability. Moreover, we have {\small$\E^{0}(\mathfrak{q}_{\n}[\boldsymbol{\psi}]\cdot\mathbf{1}[|\mathfrak{q}_{\n}[\boldsymbol{\psi}]|\leq N^{\delta_{0}})=\E^{0}(\mathfrak{q}_{\n}[\boldsymbol{\psi}]\cdot\mathbf{1}[|\mathfrak{q}_{\n}[\boldsymbol{\psi}]|> N^{\delta_{0}})$} since {\small$\mathfrak{q}_{\n}\in\mathrm{Jet}_{\k}^{\perp}$} for {\small$\k\geq0$} and hence vanishes in expectation with respect to {\small$\mathbb{P}^{0}$}. By the same bound \eqref{eq:f-estimate} on {\small$\mathfrak{q}_{\n}$}, we have {\small$\mathbf{1}[|\mathfrak{q}_{\n}[\boldsymbol{\psi}]|>N^{\delta_{0}}\leq\mathbf{1}[\sup_{|\w|\lesssim1}|\boldsymbol{\psi}_{\w}|>N^{\delta_{1}}]\lesssim\sum_{|\w|\lesssim1}\mathbf{1}[|\boldsymbol{\psi}_{\w}|>N^{\delta_{1}}]$} with {\small$\delta_{1}>0$} depending only on {\small$\delta_{0}$}. Thus, by sub-Gaussianity of {\small$\boldsymbol{\psi}_{\w}$} (see \eqref{eq:gcmeasure} and Assumption \ref{assump:potential}), the quantity {\small$\E^{0}(\mathfrak{q}_{\n}[\boldsymbol{\psi}]\cdot\mathbf{1}[|\mathfrak{q}_{\n}[\boldsymbol{\psi}]|> N^{\delta_{0}})$} is exponentially small in {\small$N$}. This yields \eqref{eq:avldpI-1}.

Recall {\small$\mathbf{Av}^{\mathbf{X},\mathfrak{q}_{\n}}$} from Definition \ref{definition:eq-operators}. By the last two displays (and the fact that the indicator in {\small$\mathbf{G}_{\k}$} is redundant with high probability), we have the following with high probability simultaneously over {\small$\t\in[0,1]$} and {\small$|\x|\leq N^{\mathrm{D}}$}:
\begin{align}
\mathbf{Av}^{\mathbf{X},\mathfrak{q}_{\n}}_{\t,\x}&=\mathfrak{n}_{\mathbf{Av}}^{-1}\sum_{\k=1,\ldots,\mathfrak{n}_{\mathbf{Av}}}\wt{\mathfrak{q}}_{\n}[\tau_{\x+\k}\bphi_{\t}]\cdot\mathbf{G}_{\k}[\tau_{\x}\bphi_{\t}]+\mathrm{O}(N^{-\mathrm{D}_{2}}).\label{eq:avldpI0}
\end{align}
Now, we consider the set {\small$\{1,\ldots,\mathfrak{n}_{\mathbf{Av}}\}$} modulo {\small$\mathfrak{l}_{1}$} for some large but still {\small$\mathrm{O}(1)$} constant {\small$\mathfrak{l}_{1}$}; we clarify its utility shortly. We will label the different equivalence classes by {\small$[\k_{1}]$} for {\small$\k_{1}\in\{1,\ldots,\mathfrak{n}_{\mathbf{Av}}\}$}; note that there are {\small$\mathfrak{l}_{1}=\mathrm{O}(1)$}-many equivalence classes. We now write the following, where the first sum is over different equivalence classes, and the second is over members of a given equivalence class:
\begin{align}
\mathbf{Av}^{\mathbf{X},\mathfrak{q}_{\n}}_{\t,\x}&=\sum_{[\k_{1}]}\mathfrak{n}_{\mathbf{Av}}^{-1}\sum_{\k_{2}\in[\k_{1}]}\wt{\mathfrak{q}}_{\n}[\tau_{\x+\k_{2}}\bphi_{\t}]\cdot\mathbf{G}_{\k_{2}}[\tau_{\x}\bphi_{\t}]+\mathrm{O}(N^{-\mathrm{D}_{2}}).\label{eq:avldpI1}
\end{align}
For any equivalence class {\small$[\k_{1}]$}, we arrange the indices {\small$\k_{2}\in[\k_{1}]$} in this equivalence class in increasing order, and to be clear, we accordingly write {\small$[\k_{1}]=\{\k_{2,1},\ldots,\k_{2,\ell}\}$}. Now, we emphasize the following properties.
\begin{itemize}
\item For any {\small$\k_{2}$}, the term {\small$\wt{\mathfrak{q}}_{\n}[\tau_{\x+\k_{2}}\bphi_{\t}]$} depends only on {\small$\bphi_{\t,\w}$} for {\small$\w\in\mathbb{I}_{\k_{2},1}:=\x+\k_{2}+\{1,\ldots,\mathfrak{l}\}$}.
\item For any {\small$\k_{2}$}, the quantity {\small$\mathbf{G}_{\k_{2}}[\tau_{\x}\bphi_{\t}]$} depends only on {\small$\bphi_{\t,\w}$} for {\small$\w\in\mathbb{I}_{\k_{2},2}=\x+\{1,\ldots,\k_{2}\}$}. We emphasize that {\small$\mathbb{I}_{\k_{2,\i},1}\cap\mathbb{I}_{\k_{2,\j},2}=\emptyset$} for any {\small$\j\leq\i$}.
\end{itemize}
\emph{Now, assume {\small$\bphi_{0,\cdot}\sim\mathbb{P}^{0}$}}; we remove this assumption later. We also define the following filtration:
\begin{align*}
\i\mapsto\mathscr{F}_{\i}:=\boldsymbol{\sigma}\Big(\bphi_{\t,\w}, \ \w\in\bigcup_{\j\leq\i} \ (\mathbb{I}_{\k_{2,\j},1}\cup\mathbb{I}_{\k_{2,\j},2})\Big), \quad \i\in\{1,\ldots,\ell\}.
\end{align*}
In words, {\small$\mathscr{F}_{\i}$} is the {\small$\boldsymbol{\sigma}$}-algebra generated by {\small$\boldsymbol{\phi}_{\t,\w}$} for {\small$\w\in\mathbb{I}_{\k_{2,\j},1}\cup\mathbb{I}_{\k_{2,\j},2}$} over {\small$\j\leq\i$}. We now claim that the following discrete-time process is a martingale with respect to the above filtration:
\begin{align*}
\i\mapsto \mathsf{M}_{\i}:=\mathfrak{n}_{\mathbf{Av}}^{-1}\sum_{\k_{2}\in\{\k_{2,1},\ldots,\k_{2,\i}\}}\wt{\mathfrak{q}}_{\n}[\tau_{\x+\k_{2}}\bphi_{\t}]\cdot\mathbf{G}_{\k_{2}}[\tau_{\x}\bphi_{\t}].
\end{align*}
To see this, it follows by construction that {\small$\mathsf{M}_{\i}$} is {\small$\mathscr{F}_{\i}$}-measurable. Moreover, if we condition on {\small$\mathscr{F}_{\i}$} for some {\small$\i\in\{1,\ldots,\ell-1\}$}, then all summands in {\small$\mathsf{M}_{\i+1}$}, except {\small$\wt{\mathfrak{q}}_{\n}[\tau_{\x+\k_{2,\i+1}}\bphi_{\t}]\mathbf{G}_{\k_{2,\i+1}}[\tau_{\x}\bphi_{\t}]$}, become deterministic. Moreover, conditioning on {\small$\mathsf{F}_{\i}$} pushes {\small$\mathbb{P}^{0}$} forward to a product measure on {\small$\R^{\Z\setminus\mathbb{I}_{\i}}$} whose one-dimensional marginals agree with those of {\small$\mathbb{P}^{0}$} (see Definition \ref{definition:gcmeasure}), where {\small$\mathbb{I}_{\i}$} is disjoint from {\small$\mathbb{I}_{\k_{2,\i+1},1}$}, the support of {\small$\wt{\mathfrak{q}}_{\n}[\tau_{\x+\k_{2,\i+1}}\bphi]$}. Thus, after said conditioning, the increment {\small$\wt{\mathfrak{q}}_{\n}[\tau_{\x+\k_{2,\i+1}}\bphi_{\t}]\mathbf{G}_{\k_{2,\i+1}}[\tau_{\x}\bphi_{\t}]$} is mean-zero, and the martingale property follows.

Also, by construction, the increments of {\small$\mathsf{M}_{\i}$} are deterministically {\small$\mathrm{O}(N^{\delta_{0}})$}. Therefore, by the Azuma-Hoeffding martingale inequality, we deduce that 
\begin{align*}
\mathbb{P}^{\mathrm{path},0}\Big(\Big|\mathfrak{n}_{\mathbf{Av}}^{-1}\sum_{\k_{2}\in[\k_{1}]}\wt{\mathfrak{q}}_{\n}[\tau_{\x+\k_{2}}\bphi_{\t}]\cdot\mathbf{G}_{\k_{2}}[\tau_{\x}\bphi_{\t}]\Big|\geq \mathfrak{n}_{\mathbf{Av}}^{-\frac12}N^{2\delta_{0}}\Big)\lesssim\exp(-\mathrm{c}N^{\delta_{0}}),
\end{align*}
where {\small$\mathrm{c}>0$} is independent of {\small$N$} and {\small$\mathbb{P}^{\mathrm{path},0}$} is the probability with respect to the dynamics {\small$\t\mapsto\bphi_{\t,\cdot}$} with initial data sampled via {\small$\mathbb{P}^{0}$}. We clarify that the sum contains {\small$\lesssim\mathfrak{n}_{\mathbf{Av}}$}-many terms, and that the lower bound in the probability is a factor of {\small$N^{\delta_{0}}$}-larger than the typical size {\small$\mathfrak{n}_{\mathbf{Av}}^{-1/2}N^{\delta_{0}}$} predicted by the \abbr{CLT}, hence the exponentially small \abbr{RHS} above. By another union bound argument, we have the following for any {\small$\mathrm{D}_{3}>0$} large:
\begin{align*}
\mathbb{P}^{\mathrm{path},0}\Big(\sup_{\t\in[0,1]\cap N^{-\mathrm{D}_{3}}\Z}\sup_{|\x|\leq N^{\mathrm{D}}}\Big|\sum_{[\k_{1}]}\mathfrak{n}_{\mathbf{Av}}^{-1}\sum_{\k_{2}\in[\k_{1}]}\wt{\mathfrak{q}}_{\n}[\tau_{\x+\k_{2}}\bphi_{\t}]\cdot\mathbf{G}_{\k_{2}}[\tau_{\x}\bphi_{\t}]\Big|\gtrsim \mathfrak{n}_{\mathbf{Av}}^{-\frac12}N^{2\delta_{0}}\Big)\lesssim\exp(-\tfrac12\mathrm{c}N^{\delta_{0}}).
\end{align*}
(Recall that the number of equivalence classes is {\small$\mathrm{O}(1)$}.) We now change measure to {\small$\mathbb{P}^{\mathrm{path}}$} and then use a short-time continuity argument as in the proofs of {\color{black}Lemmas \ref{lemma:rbound} and \ref{lemma:phibound}} to deduce that with high probability, we have 
\begin{align*}
\sup_{\t\in[0,1]}\sup_{|\x|\leq N^{\mathrm{D}}}\Big|\sum_{[\k_{1}]}\mathfrak{n}_{\mathbf{Av}}^{-1}\sum_{\k_{2}\in[\k_{1}]}\wt{\mathfrak{q}}_{\n}[\tau_{\x+\k_{2}}\bphi_{\t}]\cdot\mathbf{G}_{\k_{2}}[\tau_{\x}\bphi_{\t}]\Big|\lesssim \mathfrak{n}_{\mathbf{Av}}^{-\frac12}N^{2\delta_{0}}= N^{-\frac12+\frac34\delta_{\mathbf{S}}+2\delta_{0}},
\end{align*}
where the last identity follows by construction of {\small$\mathfrak{n}_{\mathbf{Av}}$} in Definition \ref{definition:eq-operators}. If we combine the previous display with \eqref{eq:avldpI1}, then we deduce the desired estimate \eqref{eq:avldpI}, so the proof is complete.
\end{proof}
\subsection{Estimates for {\color{black}the} {\small$\mathbf{Av}^{\mathbf{T},\mathbf{X},\mathfrak{q}_{\n}}$} objects}
In this subsection, the main goal is to present two estimates for {\color{black}the} {\small$\mathbf{Av}^{\mathbf{T},\mathbf{X},\mathfrak{q}_{\n}}$} objects from Definition \ref{definition:eq-operators}, where the {\small$\mathfrak{q}_{\n}$} are admissible (see Definition \ref{definition:admissible}). We start with a technical preliminary that controls the process {\small$\t\mapsto\mathbf{j}^{N}_{\t,\cdot}$} on time-scales bounded {\color{black}from} above by the averaging-scale {\small$\mathfrak{t}_{\mathbf{Av}}=N^{-2/3-10\delta_{\mathbf{S}}}$} in {\small$\mathbf{Av}^{\mathbf{T},\mathbf{X},\mathfrak{q}_{\n}}$} (again, see Definition \ref{definition:eq-operators}).
\begin{lemma}\label{lemma:jnestimate}
\fsp Fix any {\small$\mathrm{D}=\mathrm{O}(1)$}, and recall {\small$\mathfrak{t}_{\mathbf{Av}}=N^{-2/3-10\delta_{\mathbf{S}}}$}. With high probability, we have 
\begin{align}
\sup_{\t\in[0,1]}\sup_{|\x|\leq N^{\mathrm{D}}}\sup_{\r\in[0,\mathfrak{t}_{\mathbf{Av}}]}|\mathbf{j}^{N}_{\t+\r,\x}-\mathbf{j}^{N}_{\t,\x}|\lesssim1.\label{eq:jnestimateI}
\end{align}
\end{lemma}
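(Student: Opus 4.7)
The estimate \eqref{eq:jnestimateI} is a mesoscopic-time continuity statement for $\mathbf{j}^{N}$. A direct analysis of the SDE \eqref{eq:curr} is insufficient: the Brownian contribution alone has typical size $\sqrt{2N\mathfrak{t}_{\mathbf{Av}}}\sim N^{1/6-5\delta_{\mathbf{S}}}$, and the two drift terms are individually larger still, so systematic cancellation between the three is essential. That cancellation becomes visible only through the Cole--Hopf transform \eqref{eq:ch}, under which $\mathbf{j}^{N}$ corresponds (up to the trivial shift by $\mathscr{R}_{\lambda}\t$) to the log of an approximate lattice \abbr{SHE}. Accordingly, the plan is first to use \eqref{eq:ch} to write
\[
\mathbf{j}^{N}_{\t+\r,\x}-\mathbf{j}^{N}_{\t,\x}\;=\;\lambda^{-1}\log(\mathbf{Z}^{N}_{\t+\r,\x}/\mathbf{Z}^{N}_{\t,\x})+\mathscr{R}_{\lambda}\r,
\]
and, since $|\mathscr{R}_{\lambda}\r|\lesssim\mathfrak{t}_{\mathbf{Av}}=\mathrm{o}(1)$, reduce \eqref{eq:jnestimateI} to showing that $\mathbf{Z}^{N}_{\t+\r,\x}/\mathbf{Z}^{N}_{\t,\x}$ lies in a fixed compact subset of $(0,\infty)$, uniformly over the suprema, with high probability.

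To control this ratio I would carry out a Kolmogorov-type continuity argument for $\mathbf{Z}^{N}$. The $\mathbf{Z}^{N}$-SDE \eqref{eq:msheI} from Lemma \ref{lemma:mshe} admits a Duhamel representation against the discrete heat kernel $\mathbf{H}^{N}$ of \eqref{eq:heatkernel}, and the three resulting contributions to $\mathbf{Z}^{N}_{\t+\r,\x}-\mathbf{Z}^{N}_{\t,\x}$---the semigroup acting on $\mathbf{Z}^{N}_{\t,\cdot}$, the It\^o integral driven by $d\mathbf{b}$, and the $\mathfrak{e}$-residual---should admit moment estimates of the form
\[
\E|\mathbf{Z}^{N}_{\t+\r,\x}-\mathbf{Z}^{N}_{\t,\x}|^{2p}\;\lesssim_{p}\;\r^{p\gamma}\exp(\tfrac{\kappa_{p}|\x|}{N})
\]
for some $\gamma>0$ and each $p\geq1$. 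These follow respectively from: the spatial regularity of $\mathbf{Z}^{N}$ in \eqref{eq:mainI} together with the diffusive heat-kernel estimates of Proposition A.1 of \cite{DT}; the Burkholder--Davis--Gundy inequality together with the on-diagonal bound $\sum_{\y}(\mathbf{H}^{N}_{\s,\t+\r,\x,\y})^{2}\lesssim N^{-1}(\t+\r-\s)^{-1/2}$; and the pointwise bound \eqref{eq:err-estimate} combined with Lemma \ref{lemma:phibound}.

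To turn this $L^{2p}$ estimate on $\mathbf{Z}^{N}$ into one on $\log(\mathbf{Z}^{N}_{\t+\r,\x}/\mathbf{Z}^{N}_{\t,\x})$ a matching lower-moment bound $\E(\mathbf{Z}^{N}_{\t,\x})^{-2p}\lesssim\exp(\kappa_{p}|\x|/N)$ is required. I would establish it via the decomposition $\mathbf{Z}^{N}=\mathbf{R}^{N}\mathbf{S}^{N}$ of Definition \ref{definition:zsmooth}: by Lemma \ref{lemma:rbound} the ratio $\mathbf{R}^{N}$ equals $1+\mathrm{O}(N^{-\delta_{\mathbf{S}}/3})$ on the relevant space-time set with high probability, so the task reduces to a lower bound on the mesoscopically mollified process $\mathbf{S}^{N}$, for which negative moments follow from sub-Gaussian estimates on $\log\mathbf{S}^{N}$ via the exponential form in \eqref{eq:ch} together with the change-of-measure argument in the style of Lemma \ref{lemma:rbound}. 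A Chebyshev inequality with $p$ large, a union bound over a polynomial-mesh net in $[0,1]\times\{|\x|\leq N^{\mathrm{D}}\}\times[0,\mathfrak{t}_{\mathbf{Av}}]$, and the short-time pathwise-continuity argument used at the ends of the proofs of Lemmas \ref{lemma:rbound} and \ref{lemma:phibound} then upgrade the pointwise moment bounds to the claimed uniform estimate \eqref{eq:jnestimateI}. The main obstacle is precisely the lower-moment bound on $\mathbf{Z}^{N}$: positive multiplicative noise can produce very deep local troughs, and the $\mathbf{R}^{N}$--$\mathbf{S}^{N}$ decomposition is the natural workaround because it replaces a pointwise lower bound on $\mathbf{Z}^{N}$ by one on the much better-behaved mollified object $\mathbf{S}^{N}$.
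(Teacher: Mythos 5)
Your proposed route is genuinely different from the paper's, and its foundational premise is not correct: the paper \emph{does} analyze the \abbr{SDE} \eqref{eq:curr} directly, and no cancellation between the drift and noise terms is ever invoked. The key step, which your proposal misses, is to \emph{mollify in space before integrating in time}: using \eqref{eq:rboundI3}, the paper first replaces $\mathbf{j}^N$ by $\mathscr{S}^N\star\mathbf{j}^N$ at a cost of $\mathrm{O}(N^{-2\delta_{\mathbf{S}}/5})$, and after this replacement each of the three contributions is \emph{individually} $\lesssim 1$ over time windows of length $\mathfrak{t}_{\mathbf{Av}}$. Summation-by-parts moves $N^{3/2}\grad^{\mathbf{X}}_1$ onto $\mathscr{S}^N$ to give $N^{1/2+\delta_{\mathbf{S}}}\mathfrak{t}_{\mathbf{Av}}\lesssim 1$; the mean-zero of $\mathbf{F}$ under $\mathbb{P}^0$ plus the $\mathscr{S}^N$-average gives an i.i.d.-type cancellation and $N\cdot N^{-1/2+\delta_{\mathbf{S}}/2+\delta}\mathfrak{t}_{\mathbf{Av}}\lesssim 1$; and the noise becomes an average of $\sim N^{1-\delta_{\mathbf{S}}}$ independent Brownian motions, with supremum $\lesssim N^{\delta_{\mathbf{S}}/2+\delta}\mathfrak{t}_{\mathbf{Av}}^{1/2}\lesssim 1$. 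No Cole--Hopf, no moment estimates on $\mathbf{Z}^N$, and no lower bounds are needed.

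Your approach, by contrast, has a real gap that the cited tools do not close. To pass from $|\mathbf{Z}^N_{\t+\r,\x}-\mathbf{Z}^N_{\t,\x}|\lesssim\r^\gamma$ to $|\log(\mathbf{Z}^N_{\t+\r,\x}/\mathbf{Z}^N_{\t,\x})|\lesssim 1$ you need a uniform-in-$(\t,\x)$ lower bound on $\mathbf{Z}^N_{\t,\x}$ (equivalently, negative moments of $\mathbf{Z}^N$, or an upper bound on $-\mathbf{j}^N_{\t,\x}$). This is not established anywhere at this point in the paper, and your proposed fix is circular: by Lemma \ref{lemma:rbound}, $\mathbf{S}^N_{\t,\x}=\mathbf{Z}^N_{\t,\x}(1+\mathrm{O}(N^{-\delta_{\mathbf{S}}/3}))$, so a lower bound on $\mathbf{S}^N$ \emph{is} a lower bound on $\mathbf{Z}^N$, and the ``sub-Gaussian estimate on $\log\mathbf{S}^N$'' you invoke amounts to the very control on $\mathbf{j}^N_{\t,\x}$ one is trying to prove (only spatial increments of $\mathbf{j}^N$ are controlled uniformly in time via stationarity of $\bphi_\t$; the value $\mathbf{j}^N_{\t,\x}$ itself is not). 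Separately, the $L^{2p}$ time-increment bound for $\mathbf{Z}^N$ is not deducible from the pointwise estimate \eqref{eq:err-estimate} and Lemma \ref{lemma:phibound}: \eqref{eq:err-estimate} covers only $\mathfrak{f}_{\mathrm{err}}$, while the dominant pieces $\mathfrak{e}_2,\mathfrak{e}_1,\mathfrak{e}_0$ of \eqref{eq:msheII} are of pointwise size up to $N$, so $\int_\t^{\t+\r}$ of these is $\mathrm{O}(N\mathfrak{t}_{\mathbf{Av}})=\mathrm{O}(N^{1/3-10\delta_{\mathbf{S}}})\gg1$; controlling them requires the Kipnis--Varadhan / Boltzmann--Gibbs machinery of Proposition \ref{prop:stestimate}, which is considerably heavier than the lemma being proved.
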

\begin{proof}
First, we claim that with high probability, we have the following, in which {\small$\mathscr{S}^{N}$} is from \eqref{eq:zsmooth}:
\begin{align}
\sup_{\t\in[0,1]}\sup_{|\x|\leq N^{\mathrm{D}}}\sup_{\r\in[0,\mathfrak{t}_{\mathbf{Av}}]}|\mathbf{j}^{N}_{\t+\r,\x}-\mathbf{j}^{N}_{\t,\x}|&\lesssim\sup_{\t\in[0,1]}\sup_{|\x|\leq N^{\mathrm{D}}}\sup_{\r\in[0,\mathfrak{t}_{\mathbf{Av}}]}|[\mathscr{S}^{N}\star\mathbf{j}^{N}_{\t+\r,\cdot}]_{\x}-[\mathscr{S}^{N}\star\mathbf{j}^{N}_{\t,\cdot}]_{\x}|\nonumber\\
&+\sup_{\t\in[0,1]}\sup_{|\x|\leq N^{\mathrm{D}}}\sup_{\r\in[0,\mathfrak{t}_{\mathbf{Av}}]}|[\mathscr{S}^{N}\star\mathbf{j}^{N}_{\t+\r,\cdot}]_{\x}-\mathbf{j}^{N}_{\t+\r,\x}|\nonumber\\
&+\sup_{\t\in[0,1]}\sup_{|\x|\leq N^{\mathrm{D}}}\sup_{\r\in[0,\mathfrak{t}_{\mathbf{Av}}]}|[\mathbf{j}^{N}_{\t,\x}-[\mathscr{S}^{N}\star\mathbf{j}^{N}_{\t,\cdot}]_{\x}|\nonumber\\
&\lesssim1+\sup_{\t\in[0,1]}\sup_{|\x|\leq N^{\mathrm{D}}}\sup_{\r\in[0,\mathfrak{t}_{\mathbf{Av}}]}|[\mathscr{S}^{N}\star\mathbf{j}^{N}_{\t+\r,\cdot}]_{\x}-[\mathscr{S}^{N}\star\mathbf{j}^{N}_{\t,\cdot}]_{\x}|.\label{eq:jnestimateI1}
\end{align}
The first bound follows by the triangle inequality. To prove the second, we recall that {\small$\mathscr{S}^{N}$} is a smoothing kernel supported on a neighborhood of {\small$0\in\Z$} of length {\small$\lesssim N^{1-\delta_{\mathbf{S}}}$}, and that {\small$\mathbf{j}^{N}_{\s,\x+\w}-\mathbf{j}^{N}_{\s,\x}=N^{-1/2}\sum_{\z\in\mathbb{I}_{\w}}\bphi_{\s,\x+\w}$} for some interval {\small$\mathbb{I}_{\w}$} of length {\small$|\w|$}. In particular, this and \eqref{eq:rboundI3} give the following estimate with high probability:
\begin{align*}
&\sup_{0\leq\s\lesssim1}\sup_{|\x|\leq N^{\mathrm{D}}}|[\mathscr{S}^{N}\star\mathbf{j}^{N}_{\s,\cdot}]_{\x}-\mathbf{j}^{N}_{\s,\x}|\lesssim \sup_{0\leq\s\lesssim1}\sup_{|\x|\leq N^{\mathrm{D}}} N^{-1+\delta_{\mathbf{S}}}\sum_{|\w|\lesssim N^{1-\delta_{\mathbf{S}}}}|\mathbf{j}^{N}_{\s,\x+\w}-\mathbf{j}^{N}_{\s,\x}|\lesssim N^{-\frac25\delta_{\mathbf{S}}}.
\end{align*}
Now, by convolving the \abbr{RHS} of \eqref{eq:curr} with {\small$\mathscr{S}^{N}$}, we have 
\begin{align}
\sup_{\r\in[0,\mathfrak{t}_{\mathbf{Av}}]}|[\mathscr{S}^{N}\star\mathbf{j}^{N}_{\t+\r,\cdot}]_{\x}-[\mathscr{S}^{N}\star\mathbf{j}^{N}_{\t,\cdot}]_{\x}|&\lesssim\int_{\t}^{\t+\mathfrak{t}_{\mathbf{Av}}}|[\mathscr{S}^{N}\star(N^{\frac32}\grad^{\mathbf{X}}_{1}\mathscr{U}'[\bphi_{\s,\cdot}])]_{\x}|\d\s\nonumber\\
&+\int_{\t}^{\t+\mathfrak{t}_{\mathbf{Av}}}|[\mathscr{S}^{N}\star (N\mathbf{F}[\tau_{\cdot}\bphi_{\s}])]_{\x}|\d\s\nonumber\\
&+\sqrt{2}\lambda N^{\frac12}\sup_{\r\in[0,\mathfrak{t}_{\mathbf{Av}}]}|[\mathscr{S}^{N}\star(\mathbf{b}_{\t+\r,\cdot}-\mathbf{b}_{\t,\cdot})]_{\x}|.\label{eq:jnestimateI2}
\end{align}
We {\color{black}now} control each term on the \abbr{RHS} of the previous display. For the first term on the \abbr{RHS}, first recall that discrete gradients commute with convolution. So, we have {\small$\mathscr{S}^{N}\star(N^{3/2}\grad^{\mathbf{X}}_{1}\mathscr{U}'[\bphi_{\s,\cdot}])=N^{3/2}\grad^{\mathbf{X}}_{1}\mathscr{S}^{N}\star\mathscr{U}'[\bphi_{\s,\cdot}]$}. Moreover, since {\small$\mathscr{S}^{N}$} is smooth at scale {\small$N^{1-\delta_{\mathbf{S}}}$}, the discrete gradient acting on {\small$\mathscr{S}^{N}$} gives an extra factor of {\small$N^{-1+\delta_{\mathbf{S}}}$}. Lastly, it is also supported on a neighborhood of length {\small$\lesssim N^{1-\delta_{\mathbf{S}}}$} around the origin. Thus, we have the pointwise estimate {\small$N^{3/2}|\grad^{\mathbf{X}}_{1}\mathscr{S}^{N}_{\w}|\lesssim N^{1/2+\delta_{\mathbf{S}}}\cdot N^{-1+\delta_{\mathbf{S}}}\mathbf{1}_{|\w|\lesssim N^{1-\delta_{\mathbf{S}}}}$}, so that for any {\small$\delta>0$}, we have
\begin{align}
&\sup_{\t\in[0,1]}\sup_{|\x|\leq N^{\mathrm{D}}}\int_{\t}^{\t+\mathfrak{t}_{\mathbf{Av}}}|[\mathscr{S}^{N}\star(N^{\frac32}\grad^{\mathbf{X}}_{1}\mathscr{U}'[\bphi_{\s,\cdot}])]_{\x}|\d\s\nonumber\\
&\lesssim\sup_{\t\in[0,1]}\sup_{|\x|\leq N^{\mathrm{D}}}\int_{\t}^{\t+\mathfrak{t}_{\mathbf{Av}}} N^{\frac12+\delta_{\mathbf{S}}}\cdot N^{-1+\delta_{\mathbf{S}}}\sum_{|\w|\lesssim N^{1-\delta_{\mathbf{S}}}}|\mathscr{U}'[\bphi_{\s,\x+\w}]|\d\s\nonumber\\
&\lesssim N^{\delta}N^{\frac12+\delta_{\mathbf{S}}}\mathfrak{t}_{\mathbf{Av}}\lesssim1 \label{eq:jnestimateI3}
\end{align}
with high probability, where the last line follows because {\small$\mathscr{U}'[\cdot]$} is uniformly Lipschitz (see Assumption \ref{assump:potential}) and by the high probability estimate in Lemma \ref{lemma:phibound} (along with {\small$\mathfrak{t}_{\mathbf{Av}}\lesssim N^{-2/3}$} and smallness of {\small$\delta,\delta_{\mathbf{S}}$}).

Now, we tackle the second term on the \abbr{RHS} of \eqref{eq:jnestimateI2}. For this, we recall {\small$\mathbf{F}$} from \eqref{eq:nonlinearity}. Since {\small$\bphi_{\w}$} are i.i.d. over {\small$\w\in\Z$} with respect to the measure {\small$\mathbb{P}^{0}$} from \eqref{eq:gcmeasure}, and since {\small$\E^{0}\mathscr{U}'[\bphi_{0}]=0$} (see Lemma \ref{lemma:classify}), we know {\small$\E^{0}\mathbf{F}[\bphi]=0$}. We also know that {\small$|\mathbf{F}[\bphi]|\lesssim1+\sum_{|\z|\lesssim1}|\bphi_{\z}|^{\deg}$} by \eqref{eq:nonlinearity}. Since {\small$\bphi_{\z}$} are i.i.d. sub-Gaussian random variables (see Assumption \ref{assump:noneq}), we have {\small$\E^{0}|\mathbf{F}[\bphi]|^{p}\lesssim_{p}1$} for any {\small$p\geq1$}. We finally remark that {\small$\mathbf{F}[\tau_{\z}\bphi]$} and {\small$\mathbf{F}[\tau_{\w}\bphi]$} are i.i.d. with respect to the {\small$\mathbb{P}^{0}$}-product measure if {\small$|\z-\w|\geq2\deg+1$}; see \eqref{eq:nonlinearity}. So, standard \abbr{CLT}-type moment estimates yield {\small$\E^{0}|[\mathscr{S}^{N}\star(\mathbf{F}[\tau_{\cdot}\bphi])]_{\x}|^{p}\lesssim_{p}N^{-p/2+p\delta_{\mathbf{S}}/2}$} for any {\small$p\geq1$}, since {\small$\mathscr{S}^{N}$} is averaging in space over length-scales of order {\small$N^{1-\delta_{\mathbf{S}}}$}. Therefore, by the Chebyshev inequality and this moment estimate for large enough {\small$p\geq1$}, we ultimately deduce the following for any fixed constants {\small$\delta,\mathrm{D}_{0}>0$}, in which {\small$\mathbb{P}^{\mathrm{path},0}$} is {\color{black}the} probability with respect to the law of the dynamics {\small$\t\mapsto\bphi_{\t,\cdot}$} with initial data sampled via {\small$\mathbb{P}^{0}$}:
\begin{align*}
\mathbb{P}^{\mathrm{path},0}\Big(|[\mathscr{S}^{N}\star(\mathbf{F}[\tau_{\cdot}\bphi_{\s}])]_{\x}|\geq N^{-\frac12+\frac12\delta_{\mathbf{S}}+\delta}\Big)\lesssim_{p}N^{-\frac{p}{2}+\frac{p}{2}\delta_{\mathbf{S}}+p\delta}\E^{0}|[\mathscr{S}^{N}\star(\mathbf{F}[\tau_{\cdot}\bphi])]_{\x}|^{p}\lesssim_{p,\delta,\mathrm{D}_{0}}N^{-\mathrm{D}_{0}}.
\end{align*}
We now take a union bound over {\small$|\x|\leq N^{\mathrm{D}}$} and {\small$\s\in[0,\T]\cap N^{-\mathrm{D}}\Z$} for any {\small$\T\lesssim1$}. Then, we change measure from {\small$\mathbb{P}^{\mathrm{path},0}$} to {\small$\mathbb{P}^{\mathrm{path}}$}, which is the probability measure for the law of the same dynamics {\small$\t\mapsto\bphi_{\t,\cdot}$} but with initial data that satisfies Assumption \ref{assump:noneq}. Lastly, we use short-time continuity. This is all in the same manner as in the proof of Lemma \ref{lemma:phibound}, for example, and it provides us the following bound with high probability:
\begin{align*}
\sup_{\s\in[0,\T]}\sup_{|\x|\leq N^{\mathrm{D}}}|[\mathscr{S}^{N}\star(\mathbf{F}[\tau_{\cdot}\bphi_{\s}])]_{\x}|\lesssim N^{-\frac12+\frac12\delta_{\mathbf{S}}+\delta}
\end{align*}
Plugging this into the second term on the \abbr{RHS} of \eqref{eq:jnestimateI2} and using {\small$\mathfrak{t}_{\mathbf{Av}}\lesssim N^{-2/3}$} (and smallness of {\small$\delta_{\mathbf{S}},\delta$}) gives
\begin{align}
\sup_{\t\in[0,\mathfrak{t}_{\mathbf{Av}}]}\sup_{|\x|\leq N^{\mathrm{D}}}\int_{\t}^{\t+\mathfrak{t}_{\mathbf{Av}}}|[\mathscr{S}^{N}\star (N\mathbf{F}[\tau_{\cdot}\bphi_{\s}])]_{\x}|\d\s\lesssim N^{\frac12+\frac12\delta_{\mathbf{S}}+\delta}\mathfrak{t}_{\mathbf{Av}}\lesssim1.\label{eq:jnestimateI4}
\end{align}
Finally, we move to the last term in \eqref{eq:jnestimateI2}. Because {\small$\mathbf{b}_{\cdot,\w}$} are jointly independent Brownian motions over {\small$\w\in\Z$}, and since {\small$\mathscr{S}^{N}$} is an averaging kernel of length-scale {\small$\gtrsim N^{1-\delta_{\mathbf{S}}}$}, we know that {\small$[\mathscr{S}^{N}\star(\mathbf{b}_{\t+\r,\cdot}-\mathbf{b}_{\t,\cdot})]_{\x}$} has the law of {\small$\mathrm{O}(N^{-1/2+\delta_{\mathbf{S}}/2})\wt{\mathbf{b}}_{\r,\x}$}, where {\small$\wt{\mathbf{b}}_{\cdot,\x}$} is a standard Brownian motion. Thus, by the reflection principle for Brownian {\color{black}motions}, we have the following sub-Gaussian estimate:
\begin{align*}
\mathbb{P}\Big(\sup_{\r\in[0,\mathfrak{t}_{\mathbf{Av}}]}|[\mathscr{S}^{N}\star(\mathbf{b}_{\t+\r,\cdot}-\mathbf{b}_{\t,\cdot})]_{\x}|\geq N^{-\frac12+\frac12\delta_{\mathbf{S}}+\delta}\mathfrak{t}_{\mathbf{Av}}^{\frac12}\Big)\lesssim\exp(-\mathrm{c}N^{2\delta}).
\end{align*}
We note that the convolution {\small$[\mathscr{S}^{N}\star(\mathbf{b}_{\t+\r,\cdot}-\mathbf{b}_{\t,\cdot})]_{\x}$} is H\"{o}lder-{\small$1/2$} continuous in the {\small$\t$}-variable as well, so another union bound and short-time continuity argument implies that with high probability, we have 
\begin{align}
N^{\frac12}\sup_{\t\in[0,1]}\sup_{|\x|\leq N^{\mathrm{D}}}\sup_{\r\in[0,\mathfrak{t}_{\mathbf{Av}}]}|[\mathscr{S}^{N}\star(\mathbf{b}_{\t+\r,\cdot}-\mathbf{b}_{\t,\cdot})]_{\x}|\lesssim N^{\frac12\delta_{\mathbf{S}}+\delta}\mathfrak{t}_{\mathbf{Av}}^{\frac12}\lesssim1,\label{eq:jnestimateI5}
\end{align}
where we again use {\small$\mathfrak{t}_{\mathbf{Av}}\lesssim N^{-2/3}$} and smallness of {\small$\delta_{\mathbf{S}},\delta$} to get the last estimate. We now combine \eqref{eq:jnestimateI1}, \eqref{eq:jnestimateI2}, \eqref{eq:jnestimateI3}, \eqref{eq:jnestimateI4}, and \eqref{eq:jnestimateI5} to deduce the desired estimate \eqref{eq:jnestimateI}, so the proof is complete.
\end{proof}
Using Lemma \ref{lemma:jnestimate}, we will now be able to obtain the following high probability estimate for {\small$\mathbf{Av}^{\mathbf{T},\mathbf{X},\mathfrak{q}_{\n}}$}. (This estimate is sub-optimal in that it does not take advantage of time-fluctuations, but it will still be important.)
\begin{lemma}\label{lemma:avldptx}
\fsp Fix any {\small$\mathrm{D}=\mathrm{O}(1)$} and {\small$\delta>0$}, both independent of {\small$N$}. With high probability, we have the following for any admissible {\small$\mathfrak{q}_{\n}$} in the sense of Definition \ref{definition:admissible}:
\begin{align}
\sup_{\t\in[0,1]}\sup_{|\x|\leq N^{\mathrm{D}}}|\mathbf{Av}^{\mathbf{T},\mathbf{X},\mathfrak{q}_{\n}}_{\t,\x}|\lesssim N^{-\frac12+\frac34\delta_{\mathbf{S}}+\delta}.\label{eq:avldptxI}
\end{align}
\end{lemma}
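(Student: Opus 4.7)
The plan is to reduce Lemma \ref{lemma:avldptx} to Lemma \ref{lemma:avldp} by factoring the Cole--Hopf ratio $\mathbf{Z}^{N}_{\t-\r,\x+\j}(\mathbf{Z}^{N}_{\t,\x})^{-1}$ into a $\j$-independent time-ratio and a purely spatial ratio. First I would write
\begin{align*}
\mathbf{Z}^{N}_{\t-\r,\x+\j}(\mathbf{Z}^{N}_{\t,\x})^{-1}=\Big[\mathbf{Z}^{N}_{\t-\r,\x+\j}(\mathbf{Z}^{N}_{\t-\r,\x})^{-1}\Big]\cdot\Big[\mathbf{Z}^{N}_{\t-\r,\x}(\mathbf{Z}^{N}_{\t,\x})^{-1}\Big],
\end{align*}
and observe that the second bracket is independent of $\j$, so it may be pulled out of the spatial sum. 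The first bracket, when multiplied against $\mathfrak{q}_{\n}[\tau_{\x+\j}\bphi_{\t-\r}]$ and averaged over $\j\in\{1,\ldots,\mathfrak{n}_{\mathbf{Av}}\}$, is by definition $\mathbf{Av}^{\mathbf{X},\mathfrak{q}_{\n}}_{\t-\r,\x}$ (compare the two explicit displays after \eqref{eq:mult-op-2} with \eqref{eq:mult-op-1}, using \eqref{eq:ch}). Substituting back into \eqref{eq:mult-op-1} gives the clean identity
\begin{align*}
\mathbf{Av}^{\mathbf{T},\mathbf{X},\mathfrak{q}_{\n}}_{\t,\x}=\mathfrak{t}_{\mathbf{Av}}^{-1}\int_{0}^{\mathfrak{t}_{\mathbf{Av}}}\mathbf{Av}^{\mathbf{X},\mathfrak{q}_{\n}}_{\t-\r,\x}\cdot\frac{\mathbf{Z}^{N}_{\t-\r,\x}}{\mathbf{Z}^{N}_{\t,\x}}\,\d\r,
\end{align*}
where $\bphi_{\s}$ for $\s\leq 0$ is interpreted by the same extension convention used in \eqref{eq:timegrad}.

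Next I would estimate the two factors of the integrand on a single high-probability event, uniformly in $\r\in[0,\mathfrak{t}_{\mathbf{Av}}]$. By \eqref{eq:ch}, the time-ratio is $\exp(\lambda(\mathbf{j}^{N}_{\t-\r,\x}-\mathbf{j}^{N}_{\t,\x})+\lambda\mathscr{R}_{\lambda}\r)$; since $\mathfrak{t}_{\mathbf{Av}}\lesssim 1$, Lemma \ref{lemma:jnestimate} (with the same $\mathrm{D}$, possibly enlarged by a constant) gives $|\mathbf{j}^{N}_{\t-\r,\x}-\mathbf{j}^{N}_{\t,\x}|\lesssim 1$ with high probability uniformly over $\t\in[0,1]$, $|\x|\leq N^{\mathrm{D}}$, $\r\in[0,\mathfrak{t}_{\mathbf{Av}}]$, so this factor is deterministically $\mathrm{O}(1)$ on the event. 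For the spatial factor, Lemma \ref{lemma:avldp} (with the same $\mathrm{D}$ and $\delta$) yields, on a high-probability event,
\begin{align*}
\sup_{\s\in[0,1]}\sup_{|\x|\leq N^{\mathrm{D}}}|\mathbf{Av}^{\mathbf{X},\mathfrak{q}_{\n}}_{\s,\x}|\lesssim N^{-\frac12+\frac34\delta_{\mathbf{S}}+\delta},
\end{align*}
which applies to every $\s=\t-\r$ in the integration range. Plugging both bounds into the identity above and cancelling $\mathfrak{t}_{\mathbf{Av}}^{-1}\int_{0}^{\mathfrak{t}_{\mathbf{Av}}}\d\r=1$ delivers \eqref{eq:avldptxI}.

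All of the genuine probabilistic work is already carried out in Lemmas \ref{lemma:avldp} and \ref{lemma:jnestimate}; the role of the present lemma is only to package a deterministic factorization that turns the space-time average into a trivial time-integral of the spatial average times a bounded Cole--Hopf time-ratio. There is no real obstacle: the sub-optimality of the $N^{3\delta_{\mathbf{S}}/4+\delta}$ tail compared to a more refined estimate incorporating time-fluctuations is inherited from Lemma \ref{lemma:avldp} and is acknowledged in the remark preceding the statement.
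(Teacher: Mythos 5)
Your proof is correct and follows essentially the same route as the paper's: factor the Cole--Hopf ratio into a $\j$-independent time-ratio and the spatial ratio defining $\mathbf{Av}^{\mathbf{X},\mathfrak{q}_{\n}}_{\t-\r,\x}$, bound the former by $\mathrm{O}(1)$ via Lemma \ref{lemma:jnestimate}, and bound the latter via Lemma \ref{lemma:avldp}. The paper carries out exactly this factorization (see \eqref{eq:avldptxI1}) and then applies the same two lemmas.
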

\begin{proof}
We assume {\color{black}that} {\small$\mathfrak{q}_{\n}[\bphi]$} depends only on {\small$\bphi_{\w}$} for {\small$\w\in\{1,\ldots,\mathfrak{l}\}$} for some {\small$\mathfrak{l}\lesssim1$}. For the case {\small$\w\in\{-\mathfrak{l},\ldots,0\}$}, the same argument works after a reflection that swaps {\small$0\leftrightarrow1$}. Recalling {\color{black}the} notation from Definition \ref{definition:eq-operators}, we write
\begin{align}
\mathbf{Av}^{\mathbf{T},\mathbf{X},\mathfrak{q}_{\n}}_{\t,\x}&=\mathfrak{t}_{\mathbf{Av}}^{-1}\int_{0}^{\mathfrak{t}_{\mathbf{Av}}}\mathfrak{n}_{\mathbf{Av}}^{-1}\sum_{\j=1,\ldots,\mathfrak{n}_{\mathbf{Av}}}\mathfrak{q}_{\n}[\tau_{\x+\j}\bphi_{\t-\r}]\cdot\mathbf{Z}^{N}_{\t-\r,\x+\j}(\mathbf{Z}^{N}_{\t,\x})^{-1}\d\r\nonumber\\
&=\mathfrak{t}_{\mathbf{Av}}^{-1}\int_{0}^{\mathfrak{t}_{\mathbf{Av}}}\mathbf{Z}^{N}_{\t-\r,\x}(\mathbf{Z}^{N}_{\t,\x})^{-1}\cdot \mathfrak{n}_{\mathbf{Av}}^{-1}\sum_{\j=1,\ldots,\mathfrak{n}_{\mathbf{Av}}}\mathfrak{q}_{\n}[\tau_{\x+\j}\bphi_{\t-\r}]\cdot\mathbf{Z}^{N}_{\t-\r,\x+\j}(\mathbf{Z}^{N}_{\t-\r,\x})^{-1}\d\r\nonumber\\
&=\mathfrak{t}_{\mathbf{Av}}^{-1}\int_{0}^{\mathfrak{t}_{\mathbf{Av}}}\mathbf{Z}^{N}_{\t-\r,\x}(\mathbf{Z}^{N}_{\t,\x})^{-1}\cdot \mathbf{Av}^{\mathbf{X},\mathfrak{q}_{\n}}_{\t-\r,\x}\d\r.\label{eq:avldptxI1}
\end{align}
By \eqref{eq:ch}, we know that {\small$\mathbf{Z}^{N}_{\t-\r,\x}(\mathbf{Z}^{N}_{\t,\x})^{-1}=\exp(\lambda\mathbf{j}^{N}_{\t-\r,\x}-\lambda\mathbf{j}^{N}_{\t,\x}-\lambda\mathscr{R}_{\lambda}\r)$}. Since {\small$\lambda,\mathscr{R}_{\lambda}$} are independent of {\small$N$}, with high probability, we know by Lemma \ref{lemma:jnestimate} that {\small$|\mathbf{Z}^{N}_{\t-\r,\x}(\mathbf{Z}^{N}_{\t,\x})^{-1}|\lesssim1$}. Now use Lemma \ref{lemma:avldp} to bound {\small$\mathbf{Av}^{\mathbf{X},\mathfrak{q}_{\n}}_{\t-\r,\x}$} above by the \abbr{RHS} of \eqref{eq:avldptxI} with high probability. Because all estimates so far hold on the same high probability event simultaneously over all {\small$\t\in[0,1]$} and {\small$|\x|\leq N^{\mathrm{D}}$} and {\small$\r\in[0,\mathfrak{t}_{\mathbf{Av}}]$}, the desired estimate \eqref{eq:avldptxI} follows.
\end{proof}
With a more sophisticated argument, we can show {\color{black}some} cancellations from time-fluctuations in {\small$\mathbf{Av}^{\mathbf{T},\mathbf{X},\mathfrak{q}_{\n}}$}.
\begin{prop}\label{prop:stestimate}
\fsp Fix any {\small$\mathrm{D}=\mathrm{O}(1)$}. There exists a constant {\small$\beta>0$} depending only on {\small$\gamma_{\mathrm{data}}>0$} from Assumption \ref{assump:noneq} and a path-space event {\small$\mathscr{E}\subseteq\mathscr{C}([0,1],\R^{\Z})$} of high probability such that the following second moment estimate holds for any admissible {\small$\mathfrak{q}_{\n}$} (see Definition \ref{definition:admissible}):
\begin{align}
\sup_{\t\in[0,\infty)}\sup_{|\x|\leq N^{\mathrm{D}}}\E|\mathbf{1}_{\mathscr{E}}\mathbf{Av}^{\mathbf{T},\mathbf{X},\mathfrak{q}_{\n}}_{\t,\x}|^{2}\lesssim N^{-2-\beta_{}}.\label{eq:stestimateI}
\end{align}
\end{prop}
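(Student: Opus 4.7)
The plan is to bound $\E|\mathbf{1}_\mathscr{E}\mathbf{Av}^{\mathbf{T},\mathbf{X},\mathfrak{q}_{\n}}_{\t,\x}|^2$ by combining (i) a change of measure that reduces the problem to the stationary grand-canonical ensemble $\mathbb{P}^0$, (ii) a Kipnis--Varadhan type inequality that converts the time-average into an $H^{-1}$ norm of a spatial average, and (iii) a central-limit-style bound for the $H^{-1}$ norm that exploits the admissibility ($\mathrm{Jet}_\k^\perp$ structure) of $\mathfrak{q}_\n$ together with the convexity provided by Assumption \ref{assump:potential}. The arithmetic behind this matches exactly the heuristic \eqref{eq:heuristic2}: with $\mathfrak{t}_{\mathbf{Av}} = N^{-2/3-10\delta_{\mathbf{S}}}$ and $\mathfrak{n}_{\mathbf{Av}} = N^{1-3\delta_{\mathbf{S}}/2}$ one formally obtains a stationary second moment of order $N^{-2}\mathfrak{t}_{\mathbf{Av}}^{-1}\mathfrak{n}_{\mathbf{Av}}^{-1} = N^{-7/3+O(\delta_{\mathbf{S}})}$, and after paying $N^{1/3-\gamma_{\mathrm{data}}}$ for the change of measure one arrives at $N^{-2-\gamma_{\mathrm{data}}+O(\delta_{\mathbf{S}})}$, which is the desired bound with $\beta$ a small multiple of $\gamma_{\mathrm{data}}$ for $\delta_{\mathbf{S}}$ sufficiently small.

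Concretely, I would first define the event $\mathscr{E}$ as the intersection of the high-probability events from Lemmas \ref{lemma:rbound}, \ref{lemma:phibound}, and \ref{lemma:jnestimate}, restricted to a spatial box of radius $N^{\mathrm{D}+C}$ for some large $C$, so that on $\mathscr{E}$ the Cole--Hopf ratio $\mathbf{Z}^N_{\t-\r,\x+\j}(\mathbf{Z}^N_{\t,\x})^{-1}$ is deterministically bounded and, moreover, differs from $1$ by at most $N^{-c\delta_{\mathbf{S}}}$ uniformly in $\r\in[0,\mathfrak{t}_{\mathbf{Av}}]$, $|\j|\leq\mathfrak{n}_{\mathbf{Av}}$. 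Because $\mathscr{E}$ is in particular measurable with respect to the whole trajectory but the bounds defining it hold deterministically on $\mathscr{E}$, we may split $\mathbf{Av}^{\mathbf{T},\mathbf{X},\mathfrak{q}_\n}_{\t,\x}$ as the ``plain'' space-time average of $\mathfrak{q}_\n[\tau_{\x+\j}\bphi_{\t-\r}]$ (without any Cole--Hopf factor) plus a correction in which the Cole--Hopf ratio is replaced by its deviation from $1$; the correction is controlled in $L^2$ by Cauchy--Schwarz together with the a priori estimate from Lemma \ref{lemma:avldptx}, giving a term of negligible size $N^{-1+O(\delta_{\mathbf{S}})-c\delta_{\mathbf{S}}}$ squared.

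For the plain space-time average I would change measure: since Assumption \ref{assump:noneq} gives $\|\d\mathbb{P}^{\mathrm{init}}/\d\mathbb{P}^0\|_{L^\infty}\lesssim N^{1/3-\gamma_{\mathrm{data}}}$ and the dynamics is identical, the same bound transfers to the path-space densities, so it suffices to estimate the second moment under $\mathbb{P}^0$. Under stationarity I would apply the Kipnis--Varadhan variance inequality to the reversible Ginzburg--Landau generator (whose speed is $N^2$) to obtain
\begin{align*}
\E^0\Big|\mathfrak{t}_{\mathbf{Av}}^{-1}\int_0^{\mathfrak{t}_{\mathbf{Av}}} g(\bphi_{\t-\r})\,\d\r\Big|^2 \;\lesssim\; \mathfrak{t}_{\mathbf{Av}}^{-1} N^{-2}\|g\|_{H^{-1}}^2,
\end{align*}
with $g_{\x}(\bphi) = \mathfrak{n}_{\mathbf{Av}}^{-1}\sum_{\j} \mathfrak{q}_\n[\tau_{\x+\j}\bphi]$. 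The $H^{-1}$ norm is then estimated by the standard test-function argument for spatial averages of local functions in $\mathrm{Jet}_\k^\perp$: writing $\mathfrak{f}\in\mathrm{Jet}_\k^\perp$ as $\mathscr{L}u$ for an explicit $u$ that is itself a sum of $\mathfrak{n}_{\mathbf{Av}}$ translates of a local function and using the log-Sobolev / spectral gap inequalities implied by the strict convexity of $\mathscr{U}_1$ (and boundedness of $\mathscr{U}_2$) from Assumption \ref{assump:potential}, one arrives at $\|g\|_{H^{-1}}^2 \lesssim \mathfrak{n}_{\mathbf{Av}}^{-1}$, absorbing the scaling factor $N^{-1+\k/2}$ in the definition of $\mathfrak{q}_\n$ into the jet-order gain.

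The main obstacle is the last step: getting a sharp $H^{-1}$ bound that simultaneously extracts the $\mathfrak{n}_{\mathbf{Av}}^{-1}$ gain from spatial averaging and handles each jet order $\k\in\{0,1,2\}$, all while the state space is $\R^\Z$ and the potential is only a perturbation of a uniformly convex one. This is exactly the ``local'' analysis that the paper attributes to \cite{YEJP25} and that Assumption \ref{assump:noneq} is designed to simplify; in particular, the $L^\infty$ control on the Radon--Nikodym derivative avoids the relative entropy bookkeeping of \cite{YEJP25} and lets one compute the $H^{-1}$ norm directly against $\mathbb{P}^0$. Once this stationary $H^{-1}$ estimate is in hand, the combination described above yields \eqref{eq:stestimateI} with $\beta$ any positive constant strictly smaller than $\gamma_{\mathrm{data}}$.
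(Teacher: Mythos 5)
The overall skeleton you propose (change of measure to $\mathbb{P}^0$, Kipnis--Varadhan bound, jet-order gain for the $H^{-1}$ norm, arithmetic $N^{-2}\mathfrak{t}_{\mathbf{Av}}^{-1}\mathfrak{n}_{\mathbf{Av}}^{-1}\cdot N^{1/3-\gamma_{\mathrm{data}}}\lesssim N^{-2-\beta}$) matches the paper, but your treatment of the Cole--Hopf factor $\mathbf{Z}^{N}_{\t-\r,\x+\j}(\mathbf{Z}^{N}_{\t,\x})^{-1}$ has a gap that the rest of the argument cannot absorb.

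You split the average into a ``plain'' part (ratio replaced by $1$) plus a correction (ratio replaced by ratio $-1$) and claim the correction is of size $N^{-1+O(\delta_{\mathbf{S}})-c\delta_{\mathbf{S}}}$ before squaring, hence negligible. That arithmetic is off. The gain from $|\text{ratio}-1|\lesssim N^{-c\delta_{\mathbf{S}}}$ is only a factor of $N^{-c\delta_{\mathbf{S}}}$, and once you take absolute values (or apply Cauchy--Schwarz pointwise in $(\r,\j)$, as you suggest), you have destroyed the \emph{temporal} cancellation entirely. The crude sup bound (Lemma \ref{lemma:avldptx}) on the space--time average is only $N^{-1/2+3\delta_{\mathbf{S}}/4+\delta}$, which reflects the \emph{spatial} CLT gain alone. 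So the correction squared is of order $N^{-1-2c\delta_{\mathbf{S}}+O(\delta_{\mathbf{S}})}$, roughly $N^{-1}$, whereas \eqref{eq:stestimateI} demands $N^{-2-\beta}$. In other words: the entire point of the proposition is the extra $N^{-2}\mathfrak{t}_{\mathbf{Av}}^{-1}\approx N^{-4/3}$ from time-averaging, and your correction term throws it away. To recover the time gain on the correction you would need KV there too, and then the ratio (or ratio $-1$) reappears inside the KV observable, which is precisely the problem you were trying to sidestep. For the worst admissible case $\mathfrak{q}_\n\in\mathrm{Jet}_2^{\perp}$ (where the $N^{-1+\k/2}$ prefactor contributes nothing) the gap is fatal.

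The paper's resolution, in Lemmas \ref{lemma:stestimate-rep} and \ref{lemma:stestimate-kv}, is to \emph{keep} the Cole--Hopf factor inside the KV observable: the ratio is written (on a high-probability event) as $\exp(\mathbf{h}^{N,\Pi}_{\t,\r,\x})$ with $\mathbf{h}^{N,\Pi}$ taking values in the compact torus $[-(\log N)^{1/2},(\log N)^{1/2})$, and then one multiplies and divides by $\exp(\mathrm{U})$ for an \emph{independent} uniform random variable $\mathrm{U}$ on this torus. The pair $\r\mapsto(\mathrm{U}+\mathbf{h}^{N,\Pi}_{\t,\r,\x},\bphi_{\t-\r,\cdot})$ is then a Markov process started at its product invariant measure $\mathbb{P}^{\mathrm{Unif}}\otimes\mathbb{P}^{0}$, so KV is applicable with $\exp(\mathbf{h})\cdot\mathfrak{f}_{\j}\mathbf{G}_{\j}$ as observable. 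The further factorization of the $H^{-1}$ norm in the $\mathbf{h}$- and $\bphi$-variables (and the observation that $\mathbf{G}_\j[\bphi]$ lives on sites disjoint from the support of $\mathfrak{f}_\j[\tau_\j\bphi]$, so it can be absorbed into the test function $\mathfrak{g}$) is what lets one pay only the harmless $N^{\epsilon}$ cost from $\exp(-\mathrm{U})$. Without this step there is no way to make the Cole--Hopf factor compatible with stationarity. A second, more minor divergence: the paper does not solve a Poisson equation $\mathscr{L}u=\mathfrak{f}$ and invoke log-Sobolev; instead it telescopes $\mathfrak{q}_{\n}$ through the block conditional expectations $\boldsymbol{\Psi}_{\mathfrak{q}_{\n}}[\ell,\cdot]$ of Notation \ref{notation:localav}, using the equivalence of ensembles (Lemma \ref{lemma:stestimate-psi}) and the fact that $\boldsymbol{\Psi}[\ell]-\boldsymbol{\Psi}[2\ell]$ has \emph{all} $\E^{\sigma}$-means zero (not merely vanishing jet at $\sigma=0$), which is exactly the hypothesis of Lemma \ref{lemma:stestimate-kv}.
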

Heuristically, {\small$\mathbf{Av}^{\mathbf{T},\mathbf{X},\mathfrak{q}_{\n}}$} is the space-time block average of a fluctuation {\small$\mathfrak{q}_{\n}$} on a time-scale of {\small$\mathfrak{t}_{\mathbf{Av}}$} and length-scale of {\small$\mathfrak{n}_{\mathbf{Av}}$}. Square-root \abbr{CLT}-cancellations suggest that the \abbr{LHS} of \eqref{eq:stestimateI} is {\small$\lesssim N^{-2}\mathfrak{t}_{\mathbf{Av}}^{-1}\mathfrak{n}_{\mathbf{Av}}^{-1}\lesssim N^{-7/3+23\delta_{\mathbf{S}}/2}$} (see Definition \ref{definition:eq-operators} for the last bound), where the extra {\small$N^{-2}$} comes from the fast {\small$N^{2}$}-speed of the {\small$\t\mapsto\bphi_{\t,\cdot}$} process in \eqref{eq:phi}. Multiplying by the change-of-measure factor {\small$\lesssim N^{1/3-\gamma_{\mathrm{data}}}$} from \eqref{eq:noneq} and choosing {\small$\delta_{\mathbf{S}}>0$} small thus explains where \eqref{eq:stestimateI} comes from. Making this heuristic rigorous is the goal for the remainder of this section; it follows Section 3 of \cite{DGP} (which itself follows Sections 3-4 in \cite{GJ15}) but with important technical refinements.

The proof uses a number of different lemmas. \emph{We will assume that {\small$\mathfrak{q}_{\n}[\bphi]$} depends only on {\small$\bphi_{\w}$} for {\small$\w\in\{1,\ldots,\mathfrak{l}\}$} for some positive integer {\small$\mathfrak{l}\lesssim1$}}. The case {\small$\w\in\{-\mathfrak{l},\ldots,0\}$} follows by the same argument (again, after a reflection).

The first step is a preliminary rewriting of {\small$\mathbf{Av}^{\mathbf{T},\mathbf{X},\mathfrak{q}_{\n}}$} on a high probability path-space event. First, let us give some notation. Let {\small$\Pi:\R\to[-{}(\log N)^{1/2}{},{}(\log N)^{1/2}{})$} be projection modulo {\small$2{}(\log N)^{1/2}{}$}. We now set
\begin{align}
\mathbf{h}^{N,\Pi}_{\t,\r,\x}:=\Pi(\lambda\mathbf{j}^{N}_{\t-\r,\x}-\lambda\mathbf{j}^{N}_{\t,\x}-\lambda\mathscr{R}_{\lambda}\r).\label{eq:hnpi}
\end{align}
The {\color{black}purpose} of the ``torus" {\small$[-{}(\log N)^{1/2}{},{}(\log N)^{1/2}{})$} is given as follows. We will want to exponentiate \eqref{eq:hnpi} (in the same spirit as in the Cole-Hopf map \eqref{eq:ch}), and if we exponentiate {\small$|\log N|^{1/2}$}, we get a factor which is {\small$\lesssim_{\e}N^{\e}$} for any {\small$\e>0$} (and thus effectively harmless). On the other hand, by Lemma \ref{lemma:jnestimate} and {\small$\lambda\mathscr{R}_{\lambda}=\mathrm{O}(1)$}, if {\small$\r\lesssim\mathfrak{t}_{\mathbf{Av}}\ll1$}, then this projection {\small$\Pi$} does nothing with high probability. The utility of {\small$\mathbf{h}^{N,\Pi}$} is that it is a diffusion on a compact set, and its invariant measure turns out to be the uniform measure on {\small$[-{}(\log N)^{1/2}{},{}(\log N)^{1/2}{})$}. These facts will all be important in our proof of Proposition \ref{prop:stestimate}, and we prove them when relevant.

In any case, the first step in proving Proposition \ref{prop:stestimate} is the following high probability bound on {\small$\mathbf{Av}^{\mathbf{T},\mathbf{X},\mathfrak{q}_{\n}}$}.
\begin{lemma}\label{lemma:stestimate-rep}
\fsp Fix any {\small$\mathrm{D}=\mathrm{O}(1)$}. There exists a path-space event {\small$\mathscr{E}\subseteq\mathscr{C}([0,1],\R^{\Z})$} of high probability such that the following, which uses notation to be explained after, holds {\color{black}for any {\small$\e>0$}, {\small$\t\in[0,1]$}, and {\small$|\x|\leq N^{\mathrm{D}}$}}:
\begin{align}
\mathbf{1}_{\mathscr{E}}|\mathbf{Av}^{\mathbf{T},\mathbf{X},\mathfrak{q}_{\n}}_{\t,\x}|^{2}&\lesssim_{\e} N^{\e}\Big(\mathfrak{t}_{\mathbf{Av}}^{-1}\int_{0}^{\mathfrak{t}_{\mathbf{Av}}}\exp(\mathrm{U}+\mathbf{h}^{N,\Pi}_{\t,\r,\x})\cdot\mathfrak{n}_{\mathbf{Av}}^{-1}\sum_{1\leq\j\leq\mathfrak{n}_{\mathbf{Av}}}\mathfrak{q}_{\n}[\tau_{\x+\j}\bphi_{\t-\r}]\cdot\mathbf{G}_{\j}[\tau_{\x}\bphi_{\t-\r}]\d\r\Big)^{2}.\label{eq:stestimate-repI}
\end{align}
The random variable {\small$\mathrm{U}$} is uniformly distributed on the torus {\small$[-{}(\log N)^{1/2}{},{}(\log N)^{1/2}{})$}. It is independent of all other random variables. The random variable {\small$\mathbf{G}_{\j}$} is defined in \eqref{eq:gkterm}.
\end{lemma}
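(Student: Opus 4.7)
The plan is to show that on a good high-probability event, the right-hand side of \eqref{eq:stestimate-repI} is essentially a rewriting of $\mathbf{Av}^{\mathbf{T},\mathbf{X},\mathfrak{q}_{\n}}_{\t,\x}$, with an extra $N^{\e}$ factor extracted by using that $\exp(\mathrm{U})$ is deterministically bounded below. Concretely, I would take $\mathscr{E}$ to be the intersection of the high-probability events produced by Lemma \ref{lemma:jnestimate} (with the fixed $\mathrm{D}$) and by the estimate \eqref{eq:rboundI3} used in the proofs of Lemmas \ref{lemma:rbound} and \ref{lemma:avldp}.

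The first step is a direct algebraic decomposition. Starting from Definition \ref{definition:eq-operators} and the Cole-Hopf identity \eqref{eq:ch}, for each $\j\in\{1,\ldots,\mathfrak{n}_{\mathbf{Av}}\}$ I would write
\begin{align*}
\mathbf{Z}^{N}_{\t-\r,\x+\j}(\mathbf{Z}^{N}_{\t,\x})^{-1}&=\exp\bigl(\lambda\mathbf{j}^{N}_{\t-\r,\x+\j}-\lambda\mathbf{j}^{N}_{\t-\r,\x}\bigr)\cdot\exp\bigl(\lambda\mathbf{j}^{N}_{\t-\r,\x}-\lambda\mathbf{j}^{N}_{\t,\x}-\lambda\mathscr{R}_{\lambda}\r\bigr).
\end{align*}
Using $\mathbf{j}^{N}_{\s,\y}-\mathbf{j}^{N}_{\s,\y-1}=N^{-1/2}\bphi_{\s,\y}$, the first factor equals the un-cutoff version of $\mathbf{G}_{\j}[\tau_{\x}\bphi_{\t-\r}]$ in \eqref{eq:gkterm}. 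On $\mathscr{E}$, the bound \eqref{eq:rboundI3} implies that the indicator in \eqref{eq:gkterm} is automatically satisfied (this is the same observation made inside the proof of Lemma \ref{lemma:avldp}), so this first factor equals $\mathbf{G}_{\j}[\tau_{\x}\bphi_{\t-\r}]$ exactly.

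For the second factor I would use Lemma \ref{lemma:jnestimate} together with $|\lambda\mathscr{R}_{\lambda}|\mathfrak{t}_{\mathbf{Av}}=\mathrm{o}(1)$ to conclude that on $\mathscr{E}$,
\begin{align*}
\sup_{\t\in[0,1]}\sup_{|\x|\leq N^{\mathrm{D}}}\sup_{\r\in[0,\mathfrak{t}_{\mathbf{Av}}]}\bigl|\lambda\mathbf{j}^{N}_{\t-\r,\x}-\lambda\mathbf{j}^{N}_{\t,\x}-\lambda\mathscr{R}_{\lambda}\r\bigr|\lesssim1,
\end{align*}
which is far below $(\log N)^{1/2}$. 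Hence the projection $\Pi$ in \eqref{eq:hnpi} acts as the identity on the relevant argument, so the second factor equals $\exp(\mathbf{h}^{N,\Pi}_{\t,\r,\x})$. Combining these two observations, on $\mathscr{E}$ I obtain
\begin{align*}
\mathbf{Av}^{\mathbf{T},\mathbf{X},\mathfrak{q}_{\n}}_{\t,\x}=\mathfrak{t}_{\mathbf{Av}}^{-1}\int_{0}^{\mathfrak{t}_{\mathbf{Av}}}\exp\bigl(\mathbf{h}^{N,\Pi}_{\t,\r,\x}\bigr)\cdot\mathfrak{n}_{\mathbf{Av}}^{-1}\sum_{1\leq\j\leq\mathfrak{n}_{\mathbf{Av}}}\mathfrak{q}_{\n}[\tau_{\x+\j}\bphi_{\t-\r}]\cdot\mathbf{G}_{\j}[\tau_{\x}\bphi_{\t-\r}]\,\d\r.
\end{align*}

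To introduce the auxiliary variable $\mathrm{U}$ at the cost of $N^{\e}$, I would simply use that $\mathrm{U}\geq-(\log N)^{1/2}$ deterministically, so $\exp(\mathrm{U})\geq\exp(-(\log N)^{1/2})\geq C_{\e}^{-1}N^{-\e}$ for any $\e>0$ and all sufficiently large $N$. This gives the pointwise inequality $\exp(\mathbf{h}^{N,\Pi}_{\t,\r,\x})\leq C_{\e}N^{\e}\exp(\mathrm{U}+\mathbf{h}^{N,\Pi}_{\t,\r,\x})$, which I insert inside the time integral above and square to obtain \eqref{eq:stestimate-repI}. The argument is essentially algebraic; the main thing to watch is organizing the high-probability events and verifying that the cutoffs in $\Pi$ and in $\mathbf{G}_{\j}$ are both redundant on the same event $\mathscr{E}$, which is immediate from Lemma \ref{lemma:jnestimate} and \eqref{eq:rboundI3}. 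I do not expect a genuine obstacle here: the real probabilistic work is done later in establishing the second-moment bound \eqref{eq:stestimateI} from the representation \eqref{eq:stestimate-repI}, where the uniformizing role of $\mathrm{U}$ (which makes $\mathrm{U}+\mathbf{h}^{N,\Pi}_{\t,\r,\x}\pmod{2(\log N)^{1/2}}$ uniform on the torus and independent of the rest) becomes useful.
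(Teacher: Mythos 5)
Your proposal is correct and follows essentially the same route as the paper: you factor the ratio $\mathbf{Z}^{N}_{\t-\r,\x+\j}(\mathbf{Z}^{N}_{\t,\x})^{-1}$ into the $\mathbf{G}_{\j}$-piece and the time-increment piece, invoke Lemma \ref{lemma:jnestimate} to make the $\Pi$-projection in \eqref{eq:hnpi} redundant, and invoke \eqref{eq:rboundI3} to make the indicator in \eqref{eq:gkterm} redundant — the paper organizes the same content by first citing \eqref{eq:avldptxI1} and then \eqref{eq:avldpI0}, but the computation is identical.

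One step deserves more careful phrasing. You say you ``insert the pointwise inequality $\exp(\mathbf{h}^{N,\Pi}_{\t,\r,\x})\leq C_{\e}N^{\e}\exp(\mathrm{U}+\mathbf{h}^{N,\Pi}_{\t,\r,\x})$ inside the time integral and square.'' For a signed integrand (and $\mathfrak{q}_{\n}[\tau_{\x+\j}\bphi_{\t-\r}]$ is signed), replacing a nonnegative weight by a larger one inside $|\int fg\,\d\r|^{2}$ is \emph{not} a valid operation in general. What saves the step here — and what makes the paper's phrasing cleaner — is that $\exp(\mathrm{U})$ is \emph{constant in $\r$}, so the replacement is by an exact scalar multiple: one should write $\exp(\mathbf{h}^{N,\Pi}_{\t,\r,\x})=\exp(-\mathrm{U})\cdot\exp(\mathrm{U}+\mathbf{h}^{N,\Pi}_{\t,\r,\x})$, pull the constant $\exp(-\mathrm{U})$ out of the $\d\r$-integral, square, and only then bound $\exp(-2\mathrm{U})\lesssim_{\e}N^{\e}$ using $|\mathrm{U}|\leq(\log N)^{1/2}$. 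This is the paper's ``multiply by $1=\exp(-\mathrm{U})\exp(\mathrm{U})$'' device, and your argument reduces to it once unpacked, but as written it would not survive a reader who does not notice that $\exp(\mathrm{U})$ is $\r$-independent.
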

We defer the proof to Section \ref{subsubsection:stestimate-rep}. (In fact, all preliminary ingredients to be listed below towards the proof of Proposition \ref{prop:stestimate} will be similarly deferred until we combine them to prove Proposition \ref{prop:stestimate}, since their proofs are quite technical.) However, let us briefly clarify what Lemma \ref{lemma:stestimate-rep} states. In a nutshell, the \abbr{RHS} of \eqref{eq:stestimate-repI} comes from an earlier representation \eqref{eq:avldptxI1} and then doing some modifications that have no effect with high probability. The first is rewriting the ratio of {\small$\mathbf{Z}^{N}$}-factors in \eqref{eq:avldptxI1} in terms of the \abbr{RHS} of \eqref{eq:hnpi} without the {\small$\Pi$}-projection. But, as noted after \eqref{eq:hnpi}, this projection does nothing with high probability, so we can include it. It is also {\color{black}stated} there that the factor {\small$\exp(-\mathrm{U})$}, which lets us put in {\small$\exp(\mathrm{U})$} on the \abbr{RHS} of \eqref{eq:stestimate-repI}, satisfies {\small$\exp(-\mathrm{U})\lesssim_{\e}N^{\e}$} for any {\small$\e>0$}, explaining the {\small$N^{\e}$}-factor in \eqref{eq:stestimate-repI}. Finally, the spatial-average in \eqref{eq:avldptxI1} can be manipulated in a way that has no effect with high probability and that is similar to what we did in the proof of Lemma \ref{lemma:avldp} {\color{black}in order} to have the form of the spatial-average on the \abbr{RHS} of \eqref{eq:stestimate-repI}.

The next ingredient in the proof of Proposition \ref{prop:stestimate} is often called a ``Kipnis-Varadhan estimate". It is an estimate for a specialization of the \abbr{RHS} of \eqref{eq:stestimate-repI} to ``fluctuating" {\small$\mathfrak{q}_{\n}$}.
\begin{lemma}\label{lemma:stestimate-kv}
\fsp Fix any {\small$\e>0$}, and recall the setting of Lemma \ref{lemma:stestimate-rep}. Suppose that {\small$\mathfrak{f}_{\j}$} satisfy the following constraints:
\begin{enumerate}
\item The quantity {\small$\mathfrak{f}_{\j}[\bphi]$} depends only on {\small$\bphi_{\w}$} for {\small$\w\in\{1,\ldots,\mathfrak{l}_{\star}\}$}, where {\small$\mathfrak{l}_{\star}>0$} may depend on {\small$N$}.
\item We have {\small$\E^{\sigma}\mathfrak{f}_{\j}=0$} for any {\small$\sigma\in\R$} (see Definition \ref{definition:gcmeasure}) and {\small$\E^{0}|\mathfrak{f}_{\j}|^{2}<\infty$}, all for each {\small$\j$}.
\end{enumerate}
Then, if we let {\small$\E^{\mathrm{path},0}$} denote {\color{black}the} expectation with respect to the law of {\small$\t\mapsto\bphi_{\t}$} with initial data {\small$\bphi_{0}\sim\mathbb{P}^{0}$}, we have
\begin{align}
&\E^{\mathrm{path},0}\Big(\mathfrak{t}_{\mathbf{Av}}^{-1}\int_{0}^{\mathfrak{t}_{\mathbf{Av}}}\exp(\mathrm{U}+\mathbf{h}^{N,\Pi}_{\t,\r,\x})\cdot\mathfrak{n}_{\mathbf{Av}}^{-1}\sum_{\j=1,\ldots,\mathfrak{n}_{\mathbf{Av}}}\mathfrak{f}_{\j}[\tau_{\x+\j}\bphi_{\t-\r}]\cdot\mathbf{G}_{\j}[\tau_{\x}\bphi_{\t-\r}]\d\r\Big)^{2}\nonumber\\
&\lesssim_{\e} N^{\e}\cdot N^{-2}\mathfrak{t}_{\mathbf{Av}}^{-1}\mathfrak{n}_{\mathbf{Av}}^{-1}\cdot\mathfrak{l}_{\star}^{3}\cdot\max_{\j=1,\ldots,\mathfrak{n}_{\mathbf{Av}}}\E^{0}|\mathfrak{f}_{\j}|^{2}. \label{eq:stestimate-kvI}
\end{align}
\end{lemma}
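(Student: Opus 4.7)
The plan is to reduce the estimate to a classical Kipnis-Varadhan $H^{-1}$ inequality applied to the stationary dynamics \eqref{eq:phi}, followed by a Poincar\'e-spectral gap bound on boxes of length $\mathfrak{l}_\star$. First, since both $\mathrm{U}$ and $\mathbf{h}^{N,\Pi}_{\t,\r,\x}$ take values in $[-(\log N)^{1/2},(\log N)^{1/2})$ by construction in \eqref{eq:hnpi}, we have the deterministic pointwise bound $\exp(\mathrm{U}+\mathbf{h}^{N,\Pi}_{\t,\r,\x})\lesssim_{\e}N^{\e/2}$ for any $\e>0$. Pulling this factor out of the $\d\r$-integral and squaring costs only $N^{\e}$, so the task reduces to bounding
\begin{align*}
\E^{\mathrm{path},0}\Big(\mathfrak{t}_{\mathbf{Av}}^{-1}\int_{0}^{\mathfrak{t}_{\mathbf{Av}}}F[\bphi_{\t-\r}]\,\d\r\Big)^{2},\quad F[\bphi]:=\mathfrak{n}_{\mathbf{Av}}^{-1}\sum_{\j=1}^{\mathfrak{n}_{\mathbf{Av}}}\mathfrak{f}_{\j}[\tau_{\x+\j}\bphi]\,\mathbf{G}_{\j}[\tau_{\x}\bphi],
\end{align*}
noting that $|\mathbf{G}_{\j}|\lesssim1$ deterministically by the indicator in \eqref{eq:gkterm}, and that the support $\{\x+\j+1,\ldots,\x+\j+\mathfrak{l}_\star\}$ of $\mathfrak{f}_{\j}[\tau_{\x+\j}\bphi]$ is disjoint from the support $\{\x+1,\ldots,\x+\j\}$ of $\mathbf{G}_{\j}$.

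Next, I would invoke the Kipnis-Varadhan inequality for the stationary, reversible-plus-antisymmetric dynamics \eqref{eq:phi}, whose generator has symmetric part of speed $N^{2}$: for $F$ mean-zero under $\mathbb{P}^{0}$,
\begin{align*}
\E^{\mathrm{path},0}\Big(\int_{0}^{T}F[\bphi_{s}]\,\d s\Big)^{2}\lesssim T\,\|F\|_{-1,N}^{2},\qquad\|F\|_{-1,N}^{2}=N^{-2}\|F\|_{-1,\mathrm{sym}}^{2},
\end{align*}
where $\|\cdot\|_{-1,\mathrm{sym}}$ is dual to the speed-one symmetric Dirichlet form associated to $\mathbb{P}^{0}$. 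Combined with the $\mathfrak{t}_{\mathbf{Av}}^{-2}$ normalization, this produces $\mathfrak{t}_{\mathbf{Av}}^{-1}N^{-2}\|F\|_{-1,\mathrm{sym}}^{2}$, so it remains to show $\|F\|_{-1,\mathrm{sym}}^{2}\lesssim\mathfrak{n}_{\mathbf{Av}}^{-1}\mathfrak{l}_\star^{3}\max_{\j}\E^{0}|\mathfrak{f}_{\j}|^{2}$.

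To get the $H^{-1}$ bound, I would use the variational formula $\|F\|_{-1,\mathrm{sym}}^{2}=\sup_{g}\{2\E^{0}(Fg)-\mathcal{D}_{1}(g)\}$ together with the observation that, since $\E^{\sigma}\mathfrak{f}_{\j}=0$ for all $\sigma$, each $\mathfrak{f}_{\j}$ lies in the fluctuation subspace to which the local spectral gap applies. Testing against an arbitrary $g$, one splits the lattice into slabs of length $\mathfrak{l}_\star$ and applies a box-Poincar\'e inequality with constant $\lesssim\mathfrak{l}_\star^{2}$ on each slab to estimate $\E^{0}(\mathfrak{f}_{\j}[\tau_{\x+\j}\bphi]\mathbf{G}_{\j}[\tau_{\x}\bphi]\,g)$ by $\mathfrak{l}_\star\cdot\E^{0}|\mathfrak{f}_{\j}|^{2}\cdot(\text{local Dirichlet form of }g)$, after conditioning on the $\mathbf{G}_{\j}$-variables and using the independence noted above. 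The three powers of $\mathfrak{l}_\star$ split as $\mathfrak{l}_\star^{2}$ from the spectral gap and an additional $\mathfrak{l}_\star$ from the overlap multiplicity when summing over the $\mathfrak{l}_\star$-many $\j$-indices hitting each site, while orthogonality of well-separated $\mathfrak{f}_{\j},\mathfrak{f}_{\j'}$ (for $|\j-\j'|>\mathfrak{l}_\star$) yields the $\mathfrak{n}_{\mathbf{Av}}^{-1}$ gain instead of $\mathfrak{n}_{\mathbf{Av}}^{0}$.

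The main obstacle is controlling cross-terms $\E^{0}(\mathfrak{f}_{\j}[\tau_{\x+\j}\bphi]\mathbf{G}_{\j}\cdot\mathfrak{f}_{\j'}[\tau_{\x+\j'}\bphi]\mathbf{G}_{\j'})$ for near-diagonal $\j\approx\j'$, where the $\mathbf{G}$-factors correlate through their shared increments $\bphi_{\w}$ with $\w\leq\min(\j,\j')$; these correlations must be shown not to destroy the mean-zero structure of the $\mathfrak{f}_{\j}$'s in the $H^{-1}$ bound. The argument is essentially that of Section 3 of \cite{DGP} (following Sections 3-4 of \cite{GJ15}), but adapted to the presence of the multiplicative $\mathbf{G}_{\j}$ exponentials---here the disjoint-support/independence observation from the first paragraph is what makes the adaptation routine rather than delicate.
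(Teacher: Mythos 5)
Your first step is where the argument breaks. You write that ``Pulling this factor out of the $\d\r$-integral and squaring costs only $N^{\e}$,'' but there is no valid inequality that replaces $\exp(\mathrm{U}+\mathbf{h}^{N,\Pi}_{\t,\r,\x})$ inside the signed integral by its supremum. The integrand $\mathfrak{n}_{\mathbf{Av}}^{-1}\sum_{\j}\mathfrak{f}_{\j}[\tau_{\x+\j}\bphi_{\t-\r}]\mathbf{G}_{\j}[\tau_{\x}\bphi_{\t-\r}]$ is signed, so $\exp(\mathrm{U}+\mathbf{h}^{N,\Pi})\cdot F\le N^{\e/2}F$ fails pointwise when $F<0$; replacing by $N^{\e/2}|F|$ destroys the mean-zero structure that the Kipnis--Varadhan inequality is built to exploit; and a Cauchy--Schwarz split of the two factors across the time integral gives $\mathfrak{t}_{\mathbf{Av}}^{-1}\int_0^{\mathfrak{t}_{\mathbf{Av}}}F^2$, a static $L^2(\mathbb{P}^0)$ quantity that captures none of the dynamical cancellation (there is no factor of $N^{-2}\mathfrak{t}_{\mathbf{Av}}^{-1}$ to be found that way). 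More fundamentally, $\mathbf{h}^{N,\Pi}_{\t,\r,\x}$ is a path-dependent quantity -- it encodes the increment $\mathbf{j}^N_{\t-\r,\x}-\mathbf{j}^N_{\t,\x}$ over the interval $[\t-\r,\t]$ -- and therefore is \emph{not} a function of the state $\bphi_{\t-\r}$ alone. So even with a correct inequality in hand you would not be reducing to an additive functional of the $\bphi$-process; the integrand is not in the domain of the Kipnis--Varadhan estimate for that process.

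The fix (and what the paper actually does) is to enlarge the Markov process: one considers the pair $\r\mapsto(\mathrm{U}+\mathbf{h}^{N,\Pi}_{\t,\r,\x},\bphi_{\t-\r,\cdot})$ on the augmented state space $[-(\log N)^{1/2},(\log N)^{1/2})\times\R^{\Z}$, verifies that $\mathbb{P}^{\mathrm{Unif}}\otimes\mathbb{P}^{\sigma}$ is invariant and that the full generator splits as $\mathscr{L}_{\mathbf{h}}+\mathscr{L}_{\bphi}$ with symmetric Dirichlet form a sum of an $\mathbf{h}$-piece and a $\bphi$-piece, and applies Kipnis--Varadhan to the joint process. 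In the $H^{-1}$ variational formula one then restricts to product test functions $\mathsf{F}[\mathbf{h}]\mathfrak{g}[\bphi]$; dropping the nonpositive $\mathbf{h}$-direction Dirichlet form and changing variables in $\mathfrak{g}$ lets the supremum over $\mathsf{F}$ factor out cleanly as $\E^{\mathrm{Unif}}|\exp(\mathbf{h})|^2\lesssim_\e N^\e$, \emph{after} which the exponential is gone and one is left with a standard $H^{-1}$ estimate in $\bphi$ alone. Your remaining outline (absorbing $\mathbf{G}_{\j}$ into the test function using disjoint supports, box Poincar\'e giving $\mathfrak{l}_\star^2$, overlap multiplicity giving a further $\mathfrak{l}_\star$) is the right picture for that latter step, but it cannot be reached by the shortcut you propose. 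Relatedly, the ``main obstacle'' you flag -- cross-correlations between $\mathbf{G}_\j$ and $\mathbf{G}_{\j'}$ -- is a red herring here: the $\mathbf{G}_\j$ factors enter only through the deterministic bound $|\mathbf{G}_\j|\lesssim 1$ and the disjoint-support absorption, and no such cross-terms need to be controlled. The genuine obstacle is the path-dependent exponential, which is precisely what your first step discards.
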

Assumption (2) in Lemma \ref{lemma:stestimate-kv} states that {\small$\mathfrak{f}_{\j}$} are fluctuating. Assumption (1) states that the different {\small$\mathfrak{f}_{\j}[\tau_{\j}\bphi]$} are essentially independent (up to some scale {\small$\mathfrak{l}_{\star}$}). Therefore, averaging in space and time yields square-root \abbr{CLT}-type cancellations as indicated by the \abbr{RHS} of \eqref{eq:stestimate-kvI}. Finally, we note that {\small$\r\mapsto\mathrm{U}+\mathbf{h}^{N,\Pi}_{\t,\r,\x}$} has initial data {\small$\mathrm{U}$} at time {\small$\r=0$}; in particular, it starts at its invariant measure, which will be an important technical input.

In our application of \eqref{eq:stestimate-kvI}, we \emph{will} always have {\small$\E^{0}|\mathfrak{f}_{\j}|^{2}\lesssim\mathfrak{l}_{\star}^{-3}$}, so the powers of {\small$\mathfrak{l}_{\star}$} are ultimately harmless.

Let us now explain our application of Lemma \ref{lemma:stestimate-kv} in more detail. In a nutshell, we will follow the argument in Section 3 of \cite{DGP} (see also Sections 3 and 4 in \cite{GJ15}), which uses Lemma \ref{lemma:stestimate-kv} to estimate the expectation of the \abbr{RHS} of \eqref{eq:stestimate-repI}. Our goal is to explain this; first, we require the following notation for local (ergodic) averages.
\begin{notation}\label{notation:localav}
\fsp Consider any {\small$\mathfrak{q}_{\n}$} from Lemma \ref{lemma:stestimate-rep}. Fix any {\small$\ell\geq2\mathfrak{l}$}, where {\small$\mathfrak{l}$} is the support length in Lemma \ref{lemma:stestimate-rep}. Following notation from \cite{DGP}, we define the following function on {\small$\R^{\Z}$}:
\begin{align}
\boldsymbol{\Psi}_{\mathfrak{q}_{\n}}[\ell,\bphi]:=\E^{0}\Big(\mathfrak{q}_{\n}[\boldsymbol{\psi}]\Big|\ell^{-1}(\boldsymbol{\psi}_{1}+\ldots+\boldsymbol{\psi}_{\ell})=\ell^{-1}(\bphi_{1}+\ldots+\bphi_{\ell})\Big). \label{eq:localavI}
\end{align}
We clarify that {\small$\boldsymbol{\psi}\in\R^{\Z}$} is the integration variable in {\small$\E^{0}$} above.
\end{notation}
We now record the following estimate on {\small$\boldsymbol{\Psi}_{\mathfrak{q}_{\n}}$}-terms. In a nutshell, it follows by an equivalence of ensembles that compares the conditional expectation in \eqref{eq:localavI} to an appropriate {\small$\E^{\sigma}$}-expectation with {\small$\sigma=\ell^{-1}(\bphi_{1}+\ldots+\bphi_{\ell})$}, and a Taylor expansion in {\small$\sigma$} (in which we can take advantage of the {\small$\mathrm{Jet}_{\k}^{\perp}$} spaces from Definition \ref{definition:jets}).
\begin{lemma}\label{lemma:stestimate-psi}
\fsp Take any {\small$\mathfrak{q}_{\n}$} from Lemma \ref{lemma:stestimate-rep}. Fix any {\small$N\gtrsim\ell\geq2\mathfrak{l}$}, where {\small$\mathfrak{l}$} is the support length from Lemma \ref{lemma:stestimate-rep}. We have the following second moment estimate:
\begin{align}
\E^{0}|\boldsymbol{\Psi}_{\mathfrak{q}_{\n}}[\ell,\cdot]|^{2}\lesssim\ell^{-\frac32}.\label{eq:stestimate-psiI}
\end{align}
\end{lemma}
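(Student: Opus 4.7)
The plan is to reduce the question to a one-dimensional moment estimate via the equivalence of ensembles, and then exploit the jet vanishing condition. Since $\bphi_\x$ are i.i.d. under $\mathbb{P}^0$ and the conditioning in \eqref{eq:localavI} depends only on $\sigma_\ell := \ell^{-1}(\bphi_1 + \ldots + \bphi_\ell)$, the random variable $\boldsymbol{\Psi}_{\mathfrak{q}_\n}[\ell, \bphi]$ depends on $\bphi$ only through $\sigma_\ell$; write $\boldsymbol{\Psi}_{\mathfrak{q}_\n}[\ell, \bphi] = F_\ell(\sigma_\ell(\bphi))$. A standard local-CLT / Edgeworth expansion for the density of $\sigma_\ell$ under $\mathbb{P}^0$, valid by the smoothness and uniform convexity of $\mathscr{U}$ in Assumption \ref{assump:potential}, yields an equivalence-of-ensembles-type bound
\begin{align*}
|F_\ell(\sigma) - \E^{\sigma}\mathfrak{q}_\n| \lesssim \ell^{-1} |R_\ell(\sigma)|,
\end{align*}
with an explicit, polynomially-growing-in-$\sigma$ remainder $R_\ell$; this is classical and follows the lines of Section 3 of \cite{DGP}.

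Next, I exploit the jet condition from Definition \ref{definition:admissible}: writing $\mathfrak{q}_\n = N^{-1 + \k/2} \mathfrak{f}$ with $\mathfrak{f} \in \mathrm{Jet}_\k^\perp$ for some $\k \in \{0, 1, 2\}$, we have $\partial_\sigma^j \E^\sigma \mathfrak{f}|_{\sigma = 0} = 0$ for every $j = 0, \ldots, \k$. Since $\sigma \mapsto \E^\sigma \mathfrak{f}$ is smooth with polynomially controlled derivatives (by smoothness of $\mathscr{U}$ together with the local polynomial estimate \eqref{eq:f-estimate} on $\mathfrak{f}$), a Taylor expansion around $\sigma = 0$ gives $|\E^\sigma \mathfrak{q}_\n| \lesssim N^{-1 + \k/2} (|\sigma|^{\k+1} + |\sigma|^{C})$ for some $C = \mathrm{O}(1)$. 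Squaring and integrating against the law of $\sigma_\ell$, which obeys the sub-Gaussian moment bound $\E^0 |\sigma_\ell|^{2m} \lesssim_m \ell^{-m}$ (as the mean of $\ell$ i.i.d. sub-Gaussian mean-zero variables under $\mathbb{P}^0$), yields
\begin{align*}
\E^0 |F_\ell(\sigma_\ell)|^2 \lesssim N^{-2 + \k}\, \ell^{-(\k + 1)} + \ell^{-2}\, \E^0 |R_\ell(\sigma_\ell)|^2.
\end{align*}
For each $\k \in \{0, 1, 2\}$ and $\ell \lesssim N$, a direct case-check gives $N^{-2 + \k} \ell^{-(\k + 1)} \leq \ell^{-3/2}$ (the tightest case being $\k = 0$, which requires $\ell \leq N^4$ and thus holds under the hypothesis $\ell \lesssim N$); a parallel moment estimate for the polynomially-growing $R_\ell$ shows the remainder term is also $\lesssim \ell^{-3/2}$, giving \eqref{eq:stestimate-psiI}.

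The main technical obstacle is producing the equivalence-of-ensembles bound $|F_\ell(\sigma) - \E^\sigma \mathfrak{q}_\n| \lesssim \ell^{-1} |R_\ell(\sigma)|$ quantitatively, given that $\mathfrak{f}$ is only polynomially (not uniformly) bounded in $\bphi$. This is handled by writing $F_\ell$ as a ratio of densities, tilting the reference measure from $\mathbb{P}^0$ to $\mathbb{P}^\sigma$ (which is a Radon-Nikodym derivative by $\exp(\upsilon_\sigma \sum_{j=1}^\ell \boldsymbol{\psi}_j)$ up to normalization; see Definition \ref{definition:gcmeasure}), and invoking the standard Edgeworth refinement of the local CLT for the marginal of $\sigma_\ell$ under $\mathbb{P}^\sigma$. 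Polynomial growth of $R_\ell$ in $\sigma$ is then absorbed using the sub-Gaussian moment bounds for $\sigma_\ell$ already noted, while contributions from $|\sigma| \gtrsim 1$ (where the Taylor expansion is not useful and must be replaced with the global polynomial bound \eqref{eq:f-estimate}) are similarly controlled by sub-Gaussian tails.
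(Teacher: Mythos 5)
Your proposal is correct and follows essentially the same route as the paper: equivalence of ensembles (to pass from the canonical conditional expectation $\boldsymbol{\Psi}_{\mathfrak{q}_\n}[\ell,\cdot]$ to the grand-canonical $\E^{\sigma_\ell}\mathfrak{q}_\n$), then Taylor expansion in $\sigma$ around $\sigma=0$ using the jet condition, then sub-Gaussian moments of $\sigma_\ell$. The one place where you deviate — and it's harmless — is in the treatment of the $\ell^{-1}$ correction term in the equivalence of ensembles. The paper invokes Corollary B.3 of \cite{DGP}, which writes the $\ell^{-1}$ correction explicitly as $\ell^{-1}\sigma_\rho^2\,\partial_\rho^2\E^\rho\mathfrak{q}_\n$, and in the $\mathrm{Jet}_2^\perp$ case Taylor-expands $\partial_\rho^2\E^\rho\mathfrak{q}_\n$ around $\rho=0$ as well (it vanishes there), producing $\ell^{-3}$ after squaring. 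You bound this correction crudely as $\ell^{-1}$ times a polynomially-growing remainder without exploiting the vanishing of $\partial_\rho^2$, which after squaring and integrating gives $\ell^{-2}$ rather than $\ell^{-3}$; since $\ell^{-2}\leq\ell^{-3/2}$ for $\ell\geq1$, this is still sufficient. Your unified case-check over $\k\in\{0,1,2\}$ matches the paper's $\k=2$ computation plus its "gain $N^{-1/2}\ell^{1/2}\lesssim1$ per lost degree" remark. One small imprecision: the polynomial bound on $\E^\sigma\mathfrak{q}_\n$ should read $|\sigma|^{\k+1}(1+|\sigma|^{\mathrm{C}})$ (a product, from the Lagrange remainder and the polynomial growth of $\partial_\sigma^{\k+1}\E^\sigma\mathfrak{f}$), not a sum $|\sigma|^{\k+1}+|\sigma|^{\mathrm{C}}$, but this changes nothing after integrating against the sub-Gaussian law of $\sigma_\ell$.
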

We are now in position to prove Proposition \ref{prop:stestimate}, using the schematic from \cite{DGP,GJ15}.
\begin{proof}[Proof of Proposition \ref{prop:stestimate}]
Fix {\small$\ell\geq2\mathfrak{l}$}, where, again, {\small$\mathfrak{l}$} is the support length from Lemma \ref{lemma:stestimate-rep}. Consider the function {\small$\mathfrak{f}_{\j}[\bphi]:=\boldsymbol{\Psi}_{\mathfrak{q}_{\n}}[\ell,\bphi]-\boldsymbol{\Psi}_{\mathfrak{q}_{\n}}[2\ell,\bphi]$} from Notation \ref{notation:localav}. Note that the \abbr{RHS} of \eqref{eq:localavI} is unchanged if we replace {\small$\E^{0}$} by {\small$\E^{\sigma}$}; this follows by  (11) in \cite{DGP}. So, by the tower property of conditional expectation, we have {\small$\E^{\sigma}\mathfrak{f}_{\j}=0$} for any {\small$\sigma\in\R$}. Moreover, {\small$\mathfrak{f}_{\j}[\tau_{\j}\bphi]$} depends on {\small$\bphi_{\w}$} for {\small$\w\in\j+\{1,\ldots,2\ell\}$}, so we can use Lemma \ref{lemma:stestimate-kv} with {\small$\mathfrak{l}_{\star}\lesssim\ell$} to get
\begin{align}
&\E^{\mathrm{path},0}\Big(\mathfrak{t}_{\mathbf{Av}}^{-1}\int_{0}^{\mathfrak{t}_{\mathbf{Av}}}\exp(\mathrm{U}+\mathbf{h}^{N,\Pi}_{\t,\r,\x})\cdot\mathfrak{n}_{\mathbf{Av}}^{-1}\sum_{\j=1,\ldots,\mathfrak{n}_{\mathbf{Av}}}\Big(\boldsymbol{\Psi}_{\mathfrak{q}_{\n}}[\ell,\tau_{\x+\j}\bphi_{\t-\r}]-\boldsymbol{\Psi}_{\mathfrak{q}_{\n}}[2\ell,\tau_{\x+\j}\bphi_{\t-\r}]\Big)\cdot\mathbf{G}_{\j}[\tau_{\x}\bphi_{\t-\r}]\d\r\Big)^{2}\nonumber\\
&\lesssim_{\e} N^{\e}\cdot N^{-2}\mathfrak{t}_{\mathbf{Av}}^{-1}\mathfrak{n}_{\mathbf{Av}}^{-1}\cdot\ell^{3}\cdot\max_{\j=1,\ldots,\mathfrak{n}_{\mathbf{Av}}}\E^{0}|\mathfrak{f}_{\j}|^{2}\nonumber\\
&\lesssim N^{\e}\cdot N^{-2}\mathfrak{t}_{\mathbf{Av}}^{-1}\mathfrak{n}_{\mathbf{Av}}^{-1}\lesssim N^{-\frac73+\frac{23}{2}\delta_{\mathbf{S}}+\e},\label{eq:stestimateI1}
\end{align}
{\color{black}where the last line follows by \eqref{eq:stestimate-psiI}, by {\small$\mathfrak{t}_{\mathbf{Av}}=N^{-2/3-10\delta_{\mathbf{S}}}$}, and {\small$\mathfrak{n}_{\mathbf{Av}}=N^{1-\delta_{\mathbf{S}}}$}}. Now, take any large (but still {\small$\mathrm{O}(1)$}{\color{black})} constant {\small$\ell_{0}$}, and define {\small$\ell_{\i+1}:=2\ell_{\i}$} for any {\small$\i\geq0$}. We let {\small$\mathfrak{i}$} be the smallest positive integer such that {\small$\ell_{\mathfrak{i}}\geq N^{1-\nu}$} for any small but fixed {\small$\nu>0$}; note that {\small$\mathfrak{i}\lesssim \log N$}. By a telescoping sum, we have the following:
\begin{align*}
\mathfrak{q}_{\n}[\bphi]=\mathfrak{q}_{\n}[\bphi]-\boldsymbol{\Psi}_{\mathfrak{q}_{\n}}[\ell_{0},\bphi]+\Big\{\sum_{\i=0,\ldots,\mathfrak{i}-1}\Big(\boldsymbol{\Psi}_{\mathfrak{q}_{\n}}[\ell_{\i},\bphi]-\boldsymbol{\Psi}_{\mathfrak{q}_{\n}}[\ell_{\i+1},\bphi]\Big)\Big\}+\boldsymbol{\Psi}_{\mathfrak{q}_{\n}}[\ell_{\mathfrak{i}},\bphi].
\end{align*}
If we combine the previous display with \eqref{eq:stestimateI1}, the {\color{black}Cauchy-Schwarz} inequality, and {\small$\mathfrak{i}\lesssim\log N$}, then 
\begin{align}
&\E^{\mathrm{path},0}\Big(\mathfrak{t}_{\mathbf{Av}}^{-1}\int_{0}^{\mathfrak{t}_{\mathbf{Av}}}\exp(\mathrm{U}+\mathbf{h}^{N,\Pi}_{\t,\r,\x})\cdot\mathfrak{n}_{\mathbf{Av}}^{-1}\sum_{\j=1,\ldots,\mathfrak{n}_{\mathbf{Av}}}\mathfrak{q}_{\n}[\tau_{\x+\j}\bphi_{\t-\r}]\cdot\mathbf{G}_{\j}[\tau_{\x}\bphi_{\t-\r}]\d\r\Big)^{2}\nonumber\\
&\lesssim_{\e}N^{\e}\E^{\mathrm{path},0}\Big(\mathfrak{t}_{\mathbf{Av}}^{-1}\int_{0}^{\mathfrak{t}_{\mathbf{Av}}}\exp(\mathrm{U}+\mathbf{h}^{N,\Pi}_{\t,\r,\x})\cdot\mathfrak{n}_{\mathbf{Av}}^{-1}\sum_{\j=1,\ldots,\mathfrak{n}_{\mathbf{Av}}}\boldsymbol{\Psi}_{\mathfrak{q}_{\n}}[\ell_{\mathfrak{i}},\tau_{\x+\j}\bphi_{\t-\r}]\cdot\mathbf{G}_{\j}[\tau_{\x}\bphi_{\t-\r}]\d\r\Big)^{2}+N^{-\frac73+\frac{23}{2}\delta_{\mathbf{S}}+\e}.\nonumber
\end{align}
Let us again use Cauchy-Schwarz, the deterministic bounds {\small$|\mathbf{G}_{\j}|\lesssim1$} and {\small$|\mathrm{U}+\mathbf{h}^{N,\Pi}_{\t,\r,\x}|\lesssim(\log N)^{1/2}$} (see Lemma \ref{lemma:stestimate-rep}), and \eqref{eq:stestimate-psiI} to obtain the following estimate (again for any {\small$\e>0$}):
\begin{align*}
&\E^{\mathrm{path},0}\Big(\mathfrak{t}_{\mathbf{Av}}^{-1}\int_{0}^{\mathfrak{t}_{\mathbf{Av}}}\exp(\mathrm{U}+\mathbf{h}^{N,\Pi}_{\t,\r,\x})\cdot\mathfrak{n}_{\mathbf{Av}}^{-1}\sum_{\j=1,\ldots,\mathfrak{n}_{\mathbf{Av}}}\boldsymbol{\Psi}_{\mathfrak{q}_{\n}}[\ell_{\mathfrak{i}},\tau_{\x+\j}\bphi_{\t-\r}]\cdot\mathbf{G}_{\j}[\tau_{\x}\bphi_{\t-\r}]\d\r\Big)^{2}\lesssim_{\e}N^{-3+\e+3\nu}.
\end{align*}
(We clarify that since {\small$\mathbb{P}^{0}$} is an invariant measure for {\small$\t\mapsto\bphi_{\t,\cdot}$}, the law of {\small$\bphi_{\t-\r,\cdot}$} is given by {\small$\sim\mathbb{P}^{0}$} under the measure in {\small$\E^{\mathrm{path},0}$}.) Combining the previous two displays implies the following estimate for any {\small$\e>0$}:
\begin{align*}
&\E^{\mathrm{path},0}\Big(\mathfrak{t}_{\mathbf{Av}}^{-1}\int_{0}^{\mathfrak{t}_{\mathbf{Av}}}\exp(\mathrm{U}+\mathbf{h}^{N,\Pi}_{\t,\r,\x})\cdot\mathfrak{n}_{\mathbf{Av}}^{-1}\sum_{\j=1,\ldots,\mathfrak{n}_{\mathbf{Av}}}\mathfrak{q}_{\n}[\tau_{\x+\j}\bphi_{\t-\r}]\cdot\mathbf{G}_{\j}[\tau_{\x}\bphi_{\t-\r}]\d\r\Big)^{2}\lesssim_{\e}N^{-\frac73+\frac{23}{2}\delta_{\mathbf{S}}+\e}.
\end{align*}
We now plug the above display into the \abbr{RHS} \eqref{eq:stestimate-repI}. This gives the following estimate, in which {\small$\mathscr{E}\subseteq\mathscr{C}([0,1],\R^{\Z})$} is a high probability event in path-space:
\begin{align*}
\E^{\mathrm{path},0}\mathbf{1}_{\mathscr{E}}|\mathbf{Av}^{\mathbf{T},\mathbf{X},\mathfrak{q}_{\n}}_{\t,\x}|^{2}&\lesssim_{\e}N^{-\frac73+\frac{23}{2}\delta_{\mathbf{S}}+\e}.
\end{align*}
Finally, if {\small$\E^{\mathrm{path}}$} is {\color{black}the} expectation with respect to the law of the dynamics {\small$\t\mapsto\bphi_{\t}$} but with initial data satisfying Assumption \ref{assump:noneq}, then because the dynamics in {\small$\E^{\mathrm{path}}$} and {\small$\E^{\mathrm{path},0}$} are the same, the density of the law in {\small$\E^{\mathrm{path}}$} with respect to that in {\small$\E^{\mathrm{path},0}$} is equal to the density between the laws of the respective initial data. In particular, by \eqref{eq:noneq}, the change-of-measure factor we must pay to go from {\small$\E^{\mathrm{path},0}$} to {\small$\E^{\mathrm{path}}$} is {\small$\lesssim N^{1/3-\gamma_{\mathrm{data}}}$}. Thus, the previous display combines with \eqref{eq:noneq} to give
\begin{align*}
\E^{\mathrm{path}}\mathbf{1}_{\mathscr{E}}|\mathbf{Av}^{\mathbf{T},\mathbf{X},\mathfrak{q}_{\n}}_{\t,\x}|^{2}&\lesssim_{\e}N^{\frac13-\gamma_{\mathrm{data}}}N^{-\frac73+\frac{23}{2}\delta_{\mathbf{S}}+\e}=N^{-2-\gamma_{\mathrm{data}}+\frac{23}{2}\delta_{\mathbf{S}}+\e}.
\end{align*}
Choosing {\small$\delta_{\mathbf{S}},\e$} small enough now implies the desired estimate \eqref{eq:stestimateI}, so the proof is done.
\end{proof}
\subsection{Proof of Lemmas \ref{lemma:stestimate-rep}, \ref{lemma:stestimate-kv}, and \ref{lemma:stestimate-psi}}
We now show the ingredients used in the proof of Proposition \ref{prop:stestimate}.
\subsubsection{Proof of Lemma \ref{lemma:stestimate-rep}}\label{subsubsection:stestimate-rep}
Recall the following from the proof of Lemma \ref{lemma:avldp} for integer {\small$\k\geq1$}:
\begin{align*}
\mathbf{G}_{\k}[\tau_{\x}\bphi_{\t}]:=\exp\Big\{\lambda N^{-\frac12}(\bphi_{\t,\x+1}+\ldots+\bphi_{\t,\x+\k})\Big\}\cdot\mathbf{1}\Big[|\exp\Big(\lambda N^{-\frac12}(\bphi_{\t,\x+1}+\ldots+\bphi_{\t,\x+\k})\Big)|\lesssim1\Big].
\end{align*}
We also recall {\small$\mathbf{h}^{N,\Pi}$} from \eqref{eq:hnpi}. We claim that for some high-probability path-space event {\small$\mathscr{E}$}, we have
\begin{align}
\mathbf{1}_{\mathscr{E}}\mathbf{Av}^{\mathbf{T},\mathbf{X},\mathfrak{q}_{\n}}_{\t,\x}&=\mathbf{1}_{\mathscr{E}}\cdot\mathfrak{t}_{\mathbf{Av}}^{-1}\int_{0}^{\mathfrak{t}_{\mathbf{Av}}}\exp(\mathbf{h}^{N,\Pi}_{\t,\r,\x})\cdot\mathfrak{n}_{\mathbf{Av}}^{-1}\sum_{\j=1,\ldots,\mathfrak{n}_{\mathbf{Av}}}\mathfrak{q}_{\n}[\tau_{\x+\j}\bphi_{\t-\r}]\mathbf{G}_{\j}[\tau_{\x}\bphi_{\t-\r}]\d\r.\label{eq:stestimate-repI1}
\end{align}
To see this, recall \eqref{eq:avldptxI1}. Take the factor {\small$\mathbf{Z}^{N}_{\t-\r,\x}(\mathbf{Z}^{N}_{\t,\x})^{-1}=\exp(\lambda\mathbf{j}^{N}_{\t-\r,\x}-\lambda\mathbf{j}^{N}_{\t,\x}-\lambda\mathscr{R}_{\lambda}\r)$} therein. By Lemma \ref{lemma:jnestimate}, and because {\small$\lambda\mathscr{R}_{\lambda}=\mathrm{O}(1)$} and {\small$\r\leq\mathfrak{t}_{\mathbf{Av}}\lesssim N^{-2/3}$}, with high probability, we have {\small$\lambda\mathbf{j}^{N}_{\t-\r-,\x}-\lambda\mathbf{j}^{N}_{\t,\x}-\lambda\mathscr{R}_{\lambda}\r=\mathrm{O}(1)$}. So, with high probability, applying the {\small$\Pi$} projection does nothing to {\small$\lambda\mathbf{j}^{N}_{\t-\r-,\x}-\lambda\mathbf{j}^{N}_{\t,\x}-\lambda\mathscr{R}_{\lambda}\r$}, which implies the identity {\small$\mathbf{Z}^{N}_{\t-\r,\x}(\mathbf{Z}^{N}_{\t,\x})^{-1}=\exp(\mathbf{h}^{N,\Pi}_{\t,\r,\x})$} with high probability. Then, take the factor {\small$\mathbf{Av}^{\mathbf{X},\mathfrak{q}_{\n}}_{\t-\r,\x}$} in \eqref{eq:avldptxI1}; with high probability, it is given by \eqref{eq:avldpI0}, but with the following adjustments:
\begin{itemize}
\item Replace {\small$\t$} with {\small$\t-\r$}.
\item Replace {\small$\wt{\mathfrak{q}}_{\n}$} with {\small$\mathfrak{q}_{\n}$}, and drop the term {\small$\mathrm{O}(N^{-\mathrm{D}_{2}})$}. Indeed, this big-Oh error comes from replacing {\small$\mathfrak{q}_{\n}$} with {\small$\wt{\mathfrak{q}}_{\n}$} (see \eqref{eq:avldpI-1}); by dropping the {\small$\mathrm{O}(N^{-\mathrm{D}_{2}})$} term, we are just {\color{black}undoing} the replacement of  {\small$\mathfrak{q}_{\n}$} with {\small$\wt{\mathfrak{q}}_{\n}$} (see \eqref{eq:avldpI-1}).
\end{itemize}
Plugging this formula for {\small$\mathbf{Av}^{\mathbf{X},\mathfrak{q}_{\n}}_{\t-\r,\x}$} into \eqref{eq:avldptxI1} gives \eqref{eq:stestimateI1}. It now suffices to multiply by {\small$1=\exp(-\mathrm{U})\exp(\mathrm{U})$} on the \abbr{RHS} of \eqref{eq:stestimate-repI1}, move the factor of {\small$\exp(\mathrm{U})$} inside the {\small$\d\r$}-integration, and use the bound {\small$\exp(-\mathrm{U})\lesssim_{\e}N^{\e}$} for any {\small$\e>0$} (since {\small$|\mathrm{U}|\lesssim(\log N)^{1/2}$}). This shows \eqref{eq:stestimate-repI} and completes the proof. \qed
\subsubsection{Proof of Lemma \ref{lemma:stestimate-kv}}
Consider the process {\small$\r\mapsto(\mathrm{U}+\mathbf{h}^{N,\Pi}_{\t,\r,\x},\bphi_{\t-\r,\cdot})\in[-(\log N)^{1/2},(\log N)^{1/2})\times\R^{\Z}$}. It is a Markov process whose infinitesimal generator is a differential operator {\small$\mathscr{L}_{\mathbf{h},\bphi}$} acting on smooth functions of {\small$(\mathbf{h},\bphi)\in[-(\log N)^{1/2},(\log N)^{1/2})\times\R^{\Z}$}. This infinitesimal generator has the form
\begin{align}
\mathscr{L}_{\mathbf{h},\bphi}&=\mathscr{L}_{\mathbf{h}}+\mathscr{L}_{\bphi},\label{eq:stestimate-kvI0a}
\end{align}
where {\small$\mathscr{L}_{\mathbf{h}}$} comes from the {\small$\r\mapsto\mathrm{U}+\mathbf{h}^{N,\Pi}_{\t,\r,\x}$}-dynamics, and {\small$\mathscr{L}_{\bphi}$} comes from the {\small$\r\mapsto\bphi_{\t-\r}$} dynamics. By the formula \eqref{eq:hnpi} for {\small$\mathbf{h}^{N,\Pi}$} and the \abbr{SDE} \eqref{eq:curr}, we have
\begin{align}
\mathscr{L}_{\mathbf{h}}&=N\partial_{\mathbf{h}}^{2}+\mathsf{f}_{N}[\tau_{\x}\bphi]\partial_{\mathbf{h}},\label{eq:stestimate-kvI0b}
\end{align}
where {\small$\partial_{\mathbf{h}}$} denotes {\color{black}the} derivative with respect to {\small$\mathbf{h}$} in the torus {\small$[-(\log N)^{1/2},(\log N)^{1/2})$}, and {\small$\mathsf{f}:\R^{\Z}\to\R$} is a smooth local function depending on {\small$N$}, whose exact form is unimportant. On the other hand, the generator of the process {\small$\r\mapsto\bphi_{\t-\r,\cdot}$} is computed in Appendix \ref{subsection:generator} to be
\begin{align}
\mathscr{L}_{\bphi}&=\mathscr{L}_{\bphi,\mathrm{S}}-\mathscr{L}_{\bphi,\mathrm{A}},\label{eq:stestimate-kvI0c}\\
\mathscr{L}_{\bphi,\mathrm{S}}&=N^{2}\sum_{\w\in\Z}\Big\{(\partial_{\bphi_{\w}}-\partial_{\bphi_{\w+1}})^{2}-(\mathscr{U}'[\bphi_{\w}]-\mathscr{U}'[\bphi_{\w+1}])(\partial_{\bphi_{\w}}-\partial_{\bphi_{\w+1}})\Big\},\label{eq:stestimate-kvI0d}\\
\mathscr{L}_{\bphi,\mathrm{A}}&=N^{\frac32}\sum_{\w\in\Z}\wt{\mathbf{F}}[\tau_{\w}\bphi]\partial_{\bphi_{\w}}.\label{eq:stestimate-kvI0e}
\end{align}
The term {\small$\wt{\mathbf{F}}$} is defined in \eqref{eq:phi-nl}. Now, let {\small$\mathbb{P}^{\mathrm{Unif}}$} be {\color{black}the} uniform measure on the torus {\small$[-(\log N)^{1/2},(\log N)^{1/2})$}. We claim that the product measure {\small$\mathbb{P}^{\mathrm{Unif}}\otimes\mathbb{P}^{\sigma}$} is invariant for the ``total" generator {\small$\mathscr{L}_{\mathbf{h},\bphi}$}. To deduce that this measure is invariant for {\small$\mathscr{L}_{\mathbf{h}}$} is to prove that for any {\small$\bphi\in\R^{\Z}$}, the measure {\small$\mathbb{P}^{\mathrm{Unif}}$} is invariant for the operator in \eqref{eq:stestimate-kvI0b}, which follows by integration-by-parts on the torus. Thus, to prove {\color{black}the latter} invariance reduces to showing that {\small$\mathbb{P}^{\sigma}$} is invariant for {\small$\mathscr{L}_{\bphi}$}, since {\small$\mathscr{L}_{\bphi}$} is independent of the {\small$\mathbf{h}\in[-(\log N)^{1/2},(\log N)^{1/2})$} variable; we show this in Appendix \ref{subsection:generator}. So, {\small$\r\mapsto(\mathrm{U}+\mathbf{h}^{N,\Pi}_{\t,\r,\x},\bphi_{\t-\r,\cdot})\in[-(\log N)^{1/2},(\log N)^{1/2})\times\R^{\Z}$} is a continuous-time Markov process with invariant measure initial data. With this input, we get the following ``Kipnis-Varadhan inequality" of Lemma 2.4 in \cite{KLO}: 
\begin{align}
&\E^{\mathrm{path},0}\Big(\mathfrak{t}_{\mathbf{Av}}^{-1}\int_{0}^{\mathfrak{t}_{\mathbf{Av}}}\exp(\mathrm{U}+\mathbf{h}^{N,\Pi}_{\t,\r,\x})\cdot\mathfrak{n}_{\mathbf{Av}}^{-1}\sum_{\j=1,\ldots,\mathfrak{n}_{\mathbf{Av}}}\mathfrak{f}_{\j}[\tau_{\x+\j}\bphi_{\t-\r}]\cdot\mathbf{G}_{\j}[\tau_{\x}\bphi_{\t-\r}]\d\r\Big)^{2}\nonumber\\
&\lesssim\mathfrak{t}_{\mathbf{Av}}^{-2}\mathfrak{n}_{\mathbf{Av}}^{-2}\int_{0}^{\mathfrak{t}_{\mathbf{Av}}}\Big\|\exp(\mathbf{h})\sum_{\j=1,\ldots,\mathfrak{n}_{\mathbf{Av}}}\mathfrak{f}_{\j}[\tau_{\j}\bphi]\mathbf{G}_{\j}[\bphi]\Big\|_{-1}^{2}\d\r\lesssim\mathfrak{t}_{\mathbf{Av}}^{-1}\mathfrak{n}_{\mathbf{Av}}^{-2}\Big\|\exp(\mathbf{h})\sum_{\j=1,\ldots,\mathfrak{n}_{\mathbf{Av}}}\mathfrak{f}_{\j}[\tau_{\j}\bphi]\mathbf{G}_{\j}[\bphi]\Big\|_{-1}^{2}.\label{eq:stestimate-kvI1a}
\end{align}
Above, the {\small$\|\cdot\|_{-1}$}-norm is with respect to functions of {\small$(\mathbf{h},\bphi)\in[-(\log N)^{1/2},(\log N)^{1/2})\times\R^{\Z}$}; it is given by
\begin{align}
&\Big\|\exp(\mathbf{h})\sum_{\j=1,\ldots,\mathfrak{n}_{\mathbf{Av}}}\mathfrak{f}_{\j}[\tau_{\j}\bphi]\mathbf{G}_{\j}[\bphi]\Big\|_{-1}^{2}\nonumber\\
&=\sup_{\mathsf{F},\mathfrak{g}}\Big\{2\E^{\mathrm{stat}}\Big(\exp(\mathbf{h})\sum_{\j=1,\ldots,\mathfrak{n}_{\mathbf{Av}}}\mathfrak{f}_{\j}[\tau_{\j}\bphi]\mathbf{G}_{\j}[\bphi]\Big)\cdot\mathsf{F}[\mathbf{h}]\mathfrak{g}[\bphi] + \E^{\mathrm{stat}}(\mathsf{F}[\mathbf{h}]\mathfrak{g}[\bphi])\cdot\mathscr{L}_{\mathbf{h},\bphi}(\mathsf{F}[\mathbf{h}]\mathfrak{g}[\bphi])\Big\}.\label{eq:stestimate-kvI1b}
\end{align}
Above, the supremum is over smooth functions {\small$\mathsf{F}$} of {\small$\mathbf{h}$} and smooth, local functions {\small$\mathfrak{g}$} of {\small$\bphi$}. Also, {\small$\E^{\mathrm{stat}}=\E^{\mathrm{Unif}}\E^{0}$} denotes {\color{black}the} expectation with respect to the product stationary measure {\small$\mathrm{Unif}[-(\log N)^{1/2},(\log N)^{1/2})\otimes\mathbb{P}^{0}$}.

We will now estimate \eqref{eq:stestimate-kvI1b}, starting with the last term therein. We claim that 
\begin{align*}
&\E^{\mathrm{stat}}(\mathsf{F}[\mathbf{h}]\mathfrak{g}[\bphi])\cdot\mathscr{L}_{\mathbf{h},\bphi}(\mathsf{F}[\mathbf{h}]\mathfrak{g}[\bphi])\nonumber\\
&=\E^{\mathrm{stat}}|\mathfrak{g}[\bphi]|^{2}\cdot\mathsf{F}[\mathbf{h}]\mathscr{L}_{\mathbf{h}}\mathsf{F}[\mathbf{h}]+\E^{\mathrm{stat}}|\mathsf{F}[\mathbf{h}]|^{2}\cdot\mathfrak{g}[\bphi]\mathscr{L}_{\bphi}\mathfrak{g}[\bphi]\\
&=\E^{0}|\mathfrak{g}[\phi]|^{2}\E^{\mathrm{Unif}}\mathsf{F}[\mathbf{h}]\cdot N\partial_{\mathbf{h}}^{2}\mathsf{F}[\mathbf{h}]+\E^{\mathrm{Unif}}|\mathsf{F}[\mathbf{h}]|^{2}\E^{0}\mathfrak{g}[\bphi]\mathscr{L}_{\bphi,\mathrm{S}}\mathfrak{g}[\bphi]\\
&\leq\E^{\mathrm{Unif}}|\mathsf{F}[\mathbf{h}]|^{2}\E^{0}\mathfrak{g}[\bphi]\mathscr{L}_{\bphi,\mathrm{S}}\mathfrak{g}[\bphi].
\end{align*}
The first identity follows from \eqref{eq:stestimate-kvI0a}. The second estimate follows first by factorizing the product expectation as {\small$\E^{\mathrm{stat}}=\E^{\mathrm{Unif}}\E^{0}$}. Then, we note that the symmetric part of {\small$\mathscr{L}_{\mathbf{h}}$} is the second-derivative {\small$N\partial_{\mathbf{h}}^{2}$}, as the first derivative in \eqref{eq:stestimate-kvI0b} picks up a sign when we integrate by parts in {\small$\mathbf{h}$} (while keeping {\small$\bphi$} fixed!). Similarly, the symmetric part of {\small$\mathscr{L}_{\bphi}$} with respect to {\small$\mathbb{P}^{0}$} is {\small$\mathscr{L}_{\bphi,\mathrm{S}}$} as we show in {Appendix \ref{subsection:generator}}. The final inequality above follows because of the calculation {\small$\E^{\mathrm{Unif}}\mathsf{F}[\mathbf{h}]\partial_{\mathbf{h}}^{2}\mathsf{F}[\mathbf{h}]=-\E^{\mathrm{Unif}}|\partial_{\mathbf{h}}\mathsf{F}|^{2}\leq0$} due to integration-by-parts. 

Let {\small$\mathbb{I}_{\j}$} be the support of {\small$\bphi\mapsto\mathfrak{f}_{\j}[\tau_{\j}\bphi]$}, i.e. the smallest interval such that {\small$\mathfrak{f}_{\j}[\tau_{\j}\bphi]$} depends only on {\small$\bphi_{\w}$} for {\small$\w\in\mathbb{I}_{\j}$}. By assumption in Lemma \ref{lemma:stestimate-kv}, we have {\small$|\mathbb{I}_{\j}|\lesssim\mathfrak{l}_{\star}$} and {\small$\inf\mathbb{I}_{\j}\geq\j+1$}. We claim that for some {\small$\mathrm{c}_{1},\mathrm{c}_{2}>0$} independent of {\small$N$}, we have
\begin{align*}
\E^{0}\mathfrak{g}[\bphi]\mathscr{L}_{\bphi,\mathrm{S}}\mathfrak{g}[\bphi]&=-\mathrm{c}_{1}N^{2}\sum_{\w\in\Z}\E^{0}|(\partial_{\bphi_{\w}}-\partial_{\bphi_{\w+1}})\mathfrak{g}[\bphi]|^{2}\\
&\leq-\mathrm{c}_{2}\mathfrak{l}_{\star}^{-1}N^{2}\sum_{\j=1,\ldots,\mathfrak{n}_{\mathbf{Av}}}\sum_{\w,\w+1\in\mathbb{I}_{\j}}\E^{0}|(\partial_{\bphi_{\w}}-\partial_{\bphi_{\w+1}})\mathfrak{g}[\bphi]|^{2}\\
&=:-\mathfrak{c}_{2}\mathfrak{l}_{\star}^{-1}N^{2}\sum_{\j=1,\ldots,\mathfrak{n}_{\mathbf{Av}}}\mathfrak{D}_{\mathbb{I}_{\j}}^{0}[\mathfrak{g}].
\end{align*}
The first identity follows by a standard and direct integration-by-parts calculation using \eqref{eq:gcmeasure} and \eqref{eq:stestimate-kvI0d}, which relates the Dirichlet form and the generator of an It\^{o} diffusion. The second line follows because {\small$\mathbb{I}_{\j}$} overlaps with at most {\small$\lesssim\mathfrak{l}_{\star}$}-many other {\small$\mathbb{I}_{\k}$} sets, so for each {\small$\mathbb{I}_{\j}$} and each pair of neighboring points {\small$\w,\w+1\in\mathbb{I}_{\j}$}, the corresponding term {\small$\E^{0}|(\partial_{\bphi_{\w}}-\partial_{\bphi_{\w+1}})\mathfrak{g}[\bphi]|^{2}$} is counted at most {\small$\mathfrak{l}_{\star}$}-many times. The third line is standard notation for Dirichlet forms, and we introduce it to more cleanly present the sequel. By the previous three displays, we have
\begin{align}
&\Big\|\exp(\mathbf{h})\sum_{\j=1,\ldots,\mathfrak{n}_{\mathbf{Av}}}\mathfrak{f}_{\j}[\tau_{\j}\bphi]\mathbf{G}_{\j}[\bphi]\Big\|_{-1}^{2}\label{eq:stestimate-kvI1bb}\\
&\leq\sum_{\j=1,\ldots,\mathfrak{n}_{\mathbf{Av}}}\sup_{\mathsf{F},\mathfrak{g}}\Big\{2\E^{\mathrm{Unif}}\Big\{\exp(\mathbf{h})\mathsf{F}[\mathbf{h}]\Big\}\cdot\E^{0}\Big(\mathfrak{f}_{\j}[\tau_{\j}\bphi]\mathbf{G}_{\j}[\bphi]\mathfrak{g}[\bphi]\Big)\Big\}-\mathrm{c}_{2}\mathfrak{l}_{\star}^{-1}N^{2}\E^{\mathrm{Unif}}|\mathsf{F}[\mathbf{h}]|^{2}\mathfrak{D}_{\mathbb{I}_{\j}}^{0}[\mathfrak{g}]\Big\}.\nonumber
\end{align}
Now, in the supremum over {\small$\mathfrak{g}$}, we make the change of coordinates {\small$\mathfrak{g}\mapsto\mathfrak{g}\cdot(\E^{\mathrm{Unif}}\exp(\mathbf{h})\mathsf{F}[\mathbf{h}])\cdot(\E^{\mathrm{Unif}}|\mathsf{F}[\mathbf{h}]|^{2})^{-1}$}. This change of variables is invertible as long as restrict to nowhere-vanishing {\small$\mathsf{F}$} functions. (These, however, are dense, and since we take a supremum over {\small$\mathsf{F}$}, this restriction is allowed.) Thus, we deduce from \eqref{eq:stestimate-kvI1bb} that
\begin{align}
&\Big\|\exp(\mathbf{h})\sum_{\j=1,\ldots,\mathfrak{n}_{\mathbf{Av}}}\mathfrak{f}_{\j}[\tau_{\j}\bphi]\mathbf{G}_{\j}[\bphi]\Big\|_{-1}^{2}\label{eq:stestimate-kvI1c}\\
&\leq\sum_{\j=1,\ldots,\mathfrak{n}_{\mathbf{Av}}}\sup_{\mathsf{F}}\frac{(\E^{\mathrm{Unif}}\exp(\mathbf{h})\mathsf{F}[\mathbf{h}])^{2}}{\E^{\mathrm{Unif}}\mathsf{F}[\mathbf{h}]^{2}}\cdot\sup_{\mathfrak{g}}\Big\{2\E^{0}\Big(\mathfrak{f}_{\j}[\tau_{\j}\bphi]\mathbf{G}_{\j}[\bphi]\mathfrak{g}[\bphi]\Big)-\mathrm{c}_{2}\mathfrak{l}_{\star}^{-1}N^{2}\mathfrak{D}_{\mathbb{I}_{\j}}^{0}[\mathfrak{g}]\Big\}\nonumber\\
&\lesssim_{\e}N^{\e}\sum_{\j=1,\ldots,\mathfrak{n}_{\mathbf{Av}}}\sup_{\mathfrak{g}}\Big\{2\E^{0}\Big(\mathfrak{f}_{\j}[\tau_{\j}\bphi]\mathbf{G}_{\j}[\bphi]\mathfrak{g}[\bphi]\Big)-\mathrm{c}_{2}\mathfrak{l}_{\star}^{-1}N^{2}\mathfrak{D}_{\mathbb{I}_{\j}}^{0}[\mathfrak{g}]\Big\},\nonumber
\end{align}
where the last line follows since the sup over {\small$\mathsf{F}$} in the second line is the variational formula for {\small$\E^{\mathrm{Unif}}|\exp(\mathbf{h})|^{2}$}, and since {\small$|\mathbf{h}|\leq(\log N)^{1/2}$}, we have {\small$\exp(\mathbf{h})\lesssim_{\nu}N^{\nu}$} for any {\small$\nu>0$}. Now, for any {\small$\w,\w+1\in\mathbb{I}_{\j}$}, we claim that the following estimates hold in which {\small$\mathrm{c}>0$} is independent of {\small$N$}:
\begin{align*}
\E^{0}|(\partial_{\bphi_{\w}}-\partial_{\bphi_{\w+1}})\mathfrak{g}[\bphi]|^{2}&\geq\mathrm{c}^{2}\E^{0}\mathbf{G}_{\j}[\bphi]^{2}|(\partial_{\bphi_{\w}}-\partial_{\bphi_{\w+1}})\mathfrak{g}[\bphi]|^{2}=\mathrm{c}^{2}\E^{0}|(\partial_{\bphi_{\w}}-\partial_{\bphi_{\w+1}})(\mathbf{G}_{\j}[\bphi]\mathfrak{g}[\bphi])|^{2}.
\end{align*}
The lower bound follows because {\small$|\mathbf{G}_{\j}[\bphi]|\lesssim1$} deterministically. The identity follows since {\small$\mathbf{G}_{\j}[\bphi]$} does not depend on {\small$\bphi_{\w},\bphi_{\w+1}$}. (Indeed, by \eqref{eq:gkterm}, {\small$\mathbf{G}_{\j}[\bphi]$} is supported to the left of {\small$\j+1\leq\inf\mathbb{I}_{\j}$}.)  If we sum over {\small$\w,\w+1\in\mathbb{I}_{\j}$}, then we deduce {\small$\mathfrak{D}_{\mathbb{I}_{\j}}^{0}[\mathfrak{g}]\geq\mathrm{c}^{2}\mathfrak{D}_{\mathbb{I}_{\j}}^{0}[\mathbf{G}_{\j}\mathfrak{g}]$}, so the estimate \eqref{eq:stestimate-kvI1c} extends to the following:
\begin{align}
&\Big\|\exp(\mathbf{h})\sum_{\j=1,\ldots,\mathfrak{n}_{\mathbf{Av}}}\mathfrak{f}_{\j}[\tau_{\j}\bphi]\mathbf{G}_{\j}[\bphi]\Big\|_{-1}^{2}\nonumber\\
&\lesssim_{\e}N^{\e}\sum_{\j=1,\ldots,\mathfrak{n}_{\mathbf{Av}}}\sup_{\mathfrak{g}}\Big\{2\E^{0}\Big(\mathfrak{f}_{\j}[\tau_{\j}\bphi]\mathbf{G}_{\j}[\bphi]\mathfrak{g}[\bphi]\Big)-\mathrm{c}^{2}\mathrm{c}_{2}\mathfrak{l}_{\star}^{-1}N^{2}\mathfrak{D}_{\mathbb{I}_{\j}}^{0}[\mathbf{G}_{\j}\mathfrak{g}]\Big\}\nonumber\\
&\lesssim_{\e}N^{\e}\sum_{\j=1,\ldots,\mathfrak{n}_{\mathbf{Av}}}\sup_{\wt{\mathfrak{g}}}\Big\{2\E^{0}\Big(\mathfrak{f}_{\j}[\tau_{\j}\bphi]\wt{\mathfrak{g}}[\bphi]\Big)-\mathrm{c}^{2}\mathrm{c}_{2}\mathfrak{l}_{\star}^{-1}N^{2}\mathfrak{D}_{\mathbb{I}_{\j}}^{0}[\wt{\mathfrak{g}}]\Big\}.\nonumber
\end{align}
We clarify that the last line follows since in the second line, it is enough to replace {\small$\mathbf{G}_{\j}\mathfrak{g}$} by {\small$\wt{\mathfrak{g}}$} and take a supremum over functions that are in the image of {\color{black}the} multiplication by {\small$\mathbf{G}_{\j}$}. However, we can extend to all local, smooth functions in the supremum, {\color{black}obtaining an upper bound}. Next, we use the change-of-variables {\small$\wt{\mathfrak{g}}\mapsto N^{-2}\mathfrak{l}_{\star}\mathrm{c}^{-2}\mathrm{c}_{2}^{-1}\wt{\mathfrak{g}}$}. Then, we proceed as in the proof of Lemma 3.5 in \cite{DGP} to bound the resulting supremum and get the last line below:
\begin{align*}
&\Big\|\exp(\mathbf{h})\sum_{\j=1,\ldots,\mathfrak{n}_{\mathbf{Av}}}\mathfrak{f}_{\j}[\tau_{\j}\bphi]\mathbf{G}_{\j}[\bphi]\Big\|_{-1}^{2}\nonumber\\
&\lesssim N^{\e}N^{-2}\mathfrak{l}_{\star}\sum_{\j=1,\ldots,\mathfrak{n}_{\mathbf{Av}}}\sup_{\wt{\mathfrak{g}}}\Big\{2\E^{0}\mathfrak{f}_{\j}[\tau_{\j}\bphi]\wt{\mathfrak{g}}[\bphi]-\mathfrak{D}_{\mathbb{I}_{\j}}^{0}[\wt{\mathfrak{g}}]\Big\}\nonumber\\
&\lesssim N^{\e}N^{-2}\mathfrak{l}_{\star}\sum_{\j=1,\ldots,\mathfrak{n}_{\mathbf{Av}}}\mathfrak{l}_{\star}^{2}\cdot\E^{0}\mathfrak{f}_{\j}[\tau_{\j}\bphi]^{2}\lesssim N^{\e}N^{-2}\mathfrak{l}_{\star}^{3}\cdot\mathfrak{n}_{\mathbf{Av}}\cdot\max_{\j=1,\ldots,\mathfrak{n}_{\mathbf{Av}}}\E^{0}|\mathfrak{f}_{\j}|^{2}.
\end{align*}
If we plug the previous display into \eqref{eq:stestimate-kvI1a}, then the desired estimate \eqref{eq:stestimate-kvI} follows, so the proof is complete. \qed
\subsubsection{Proof of Lemma \ref{lemma:stestimate-psi}}
We first use the equivalence of ensembles; in particular, by Corollary B.3 in \cite{DGP}, we have the deterministic estimate below, which uses notation to be explained afterwards:
\begin{align}
|\boldsymbol{\Psi}_{\mathfrak{q}_{\n}}[\ell,\bphi]|&\lesssim|\E^{\rho[\bphi]}\mathfrak{q}_{\n}|+\ell^{-1}\sigma_{\rho[\bphi]}^{2}\partial_{\rho}^{2}\E^{\rho}\mathfrak{q}_{\n}|_{\rho=\rho[\bphi]}+\ell^{-\frac32}\E^{\rho[\bphi]}|\mathfrak{q}_{\n}|^{2}.\label{eq:stestimate-psiI1}
\end{align}
Above, we used the notation {\small$\rho[\bphi]=\ell^{-1}(\bphi_{1}+\ldots+\bphi_{\ell})$}. Also, {\small$\sigma_{\rho}^{2}:=\E^{\rho}|\bphi_{0}-\rho|^{2}$}. Before we proceed, we will first require the following technical estimates for all moments and expectations involved. Because {\small$\mathfrak{q}_{\n}$} admits a pointwise local polynomial bound (see Lemma \ref{lemma:stestimate-rep}), an elementary analysis shows that {\small$|\partial_{\rho}^{\k}\E^{\rho}\mathfrak{q}_{\n}|\lesssim1+|\rho|^{\mathrm{C}}$} for some {\small$\mathrm{C}=\mathrm{O}(1)$} and any {\small$\k=\mathrm{O}(1)$}. Similarly, we have {\small$\sigma_{\rho}^{2}\lesssim1+|\rho|^{\mathrm{C}}$}. 

Now, assume that {\small$\mathfrak{q}_{\n}\in\mathrm{Jet}_{2}^{\perp}$}; see Definition \ref{definition:jets}. In this case, we expand
\begin{align*}
\E^{\rho[\bphi]}\mathfrak{q}_{\n}&=\E^{0}\mathfrak{q}_{\n}+\partial_{\rho}\E^{\rho}\mathfrak{q}_{\n}|_{\rho=0}\cdot\rho[\bphi]+\tfrac12\partial_{\rho}^{2}\E^{\rho}\mathfrak{q}_{\n}|_{\rho=0}\cdot|\rho[\bphi]|^{2}+\mathrm{O}\Big(|\rho[\bphi]|^{3}(1+|\rho[\bphi]|^{\mathrm{C}})\Big),\\
\partial_{\rho}^{2}\E^{\rho}\mathfrak{q}_{\n}|_{\rho=\rho[\bphi]}&=\partial_{\rho}^{2}\E^{\rho}\mathfrak{q}_{\n}|_{\rho=0}+\mathrm{O}\Big(|\rho[\bphi]|(1+|\rho[\bphi]|^{\mathrm{C}})\Big).
\end{align*}
Now, we note that {\small$\rho[\bphi]=\ell^{-1}(\bphi_{1}+\ldots+\bphi_{\ell})$} is sub-Gaussian with variance {\small$\lesssim\ell^{-1}$} with respect to {\small$\mathbb{P}^{0}$}, since {\small$\bphi_{\w}$} are i.i.d. and sub-Gaussian (see Assumption \ref{assump:potential}). In particular, we have {\small$\E^{0}|\rho[\bphi]|^{p}\lesssim_{p}\ell^{-p/2}$} for any {\small$p\geq2$}. Now, we use this estimate after plugging the above display into the \abbr{RHS} of \eqref{eq:stestimate-psiI1} to deduce the desired bound \eqref{eq:stestimate-psiI}. 

If {\small$\mathfrak{q}_{\n}=N^{-1/2}\mathfrak{f}$} with {\small$\mathfrak{f}\in\mathrm{Jet}_{1}^{\perp}$}, the same argument works, except we Taylor expand {\small$\E^{\rho}\mathfrak{q}_{\n}$} to one less degree. This means we lose a factor of {\small$\rho[\bphi]$} (which means we gain a factor of {\small$\ell^{1/2}$} after taking expectation). However, we also multiply by {\small$N^{-1/2}$}, and since {\small$\ell\lesssim N$}, the total factor that we must include is {\small$N^{-1/2}\ell^{1/2}\lesssim1$}. If {\small$\mathfrak{q}_{\n}=N^{-1}\mathfrak{f}$} with {\small$\mathfrak{f}\in\mathrm{Jet}_{0}^{\perp}$}, then we gain another factor of {\small$N^{-1/2}\ell^{1/2}\lesssim1$}. Thus, the desired estimate \eqref{eq:stestimate-psiI} holds. \qed
%
%
%
\section{Proof of Proposition \ref{prop:stochheat}}\label{section:stochheat}
In this section, we will first list all the ingredients needed to prove Proposition \ref{prop:stochheat}. Then, before we give their proofs, we combine them to show Proposition \ref{prop:stochheat}. Next, we prove each ingredient. We remark that the proof of Proposition \ref{prop:stochheat} will only need Lemmas \ref{lemma:kzetamodify} and \ref{lemma:kzetaestimate-long}. However, their proofs use the other lemmas in the list of ingredients below, which is why we record them altogether.
\subsection{A technical modification}
The first step in the proof of Proposition \ref{prop:stochheat} is to consider the \abbr{SDE} \eqref{eq:kzeta-sde} for the heat kernel of interest and modify its coefficients (in a way that ultimately does nothing with high probability). To be precise, we let {\small$\mathbf{K}^{N,\zeta,\sim}_{\s,\t,\x,\y}$} be a function of {\small$(\s,\t,\x,\y)\in[0,\infty)^{2}\times\Z^{2}$} with {\small$\s\leq\t$} which satisfies the following modification of \eqref{eq:kzeta-sde} (where the change, marked in red, is explained afterwards):
\begin{align}
&\d\mathbf{K}^{N,\zeta,\sim}_{\s,\t,\x,\y}=\mathscr{T}_{N}\mathbf{K}^{N,\zeta,\sim}_{\s,\t,\x,\y}\d\t+{\boldsymbol{\chi}^{(\zeta_{\mathrm{large}})}_{\x}}[\mathscr{S}^{N}\star(\sqrt{2}\lambda N^{\frac12}\mathbf{R}^{N,\wedge}_{\t,\cdot}\mathbf{K}^{N,\zeta,\sim}_{\s,\t,\cdot,\y}\d\mathbf{b}_{\t,\cdot})]_{\x}\nonumber\\
&+N{\boldsymbol{\chi}^{(\zeta)}_{\x}}{\ocolor{red}\mathbf{1}_{\mathfrak{t}_{\mathrm{ap}}\geq\t}}\sum_{\substack{\n=1,\ldots,\mathrm{K}\\\m=0,\ldots,\mathrm{M}\\0<|\mathfrak{l}_{1}|,\ldots,|\mathfrak{l}_{\m}|\lesssim \mathfrak{n}_{\mathbf{Av}}}}\tfrac{1}{|\mathfrak{l}_{1}|\ldots|\mathfrak{l}_{\m}|}\mathrm{c}_{N,\n,\mathfrak{l}_{1},\ldots,\mathfrak{l}_{\m}}\grad^{\mathbf{X}}_{\mathfrak{l}_{1}}\ldots\grad^{\mathbf{X}}_{\mathfrak{l}_{\m}}[\mathscr{S}^{N}\star(\mathbf{Av}^{\mathbf{T},\mathbf{X},\mathfrak{q}_{\n}}_{\t,\cdot}\cdot\mathbf{R}^{N}_{\t,\cdot}\mathbf{K}^{N,\zeta,\sim}_{\s,\t,\cdot,\y})]_{\x}\d\t\nonumber\\
&+N\boldsymbol{\chi}^{(\zeta)}_{\x}\sum_{\substack{\n=1,\ldots,\mathrm{K}\\\m=0,\ldots,\mathrm{M}\\0<|\mathfrak{l}_{1}|,\ldots,|\mathfrak{l}_{\m}|\lesssim \mathfrak{n}_{\mathbf{Av}}}}\tfrac{1}{|\mathfrak{l}_{1}|\ldots|\mathfrak{l}_{\m}|}\mathrm{c}_{N,\n,\mathfrak{l}_{1},\ldots,\mathfrak{l}_{\m}}\grad^{\mathbf{T},\mathrm{av}}_{\mathfrak{t}_{\mathbf{Av}}}\grad^{\mathbf{X}}_{\mathfrak{l}_{1}}\ldots\grad^{\mathbf{X}}_{\mathfrak{l}_{\m}}\Big({\ocolor{red}\mathbf{1}_{\mathfrak{t}_{\mathrm{ap}}\geq\t}}[\mathscr{S}^{N}\star(\mathbf{Av}^{\mathbf{X},\mathfrak{q}_{\n}}_{\t,\cdot}\cdot\mathbf{R}^{N}_{\t,\cdot}\mathbf{K}^{N,\zeta,\sim}_{\s,\t,\cdot,\y})]_{\x}\Big)\d\t\nonumber\\
&+\boldsymbol{\chi}^{(\zeta)}_{\x}{\ocolor{red}\mathbf{1}_{\mathfrak{t}_{\mathrm{ap}}\geq\t}}\cdot\mathrm{Err}[\mathbf{R}^{N}_{\t,\cdot}\mathbf{K}^{N,\zeta,\sim}_{\s,\t,\cdot,\y}]_{\x}\d\t.\label{eq:kzetasim-sde}
\end{align}
Again, we will take the initial data {\small$\mathbf{K}^{N,\zeta,\sim}_{\s,\s,\x,\y}=\mathbf{1}_{\x=\y}$}. The object {\small$\mathfrak{t}_{\mathrm{ap}}$} is defined to be the following stopping time, which provides us important a priori estimates (hence the subscript):
\begin{align}
\mathfrak{t}_{\mathrm{ap}}:=&\inf\Big\{\s\in[0,\infty): \max_{|\x|\leq N^{2+2\zeta+\zeta_{\mathrm{large}}}}N^{-\delta_{\mathbf{S}}}|\bphi_{\s,\x}|+\max_{|\x|\leq N^{2+2\zeta+\zeta_{\mathrm{large}}}}N^{\frac13\delta_{\mathbf{S}}}|\mathbf{R}^{N}_{\s,\x}-1|\geq 1\Big\}\label{eq:tap1}\\
&\wedge\inf\Big\{\s\in[0,\infty): \max_{\n=1,\ldots,\mathrm{K}}\max_{|\x|\leq N^{2+2\zeta+\zeta_{\mathrm{large}}}}|\mathbf{Av}^{\mathbf{X},\mathfrak{q}_{\n}}_{\s,\x}|\geq N^{-\frac12+\delta_{\mathbf{S}}}\Big\}\label{eq:tap2}\\
&\wedge\inf\Big\{\s\in[0,\infty): \max_{\n=1,\ldots,\mathrm{K}}\max_{|\x|\leq N^{2+2\zeta+\zeta_{\mathrm{large}}}}|\mathbf{Av}^{\mathbf{T},\mathbf{X},\mathfrak{q}_{\n}}_{\s,\x}|\geq N^{-\frac12+\delta_{\mathbf{S}}}\Big\}.\label{eq:tap3}
\end{align}
(Recall the small parameter {\small$\delta_{\mathbf{S}}>0$} from Definition \ref{definition:zsmooth}.) We clarify that the above is a stopping time because the time-average {\small$\mathbf{Av}^{\mathbf{T},\mathbf{X},\mathfrak{q}_{\n}}$} is backwards in time (see Definition \ref{definition:eq-operators}).

Because of {Lemmas \ref{lemma:rbound}, \ref{lemma:phibound}, \ref{lemma:avldp}, and \ref{lemma:avldptx}}, we know {\small$\mathfrak{t}_{\mathrm{ap}}\geq1$} with high probability. Therefore, it is a standard It\^{o} calculus result that the modification in red in \eqref{eq:kzetasim-sde} is harmless with high probability (some care is needed to handle the infinite-dimensional nature of \eqref{eq:kzetasim-sde} and \eqref{eq:kzeta-sde}). We record this in the following lemma.
\begin{lemma}\label{lemma:kzetamodify}
\fsp With probability {\small$1$}, there exists a path-wise solution to \eqref{eq:kzetasim-sde} for {\small$\t\in[\s,1]$} and {\small$\x,\y\in\Z$}, such that the map {\small$(\s,\t)\mapsto\mathbf{K}^{N,\zeta,\sim}_{\s,\t,\x,\y}$} is jointly continuous on the domain {\small$\s\leq\t$}. Moreover,  {\small$\mathbf{K}^{N,\zeta}_{\s,\t,\x,\y}=\mathbf{K}^{N,\zeta,\sim}_{\s,\t,\x,\y}$} for all {\small$0\leq\s\leq\t\leq1$} and {\small$\x,\y\in\Z$} with high probability.
\end{lemma}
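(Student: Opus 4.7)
The plan centers on the observation that the stopping time $\mathfrak{t}_{\mathrm{ap}}$ defined in \eqref{eq:tap1}--\eqref{eq:tap3} satisfies $\mathbb{P}(\mathfrak{t}_{\mathrm{ap}} \geq 1) = 1 - \mathrm{o}(1)$, so that the red indicator $\mathbf{1}_{\mathfrak{t}_{\mathrm{ap}} \geq \t}$ in \eqref{eq:kzetasim-sde} is identically $1$ for $\t \in [0,1]$ on a high-probability event. I would first verify this by applying Lemmas \ref{lemma:rbound}, \ref{lemma:phibound}, \ref{lemma:avldp}, and \ref{lemma:avldptx} with $\mathrm{D} = 2 + 2\zeta + \zeta_{\mathrm{large}}$ and a choice of $\delta>0$ small enough that the resulting bounds lie strictly below the thresholds $1$, $N^{-\delta_{\mathbf{S}}/3}$, and $N^{-1/2+\delta_{\mathbf{S}}}$ appearing in \eqref{eq:tap1}--\eqref{eq:tap3}. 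Each of the three defining events in \eqref{eq:tap1}--\eqref{eq:tap3} is thereby controlled with high probability, and their intersection (which is contained in $\{\mathfrak{t}_{\mathrm{ap}} \geq 1\}$) also has high probability.

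With this a priori bound in place, I would construct $\mathbf{K}^{N,\zeta,\sim}$ by Picard iteration on \eqref{eq:kzetasim-sde}. The truncation by $\mathbf{1}_{\mathfrak{t}_{\mathrm{ap}} \geq \t}$ ensures that the drift coefficients--which depend on $\bphi$, $\mathbf{R}^{N}$, and the averages $\mathbf{Av}^{\mathbf{X},\mathfrak{q}_{\n}}, \mathbf{Av}^{\mathbf{T},\mathbf{X},\mathfrak{q}_{\n}}$--are deterministically bounded by polynomial powers of $N$ whenever they are non-zero; the noise coefficient is similarly bounded via $\mathbf{R}^{N,\wedge}$ (see \eqref{eq:rwedge}). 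Combined with the spatial cutoffs $\boldsymbol{\chi}^{(\zeta)}$ and $\boldsymbol{\chi}^{(\zeta_{\mathrm{large}})}$, which localize the equation to $|\x| \lesssim N^{1+\zeta_{\mathrm{large}}}$, the equation \eqref{eq:kzetasim-sde} becomes, for fixed $\y$, a linear SDE for the $\Z$-indexed process $(\t,\x) \mapsto \mathbf{K}^{N,\zeta,\sim}_{\s,\t,\x,\y}$ with coefficients that are uniformly bounded in $(\t,\x)$ and linear in the solution. Standard Picard iteration on an exponentially weighted $\ell^{\infty}$-space in $\x$ (of the type appearing on the left-hand side of \eqref{eq:stochheatI}) then produces a unique path-wise solution almost surely. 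Joint continuity of $(\s,\t) \mapsto \mathbf{K}^{N,\zeta,\sim}_{\s,\t,\x,\y}$ follows from the continuous dependence of the Picard iterates on the initial time $\s$ together with the martingale-type continuity in $\t$ coming from the noise integral.

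To conclude the identification $\mathbf{K}^{N,\zeta} = \mathbf{K}^{N,\zeta,\sim}$ with high probability, I would restrict to the high-probability event $\{\mathfrak{t}_{\mathrm{ap}} \geq 1\}$ from the first paragraph. On this event, the indicator $\mathbf{1}_{\mathfrak{t}_{\mathrm{ap}} \geq \t}$ equals $1$ for all $\t \in [0,1]$, so the SDEs \eqref{eq:kzeta-sde} and \eqref{eq:kzetasim-sde} are literally identical. The same Picard argument as above, applied now to \eqref{eq:kzeta-sde} whose coefficients are also deterministically bounded on this event (again by the definition of $\mathfrak{t}_{\mathrm{ap}}$), yields uniqueness of the path-wise solution on $[0,1]$, and hence $\mathbf{K}^{N,\zeta} = \mathbf{K}^{N,\zeta,\sim}$ pointwise on $\{0 \leq \s \leq \t \leq 1\} \times \Z^{2}$.

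The main technical point requiring care is to set up the Picard iteration in an appropriate weighted function space in the spatial variable. Although the cutoffs $\boldsymbol{\chi}^{(\zeta)}, \boldsymbol{\chi}^{(\zeta_{\mathrm{large}})}$ restrict the effective spatial support, the drift contains gradients $\grad^{\mathbf{X}}_{\mathfrak{l}_{1}}\ldots\grad^{\mathbf{X}}_{\mathfrak{l}_{\m}}$ of lengths up to $\mathfrak{n}_{\mathbf{Av}}$ and is multiplied by a prefactor of $N \cdot \mathrm{c}_{N,\n,\mathfrak{l}_{1},\ldots,\mathfrak{l}_{\m}} = \mathrm{O}(N^{2})$; one must therefore choose the time horizon of each Picard step small enough (in a polynomial power of $N$) so that the iteration is a contraction, and then iterate a polynomially large number of times to cover $[0,1]$. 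This is a routine finite-dimensional SDE construction once the truncation is built in, but the bookkeeping of weights and gradient lengths is where the care lies.
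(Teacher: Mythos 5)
Your proposal is correct and follows essentially the same approach as the paper: the key inputs are the high probability of $\{\mathfrak{t}_{\mathrm{ap}}\geq 1\}$ (via Lemmas \ref{lemma:rbound}, \ref{lemma:phibound}, \ref{lemma:avldp}, \ref{lemma:avldptx}), Picard iteration for existence, and linearity for uniqueness. The paper makes the identification step slightly more explicit by forming the stopped difference $\mathbf{U}^{N,\zeta}_{\s,\t,\x,\y}:=\mathbf{K}^{N,\zeta}_{\s,\t\wedge(\mathfrak{t}_{\mathrm{ap}}\vee\s),\x,\y}-\mathbf{K}^{N,\zeta,\sim}_{\s,\t\wedge(\mathfrak{t}_{\mathrm{ap}}\vee\s),\x,\y}$, which solves a linear SDE with zero initial data and is therefore identically zero up to $\mathfrak{t}_{\mathrm{ap}}$; this is the rigorous version of your ``the two SDEs are literally identical on the event,'' and avoids the imprecision of asserting uniqueness ``on an event'' for solutions defined on the whole probability space.
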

\subsection{Estimates for the heat kernel of a lattice \abbr{SHE}-type equation}
Because of the red indicators in \eqref{eq:kzetasim-sde}, we have \emph{deterministic} control for the last two lines in \eqref{eq:kzetasim-sde}. It will then be achievable to establish estimates for the heat kernel for the \abbr{SDE} corresponding to \eqref{eq:kzetasim-sde} but without the second line therein. In particular, let {\small$\mathbf{L}^{N,\zeta,\sim}_{\s,\t,\x,\y}$} be a function of {\small$(\s,\t,\x,\y)\in[0,\infty)^{2}\times\Z^{2}$} with {\small$\s\leq\t$} that solves the following \abbr{SDE}:
\begin{align}
&\d\mathbf{L}^{N,\zeta,\sim}_{\s,\t,\x,\y}=\mathscr{T}_{N}\mathbf{L}^{N,\zeta,\sim}_{\s,\t,\x,\y}\d\t+{\boldsymbol{\chi}^{(\zeta_{\mathrm{large}})}_{\x}}[\mathscr{S}^{N}\star(\sqrt{2}\lambda N^{\frac12}\mathbf{R}^{N,\wedge}_{\t,\cdot}\mathbf{L}^{N,\zeta,\sim}_{\s,\t,\cdot,\y}\d\mathbf{b}_{\t,\cdot})]_{\x}\nonumber\\
&+N\boldsymbol{\chi}^{(\zeta)}_{\x}\sum_{\substack{\n=1,\ldots,\mathrm{K}\\\m=0,\ldots,\mathrm{M}\\0<|\mathfrak{l}_{1}|,\ldots,|\mathfrak{l}_{\m}|\lesssim \mathfrak{n}_{\mathbf{Av}}}}\tfrac{1}{|\mathfrak{l}_{1}|\ldots|\mathfrak{l}_{\m}|}\mathrm{c}_{N,\n,\mathfrak{l}_{1},\ldots,\mathfrak{l}_{\m}}\grad^{\mathbf{T},\mathrm{av}}_{\mathfrak{t}_{\mathbf{Av}}}\grad^{\mathbf{X}}_{\mathfrak{l}_{1}}\ldots\grad^{\mathbf{X}}_{\mathfrak{l}_{\m}}\Big({\ocolor{red}\mathbf{1}_{\mathfrak{t}_{\mathrm{ap}}\geq\t}}[\mathscr{S}^{N}\star(\mathbf{Av}^{\mathbf{X},\mathfrak{q}_{\n}}_{\t,\cdot}\cdot\mathbf{R}^{N}_{\t,\cdot}\mathbf{L}^{N,\zeta,\sim}_{\s,\t,\cdot,\y})]_{\x}\Big)\d\t\nonumber\\
&+\boldsymbol{\chi}^{(\zeta)}_{\x}{\ocolor{red}\mathbf{1}_{\mathfrak{t}_{\mathrm{ap}}\geq\t}}\cdot\mathrm{Err}[\mathbf{R}^{N}_{\t,\cdot}\mathbf{L}^{N,\zeta,\sim}_{\s,\t,\cdot,\y}]_{\x}\d\t.\label{eq:lzetasim-sde}
\end{align}
Again, we assume the initial data {\small$\mathbf{L}^{N,\zeta,\sim}_{\s,\s,\x,\y}=\mathbf{1}_{\x=\y}$}. We will think of {\small$\mathbf{L}^{N,\zeta,\sim}$} as a perturbation of the deterministic heat kernel corresponding to the {\small$\mathscr{T}_{N}$} operator; recall this heat kernel from \eqref{eq:heatkernel}.
\begin{lemma}\label{lemma:lzetasimestimate}
\fsp Fix any {\small$\kappa=\mathrm{O}(1)$} and {\small$\delta>0$}. There exists a high probability event on which we have
\begin{align}
\sup_{0\leq\s\leq\t\leq1}\sup_{\x\in\Z}\sum_{\y\in\Z}\exp(\tfrac{2\kappa|\x-\y|}{N})|\mathbf{L}^{N,\zeta,\sim}_{\s,\t,\x,\y}-\mathbf{H}^{N}_{\s,\t,\x,\y}|^{2}&\lesssim N^{-1+\delta}.\label{eq:lzetasimestimateII}
\end{align}
\end{lemma}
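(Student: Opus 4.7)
The plan is to perform a Duhamel expansion of $\mathbf{L}^{N,\zeta,\sim}$ about the deterministic heat kernel $\mathbf{H}^{N}$. Setting $\mathbf{D}_{\s,\t,\x,\y}:=\mathbf{L}^{N,\zeta,\sim}_{\s,\t,\x,\y}-\mathbf{H}^{N}_{\s,\t,\x,\y}$ and using that both have the same initial data at $\t=\s$, variation of constants for the parabolic equation driven by $\mathscr{T}_{N}$ gives $\mathbf{D}=\mathcal{N}+\mathcal{D}+\mathcal{E}$, where $\mathcal{N}$, $\mathcal{D}$, $\mathcal{E}$ are the contributions of the noise, time-gradient drift, and $\mathrm{Err}$-operator terms from \eqref{eq:lzetasim-sde}, each propagated by a time integral of $\mathbf{H}^{N}_{\r,\t,\x,\cdot}$ against $\mathbf{L}^{N,\zeta,\sim}_{\s,\r,\cdot,\y}$. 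I would first establish an $L^{2}$-expectation version of \eqref{eq:lzetasimestimateII} pointwise in $(\s,\t,\x)$, then upgrade to the claimed high-probability uniform bound by taking $2p$-th moments, applying Chebyshev on a polynomial-in-$N$-fine $(\s,\t,\x)$-net, and invoking short-time continuity inherited from the $\mathfrak{t}_{\mathrm{ap}}$-truncation.

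For $\mathcal{N}$, the It\^{o} isometry combined with the deterministic bound $|\mathbf{R}^{N,\wedge}|\lesssim 1$ and the weighted $L^{2}$-estimate $\sum_{\z}e^{2\kappa|\x-\z|/N}|\mathbf{H}^{N}_{\r,\t,\x,\z}|^{2}\lesssim N^{-1}|\t-\r|^{-1/2}$ (Proposition A.1 in \cite{DT}) transforms the prefactor $N^{1/2}$ from $\sqrt{2}\lambda N^{1/2}\d\mathbf{b}$ into an integrable-in-$\r$ kernel yielding the correct $N^{-1+\delta}$ scaling. For $\mathcal{D}$, the decisive point is that the red indicator $\mathbf{1}_{\mathfrak{t}_{\mathrm{ap}}\geq\t}$ lets us replace $|\mathbf{Av}^{\mathbf{X},\mathfrak{q}_{\n}}|\leq N^{-1/2+\delta_{\mathbf{S}}}$ and $|\mathbf{R}^{N}|\lesssim 1$ deterministically via \eqref{eq:tap1}--\eqref{eq:tap3}, so the drift becomes a deterministic bounded operator of order $N\cdot N^{-1/2+\delta_{\mathbf{S}}}=N^{1/2+\delta_{\mathbf{S}}}$ before gradients are applied. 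The spatial gradients $\grad^{\mathbf{X}}_{\mathfrak{l}_{1}}\cdots\grad^{\mathbf{X}}_{\mathfrak{l}_{\m}}$, weighted by $|\mathfrak{l}_{1}|^{-1}\cdots|\mathfrak{l}_{\m}|^{-1}$, will be transferred to $\mathbf{H}^{N}$ and controlled by the gradient-on-heat-kernel bounds of \cite{DT}, whose $N^{-1}$-per-gradient scaling is matched by the $|\mathfrak{l}|$-sums producing only logarithmic losses; for $\mathcal{E}$, the estimate \eqref{eq:err-estimate} together with $|\bphi|\lesssim N^{\delta_{\mathbf{S}}}$ (from \eqref{eq:tap1}) makes it a deterministic operator of norm $\lesssim N^{-1/2+\mathrm{C}\delta_{\mathbf{S}}}$ into a local $N^{1-\delta_{\mathbf{S}}}$-average, which is easily absorbed.

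The main obstacle I anticipate is the averaged time-gradient factor $\grad^{\mathbf{T},\mathrm{av}}_{\mathfrak{t}_{\mathbf{Av}}}$ inside $\mathcal{D}$. I plan to handle it by summation-by-parts in $\r$ across the Duhamel time-integration, moving the backward time-difference onto $\mathbf{H}^{N}_{\r,\t,\x,\cdot}$; since $\partial_{\r}\mathbf{H}^{N}=-\mathscr{T}_{N}\mathbf{H}^{N}\sim N^{2}\Delta\mathbf{H}^{N}$ and $\mathfrak{t}_{\mathbf{Av}}=N^{-2/3-10\delta_{\mathbf{S}}}$, one gains $\mathfrak{t}_{\mathbf{Av}}\cdot N^{2}\cdot|\t-\r|^{-1/2}\cdot N^{-1}\lesssim N^{1/3-10\delta_{\mathbf{S}}}|\t-\r|^{-1/2}$ from moving the operator over, and this must combine against the $N^{1/2+\delta_{\mathbf{S}}}$ from the prefactor to leave a net $\ll 1$ contribution (which requires $\delta_{\mathbf{S}}$ small and the use of heat-kernel smoothing over the mesoscopic window). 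Boundary terms from the summation-by-parts at $\r=\s$ and $\r=\t$ are bounded directly using the same deterministic control, so no extra structure is needed there. Assembling the three contributions yields a singular-Gronwall inequality of the form $\sum_{\y}e^{2\kappa|\x-\y|/N}\E|\mathbf{D}_{\s,\t,\x,\y}|^{2}\lesssim N^{-1+\delta'}+\int_{\s}^{\t}|\t-\r|^{-1/2}\sum_{\y}e^{2\kappa|\x-\y|/N}\E|\mathbf{L}^{N,\zeta,\sim}_{\s,\r,\x,\y}|^{2}\d\r$, and substituting $\mathbf{L}^{N,\zeta,\sim}=\mathbf{H}^{N}+\mathbf{D}$ on the right and iterating closes the estimate at $\lesssim N^{-1+\delta}$. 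Replacing It\^{o} isometry with Burkholder-Davis-Gundy throughout the same computation promotes the second moment to arbitrary $2p$, and Chebyshev plus the net-continuity argument delivers the high-probability conclusion \eqref{eq:lzetasimestimateII}.
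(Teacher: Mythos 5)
Your overall architecture is correct and matches the paper's: Duhamel expansion of $\mathbf{L}^{N,\zeta,\sim}$ about $\mathbf{H}^{N}$, control of the noise piece via BDG and the weighted $\ell^{2}$ heat-kernel estimate from Proposition \ref{prop:hke}, control of the $\mathrm{Err}$ piece via \eqref{eq:err-estimate} and the $\mathfrak{t}_{\mathrm{ap}}$-bounds, a time-gradient manipulation for the drift, a singular Gronwall closure, and a Chebyshev-plus-net upgrade to a uniform high-probability statement. But the treatment of the $\grad^{\mathbf{T},\mathrm{av}}_{\mathfrak{t}_{\mathbf{Av}}}$-drift contains a quantitative gap that your own displayed arithmetic already exposes: you claim a net gain of
\begin{align*}
\mathfrak{t}_{\mathbf{Av}}\cdot N^{2}\cdot N^{-1}\,|\t-\r|^{-1/2}\lesssim N^{1/3-10\delta_{\mathbf{S}}}|\t-\r|^{-1/2},
\end{align*}
which, multiplied by the drift prefactor $N^{1/2+\delta_{\mathbf{S}}}$, yields $N^{5/6-9\delta_{\mathbf{S}}}\gg 1$ for small $\delta_{\mathbf{S}}$. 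This is not $\ll 1$, so the Gronwall does not close as written. The point you are missing is that after writing $\mathbf{H}^{N}_{\r+\u}-\mathbf{H}^{N}_{\r}=\int_{0}^{\u}\mathscr{T}_{N}\mathbf{H}^{N}_{\r+\upsilon}\,\d\upsilon$, you must \emph{not} let $\mathscr{T}_{N}$ act on $\mathbf{H}^{N}$ (which costs $N^{2}$, or equivalently a non-integrable $|\t-\r|^{-3/2}$). Because $\mathscr{T}_{N}$ commutes with its own semigroup, you may instead move $\mathscr{T}_{N}$ onto the companion kernel $\boldsymbol{\chi}^{(\zeta)}\cdot\mathrm{c}_{N,\cdots}\grad^{\mathbf{X}}_{\mathfrak{l}_{1}}\cdots\grad^{\mathbf{X}}_{\mathfrak{l}_{\m}}\mathscr{S}^{N}$. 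Since $\mathscr{S}^{N}$ and $\boldsymbol{\chi}^{(\zeta)}$ are both smooth on length-scale $\gtrsim N^{1-\delta_{\mathbf{S}}}$, the second-order operator $\mathscr{T}_{N}$ costs only $N^{2\delta_{\mathbf{S}}}$ rather than $N^{2}$; this is a saving of $N^{2-2\delta_{\mathbf{S}}}$ which is precisely what makes the drift contribution $\lesssim N^{-c\delta_{\mathbf{S}}}$ for small $\delta_{\mathbf{S}}$. Your parenthetical reference to ``heat-kernel smoothing over the mesoscopic window'' seems to gesture toward this but does not carry the arithmetic, and the number you wrote is the wrong one.

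A secondary, more routine gap: the supremum in \eqref{eq:lzetasimestimateII} is over $\x\in\Z$ (an infinite set), so the Chebyshev-plus-net upgrade cannot be applied directly over all $\x$. For $|\x|$ far outside the support of $\boldsymbol{\chi}^{(\zeta_{\mathrm{large}})}$, the kernel $\mathbf{L}^{N,\zeta,\sim}$ satisfies the pure $\mathscr{T}_{N}$-equation, so $\mathbf{L}^{N,\zeta,\sim}-\mathbf{H}^{N}$ is exponentially small by finite propagation speed of the discrete heat operator; this is needed to truncate the union bound to $|\x|\lesssim N^{\mathrm{D}}$ for some fixed $\mathrm{D}$ before applying the Chebyshev-plus-net argument. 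Without it, the union over $\x$ diverges.
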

Let us clarify the estimate \eqref{eq:lzetasimestimateII}. The squared {\small$\ell^{2}(\Z)$}-norm of the heat kernel {\small$\mathbf{H}^{N}_{\s,\t,\x,\y}$} is {\color{black}of order} {\small$\lesssim N^{-1}|\t-\s|^{-1/2}$}; see {Proposition \ref{prop:hke}}, which includes a similar estimate even after including the exponential weight from the \abbr{LHS} of \eqref{eq:lzetasimestimateII}. The estimate \eqref{eq:lzetasimestimateII} asserts that {\small$\mathbf{L}^{N,\zeta,\sim}$} is perturbative (around {\small$\mathbf{H}^{N}$}) in this {\small$\ell^{2}(\Z)$}-norm topology, since the singular factor {\small$|\t-\s|^{-1/2}$} does not appear in \eqref{eq:lzetasimestimateII}.
\subsection{The main estimates for {\small$\mathbf{K}^{N,\zeta,\sim}$}}
We now use the {\small$\mathbf{L}^{N,\zeta,\sim}$} kernel to present the following Duhamel formula. (Because it is not quite the classical Duhamel formula, we will justify it carefully. One goal of Lemma \ref{lemma:lkzeta} below is to establish convenient notation for forthcoming analysis.)
\begin{lemma}\label{lemma:lkzeta}
\fsp For any {\small$(\s,\t,\x,\y)\in[0,\infty)^{2}\times\Z^{2}$} with {\small$\s\leq\t$}, we have
\begin{align}
\mathbf{K}^{N,\zeta,\sim}_{\s,\t,\x,\y}&=\mathbf{L}^{N,\zeta,\sim}_{\s,\t,\x,\y}+\int_{\s}^{\t}\sum_{\w\in\Z}\mathbf{L}^{N,\zeta,\sim}_{\r,\t,\x,\w}\cdot\boldsymbol{\Phi}_{\r}[\mathbf{K}^{N,\zeta,\sim}_{\s,\r,\cdot,\y}]_{\w}\d\r,\label{eq:lkzetaIa}
\end{align}
where, for convenience, {\small$\boldsymbol{\Phi}_{\r}$} is the operator acting on {\small$\w\mapsto\mathbf{K}^{N,\zeta}_{\s,\r,\w,\y}$} arising from the second line in \eqref{eq:kzetasim-sde}:
\begin{align}
\boldsymbol{\Phi}_{\r}[\mathbf{K}^{N,\zeta,\sim}_{\s,\r,\cdot,\y}]_{\w}&:=N\boldsymbol{\chi}^{(\zeta)}_{\x}{\ocolor{red}\mathbf{1}_{\mathfrak{t}_{\mathrm{ap}}\geq\r}}\sum_{\substack{\n=1,\ldots,\mathrm{K}\\\m=0,\ldots,\mathrm{M}\\0<|\mathfrak{l}_{1}|,\ldots,|\mathfrak{l}_{\m}|\lesssim \mathfrak{n}_{\mathbf{Av}}}}\tfrac{1}{|\mathfrak{l}_{1}|\ldots|\mathfrak{l}_{\m}|}\boldsymbol{\Phi}_{\r}^{\n,\m,\mathfrak{l}_{1},\ldots,\mathfrak{l}_{\m}}[\mathbf{K}^{N,\zeta,\sim}_{\s,\r,\cdot,\y}]_{\w},\label{eq:lkzetaIb}\\
\boldsymbol{\Phi}_{\r}^{\n,\m,\mathfrak{l}_{1},\ldots,\mathfrak{l}_{\m}}[\mathbf{K}^{N,\zeta,\sim}_{\s,\r,\cdot,\y}]_{\w}&:=\mathrm{c}_{N,\n,\mathfrak{l}_{1},\ldots,\mathfrak{l}_{\m}}\grad^{\mathbf{X}}_{\mathfrak{l}_{1}}\ldots\grad^{\mathbf{X}}_{\mathfrak{l}_{\m}}[\mathscr{S}^{N}\star(\mathbf{Av}^{\mathbf{T},\mathbf{X},\mathfrak{q}_{\n}}_{\r,\cdot}\cdot\mathbf{R}^{N}_{\t,\cdot}\mathbf{K}^{N,\zeta,\sim}_{\s,\r,\cdot,\y})]_{\w}.\label{eq:lkzetaIc}
\end{align}
\end{lemma}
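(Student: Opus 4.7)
The identity \eqref{eq:lkzetaIa} is the variation-of-constants (Duhamel) formula for the linear SDE \eqref{eq:kzetasim-sde}: we split the linear operator on the right-hand side of \eqref{eq:kzetasim-sde} into the part that drives \eqref{eq:lzetasim-sde} (namely lines one, three and four of \eqref{eq:kzetasim-sde}) together with the remaining middle line, which is precisely $\boldsymbol{\Phi}_{\r}[\mathbf{K}^{N,\zeta,\sim}_{\s,\r,\cdot,\y}]_{\w}\,\d\r$. Since $\mathbf{L}^{N,\zeta,\sim}$ is by construction the fundamental solution to \eqref{eq:lzetasim-sde}, the identity \eqref{eq:lkzetaIa} is then the expected Duhamel expansion. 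My plan is to make this rigorous via a Picard iteration that matches the RHS of \eqref{eq:lkzetaIa} with $\mathbf{K}^{N,\zeta,\sim}$.

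Concretely, I will set $\wt{\mathbf{K}}^{(0)}_{\s,\t,\x,\y}:=\mathbf{L}^{N,\zeta,\sim}_{\s,\t,\x,\y}$ and iterate
\[
\wt{\mathbf{K}}^{(\n+1)}_{\s,\t,\x,\y}:=\mathbf{L}^{N,\zeta,\sim}_{\s,\t,\x,\y}+\int_{\s}^{\t}\sum_{\w\in\Z}\mathbf{L}^{N,\zeta,\sim}_{\r,\t,\x,\w}\cdot\boldsymbol{\Phi}_{\r}[\wt{\mathbf{K}}^{(\n)}_{\s,\r,\cdot,\y}]_{\w}\,\d\r.
\]
Two observations make this iteration tractable. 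First, each factor entering $\boldsymbol{\Phi}_{\r}$ is deterministically polynomial-in-$N$ bounded on $\{\mathfrak{t}_{\mathrm{ap}}\geq\r\}$: the coefficients $\mathbf{Av}^{\mathbf{T},\mathbf{X},\mathfrak{q}_{\n}}_{\r,\cdot}$ and $\mathbf{R}^{N}_{\r,\cdot}$ are controlled through \eqref{eq:tap1}--\eqref{eq:tap3}, while the constants $\mathrm{c}_{N,\n,\mathfrak{l}_{1},\ldots,\mathfrak{l}_{\m}}$ are $\mathrm{O}(N)$ and the remaining operators ($\mathscr{S}^N$ and finitely many discrete gradients) contribute only polynomial factors. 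Second, the spatial cutoff $\boldsymbol{\chi}^{(\zeta)}$ restricts the $\w$-summation to $|\w|\lesssim N^{1+\zeta}$, so $\boldsymbol{\Phi}_\r$ is a uniformly bounded linear operator on the weighted $\ell^\infty(\Z)$-space carrying the weight $\exp(\kappa|\cdot|/N)$ that appears in \eqref{eq:stochheatI}. The factor $(\t-\s)^\n/\n!$ gained at each Picard step then produces a contraction, giving $\wt{\mathbf{K}}^{(\n)}\to\wt{\mathbf{K}}$ in this norm. Differentiating the limiting integral equation in $\t$ and invoking the SDE \eqref{eq:lzetasim-sde} satisfied by $\mathbf{L}^{N,\zeta,\sim}$ then shows that $\wt{\mathbf{K}}$ solves \eqref{eq:kzetasim-sde} with initial condition $\mathbf{1}_{\x=\y}$. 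Uniqueness of path-wise solutions to \eqref{eq:kzetasim-sde} (which is the same Picard argument used to establish Lemma \ref{lemma:kzetamodify}) then forces $\wt{\mathbf{K}}=\mathbf{K}^{N,\zeta,\sim}$, proving \eqref{eq:lkzetaIa}.

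The main technical obstacle will be the stochastic Fubini step: verifying that applying the Itô differential $d_\t$ to the iterated integral expression commutes with the $\r$-integration and the $\w$-summation, so that the differential of the RHS of \eqref{eq:lkzetaIa} truly matches \eqref{eq:kzetasim-sde} term by term. This amounts to producing uniform-in-$\r$ moment bounds for $\mathbf{L}^{N,\zeta,\sim}$, its spatial gradients, and its stochastic differential, weighted by $\exp(\kappa|\cdot|/N)$. The indicator $\mathbf{1}_{\mathfrak{t}_{\mathrm{ap}}\geq\r}$ renders the non-noise coefficients in \eqref{eq:lzetasim-sde} deterministically polynomial-in-$N$ bounded, so a standard moment estimate for lattice SHEs, combined with Lemma \ref{lemma:lzetasimestimate} and the heat-kernel bounds of Proposition \ref{prop:hke}, supply the needed uniform control. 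Once these estimates are in hand, the exchanges of limits are routine and \eqref{eq:lkzetaIa} follows.
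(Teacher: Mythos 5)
Your core idea is the same as the paper's: show that the right-hand side of \eqref{eq:lkzetaIa} solves the \abbr{SDE} \eqref{eq:kzetasim-sde} with initial data $\mathbf{1}_{\x=\y}$ by differentiating in $\t$ (picking up a boundary term from the upper limit, which evaluates using $\mathbf{L}^{N,\zeta,\sim}_{\t,\t,\x,\w}=\mathbf{1}_{\x=\w}$, plus an interior contribution from $\d\mathbf{L}^{N,\zeta,\sim}_{\r,\t,\x,\w}$ and linearity of \eqref{eq:lzetasim-sde}), and then conclude by the uniqueness of Remark \ref{remark:kzetasim-unique}. Where you differ is the Picard-iteration scaffold: you build a new fixed point $\wt{\mathbf{K}}$ of the integral map $\wt{\mathbf{K}}\mapsto\mathbf{L}^{N,\zeta,\sim}+\int\mathbf{L}^{N,\zeta,\sim}\boldsymbol{\Phi}[\wt{\mathbf{K}}]$, verify that $\wt{\mathbf{K}}$ solves \eqref{eq:kzetasim-sde}, deduce $\wt{\mathbf{K}}=\mathbf{K}^{N,\zeta,\sim}$, and only then recover \eqref{eq:lkzetaIa}. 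The paper short-circuits this: because $\mathbf{K}^{N,\zeta,\sim}$ is already a given (constructed) object, the \abbr{RHS} of \eqref{eq:lkzetaIa} is also already a well-defined process, so one can simply apply the $\t$-differential to it and compare against \eqref{eq:kzetasim-sde}. Your route is logically valid, but the Picard contraction estimate you describe (giving the $(\t-\s)^{\n}/\n!$ gain) is extra work; the uniqueness step already demands comparable operator bounds on $\boldsymbol{\Phi}_\r$, so the more direct approach saves a non-trivial argument without sacrificing anything. Your identification of the stochastic-Fubini issue as the main technical step (and its resolution through the $\mathfrak{t}_{\mathrm{ap}}$ cutoff together with the $\mathbf{L}^{N,\zeta,\sim}$-estimates from Lemma \ref{lemma:lzetasimestimate} and Proposition \ref{prop:hke}) is exactly the right place to spend care, and it is a concern shared by both routes.
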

We will eventually use Lemma \ref{lemma:lkzeta} to establish the following estimate for {\small$\mathbf{K}^{N,\zeta,\sim}$} \emph{for short times}.
\begin{lemma}\label{lemma:kzetaestimate-short}
\fsp Fix any {\small$\delta>0$} (independent of {\small$N$}). There exist constants {\small$\beta,\kappa>0$} (also independent of {\small$N$}) and a high probability event on which the following holds for any {\small$\s,\t\in[0,1]$} with {\small$\s\leq\t$} and {\small$|\t-\s|\leq N^{\beta}N^{-\zeta}$}:
\begin{align}
\sup_{\x\in\Z}\sum_{\y\in\Z}\exp(\tfrac{\kappa|\x-\y|}{N})\cdot|\mathbf{K}^{N,\zeta,\sim}_{\s,\t,\x,\y}|&\lesssim N^{\delta}.\label{eq:kzetaestimate-shortI}
\end{align}
\end{lemma}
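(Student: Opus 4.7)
The strategy is to run Gronwall's inequality on the weighted $\ell^{1}$-norm
\[
M(\t) \; := \; \sup_{\s \leq \r \leq \t} \; \sup_{\x \in \Z} \; \sum_{\y \in \Z} \exp\!\Big(\tfrac{\kappa|\x-\y|}{N}\Big) \cdot |\mathbf{K}^{N,\zeta,\sim}_{\s,\r,\x,\y}|,
\]
iterating the Duhamel identity from Lemma \ref{lemma:lkzeta}. Throughout, I restrict to the high-probability event on which Lemma \ref{lemma:lzetasimestimate} holds and on which the stopping time from \eqref{eq:tap1}--\eqref{eq:tap3} satisfies $\mathfrak{t}_{\mathrm{ap}} \geq 1$; by Lemmas \ref{lemma:rbound}, \ref{lemma:phibound}, \ref{lemma:avldp}, and \ref{lemma:avldptx}, this is a high-probability event. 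First I would dispose of the free term: using Lemma \ref{lemma:lzetasimestimate}, the exponentially-decaying heat-kernel bounds from Proposition \ref{prop:hke} for $\mathbf{H}^{N}$, and Cauchy--Schwarz against the essential (polynomially large in $N$) support of $\mathbf{L}^{N,\zeta,\sim}$, I would upgrade the $\ell^{2}$-bound of Lemma \ref{lemma:lzetasimestimate} into the weighted $\ell^{1}$-bound
\[
\sup_{0 \leq \r \leq \t \leq 1} \sup_{\x \in \Z} \sum_{\w \in \Z} \exp\!\Big(\tfrac{2\kappa|\x-\w|}{N}\Big) \cdot |\mathbf{L}^{N,\zeta,\sim}_{\r,\t,\x,\w}| \; \lesssim \; 1.
\]

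Next, I would estimate the operator $\boldsymbol{\Phi}_{\r}$ defined in \eqref{eq:lkzetaIb}--\eqref{eq:lkzetaIc}. Since $\mathscr{S}^{N} \star$ commutes with every lattice gradient, one may transfer the product $\grad^{\mathbf{X}}_{\mathfrak{l}_{1}} \cdots \grad^{\mathbf{X}}_{\mathfrak{l}_{\m}}$ onto $\mathscr{S}^{N}$; the resulting kernel has weighted $\ell^{1}$-norm controlled by $|\mathfrak{l}_{1}| \cdots |\mathfrak{l}_{\m}| \cdot N^{-\m(1-\delta_{\mathbf{S}})}$ on its support $\{|\z| \lesssim N^{1-\delta_{\mathbf{S}}}\}$, where the exponential weight only contributes $O(1)$. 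The weights $1/(|\mathfrak{l}_{1}| \cdots |\mathfrak{l}_{\m}|)$ appearing in \eqref{eq:lkzetaIc} then cancel these gradient-length factors, and in the borderline case where some $|\mathfrak{l}_{j}| \leq 1$ (so that $|\mathrm{c}_{N,\n,\mathfrak{l}_{1},\ldots,\mathfrak{l}_{\m}}| \lesssim N$ rather than $O(1)$), the compensating factor $N^{-1+\delta_{\mathbf{S}}}$ produced by a short-length gradient acting on $\mathscr{S}^{N}$ re-absorbs the $O(N)$. Combining this with the stopping-time bounds $|\mathbf{Av}^{\mathbf{T},\mathbf{X},\mathfrak{q}_{\n}}_{\r,\cdot}| \lesssim N^{-1/2+\delta_{\mathbf{S}}}$ from \eqref{eq:tap3} and $|\mathbf{R}^{N}_{\r,\cdot}| \lesssim 1$ from \eqref{eq:tap1}, and using the triangle inequality $|\w-\y| \leq |\w-\z|+|\z-\y|$ to distribute the exponential weight across the $\z$-convolution, I would deduce
\[
\sum_{\y \in \Z} \exp\!\Big(\tfrac{\kappa|\w-\y|}{N}\Big) \cdot |\boldsymbol{\Phi}_{\r}[\mathbf{K}^{N,\zeta,\sim}_{\s,\r,\cdot,\y}]_{\w}| \; \lesssim \; N^{\tfrac12 + \mathrm{C}\delta_{\mathbf{S}}} \cdot M(\r),
\]
the prefactor arising from the external $N$ in \eqref{eq:lkzetaIb} paired with the stopping-time bound on $\mathbf{Av}^{\mathbf{T},\mathbf{X},\mathfrak{q}_{\n}}$. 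Summing over the $O(1)$-many admissible tuples $(\n,\m,\mathfrak{l}_{1},\ldots,\mathfrak{l}_{\m})$ costs only logarithmic factors, which I absorb into $N^{\mathrm{C}\delta_{\mathbf{S}}}$.

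Substituting the two preceding displays into the Duhamel identity \eqref{eq:lkzetaIa} and splitting $\exp(\kappa|\x-\y|/N) \leq \exp(\kappa|\x-\w|/N) \exp(\kappa|\w-\y|/N)$ to decouple the weights yields the integral inequality
\[
M(\t) \; \lesssim \; 1 + N^{\tfrac12 + \mathrm{C}\delta_{\mathbf{S}}} \int_{\s}^{\t} M(\r) \, \d\r,
\]
and Gronwall gives $M(\t) \lesssim \exp\!\big(\mathrm{C}' N^{\tfrac12+\mathrm{C}\delta_{\mathbf{S}}} (\t-\s)\big)$. Taking $\beta > 0$ small enough (and $\delta_{\mathbf{S}}$ correspondingly small) so that the restriction $|\t-\s| \leq N^{\beta}N^{-\zeta}$ forces $N^{\tfrac12+\mathrm{C}\delta_{\mathbf{S}}}(\t-\s) \leq \delta \log N$, I conclude $M(\t) \lesssim N^{\delta}$ as required. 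The principal difficulty is the operator bound on $\boldsymbol{\Phi}_{\r}$ in the previous step: the large prefactor $N$ in \eqref{eq:lkzetaIb} must be balanced against the gradient cancellations (via the weights $1/|\mathfrak{l}_{j}|$ and the $\ell^{1}$-estimates on $\grad^{\mathbf{X}}_{\mathfrak{l}} \mathscr{S}^{N}$), the compensation of the possibly $O(N)$ coefficients $\mathrm{c}_{N,\n,\mathfrak{l}_{1},\ldots,\mathfrak{l}_{\m}}$ in the short-gradient regime, and the stopping-time bound on $\mathbf{Av}^{\mathbf{T},\mathbf{X},\mathfrak{q}_{\n}}$, so that the net multiplier in the Gronwall loop is $N^{1/2+o(1)}$ rather than the naive $N^{1+o(1)}$.
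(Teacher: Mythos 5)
Your framing — write the Duhamel identity from Lemma \ref{lemma:lkzeta}, control the free term via Lemma \ref{lemma:lzetasimestimate} and Proposition \ref{prop:hke}, push the gradients onto $\mathscr{S}^{N}$, distribute the exponential weight with the triangle inequality, and close with a Gronwall-type loop — is the right skeleton and matches the paper's architecture. The gap is in how you bound $\boldsymbol{\Phi}_{\r}$, and it is fatal for small $\zeta$.

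You control $\boldsymbol{\Phi}_{\r}$ using only the pointwise stopping-time bound $|\mathbf{Av}^{\mathbf{T},\mathbf{X},\mathfrak{q}_{\n}}_{\r,\cdot}|\lesssim N^{-1/2+\delta_{\mathbf{S}}}$ from \eqref{eq:tap3}. Paired with the external factor of $N$ in \eqref{eq:lkzetaIb}, this gives a Gronwall multiplier of order $N^{1/2+\mathrm{C}\delta_{\mathbf{S}}}$, so $M(\t)\lesssim\exp\big(\mathrm{C}'N^{1/2+\mathrm{C}\delta_{\mathbf{S}}}(\t-\s)\big)$. For this to be $\lesssim N^{\delta}$ under the constraint $|\t-\s|\leq N^{\beta}N^{-\zeta}$, you would need $N^{1/2+\mathrm{C}\delta_{\mathbf{S}}+\beta-\zeta}\lesssim\delta\log N$, i.e.\ $\zeta>1/2+\beta+\mathrm{C}\delta_{\mathbf{S}}$. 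But $\beta$ must be a fixed positive constant independent of $\zeta$, and the entire iterative scheme in Lemma \ref{lemma:stochheat} (via Lemma \ref{lemma:kzetaestimate-long}) requires Lemma \ref{lemma:kzetaestimate-short} for every $\zeta$ in Notation \ref{notation:zeta}, down to $\zeta_{\mathrm{final}}\in[\e_{\mathrm{cut}},2\e_{\mathrm{cut}}]$ with $\e_{\mathrm{cut}}>0$ arbitrarily small. For such $\zeta$, your short-time window $N^{\beta-\zeta}$ is large, your exponential $\exp(\mathrm{C}'N^{1/2+\beta-\zeta+\ldots})$ is superpolynomial, and the iteration in Lemma \ref{lemma:kzetaestimate-long} would produce $\exp(\mathrm{C}N^{1/2+o(1)})$ rather than $\exp(N^{\zeta-\beta/2})N^{\delta}$. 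So the pointwise bound is genuinely too weak; it misses the essential cancellation.

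The paper's proof does \emph{not} rely on the pointwise bound for $\mathbf{Av}^{\mathbf{T},\mathbf{X},\mathfrak{q}_{\n}}$. It instead uses the $L^{2}$-in-space-and-time estimate \eqref{eq:kzetaestimate-shortI-key}, which comes from the Kipnis--Varadhan second-moment bound of Proposition \ref{prop:stestimate}: the integrated-in-time weighted $\ell^{2}$-norm of $\mathbf{Av}^{\mathbf{T},\mathbf{X},\mathfrak{q}_{\n}}$ over $|\w|\lesssim N^{1+\zeta}$ is $\lesssim N^{-2-\beta_{\star}}N^{1+\zeta}$, a full $N^{-\beta_{\star}}$ better than what one gets by squaring the pointwise bound and summing. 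The proof splits $\mathbf{L}^{N,\zeta,\sim}$ into $\mathbf{H}^{N}+(\mathbf{L}^{N,\zeta,\sim}-\mathbf{H}^{N})$ and applies Cauchy--Schwarz in the $\d\r$-integration to pair this $L^{2}$-in-time bound against the $L^{2}$-in-time regularity of the heat kernel (for the $\mathbf{H}^{N}$ part, via the kernel $\boldsymbol{\Theta}$ and \eqref{eq:kzetaestimate-shortI4d}; for the $\mathbf{L}^{N,\zeta,\sim}-\mathbf{H}^{N}$ part, via \eqref{eq:lzetasimestimateII}). The factor $|\t-\s|^{1/2}\lesssim N^{\beta/2}N^{-\zeta/2}$ then cancels the $N^{\zeta/2}$ coming from the $\sum_{|\w|\lesssim N^{1+\zeta}}$, and the leftover multiplier is $N^{-\beta_{\star}/2+o(1)}\ll 1$. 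So the closing step is not an exponential Gronwall at all; it is a one-step absorption, $M(\t)\lesssim N^{\delta/2}+N^{-\beta_{\star}/3}M(\t)$, and one divides by $1-N^{-\beta_{\star}/3}$. This absorption works for \emph{every} $\zeta$, which is what the iteration demands. To fix your proof, replace the pointwise bound on $\mathbf{Av}^{\mathbf{T},\mathbf{X},\mathfrak{q}_{\n}}$ by Cauchy--Schwarz in time against the $L^{2}$-in-time-and-space estimate \eqref{eq:kzetaestimate-shortI-key} — this is the precise place where the CLT-type cancellation of \eqref{eq:heuristic2}, the central stochastic input of the paper, enters the heat-kernel argument.
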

We also need estimates for {\small$\mathbf{K}^{N,\zeta,\sim}$} beyond the ``short-time" regime, i.e. {\small$|\t-\s|\leq N^{\beta}N^{-\zeta}$}. (Indeed, we think of {\small$\beta$} as small, and {\small$\zeta$} is not necessarily small.) We record such an estimate below, which ultimately follows from \eqref{eq:kzetaestimate-shortI} and a repeated application of the Chapman-Kolmogorov equations for {\small$\mathbf{K}^{N,\zeta,\sim}$}.
\begin{lemma}\label{lemma:kzetaestimate-long}
\fsp Retain the setting of Lemma \ref{lemma:kzetaestimate-short}, but forget the constraint that {\small$|\t-\s|\leq N^{\beta}N^{-\zeta}$}. On the same high probability event as in Lemma \ref{lemma:kzetaestimate-short}, we have 
\begin{align}
\sup_{0\leq\s\leq\t\leq1}\sup_{\x\in\Z}\sum_{\y\in\Z}\exp(\tfrac{\kappa|\x-\y|}{N})|\mathbf{K}^{N,\zeta,\sim}_{\s,\t,\x,\y}|\lesssim \exp(N^{-\frac12\beta}N^{\zeta})N^{\delta}.\label{eq:kzetaestimate-longI}
\end{align}
\end{lemma}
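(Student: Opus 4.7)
The plan is to iterate a Chapman--Kolmogorov semigroup identity for $\mathbf{K}^{N,\zeta,\sim}$ on a partition of $[\s,\t]$ into subintervals of length at most $N^{\beta}N^{-\zeta}$, and to apply Lemma \ref{lemma:kzetaestimate-short} to each factor. The first input is the semigroup identity
\begin{align*}
\mathbf{K}^{N,\zeta,\sim}_{\s,\t,\x,\y}=\sum_{\z\in\Z}\mathbf{K}^{N,\zeta,\sim}_{\r,\t,\x,\z}\cdot\mathbf{K}^{N,\zeta,\sim}_{\s,\r,\z,\y},\quad \s\leq\r\leq\t.
\end{align*}
This follows from linearity of \eqref{eq:kzetasim-sde} in its solution and path-wise uniqueness (given by Lemma \ref{lemma:kzetamodify}): both sides, viewed as functions of $(\t,\x)$ for $\t\geq\r$, solve \eqref{eq:kzetasim-sde} with common initial data $\mathbf{K}^{N,\zeta,\sim}_{\s,\r,\cdot,\y}$ at time $\r$; here it is crucial that the backwards-in-time averages $\mathbf{Av}^{\mathbf{T},\mathbf{X},\mathfrak{q}_{\n}}$, the multiplicative factors $\mathbf{R}^{N},\mathbf{R}^{N,\wedge}$, and the indicator $\mathbf{1}_{\mathfrak{t}_{\mathrm{ap}}\geq\t}$ are all external data that do not depend on the kernel being propagated.

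Next, given $0\leq\s\leq\t\leq1$, I will set $\mathfrak{t}_{\mathrm{step}}:=N^{\beta}N^{-\zeta}$ and $\mathrm{N}_{0}:=\lceil(\t-\s)/\mathfrak{t}_{\mathrm{step}}\rceil\lesssim N^{\zeta-\beta}$, and choose a subdivision $\s=\t_{0}<\t_{1}<\cdots<\t_{\mathrm{N}_{0}}=\t$ with $\t_{\i}-\t_{\i-1}\leq\mathfrak{t}_{\mathrm{step}}$. Iterating the semigroup identity gives
\begin{align*}
\mathbf{K}^{N,\zeta,\sim}_{\s,\t,\x,\y}=\sum_{\z_{1},\ldots,\z_{\mathrm{N}_{0}-1}\in\Z}\mathbf{K}^{N,\zeta,\sim}_{\t_{\mathrm{N}_{0}-1},\t,\x,\z_{\mathrm{N}_{0}-1}}\cdots\mathbf{K}^{N,\zeta,\sim}_{\t_{0},\t_{1},\z_{1},\y}.
\end{align*}
The triangle inequality $|\x-\y|\leq|\x-\z_{\mathrm{N}_{0}-1}|+\cdots+|\z_{1}-\y|$ factorizes the exponential weight on the \abbr{LHS} of \eqref{eq:kzetaestimate-longI}, so that summing out $\y$, then $\z_{1}$, then $\z_{2}$, and so on, and applying \eqref{eq:kzetaestimate-shortI} (valid on the same high-probability event because each step has length $\leq\mathfrak{t}_{\mathrm{step}}$) yields, for some $\mathrm{C}=\mathrm{O}(1)$,
\begin{align*}
\sup_{\x\in\Z}\sum_{\y\in\Z}\exp(\tfrac{\kappa|\x-\y|}{N})|\mathbf{K}^{N,\zeta,\sim}_{\s,\t,\x,\y}|\leq(\mathrm{C}N^{\delta})^{\mathrm{N}_{0}}.
\end{align*}

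Finally, I will convert this into the claimed bound by noting that $\mathrm{N}_{0}\leq N^{\zeta-\beta}+1$ and that $\log\mathrm{C}+\delta\log N\leq N^{\beta/2}$ for all $N$ sufficiently large, so that
\begin{align*}
(\mathrm{C}N^{\delta})^{\mathrm{N}_{0}}=\exp\bigl(\mathrm{N}_{0}(\log\mathrm{C}+\delta\log N)\bigr)\leq\exp(N^{\zeta-\beta/2}),
\end{align*}
which is stronger than the \abbr{RHS} of \eqref{eq:kzetaestimate-longI} (the additional $N^{\delta}$ slack is used to absorb the implicit constants uniformly in $\s,\t$). The only genuinely delicate point in carrying this out, and the place where care is required, is the verification of the Chapman--Kolmogorov identity itself: since \eqref{eq:kzetasim-sde} contains the backwards time-average $\mathbf{Av}^{\mathbf{T},\mathbf{X},\mathfrak{q}_{\n}}$ and the stopping-time indicator $\mathbf{1}_{\mathfrak{t}_{\mathrm{ap}}\geq\t}$, one must ensure that these are treated as prescribed time-dependent coefficients (independent of the kernel being propagated), so that linearity plus pathwise uniqueness of \eqref{eq:kzetasim-sde} do in fact yield the desired semigroup property. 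Everything else is a clean iteration.
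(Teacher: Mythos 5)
Your proposal is correct and follows essentially the same route as the paper's proof: the semigroup (Chapman--Kolmogorov) identity established by linearity of \eqref{eq:kzetasim-sde} plus pathwise uniqueness (Remark \ref{remark:kzetasim-unique}), iterated over a partition of $[\s,\t]$ into $\mathrm{O}(N^{\zeta-\beta})$ subintervals of length at most $N^{\beta}N^{-\zeta}$, with the exponential weight factorized by the triangle inequality and Lemma \ref{lemma:kzetaestimate-short} applied to each factor. Your closing observation about why the CK identity is legitimate (the $\mathbf{Av}$, $\mathbf{R}^{N}$, $\mathbf{R}^{N,\wedge}$, and $\mathbf{1}_{\mathfrak{t}_{\mathrm{ap}}\geq\t}$ factors are prescribed time-dependent coefficients, not functionals of the kernel being propagated) is exactly the point the paper relies on, so there is no gap.
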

\subsection{Proof of Proposition \ref{prop:stochheat}}
Combine Lemmas \ref{lemma:kzetamodify} and \ref{lemma:kzetaestimate-long}. \qed
\subsection{Proofs of the lemmas}
Let us now prove Lemmas \ref{lemma:kzetamodify}, \ref{lemma:lzetasimestimate}, \ref{lemma:lkzeta}, \ref{lemma:kzetaestimate-short}, and \ref{lemma:kzetaestimate-long}. We defer the proof of Lemma \ref{lemma:lzetasimestimate} to the end, since it is the most technically involved and fairly standard. Otherwise, we proceed in order.
\subsubsection{Proof of Lemma \ref{lemma:kzetamodify}}
Existence of continuous path-wise solutions to \eqref{eq:kzetasim-sde} follows from a standard Picard iteration as discussed after \eqref{eq:szeta-sde} and \eqref{eq:kzeta-sde}. Thus, it remains to show that the identity {\small$\mathbf{K}^{N,\zeta,\sim}_{\s,\t,\x,\y}=\mathbf{K}^{N,\zeta}_{\s,\t,\x,\y}$} holds with high probability. Consider the following difference of kernels stopped at {\small$\mathfrak{t}_{\mathrm{ap}}$}:
\begin{align}
\mathbf{U}^{N,\zeta}_{\s,\t,\x,\y}:=\mathbf{K}^{N,\zeta}_{\s,\t\wedge(\mathfrak{t}_{\mathrm{ap}}\vee\s),\x,\y}-\mathbf{K}^{N,\zeta,\sim}_{\s,\t\wedge(\mathfrak{t}_{\mathrm{ap}}\vee\s),\x,\y}.\label{eq:kzetamodifyI1}
\end{align}
We clarify that we stop the kernels at time {\small$\mathfrak{t}_{\mathrm{ap}}$} (but we also restrict to times after {\small$\s$}, hence the additional {\small$\vee\s$} on the \abbr{RHS}). Linearity of the \abbr{SDE}s \eqref{eq:kzeta-sde} and \eqref{eq:kzetasim-sde} (and their agreement before {\small$\mathfrak{t}_{\mathrm{ap}}$}) shows that {\small$\mathbf{U}^{N,\zeta}$} solves the same equation but with zero initial data for times {\small$\t\in[\s,\mathfrak{t}_{\mathrm{ap}}\vee\s]$}. Namely, for such {\small$\t$}, we have 
\begin{align}
&\d\mathbf{U}^{N,\zeta}_{\s,\t,\x,\y}=\mathscr{T}_{N}\mathbf{U}^{N,\zeta}_{\s,\t,\x,\y}\d\t+{\boldsymbol{\chi}^{(\zeta_{\mathrm{large}})}_{\x}}[\mathscr{S}^{N}\star(\sqrt{2}\lambda N^{\frac12}\mathbf{R}^{N,\wedge}_{\t,\cdot}\mathbf{U}^{N,\zeta}_{\s,\t,\cdot,\y}\d\mathbf{b}_{\t,\cdot})]_{\x}\nonumber\\
&+N\boldsymbol{\chi}^{(\zeta)}_{\x}{\ocolor{red}\mathbf{1}_{\mathfrak{t}_{\mathrm{ap}}\geq\t}}\sum_{\substack{\n=1,\ldots,\mathrm{K}\\\m=0,\ldots,\mathrm{M}\\0<|\mathfrak{l}_{1}|,\ldots,|\mathfrak{l}_{\m}|\lesssim \mathfrak{n}_{\mathbf{Av}}}}\tfrac{1}{|\mathfrak{l}_{1}|\ldots|\mathfrak{l}_{\m}|}\mathrm{c}_{N,\n,\mathfrak{l}_{1},\ldots,\mathfrak{l}_{\m}}\grad^{\mathbf{X}}_{\mathfrak{l}_{1}}\ldots\grad^{\mathbf{X}}_{\mathfrak{l}_{\m}}[\mathscr{S}^{N}\star(\mathbf{Av}^{\mathbf{T},\mathbf{X},\mathfrak{q}_{\n}}_{\t,\cdot}\cdot\mathbf{R}^{N}_{\t,\cdot}\mathbf{U}^{N,\zeta}_{\s,\t,\cdot,\y})]_{\x}\d\t\nonumber\\
&+N\boldsymbol{\chi}^{(\zeta)}_{\x}\sum_{\substack{\n=1,\ldots,\mathrm{K}\\\m=0,\ldots,\mathrm{M}\\0<|\mathfrak{l}_{1}|,\ldots,|\mathfrak{l}_{\m}|\lesssim \mathfrak{n}_{\mathbf{Av}}}}\tfrac{1}{|\mathfrak{l}_{1}|\ldots|\mathfrak{l}_{\m}|}\mathrm{c}_{N,\n,\mathfrak{l}_{1},\ldots,\mathfrak{l}_{\m}}\grad^{\mathbf{T},\mathrm{av}}_{\mathfrak{t}_{\mathbf{Av}}}\grad^{\mathbf{X}}_{\mathfrak{l}_{1}}\ldots\grad^{\mathbf{X}}_{\mathfrak{l}_{\m}}\Big({\ocolor{red}\mathbf{1}_{\mathfrak{t}_{\mathrm{ap}}\geq\t}}[\mathscr{S}^{N}\star(\mathbf{Av}^{\mathbf{X},\mathfrak{q}_{\n}}_{\t,\cdot}\cdot\mathbf{R}^{N}_{\t,\cdot}\mathbf{U}^{N,\zeta}_{\s,\t,\cdot,\y})]_{\x}\Big)\d\t\nonumber\\
&+\boldsymbol{\chi}^{(\zeta)}_{\x}{\ocolor{red}\mathbf{1}_{\mathfrak{t}_{\mathrm{ap}}\geq\t}}\cdot\mathrm{Err}[\mathbf{R}^{N}_{\t,\cdot}\mathbf{U}^{N,\zeta}_{\s,\t,\cdot,\y}]_{\x}\d\t.\label{eq:uzeta-sde}
\end{align}
In particular, {\small$\mathbf{U}^{N,\zeta}$} solves a linear \abbr{SDE} with zero initial data; it is then standard to deduce that {\small$\mathbf{U}^{N,\zeta}_{\s,\t,\x,\y}=0$} for all {\small$\t\in[\s,\mathfrak{t}_{\mathrm{ap}}\vee\s]$}. Since {\small$\mathfrak{t}_{\mathrm{ap}}=1$} with high probability by {Lemmas \ref{lemma:rbound}, \ref{lemma:phibound}, \ref{lemma:avldp}, and \ref{lemma:avldptx}}, the proof is finished. \qed

\begin{remark}\label{remark:kzetasim-unique}
\fsp The same argument shows that solutions to \eqref{eq:kzetasim-sde} with the same initial data are equal.
\end{remark}
\subsubsection{Proof of Lemma \ref{lemma:lkzeta}}
Per Remark \ref{remark:kzetasim-unique}, it suffices to show that the \abbr{RHS} of \eqref{eq:lkzetaIa} satisfies the \abbr{SDE} \eqref{eq:kzetasim-sde}. (We note that its initial data at {\small$\t=\s$} agrees with {\small$\mathbf{K}^{N,\zeta,\sim}_{\s,\s,\x,\y}$}, since the integral on the \abbr{RHS} of \eqref{eq:lkzetaIa} vanishes at {\small$\t=\s$}.) To this end, let us take the {\small$\t$}-differential of the second term on the \abbr{RHS} of \eqref{eq:lkzetaIa} in {\small$\t$}. {\color{black}When we do this, we obtain two terms.} The first of these is from differentiating the upper limit of integration; this lets us remove the integral and set {\small$\r=\t$}. In particular, the first term that we get is equal to
\begin{align*}
\sum_{\w\in\Z}\mathbf{L}^{N,\zeta,\sim}_{\t,\t,\x,\w}\cdot\boldsymbol{\Phi}_{\t}[\mathbf{K}^{N,\zeta,\sim}_{\s,\t,\cdot,\y}]_{\w}\d\t&=\boldsymbol{\Phi}_{\t}[\mathbf{K}^{N,\zeta,\sim}_{\s,\t,\cdot,\y}]_{\x}\d\t,
\end{align*}
where the second identity follows by {\small$\mathbf{L}^{N,\zeta,\sim}_{\t,\t,\x,\w}=\mathbf{1}_{\x=\w}$}. The second term that we get when we take the {\small$\t$}-differential of the second term on the \abbr{RHS} of \eqref{eq:lkzetaIa} comes from {\small$\d\mathbf{L}^{N,\zeta,\sim}_{\r,\t,\x,\w}$}, which we can evaluate via \eqref{eq:lzetasim-sde}. Moreover, the \abbr{SDE} \eqref{eq:lzetasim-sde} is linear in its solution {\small$\mathbf{L}^{N,\zeta,\sim}_{\s,\t,\cdot,\w}$} with coefficients depending only on {\small$\t$}. Thus, if we denote the second term on the \abbr{RHS} of \eqref{eq:lkzetaIa} by {\small$\mathbf{N}^{N,\zeta,\sim}_{\s,\t,\x,\y}$} (only for this proof), then the other term that we get is given by the following (which is just the \abbr{RHS} of \eqref{eq:lzetasim-sde} but replacing {\small$\mathbf{L}$} by {\small$\mathbf{N}$} everywhere):
\begin{align}
&\mathscr{T}_{N}\mathbf{N}^{N,\zeta,\sim}_{\s,\t,\x,\y}\d\t+{\ocolor{red}\mathbf{1}_{\mathfrak{t}_{\mathrm{ap}}\geq\t}}{\boldsymbol{\chi}^{(\zeta_{\mathrm{large}})}_{\x}}[\mathscr{S}^{N}\star(\sqrt{2}\lambda N^{\frac12}\mathbf{R}^{N,\wedge}_{\t,\cdot}\mathbf{N}^{N,\zeta,\sim}_{\s,\t,\cdot,\y}\d\mathbf{b}_{\t,\cdot})]_{\x}\nonumber\\
&+N\boldsymbol{\chi}^{(\zeta)}_{\x}\sum_{\substack{\n=1,\ldots,\mathrm{K}\\\m=0,\ldots,\mathrm{M}\\0<|\mathfrak{l}_{1}|,\ldots,|\mathfrak{l}_{\m}|\lesssim \mathfrak{n}_{\mathbf{Av}}}}\tfrac{1}{|\mathfrak{l}_{1}|\ldots|\mathfrak{l}_{\m}|}\mathrm{c}_{N,\n,\mathfrak{l}_{1},\ldots,\mathfrak{l}_{\m}}\grad^{\mathbf{T},\mathrm{av}}_{\mathfrak{t}_{\mathbf{Av}}}\grad^{\mathbf{X}}_{\mathfrak{l}_{1}}\ldots\grad^{\mathbf{X}}_{\mathfrak{l}_{\m}}\Big({\ocolor{red}\mathbf{1}_{\mathfrak{t}_{\mathrm{ap}}\geq\t}}[\mathscr{S}^{N}\star(\mathbf{Av}^{\mathbf{X},\mathfrak{q}_{\n}}_{\t,\cdot}\cdot\mathbf{R}^{N}_{\t,\cdot}\mathbf{N}^{N,\zeta,\sim}_{\s,\t,\cdot,\y})]_{\x}\Big)\d\t\nonumber\\
&+\boldsymbol{\chi}^{(\zeta)}_{\x}{\ocolor{red}\mathbf{1}_{\mathfrak{t}_{\mathrm{ap}}\geq\t}}\cdot\mathrm{Err}[\mathbf{R}^{N}_{\t,\cdot}\mathbf{N}^{N,\zeta,\sim}_{\s,\t,\cdot,\y}]_{\x}\d\t.\nonumber
\end{align}
The previous two displays yield the {\small$\t$}-differential for the second on term on the \abbr{RHS} of \eqref{eq:lkzetaIa}. If we evaluate the {\small$\t$}-differential of the first term on the \abbr{RHS} of \eqref{eq:lkzetaIa} via \eqref{eq:lzetasim-sde}, then we ultimately obtain that the {\small$\t$}-differential of the entire \abbr{RHS} of \eqref{eq:lkzetaIa} is given by
\begin{align}
&\mathscr{T}_{N}\mathbf{N}^{N,\zeta,\sim}_{\s,\t,\x,\y}\d\t+{\ocolor{red}\mathbf{1}_{\mathfrak{t}_{\mathrm{ap}}\geq\t}}{\boldsymbol{\chi}^{(\zeta_{\mathrm{large}})}_{\x}}[\mathscr{S}^{N}\star(\sqrt{2}\lambda N^{\frac12}\mathbf{R}^{N,\wedge}_{\t,\cdot}\mathbf{N}^{N,\zeta,\sim}_{\s,\t,\cdot,\y}\d\mathbf{b}_{\t,\cdot})]_{\x}\nonumber\\
&+N\boldsymbol{\chi}^{(\zeta)}_{\x}\sum_{\substack{\n=1,\ldots,\mathrm{K}\\\m=0,\ldots,\mathrm{M}\\0<|\mathfrak{l}_{1}|,\ldots,|\mathfrak{l}_{\m}|\lesssim \mathfrak{n}_{\mathbf{Av}}}}\tfrac{1}{|\mathfrak{l}_{1}|\ldots|\mathfrak{l}_{\m}|}\mathrm{c}_{N,\n,\mathfrak{l}_{1},\ldots,\mathfrak{l}_{\m}}\grad^{\mathbf{T},\mathrm{av}}_{\mathfrak{t}_{\mathbf{Av}}}\grad^{\mathbf{X}}_{\mathfrak{l}_{1}}\ldots\grad^{\mathbf{X}}_{\mathfrak{l}_{\m}}\Big({\ocolor{red}\mathbf{1}_{\mathfrak{t}_{\mathrm{ap}}\geq\t}}[\mathscr{S}^{N}\star(\mathbf{Av}^{\mathbf{X},\mathfrak{q}_{\n}}_{\t,\cdot}\cdot\mathbf{R}^{N}_{\t,\cdot}\mathbf{N}^{N,\zeta,\sim}_{\s,\t,\cdot,\y})]_{\x}\Big)\d\t\nonumber\\
&+\boldsymbol{\chi}^{(\zeta)}_{\x}{\ocolor{red}\mathbf{1}_{\mathfrak{t}_{\mathrm{ap}}\geq\t}}\cdot\mathrm{Err}[\mathbf{R}^{N}_{\t,\cdot}\mathbf{N}^{N,\zeta,\sim}_{\s,\t,\cdot,\y}]_{\x}\d\t\nonumber\\
&+\mathscr{T}_{N}\mathbf{L}^{N,\zeta,\sim}_{\s,\t,\x,\y}\d\t+{\ocolor{red}\mathbf{1}_{\mathfrak{t}_{\mathrm{ap}}\geq\t}}{\boldsymbol{\chi}^{(\zeta_{\mathrm{large}})}_{\x}}[\mathscr{S}^{N}\star(\sqrt{2}\lambda N^{\frac12}\mathbf{R}^{N,\wedge}_{\t,\cdot}\mathbf{L}^{N,\zeta,\sim}_{\s,\t,\cdot,\y}\d\mathbf{b}_{\t,\cdot})]_{\x}\nonumber\\
&+N\boldsymbol{\chi}^{(\zeta)}_{\x}\sum_{\substack{\n=1,\ldots,\mathrm{K}\\\m=0,\ldots,\mathrm{M}\\0<|\mathfrak{l}_{1}|,\ldots,|\mathfrak{l}_{\m}|\lesssim \mathfrak{n}_{\mathbf{Av}}}}\tfrac{1}{|\mathfrak{l}_{1}|\ldots|\mathfrak{l}_{\m}|}\mathrm{c}_{N,\n,\mathfrak{l}_{1},\ldots,\mathfrak{l}_{\m}}\grad^{\mathbf{T},\mathrm{av}}_{\mathfrak{t}_{\mathbf{Av}}}\grad^{\mathbf{X}}_{\mathfrak{l}_{1}}\ldots\grad^{\mathbf{X}}_{\mathfrak{l}_{\m}}\Big({\ocolor{red}\mathbf{1}_{\mathfrak{t}_{\mathrm{ap}}\geq\t}}[\mathscr{S}^{N}\star(\mathbf{Av}^{\mathbf{X},\mathfrak{q}_{\n}}_{\t,\cdot}\cdot\mathbf{R}^{N}_{\t,\cdot}\mathbf{L}^{N,\zeta,\sim}_{\s,\t,\cdot,\y})]_{\x}\Big)\d\t\nonumber\\
&+\boldsymbol{\chi}^{(\zeta)}_{\x}{\ocolor{red}\mathbf{1}_{\mathfrak{t}_{\mathrm{ap}}\geq\t}}\cdot\mathrm{Err}[\mathbf{R}^{N}_{\t,\cdot}\mathbf{L}^{N,\zeta,\sim}_{\s,\t,\cdot,\y}]_{\x}\d\t+\boldsymbol{\Phi}_{\t}[\mathbf{K}^{N,\zeta,\sim}_{\s,\t,\cdot,\y}]_{\x}\d\t.\nonumber
\end{align}
The \abbr{RHS} of \eqref{eq:lkzetaIa} is equal to {\small$\mathbf{L}^{N,\zeta,\sim}_{\s,\t,\x,\y}+\mathbf{N}^{N,\zeta,\sim}_{\s,\t,\x,\y}$}, so if we combine each term in the first three lines in the previous display with their {\small$\mathbf{L}^{N,\zeta,\sim}$}-counterparts in the last four lines above, then we deduce that the \abbr{RHS} of \eqref{eq:lkzetaIa} satisfies the same \abbr{SDE} \eqref{eq:kzetasim-sde} for {\small$\s\leq\t$} as {\small$\mathbf{K}^{N,\zeta,\sim}_{\s,\t,\x,\y}$}. As noted at the beginning of this proof, this shows \eqref{eq:lkzetaIa}.\qed
\subsubsection{Proof of Lemma \ref{lemma:kzetaestimate-short}}
Throughout this argument, every statement holds with high probability, even if it is not explicitly mentioned. Since the intersection of {\small$\mathrm{O}(1)$}-many high probability events also has high probability, the final conclusion holds with high probability. To estimate the \abbr{LHS} of \eqref{eq:kzetaestimate-shortI}, we will control the \abbr{RHS} of \eqref{eq:lkzetaIa} (after multiplying by the appropriate exponential weight and summing over {\small$\y\in\Z$}). We will first estimate {\small$\mathbf{L}^{N,\zeta,\sim}$}, the first term on the \abbr{RHS} of \eqref{eq:lkzetaIa}. First, recall the heat kernel {\small$\mathbf{H}^{N}$} for the {\small$\mathscr{T}_{N}$}-operator in \eqref{eq:heatkernel}. We write
\begin{align}
\sum_{\y\in\Z}\exp(\tfrac{\kappa|\x-\y|}{N})|\mathbf{L}^{N,\zeta,\sim}_{\s,\t,\x,\y}|&\leq\sum_{\y\in\Z}\exp(\tfrac{\kappa|\x-\y|}{N})|\mathbf{H}^{N}_{\s,\t,\x,\y}|+\sum_{\y\in\Z}\exp(\tfrac{\kappa|\x-\y|}{N})|\mathbf{L}^{N,\zeta,\sim}_{\s,\t,\x,\y}-\mathbf{H}^{N}_{\s,\t,\x,\y}|\nonumber\\
&\lesssim_{\kappa}1+\sum_{\y\in\Z}\exp(\tfrac{\kappa|\x-\y|}{N})|\mathbf{L}^{N,\zeta,\sim}_{\s,\t,\x,\y}-\mathbf{H}^{N}_{\s,\t,\x,\y}|,\nonumber
\end{align}
where the second estimate follows from heat kernel estimates in {Proposition \ref{prop:hke}}. For the second term in the last line above, we apply Cauchy-Schwarz with respect to the space-summation, and then we use \eqref{eq:lzetasimestimateII}. This implies that for any {\small$\delta>0$} fixed, the following holds with high probability:
\begin{align}
\sum_{\y\in\Z}\exp(\tfrac{\kappa|\x-\y|}{N})|\mathbf{L}^{N,\zeta,\sim}_{\s,\t,\x,\y}-\mathbf{H}^{N}_{\s,\t,\x,\y}|&\lesssim\Big(\sum_{\y\in\Z}\exp(\tfrac{-\kappa|\x-\y|}{N})\Big)^{\frac12}\Big(\sum_{\y\in\Z}\exp(\tfrac{3\kappa|\x-\y|}{N})|\mathbf{L}^{N,\zeta,\sim}_{\s,\t,\x,\y}-\mathbf{H}^{N}_{\s,\t,\x,\y}|^{2}\Big)^{\frac12}\nonumber\\
&\lesssim_{\kappa}N^{\frac12}N^{-\frac12+\frac12\delta}=N^{\frac12\delta}.\nonumber
\end{align}
If we combine the previous two displays, then for any {\small$\delta>0$}, we obtain the estimate below with high probability:
\begin{align}
\sum_{\y\in\Z}\exp(\tfrac{\kappa|\x-\y|}{N})|\mathbf{L}^{N,\zeta,\sim}_{\s,\t,\x,\y}|\lesssim_{\kappa}N^{\frac12\delta}.\label{eq:kzetaestimate-shortI1}
\end{align}
We now move on to the second term on the \abbr{RHS} of \eqref{eq:lkzetaIa}. For this term, we use a similar decomposition of {\small$\mathbf{L}^{N,\zeta,\sim}$} into {\small$\mathbf{H}^{N}$} and {\small$\mathbf{L}^{N,\zeta,\sim}-\mathbf{H}^{N}$}. More precisely, we have 
\begin{align}
\int_{\s}^{\t}\sum_{\w\in\Z}\mathbf{L}^{N,\zeta,\sim}_{\r,\t,\x,\w}\boldsymbol{\Phi}_{\r}[\mathbf{K}^{N,\zeta,\sim}_{\s,\r,\cdot,\y}]_{\w}\d\r&=\int_{\s}^{\t}\sum_{\w\in\Z}\mathbf{H}^{N}_{\r,\t,\x,\w}\cdot\boldsymbol{\Phi}_{\r}[\mathbf{K}^{N,\zeta,\sim}_{\s,\r,\cdot,\y}]_{\w}\d\r\label{eq:kzetaestimate-shortI2a}\\
&+\int_{\s}^{\t}\sum_{\w\in\Z}\Big(\mathbf{L}^{N,\zeta,\sim}_{\r,\t,\x,\w}-\mathbf{H}^{N}_{\r,\t,\x,\w}\Big)\cdot\boldsymbol{\Phi}_{\r}[\mathbf{K}^{N,\zeta,\sim}_{\s,\r,\cdot,\y}]_{\w}\d\r.\label{eq:kzetaestimate-shortI2b}
\end{align}
We claim that the following holds with high probability, where {\small$\beta_{\star}>0$} depends only on {\small$\gamma_{\mathrm{data}}>0$} from \eqref{eq:noneq}:
\begin{align}
\int_{0}^{1}\sum_{\substack{|\w|\lesssim N^{1+\zeta}}}\sum_{\substack{\n=1,\ldots,\mathrm{K}\\\m=0,\ldots,\mathrm{M}\\0<|\mathfrak{l}_{1}|,\ldots,|\mathfrak{l}_{\m}|\lesssim \mathfrak{n}_{\mathbf{Av}}}}\tfrac{1}{|\mathfrak{l}_{1}|\ldots|\mathfrak{l}_{\m}|}\cdot |\mathbf{Av}^{\mathbf{T},\mathbf{X},\mathfrak{q}_{\n}}_{\r,\w}|^{2}\d\r\lesssim N^{-2-\beta_{\star}}\cdot N^{1+\zeta}.\label{eq:kzetaestimate-shortI-key}
\end{align}
We explain this as follows. Because {\small$|\mathfrak{l}_{1}|^{-1}\ldots|\mathfrak{l}_{\m}|^{-1}$} sums (over {\small$\n,\m,\mathfrak{l}_{1},\ldots,\mathfrak{l}_{\m}$}) to {\small$(\log N)^{\mathrm{O}(1)}\lesssim_{\e}N^{\e}$}, by {Proposition \ref{prop:stestimate}}, we know that the expectation of the summands on the \abbr{LHS} of \eqref{eq:kzetaestimate-shortI-key} is {\small$\lesssim_{\e} N^{\e}N^{-2-\beta_{\mathrm{univ},2}}$} for some {\small$\beta_{\mathrm{univ},2}>0$} depending only on {\small$\gamma_{\mathrm{data}}$} from \eqref{eq:noneq}. Now, sum this bound {\small$\mathrm{O}(N^{1+\zeta})$}-many times and use the Markov inequality to deduce that \eqref{eq:kzetaestimate-shortI-key} holds with high probability.

Let us start with the second term \eqref{eq:kzetaestimate-shortI2b}. Again, we want to multiply it by {\small$\exp(\kappa|\x-\y|/N)$}, sum over {\small$\y\in\Z$}, and control the resulting quantity. In the sequel, {\small$\beta>0$} is a small constant to be determined later in the argument. We claim that if {\small$|\t-\s|\leq N^{\beta}N^{-\zeta}$}, then
\begin{align}
&\sum_{\y\in\Z}\exp(\tfrac{\kappa|\x-\y|}{N})\cdot\int_{\s}^{\t}\sum_{\w\in\Z}|\mathbf{L}^{N,\zeta,\sim}_{\r,\t,\x,\w}-\mathbf{H}^{N}_{\r,\t,\x,\w}|\cdot|\boldsymbol{\Phi}_{\r}[\mathbf{K}^{N,\zeta,\sim}_{\s,\r,\cdot,\y}]_{\w}|\d\r\nonumber\\
&\lesssim\int_{\s}^{\t}\sum_{\w\in\Z}\exp(\tfrac{\kappa|\x-\w|}{N})|\mathbf{L}^{N,\zeta,\sim}_{\r,\t,\x,\w}-\mathbf{H}^{N}_{\r,\t,\x,\w}|\cdot\sum_{\y\in\Z}\exp(\tfrac{\kappa|\w-\y|}{N})|\boldsymbol{\Phi}_{\r}[\mathbf{K}^{N,\zeta,\sim}_{\s,\r,\cdot,\y}]_{\w}|\d\r\nonumber\\
&\lesssim\Big(\int_{\s}^{\t}\sum_{\w\in\Z}\exp(\tfrac{2\kappa|\x-\w|}{N})|\mathbf{L}^{N,\zeta,\sim}_{\r,\t,\x,\w}-\mathbf{H}^{N}_{\r,\t,\x,\w}|^{2}\d\r\Big)^{\frac12}\cdot\Big(\int_{\s}^{\t}\sum_{\w\in\Z}\Big\{\sum_{\y\in\Z}\exp(\tfrac{\kappa|\w-\y|}{N})|\boldsymbol{\Phi}_{\r}[\mathbf{K}^{N,\zeta,\sim}_{\s,\r,\cdot,\y}]_{\w}|\Big\}^{2}\d\r\Big)^{\frac12}\nonumber\\
&\lesssim_{\kappa}N^{-\frac12+\frac12\delta}|\t-\s|^{\frac12}\Big(\int_{\s}^{\t}\sum_{\w\in\Z}\Big\{\sum_{\y\in\Z}\exp(\tfrac{\kappa|\w-\y|}{N})|\boldsymbol{\Phi}_{\r}[\mathbf{K}^{N,\zeta,\sim}_{\s,\r,\cdot,\y}]_{\w}|\Big\}^{2}\d\r\Big)^{\frac12}\nonumber\\
&\lesssim N^{-\frac12+\frac12\delta}N^{\frac12\beta}N^{-\frac12\zeta}\Big(\int_{\s}^{\t}\sum_{\w\in\Z}\Big\{\sum_{\y\in\Z}\exp(\tfrac{\kappa|\w-\y|}{N})|\boldsymbol{\Phi}_{\r}[\mathbf{K}^{N,\zeta,\sim}_{\s,\r,\cdot,\y}]_{\w}|\Big\}^{2}\d\r\Big)^{\frac12}.\label{eq:kzetaestimate-shortI3a}
\end{align}
The first bound follows first by the triangle inequality, i.e. {\small$\exp(\kappa|\x-\y|/N)\leq\exp(\kappa|\x-\w|/N)\exp(\kappa|\w-\y|/N)$} for any {\small$\x,\y,\w\in\Z$}. The second estimate follows by Cauchy-Schwarz applied with respect to the {\small$\d\r$}-integration and the {\small$\w$}-summation. The third estimate follows by \eqref{eq:lzetasimestimateII}. The last estimate above follows because {\small$|\t-\s|\lesssim N^{\beta}N^{-\zeta}$}. Next, we claim that for any {\small$\e>0$}, we have the following (see \eqref{eq:lkzetaIb}-\eqref{eq:lkzetaIc} for the relevant notation):
\begin{align}
&\Big\{\sum_{\y\in\Z}\exp(\tfrac{\kappa|\w-\y|}{N})|\boldsymbol{\Phi}_{\r}[\mathbf{K}^{N,\zeta,\sim}_{\s,\r,\cdot,\y}]_{\w}]|\Big\}^{2}\nonumber\\
&\lesssim N^{2}\Big\{{\mathbf{1}_{|\w|\lesssim N^{1+\zeta}}}\sum_{\substack{\n=1,\ldots,\mathrm{K}\\\m=0,\ldots,\mathrm{M}\\0<|\mathfrak{l}_{1}|,\ldots,|\mathfrak{l}_{\m}|\lesssim \mathfrak{n}_{\mathbf{Av}}}}\tfrac{1}{|\mathfrak{l}_{1}|\ldots|\mathfrak{l}_{\m}|}\sum_{\y\in\Z}\exp(\tfrac{\kappa|\w-\y|}{N})|\boldsymbol{\Phi}_{\r}^{\n,\m,\mathfrak{l}_{1},\ldots,\mathfrak{l}_{\m}}[\mathbf{K}^{N,\zeta,\sim}_{\s,\r,\cdot,\y}]_{\w}|\Big\}^{2}\nonumber\\
&\lesssim_{\e} N^{2+\e}{\mathbf{1}_{|\w|\lesssim N^{1+\zeta}}}\sum_{\substack{\n=1,\ldots,\mathrm{K}\\\m=0,\ldots,\mathrm{M}\\0<|\mathfrak{l}_{1}|,\ldots,|\mathfrak{l}_{\m}|\lesssim \mathfrak{n}_{\mathbf{Av}}}}\tfrac{1}{|\mathfrak{l}_{1}|\ldots|\mathfrak{l}_{\m}|}\Big\{\sum_{\y\in\Z}\exp(\tfrac{\kappa|\w-\y|}{N})|\boldsymbol{\Phi}_{\r}^{\n,\m,\mathfrak{l}_{1},\ldots,\mathfrak{l}_{\m}}[\mathbf{K}^{N,\zeta,\sim}_{\s,\r,\cdot,\y}]_{\w}|\Big\}^{2}.\label{eq:kzetaestimate-shortI3b}
\end{align}
The first bound follows by \eqref{eq:lkzetaIb}-\eqref{eq:lkzetaIc}. The second follows by Cauchy-Schwarz with respect to the triple summation; we note that the number of {\small$\n,\m$} summands is {\small$\mathrm{K}\mathrm{M}\lesssim1$} (see Proposition \ref{prop:s-sde}), and the weight {\small$|\mathfrak{l}_{1}|^{-1}\ldots|\mathfrak{l}_{\m}|^{-1}$} has total sum of {\small$\lesssim (\log N)^{\mathrm{C}\m}\lesssim_{\e} N^{\e}$} for any {\small$\e>0$}, so the only cost to pay when moving the square into the {\small$\y$}-summation is a factor of {\small$\lesssim N^{\e}$}, and the final bound in \eqref{eq:kzetaestimate-shortI3b} follows. We now control each of the squared-{\small$\y$}-sums in \eqref{eq:kzetaestimate-shortI3b}; recall {\small$\boldsymbol{\Phi}^{\n,\m,\mathfrak{l}_{1}\ldots,\mathfrak{l}_{\m}}$} in \eqref{eq:lkzetaIc}. We first claim that 
\begin{align}
|\mathrm{c}_{N,\n,\mathfrak{l}_{1},\ldots,\mathfrak{l}_{\m}}\grad^{\mathbf{X}}_{\mathfrak{l}_{1}}\ldots\grad^{\mathbf{X}}_{\mathfrak{l}_{\m}}\mathscr{S}^{N}_{\w-\z}|\lesssim N^{-1+2\delta_{\mathbf{S}}}\mathbf{1}_{|\w-\z|\lesssim N^{1-\delta_{\mathbf{S}}}}.\label{eq:kzetaestimate-shortI3c}
\end{align}
To see this, we consider two separate cases.
\begin{itemize}
\item Assume that {\small$\mathrm{c}_{N,\n,\mathfrak{l}_{1},\ldots,\mathfrak{l}_{\m}}=\mathrm{O}(1)$}. We know {\small$|\mathscr{S}^{N}_{\w-\z}|\lesssim N^{-1+\delta_{\mathbf{S}}}\mathbf{1}_{|\w-\z|\lesssim N^{1-\delta_{\mathbf{S}}}}$}. We also restrict to {\small$|\mathfrak{l}_{1}|,\ldots,|\mathfrak{l}_{\m}|\lesssim\mathfrak{n}_{\mathbf{Av}}\ll N^{1-\delta_{\mathbf{S}}}$}. Because the number of gradients is {\small$\m\leq\mathrm{M}\lesssim1$}, we therefore deduce the claim \eqref{eq:kzetaestimate-shortI3c}.
\item Next, we assume that {\small$\mathrm{c}_{N,\n,\mathfrak{l}_{1},\ldots,\mathfrak{l}_{\m}}\neq\mathrm{O}(1)$}. As stated in Proposition \ref{prop:s-sde}, we still know that {\small$\mathrm{c}_{N,\n,\mathfrak{l}_{1},\ldots,\mathfrak{l}_{\m}}=\mathrm{O}(N)$}. Moreover, as also stated in Proposition \ref{prop:s-sde}, in this case, we know that {\small$|\mathfrak{l}_{\j}|\lesssim1$} for some {\small$\j=1,\ldots,\m$}. Because {\small$\mathscr{S}^{N}$} is smooth at scale {\small$N^{1-\delta_{\mathbf{S}}}$} (see Definition \ref{definition:zsmooth}), taking its {\small$\grad^{\mathbf{X}}_{\mathfrak{l}_{\j}}$}-gradient yields another factor of {\small$N^{-1+\delta_{\mathbf{S}}}$}. In particular, we deduce {\small$|\grad^{\mathbf{X}}_{\mathfrak{l}_{\j}}\mathscr{S}^{N}_{\w-\z}|\lesssim N^{-1+\delta_{\mathbf{S}}}\cdot N^{-1+\delta_{\mathbf{S}}}\mathbf{1}_{|\w-\z|\lesssim N^{1-\delta_{\mathbf{S}}}}$}. We now multiply by {\small$\mathrm{c}_{N,\n,\mathfrak{l}_{1},\ldots,\mathfrak{l}_{\m}}=\mathrm{O}(N)$} and take the other gradients as in the previous bullet point to deduce the claim \eqref{eq:kzetaestimate-shortI3c}.
\end{itemize}
Therefore, upon recalling {\small$\boldsymbol{\Phi}^{\n,\m,\mathfrak{l}_{1}\ldots,\mathfrak{l}_{\m}}$} in \eqref{eq:lkzetaIc}, we have 
\begin{align}
&\Big\{\sum_{\y\in\Z}\exp(\tfrac{\kappa|\w-\y|}{N})|\boldsymbol{\Phi}_{\r}^{\n,\m,\mathfrak{l}_{1},\ldots,\mathfrak{l}_{\m}}[\mathbf{K}^{N,\zeta,\sim}_{\s,\r,\cdot,\y}]_{\w}|\Big\}^{2}\nonumber\\
&\lesssim \Big\{\sum_{\y\in\Z} N^{-1+2\delta_{\mathbf{S}}}\sum_{|\z|\lesssim N^{1-\delta_{\mathbf{S}}}}|\mathbf{Av}^{\mathbf{T},\mathbf{X},\mathfrak{q}_{\n}}_{\r,\w+\z}|\cdot|\mathbf{R}^{N}_{\r,\w+\z}|\cdot\exp(\tfrac{\kappa|\w-\y|}{N})|\mathbf{K}^{N,\zeta,\sim}_{\s,\r,\w+\z,\y}|\Big\}^{2}\nonumber\\
&= \Big\{N^{-1+2\delta_{\mathbf{S}}}\sum_{|\z|\lesssim N^{1-\delta_{\mathbf{S}}}}|\mathbf{Av}^{\mathbf{T},\mathbf{X},\mathfrak{q}_{\n}}_{\r,\w+\z}|\cdot|\mathbf{R}^{N}_{\r,\w+\z}|\cdot\sum_{\y\in\Z}\exp(\tfrac{\kappa|\w-\y|}{N})|\mathbf{K}^{N,\zeta,\sim}_{\s,\r,\w+\z,\y}|\Big\}^{2}\nonumber\\
&\lesssim N^{-1+\delta_{\mathbf{S}}}\sum_{|\z|\lesssim N^{1-\delta_{\mathbf{S}}}}N^{2\delta_{\mathbf{S}}}|\mathbf{Av}^{\mathbf{T},\mathbf{X},\mathfrak{q}_{\n}}_{\r,\w+\z}|^{2}\cdot|\mathbf{R}^{N}_{\r,\w+\z}|^{2}\cdot\Big(\sum_{\y\in\Z}\exp(\tfrac{\kappa|\w-\y|}{N})|\mathbf{K}^{N,\zeta,\sim}_{\s,\r,\w+\z,\y}|\Big)^{2}.\label{eq:kzetaestimate-shortI3d}
\end{align}
The first bound follows from \eqref{eq:lkzetaIc} and \eqref{eq:kzetaestimate-shortI3c}. The line afterwards follows from rearranging summations. The last line follows from Cauchy-Schwarz with respect to the {\small$\z$}-summation. Let us now combine \eqref{eq:kzetaestimate-shortI3b}, and \eqref{eq:kzetaestimate-shortI3d}. We also note that since we restrict to {\small$|\w|\lesssim N^{1+\zeta}$} and {\small$|\z|\lesssim N^{1-\delta_{\mathbf{S}}}$}, with high probability, we can bound the {\small$\mathbf{R}^{N}$} factor in \eqref{eq:kzetaestimate-shortI3d} by {\small$1$}; this follows by {Lemma \ref{lemma:rbound}}. We claim that this ultimately produces the estimate
\begin{align}
&\sum_{\w\in\Z}\Big\{\sum_{\y\in\Z}\exp(\tfrac{\kappa|\w-\y|}{N})|\boldsymbol{\Phi}_{\r}[\mathbf{K}^{N,\zeta,\sim}_{\s,\r,\cdot,\y}]_{\w}]|\Big\}^{2}\nonumber\\
&\lesssim N^{2+\e+2\delta_{\mathbf{S}}}N^{-1+\delta_{\mathbf{S}}}\sum_{\substack{|\w|\lesssim N^{1+\zeta}\\|\z|\lesssim N^{1-\delta_{\mathbf{S}}}}}\sum_{\substack{\n=1,\ldots,\mathrm{K}\\\m=0,\ldots,\mathrm{M}\\0<|\mathfrak{l}_{1}|,\ldots,|\mathfrak{l}_{\m}|\lesssim \mathfrak{n}_{\mathbf{Av}}}}\tfrac{1}{|\mathfrak{l}_{1}|\ldots|\mathfrak{l}_{\m}|}\cdot |\mathbf{Av}^{\mathbf{T},\mathbf{X},\mathfrak{q}_{\n}}_{\r,\w+\z}|^{2}\cdot\Big(\sum_{\y\in\Z}\exp(\tfrac{\kappa|\w-\y|}{N})|\mathbf{K}^{N,\zeta,\sim}_{\s,\r,\w+\z,\y}|\Big)^{2}\nonumber\\
&\lesssim_{\kappa} \Big(\sup_{\substack{\r\in[\s,\t]\\\x\in\Z}}\sum_{\y\in\Z}\exp(\tfrac{\kappa|\x-\y|}{N})|\mathbf{K}^{N,\zeta,\sim}_{\s,\r,\x,\y}|\Big)^{2}\cdot N^{2+\e+2\delta_{\mathbf{S}}}N^{-1+\delta_{\mathbf{S}}}\sum_{\substack{|\w|\lesssim N^{1+\zeta}\\|\z|\lesssim N^{1-\delta_{\mathbf{S}}}}}\sum_{\substack{\n=1,\ldots,\mathrm{K}\\\m=0,\ldots,\mathrm{M}\\0<|\mathfrak{l}_{1}|,\ldots,|\mathfrak{l}_{\m}|\lesssim \mathfrak{n}_{\mathbf{Av}}}}\tfrac{1}{|\mathfrak{l}_{1}|\ldots|\mathfrak{l}_{\m}|}\cdot |\mathbf{Av}^{\mathbf{T},\mathbf{X},\mathfrak{q}_{\n}}_{\r,\w+\z}|^{2}\nonumber\\
&\lesssim\Big(\sup_{\substack{\r\in[\s,\t]\\\x\in\Z}}\sum_{\y\in\Z}\exp(\tfrac{\kappa|\x-\y|}{N})|\mathbf{K}^{N,\zeta,\sim}_{\s,\r,\x,\y}|\Big)^{2}\cdot N^{2+\e+2\delta_{\mathbf{S}}}\sum_{\substack{|\w|\lesssim N^{1+\zeta}}}\sum_{\substack{\n=1,\ldots,\mathrm{K}\\\m=0,\ldots,\mathrm{M}\\0<|\mathfrak{l}_{1}|,\ldots,|\mathfrak{l}_{\m}|\lesssim \mathfrak{n}_{\mathbf{Av}}}}\tfrac{1}{|\mathfrak{l}_{1}|\ldots|\mathfrak{l}_{\m}|}\cdot |\mathbf{Av}^{\mathbf{T},\mathbf{X},\mathfrak{q}_{\n}}_{\r,\w}|^{2}.\nonumber
\end{align}
The second bound above follows by first noting that {\small$\exp(\kappa|\w-\y|/N)\lesssim_{\kappa}\exp(\kappa|(\w+\z)-\y|/N)$} if {\small$|\z|\lesssim N^{1-\delta_{\mathbf{S}}}$}. Then, we bound the squared {\small$\y$}-summation in the second line by its supremum over {\small$\w+\z$}. The last line above follows because summing a non-negative function of {\small$\w+\z$} over {\small$|\w|\lesssim N^{1+\zeta}$} and averaging over {\small$|\z|\lesssim N^{1-\delta_{\mathbf{S}}}$} is bounded {\color{black}from} above by summing the same function but now {\color{black}over} {\small$|\w|\lesssim N^{1+\zeta}$}. Now, we combine the previous display with \eqref{eq:kzetaestimate-shortI3a}. After gathering powers of {\small$N$}, this provides the first estimate below, after which we use \eqref{eq:kzetaestimate-shortI-key}:
\begin{align}
&\sum_{\y\in\Z}\exp(\tfrac{\kappa|\x-\y|}{N})\cdot\int_{\s}^{\t}\sum_{\w\in\Z}|\mathbf{L}^{N,\zeta,\sim}_{\r,\t,\x,\w}-\mathbf{H}^{N}_{\r,\t,\x,\w}|\cdot|\boldsymbol{\Phi}_{\r}[\mathbf{K}^{N,\zeta,\sim}_{\s,\r,\cdot,\y}]_{\w}|\d\r\nonumber\\
&\lesssim N^{\frac12+\frac12\delta+\frac12\e+\delta_{\mathbf{S}}}N^{\frac12\beta}N^{-\frac12\zeta}\cdot\sup_{\substack{\r\in[\s,\t]\\\x\in\Z}}\sum_{\y\in\Z}\exp(\tfrac{\kappa|\x-\y|}{N})|\mathbf{K}^{N,\zeta,\sim}_{\s,\r,\x,\y}|\nonumber\\
&\times\Big(\int_{\s}^{\t}\sum_{\substack{|\w|\lesssim N^{1+\zeta}}}\sum_{\substack{\n=1,\ldots,\mathrm{K}\\\m=0,\ldots,\mathrm{M}\\0<|\mathfrak{l}_{1}|,\ldots,|\mathfrak{l}_{\m}|\lesssim \mathfrak{n}_{\mathbf{Av}}}}\tfrac{1}{|\mathfrak{l}_{1}|\ldots|\mathfrak{l}_{\m}|}\cdot |\mathbf{Av}^{\mathbf{T},\mathbf{X},\mathfrak{q}_{\n}}_{\r,\w}|^{2}\Big)^{\frac12}\nonumber\\
&\lesssim N^{-\frac12\beta_{\star}}N^{\frac12\delta+\frac12\e+\delta_{\mathbf{S}}}N^{\frac12\beta}\cdot\sup_{\substack{\r\in[\s,\t]\\\x\in\Z}}\sum_{\y\in\Z}\exp(\tfrac{\kappa|\x-\y|}{N})|\mathbf{K}^{N,\zeta,\sim}_{\s,\r,\x,\y}|.\label{eq:kzetaestimate-shortI3}
\end{align}
The {\small$\delta,\e,\delta_{\mathbf{S}},\beta$} are small constants of our choosing, and {\small$\beta_{\star}>0$} depends only on {\small$\gamma_{\mathrm{data}}$} from \eqref{eq:noneq}. This gives our estimate for \eqref{eq:kzetaestimate-shortI2b}. We now move to the \abbr{RHS} of \eqref{eq:kzetaestimate-shortI2a}. Recall \eqref{eq:lkzetaIb}-\eqref{eq:lkzetaIc}, and recall that with high probability (see {Lemma \ref{lemma:rbound}}), the {\small$\mathbf{R}^{N}$} factors therein are uniformly {\small$\mathrm{O}(1)$}; this is explained after \eqref{eq:kzetaestimate-shortI3d}. Ultimately, we get
\begin{align}
&\sum_{\y\in\Z}\exp(\tfrac{\kappa|\x-\y|}{N})\cdot \int_{\s}^{\t}\sum_{\w\in\Z}\mathbf{H}^{N}_{\r,\t,\x,\w}\cdot|\boldsymbol{\Phi}_{\r}[\mathbf{K}^{N,\zeta,\sim}_{\s,\r,\cdot,\y}]_{\w}|\d\r\nonumber\\
&\lesssim N\sum_{\substack{\n=1,\ldots,\mathrm{K}\\\m=0,\ldots,\mathrm{M}\\0<|\mathfrak{l}_{1}|,\ldots,|\mathfrak{l}_{\m}|\lesssim \mathfrak{n}_{\mathbf{Av}}}}\tfrac{1}{|\mathfrak{l}_{1}|\ldots|\mathfrak{l}_{\m}|}\sum_{\y\in\Z}\exp(\tfrac{\kappa|\x-\y|}{N})\cdot\mathbf{I}_{\s,\t,\x,\y}^{\n,\mathfrak{l}_{1},\ldots,\mathfrak{l}_{\m}},\label{eq:kzetaestimate-shortI4a}
\end{align}
where, in order to fit the display on the page, we use the notation
\begin{align}
\mathbf{I}^{\n,\mathfrak{l}_{1},\ldots,\mathfrak{l}_{\m}}_{\s,\t,\x,\y}&:=\int_{\s}^{\t}\sum_{|\w|\lesssim N^{1+\zeta}}\mathbf{H}^{N}_{\r,\t,\x,\w}\sum_{\z\in\Z}|\mathrm{c}_{N,\n,\mathfrak{l}_{1},\ldots,\mathfrak{l}_{\m}}\grad^{\mathbf{X}}_{1}\ldots\grad^{\mathbf{X}}_{\mathfrak{l}_{\m}}\mathscr{S}^{N}_{\w-\z}|\cdot|\mathbf{Av}^{\mathbf{T},\mathbf{X},\mathfrak{q}_{\n}}_{\r,\z}|\cdot|\mathbf{K}^{N,\zeta,\sim}_{\s,\r,\z,\y}|\d\r.\label{eq:kzetaestimate-shortI4b}
\end{align}
We now use the estimate \eqref{eq:kzetaestimate-shortI3c}; this implies that we can restrict the summation over {\small$\z\in\Z$} to {\small$|\w-\z|\lesssim N^{1-\delta_{\mathbf{S}}}$}. Moreover, because {\small$|\w|\lesssim N^{1+\zeta}$}, we can cut the summation over {\small$\z\in\Z$} ultimately to {\small$|\z|\lesssim N^{1+\zeta}$}. In particular, if we then swap the {\small$\w,\z$} summations, we obtain 
\begin{align}
\mathbf{I}^{\n,\mathfrak{l}_{1},\ldots,\mathfrak{l}_{\m}}_{\s,\t,\x,\y}&\lesssim\int_{\s}^{\t}\sum_{|\z|\lesssim N^{1+\zeta}}\Big(\sum_{\w\in\Z}\mathbf{H}^{N}_{\r,\t,\x,\w}|\mathrm{c}_{N,\n,\mathfrak{l}_{1},\ldots,\mathfrak{l}_{\m}}\grad^{\mathbf{X}}_{1}\ldots\grad^{\mathbf{X}}_{\mathfrak{l}_{\m}}\mathscr{S}^{N}_{\w-\z}|\Big)\cdot|\mathbf{Av}^{\mathbf{T},\mathbf{X},\mathfrak{q}_{\n}}_{\r,\z}|\cdot|\mathbf{K}^{N,\zeta,\sim}_{\s,\r,\z,\y}|\d\r.\nonumber
\end{align}
We now use the following notation for convenience:
\begin{align}
\mathbf{W}_{\r,\t,\x,\z}&:=\max_{\substack{\n=1,\ldots,\mathrm{K}\\\m=0,\ldots,\mathrm{M}\\0<|\mathfrak{l}_{1}|,\ldots,|\mathfrak{l}_{\m}|\lesssim \mathfrak{n}_{\mathbf{Av}}}}\sum_{\w\in\Z}\mathbf{H}^{N}_{\r,\t,\x,\w}|\mathrm{c}_{N,\n,\mathfrak{l}_{1},\ldots,\mathfrak{l}_{\m}}\grad^{\mathbf{X}}_{1}\ldots\grad^{\mathbf{X}}_{\mathfrak{l}_{\m}}\mathscr{S}^{N}_{\w-\z}|\nonumber\\
&\lesssim\sum_{\w\in\Z}\mathbf{H}^{N}_{\r,\t,\x,\w}\cdot\max_{\substack{\n=1,\ldots,\mathrm{K}\\\m=0,\ldots,\mathrm{M}\\0<|\mathfrak{l}_{1}|,\ldots,|\mathfrak{l}_{\m}|\lesssim \mathfrak{n}_{\mathbf{Av}}}}|\mathrm{c}_{N,\n,\mathfrak{l}_{1},\ldots,\mathfrak{l}_{\m}}\grad^{\mathbf{X}}_{1}\ldots\grad^{\mathbf{X}}_{\mathfrak{l}_{\m}}\mathscr{S}^{N}_{\w-\z}|.\label{eq:thetakerneldef}
\end{align}
If we combine the previous two displays with \eqref{eq:kzetaestimate-shortI4a}, then we obtain
\begin{align}
&\sum_{\y\in\Z}\exp(\tfrac{\kappa|\x-\y|}{N})\cdot \int_{\s}^{\t}\sum_{\w\in\Z}\mathbf{H}^{N}_{\r,\t,\x,\w}\cdot|\boldsymbol{\Phi}_{\r}[\mathbf{K}^{N,\zeta,\sim}_{\s,\r,\cdot,\y}]_{\w}|\d\r\nonumber\\
&\lesssim N\sum_{\y\in\Z}\exp(\tfrac{\kappa|\x-\y|}{N})\cdot \int_{\s}^{\t}\sum_{|\z|\lesssim N^{1+\zeta}}\mathbf{W}_{\r,\t,\x,\z}\cdot \sum_{\substack{\n=1,\ldots,\mathrm{K}\\\m=0,\ldots,\mathrm{M}\\0<|\mathfrak{l}_{1}|,\ldots,|\mathfrak{l}_{\m}|\lesssim \mathfrak{n}_{\mathbf{Av}}}}\tfrac{1}{|\mathfrak{l}_{1}|\ldots|\mathfrak{l}_{\m}|}|\mathbf{Av}^{\mathbf{T},\mathbf{X},\mathfrak{q}_{\n}}_{\r,\z}|\cdot|\mathbf{K}^{N,\zeta,\sim}_{\s,\r,\z,\y}|\d\r.\nonumber
\end{align}
Now, we use the estimate {\small$\exp(\kappa|\x-\y|/N)\leq\exp(\kappa|\x-\z|/N)\exp(\kappa|\z-\y|/N)$}, and we rearrange summations. This eventually turns the previous display into the following estimate:
\begin{align}
&\sum_{\y\in\Z}\exp(\tfrac{\kappa|\x-\y|}{N})\cdot \int_{\s}^{\t}\sum_{\w\in\Z}\mathbf{H}^{N}_{\r,\t,\x,\w}\cdot|\boldsymbol{\Phi}_{\r}[\mathbf{K}^{N,\zeta,\sim}_{\s,\r,\cdot,\y}]_{\w}|\d\r\nonumber\\
&\lesssim N\int_{\s}^{\t}\sum_{|\z|\lesssim N^{1+\zeta}}\exp(\tfrac{\kappa|\x-\z|}{N})\mathbf{W}_{\r,\t,\x,\z}\cdot\sum_{\substack{\n=1,\ldots,\mathrm{K}\\\m=0,\ldots,\mathrm{M}\\0<|\mathfrak{l}_{1}|,\ldots,|\mathfrak{l}_{\m}|\lesssim \mathfrak{n}_{\mathbf{Av}}}}\tfrac{1}{|\mathfrak{l}_{1}|\ldots|\mathfrak{l}_{\m}|}|\mathbf{Av}^{\mathbf{T},\mathbf{X},\mathfrak{q}_{\n}}_{\r,\z}|\cdot\sum_{\y\in\Z}\exp(\tfrac{\kappa|\z-\y|}{N})|\mathbf{K}^{N,\zeta,\sim}_{\s,\r,\z,\y}|\d\r\nonumber\\
&\lesssim N\int_{\s}^{\t}\sum_{|\z|\lesssim N^{1+\zeta}}\exp(\tfrac{\kappa|\x-\z|}{N})\mathbf{W}_{\r,\t,\x,\z}\cdot\sum_{\substack{\n=1,\ldots,\mathrm{K}\\\m=0,\ldots,\mathrm{M}\\0<|\mathfrak{l}_{1}|,\ldots,|\mathfrak{l}_{\m}|\lesssim \mathfrak{n}_{\mathbf{Av}}}}\tfrac{1}{|\mathfrak{l}_{1}|\ldots|\mathfrak{l}_{\m}|}|\mathbf{Av}^{\mathbf{T},\mathbf{X},\mathfrak{q}_{\n}}_{\r,\z}|\d\r\nonumber\\
&\quad\quad\times\sup_{\substack{\r\in[\s,\t]\\\x\in\Z}}\sum_{\y\in\Z}\exp(\tfrac{\kappa|\x-\y|}{N})|\mathbf{K}^{N,\zeta,\sim}_{\s,\r,\x,\y}|.\label{eq:kzetaestimate-shortI4c}
\end{align}
Now, we control the first factor on the far \abbr{RHS} of \eqref{eq:kzetaestimate-shortI4c}. Recall {\small$\mathbf{W}$} from \eqref{eq:thetakerneldef}. We claim that
\begin{align}
&\sum_{\z\in\Z}\exp(\tfrac{2\kappa|\x-\z|}{N})|\mathbf{W}_{\r,\t,\x,\z}|^{2}\nonumber\\
&\lesssim\sum_{\z\in\Z}\Big(\sum_{\w\in\Z}\exp(\tfrac{\kappa|\x-\w|}{N})\mathbf{H}^{N}_{\r,\t,\x,\w}\cdot\exp(\tfrac{\kappa|\w-\z|}{N})\cdot\max_{\substack{\n=1,\ldots,\mathrm{K}\\\m=0,\ldots,\mathrm{M}\\0<|\mathfrak{l}_{1}|,\ldots,|\mathfrak{l}_{\m}|\lesssim \mathfrak{n}_{\mathbf{Av}}}}|\mathrm{c}_{N,\n,\mathfrak{l}_{1},\ldots,\mathfrak{l}_{\m}}\grad^{\mathbf{X}}_{1}\ldots\grad^{\mathbf{X}}_{\mathfrak{l}_{\m}}\mathscr{S}^{N}_{\w-\z}|\Big)^{2}\nonumber\\
&\lesssim_{\kappa}\sum_{\w\in\Z}\exp(\tfrac{\kappa|\x-\w|}{N})\mathbf{H}^{N}_{\r,\t,\x,\w}\cdot\sum_{\z\in\Z}\exp(\tfrac{2\kappa|\w-\z|}{N})\cdot\max_{\substack{\n=1,\ldots,\mathrm{K}\\\m=0,\ldots,\mathrm{M}\\0<|\mathfrak{l}_{1}|,\ldots,|\mathfrak{l}_{\m}|\lesssim \mathfrak{n}_{\mathbf{Av}}}}|\mathrm{c}_{N,\n,\mathfrak{l}_{1},\ldots,\mathfrak{l}_{\m}}\grad^{\mathbf{X}}_{1}\ldots\grad^{\mathbf{X}}_{\mathfrak{l}_{\m}}\mathscr{S}^{N}_{\w-\z}|^{2}\nonumber\\
&\lesssim\sum_{\w\in\Z}\exp(\tfrac{\kappa|\x-\w|}{N})\mathbf{H}^{N}_{\r,\t,\x,\w}\cdot N^{-2+4\delta_{\mathbf{S}}}\sum_{|\z-\w|\lesssim N^{1-\delta_{\mathbf{S}}}}\exp(\tfrac{2\kappa|\w-\z|}{N})\nonumber\\
&\lesssim_{\kappa} N^{-1+3\delta_{\mathbf{S}}}\sum_{\w\in\Z}\exp(\tfrac{\kappa|\x-\w|}{N})\mathbf{H}^{N}_{\r,\t,\x,\w}\lesssim_{\kappa} N^{-1+3\delta_{\mathbf{S}}}.\label{eq:kzetaestimate-shortI4d}
\end{align}
The first estimate follows from \eqref{eq:thetakerneldef} moving the exponential factor inside of the square, and using {\small$\exp(\kappa|\x-\z|/N)\leq\exp(\kappa|\x-\w|/N)\exp(\kappa|\w-\z|/N)$}. The second estimate follows by Cauchy-Schwarz with respect to the {\small$\w$}-summation; in this, we note that {\small$\exp(\kappa|\x-\w|/N)\mathbf{H}^{N}_{\r,\t,\x,\w}$} is not a probability measure in {\small$\w\in\Z$}, but its total sum over {\small$\w\in\Z$} is {\small$\lesssim_{\kappa}1$}. (We also rearrange the {\small$\w,\z$}-summations to get the second estimate above.) The third inequality follows first by applying the estimate \eqref{eq:kzetaestimate-shortI3c} to bound the maximum in the third line above. The fourth line follows first from noting that {\small$\exp(2\kappa|\w-\z|/N)\lesssim_{\kappa}1$} if {\small$|\w-\z|\lesssim N^{1-\delta_{\mathbf{S}}}$}, and then by using the exponential heat kernel estimate in {Proposition \ref{prop:hke}}. Now, we claim that the following estimate holds:
\begin{align}
&\int_{\s}^{\t}\sum_{|\z|\lesssim N^{1+\zeta}}\exp(\tfrac{\kappa|\x-\z|}{N})\mathbf{W}_{\r,\t,\x,\z}\cdot\sum_{\substack{\n=1,\ldots,\mathrm{K}\\\m=0,\ldots,\mathrm{M}\\0<|\mathfrak{l}_{1}|,\ldots,|\mathfrak{l}_{\m}|\lesssim \mathfrak{n}_{\mathbf{Av}}}}\tfrac{1}{|\mathfrak{l}_{1}|\ldots|\mathfrak{l}_{\m}|}|\mathbf{Av}^{\mathbf{T},\mathbf{X},\mathfrak{q}_{\n}}_{\r,\z}|\d\r\nonumber\\
&\lesssim_{\e}N^{\e}\Big(\int_{\s}^{\t}\sum_{\z\in\Z}\exp(\tfrac{2\kappa|\x-\z|}{N})|\mathbf{W}_{\r,\t,\x,\z}|^{2}\d\r\Big)^{\frac12}\Big(\int_{\s}^{\t}\sum_{|\z|\lesssim N^{1+\zeta}}\sum_{\substack{\n=1,\ldots,\mathrm{K}\\\m=0,\ldots,\mathrm{M}\\0<|\mathfrak{l}_{1}|,\ldots,|\mathfrak{l}_{\m}|\lesssim \mathfrak{n}_{\mathbf{Av}}}}\tfrac{1}{|\mathfrak{l}_{1}|\ldots|\mathfrak{l}_{\m}|}|\mathbf{Av}^{\mathbf{T},\mathbf{X},\mathfrak{q}_{\n}}_{\r,\z}|^{2}\d\r\Big)^{\frac12}\nonumber\\
&\lesssim N^{-\frac12+\frac32\delta_{\mathbf{S}}+\e}|\t-\s|^{\frac12}N^{-1-\frac12\beta_{\star}}N^{\frac12+\frac12\zeta}\nonumber\\
&\lesssim N^{-1-\frac12\beta_{\star}}N^{\frac32\delta_{\mathbf{S}}+\e+\frac12\beta}.\nonumber
\end{align}
The first bound follows by Cauchy-Schwarz with respect to the {\small$\d\r$} integration, the {\small$\z$}-sum, and the {\small$\n,\m,\mathfrak{l}_{1},\ldots,\mathfrak{l}_{\m}$}-sum (using that the factor {\small$|\mathfrak{l}_{1}|^{-1}\ldots|\mathfrak{l}_{\m}|^{-1}$} sums to {\small$\lesssim_{\e}N^{\e}$} for any {\small$\e>0$}). The second estimate follows by \eqref{eq:kzetaestimate-shortI4d} and \eqref{eq:kzetaestimate-shortI-key}. The last estimate follows because {\small$|\t-\s|\leq N^{-\zeta}N^{\beta}$}. If we plug the previous display into the far \abbr{RHS} of \eqref{eq:kzetaestimate-shortI4c}, then we turn \eqref{eq:kzetaestimate-shortI4c} into the following:
\begin{align}
&\sum_{\y\in\Z}\exp(\tfrac{\kappa|\x-\y|}{N})\cdot \int_{\s}^{\t}\sum_{\w\in\Z}\mathbf{H}^{N}_{\r,\t,\x,\w}\cdot|\boldsymbol{\Phi}_{\r}[\mathbf{K}^{N,\zeta,\sim}_{\s,\r,\cdot,\y}]_{\w}|\d\r\nonumber\\
&\lesssim N^{-\frac12\beta_{\star}}N^{\frac32\delta_{\mathbf{S}}+\e+\frac12\beta}\cdot\sup_{\substack{\r\in[\s,\t]\\\x\in\Z}}\sum_{\y\in\Z}\exp(\tfrac{\kappa|\x-\y|}{N})|\mathbf{K}^{N,\zeta,\sim}_{\s,\r,\x,\y}|.\label{eq:kzetaestimate-shortI4}
\end{align}
Finally, we combine \eqref{eq:kzetaestimate-shortI2a}-\eqref{eq:kzetaestimate-shortI2b} with \eqref{eq:kzetaestimate-shortI3} and \eqref{eq:kzetaestimate-shortI4}. This implies (for any {\small$\e,\delta>0$})
\begin{align}
&\sum_{\y\in\Z}\exp(\tfrac{\kappa|\x-\y|}{N})\Big|\int_{\s}^{\t}\sum_{\w\in\Z}\mathbf{L}^{N,\zeta,\sim}_{\r,\t,\x,\w}\boldsymbol{\Phi}_{\r}[\mathbf{K}^{N,\zeta,\sim}_{\s,\r,\cdot,\w}]_{\x}\d\r\Big|\nonumber\\
&\lesssim_{\kappa,\e,\delta} N^{-\frac12\beta_{\star}}N^{\e+\delta+\frac12\beta+\frac32\delta_{\mathbf{S}}} \sup_{\substack{\r\in[\s,\t]\\\x\in\Z}}\sum_{\y\in\Z}\exp(\tfrac{\kappa|\x-\y|}{N})|\mathbf{K}^{N,\zeta,\sim}_{\s,\r,\x,\y}|.\label{eq:kzetaestimate-shortI5}
\end{align}
We now let {\small$\e,\delta,\delta_{\mathbf{S}}$} be small multiples of {\small$\beta_{\star}$} (recall from Definition \ref{definition:zsmooth} that {\small$\delta_{\mathbf{S}}$} is an arbitrary small constant that is independent of {\small$N$}). Then, we combine the previous display with \eqref{eq:kzetaestimate-shortI1} and \eqref{eq:lkzetaIa}. This implies
\begin{align}
\sum_{\y\in\Z}\exp(\tfrac{\kappa|\x-\y|}{N})|\mathbf{K}^{N,\zeta,\sim}_{\s,\t,\x,\y}|&\lesssim_{\kappa,\delta} N^{\frac12\delta}+N^{-\frac13\beta_{\star}}\sup_{\substack{\r\in[\s,\t]\\\x\in\Z}}\sum_{\y\in\Z}\exp(\tfrac{\kappa|\x-\y|}{N})|\mathbf{K}^{N,\zeta,\sim}_{\s,\r,\x,\y}|.\nonumber
\end{align}
Precisely, there is a high probability event on which the above holds {\color{black}for all {\small$\s\in[0,1]$}, {\small$\t\in[\s,\s+N^{-\zeta}N^{\beta}]$}, and {\small$\x\in\Z$}}. Thus, with high probability, we have
\begin{align}
&\sup_{\s\in[0,1]}\sup_{\t\in[\s,\s+N^{-\zeta}N^{\beta}]}\sup_{\x\in\Z}\sum_{\y\in\Z}\exp(\tfrac{\kappa|\x-\y|}{N})|\mathbf{K}^{N,\zeta,\sim}_{\s,\t,\x,\y}|\nonumber\\
&\lesssim_{\kappa,\delta}N^{\frac12\delta}+N^{-\frac13\beta_{\star}}\sup_{\s\in[0,1]}\sup_{\t\in[\s,\s+N^{-\zeta}N^{\beta}]}\sup_{\x\in\Z}\sum_{\y\in\Z}\exp(\tfrac{\kappa|\x-\y|}{N})|\mathbf{K}^{N,\zeta,\sim}_{\s,\t,\x,\y}|.\nonumber
\end{align}
To complete the proof of the desired estimate \eqref{eq:kzetaestimate-shortI}, we move the last term in the previous display to the \abbr{LHS} and divide out by {\small$1-N^{-\beta_{\star}/3}\gtrsim1$}. This finishes the proof. \qed
\subsubsection{Proof of Lemma \ref{lemma:kzetaestimate-long}}
Throughout this argument, we will condition on the estimate \eqref{eq:kzetaestimate-shortI}; we are allowed to do this because \eqref{eq:kzetaestimate-shortI} holds with high probability. 

{\color{black}Fix {\small$\s,\t\in[0,1]$} with {\small$\s\leq\t$} and any {\small$\x,\y\in\Z$}.} Next, take any {\small$\r\in[\s,\t]$}. We claim that
\begin{align}
\mathbf{K}^{N,\zeta,\sim}_{\s,\t,\x,\y}&=\sum_{\w\in\Z}\mathbf{K}^{N,\zeta,\sim}_{\r,\t,\x,\w}\mathbf{K}^{N,\zeta,\sim}_{\s,\r,\w,\y}.\label{eq:kzetaestimate-longI1}
\end{align}
To justify \eqref{eq:kzetaestimate-longI1}, we follow the argument in the proof of Lemma \ref{lemma:lkzeta}. In particular, we first note that both sides of \eqref{eq:kzetaestimate-longI1} solve the same \abbr{SDE} \eqref{eq:kzetasim-sde} for {\small$\t\in[\r,1]$} with the same data at {\small$\t=\r$}. (This uses linearity of the equation \eqref{eq:kzetasim-sde}.) Thus, by uniqueness of solutions to said \abbr{SDE} (see Remark \ref{remark:kzetasim-unique}), we obtain \eqref{eq:kzetaestimate-longI1}.

We now apply \eqref{eq:kzetaestimate-longI1} iteratively as follows. Let {\small$\t_{1},\ldots,\t_{\mathrm{F}}$} be a sequence of times such that:
\begin{enumerate}
\item We have {\small$\s\leq\t_{1}\leq\ldots\leq\t_{\mathrm{F}}\leq\t$}. We have {\small$|\t_{1}-\s|\leq N^{\beta}N^{-\zeta}$} and {\small$|\t_{\j+1}-\t_{\j}|\leq N^{\beta}N^{-\zeta}$} for all {\small$\j=1,\ldots,\mathrm{F}-1$} and {\small$|\t-\t_{\mathrm{F}}|\leq N^{\beta}N^{-\zeta}$}. Here, the parameter {\small$\beta>0$} is from Lemma \ref{lemma:kzetaestimate-short}.
\item We have {\small$\mathrm{F}\lesssim N^{-\beta}N^{\zeta}+1$}. This is achievable if {\small$\t_{1},\ldots,\t_{\mathrm{F}}$} are evenly spaced points in {\small$[\s,\t]$}, since {\small$\s,\t\in[0,1]$}.
\end{enumerate}
By iterating the Chapman-Kolmogorov equation in \eqref{eq:kzetaestimate-longI1}, we then have 
\begin{align}
\mathbf{K}^{N,\zeta,\sim}_{\s,\t,\x,\y}&=\sum_{\w_{1},\ldots,\w_{\mathrm{F}}\in\Z}\mathbf{K}^{N,\zeta,\sim}_{\t_{\mathrm{F}},\t,\x,\w_{\mathrm{F}}}\cdot\Big(\prod_{\j=1,\ldots,\mathrm{F}-1}\mathbf{K}^{N,\zeta,\sim}_{\t_{\j},\t_{\j+1},\w_{\j+1},\w_{\j}}\Big)\cdot\mathbf{K}^{N,\zeta,\sim}_{\s,\t_{1},\w_{1},\y}.\nonumber
\end{align}
Next, \emph{just for this proof}, for convenience, we will define {\small$\mathbf{K}^{N,\zeta,\sim,\kappa}_{\s,\t,\x,\y}:=\exp(\kappa|\x-\y|/N)\cdot\mathbf{K}^{N,\zeta,\sim}_{\s,\t,\x,\y}$}. By the previous display and the triangle inequality, we have the first estimate below, which is followed by a second estimate that we justify afterwards:
\begin{align}
\sum_{\y\in\Z}|\mathbf{K}^{N,\zeta,\sim,\kappa}_{\s,\t,\x,\y}|&\leq\sum_{\w_{1},\ldots,\w_{\mathrm{F}}\in\Z}|\mathbf{K}^{N,\zeta,\sim,\kappa}_{\t_{\mathrm{F}},\t,\x,\w_{\mathrm{F}}}|\cdot\Big(\prod_{\j=1,\ldots,\mathrm{F}-1}|\mathbf{K}^{N,\zeta,\sim,\kappa}_{\t_{\j},\t_{\j+1},\w_{\j+1},\w_{\j}}|\Big)\cdot\sum_{\y\in\Z}|\mathbf{K}^{N,\zeta,\sim,\kappa}_{\s,\t_{1},\w_{1},\y}|\nonumber\\
&\leq\sum_{\w_{\mathrm{F}}\in\Z}|\mathbf{K}^{N,\zeta,\sim,\kappa}_{\t_{\mathrm{F}},\t,\x,\w_{\mathrm{F}}}|\cdot\Big(\prod_{\j=1}^{\mathrm{F}-1}\sup_{\z\in\Z}\sum_{\w_{\j}\in\Z}|\mathbf{K}^{N,\zeta,\sim,\kappa}_{\t_{\j},\t_{\j+1},\z,\w_{\j}}|\Big)\cdot\sup_{\z\in\Z}\sum_{\y\in\Z}|\mathbf{K}^{N,\zeta,\sim,\kappa}_{\s,\t_{1},\z,\w_{1}}|.\nonumber
\end{align}
The second line is justified as follows. On the \abbr{RHS} of the first line, we bound the sum over {\small$\y\in\Z$} by its supremum over {\small$\w_{1}\in\Z$}. This leads us to the last factor in the second line. Then, the only term which depends on {\small$\w_{1}$} is the {\small$\j=1$} factor on the \abbr{RHS} of the first line, giving us a factor of {\small$\sum_{\w_{1}}|\mathbf{K}^{N,\zeta,\sim,\kappa}_{\t_{1},\t_{2},\w_{2},\w_{1}}|$}. We bound this by its supremum over {\small$\w_{2}\in\Z$}. {\color{black}Therefore}, the only term that depends on {\small$\w_{2}$} on the \abbr{RHS} of the first line is the factor {\small$\sum_{\w_{3}\in\Z}|\mathbf{K}^{N,\zeta,\sim,\kappa}_{\t_{2},\t_{3},\w_{3},\w_{2}}|$}. We then proceed inductively to obtain the second estimate above. Now, we use Lemma \ref{lemma:kzetaestimate-short}; because each time-increment is {\small$\leq N^{\beta}N^{-\zeta}$}, each sum in the second line above is {\small$\lesssim_{\kappa} N^{\delta}$} for any fixed (but small) {\small$\delta>0$}. Thus, we obtain the following estimate (which holds simultaneously over {\small$0\leq\s\leq\t\leq1$} and {\small$\x\in\Z$} with high probability):
\begin{align*}
\sum_{\y\in\Z}|\mathbf{K}^{N,\zeta,\sim,\kappa}_{\s,\t,\x,\y}|&\lesssim(\mathrm{C}_{\kappa}N^{\delta})^{\mathrm{F}}\lesssim_{\kappa}\exp(\mathrm{C}_{\kappa}'\log N\cdot N^{-\beta}N^{\zeta})N^{\delta}.
\end{align*}
Here, {\small$\mathrm{C}_{\kappa},\mathrm{C}_{\kappa}'=\mathrm{O}(1)$}. Next, we note that {\small$\exp(\mathrm{C}_{\kappa}'N^{-\beta}N^{\zeta}\log N)\lesssim\exp(N^{-\beta/2}N^{\zeta})$}. It then suffices to recall that {\small$\mathbf{K}^{N,\zeta,\sim,\kappa}_{\s,\t,\x,\y}:=\exp(\kappa|\x-\y|/N)\cdot\mathbf{K}^{N,\zeta,\sim}_{\s,\t,\x,\y}$} to complete the proof. \qed

\subsubsection{Proof of Lemma \ref{lemma:lzetasimestimate}}
By \eqref{eq:lzetasim-sde} and the Duhamel formula, we have the following expression:
\begin{align}
\mathbf{L}^{N,\zeta,\sim}_{\s,\t,\x,\y}&=\mathbf{H}^{N}_{\s,\t,\x,\y}+\int_{\s}^{\t}\sum_{\w\in\Z}\mathbf{H}^{N}_{\r,\t,\x,\w}\cdot{\boldsymbol{\chi}^{(\zeta_{\mathrm{large}})}_{\w}}[\mathscr{S}^{N}\star(\sqrt{2}\lambda N^{\frac12}\mathbf{R}^{N,\wedge}_{\r,\cdot}\mathbf{L}^{N,\zeta,\sim}_{\s,\r,\cdot,\y}\d\mathbf{b}_{\r,\cdot})]_{\w}\label{eq:lzetasimestimateII1a}\\
&+\int_{\s}^{\t}\sum_{\w\in\Z}\mathbf{H}^{N}_{\r,\t,\x,\w}\cdot\mathbf{1}_{\mathfrak{t}_{\mathrm{ap}}\geq\r}\boldsymbol{\chi}^{(\zeta)}_{\w}\mathrm{Err}[\mathbf{R}^{N}_{\r,\cdot}\mathbf{L}^{N,\zeta,\sim}_{\s,\r,\cdot,\y}]_{\w}\d\r\label{eq:lzetasimestimateII1b}\\
&+N\sum_{\substack{\n=1,\ldots,\mathrm{K}\\\m=0,\ldots,\mathrm{M}\\0<|\mathfrak{l}_{1}|,\ldots,|\mathfrak{l}_{\m}|\lesssim \mathfrak{n}_{\mathbf{Av}}}}\tfrac{1}{|\mathfrak{l}_{1}|\ldots|\mathfrak{l}_{\m}|}\int_{\s}^{\t}\sum_{\w\in\Z}\mathbf{H}^{N}_{\r,\t,\x,\w}\cdot\boldsymbol{\chi}^{(\zeta)}_{\w}\grad^{\mathbf{T},\mathrm{av}}_{\mathfrak{t}_{\mathbf{Av}}}\boldsymbol{\Omega}^{\n,\mathfrak{l}_{1},\ldots,\mathfrak{l}_{\m}}_{\r,\w,\y}\d\r,\label{eq:lzetasimestimateII1c}
\end{align}
where {\small$\mathbf{H}^{N}$} is the heat kernel from \eqref{eq:heatkernel}, and in the third line above, we used the following notation (to make the display fit), where the gradients act on the {\small$\w$}-variable below:
\begin{align}
\boldsymbol{\Omega}^{\n,\mathfrak{l}_{1},\ldots,\mathfrak{l}_{\m}}_{\r,\w,\y}&=\mathrm{c}_{N,\n,\mathfrak{l}_{1},\ldots,\mathfrak{l}_{\m}}\grad^{\mathbf{X}}_{\mathfrak{l}_{1}}\ldots\grad^{\mathbf{X}}_{\mathfrak{l}_{\m}}\Big({\mathbf{1}_{\mathfrak{t}_{\mathrm{ap}}\geq\r}}[\mathscr{S}^{N}\star(\mathbf{Av}^{\mathbf{X},\mathfrak{q}_{\n}}_{\r,\cdot}\cdot\mathbf{R}^{N}_{\r,\cdot}\mathbf{L}^{N,\zeta,\sim}_{\s,\r,\cdot,\y})]_{\w}\Big).\label{eq:lzetasimestimateII1d}
\end{align}
We now control each term on the \abbr{RHS} of \eqref{eq:lzetasimestimateII1a}-\eqref{eq:lzetasimestimateII1c}. First, by {Proposition \ref{prop:hke}}, we have 
\begin{align}
\sup_{\x\in\Z}\sum_{\y\in\Z}\exp(\tfrac{2\kappa|\x-\y|}{N})\cdot|\mathbf{H}^{N}_{\s,\t,\x,\y}|^{2}&\lesssim_{\kappa}N^{-1}|\t-\s|^{-\frac12}.\label{eq:lzetasimestimateII2}
\end{align}
Next, we control the last term in the first line \eqref{eq:lzetasimestimateII1a}. We write this stochastic integral as
\begin{align*}
\sqrt{2}\lambda N^{\frac12}\int_{\s}^{\t}\sum_{\z\in\Z}\sum_{\w\in\Z}\mathbf{H}^{N}_{\r,\t,\x,\w}\cdot\boldsymbol{\chi}^{(\zeta_{\mathrm{large}})}_{\w}\mathscr{S}^{N}_{\w-\z}\mathbf{R}^{N,\wedge}_{\r,\z}\mathbf{L}^{N,\zeta,\sim}_{\s,\r,\z,\y}\d\mathbf{b}_{\r,\z}.
\end{align*}
Because the Brownian motions are jointly independent, the quadratic variation of this stochastic integral is
\begin{align*}
&2\lambda^{2}N\int_{\s}^{\t}\sum_{\z\in\Z}\Big(\sum_{\w\in\Z}\mathbf{H}^{N}_{\r,\t,\x,\w}\cdot\boldsymbol{\chi}^{(\zeta_{\mathrm{large}})}_{\w}\mathscr{S}^{N}_{\w-\z}\Big)^{2}|\mathbf{R}^{N,\wedge}_{\r,\z}|^{2}|\mathbf{L}^{N,\zeta,\sim}_{\s,\r,\z,\y}|^{2}\d\r\\
&\lesssim N\int_{\s}^{\t}\sum_{\z\in\Z}|[\mathscr{S}^{N}\star(\mathbf{H}^{N}_{\r,\t,\x,\cdot}\cdot\boldsymbol{\chi}^{(\zeta_{\mathrm{large}})}_{\cdot})]_{\z}|^{2}|\mathbf{L}^{N,\zeta,\sim}_{\s,\r,\z,\y}|^{2}\d\r,
\end{align*}
where the inequality follows because {\small$|\mathbf{R}^{N,\wedge}_{\r,\z}|\lesssim1$} (see \eqref{eq:rwedge}). Thus, by the \abbr{BDG} inequality and uniform boundedness of {\small${\boldsymbol{\chi}^{(\zeta_{\mathrm{large}})}}$}, we have
\begin{align}
&\exp(\tfrac{2p\kappa|\x-\y|}{N})\E\Big(\int_{\s}^{\t}\sum_{\w\in\Z}\mathbf{H}^{N}_{\r,\t,\x,\w}\cdot{\boldsymbol{\chi}^{(\zeta_{\mathrm{large}})}_{\w}}[\mathscr{S}^{N}\star(\sqrt{2}\lambda N^{\frac12}\mathbf{R}^{N,\wedge}_{\r,\cdot}\mathbf{L}^{N,\zeta,\sim}_{\s,\r,\cdot,\y}\d\mathbf{b}_{\r,\cdot})]_{\w}\Big)^{2p}\nonumber\\
&\lesssim_{p}\exp(\tfrac{2p\kappa|\x-\y|}{N})\E\Big(\int_{\s}^{\t}\sum_{\z\in\Z}N|[\mathscr{S}^{N}\star\mathbf{H}^{N}_{\r,\t,\x,\cdot}]_{\z}|^{2}\cdot|\mathbf{L}^{N,\zeta,\sim}_{\s,\r,\z,\y}|^{2}\d\r\Big)^{p}\nonumber
\end{align}
for any {\small$p\geq1$}. Now, we move the exponential in the {\small$p$}-th moment. Then, we use 
\begin{align}
\exp(\tfrac{\kappa|\x-\y|}{N})\leq\exp(\tfrac{\kappa|\x-\z|}{N})\exp(\tfrac{\kappa|\z-\y|}{N}).\label{eq:expinequalitytool}
\end{align}
Ultimately, we get the first estimate below (the second estimate below follows by pointwise bounds on {\small$\mathbf{H}^{N}$} from {Proposition \ref{prop:hke}} and contractivity in {\small$\ell^{\infty}(\Z)$} of {\color{black}the} convolution with the probability density {\small$\mathscr{S}^{N}$}):
\begin{align}
&\exp(\tfrac{2p\kappa|\x-\y|}{N})\E\Big(\int_{\s}^{\t}\sum_{\w\in\Z}\mathbf{H}^{N}_{\r,\t,\x,\w}\cdot{\boldsymbol{\chi}^{(\zeta_{\mathrm{large}})}_{\w}}[\mathscr{S}^{N}\star(\sqrt{2}\lambda N^{\frac12}\mathbf{R}^{N,\wedge}_{\r,\cdot}\mathbf{L}^{N,\zeta,\sim}_{\s,\r,\cdot,\y}\d\mathbf{b}_{\r,\cdot})]_{\w}\Big)^{2p}\nonumber\\
&\lesssim_{p}\E\Big(\int_{\s}^{\t}\sum_{\z\in\Z}N\exp(\tfrac{2\kappa|\x-\z|}{N})|[\mathscr{S}^{N}\star\mathbf{H}^{N}_{\r,\t,\x,\cdot}]_{\z}|^{2}\cdot\exp(\tfrac{2\kappa|\z-\y|}{N})|\mathbf{L}^{N,\zeta,\sim}_{\s,\r,\z,\y}|^{2}\d\r\Big)^{p}\nonumber\\
&\lesssim_{\kappa,p}\E\Big(\int_{\s}^{\t}|\t-\r|^{-\frac12}\sum_{\z\in\Z}[\mathscr{S}^{N}\star\mathbf{H}^{N}_{\r,\t,\x,\cdot}]_{\z}\cdot\exp(\tfrac{2\kappa|\z-\y|}{N})|\mathbf{L}^{N,\zeta,\sim}_{\s,\r,\z,\y}|^{2}\d\r\Big)^{p}.\nonumber
\end{align}
Now, for any {\small$q\geq1$}, it will be convenient to use the norm notation {\small$\|\cdot\|_{\mathrm{L}^{q}(\mathbb{P})}:=(\E|\cdot|^{q})^{1/p}$}. Thus, if we take the {\small$1/p$}-th power of the previous display, use the triangle inequality for {\small$\|\cdot\|_{\mathrm{L}^{p}(\mathbb{P})}$}, and sum over {\small$\y\in\Z$}, then the above turns into the following estimate:
\begin{align}
&\sum_{\y\in\Z}\exp(\tfrac{2\kappa|\x-\y|}{N})\Big\|\Big(\int_{\s}^{\t}\sum_{\w\in\Z}\mathbf{H}^{N}_{\r,\t,\x,\w}\cdot{\boldsymbol{\chi}^{(\zeta_{\mathrm{large}})}_{\w}}[\mathscr{S}^{N}\star(\sqrt{2}\lambda N^{\frac12}\mathbf{R}^{N,\wedge}_{\r,\cdot}\mathbf{L}^{N,\zeta,\sim}_{\s,\r,\cdot,\y}\d\mathbf{b}_{\r,\cdot})]_{\w}\Big)^{2}\Big\|_{\mathrm{L}^{p}(\mathbb{P})}\nonumber\\
&\lesssim_{\kappa,p}\Big\|\int_{\s}^{\t}|\t-\r|^{-\frac12}\sum_{\z\in\Z}[\mathscr{S}^{N}\star\mathbf{H}^{N}_{\r,\t,\x,\cdot}]_{\z}\cdot\exp(\tfrac{2\kappa|\z-\y|}{N})|\mathbf{L}^{N,\zeta,\sim}_{\s,\r,\z,\y}|^{2}\d\r\Big\|_{\mathrm{L}^{p}(\mathbb{P})}\nonumber\\
&\lesssim\int_{\s}^{\t}|\t-\r|^{-\frac12}\sum_{\z\in\Z}[\mathscr{S}^{N}\star\mathbf{H}^{N}_{\r,\t,\x,\cdot}]_{\z}\cdot\sup_{\x\in\Z}\sum_{\y\in\Z}\exp(\tfrac{2\kappa|\x-\y|}{N})\||\mathbf{L}^{N,\zeta,\sim}_{\s,\r,\x,\y}|^{2}\|_{\mathrm{L}^{p}(\mathbb{P}}\d\r\nonumber\\
&\lesssim\int_{\s}^{\t}|\t-\r|^{-\frac12}\cdot\sup_{\x\in\Z}\sum_{\y\in\Z}\exp(\tfrac{2\kappa|\x-\y|}{N})\||\mathbf{L}^{N,\zeta,\sim}_{\s,\r,\x,\y}|^{2}\|_{\mathrm{L}^{p}(\mathbb{P})}\d\r,\label{eq:lzetasimestimateII3}
\end{align}
where the last estimate follows because both {\small$\mathscr{S}^{N}$} and {\small$\mathbf{H}^{N}_{\r,\t,\x,\cdot}$} are probability measures on {\small$\Z$}, and thus so is their convolution. We now estimate \eqref{eq:lzetasimestimateII1b}. For this, we use \eqref{eq:err-estimate} to get the following deterministic estimate:
\begin{align*}
&\mathbf{1}_{\mathfrak{t}_{\mathrm{ap}}\geq\r}\boldsymbol{\chi}^{(\zeta)}_{\w}|\mathrm{Err}[\mathbf{R}^{N}_{\r,\cdot}\mathbf{L}^{N,\zeta,\sim}_{\s,\r,\cdot,\y}]_{\w}|\\
&\lesssim N^{-\frac12+\mathrm{C}\delta_{\mathbf{S}}}\mathbf{1}_{\mathfrak{t}_{\mathrm{ap}}\geq\r}\cdot N^{-1+\delta_{\mathbf{S}}}\sum_{|\z|\lesssim N^{1-\delta_{\mathbf{S}}}}\Big(1+\sum_{|\u|\lesssim1}|\bphi_{\r,\w+\z+\u}|^{\mathrm{C}}\Big)\cdot|\mathbf{R}^{N}_{\w+\z}|\cdot|\mathbf{L}^{N,\zeta,\sim}_{\s,\r,\w+\z,\y}|\\
&\lesssim N^{-\frac12+2\mathrm{C}\delta_{\mathbf{S}}}\cdot N^{-1+\delta_{\mathbf{S}}}\sum_{|\z|\lesssim N^{1-\delta_{\mathbf{S}}}}|\mathbf{L}^{N,\zeta,\sim}_{\s,\r,\w+\z,\y}|.
\end{align*}
We clarify that the last bound follows by the a priori estimates on {\small$\bphi$} and {\small$\mathbf{R}^{N}$} provided by {\small$\mathfrak{t}_{\mathrm{ap}}$} (see \eqref{eq:tap1}), which hold uniformly before time {\small$\mathfrak{t}_{\mathrm{ap}}$} and for {\small$|\w|\lesssim N^{1+\zeta}$}, which we are allowed to restrict to since {\small$\boldsymbol{\chi}^{(\zeta)}_{\w}=0$} otherwise. We also recall that {\small$\mathrm{C}=\mathrm{O}(1)$} as noted in \eqref{eq:err-estimate}. By using the previous display and \eqref{eq:expinequalitytool}, we have 
\begin{align*}
&\exp(\tfrac{\kappa|\x-\y|}{N})\int_{\s}^{\t}\sum_{\w\in\Z}\mathbf{H}^{N}_{\r,\t,\x,\w}\cdot\mathbf{1}_{\mathfrak{t}_{\mathrm{ap}}\geq\r}\boldsymbol{\chi}^{(\zeta)}_{\w}\mathrm{Err}[\mathbf{R}^{N}_{\r,\cdot}\mathbf{L}^{N,\zeta,\sim}_{\s,\r,\cdot,\y}]_{\w}|\d\r\\
&\lesssim N^{-\frac12+2\mathrm{C}\delta_{\mathbf{S}}}\int_{\s}^{\t}\sum_{\w\in\Z}\exp(\tfrac{\kappa|\x-\w|}{N})\mathbf{H}^{N}_{\r,\t,\x,\w}\cdot N^{-1+\delta_{\mathbf{S}}}\sum_{|\z|\lesssim N^{1-\delta_{\mathbf{S}}}}\exp(\tfrac{\kappa|\w-\y|}{N})|\mathbf{L}^{N,\zeta,\sim}_{\s,\r,\w+\z,\y}|\d\r\\
&\lesssim_{\kappa} N^{-\frac12+2\mathrm{C}\delta_{\mathbf{S}}}\int_{\s}^{\t}\sum_{\w\in\Z}\exp(\tfrac{\kappa|\x-\w|}{N})\mathbf{H}^{N}_{\r,\t,\x,\w}\cdot N^{-1+\delta_{\mathbf{S}}}\sum_{|\z|\lesssim N^{1-\delta_{\mathbf{S}}}}\exp(\tfrac{\kappa|(\w+\z)-\y|}{N})|\mathbf{L}^{N,\zeta,\sim}_{\s,\r,\w+\z,\y}|\d\r,
\end{align*}
where the last line follows since {\small$|\z|\lesssim N^{1-\delta_{\mathbf{S}}}$}, implying {\small$\exp(\kappa|\w-\y|/N)\lesssim_{\kappa}\exp(\kappa|(\w+\z)-\y|/N)$}. Now, we square the previous estimate, take {\small$\mathrm{L}^{p}(\mathbb{P})$}-norms, and sum over {\small$\y\in\Z$}. If {\small$\delta_{\mathbf{S}}>0$} is small enough, then we get
\begin{align}
&\sum_{\y\in\Z}\exp(\tfrac{2\kappa|\x-\y|}{N})\Big\|\Big(\int_{\s}^{\t}\sum_{\w\in\Z}\mathbf{H}^{N}_{\r,\t,\x,\w}\cdot\mathbf{1}_{\mathfrak{t}_{\mathrm{ap}}\geq\r}\boldsymbol{\chi}^{(\zeta)}_{\w}\mathrm{Err}[\mathbf{R}^{N}_{\r,\cdot}\mathbf{L}^{N,\zeta,\sim}_{\s,\r,\cdot,\y}]_{\w}|\d\r\Big)^{2}\Big\|_{\mathrm{L}^{p}(\mathbb{P})}\nonumber\\
&\lesssim \sum_{\y\in\Z}N^{-\frac23}\Big\|\Big(\int_{\s}^{\t}\sum_{\w\in\Z}\exp(\tfrac{\kappa|\x-\w|}{N})\mathbf{H}^{N}_{\r,\t,\x,\w}\cdot N^{-1+\delta_{\mathbf{S}}}\sum_{|\z|\lesssim N^{1-\delta_{\mathbf{S}}}}\exp(\tfrac{\kappa|(\w+\z)-\y|}{N})|\mathbf{L}^{N,\zeta,\sim}_{\s,\r,\w+\z,\y}|\d\r\Big)^{2}\Big\|_{\mathrm{L}^{p}(\mathbb{P})}.\nonumber
\end{align}
Now, we use Cauchy-Schwarz to move the square inside the {\small$\d\r$}-integration and both summations; we can do this at a multiplicative cost of {\small$\mathrm{O}_{\kappa}(1)$} by summability of {\small$\exp(\kappa|\x-\w|/N)\mathbf{H}^{N}_{\r,\t,\x,\w}$} over {\small$\w\in\Z$} (see {Proposition \ref{prop:hke}}). Next, we move the summation over {\small$\y\in\Z$} past the integral and the other two summations, and we put the norm on only {\small$|\mathbf{L}^{N,\zeta,\sim}_{\s,\r,\w+\z,\y}|$} (using the triangle inequality for said norm). Ultimately, we obtain
\begin{align}
&\sum_{\y\in\Z}\exp(\tfrac{2\kappa|\x-\y|}{N})\Big\|\Big(\int_{\s}^{\t}\sum_{\w\in\Z}\mathbf{H}^{N}_{\r,\t,\x,\w}\cdot\mathbf{1}_{\mathfrak{t}_{\mathrm{ap}}\geq\r}\boldsymbol{\chi}^{(\zeta)}_{\w}\mathrm{Err}[\mathbf{R}^{N}_{\r,\cdot}\mathbf{L}^{N,\zeta,\sim}_{\s,\r,\cdot,\y}]_{\w}|\d\r\Big)^{2}\Big\|_{\mathrm{L}^{p}(\mathbb{P})}\nonumber\\
&\lesssim_{\kappa}N^{-\frac23}\int_{\s}^{\t}\sum_{\w\in\Z}\exp(\tfrac{\kappa|\x-\w|}{N})\mathbf{H}^{N}_{\r,\t,\x,\w}\cdot N^{-1+\delta_{\mathbf{S}}}\sum_{|\z|\lesssim N^{1-\delta_{\mathbf{S}}}}\sum_{\y\in\Z}\exp(\tfrac{2\kappa|(\w+\z)-\y|}{N})\||\mathbf{L}^{N,\zeta,\sim}_{\s,\r,\w+\z,\y}|^{2}\|_{\mathrm{L}^{p}(\mathbb{P})}\nonumber\\
&\lesssim_{\kappa} N^{-\frac23}\int_{\s}^{\t}\sup_{\x\in\Z}\sum_{\y\in\Z}\exp(\tfrac{2\kappa|\x-\y|}{N})\||\mathbf{L}^{N,\zeta,\sim}_{\s,\r,\x,\y}|^{2}\|_{\mathrm{L}^{p}(\mathbb{P})}\d\r\nonumber\\
&\lesssim N^{-\frac23}\sup_{\r\in[\s,\t]}\sup_{\x\in\Z}|\r-\s|^{\frac12}\sum_{\y\in\Z}\exp(\tfrac{\kappa|\x-\y|}{N})\||\mathbf{L}^{N,\zeta,\sim}_{\s,\r,\x,\y}|^{2}\|_{\mathrm{L}^{p}(\mathbb{P})}\cdot\int_{\s}^{\t}|\r-\s|^{-\frac12}\d\r\nonumber\\
&\lesssim N^{-\frac23}\sup_{\r\in[\s,\t]}\sup_{\x\in\Z}|\r-\s|^{\frac12}\sum_{\y\in\Z}\exp(\tfrac{\kappa|\x-\y|}{N})\||\mathbf{L}^{N,\zeta,\sim}_{\s,\r,\x,\y}|^{2}\|_{\mathrm{L}^{p}(\mathbb{P})}, \label{eq:lzetasimestimateII4}
\end{align}
where the second bound holds as the sum over {\small$\z$} is averaged out, and by {\color{black}the} summability of {\small$\exp(\kappa|\x-\w|/N)\mathbf{H}^{N}_{\r,\t,\x,\w}$} over {\small$\w\in\Z$} from {Proposition \ref{prop:hke}}. Let us now control each summand in \eqref{eq:lzetasimestimateII1c}. Recall the time-gradient formula in \eqref{eq:timegrad}. This and elementary integration manipulations let us write
\begin{align}
&\Big|\int_{\s}^{\t}\sum_{\w\in\Z}\mathbf{H}^{N}_{\r,\t,\x,\w}\cdot\boldsymbol{\chi}^{(\zeta)}_{\w}\grad^{\mathbf{T},\mathrm{av}}_{\mathfrak{t}_{\mathbf{Av}}}\boldsymbol{\Omega}^{\n,\mathfrak{l}_{1},\ldots,\mathfrak{l}_{\m}}_{\r,\w,\y}\d\r\Big|\nonumber\\
&=\Big|\mathfrak{t}_{\mathbf{Av}}^{-1}\int_{0}^{\mathfrak{t}_{\mathbf{Av}}}\d\u\int_{\s}^{\t}\sum_{\w\in\Z}\mathbf{H}^{N}_{\r,\t,\x,\w}\cdot\boldsymbol{\chi}^{(\zeta)}_{\w}\Big(\boldsymbol{\Omega}^{\n,\mathfrak{l}_{1},\ldots,\mathfrak{l}_{\m}}_{\r,\w}-\boldsymbol{\Omega}^{\n,\mathfrak{l}_{1},\ldots,\mathfrak{l}_{\m}}_{\r-\u,\w,\y}\Big)\d\r\Big|\nonumber\\
&=\Big|\mathfrak{t}_{\mathbf{Av}}^{-1}\int_{0}^{\mathfrak{t}_{\mathbf{Av}}}\d\u\Big(\int_{\s}^{\t}\sum_{\w\in\Z}\mathbf{H}^{N}_{\r,\t,\x,\w}\boldsymbol{\chi}^{(\zeta)}_{\w}\boldsymbol{\Omega}^{\n,\mathfrak{l}_{1},\ldots,\mathfrak{l}_{\m}}_{\r,\w,\y}\d\r-\int_{\s-\u}^{\t-\u}\sum_{\w\in\Z}\mathbf{H}^{N}_{\r+\u,\t,\x,\w}\boldsymbol{\chi}^{(\zeta)}_{\w}\boldsymbol{\Omega}^{\n,\mathfrak{l}_{1},\ldots,\mathfrak{l}_{\m}}_{\r,\w,\y}\d\r\Big)\Big|\nonumber\\
&\leq\Big|\mathfrak{t}_{\mathbf{Av}}^{-1}\int_{0}^{\mathfrak{t}_{\mathbf{Av}}}\d\u\int_{\s-\u}^{(\t-\u)\wedge\s}\sum_{\w\in\Z}\mathbf{H}^{N}_{\r+\u,\t,\x,\w}\boldsymbol{\chi}^{(\zeta)}_{\w}\boldsymbol{\Omega}^{\n,\mathfrak{l}_{1},\ldots,\mathfrak{l}_{\m}}_{\r,\w,\y}\d\r\Big|\label{eq:lzetasimestimateII5a}\\
&+\Big|\mathfrak{t}_{\mathbf{Av}}^{-1}\int_{0}^{\mathfrak{t}_{\mathbf{Av}}}\d\u\int_{(\t-\u)\wedge\s}^{\t-\u}\sum_{\w\in\Z}\Big(\mathbf{H}^{N}_{\r+\u,\t,\x,\w}-\mathbf{H}^{N}_{\r,\t,\x,\w}\Big)\boldsymbol{\chi}^{(\zeta)}_{\w}\boldsymbol{\Omega}^{\n,\mathfrak{l}_{1},\ldots,\mathfrak{l}_{\m}}_{\r,\w,\y}\d\r\Big|\label{eq:lzetasimestimateII5b}\\
&+\Big|\mathfrak{t}_{\mathbf{Av}}^{-1}\int_{0}^{\mathfrak{t}_{\mathbf{Av}}}\d\u\int_{\t-\u}^{\t}\sum_{\w\in\Z}\mathbf{H}^{N}_{\r,\t,\x,\w}\boldsymbol{\chi}^{(\zeta)}_{\w}\boldsymbol{\Omega}^{\n,\mathfrak{l}_{1},\ldots,\mathfrak{l}_{\m}}_{\r,\w,\y}\d\r\Big|\label{eq:lzetasimestimateII5c}\\
&+\Big|\mathfrak{t}_{\mathbf{Av}}^{-1}\int_{\s}^{(\t-\u)\wedge\s}\sum_{\w\in\Z}\mathbf{H}^{N}_{\r,\t,\x,\w}\boldsymbol{\chi}^{(\zeta)}_{\w}\boldsymbol{\Omega}^{\n,\mathfrak{l}_{1},\ldots,\mathfrak{l}_{\m}}_{\r,\w,\y}\d\r\Big|.\label{eq:lzetasimestimateII5d}
\end{align}
We now multiply each of \eqref{eq:lzetasimestimateII5a}-\eqref{eq:lzetasimestimateII5d} by {\small$\exp(\kappa|\x-\y|/N)$}, square it, take the {\small$\mathrm{L}^{p}(\mathbb{P})$}-norm, and sum over {\small$\y\in\Z$}. We first deal with \eqref{eq:lzetasimestimateII5a} to illustrate this. We claim that 
\begin{align}
&\sum_{\y\in\Z}\exp(\tfrac{2\kappa|\x-\y|}{N})\Big\|\Big(\mathfrak{t}_{\mathbf{Av}}^{-1}\int_{0}^{\mathfrak{t}_{\mathbf{Av}}}\d\u\int_{\s-\u}^{(\t-\u)\wedge\s}\sum_{\w\in\Z}\mathbf{H}^{N}_{\r+\u,\t,\x,\w}\boldsymbol{\chi}^{(\zeta)}_{\w}\boldsymbol{\Omega}^{\n,\mathfrak{l}_{1},\ldots,\mathfrak{l}_{\m}}_{\r,\w,\y}\d\r\Big)^{2}\Big\|_{\mathrm{L}^{p}(\mathbb{P})}\label{eq:lzetasimestimateII5e}\\
&\lesssim\mathfrak{t}_{\mathbf{Av}}^{-1}\int_{0}^{\mathfrak{t}_{\mathbf{Av}}}|\u|\d\u\int_{\s-\u}^{(\t-\u)\wedge\s}\sum_{|\w|\lesssim N^{1+\zeta}}\exp(\tfrac{\kappa|\x-\w|}{N})\mathbf{H}^{N}_{\r+\u,\t,\x,\w}\cdot\sum_{\y\in\Z}\exp(\tfrac{2\kappa|\w-\y|}{N})\||\boldsymbol{\Omega}^{\n,\mathfrak{l}_{1},\ldots,\mathfrak{l}_{\m}}_{\r,\w,\y}|^{2}\|_{\mathrm{L}^{p}(\mathbb{P})}\d\r.\nonumber
\end{align}
This follows by Cauchy-Schwarz with respect to all summations and integrations, noting that the time-integration domain has length {\small$\lesssim|\u|$}, and noting the summability of {\small$\exp(\kappa|\x-\w|/N)\mathbf{H}^{N}_{\r,\t,\x,\w}$} over {\small$\w\in\Z$} (see {Proposition \ref{prop:hke}}). Next, we refer to the formula \eqref{eq:lzetasimestimateII1d} for {\small$\boldsymbol{\Omega}^{\n,\mathfrak{l}_{1},\ldots,\mathfrak{l}_{\m}}$}. Moreover, we recall the estimate \eqref{eq:kzetaestimate-shortI3c}. Using these inputs, we claim that the following holds for any {\small$|\w|\lesssim N^{1+\zeta}$}:
\begin{align*}
&\||\boldsymbol{\Omega}^{\n,\mathfrak{l}_{1},\ldots,\mathfrak{l}_{\m}}_{\r,\w,\y}|^{2}\|_{\mathrm{L}^{p}(\mathbb{P})}\\
&\lesssim N^{2\delta_{\mathbf{S}}}\cdot N^{-1+\delta_{\mathbf{S}}}\sum_{\substack{|\z-\w|\lesssim N^{1-\delta_{\mathbf{S}}}}}\|\mathbf{1}_{\mathfrak{t}_{\mathrm{ap}}\geq\r}|\mathbf{Av}^{\mathbf{X},\mathfrak{q}_{\n}}_{\r,\z}|^{2}\cdot|\mathbf{R}^{N}_{\r,\z}|^{2}\cdot|\mathbf{L}^{N,\zeta,\sim}_{\s,\r,\z,\y}|^{2}\|_{\mathrm{L}^{p}(\mathbb{P})}\\
&\lesssim N^{-1+4\delta_{\mathbf{S}}}\cdot N^{-1+\delta_{\mathbf{S}}}\sum_{|\z-\w|\lesssim N^{1-\delta_{\mathbf{S}}}}\||\mathbf{L}^{N,\zeta,\sim}_{\s,\r,\z,\y}|^{2}\|_{\mathrm{L}^{p}(\mathbb{P})}
\end{align*}
The first estimate follows by Cauchy-Schwarz applied to the convolution in \eqref{eq:lzetasimestimateII1d}, noting that by \eqref{eq:kzetaestimate-shortI3c}, the kernel {\small$\mathrm{c}_{N,\n,\mathfrak{l}_{1},\ldots,\mathfrak{l}_{\m}}\grad^{\mathbf{X}}_{\mathfrak{l}_{1}}\grad^{\mathbf{X}}_{\mathfrak{l}_{\m}}\mathscr{S}^{N}_{\w-\z}$} is equal to {\small$\lesssim N^{\delta_{\mathbf{S}}}$} times a probability measure in {\small$\z\in\Z$} which is supported on the set {\small$|\w-\z|\lesssim N^{1-\delta_{\mathbf{S}}}$}. The second estimate follows by the a priori estimates in \eqref{eq:tap1} and \eqref{eq:tap2}. We now have
\begin{align*}
&\sum_{\y\in\Z}\exp(\tfrac{2\kappa|\w-\y|}{N})\||\boldsymbol{\Omega}^{\n,\mathfrak{l}_{1},\ldots,\mathfrak{l}_{\m}}_{\r,\w,\y}|^{2}\|_{\mathrm{L}^{p}(\mathbb{P})}\\
&\lesssim N^{-1+4\delta_{\mathbf{S}}}\cdot N^{-1+\delta_{\mathbf{S}}}\sum_{|\z-\w|\lesssim N^{1-\delta_{\mathbf{S}}}}\sum_{\y\in\Z}\exp(\tfrac{2\kappa|\w-\y|}{N})\||\mathbf{L}^{N,\zeta,\sim}_{\s,\r,\z,\y}|^{2}\|_{\mathrm{L}^{p}(\mathbb{P})}\\
&\lesssim_{\kappa} N^{-1+4\delta_{\mathbf{S}}}\sup_{\r\in[\s,\t]}\sup_{\x\in\Z}|\r-\s|^{\frac12}\sum_{\y\in\Z}\exp(\tfrac{2\kappa|\x-\y|}{N})\|\mathbf{L}^{N,\zeta,\sim}_{\s,\r,\x,\y}|^{2}\|_{\mathrm{L}^{p}(\mathbb{P})}\cdot|\r-\s|^{-\frac12},
\end{align*}
where the final inequality follows because {\small$\exp(2\kappa|\w-\y|/N)\lesssim_{\kappa}\exp(2\kappa|\z-\y|/N)$} if {\small$|\z-\y|\lesssim N^{1-\delta_{\mathbf{S}}}$}, after which we take the supremum over all {\small$\z\in\Z$}, and the sum over {\small$\z$} which remains is averaged out by the factor of {\small$N^{-1+\delta_{\mathbf{S}}}$}. If we now combine the previous display with \eqref{eq:lzetasimestimateII5e}, then we obtain
\begin{align}
&\sum_{\y\in\Z}\exp(\tfrac{2\kappa|\x-\y|}{N})\Big\|\Big(\mathfrak{t}_{\mathbf{Av}}^{-1}\int_{0}^{\mathfrak{t}_{\mathbf{Av}}}\d\u\int_{\s-\u}^{(\t-\u)\wedge\s}\sum_{\w\in\Z}\mathbf{H}^{N}_{\r+\u,\t,\x,\w}\boldsymbol{\chi}^{(\zeta)}_{\w}\boldsymbol{\Omega}^{\n,\mathfrak{l}_{1},\ldots,\mathfrak{l}_{\m}}_{\r,\w,\y}\d\r\Big)^{2}\Big\|_{\mathrm{L}^{p}(\mathbb{P})}\nonumber\\
&\lesssim_{\kappa}N^{-1+4\delta_{\mathbf{S}}}\sup_{\r\in[\s,\t]}\sup_{\x\in\Z}|\r-\s|^{\frac12}\sum_{\y\in\Z}\exp(\tfrac{2\kappa|\x-\y|}{N})\|\mathbf{L}^{N,\zeta,\sim}_{\s,\r,\x,\y}|^{2}\|_{\mathrm{L}^{p}(\mathbb{P})}\nonumber\\
&\quad\times\mathfrak{t}_{\mathbf{Av}}^{-1}\int_{0}^{\mathfrak{t}_{\mathbf{Av}}}|\u|\d\u\int_{\s-\u}^{(\t-\u)\wedge\s}\sum_{\w\in\Z}\exp(\tfrac{\kappa|\x-\w|}{N})\mathbf{H}^{N}_{\r+\u,\t,\x,\w}|\r-\s|^{-\frac12}\d\r\nonumber\\
&\lesssim_{\kappa}N^{-1+4\delta_{\mathbf{S}}}\sup_{\r\in[\s,\t]}\sup_{\x\in\Z}|\r-\s|^{\frac12}\sum_{\y\in\Z}\exp(\tfrac{2\kappa|\x-\y|}{N})\|\mathbf{L}^{N,\zeta,\sim}_{\s,\r,\x,\y}|^{2}\|_{\mathrm{L}^{p}(\mathbb{P})}\cdot\mathfrak{t}_{\mathbf{Av}}^{-1}\int_{0}^{\mathfrak{t}_{\mathbf{Av}}}|\u|^{\frac32}\d\u\nonumber\\
&\lesssim N^{-2-11\delta_{\mathbf{S}}}\sup_{\r\in[\s,\t]}\sup_{\x\in\Z}|\r-\s|^{\frac12}\sum_{\y\in\Z}\exp(\tfrac{2\kappa|\x-\y|}{N})\|\mathbf{L}^{N,\zeta,\sim}_{\s,\r,\x,\y}|^{2}\|_{\mathrm{L}^{p}(\mathbb{P})},\label{eq:lzetasimestimateII6}
\end{align}
where the third line follows due to {\color{black}the} summability of {\small$\exp(\kappa|\x-\w|/N)\mathbf{H}^{N}_{\r,\t,\x,\w}$} over {\small$\w\in\Z$} (see {Proposition \ref{prop:hke}}), and the last line follows since {\small$\mathfrak{t}_{\mathbf{Av}}=N^{-2/3-10\delta_{\mathbf{S}}}$} (see Definition \ref{definition:eq-operators}). Note that the only feature we used about the integration-domain {\small$[\s-\u,(\t-\u)\wedge\s]$} to obtain \eqref{eq:lzetasimestimateII6} is that its size is {\small$\lesssim\u$}. Thus, the same argument gives 
\begin{align}
&\sum_{\y\in\Z}\exp(\tfrac{2\kappa|\x-\y|}{N})\Big\|\Big(\mathfrak{t}_{\mathbf{Av}}^{-1}\int_{0}^{\mathfrak{t}_{\mathbf{Av}}}\d\u\int_{\t-\u}^{\t}\sum_{\w\in\Z}\mathbf{H}^{N}_{\r,\t,\x,\w}\boldsymbol{\chi}^{(\zeta)}_{\w}\boldsymbol{\Omega}^{\n,\mathfrak{l}_{1},\ldots,\mathfrak{l}_{\m}}_{\r,\w,\y}\d\r\Big)^{2}\Big\|_{\mathrm{L}^{p}(\mathbb{P})}\nonumber\\
&\lesssim_{\kappa}N^{-2-11\delta_{\mathbf{S}}}\sup_{\r\in[\s,\t]}\sup_{\x\in\Z}|\r-\s|^{\frac12}\sum_{\y\in\Z}\exp(\tfrac{2\kappa|\x-\y|}{N})\|\mathbf{L}^{N,\zeta,\sim}_{\s,\r,\x,\y}|^{2}\|_{\mathrm{L}^{p}(\mathbb{P})}\label{eq:lzetasimestimateII7}
\end{align}
and
\begin{align}
&\sum_{\y\in\Z}\exp(\tfrac{2\kappa|\x-\y|}{N})\Big\|\Big(\mathfrak{t}_{\mathbf{Av}}^{-1}\int_{\s}^{(\t-\u)\wedge\s}\sum_{\w\in\Z}\mathbf{H}^{N}_{\r,\t,\x,\w}\boldsymbol{\chi}^{(\zeta)}_{\w}\boldsymbol{\Omega}^{\n,\mathfrak{l}_{1},\ldots,\mathfrak{l}_{\m}}_{\r,\w,\y}\d\r.\Big)^{2}\Big\|_{\mathrm{L}^{p}(\mathbb{P})}\nonumber\\
&\lesssim_{\kappa}N^{-2-11\delta_{\mathbf{S}}}\sup_{\r\in[\s,\t]}\sup_{\x\in\Z}|\r-\s|^{\frac12}\sum_{\y\in\Z}\exp(\tfrac{2\kappa|\x-\y|}{N})\|\mathbf{L}^{N,\zeta,\sim}_{\s,\r,\x,\y}|^{2}\|_{\mathrm{L}^{p}(\mathbb{P})}.\label{eq:lzetasimestimateII8}
\end{align}
(Indeed, {\small$[\s,(\t-\u)\wedge\s]$} is nonempty if and only if {\small$\t-\u<\s$}. But {\small$\s\leq\t$}; thus, {\small$\t-\u<\s$} only happens if {\small$\t-\s\in[0,\u)$}, so that {\small$|(\t-\u)-\s|\leq\u$}.) We now move to \eqref{eq:lzetasimestimateII5b}, whose time-integral is not necessarily supported on a small interval. Again, we recall \eqref{eq:lzetasimestimateII1d}. This lets us compute
\begin{align*}
&\sum_{\w\in\Z}(\mathbf{H}^{N}_{\r+\u,\t,\x,\w}-\mathbf{H}^{N}_{\r,\t,\x,\w})\boldsymbol{\chi}^{(\zeta)}_{\w}\boldsymbol{\Omega}^{\n,\mathfrak{l}_{1},\ldots,\mathfrak{l}_{\m}}_{\r,\w,\y}\nonumber\\
&=\mathbf{1}_{\mathfrak{t}_{\mathrm{ap}}\geq\r}\sum_{\w\in\Z}\sum_{\z\in\Z}(\mathbf{H}^{N}_{\r+\u,\t,\x,\w}-\mathbf{H}^{N}_{\r,\t,\x,\w})\cdot\boldsymbol{\chi}^{(\zeta)}_{\w}\cdot\mathrm{c}_{\n,\mathfrak{l}_{1},\ldots,\mathfrak{l}_{\m}}\grad^{\mathbf{X}}_{\mathfrak{l}_{1}}\ldots\grad^{\mathbf{X}}_{\mathfrak{l}_{\m}}\Big(\mathscr{S}^{N}_{\w-\z}\mathbf{Av}^{\mathbf{X},\mathfrak{q}_{\n}}_{\r,\z}\mathbf{R}^{N}_{\r,\z}\mathbf{L}^{N,\zeta,\sim}_{\s,\r,\z,\y}\Big)\\
&\lesssim N^{-\frac12+\delta_{\mathbf{S}}}\sum_{\z\in\Z}|\mathbf{L}^{N,\zeta,\sim}_{\s,\r,\z,\y}|\cdot\Big|\sum_{\w\in\Z}(\mathbf{H}^{N}_{\r+\u,\t,\x,\w}-\mathbf{H}^{N}_{\r,\t,\x,\w})\cdot\boldsymbol{\chi}^{(\zeta)}_{\w}\cdot\mathrm{c}_{\n,\mathfrak{l}_{1},\ldots,\mathfrak{l}_{\m}}\grad^{\mathbf{X}}_{\mathfrak{l}_{1}}\ldots\grad^{\mathbf{X}}_{\mathfrak{l}_{\m}}\mathscr{S}^{N}_{\w-\z}\Big|,
\end{align*}
where the last line is justified as follows. First, interchange the {\small$\w,\z$}-sums in the second line. Then, after replacing everything by an absolute value, we can use \eqref{eq:kzetaestimate-shortI3c} and the support of {\small$\w\mapsto\boldsymbol{\chi}^{(\zeta)}_{\w}$} to restrict the {\small$\z$}-summation to {\small$|\z|\lesssim N^{1+\zeta}$}, at which point we use {\small$\mathfrak{t}_{\mathrm{ap}}\geq\r$} and \eqref{eq:tap1}-\eqref{eq:tap2} to get a priori estimates for the {\small$\mathbf{Av}^{\mathbf{X},\mathfrak{q}_{\n}}$} and {\small$\mathbf{R}^{N}$} terms. Next, we use the fundamental theorem of calculus and the \abbr{PDE} \eqref{eq:heatkernel} for {\small$\mathbf{H}^{N}$} to write
\begin{align*}
&\sum_{\w\in\Z}(\mathbf{H}^{N}_{\r+\u,\t,\x,\w}-\mathbf{H}^{N}_{\r,\t,\x,\w})\cdot\boldsymbol{\chi}^{(\zeta)}_{\w}\cdot\mathrm{c}_{\n,\mathfrak{l}_{1},\ldots,\mathfrak{l}_{\m}}\grad^{\mathbf{X}}_{\mathfrak{l}_{1}}\ldots\grad^{\mathbf{X}}_{\mathfrak{l}_{\m}}\mathscr{S}^{N}_{\w-\z}\\
&=\int_{0}^{\u}\d\upsilon\sum_{\w\in\Z}\partial_{\upsilon}\mathbf{H}^{N}_{\r+\upsilon,\t,\x,\w}\cdot\boldsymbol{\chi}^{(\zeta)}_{\w}\cdot\mathrm{c}_{\n,\mathfrak{l}_{1},\ldots,\mathfrak{l}_{\m}}\grad^{\mathbf{X}}_{\mathfrak{l}_{1}}\ldots\grad^{\mathbf{X}}_{\mathfrak{l}_{\m}}\mathscr{S}^{N}_{\w-\z}\\
&=\int_{0}^{\u}\d\upsilon\sum_{\w\in\Z}\mathscr{T}_{N}\mathbf{H}^{N}_{\r+\upsilon,\t,\x,\w}\cdot\Big(\boldsymbol{\chi}^{(\zeta)}_{\w}\cdot\mathrm{c}_{\n,\mathfrak{l}_{1},\ldots,\mathfrak{l}_{\m}}\grad^{\mathbf{X}}_{\mathfrak{l}_{1}}\ldots\grad^{\mathbf{X}}_{\mathfrak{l}_{\m}}\mathscr{S}^{N}_{\w-\z}\Big).
\end{align*}
Next, we recall that {\small$\mathscr{T}_{N}=\mathrm{O}(N^{2})\Delta$}, where {\small$\Delta=-\grad^{\mathbf{X}}_{-1}\grad^{\mathbf{X}}_{1}$} is the discrete Laplacian. Moreover, {\small$\mathbf{H}^{N}$} is the kernel for the {\small$\mathscr{T}_{N}$}-semigroup, which commutes with {\small$\mathscr{T}_{N}$}-itself. Thus, we can move {\small$\mathscr{T}_{N}$} onto the term in parentheses in the last line. However, {\small$\boldsymbol{\chi}^{(\zeta)}$} is smooth on length-scales {\small$\gg N^{1-\delta_{\mathbf{S}}}$}, and \eqref{eq:kzetaestimate-shortI3c} shows that {\small$\mathrm{c}_{\n,\mathfrak{l}_{1},\ldots,\mathfrak{l}_{\m}}\grad^{\mathbf{X}}_{\mathfrak{l}_{1}}\ldots\grad^{\mathbf{X}}_{\mathfrak{l}_{\m}}\mathscr{S}^{N}_{\w-\z}$} is {\small$\mathrm{O}(N^{\delta_{\mathbf{S}}})$} times a function that is also smooth on length-scales of {\small$N^{1-\delta_{\mathbf{S}}}$}. Thus, the proof of \eqref{eq:kzetaestimate-shortI3c} also gives
\begin{align*}
\mathscr{T}_{N}\Big(\boldsymbol{\chi}^{(\zeta)}_{\w}\cdot\mathrm{c}_{\n,\mathfrak{l}_{1},\ldots,\mathfrak{l}_{\m}}\grad^{\mathbf{X}}_{\mathfrak{l}_{1}}\ldots\grad^{\mathbf{X}}_{\mathfrak{l}_{\m}}\mathscr{S}^{N}_{\w-\z}\Big)\lesssim N^{2\delta_{\mathbf{S}}}N^{-1+2\delta_{\mathbf{S}}}\mathbf{1}_{|\w-\z|\lesssim N^{1-\delta_{\mathbf{S}}}},
\end{align*}
where the first factor of {\small$N^{2\delta_{\mathbf{S}}}$} is there since {\small$\mathscr{T}_{N}$} is a second-order gradient and introduces two factors of {\small$N^{\delta_{\mathbf{S}}}$} (as opposed to one factor of {\small$N^{\delta_{\mathbf{S}}}$} from a first-order gradient in \eqref{eq:kzetaestimate-shortI3c}). If we combine the last three displays, we get
\begin{align*}
\sum_{\w\in\Z}(\mathbf{H}^{N}_{\r+\u,\t,\x,\w}-\mathbf{H}^{N}_{\r,\t,\x,\w})\boldsymbol{\chi}^{(\zeta)}_{\w}\boldsymbol{\Omega}^{\n,\mathfrak{l}_{1},\ldots,\mathfrak{l}_{\m}}_{\r,\w,\y}&\lesssim N^{-\frac12+3\delta_{\mathbf{S}}}\sum_{\z\in\Z}|\mathbf{L}^{N,\zeta,\sim}_{\s,\r,\z,\y}|\cdot\int_{0}^{\u}\d\upsilon\sum_{\w\in\Z}\mathbf{H}^{N}_{\r+\upsilon,\t,\x,\w}\cdot N^{-1+2\delta_{\mathbf{S}}}\mathbf{1}_{|\w-\z|\lesssim N^{1-\delta_{\mathbf{S}}}}.
\end{align*}
Using the previous display as an input, we claim that the following holds:
\begin{align}
&\Big\|\Big(\mathfrak{t}_{\mathbf{Av}}^{-1}\int_{0}^{\mathfrak{t}_{\mathbf{Av}}}\d\u\int_{(\t-\u)\wedge\s}^{\t-\u}\sum_{\w\in\Z}\Big(\mathbf{H}^{N}_{\r+\u,\t,\x,\w}-\mathbf{H}^{N}_{\r,\t,\x,\w}\Big)\boldsymbol{\chi}^{(\zeta)}_{\w}\boldsymbol{\Omega}^{\n,\mathfrak{l}_{1},\ldots,\mathfrak{l}_{\m}}_{\r,\w,\y}\d\r\Big)^{2}\Big\|_{\mathrm{L}^{p}(\mathbb{P})}\nonumber\\
&\lesssim\mathfrak{t}_{\mathbf{Av}}^{-1}\int_{0}^{\mathfrak{t}_{\mathbf{Av}}}\d\u\int_{(\t-\u)\wedge\s}^{\t-\u}\Big\|\Big(\sum_{\w\in\Z}\Big(\mathbf{H}^{N}_{\r+\u,\t,\x,\w}-\mathbf{H}^{N}_{\r,\t,\x,\w}\Big)\boldsymbol{\chi}^{(\zeta)}_{\w}\boldsymbol{\Omega}^{\n,\mathfrak{l}_{1},\ldots,\mathfrak{l}_{\m}}_{\r,\w,\y}\Big)^{2}\Big\|_{\mathrm{L}^{p}(\mathbb{P})}\d\r\nonumber\\
&\lesssim N^{-1+6\delta_{\mathbf{S}}}\mathfrak{t}_{\mathbf{Av}}^{-1}\int_{0}^{\mathfrak{t}_{\mathbf{Av}}}\d\u\int_{(\t-\u)\wedge\s}^{\t-\u}\Big\|\Big(\int_{0}^{\u}\d\upsilon\sum_{\w\in\Z}\mathbf{H}^{N}_{\r+\upsilon,\t,\x,\w}\cdot N^{-1+2\delta_{\mathbf{S}}}\sum_{|\z-\w|\lesssim N^{1-\delta_{\mathbf{S}}}}|\mathbf{L}^{N,\zeta,\sim}_{\s,\r,\z,\y}|\Big)^{2}\Big\|_{\mathrm{L}^{p}(\mathbb{P})}\d\r\nonumber\\
&\lesssim N^{-1+8\delta_{\mathbf{S}}}\mathfrak{t}_{\mathbf{Av}}^{-1}\int_{0}^{\mathfrak{t}_{\mathbf{Av}}}|\u|\d\u\int_{(\t-\u)\wedge\s}^{\t-\u}\int_{0}^{\u}\d\upsilon\sum_{\w\in\Z}\mathbf{H}^{N}_{\r+\upsilon,\t,\x,\w}\cdot N^{-1+\delta_{\mathbf{S}}}\sum_{|\z-\w|\lesssim N^{1-\delta_{\mathbf{S}}}}\||\mathbf{L}^{N,\zeta,\sim}_{\s,\r,\z,\y}|^{2}\|_{\mathrm{L}^{p}(\mathbb{P})}\d\r.\nonumber
\end{align}
The first estimate follows by Cauchy-Schwarz and triangle inequality for the {\small$\mathrm{L}^{p}(\mathbb{P})$}-norm through the two time-integrals. The last estimate follows by Cauchy-Schwarz through the {\small$\d\upsilon$}-integral and the two summations. Next, we record the following estimate, which again uses {\small$\exp(2\kappa|\x-\y|/N)\lesssim_{\kappa}\exp(2\kappa|\x-\w|/N)\exp(2\kappa|\z-\y|/N)$} for {\small$|\z-\w|\lesssim N^{1-\delta_{\mathbf{S}}}$}, summability of {\small$\exp(\kappa|\x-\w|/N)\mathbf{H}^{N}_{\r,\t,\x,\w}$} over {\small$\w\in\Z$} (see {Proposition \ref{prop:hke}}), and elementary manipulations:
\begin{align}
&\sum_{\y\in\Z}\exp(\tfrac{2\kappa|\x-\y|}{N})\sum_{\w\in\Z}\mathbf{H}^{N}_{\r+\upsilon,\t,\x,\w}\cdot N^{-1+\delta_{\mathbf{S}}}\sum_{|\z-\w|\lesssim N^{1-\delta_{\mathbf{S}}}}\||\mathbf{L}^{N,\zeta,\sim}_{\s,\r,\z,\y}|^{2}\|_{\mathrm{L}^{p}(\mathbb{P})}\nonumber\\
&\lesssim_{\kappa}\sum_{\w\in\Z}\exp(\tfrac{2\kappa|\x-\w|}{N})\mathbf{H}^{N}_{\r+\upsilon,\t,\x,\w}\cdot N^{-1+\delta_{\mathbf{S}}}\sum_{|\z-\w|\lesssim N^{1-\delta_{\mathbf{S}}}}\sum_{\y\in\Z}\exp(\tfrac{2\kappa|\z-\y|}{N})\||\mathbf{L}^{N,\zeta,\sim}_{\s,\r,\z,\y}|^{2}\|_{\mathrm{L}^{p}(\mathbb{P})}\nonumber\\
&\lesssim_{\kappa}|\r-\s|^{-\frac12}\cdot\sup_{\r\in[\s,\t]}\sup_{\x\in\Z}|\r-\s|^{\frac12}\sum_{\y\in\Z}\exp(\tfrac{2\kappa|\x-\y|}{N})\||\mathbf{L}^{N,\zeta,\sim}_{\s,\r,\x,\y}|^{2}\|_{\mathrm{L}^{p}(\mathbb{P})}.\nonumber
\end{align}
If we combine the previous two displays, then we have 
\begin{align}
&\sum_{\y\in\Z}\exp(\tfrac{2\kappa|\x-\y|}{N})\Big\|\Big(\mathfrak{t}_{\mathbf{Av}}^{-1}\int_{0}^{\mathfrak{t}_{\mathbf{Av}}}\d\u\int_{(\t-\u)\wedge\s}^{\t-\u}\sum_{\w\in\Z}\Big(\mathbf{H}^{N}_{\r+\u,\t,\x,\w}-\mathbf{H}^{N}_{\r,\t,\x,\w}\Big)\boldsymbol{\chi}^{(\zeta)}_{\w}\boldsymbol{\Omega}^{\n,\mathfrak{l}_{1},\ldots,\mathfrak{l}_{\m}}_{\r,\w,\y}\d\r\Big)^{2}\Big\|_{\mathrm{L}^{p}(\mathbb{P})}\nonumber\\
&\lesssim_{\kappa}N^{-1+8\delta_{\mathbf{S}}}\sup_{\r\in[\s,\t]}\sup_{\x\in\Z}|\r-\s|^{\frac12}\sum_{\y\in\Z}\exp(\tfrac{2\kappa|\x-\y|}{N})\||\mathbf{L}^{N,\zeta,\sim}_{\s,\r,\x,\y}|^{2}\|_{\mathrm{L}^{p}(\mathbb{P})}\times\mathfrak{t}_{\mathbf{Av}}^{-1}\int_{0}^{\mathfrak{t}_{\mathbf{Av}}}|\u|\d\u\int_{(\t-\u)\wedge\s}^{\t-\u}|\r-\s|^{-\frac12}\int_{0}^{\u}\d\upsilon\d\r\nonumber\\
&\lesssim N^{-1+8\delta_{\mathbf{S}}}\mathfrak{t}_{\mathbf{Av}}^{2}\sup_{\r\in[\s,\t]}\sup_{\x\in\Z}|\r-\s|^{\frac12}\sum_{\y\in\Z}\exp(\tfrac{2\kappa|\x-\y|}{N})\||\mathbf{L}^{N,\zeta,\sim}_{\s,\r,\x,\y}|^{2}\|_{\mathrm{L}^{p}(\mathbb{P})}\nonumber\\
&\lesssim N^{-\frac73+8\delta_{\mathbf{S}}}\sup_{\r\in[\s,\t]}\sup_{\x\in\Z}|\r-\s|^{\frac12}\sum_{\y\in\Z}\exp(\tfrac{2\kappa|\x-\y|}{N})\||\mathbf{L}^{N,\zeta,\sim}_{\s,\r,\x,\y}|^{2}\|_{\mathrm{L}^{p}(\mathbb{P})}.\label{eq:lzetasimestimateII9}
\end{align}
We clarify that the last line follows because {\small$\mathfrak{t}_{\mathbf{Av}}\lesssim N^{-2/3}$}; see Definition \ref{definition:eq-operators}. We now combine \eqref{eq:lzetasimestimateII5a}-\eqref{eq:lzetasimestimateII5d} with \eqref{eq:lzetasimestimateII6}, \eqref{eq:lzetasimestimateII7}, \eqref{eq:lzetasimestimateII8}, and \eqref{eq:lzetasimestimateII9}. This implies that 
\begin{align}
&\sum_{\y\in\Z}\exp(\tfrac{2\kappa|\x-\y|}{N})\Big\|\Big(\int_{\s}^{\t}\sum_{\w\in\Z}\mathbf{H}^{N}_{\r,\t,\x,\w}\cdot\boldsymbol{\chi}^{(\zeta)}_{\w}\grad^{\mathbf{T},\mathrm{av}}_{\mathfrak{t}_{\mathbf{Av}}}\boldsymbol{\Omega}^{\n,\mathfrak{l}_{1},\ldots,\mathfrak{l}_{\m}}_{\r,\w,\y}\d\r\Big)^{2}\Big\|_{\mathrm{L}^{p}(\mathbb{P})}\nonumber\\
&\lesssim N^{-2-11\delta_{\mathbf{S}}}\sup_{\r\in[\s,\t]}\sup_{\x\in\Z}|\r-\s|^{\frac12}\sum_{\y\in\Z}\exp(\tfrac{2\kappa|\x-\y|}{N})\||\mathbf{L}^{N,\zeta,\sim}_{\s,\r,\x,\y}|^{2}\|_{\mathrm{L}^{p}(\mathbb{P})}.\label{eq:lzetasimestimateII10}
\end{align}
We combine the previous display with \eqref{eq:lzetasimestimateII1a}-\eqref{eq:lzetasimestimateII1c}, \eqref{eq:lzetasimestimateII2}, \eqref{eq:lzetasimestimateII3}, and \eqref{eq:lzetasimestimateII4}. This gives
\begin{align}
&|\t-\s|^{\frac12}\sum_{\y\in\Z}\exp(\tfrac{2\kappa|\x-\y|}{N})\||\mathbf{L}^{N,\zeta,\sim}_{\s,\t,\x,\y}|^{2}\|_{\mathrm{L}^{p}(\mathbb{P})}\nonumber\\
&\lesssim_{\kappa} N^{-1}+|\t-\s|^{\frac12}\int_{\s}^{\t}|\t-\r|^{-\frac12}|\r-\s|^{-\frac12}\cdot\sup_{\x\in\Z}|\r-\s|^{\frac12}\sum_{\y\in\Z}\exp(\tfrac{2\kappa|\x-\y|}{N})\|\mathbf{L}^{N,\zeta,\sim}_{\s,\r,\x,\y}\|_{\mathrm{L}^{p}(\mathbb{P})}^{2}\d\r\nonumber\\
&+N^{-11\delta_{\mathbf{S}}}|\t-\s|^{\frac12}\sup_{\r\in[\s,\t]}\sup_{\x\in\Z}|\r-\s|^{\frac12}\sum_{\y\in\Z}\exp(\tfrac{2\kappa|\x-\y|}{N})\||\mathbf{L}^{N,\zeta,\sim}_{\s,\r,\x,\y}|^{2}\|_{\mathrm{L}^{p}(\mathbb{P})}.\nonumber
\end{align}
Let us now define {\small$\boldsymbol{\Lambda}_{\s,\t}:=\sup_{\x\in\Z}|\t-\s|^{\frac12}\sum_{\y\in\Z}\exp(\tfrac{2\kappa|\x-\y|}{N})\||\mathbf{L}^{N,\zeta,\sim}_{\s,\r,\x,\y}|^{2}\|_{\mathrm{L}^{p}(\mathbb{P})}$}. The previous display now reads
\begin{align}
\boldsymbol{\Lambda}_{\s,\t}&\lesssim_{\kappa} N^{-1}+|\t-\s|^{\frac12}\int_{\s}^{\t}|\t-\r|^{-\frac12}|\r-\s|^{-\frac12}\boldsymbol{\Lambda}_{\s,\r}\d\r+N^{-11\delta_{\mathbf{S}}}|\t-\s|^{\frac12}\sup_{\r\in[\s,\t]}\boldsymbol{\Lambda}_{\s,\r}.\label{eq:lzetasimestimateII11}
\end{align}
Now, let {\small$\t_{0}\in[\s,1]$} be the first time that {\small$\boldsymbol{\Lambda}_{\s,\t}\gtrsim_{\kappa} N^{-1+\e}$}, where we have fixed a (small) {\small$\e>0$} satisfying {\small$\e\leq\delta_{\mathbf{S}}$}. (If no such time exists, we set {\small$\t_{0}=1$}.) In this case, we have the following for any {\small$\t\in[\s,\t_{0}]$}: 
\begin{align*}
\boldsymbol{\Lambda}_{\s,\t}&\lesssim_{\kappa}N^{-1}+\int_{\s}^{\t}|\t-\r|^{-\frac12}|\r-\s|^{-\frac12}\boldsymbol{\Lambda}_{\s,\r}\d\r,
\end{align*}
which, by the Gronwall inequality, implies
\begin{align*}
\sup_{\t\in[\s,\t_{0}]}\boldsymbol{\Lambda}_{\s,\t}\lesssim_{\kappa}N^{-1}\cdot\sup_{\t\in[\s,\t_{0}]}\exp(\int_{\s}^{\t}|\t-\r|^{-\frac12}|\r-\s|^{-\frac12}\d\r\Big)\lesssim N^{-1}.
\end{align*}
Since the {\color{black}rightmost term} is {\small$\ll N^{1+\e}$}, we must have {\small$\t_{0}=1$} and thus {\small$\boldsymbol{\Lambda}_{\s,\t}\lesssim N^{1+\e}$} for all {\small$\t\in[\s,1]$}. Now, we again use  \eqref{eq:lzetasimestimateII1a}-\eqref{eq:lzetasimestimateII1c}, but now we subtract {\small$\mathbf{H}^{N}_{\s,\t,\x,\y}$} from both sides. Then, we use \eqref{eq:lzetasimestimateII3}, \eqref{eq:lzetasimestimateII4}, and \eqref{eq:lzetasimestimateII10} to get the following estimate, which is essentially \eqref{eq:lzetasimestimateII11}, but we have subtracted off the first term on the \abbr{RHS}: 
\begin{align}
&\sup_{\x\in\Z}|\t-\s|^{\frac12}\sum_{\y\in\Z}\exp(\tfrac{2\kappa|\x-\y|}{N})\||\mathbf{L}^{N,\zeta,\sim}_{\s,\t,\x,\y}-\mathbf{H}^{N}_{\s,\t,\x,\y}|^{2}\|_{\mathrm{L}^{p}(\mathbb{P})}\nonumber\\
&\lesssim |\t-\s|^{\frac12}\int_{\s}^{\t}|\t-\r|^{-\frac12}|\r-\s|^{-\frac12}\boldsymbol{\Lambda}_{\s,\r}\d\r+N^{-13\delta_{\mathbf{S}}}|\t-\s|^{\frac12}\sup_{\r\in[\s,\t]}\boldsymbol{\Lambda}_{\s,\r}\lesssim N^{-1+\e}|\t-\s|^{\frac12}.\label{eq:lzetasimestimateII12a}
\end{align}
It is now a standard Chebyshev inequality argument to use the above display for large enough {\small$p$} and deduce {\color{black}that}
\begin{align*}
\sum_{\y\in\Z}\exp(\tfrac{2\kappa|\x-\y|}{N})|\mathbf{L}^{N,\zeta,\sim}_{\s,\t,\x,\y}-\mathbf{H}^{N}_{\s,\t,\x,\y}|^{2}\lesssim_{\kappa}N^{-1+2\e}
\end{align*}
holds with probability {\small$1-\mathrm{O}_{\mathrm{D}}(N^{-\mathrm{D}})$} for any {\small$\mathrm{D}>0$}. Now, fix any {\small$\mathrm{D}_{1},\mathrm{D}_{2}>0$}. A union bound and the previous display {\color{black}imply} now the following with high probability:
\begin{align*}
\sup_{\s\in[0,1]\cap N^{-\mathrm{D}_{1}}\Z}\sup_{\t\in[\s,1]\cap N^{-\mathrm{D}_{1}}\Z}\sup_{|\x|\lesssim N^{\mathrm{D}_{2}}}\sum_{\y\in\Z}\exp(\tfrac{2\kappa|\x-\y|}{N})|\mathbf{L}^{N,\zeta,\sim}_{\s,\t,\x,\y}-\mathbf{H}^{N}_{\s,\t,\x,\y}|^{2}\lesssim_{\kappa}N^{-1+\delta}.
\end{align*}
Because {\small$\mathrm{D}_{1}>0$} is arbitrarily large (in particular, {\small$N^{-\mathrm{D}_{1}}$} is much smaller than the microscopic time-scale {\small$N^{-2}$}), a standard continuity argument lets us remove the intersections with {\small$N^{-\mathrm{D}_{1}}\Z$}, so that 
\begin{align}
\sup_{\t\in[\s,1]}\sup_{|\x|\lesssim N^{\mathrm{D}_{2}}}\sum_{\y\in\Z}\exp(\tfrac{2\kappa|\x-\y|}{N})|\mathbf{L}^{N,\zeta,\sim}_{\s,\t,\x,\y}-\mathbf{H}^{N}_{\s,\t,\x,\y}|^{2}\lesssim_{\kappa}N^{-1+\delta}.\label{eq:lzetasimestimateII12}
\end{align}
It now suffices to handle {\small$|\x|\gtrsim N^{\mathrm{D}_{2}}$}. By \eqref{eq:lzetasim-sde}, if {\small$|\x|\gtrsim N^{1+\zeta_{\mathrm{large}}+\zeta}$}, then {\small$\d\mathbf{L}^{N,\zeta,\sim}_{\s,\t,\x,\y}=\mathscr{T}_{N}\mathbf{L}^{N,\zeta,\sim}_{\s,\t,\x,\y}$}, which is the same evolution equation as the one \eqref{eq:heatkernel} for {\small$\mathbf{H}^{N}$}. Thus, since {\small$\mathscr{T}_{N}$} is {\small$\mathrm{O}(N^{2})$} times a discrete Laplacian on {\small$\Z$}, for any such {\small$\x$}, we have the deterministic estimate
\begin{align*}
\sum_{\y\in\Z}\exp(\tfrac{2\kappa|\x-\y|}{N})|\mathbf{L}^{N,\zeta,\sim}_{\s,\t,\x,\y}-\mathbf{H}^{N}_{\s,\t,\x,\y}|^{2}&\lesssim N^{4}\sum_{|\z|\leq1}\int_{\s}^{\t}\sum_{\y\in\Z}\exp(\tfrac{2\kappa|\x-\y|}{N})|\mathbf{L}^{N,\zeta,\sim}_{\s,\t,\x+\z,\y}-\mathbf{H}^{N}_{\s,\r,\x+\z,\y}|^{2}\d\r\\
&\lesssim_{\kappa} N^{4}\sum_{|\z|\leq1}\int_{\s}^{\t}\sum_{\y\in\Z}\exp(\tfrac{2\kappa|(\x+\z)-\y|}{N})|\mathbf{L}^{N,\zeta,\sim}_{\s,\t,\x+\z,\y}-\mathbf{H}^{N}_{\s,\r,\x+\z,\y}|^{2}\d\r.
\end{align*}
In particular, for {\small$|\x|\gtrsim N^{\mathrm{D}_{2}}$} with {\small$\mathrm{D}_{2}>0$} large, we can iterate this bound a total of {\small$\mathrm{c}\cdot\mathrm{dist}(\x,\mathrm{supp}(\boldsymbol{\chi}^{\zeta_{\mathrm{large}}}))$}-many times (for some {\small$\mathrm{c}>0$}). We obtain the following (in which {\small$\mathrm{J}:=\mathrm{c}\cdot\mathrm{dist}(\x,\mathrm{supp}(\boldsymbol{\chi}^{\zeta_{\mathrm{large}}}))$} just for convenience):
\begin{align*}
&\sum_{\y\in\Z}\exp(\tfrac{2\kappa|\x-\y|}{N})|\mathbf{L}^{N,\zeta,\sim}_{\s,\t,\x,\y}-\mathbf{H}^{N}_{\s,\t,\x,\y}|^{2}\\
&\lesssim \mathrm{C}_{\kappa}^{\mathrm{J}}N^{4\mathrm{J}}\sum_{\substack{\z=\z_{1}+\ldots\z_{\mathrm{J}+1}\\|\z_{1}|,\ldots,|\z_{\mathrm{J}+1}|\leq1}}\int_{\s}^{\t}\int_{\s}^{\r_{1}}\ldots\int_{\s}^{\r_{\mathrm{J}}}\sum_{\y\in\Z}\exp(\tfrac{2\kappa|(\x+\z)-\y|}{N})|\mathbf{L}^{N,\zeta,\sim}_{\s,\r,\x+\z,\y}-\mathbf{H}^{N}_{\s,\r,\x+\z,\y}|^{2}\d\r\d\r_{\mathrm{J}}\ldots\d\r_{1}.
\end{align*}
Above, {\small$\mathrm{C}_{\kappa}\lesssim_{\kappa}1$}. If we now take {\small$\mathrm{L}^{p}(\mathbb{P})$}-norms, use the triangle inequality, and then use \eqref{eq:lzetasimestimateII12a}, we obtain
\begin{align*}
\sum_{\y\in\Z}\exp(\tfrac{2\kappa|\x-\y|}{N})\||\mathbf{L}^{N,\zeta,\sim}_{\s,\t,\x,\y}-\mathbf{H}^{N}_{\s,\t,\x,\y}|^{2}\|_{\mathrm{L}^{p}(\mathbb{P})}&\lesssim \wt{\mathrm{C}}_{\kappa}^{\mathrm{J}}N^{4\mathrm{J}}\sum_{\substack{\z=\z_{1}+\ldots\z_{\mathrm{J}}\\|\z_{1}|,\ldots,|\z_{\mathrm{J}}|\leq1}}\int_{\s}^{\t}\int_{\s}^{\r_{1}}\ldots\int_{\s}^{\r_{\mathrm{J}}}\d\r\d\r_{\mathrm{J}}\ldots\d\r_{1}\\
&\lesssim (\mathrm{J}!)^{-1}\cdot N^{5\mathrm{J}}\lesssim\exp(-\mathrm{c}_{\kappa}\mathrm{J}),
\end{align*}
where the second line follows by integration and Stirling's formula (noting that {\small$\mathrm{J}\gtrsim N^{\mathrm{D}_{2}}$} for some {\small$\mathrm{D}_{2}>0$} large, so that the factorial dominates the power of {\small$N$}). Again, since {\small$\mathrm{J}\gtrsim N^{\mathrm{D}_{2}}$} and {\small$\mathrm{J}\gtrsim\mathrm{dist}(\x,\mathrm{supp}(\boldsymbol{\chi}^{\zeta_{\mathrm{large}}}))$}, we get
\begin{align*}
\sum_{\y\in\Z}\exp(\tfrac{2\kappa|\x-\y|}{N})\||\mathbf{L}^{N,\zeta,\sim}_{\s,\t,\x,\y}-\mathbf{H}^{N}_{\s,\t,\x,\y}|^{2}\|_{\mathrm{L}^{p}(\mathbb{P})}\lesssim \exp(-N)\cdot\exp(-\mathrm{c}'_{\kappa}\cdot\mathrm{dist}(\x,\mathrm{supp}(\boldsymbol{\chi}^{\zeta_{\mathrm{large}}}))).
\end{align*}
Therefore, by a union bound, Markov inequality, and the previous display for {\small$p=1$}, we have 
\begin{align*}
&\mathbb{P}\Big(\sup_{|\x|\gtrsim N^{\mathrm{D}_{2}}}\sum_{\y\in\Z}\exp(\tfrac{2\kappa|\x-\y|}{N})|\mathbf{L}^{N,\zeta,\sim}_{\s,\t,\x,\y}-\mathbf{H}^{N}_{\s,\t,\x,\y}|^{2}\gtrsim \exp(-\tfrac{N}{2})\Big)\\
&\leq\sum_{|\x|\gtrsim N^{\mathrm{D}_{2}}}\mathbb{P}\Big(\sum_{\y\in\Z}\exp(\tfrac{2\kappa|\x-\y|}{N})|\mathbf{L}^{N,\zeta,\sim}_{\s,\t,\x,\y}-\mathbf{H}^{N}_{\s,\t,\x,\y}|^{2}\gtrsim \exp(-\tfrac{N}{2})\Big)\\
&\lesssim\exp(-\tfrac{N}{2})\sum_{|\x|\gtrsim N^{\mathrm{D}_{2}}}\exp(-\mathrm{c}'_{\kappa}\cdot\mathrm{dist}(\x,\mathrm{supp}(\boldsymbol{\chi}^{\zeta_{\mathrm{large}}})))\lesssim_{\kappa}\exp(-\tfrac{N}{2}),
\end{align*}
where the last bound is an elementary geometric series estimate. In particular, we can use another union bound and continuity argument (like the one used to get \eqref{eq:lzetasimestimateII12}) to turn the previous display into 
\begin{align}
\sup_{\t\in[\s,1]}\sup_{|\x|\gtrsim N^{\mathrm{D}_{2}}}\sum_{\y\in\Z}\exp(\tfrac{2\kappa|\x-\y|}{N})|\mathbf{L}^{N,\zeta,\sim}_{\s,\t,\x,\y}-\mathbf{H}^{N}_{\s,\t,\x,\y}|^{2}\lesssim_{\kappa}\exp(-\tfrac{N}{2})\label{eq:lzetasimestimateII13}
\end{align}
if {\small$\mathrm{D}_{2}>0$} is large enough. Combining \eqref{eq:lzetasimestimateII12} and \eqref{eq:lzetasimestimateII13} now yields the desired inequality \eqref{eq:lzetasimestimateII}. \qed
%
%
%

\section{Proof of {Lemma \ref{lemma:stochheat}}}\label{section:compact}
For convenience, we define the following object for any index {\small$\i=1,\ldots,\i_{\e_{\mathrm{cut}}}$}:
\begin{align}
\mathbf{D}^{N,\zeta_{\i},\zeta_{\i+1}}_{\t,\x}:=\mathbf{S}^{N,\zeta_{\i}}_{\t,\x}-\mathbf{S}^{N,\zeta_{\i+1}}_{\t,\x}.\label{eq:dzetaformula}
\end{align}
Now recall \eqref{eq:szeta-sde}. The two terms that we subtract in {\small$\mathbf{D}^{N,\zeta_{\i},\zeta_{\i+1}}$} solve the same linear equation, except {\color{black}that} the term {\small$\mathbf{S}^{N,\zeta_{\i+1}}$} has indicators imposing a stricter cutoff of {\small$|\x|\leq N^{1+\zeta_{\i+1}}$} instead of {\small$|\x|\leq N^{1+\zeta_{\i}}$} as in {\small$\mathbf{S}^{N,\zeta_{\i}}$} (see Notation \ref{notation:zeta} for how different {\small$\zeta_{\i}$} parameters relate to each other). In particular, we have the following formula, which we explain afterwards:
\begin{align}
&\d\mathbf{D}^{N,\zeta_{\i},\zeta_{\i+1}}_{\t,\x}=\mathscr{T}_{N}\mathbf{D}^{N,\zeta_{\i},\zeta_{\i+1}}_{\t,\x}\d\t+{\boldsymbol{\chi}^{(\zeta_{\mathrm{large}})}_{\x}}[\mathscr{S}^{N}\star(\sqrt{2}\lambda N^{\frac12}\mathbf{R}^{N,\wedge}\mathbf{D}^{N,\zeta_{\i},\zeta_{\i+1}}\d\mathbf{b}_{\t,\cdot})]_{\x}\nonumber\\
&+N\boldsymbol{\chi}^{(\zeta_{\i})}_{\x}\sum_{\substack{\n=1,\ldots,\mathrm{K}\\\m=0,\ldots,\mathrm{M}\\0<|\mathfrak{l}_{1}|,\ldots,|\mathfrak{l}_{\m}|\lesssim \mathfrak{n}_{\mathbf{Av}}}}\tfrac{1}{|\mathfrak{l}_{1}|\ldots|\mathfrak{l}_{\m}|}\mathrm{c}_{N,\n,\mathfrak{l}_{1},\ldots,\mathfrak{l}_{\m}}\grad^{\mathbf{X}}_{\mathfrak{l}_{1}}\ldots\grad^{\mathbf{X}}_{\mathfrak{l}_{\m}}[\mathscr{S}^{N}\star(\mathbf{Av}^{\mathbf{T},\mathbf{X},\mathfrak{q}_{\n}}_{\t,\cdot}\cdot\mathbf{R}^{N}_{\t,\cdot}\mathbf{D}^{N,\zeta_{\i},\zeta_{\i+1}}_{\t,\cdot})]_{\x}\d\t\nonumber\\
&+N\boldsymbol{\chi}^{(\zeta_{\i})}_{\x}\sum_{\substack{\n=1,\ldots,\mathrm{K}\\\m=0,\ldots,\mathrm{M}\\0<|\mathfrak{l}_{1}|,\ldots,|\mathfrak{l}_{\m}|\lesssim \mathfrak{n}_{\mathbf{Av}}}}\tfrac{1}{|\mathfrak{l}_{1}|\ldots|\mathfrak{l}_{\m}|}\mathrm{c}_{N,\n,\mathfrak{l}_{1},\ldots,\mathfrak{l}_{\m}}\grad^{\mathbf{T},\mathrm{av}}_{\mathfrak{t}_{\mathbf{Av}}}\grad^{\mathbf{X}}_{\mathfrak{l}_{1}}\ldots\grad^{\mathbf{X}}_{\mathfrak{l}_{\m}}[\mathscr{S}^{N}\star(\mathbf{Av}^{\mathbf{X},\mathfrak{q}_{\n}}_{\t,\cdot}\cdot\mathbf{R}^{N}_{\t,\cdot}\mathbf{D}^{N,\zeta_{\i},\zeta_{\i+1}}_{\t,\cdot})]_{\x}\d\t\nonumber\\
&+N{\ocolor{purple}\mathrm{O}\Big(\mathbf{1}_{\substack{|\x|\lesssim N^{1+\zeta_{\i}}\\|\x|\gtrsim N^{1+\zeta_{\i+1}}}}\Big)}\sum_{\substack{\n=1,\ldots,\mathrm{K}\\\m=0,\ldots,\mathrm{M}\\0<|\mathfrak{l}_{1}|,\ldots,|\mathfrak{l}_{\m}|\lesssim \mathfrak{n}_{\mathbf{Av}}}}\tfrac{1}{|\mathfrak{l}_{1}|\ldots|\mathfrak{l}_{\m}|}\mathrm{c}_{N,\n,\mathfrak{l}_{1},\ldots,\mathfrak{l}_{\m}}\grad^{\mathbf{X}}_{\mathfrak{l}_{1}}\ldots\grad^{\mathbf{X}}_{\mathfrak{l}_{\m}}[\mathscr{S}^{N}\star(\mathbf{Av}^{\mathbf{T},\mathbf{X},\mathfrak{q}_{\n}}_{\t,\cdot}\cdot\mathbf{R}^{N}_{\t,\cdot}\mathbf{S}^{N,\zeta_{\i+1}}_{\t,\cdot})]_{\x}\d\t\nonumber\\
&+N{\ocolor{purple}\mathrm{O}\Big(\mathbf{1}_{\substack{|\x|\lesssim N^{1+\zeta_{\i}}\\|\x|\gtrsim N^{1+\zeta_{\i+1}}}}\Big)}\sum_{\substack{\n=1,\ldots,\mathrm{K}\\\m=0,\ldots,\mathrm{M}\\0<|\mathfrak{l}_{1}|,\ldots,|\mathfrak{l}_{\m}|\lesssim \mathfrak{n}_{\mathbf{Av}}}}\tfrac{1}{|\mathfrak{l}_{1}|\ldots|\mathfrak{l}_{\m}|}\mathrm{c}_{N,\n,\mathfrak{l}_{1},\ldots,\mathfrak{l}_{\m}}\grad^{\mathbf{T},\mathrm{av}}_{\mathfrak{t}_{\mathbf{Av}}}\grad^{\mathbf{X}}_{\mathfrak{l}_{1}}\ldots\grad^{\mathbf{X}}_{\mathfrak{l}_{\m}}[\mathscr{S}^{N}\star(\mathbf{Av}^{\mathbf{X},\mathfrak{q}_{\n}}_{\t,\cdot}\cdot\mathbf{R}^{N}_{\t,\cdot}\mathbf{S}^{N,\zeta_{\i+1}}_{\t,\cdot})]_{\x}\d\t\nonumber\\
&+N\boldsymbol{\chi}^{(\zeta_{\i})}_{\x}\cdot\mathrm{Err}[\mathbf{R}^{N}_{\t,\cdot}\mathbf{D}^{N,\zeta_{\i},\zeta_{\i+1}}_{\t,\cdot}]_{\x}\d\t+N{\ocolor{purple}\mathrm{O}\Big(\mathbf{1}_{\substack{|\x|\lesssim N^{1+\zeta_{\i}}\\|\x|\gtrsim N^{1+\zeta_{\i+1}}}}\Big)}\cdot\mathrm{Err}[\mathbf{R}^{N}_{\t,\cdot}\mathbf{S}^{N,\zeta_{\i+1}}_{\t,\cdot}]_{\x}\d\t.\nonumber
\end{align}
As we noted earlier, the two terms {\small$\mathbf{S}^{N,\zeta_{\i}}$} and {\small$\mathbf{S}^{N,\zeta_{\i+1}}$} satisfy the same \abbr{SDE}, except {\small$\mathbf{S}^{N,\zeta_{\i+1}}$} is missing terms that are compensated for by the quantities in the above display that have a purple indicator function; again, see \eqref{eq:szeta-sde}. These are, equivalently, the terms that do not have a factor of {\small$\mathbf{D}^{N,\zeta_{\i},\zeta_{\i+1}}$}. Observe that the terms without purple indicator functions, i.e. the terms that have a factor of {\small$\mathbf{D}^{N,\zeta_{\i+1},\zeta_{\i}}$}, are exactly the linear operators appearing in \eqref{eq:kzeta-sde} for {\small$\zeta=\zeta_{\i}$} but with {\small$\mathbf{D}^{N,\zeta_{\i},\zeta_{\i+1}}$} instead of {\small$\mathbf{K}^{N,\zeta_{\i}}$}. In particular, by following the reasoning from the proof of Lemma \ref{lemma:lkzeta}, we get the following Duhamel-type identity:
\begin{align}
\mathbf{D}^{N,\zeta_{\i},\zeta_{\i+1}}_{\t,\x}&=\int_{0}^{\t}\sum_{\y\in\Z}\mathbf{K}^{N,\zeta_{\i}}_{\s,\t,\x,\y}\Upsilon^{(1)}_{\s,\y}\d\s+\int_{0}^{\t}\sum_{\y\in\Z}\mathbf{K}^{N,\zeta_{\i}}_{\s,\t,\x,\y}\Upsilon^{(2)}_{\s,\y}\d\s+\int_{0}^{\t}\sum_{\y\in\Z}\mathbf{K}^{N,\zeta_{\i}}_{\s,\t,\x,\y}\Upsilon^{(3)}_{\s,\y}\d\s.\label{eq:stochheatII1}
\end{align}
We emphasize that {\small$\mathbf{D}^{N,\zeta_{\i},\zeta_{\i+1}}$} has zero initial data by construction (see \eqref{eq:dzetaformula}), and
\begin{align*}
\Upsilon^{(1)}_{\t,\x}&:=N{\ocolor{purple}\mathrm{O}\Big(\mathbf{1}_{\substack{|\x|\lesssim N^{1+\zeta_{\i}}\\|\x|\gtrsim N^{1+\zeta_{\i+1}}}}\Big)}\sum_{\substack{\n=1,\ldots,\mathrm{K}\\\m=0,\ldots,\mathrm{M}\\0<|\mathfrak{l}_{1}|,\ldots,|\mathfrak{l}_{\m}|\lesssim \mathfrak{n}_{\mathbf{Av}}}}\tfrac{1}{|\mathfrak{l}_{1}|\ldots|\mathfrak{l}_{\m}|}\mathrm{c}_{N,\n,\mathfrak{l}_{1},\ldots,\mathfrak{l}_{\m}}\grad^{\mathbf{X}}_{\mathfrak{l}_{1}}\ldots\grad^{\mathbf{X}}_{\mathfrak{l}_{\m}}[\mathscr{S}^{N}\star(\mathbf{Av}^{\mathbf{T},\mathbf{X},\mathfrak{q}_{\n}}_{\t,\cdot}\cdot\mathbf{R}^{N}_{\t,\cdot}\mathbf{S}^{N,\zeta_{\i+1}}_{\t,\cdot})]_{\x}\d\t,\\
\Upsilon^{(2)}_{\t,\x}&:=N{\ocolor{purple}\mathrm{O}\Big(\mathbf{1}_{\substack{|\x|\lesssim N^{1+\zeta_{\i}}\\|\x|\gtrsim N^{1+\zeta_{\i+1}}}}\Big)}\sum_{\substack{\n=1,\ldots,\mathrm{K}\\\m=0,\ldots,\mathrm{M}\\0<|\mathfrak{l}_{1}|,\ldots,|\mathfrak{l}_{\m}|\lesssim \mathfrak{n}_{\mathbf{Av}}}}\tfrac{1}{|\mathfrak{l}_{1}|\ldots|\mathfrak{l}_{\m}|}\mathrm{c}_{N,\n,\mathfrak{l}_{1},\ldots,\mathfrak{l}_{\m}}\grad^{\mathbf{T},\mathrm{av}}_{\mathfrak{t}_{\mathbf{Av}}}\grad^{\mathbf{X}}_{\mathfrak{l}_{1}}\ldots\grad^{\mathbf{X}}_{\mathfrak{l}_{\m}}[\mathscr{S}^{N}\star(\mathbf{Av}^{\mathbf{X},\mathfrak{q}_{\n}}_{\t,\cdot}\cdot\mathbf{R}^{N}_{\t,\cdot}\mathbf{S}^{N,\zeta_{\i+1}}_{\t,\cdot})]_{\x}\d\t,\\
\Upsilon^{(3)}_{\t,\x}&:=N{\ocolor{purple}\mathrm{O}\Big(\mathbf{1}_{\substack{|\x|\lesssim N^{1+\zeta_{\i}}\\|\x|\gtrsim N^{1+\zeta_{\i+1}}}}\Big)}\cdot\mathrm{Err}[\mathbf{R}^{N}_{\t,\cdot}\mathbf{S}^{N,\zeta_{\i+1}}_{\t,\cdot}]_{\x}\d\t.
\end{align*}
Let us make a few observations. First, we claim that with high probability, we have the following simultaneously over {\small$\t\in[0,1]$} {\color{black}and all} {\small$\x\in\Z$}, in which {\small$\kappa>0$} is a large but {\small$\mathrm{O}(1)$} constant and {\small$\beta,\delta>0$} are independent of {\small$N$}:
\begin{align}
&\exp(-\tfrac{\kappa|\x|}{N})|\mathbf{S}^{N,\zeta_{\i+1}}_{\t,\x}|\lesssim\exp(-\tfrac{\kappa|\x|}{N})\sum_{\y\in\Z}|\mathbf{K}^{N,\zeta_{\i+1}}_{0,\t,\x,\y}|\cdot|\mathbf{S}^{N}_{0,\y}|\nonumber\\
&\lesssim \exp(N^{-\beta}N^{\zeta_{\i+1}})N^{\delta}\sum_{\y\in\Z}\exp(-\tfrac{\kappa|\x|}{N}-\tfrac{\kappa|\x-\y|}{N})\cdot|\mathbf{S}^{N}_{0,\y}|\nonumber\\
&\lesssim_{\kappa}\exp(N^{-\beta}N^{\zeta_{\i+1}})N^{\delta}\sum_{\y\in\Z}\exp(-\tfrac{\kappa|\y|}{N})\cdot|\mathbf{S}^{N}_{0,\y}|\lesssim N^{1+\delta}\cdot\exp(N^{-\beta}N^{\zeta_{\i+1}}).\label{eq:stochheatII2}
\end{align}
The first estimate follows by \eqref{eq:duhamelszeta}. The second estimate follows by Proposition \ref{prop:stochheat} for {\small$\zeta=\zeta_{\i+1}$}. The third line follows by first using {\small$\exp(\upsilon|\y|)\leq\exp(\upsilon|\x|)\exp(\upsilon|\x-\y|)$} for any {\small$\upsilon\geq0$}. Then, we use {Lemma \ref{lemma:databound}}, which gives an sub-exponential estimate for the initial data {\small$\mathbf{S}^{N}$} (with high probability), and an elementary exponential-sum bound. Next, we claim that with high probability, we have 
\begin{align}
&\mathbf{1}_{|\x|\lesssim N^{1+\zeta_{\i}}}\exp(-\tfrac{\kappa|\x|}{N})|\mathrm{Err}[\mathbf{R}^{N}\mathbf{S}^{N,\zeta_{\i+1}}]_{\t,\x}|\nonumber\\
&\lesssim N^{-\mathrm{D}_{0}}N^{-1+\delta_{\mathbf{S}}}\sum_{|\w|\lesssim N^{1-\delta_{\mathbf{S}}}}\Big\{1+\sum_{|\z|\lesssim1}\mathbf{1}_{|\x|\lesssim N^{1+\zeta_{1}}}|\bphi_{\t,\x+\w+\z}|^{\mathrm{C}}\cdot|\mathbf{R}^{N}_{\t,\x+\w}|\cdot\exp(-\tfrac{\kappa|\x|}{N})|\mathbf{S}^{N,\zeta_{\i+1}}_{\t,\x+\w}|\Big\}\nonumber\\
&\lesssim\exp (N^{-\beta}N^{\zeta_{\i+1}})N^{\delta}.\label{eq:stochheatII3}
\end{align}
The first estimate follows from \eqref{eq:err-estimate}. The second bound follows by \eqref{eq:stochheatII2} and {Lemma \ref{lemma:phibound}} (to give polynomial estimates on {\small$\bphi_{\t,\cdot}$}) and {Lemma \ref{lemma:rbound}} (which gives a priori {\small$\mathrm{O}(1)$} estimates for {\small$\mathbf{R}^{N}$}). Next, we claim that with high probability we have 
\begin{align}
&\sup_{\t\in[0,1]}\sup_{|\x|\lesssim N^{1+\zeta_{\i}}}\exp(-\tfrac{\kappa|\x|}{N})|[\mathscr{S}^{N}\star(\mathbf{Av}^{\mathbf{T},\mathbf{X},\mathfrak{q}_{\n}}_{\t,\cdot}\cdot\mathbf{R}^{N}_{\t,\cdot}\mathbf{S}^{N,\zeta_{\i+1}}_{\t,\cdot})]_{\x}|\nonumber\\
&\lesssim\sup_{\t\in[0,1]}\sup_{|\x|\lesssim N^{1+\zeta_{\i}}}\sup_{|\w|\lesssim N}|\mathbf{Av}^{\mathbf{T},\mathbf{X},\mathfrak{q}_{\n}}_{\t,\x+\w}|\cdot|\mathbf{R}^{N}_{\t,\x+\w}|\cdot\exp(-\tfrac{\kappa|\x|}{N})|\mathbf{S}^{N,\zeta_{\i+1}}_{\t,\x+\w}|\lesssim \exp(N^{-\beta}N^{\zeta_{\i+1}})N^{\delta}.\label{eq:stochheatII3b}
\end{align}
To get the first estimate, we recall from Definition \ref{definition:zsmooth} that the convolution kernel {\small$\mathscr{S}^{N}$} is supported on a block of size {\small$\lesssim N$}. To get the second, we again use {Lemma \ref{lemma:rbound}} to control {\small$\mathbf{R}^{N}$}, but now we also use {Lemma \ref{lemma:avldptx}} to control the {\small$\mathbf{Av}^{\mathbf{T},\mathbf{X},\mathfrak{q}_{\n}}$} factor by {\small$\lesssim1$} uniformly over the desired {\small$\t,\x,\w$}. We then use \eqref{eq:stochheatII2} to conclude. We now claim that with high probability, we have a similar {\color{black}lower bound}, in which {\small$\mathfrak{l}_{1},\ldots,\mathfrak{l}_{\m}$} are all {\small$\mathrm{O}(N)$} and {\small$\mathfrak{m}=\mathrm{O}(1)$}:
\begin{align}
&\sup_{\t\in[0,1]}\sup_{|\x|\lesssim N^{1+\zeta_{\i}}}\exp(-\tfrac{\kappa|\x|}{N})|\grad^{\mathbf{X}}_{\mathfrak{l}_{1}}\ldots\grad^{\mathbf{X}}_{\mathfrak{l}_{\m}}[\mathscr{S}^{N}\star(\mathbf{Av}^{\mathbf{T},\mathbf{X},\mathfrak{q}_{\n}}_{\t,\cdot}\cdot\mathbf{R}^{N}_{\t,\cdot}\mathbf{S}^{N,\zeta_{\i+1}}_{\t,\cdot})]_{\x}|\lesssim N^{\delta}\cdot \exp(N^{-\beta}N^{\zeta_{\i+1}}),\nonumber\\
&\sup_{\t\in[0,1]}\sup_{|\x|\lesssim N^{1+\zeta_{\i}}}\exp(-\tfrac{\kappa|\x|}{N})|\grad^{\mathbf{X}}_{\mathfrak{l}_{1}}\ldots\grad^{\mathbf{X}}_{\mathfrak{l}_{\m}}[\mathscr{S}^{N}\star(\mathbf{Av}^{\mathbf{X},\mathfrak{q}_{\n}}_{\t,\cdot}\cdot\mathbf{R}^{N}_{\t,\cdot}\mathbf{S}^{N,\zeta_{\i+1}}_{\t,\cdot})]_{\x}|\lesssim N^{\delta}\cdot \exp(N^{-\beta}N^{\zeta_{\i+1}}).\label{eq:stochheatII4}
\end{align}
This follows by the same argument; we just replace the smoothing kernel {\small$\mathscr{S}^{N}$} in Definition \ref{definition:eq-operators} with its (iterated) gradient, and we use the space-average estimate in {Lemma \ref{lemma:avldp}}. Lastly, since the above estimate holds for all {\small$\t\in[0,1]$}, it holds if we additionally apply the time-gradient operator \eqref{eq:timegrad}. Ultimately, by \eqref{eq:stochheatII3}, \eqref{eq:stochheatII3b}, and \eqref{eq:stochheatII4}, as well as the estimate {\small$\mathrm{c}_{N,\m,\mathfrak{l}_{1},\ldots,\mathfrak{l}_{\m}}\lesssim N$} (see Proposition \ref{prop:s-sde}), the following holds with high probability for any {\small$\delta>0$} (if {\small$\kappa>0$} is large but still {\small$\mathrm{O}(1)$}):
\begin{align}
\sup_{\t\in[0,1]}\sup_{\x\in\Z}\exp(-\tfrac{\kappa|\x|}{N})\Big\{|\Upsilon^{(1)}_{\t,\x}|+|\Upsilon^{(2)}_{\t,\x}|+|\Upsilon^{(3)}_{\t,\x}|\Big\}\lesssim_{\delta} N^{2+\delta}\cdot\exp(N^{-\beta}N^{\zeta_{\i+1}})\cdot\mathbf{1}_{\substack{|\x|\lesssim N^{1+\zeta_{\i}}\\|\x|\gtrsim N^{1+\zeta_{\i+1}}}}.\label{eq:stochheatII5}
\end{align}
Now, let us restrict to the (high probability) event on which \eqref{eq:stochheatII5} holds. We claim that with high probability, for any {\small$\k=1,2,3$}, we have the following simultaneously over all {\small$\s,\t\in[0,1]$} with {\small$\s\leq\t$} and all {\small$\x$} with {\small$|\x|\lesssim N$}:
\begin{align*}
&\exp(-\tfrac{\kappa|\x|}{N})\cdot|\mathbf{K}^{N,\zeta_{\i}}_{\s,\t,\x,\y}|\cdot|\Upsilon^{(\k)}_{\s,\y}|\\
&\lesssim\exp(\tfrac{\kappa|\x-\y|}{N})\cdot|\mathbf{K}^{N,\zeta_{\i}}_{\s,\t,\x,\y}|\cdot\exp(-\tfrac{\kappa|\y|}{N})\cdot|\Upsilon^{(\k)}_{\s,\y}|\\
&\lesssim\exp(\tfrac{\kappa|\x-\y|}{N})|\mathbf{K}^{N,\zeta_{\i}}_{\s,\t,\x,\y}|\cdot N^{2+\delta}\mathbf{1}_{\substack{|\y|\lesssim N^{1+\zeta_{\i}}\\|\y|\gtrsim N^{1+\zeta_{\i+1}}}}\cdot\exp(N^{-\beta}N^{\zeta_{\i+1}})\\
&\lesssim_{\delta} N^{2+\delta}\cdot\exp(N^{-\beta}N^{\zeta_{\i+1}})\exp(N^{\frac12-\beta}N^{\zeta_{\i}})\cdot\mathbf{1}_{\substack{|\y|\lesssim N^{1+\zeta_{\i}}\\|\y|\gtrsim N^{1+\zeta_{\i+1}}}}\cdot\exp(-\tfrac{\kappa|\x-\y|}{N}).
\end{align*}
The first estimate follows by the triangle inequality inside the exponential. The second estimate follows by \eqref{eq:stochheatII5}. The third estimate follows by \eqref{eq:stochheatI} for {\small$\zeta=\zeta_{\i}$}. Because {\small$|\x|\lesssim N$}, we can drop the factor {\small$\exp(-\kappa|\x|/N)$} in the first line, and the resulting estimate still holds if we give up a factor of {\small$\mathrm{O}_{\kappa}(1)$}. So, when we use \eqref{eq:stochheatII1}, integrate the previous display over {\small$\s\in[0,\t]$}, and sum over {\small$\y\in\Z$}, we obtain the following with high probability, in which {\small$\mathrm{c},\mathrm{C}>0$} are fixed (depending only on {\small$\kappa$}):
\begin{align*}
\sup_{|\x|\lesssim N}|\mathbf{D}^{N,\zeta_{\i},\zeta_{\i+1}}_{\t,\x}|&\lesssim N^{\mathrm{C}}\exp(N^{-\beta}N^{\zeta_{\i+1}})\exp(N^{-\beta}N^{\zeta_{\i}})\sup_{\substack{|\x|\lesssim N}}\sup_{{|\y|\gtrsim N^{1+\zeta_{\i+1}}}}\exp(-\tfrac{\kappa|\x-\y|}{N})\\
&\lesssim N^{\mathrm{C}}\exp(2N^{-\beta}N^{\zeta_{\i}})\exp(-\mathrm{c} N^{\zeta_{\i+1}}).
\end{align*}
The second bound follows since the distance between {\small$\x,\y$} in the previous display is {\small$\gtrsim N^{1+\zeta_{\i+1}}$}. Now, by Notation \ref{notation:zeta}, we have {\small$\zeta_{\i+1}=\zeta_{\i}-\delta_{\mathrm{cut}}$} with {\small$\delta_{\mathrm{cut}}$} as small as we want. If we choose {\small$\delta_{\mathrm{cut}}\leq\beta/2$}, then the last line of the previous display is exponentially small as {\small$N\to\infty$}. This implies the desired estimate \eqref{eq:stochheatII}. \qed
\section{Proof of Proposition \ref{prop:comparison}}\label{section:comparison}
We proceed in a few steps. The first step essentially compares {\small$\mathbf{Z}^{N}$} to {\small$\mathbf{S}^{N}$} via \eqref{eq:zsmooth}, compares {\small$\mathbf{S}^{N}$} to {\small$\mathbf{S}^{N,\zeta_{\mathrm{final}}}$} via Lemmas \ref{lemma:aprioricompact} and \ref{lemma:stochheat}, takes \eqref{eq:duhamelszeta} with {\small$\zeta=\zeta_{\mathrm{final}}$} from Notation \ref{notation:zeta}, and shows that {\small$\mathbf{K}^{N,\zeta}$} therein can be replaced by the kernel {\small$\mathbf{L}^{N,\zeta,\sim}$} from \eqref{eq:lzetasim-sde} (whose evolution is much closer to a lattice \abbr{SHE}).
\begin{lemma}\label{lemma:comparisonstep1}
\fsp Fix any {\small$\mathrm{L}>0$}. There exists {\small$\gamma_{\mathrm{L},1}>0$} depending on {\small$\mathrm{L}$} such that with high probability, we have 
\begin{align}
\sup_{\t\in[0,1]}\sup_{|\x|\leq \mathrm{L}N}|\mathbf{Z}^{N}_{\t,\x}-\mathbf{Y}^{N,1}_{\t,\x}|\lesssim_{\mathrm{L}} N^{-\gamma_{\mathrm{L},1}},\label{eq:comparisonstep1I}
\end{align}
where, upon recalling {\small$\mathbf{L}^{N,\zeta_{\mathrm{final}},\sim}$} from \eqref{eq:lzetasim-sde}, we set
\begin{align}
\mathbf{Y}^{N,1}_{\t,\x}&:=\sum_{\y\in\Z}\mathbf{L}^{N,\zeta_{\mathrm{final}},\sim}_{0,\t,\x,\y}\mathbf{S}^{N}_{0,\y}.\label{eq:comparisonstep1II}
\end{align}
\end{lemma}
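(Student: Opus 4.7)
The plan is to build a chain of approximations from $\mathbf{Z}^{N}_{\t,\x}$ down to $\mathbf{Y}^{N,1}_{\t,\x}$, each step of which loses only a negligible error on the region $[0,1]\times\{|\x|\leq\mathrm{L}N\}$. The first three steps essentially repackage earlier results, and the bulk of the work is in the final step, which compares the true stochastic heat kernel $\mathbf{K}^{N,\zeta_{\mathrm{final}},\sim}$ to its linearization $\mathbf{L}^{N,\zeta_{\mathrm{final}},\sim}$.

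First, I would replace $\mathbf{Z}^{N}$ by its regularization $\mathbf{S}^{N}$. By Definition \ref{definition:zsmooth}, $\mathbf{S}^{N}$ is the convolution of $\mathbf{Z}^{N}$ against a probability kernel supported on scale $N^{1-\delta_{\mathbf{S}}}$; combining the Cole-Hopf representation \eqref{eq:ch} with the spatial regularity estimate \eqref{eq:rboundI3} (proved in the course of Lemma \ref{lemma:rbound}) yields $\sup_{\t\in[0,1]}\sup_{|\x|\leq\mathrm{L}N}|\mathbf{Z}^{N}_{\t,\x}-\mathbf{S}^{N}_{\t,\x}|\lesssim_{\mathrm{L}} N^{-\gamma}\mathbf{S}^{N}_{\t,\x}$ with high probability for some $\gamma>0$, and then a priori exponential growth bounds on $\mathbf{S}^{N}$ (deducible from \eqref{eq:mainI} and \eqref{eq:ch}) turn this into a pointwise $N^{-\gamma'}$ bound on the set $|\x|\leq\mathrm{L}N$. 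Next, I compactify: Lemma \ref{lemma:aprioricompact} with $\zeta=\zeta_{1}$ gives $\mathbf{S}^{N}\approx\mathbf{S}^{N,\zeta_{1}}$ on the region up to $N^{-\gamma}$, and iterating Lemma \ref{lemma:stochheat} along the finite sequence $\zeta_{1}>\zeta_{2}>\ldots>\zeta_{\mathrm{final}}$ from Notation \ref{notation:zeta} (only $\i_{\e_{\mathrm{cut}}}=\mathrm{O}(1)$ steps, each losing at most $N^{-\mathrm{D}}$) yields $\sup_{\t\in[0,1]}\sup_{|\x|\leq\mathrm{L}N}|\mathbf{S}^{N}_{\t,\x}-\mathbf{S}^{N,\zeta_{\mathrm{final}}}_{\t,\x}|\lesssim_{\mathrm{L}} N^{-\gamma}$ with high probability.

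Third, I pass to the kernel representation. By \eqref{eq:duhamelszeta} at $\zeta=\zeta_{\mathrm{final}}$ and Lemma \ref{lemma:kzetamodify}, with high probability one has $\mathbf{S}^{N,\zeta_{\mathrm{final}}}_{\t,\x}=\sum_{\y\in\Z}\mathbf{K}^{N,\zeta_{\mathrm{final}},\sim}_{0,\t,\x,\y}\mathbf{S}^{N}_{0,\y}$, so it remains to bound $\sum_{\y}[\mathbf{K}^{N,\zeta_{\mathrm{final}},\sim}_{0,\t,\x,\y}-\mathbf{L}^{N,\zeta_{\mathrm{final}},\sim}_{0,\t,\x,\y}]\mathbf{S}^{N}_{0,\y}$. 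The Duhamel identity \eqref{eq:lkzetaIa} gives
\begin{align*}
\mathbf{K}^{N,\zeta_{\mathrm{final}},\sim}_{0,\t,\x,\y}-\mathbf{L}^{N,\zeta_{\mathrm{final}},\sim}_{0,\t,\x,\y}=\int_{0}^{\t}\sum_{\w\in\Z}\mathbf{L}^{N,\zeta_{\mathrm{final}},\sim}_{\r,\t,\x,\w}\cdot\boldsymbol{\Phi}_{\r}[\mathbf{K}^{N,\zeta_{\mathrm{final}},\sim}_{0,\r,\cdot,\y}]_{\w}\d\r.
\end{align*}
This is exactly the quantity controlled in the short-time estimate \eqref{eq:kzetaestimate-shortI3}, via the key input \eqref{eq:kzetaestimate-shortI-key} on $\mathbf{Av}^{\mathbf{T},\mathbf{X},\mathfrak{q}_{\n}}$ provided by Proposition \ref{prop:stestimate}, combined with the $\ell^{2}$-type perturbation of $\mathbf{H}^{N}$ by $\mathbf{L}^{N,\zeta_{\mathrm{final}},\sim}$ from Lemma \ref{lemma:lzetasimestimate} and the long-time bound for $\mathbf{K}^{N,\zeta_{\mathrm{final}},\sim}$ from Lemma \ref{lemma:kzetaestimate-long}; since $\zeta_{\mathrm{final}}\in[\e_{\mathrm{cut}},2\e_{\mathrm{cut}}]$ is small, the factor $\exp(N^{-\beta}N^{\zeta_{\mathrm{final}}})$ therein is $N^{\mathrm{o}(1)}$ and can be absorbed. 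Running the same computation as in \eqref{eq:kzetaestimate-shortI3a}-\eqref{eq:kzetaestimate-shortI5}, and chaining it across $\mathrm{O}(N^{\zeta_{\mathrm{final}}})$ time intervals of length $N^{\beta}N^{-\zeta_{\mathrm{final}}}$ via Chapman-Kolmogorov (as in the proof of Lemma \ref{lemma:kzetaestimate-long}), yields a high-probability bound
\begin{align*}
\sup_{\t\in[0,1]}\sup_{\x\in\Z}\sum_{\y\in\Z}\exp(\tfrac{\kappa|\x-\y|}{N})\,|\mathbf{K}^{N,\zeta_{\mathrm{final}},\sim}_{0,\t,\x,\y}-\mathbf{L}^{N,\zeta_{\mathrm{final}},\sim}_{0,\t,\x,\y}|\lesssim N^{-\gamma}
\end{align*}
for some $\gamma>0$. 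Multiplying by $\mathbf{S}^{N}_{0,\y}$ and summing, and using the sub-exponential initial data bound from \eqref{eq:mainI}, produces a pointwise error of $N^{-\gamma_{\mathrm{L},1}}$ uniformly on $|\x|\leq\mathrm{L}N$.

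The main obstacle will be the last step: carefully propagating the weighted $\ell^{1}$ control of $\mathbf{K}^{N,\zeta_{\mathrm{final}},\sim}-\mathbf{L}^{N,\zeta_{\mathrm{final}},\sim}$ through the Chapman-Kolmogorov iteration without blowing up the constants, since here we need smallness of the kernel difference rather than just boundedness, so the Gronwall-type mechanism underlying Lemma \ref{lemma:kzetaestimate-long} must be applied to the difference and not to $\mathbf{K}^{N,\zeta_{\mathrm{final}},\sim}$ itself. All the analytic ingredients, however, are already present in the proofs of Lemmas \ref{lemma:kzetaestimate-short}, \ref{lemma:kzetaestimate-long} and \ref{lemma:lzetasimestimate}, so the argument is essentially a repackaging of those estimates.
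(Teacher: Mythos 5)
Your $\mathbf{Z}^{N}\to\mathbf{S}^{N}\to\mathbf{S}^{N,\zeta_{\mathrm{final}}}\to\mathbf{Y}^{N,1}$ chain matches the paper's, and the middle compactification steps via Lemmas \ref{lemma:aprioricompact} and \ref{lemma:stochheat} are correct. The gap is in the last step. You propose to control $\mathbf{K}^{N,\zeta_{\mathrm{final}},\sim}-\mathbf{L}^{N,\zeta_{\mathrm{final}},\sim}$ by ``chaining across $\mathrm{O}(N^{\zeta_{\mathrm{final}}})$ time intervals of length $N^{\beta}N^{-\zeta_{\mathrm{final}}}$ via Chapman-Kolmogorov,'' and correctly flag this as ``the main obstacle.'' It is a genuine obstacle on that route: the \emph{difference} of two propagators is not itself a propagator, so telescoping it across many time slices forces you to multiply by full-size single-slice kernels (order $N^{\delta}$ in weighted $\ell^{1}$), and controlling the resulting geometric growth is not ``essentially a repackaging.'' The paper avoids this entirely by using the hierarchy of free parameters: the short-time window exponent $\beta>0$ in Lemma \ref{lemma:kzetaestimate-short} is a fixed constant, while $\zeta_{\mathrm{final}}$ is controlled by the free parameter $\e_{\mathrm{cut}}$ from Notation \ref{notation:zeta} and can therefore be taken smaller than $\beta$. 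Once $\zeta_{\mathrm{final}}<\beta$, the constraint $|\t-\s|\leq N^{\beta}N^{-\zeta_{\mathrm{final}}}=N^{\beta-\zeta_{\mathrm{final}}}\geq1$ is vacuous on $[0,1]$, so \eqref{eq:kzetaestimate-shortI5} applies directly with $\s=0$ for all $\t\in[0,1]$ and no chaining is needed; the remaining factor $\sup_{\r,\x}\sum_{\y}\exp(\kappa|\x-\y|/N)|\mathbf{K}^{N,\zeta_{\mathrm{final}},\sim}_{0,\r,\x,\y}|$ on the \abbr{RHS} is supplied by \eqref{eq:kzetaestimate-longI}, whose factor $\exp(N^{-\beta/2}N^{\zeta_{\mathrm{final}}})$ is then $\mathrm{O}(1)$ (not merely $N^{\mathrm{o}(1)}$, as you wrote), giving the kernel-difference bound $\lesssim N^{-\omega_{0}/2}$ uniformly on $[0,1]\times\Z$.

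A smaller issue: the claim that a uniform bound on $\mathbf{S}^{N}_{\t,\x}$ over $\t\in[0,1]$, $|\x|\leq\mathrm{L}N$ is ``deducible from \eqref{eq:mainI} and \eqref{eq:ch}'' skips a step---\eqref{eq:mainI} concerns only the initial data, not the evolved field. The paper first establishes $\mathbf{S}^{N}\approx\mathbf{S}^{N,\zeta_{\mathrm{final}}}$, then bounds $\mathbf{S}^{N,\zeta_{\mathrm{final}}}$ pointwise through its Duhamel representation \eqref{eq:duhamelszeta} combined with Proposition \ref{prop:stochheat} and the high-probability data bound of Lemma \ref{lemma:databound}, and only then deduces $|\mathbf{Z}^{N}-\mathbf{S}^{N}|\lesssim N^{-\delta_{\mathbf{S}}/3}\sup|\mathbf{S}^{N}|$ from Lemma \ref{lemma:rbound}. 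You should reorder your steps in the same way; otherwise the pointwise bound you invoke on $\mathbf{S}^{N}$ is not available at the point where you need it.
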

The second step amounts to comparing {\small$\mathbf{Y}^{N,1}$} from \eqref{eq:comparisonstep1II} to something that is essentially a lattice \abbr{SHE} (with minor cosmetic adjustments). Before we state the lemma, we need to introduce the required notation. Let {\small$\mathbf{B}^{N,\sim}_{\s,\t,\x,\y}$} be a function of {\small$(\s,\t,\x,\y)\in[0,1]^{2}\times\Z^{2}$} with {\small$\s\leq\t$}, such that {\small$\mathbf{B}^{N,\sim}_{\s,\s,\x,\y}=\mathbf{1}_{\x=\y}$}, and such that the following version of \eqref{eq:lzetasim-sde} (without any of the error terms therein) is satisfied for all {\small$\s\leq\t$} and {\small$\x,\y\in\Z$}:
\begin{align}
\d\mathbf{B}^{N,\sim}_{\s,\t,\x,\y}&=\mathscr{T}_{N}\mathbf{B}^{N,\sim}_{\s,\t,\x,\y}\d\t+{\boldsymbol{\chi}}^{(\zeta_{\mathrm{large}})}_{\x}\cdot[\mathscr{S}^{N}\star(\sqrt{2}\lambda N^{\frac12}\mathbf{R}^{N,\wedge}_{\t,\cdot}\mathbf{B}^{N,\sim}_{\s,\t,\cdot,\y}\d\mathbf{b}_{\t,\cdot})]_{\x}.\label{eq:bzetasim-sde}
\end{align}
We clarify that {\small$\mathscr{T}_{N}$} acts on the {\small$\x$}-variable above.
\begin{lemma}\label{lemma:comparisonstep2}
\fsp Fix any {\small$\mathrm{L}>0$}. There exists {\small$\gamma_{\mathrm{L},2}>0$} universal such that with high probability, we have 
\begin{align}
\sup_{\t\in[0,1]}\sup_{|\x|\leq\mathrm{L}N}|\mathbf{Y}^{N,1}_{\t,\x}-\mathbf{Y}^{N,2}_{\t,\x}|\lesssim_{\mathrm{L}}N^{-\gamma_{\mathrm{L},2}},\label{eq:comparisonstep2I}
\end{align}
where we have used {\small$\mathbf{Y}^{N,1}$} from \eqref{eq:comparisonstep1II}, and where we have used
\begin{align}
\mathbf{Y}^{N,2}_{\t,\x}&:=\sum_{\y\in\Z}\mathbf{B}^{N,\sim}_{0,\t,\x,\y}\mathbf{S}^{N}_{0,\y}.\label{eq:comparisonstep2II}
\end{align}
\end{lemma}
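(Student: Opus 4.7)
The approach will mirror the Duhamel expansion used in Lemma \ref{lemma:lkzeta}, but now with the ``base" kernel being the simpler object $\mathbf{B}^{N,\sim}$ from \eqref{eq:bzetasim-sde} instead of $\mathbf{L}^{N,\zeta,\sim}$. By linearity of \eqref{eq:lzetasim-sde} and \eqref{eq:bzetasim-sde} and the fact that they share the same noise term, one verifies (by the same direct calculation as in the proof of Lemma \ref{lemma:lkzeta}) that
\begin{align*}
\mathbf{L}^{N,\zeta_{\mathrm{final}},\sim}_{0,\t,\x,\y}-\mathbf{B}^{N,\sim}_{0,\t,\x,\y}=\int_{0}^{\t}\sum_{\w\in\Z}\mathbf{B}^{N,\sim}_{\r,\t,\x,\w}\cdot\boldsymbol{\Psi}_{\r}[\mathbf{L}^{N,\zeta_{\mathrm{final}},\sim}_{0,\r,\cdot,\y}]_{\w}\d\r,
\end{align*}
where $\boldsymbol{\Psi}_{\r}$ collects the time-gradient term and the $\mathrm{Err}$ term from \eqref{eq:lzetasim-sde} (i.e.\ exactly those terms present in \eqref{eq:lzetasim-sde} but absent from \eqref{eq:bzetasim-sde}). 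After multiplying by $\mathbf{S}^{N}_{0,\y}$ and summing over $\y\in\Z$, this gives a Duhamel identity for $\mathbf{Y}^{N,1}-\mathbf{Y}^{N,2}$ with driver $\boldsymbol{\Psi}_{\r}[\mathbf{Y}^{N,1}_{\r,\cdot}]_{\w}$.

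Next, the plan is to establish an analogue of Lemma \ref{lemma:lzetasimestimate} for $\mathbf{B}^{N,\sim}$, namely that on a high-probability event,
\begin{align*}
\sup_{0\leq\s\leq\t\leq 1}\sup_{\x\in\Z}\sum_{\y\in\Z}\exp\bigl(\tfrac{2\kappa|\x-\y|}{N}\bigr)|\mathbf{B}^{N,\sim}_{\s,\t,\x,\y}-\mathbf{H}^{N}_{\s,\t,\x,\y}|^{2}\lesssim N^{-1+\delta}.
\end{align*}
The proof of this is a strictly simpler version of the argument for Lemma \ref{lemma:lzetasimestimate}, since only the stochastic integral term \eqref{eq:lzetasimestimateII1a} is present and the error-operator handling \eqref{eq:lzetasimestimateII1b}--\eqref{eq:lzetasimestimateII1c} is not needed. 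Exactly as in \eqref{eq:kzetaestimate-shortI1}--\eqref{eq:kzetaestimate-shortI5}, this estimate yields
\begin{align*}
\sup_{0\leq\s\leq\t\leq 1}\sup_{\x\in\Z}\sum_{\y\in\Z}\exp\bigl(\tfrac{\kappa|\x-\y|}{N}\bigr)|\mathbf{B}^{N,\sim}_{\s,\t,\x,\y}|\lesssim N^{\delta}
\end{align*}
with high probability, which is the key heat-kernel-type input for bounding the Duhamel integral.

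The bulk of the work is then to estimate $\boldsymbol{\Psi}_{\r}[\mathbf{Y}^{N,1}_{\r,\cdot}]_{\w}$. For this, I will leverage two facts: (i) by Lemma \ref{lemma:comparisonstep1}, $\mathbf{Y}^{N,1}_{\r,\cdot}$ is essentially $\mathbf{Z}^{N}_{\r,\cdot}$, which inherits sub-exponential moment estimates from the initial data and Cole-Hopf formula; and (ii) on the cutoff region $|\w|\lesssim N^{1+\zeta_{\mathrm{final}}}$, the a priori estimates \eqref{eq:tap1}--\eqref{eq:tap3} enforced by $\mathfrak{t}_{\mathrm{ap}}$ give pointwise bounds $|\mathbf{Av}^{\mathbf{X},\mathfrak{q}_{\n}}|\lesssim N^{-1/2+\delta_{\mathbf{S}}}$ and the deterministic estimate \eqref{eq:err-estimate} gives a prefactor $N^{-1/2+\mathrm{C}\delta_{\mathbf{S}}}$ for the $\mathrm{Err}$ piece. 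Combining these with the noise-kernel bound on $\mathbf{B}^{N,\sim}$ (via the triangle inequality $\exp(\kappa|\x-\y|/N)\leq\exp(\kappa|\x-\w|/N)\exp(\kappa|\w-\y|/N)$ as in \eqref{eq:expinequalitytool}) and the volume factor $N^{1+\zeta_{\mathrm{final}}}$ from the spatial cutoff reduces the estimate to a power count. The $\mathrm{Err}$ contribution is straightforward, since $N^{-1/2+\mathrm{C}\delta_{\mathbf{S}}}$ beats $N^{\zeta_{\mathrm{final}}/2}\lesssim N^{\e_{\mathrm{cut}}}$ once $\e_{\mathrm{cut}}$ and $\delta_{\mathbf{S}}$ are small enough.

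The main obstacle, and the step that requires care, is the time-gradient piece involving $\grad^{\mathbf{T},\mathrm{av}}_{\mathfrak{t}_{\mathbf{Av}}}$ applied to an averaged term of the form $\mathbf{Av}^{\mathbf{X},\mathfrak{q}_{\n}}\mathbf{R}^{N}\mathbf{Y}^{N,1}$. Here a crude $\ell^{\infty}$-style bound loses the needed power of $N$, so I will follow the splitting \eqref{eq:lzetasimestimateII5a}--\eqref{eq:lzetasimestimateII5d}: absorb the time-gradient onto the $\mathbf{B}^{N,\sim}$ kernel (or equivalently onto the heat kernel $\mathbf{H}^{N}$ plus the small perturbation) via a shift of the time-integration variable, use the heat-equation identity to convert $\partial_{\upsilon}\mathbf{H}^{N}$ into $\mathscr{T}_{N}\mathbf{H}^{N}$ and thereby pay two discrete-gradient factors on the smooth kernel $\mathscr{S}^{N}\star\mathbf{H}^{N}$, and then exploit the smallness $\mathfrak{t}_{\mathbf{Av}}=N^{-2/3-10\delta_{\mathbf{S}}}$. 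The resulting gain of $\mathfrak{t}_{\mathbf{Av}}^{2}\lesssim N^{-4/3-20\delta_{\mathbf{S}}}$ comfortably beats the spatial volume factor $N^{1+\zeta_{\mathrm{final}}}\lesssim N^{1+2\e_{\mathrm{cut}}}$ once $\e_{\mathrm{cut}},\delta_{\mathbf{S}}$ are sufficiently small. Putting the two estimates together and plugging into the Duhamel identity yields \eqref{eq:comparisonstep2I} with some positive exponent $\gamma_{\mathrm{L},2}$ depending on the final choice of $\e_{\mathrm{cut}}$ and $\delta_{\mathbf{S}}$.
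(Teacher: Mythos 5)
The proposal takes a genuinely different route: instead of the paper's Duhamel with the deterministic heat kernel $\mathbf{H}^{N}$ as base plus a second-moment Gronwall closure of the noise integral, you use the stochastic kernel $\mathbf{B}^{N,\sim}$ as base, which eliminates the noise integral from the Duhamel entirely. That part is fine — the Duhamel identity $\mathbf{L}^{N,\zeta,\sim}-\mathbf{B}^{N,\sim}=\int_{0}^{\t}\sum_{\w}\mathbf{B}^{N,\sim}_{\r,\t,\x,\w}\boldsymbol{\Psi}_{\r}[\mathbf{L}^{N,\zeta,\sim}]_{\w}\d\r$ is verified by the same argument as Lemma \ref{lemma:lkzeta}, and the $\ell^{1}$ bound and the $\mathbf{B}^{N,\sim}-\mathbf{H}^{N}$ analogue of Lemma \ref{lemma:lzetasimestimate} both go through.

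The gap is in the time-gradient piece. The ``move $\grad^{\mathbf{T},\mathrm{av}}_{\mathfrak{t}_{\mathbf{Av}}}$ onto the propagator and convert $\partial_{\upsilon}\mathbf{H}^{N}$ into $\mathscr{T}_{N}\mathbf{H}^{N}$'' mechanism, which is where the entire $\mathfrak{t}_{\mathbf{Av}}$-gain comes from (cf.\ the computations leading to \eqref{eq:lzetasimestimateII9} and \eqref{eq:lzetasimestimateII10}), is a consequence of the \emph{deterministic} PDE \eqref{eq:heatkernel}. The kernel $\mathbf{B}^{N,\sim}$ does not satisfy that PDE; it satisfies the SDE \eqref{eq:bzetasim-sde}, and its backward-in-$\s$ increments carry a noise contribution that is not controlled by $\mathscr{T}_{N}\mathbf{B}^{N,\sim}\cdot\u$. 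You flag this yourself with the parenthetical ``onto the heat kernel $\mathbf{H}^{N}$ plus the small perturbation'', which forces a split $\mathbf{B}^{N,\sim}=\mathbf{H}^{N}+(\mathbf{B}^{N,\sim}-\mathbf{H}^{N})$. For the $\mathbf{H}^{N}$ piece the paper's \eqref{eq:lzetasimestimateII10} applies and you win $\mathfrak{t}_{\mathbf{Av}}^{2}$. For the $(\mathbf{B}^{N,\sim}-\mathbf{H}^{N})$ piece there is no such gain: $\grad^{\mathbf{T},\mathrm{av}}_{\mathfrak{t}_{\mathbf{Av}}}$ is merely a bounded operator, the a priori bound on $\mathbf{Av}^{\mathbf{X},\mathfrak{q}_{\n}}$ is only $N^{-1/2+\delta_{\mathbf{S}}}$ (it lacks the Kipnis--Varadhan $N^{-2-\beta}$ gain that $\mathbf{Av}^{\mathbf{T},\mathbf{X},\mathfrak{q}_{\n}}$ enjoys), and the best available kernel bound is the $\ell^{2}$ smallness $\sum_{\w}\exp(\kappa|\x-\w|/N)|\mathbf{B}^{N,\sim}-\mathbf{H}^{N}|^{2}\lesssim N^{-1+\delta}$. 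A Cauchy--Schwarz in $\w$ then yields, schematically, a factor $N^{-1/2}$ from the kernel difference times $N^{1/2}$ from the weighted $\ell^{2}$ volume times $N^{1/2+\mathrm{O}(\delta_{\mathbf{S}})}$ from $N\cdot|\mathbf{Av}^{\mathbf{X}}|\cdot(\ell^{1}\text{ of }\mathbf{L})$ --- a net \emph{positive} power of $N$ that does not close, no matter how small $\e_{\mathrm{cut}}$ and $\delta_{\mathbf{S}}$ are.

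This is precisely why the paper keeps $\mathbf{H}^{N}$ as the Duhamel propagator throughout (cf.\ \eqref{eq:comparisonstep2I2}) even though that reinstates a self-referential noise integral: that integral is linear in $\mathbf{V}^{N}$ and is closed via a singular Gronwall in the $\ell^{2}(\y)$ second-moment topology \eqref{eq:comparisonstep2I4}, while the driver $\boldsymbol{\Upsilon}$ can then reuse the already-established bounds \eqref{eq:lzetasimestimateII4} and \eqref{eq:lzetasimestimateII10} verbatim. To repair your plan you would either need a genuinely new estimate on the backward-in-time increments of $\mathbf{B}^{N,\sim}-\mathbf{H}^{N}$ with a $\mathfrak{t}_{\mathbf{Av}}$-type gain (not available from the SDE), or revert to $\mathbf{H}^{N}$ as base and accept the Gronwall step.
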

The third and final step is to compare {\small$\mathbf{Y}^{N,2}$} to the object {\small$\mathbf{Q}^{N}$} of interest (see \eqref{eq:qsde}).
\begin{lemma}\label{lemma:comparisonstep3}
\fsp Fix any {\small$\mathrm{L}>0$}. There exists {\small$\gamma_{\mathrm{L},3}>0$} universal such that with high probability, we have 
\begin{align}
\sup_{\t\in[0,1]}\sup_{|\x|\leq \mathrm{L}N}|\mathbf{Y}^{N,2}_{\t,\x}-\mathbf{Q}^{N}_{\t,\x}|\lesssim_{\mathrm{L}} N^{-\gamma_{\mathrm{L},3}}.\label{eq:comparisonstep3I}
\end{align}
\end{lemma}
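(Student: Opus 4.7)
The plan is to decompose the comparison into two steps via an intermediary process, each reducing to a stability-of-lattice-\abbr{SHE} argument in the spirit of the proof of Theorem 2.1 of \cite{BG} already invoked in the proof of Theorem \ref{theorem:main}. Let $\wt{\mathbf{Q}}^{N}$ solve \eqref{eq:qsde} but with initial data $\mathbf{S}^{N}_{0,\cdot}$ in place of $\mathbf{Z}^{N}_{0,\cdot}$, so that $\mathbf{Q}^{N}-\wt{\mathbf{Q}}^{N}$ is itself a lattice \abbr{SHE} (by linearity of \eqref{eq:qsde}) driven from the initial data $\mathbf{Z}^{N}_{0,\cdot}-\mathbf{S}^{N}_{0,\cdot}=\mathbf{Z}^{N}_{0,\cdot}-[\mathscr{S}^{N}\star\mathbf{Z}^{N}_{0,\cdot}]$. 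First I would show this initial-data difference is small by writing it as $-\sum_{\w}\mathscr{S}^{N}_{\w}\grad^{\mathbf{X}}_{-\w}\mathbf{Z}^{N}_{0,\cdot}$ and applying \eqref{eq:mainI} with $|\mathfrak{l}|\lesssim N^{1-\delta_{\mathbf{S}}}$ (the scale of $\mathscr{S}^{N}$) together with Chebyshev's inequality and a union bound, getting $\sup_{|\y|\lesssim N^{\mathrm{D}}}|\mathbf{Z}^{N}_{0,\y}-\mathbf{S}^{N}_{0,\y}|\lesssim N^{-\gamma_{0}}$ with high probability for some $\gamma_{0}>0$. The stability of the lattice \abbr{SHE} under locally-uniformly small initial data (exactly as in the proof of Theorem \ref{theorem:main}) then gives $|\mathbf{Q}^{N}-\wt{\mathbf{Q}}^{N}|\lesssim_{\mathrm{L}}N^{-\gamma_{0}'}$ on $[0,1]\times\{|\x|\leq\mathrm{L}N\}$ with high probability.

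It then remains to compare $\mathbf{Y}^{N,2}$ to $\wt{\mathbf{Q}}^{N}$, both having initial data $\mathbf{S}^{N}_{0,\cdot}$. Setting $\mathbf{D}:=\mathbf{Y}^{N,2}-\wt{\mathbf{Q}}^{N}$ and subtracting \eqref{eq:qsde} from \eqref{eq:bzetasim-sde}, $\mathbf{D}$ has zero initial data and solves the lattice \abbr{SHE} plus a forcing martingale $\d\mathbf{M}$ coming from the difference of the two noise terms. I would split $\d\mathbf{M}=\d\mathbf{M}^{(\mathrm{i})}+\d\mathbf{M}^{(\mathrm{ii})}+\d\mathbf{M}^{(\mathrm{iii})}$ into: (i) a ``cutoff-tail'' piece proportional to $\boldsymbol{\chi}^{(\zeta_{\mathrm{large}})}_{\x}-1$, supported on $|\x|\gtrsim N^{1+\zeta_{\mathrm{large}}/2}$; (ii) a ``$\mathbf{R}^{N,\wedge}$-deviation'' piece $[\mathscr{S}^{N}\star(\sqrt{2}\lambda N^{1/2}(\mathbf{R}^{N,\wedge}_{\t,\cdot}-1)\mathbf{Y}^{N,2}_{\t,\cdot}\d\mathbf{b}_{\t,\cdot})]_{\x}$, carrying a deterministic prefactor $\mathrm{O}(N^{-\delta_{\mathbf{S}}/3})$ by \eqref{eq:rwedge} on the high-probability event of Lemma \ref{lemma:rbound}; and (iii) the pure mollification error $\sum_{\w}(\mathscr{S}^{N}_{\x-\w}-\delta_{\x=\w})\sqrt{2}\lambda N^{1/2}\mathbf{Y}^{N,2}_{\t,\w}\d\mathbf{b}_{\t,\w}$.

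I would then bound $\mathbf{D}$ by writing its Duhamel representation against the stochastic heat kernel $\mathbf{K}^{N,\mathbf{Q}}$ of \eqref{eq:qsde}, taking second moments with the It\^{o} isometry and the \abbr{BDG} inequality, and closing with Gronwall. Piece (i) is exponentially small on $|\x|\leq\mathrm{L}N$ due to sub-exponential decay of $\mathbf{K}^{N,\mathbf{Q}}$ (analogous to Proposition \ref{prop:hke} and to the argument leading to \eqref{eq:lzetasimestimateII13}), once $\zeta_{\mathrm{large}}$ is chosen large enough. Piece (ii) contributes $\mathrm{O}(N^{-2\delta_{\mathbf{S}}/3})$ via its deterministic small prefactor plus a routine second-moment bound on $\mathbf{Y}^{N,2}$ (Gronwall on $\sup_{\x}\E|\mathbf{Y}^{N,2}_{\t,\x}|^{2}$, in the spirit of \eqref{eq:aprioricompact4}). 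Piece (iii) is the main obstacle, since its variance density per lattice site is comparable to that of the genuine noise; the required smallness must come from spatial regularity of $\mathbf{K}^{N,\mathbf{Q}}$ after Duhamel. Concretely, using the symmetry of $\mathscr{S}^{N}$ one rewrites the (iii)-contribution to $\mathbf{D}_{\t,\x}$ as $\int_{0}^{\t}\sum_{\w}([\mathscr{S}^{N}\star\mathbf{K}^{N,\mathbf{Q}}_{\r,\t,\x,\cdot}]_{\w}-\mathbf{K}^{N,\mathbf{Q}}_{\r,\t,\x,\w})\sqrt{2}\lambda N^{1/2}\mathbf{Y}^{N,2}_{\r,\w}\d\mathbf{b}_{\r,\w}$ and exploits that $\mathbf{K}^{N,\mathbf{Q}}_{\r,\t,\x,\cdot}$ is smooth on the diffusive scale $N\sqrt{\t-\r}$ while $\mathscr{S}^{N}$ mollifies at the strictly sub-macroscopic scale $N^{1-\delta_{\mathbf{S}}}$, yielding an $\ell^{2}(\Z)$-averaged gain of $\mathrm{O}(N^{-\delta_{\mathbf{S}}}|\t-\r|^{-1/2})$ in the effective kernel. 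After It\^{o} isometry this produces $\E|\mathbf{D}_{\t,\x}|^{2}\lesssim N^{-2\delta_{\mathbf{S}}}$, which upgrades to a pointwise high-probability bound by a standard Chebyshev-and-continuity argument. Combining with the first paragraph finishes the proof.
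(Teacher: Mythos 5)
Your proposal takes a genuinely different route from the paper's. The paper removes the $\boldsymbol{\chi}^{(\zeta_{\mathrm{large}})}$ cutoff to pass from $\mathbf{Y}^{N,2}$ to an intermediate $\mathbf{Y}^{N,3}$, and then \emph{deconvolves}: it defines $\mathbf{Y}^{N,4}$ by $\mathbf{Y}^{N,3}=\mathscr{S}^{N}\star\mathbf{Y}^{N,4}$, which forces $\mathbf{Y}^{N,4}$ to have the same initial data as $\mathbf{Q}^{N}$ and to solve an \abbr{SDE} whose noise coefficient at site $\x$ is $\sqrt{2}\lambda N^{1/2}\mathbf{R}^{N,\wedge}_{\t,\x}[\mathscr{S}^{N}\star\mathbf{Y}^{N,4}_{\t,\cdot}]_{\x}\d\mathbf{b}_{\t,\x}$, i.e.\ a single Brownian motion multiplied by a coefficient that is \emph{pointwise} close to $\mathbf{Y}^{N,4}_{\t,\x}$ (by $|\mathbf{R}^{N,\wedge}-1|\lesssim N^{-\delta_{\mathbf{S}}/3}$ and by the $\mathrm{O}(N^{\upsilon-1})$-H\"{o}lder regularity of the \emph{solution}). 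In contrast, you keep the convolution around the noise and try to estimate the mollification error of the noise term directly; you correctly identify that this noise difference has $\mathrm{O}(1)$ variance density, so the smallness must be extracted after Duhamel via kernel regularity. Both routes are conceptually valid, but the paper's deconvolution is cleaner because it trades regularity-of-kernel (which degrades like $|\t-\r|^{-1/2}$ near the diagonal) for regularity-of-solution (which is uniform in time).

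There are two genuine gaps in your piece-(iii) argument. First, you write the Duhamel contribution against ``the stochastic heat kernel $\mathbf{K}^{N,\mathbf{Q}}$'' inside a stochastic integral $\int(\cdot)\d\mathbf{b}_{\r,\w}$. Since $\mathbf{K}^{N,\mathbf{Q}}_{\r,\t,\cdot,\cdot}$ depends on the noise in $[\r,\t]$ it is not $\mathscr{F}_{\r}$-adapted, so this is an anticipative integral and the naive It\^{o} isometry fails; moreover the formula already misses an It\^{o}-correction term relative to what the actual Duhamel formula for linear \abbr{SDE}s with stochastic forcing gives. The fix is to Duhamel against the deterministic kernel $\mathbf{H}^{N}$ of \eqref{eq:heatkernel}, in which case the self-referential term $\sqrt{2}\lambda N^{1/2}\mathbf{D}_{\r,\w}\d\mathbf{b}_{\r,\w}$ persists and is absorbed by Gronwall (you do mention Gronwall, so this may be only a notational slip, but the formula as stated is not valid). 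Second, the asserted $\ell^{2}$-averaged gain $\mathrm{O}(N^{-\delta_{\mathbf{S}}}|\t-\r|^{-1/2})$, if plugged blindly into the It\^{o} isometry, yields $\int_{0}^{\t}|\t-\r|^{-3/2}\d\r$ (or worse), which diverges at $\r=\t$. You must split the time-integral at $|\t-\r|\sim N^{-\eta}$: on the short-time piece bound the mollification error trivially by the kernel itself and integrate (cost $\mathrm{O}(N^{-\eta/2})$), while on the long-time piece your regularity gain applies and the integral converges; optimizing $\eta\sim\delta_{\mathbf{S}}$ gives an overall $N^{-\gamma}$ with $\gamma\sim\delta_{\mathbf{S}}$. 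With these two repairs your route does go through, but the paper's deconvolution step bypasses both issues.
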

\begin{proof}[Proof of Proposition \ref{prop:comparison}]
Use Lemmas \ref{lemma:comparisonstep1}, \ref{lemma:comparisonstep2}, and \ref{lemma:comparisonstep3} and the triangle inequality.
\end{proof}
%
\subsection{Proofs of Lemmas \ref{lemma:comparisonstep1}, \ref{lemma:comparisonstep2}, and \ref{lemma:comparisonstep3}}
We will now prove the previous ingredients in order.
\subsubsection{Proof of Lemma \ref{lemma:comparisonstep1}}
We iteratively use Lemmas \ref{lemma:aprioricompact} and \ref{lemma:stochheat} to deduce {\color{black}that}
\begin{align}
\sup_{\t\in[0,1]}\sup_{|\x|\leq\mathrm{L}N}|\mathbf{S}^{N}_{\t,\x}-\mathbf{S}^{N,\zeta_{\mathrm{final}}}_{\t,\x}|\lesssim_{\mathrm{L}}N^{-\gamma_{\mathrm{L}}}\label{eq:comparisonstep1I1}
\end{align}
for some {\small$\gamma_{\mathrm{L}}>0$} with high probability. We now apply {Lemma \ref{lemma:rbound}} and \eqref{eq:comparisonstep1I1} to get
\begin{align}
\sup_{\t\in[0,1]}\sup_{|\x|\leq\mathrm{L}N}|\mathbf{S}^{N}_{\t,\x}-\mathbf{Z}^{N}_{\t,\x}|&\lesssim N^{-\frac13\delta_{\mathbf{S}}}\sup_{\t\in[0,1]}\sup_{|\x|\leq\mathrm{L}N}|\mathbf{S}^{N}_{\t,\x}|\nonumber\\
&\lesssim_{\mathrm{L}} N^{-\frac13\delta_{\mathbf{S}}}\sup_{\t\in[0,1]}\sup_{|\x|\leq\mathrm{L}N}|\mathbf{S}^{N,\mathrm{final}}_{\t,\x}|+N^{-\frac13\delta_{\mathbf{S}}-\gamma_{\mathrm{L}}}.\label{eq:comparisonstep1I2}
\end{align}
We now claim that the following estimate holds on a high probability event if {\small$\kappa>0$} is large enough:
\begin{align*}
\exp(-\tfrac{\kappa|\x|}{N})|\mathbf{S}^{N,\zeta_{\mathrm{final}}}_{\t,\x}|&\lesssim\sum_{\y\in\Z}\exp(\tfrac{\kappa|\x-\y|}{N})|\mathbf{K}^{N,\mathrm{final}}_{0,\t,\x,\y}|\cdot\exp(-\tfrac{\kappa|\y|}{N})|\mathbf{S}^{N}_{0,\y}|\\
&\lesssim_{\kappa,\delta}\exp(N^{-\beta}N^{\zeta_{\mathrm{final}}})N^{\delta}\lesssim N^{\delta}.
\end{align*}
The first bound follows from \eqref{eq:duhamelszeta} and the triangle inequality. To show the second estimate, we use Proposition \ref{prop:stochheat}, noting that {\small$\zeta_{\mathrm{final}}$} is arbitrarily small (see Notation \ref{notation:zeta}), so the \abbr{RHS} of \eqref{eq:stochheatI} is {\small$\lesssim_{\delta}N^{\delta}$}. We also use {Lemma \ref{lemma:databound}} to get {\small$\sup_{\y\in\Z}\exp(-\kappa|\y|/N)|\mathbf{S}^{N}_{0,\y}|\lesssim_{\delta} N^{\delta}$} with high probability. If we restrict the previous display to {\small$|\x|\leq\mathrm{L}N$}, then the factor of {\small$\exp(-\kappa|\x|/N)$} is bounded uniformly away from {\small$0$}. So, if we take {\small$\delta>0$} small enough in the above display and combine it with \eqref{eq:comparisonstep1I2}, then we have
\begin{align}
\sup_{\t\in[0,1]}\sup_{|\x|\leq\mathrm{L}N}|\mathbf{S}^{N}_{\t,\x}-\mathbf{Z}^{N}_{\t,\x}|\lesssim_{\mathrm{L},\delta}N^{-\frac16\delta_{\mathbf{S}}}.\label{eq:comparisonstep1I3}
\end{align}
Now, recall {\small$\mathbf{Y}^{N,1}$} from \eqref{eq:comparisonstep1II}. Moreover, recall {\small$\mathbf{K}^{N,\zeta_{\mathrm{final}},\sim}$} from \eqref{eq:kzetasim-sde}, and recall that by Lemma \ref{lemma:kzetamodify}, we have {\small$\mathbf{K}^{N,\zeta_{\mathrm{final}}}=\mathbf{K}^{N,\zeta_{\mathrm{final}},\sim}$} with high probability. In particular, with high probability, we have 
\begin{align*}
\exp(-\tfrac{\kappa|\x|}{N})|\mathbf{Y}^{N,1}_{\t,\x}-\mathbf{S}^{N,\zeta_{\mathrm{final}}}_{\t,\x}|&\leq\sum_{\y\in\Z}\exp(\tfrac{\kappa|\x-\y|}{N})|\mathbf{L}^{N,\zeta_{\mathrm{final}},\sim}_{0,\t,\x,\y}-\mathbf{K}^{N,\zeta_{\mathrm{final}},\sim}_{0,\t,\x,\y}|\cdot\exp(-\tfrac{\kappa|\y|}{N})|\mathbf{S}^{N}_{0,\y}|\\
&\lesssim_{\kappa,\delta} N^{\delta}\sum_{\y\in\Z}\exp(\tfrac{\kappa|\x-\y|}{N})|\mathbf{L}^{N,\zeta_{\mathrm{final}},\sim}_{0,\t,\x,\y}-\mathbf{K}^{N,\zeta_{\mathrm{final}},\sim}_{0,\t,\x,\y}|,
\end{align*}
where we again used {Lemma \ref{lemma:databound}} to get the second line above. Now, we note that the difference of kernels in the second line can be computed using \eqref{eq:lkzetaIa}. After plugging in this expression, we use \eqref{eq:kzetaestimate-shortI5} for small {\small$\e,\delta,\delta_{\mathbf{S}}>0$}, and then we use \eqref{eq:kzetaestimate-longI} for {\small$\zeta=\zeta_{\mathrm{final}}$} and {\small$\delta>0$} small. This implies that
\begin{align*}
\sum_{\y\in\Z}\exp(\tfrac{\kappa|\x-\y|}{N})|\mathbf{L}^{N,\zeta_{\mathrm{final}},\sim}_{0,\t,\x,\y}-\mathbf{K}^{N,\zeta_{\mathrm{final}},\sim}_{0,\t,\x,\y}|\lesssim_{\kappa} N^{-\omega_{0}}\sup_{\substack{\r\in[0,\t]\\\x\in\Z}}\sup_{\y\in\Z}\exp(\tfrac{\kappa|\x-\y|}{N})|\mathbf{K}^{N,\zeta_{\mathrm{final}},\sim}_{\r,\t,\x,\y}|\lesssim N^{-\frac12\omega_{0}}
\end{align*}
for some universal constant {\small$\omega_{0}>0$}. Again, this holds on some high probability event simultaneously over all {\small$\t\in[0,1]$} and {\small$\x\in\Z$}. If we combine the previous two displays (with {\small$\delta>0$} small enough), then we obtain
\begin{align*}
\sup_{\t\in[0,1]}\sup_{\x\in\Z}\exp(-\tfrac{\kappa|\x|}{N})|\mathbf{Y}^{N,1}_{\t,\x}-\mathbf{S}^{N,\zeta_{\mathrm{final}}}_{\t,\x}|\lesssim_{\kappa}N^{-\frac13\omega_{0}}.
\end{align*}
If we restrict to {\small$|\x|\leq\mathrm{L}N$} in the previous display, then the exponential factor is bounded uniformly away from {\small$0$}. Combining the previous display with \eqref{eq:comparisonstep1I1} and \eqref{eq:comparisonstep1I3} thus yields the desired estimate \eqref{eq:comparisonstep1I}. \qed
\subsubsection{Proof of Lemma \ref{lemma:comparisonstep2}}
The key is controlling the following difference of kernels propagating {\small$\mathbf{Y}^{N,1},\mathbf{Y}^{N,2}$}:
\begin{align}
\mathbf{V}^{N}_{\s,\t,\x,\y}:=\mathbf{L}^{N,\zeta,\sim}_{\s,\t,\x,\y}-\mathbf{B}^{N,\sim}_{\s,\t,\x,\y}.\label{eq:comparisonstep2I1}
\end{align}
(We have introduced this notation because it will be convenient for analyzing this difference of kernels through its dynamical equation.) By \eqref{eq:lzetasim-sde} and \eqref{eq:bzetasim-sde}, this difference kernel satisfies the following for {\small$\t\geq\s$} and {\small$\x,\y\in\Z$}:
\begin{align}
&\d\mathbf{V}^{N}_{\s,\t,\x,\y}=\mathscr{T}_{N}\mathbf{V}^{N}_{\s,\t,\x,\y}\d\t+{\boldsymbol{\chi}}^{(\zeta_{\mathrm{large}})}_{\x}\cdot[\mathscr{S}^{N}\star(\sqrt{2}\lambda N^{\frac12}\mathbf{R}^{N,\wedge}_{\t,\cdot}\mathbf{V}^{N}_{\s,\t,\cdot,\y}\d\mathbf{b}_{\t,\cdot})]_{\x}\nonumber\\
&+N\boldsymbol{\chi}^{(\zeta)}_{\x}\sum_{\substack{\n=1,\ldots,\mathrm{K}\\\m=0,\ldots,\mathrm{M}\\0<|\mathfrak{l}_{1}|,\ldots,|\mathfrak{l}_{\m}|\lesssim \mathfrak{n}_{\mathbf{Av}}}}\tfrac{1}{|\mathfrak{l}_{1}|\ldots|\mathfrak{l}_{\m}|}\mathrm{c}_{N,\n,\mathfrak{l}_{1},\ldots,\mathfrak{l}_{\m}}\grad^{\mathbf{T},\mathrm{av}}_{\mathfrak{t}_{\mathbf{Av}}}\grad^{\mathbf{X}}_{\mathfrak{l}_{1}}\ldots\grad^{\mathbf{X}}_{\mathfrak{l}_{\m}}\Big({\mathbf{1}_{\mathfrak{t}_{\mathrm{ap}}\geq\t}}[\mathscr{S}^{N}\star(\mathbf{Av}^{\mathbf{X},\mathfrak{q}_{\n}}_{\t,\cdot}\cdot\mathbf{R}^{N}_{\t,\cdot}\mathbf{L}^{N,\zeta,\sim}_{\s,\t,\cdot,\y})]_{\x}\Big)\d\t\nonumber\\
&+\boldsymbol{\chi}^{(\zeta)}_{\x}{\mathbf{1}_{\mathfrak{t}_{\mathrm{ap}}\geq\t}}\cdot\mathrm{Err}[\mathbf{R}^{N}_{\t,\cdot}\mathbf{L}^{N,\zeta,\sim}_{\s,\t,\cdot,\y}]_{\x}\d\t\nonumber\\
&=:\mathscr{T}_{N}\mathbf{V}^{N}_{\s,\t,\x,\y}\d\t+\mathbf{1}_{\mathfrak{t}_{\mathrm{ap}}\geq\t}{\boldsymbol{\chi}}^{(\zeta_{\mathrm{large}})}_{\x}\cdot[\mathscr{S}^{N}\star(\sqrt{2}\lambda N^{\frac12}\mathbf{R}^{N,\wedge}_{\t,\cdot}\mathbf{V}^{N}_{\s,\t,\cdot,\y}\d\mathbf{b}_{\t,\cdot})]_{\x}+\boldsymbol{\Upsilon}_{\t,\x,\y}\d\t.\label{eq:vn-sde}
\end{align}
We clarify that {\small$\mathscr{T}_{N}$} acts in {\small$\x$}, and the differential is in {\small$\t$}. By the Duhamel formula, we have 
\begin{align}
\mathbf{V}^{N}_{\s,\t,\x,\y}&=\int_{\s}^{\t}\sum_{\w\in\Z}\mathbf{H}^{N}_{\r,\t,\x,\w}{\boldsymbol{\chi}}^{(\zeta_{\mathrm{large}})}_{\w}\cdot[\mathscr{S}^{N}\star(\sqrt{2}\lambda N^{\frac12}\mathbf{R}^{N,\wedge}_{\r,\cdot}\mathbf{V}^{N}_{\s,\r,\cdot,\y}\d\mathbf{b}_{\r,\cdot})]_{\w}\nonumber\\
&+\int_{\s}^{\t}\sum_{\w\in\Z}\mathbf{H}^{N}_{\r,\t,\x,\w}\cdot\boldsymbol{\Upsilon}_{\r,\w,\y}\d\r.\label{eq:comparisonstep2I2}
\end{align}
We will again recall the notation {\small$\|\cdot\|_{\mathrm{L}^p(\mathbb{P})}:=(\E|\cdot|^{p})^{1/p}$}. For any {\small$\kappa>0$}, we get the following moment estimate by combining \eqref{eq:lzetasimestimateII1d}, \eqref{eq:lzetasimestimateII4}, and \eqref{eq:lzetasimestimateII10}:
\begin{align}
&\sum_{\y\in\Z}\exp(\tfrac{2\kappa|\x-\y|}{N})\Big\|\Big(\int_{\s}^{\t}\sum_{\w\in\Z}\mathbf{H}^{N}_{\r,\t,\x,\w}\cdot\boldsymbol{\Upsilon}_{\r,\w,\y}\d\r\Big)^{2}\Big\|_{\mathrm{L}^{p}(\mathbb{P})}\nonumber\\
&\lesssim_{\kappa}N^{-\omega_{0}}\sup_{\r\in[\s,\t]}\sup_{\x\in\Z}|\r-\s|^{\frac12}\sum_{\y\in\Z}\exp(\tfrac{2\kappa|\x-\y|}{N})\||\mathbf{L}^{N,\zeta,\sim}_{\s,\r,\x,\y}|^{2}\|_{\mathrm{L}^{p}(\mathbb{P})},\nonumber
\end{align}
where {\small$\omega_{0}>0$} is a fixed constant. We now claim that the following holds (with explanation given afterwards):
\begin{align*}
&\sup_{\r\in[\s,\t]}\sup_{\x\in\Z}|\r-\s|^{\frac12}\sum_{\y\in\Z}\exp(\tfrac{2\kappa|\x-\y|}{N})\||\mathbf{L}^{N,\zeta,\sim}_{\s,\r,\x,\y}|^{2}\|_{\mathrm{L}^{p}(\mathbb{P})}\\
&\lesssim\sup_{\r\in[\s,\t]}\sup_{\x\in\Z}|\r-\s|^{\frac12}\sum_{\y\in\Z}\exp(\tfrac{2\kappa|\x-\y|}{N})\||\mathbf{L}^{N,\zeta,\sim}_{\s,\r,\x,\y}-\mathbf{H}^{N}_{\s,\r,\x,\y}|^{2}\|_{\mathrm{L}^{p}(\mathbb{P})}\\
&+\sup_{\r\in[\s,\t]}\sup_{\x\in\Z}|\r-\s|^{\frac12}\sum_{\y\in\Z}\exp(\tfrac{2\kappa|\x-\y|}{N})|\mathbf{H}^{N}_{\s,\r,x,\y}|^{2}|\lesssim_{\delta,\kappa}N^{-1+\delta},
\end{align*}
where {\small$\delta>0$} is any small (but fixed) constant. The first bound is elementary. The second bound follows by \eqref{eq:lzetasimestimateII12a} and {Proposition \ref{prop:hke}}. Thus, the previous two displays yield
\begin{align}
\sum_{\y\in\Z}\exp(\tfrac{2\kappa|\x-\y|}{N})\Big\|\Big(\int_{\s}^{\t}\sum_{\w\in\Z}\mathbf{H}^{N}_{\r,\t,\x,\w}\cdot\boldsymbol{\Upsilon}_{\r,\w,\y}\d\r\Big)^{2}\Big\|_{\mathrm{L}^{p}(\mathbb{P})}\lesssim_{\delta,\kappa}N^{-1-\omega_{0}+\delta}.\label{eq:comparisonstep2I3}
\end{align}
Next, by the same \abbr{BDG}-martingale argument that gave us \eqref{eq:lzetasimestimateII3}, we have 
\begin{align}
&\sum_{\y\in\Z}\exp(\tfrac{2\kappa|\x-\y|}{N})\Big\|\Big(\int_{\s}^{\t}\sum_{\w\in\Z}\mathbf{H}^{N}_{\r,\t,\x,\w}{\boldsymbol{\chi}}^{(\zeta_{\mathrm{large}})}_{\w}\cdot[\mathscr{S}^{N}\star(\sqrt{2}\lambda N^{\frac12}\mathbf{R}^{N,\wedge}_{\r,\cdot}\mathbf{V}^{N}_{\s,\r,\cdot,\y}\d\mathbf{b}_{\r,\cdot})]_{\w}\Big)^{2}\Big\|_{\mathrm{L}^{p}(\mathbb{P})}\nonumber\\
&\lesssim_{\kappa} \int_{\s}^{\t}|\t-\s|^{-\frac12}\cdot\sup_{\x\in\Z}\sum_{\y\in\Z}\exp(\tfrac{2\kappa|\z-\y|}{N})\||\mathbf{V}^{N}_{\s,\r,\x,\y}|^{2}\|_{\mathrm{L}^{p}(\mathbb{P})}\d\r.\label{eq:comparisonstep2I4}
\end{align}
If we now combine \eqref{eq:comparisonstep2I2} with \eqref{eq:comparisonstep2I3}-\eqref{eq:comparisonstep2I4}, and choose {\small$\delta>0$} small enough, then we obtain
\begin{align*}
&\sup_{\x\in\Z}\sum_{\y\in\Z}\exp(\tfrac{2\kappa|\x-\y|}{N})\||\mathbf{V}^{N}_{\s,\r,\x,\y}|^{2}\|_{\mathrm{L}^{p}(\mathbb{P})}\lesssim_{\kappa}N^{-1-\frac12\omega_{0}}+\int_{\s}^{\t}|\t-\s|^{-\frac12}\cdot\sup_{\x\in\Z}\sum_{\y\in\Z}\exp(\tfrac{2\kappa|\z-\y|}{N})\||\mathbf{V}^{N}_{\s,\r,\x,\y}|^{2}\|_{\mathrm{L}^{p}(\mathbb{P})}\d\r.
\end{align*}
Now, we can use the Gronwall inequality so that the \abbr{LHS} of the previous estimate is {\small$\lesssim_{\kappa}N^{-1-\omega_{0}/2}$}. Now, by the same Chebyshev inequality, union bound, and short-time continuity argument that gave us \eqref{eq:lzetasimestimateII12}, from this estimate on the \abbr{LHS} of the previous display, we get the following with high probability:
\begin{align}
\sup_{0\leq\s\leq\t\leq1}\sup_{|\x|\lesssim N}\sum_{\y\in\Z}\exp(\tfrac{2\kappa|\x-\y|}{N})|\mathbf{V}^{N}_{\s,\r,\x,\y}|^{2}\lesssim N^{-1-\frac13\omega_{0}}.\label{eq:comparisonstep2I5}
\end{align}
Finally, we claim that the following estimate holds for large enough (but still {\small$\mathrm{O}(1)$}) constant {\small$\wt{\kappa}>0$}:
\begin{align*}
\exp(-\tfrac{\wt{\kappa}|\x|}{N})|\mathbf{Y}^{N,1}_{\t,\x}-\mathbf{Y}^{N,2}_{\t,\x}|&\leq\sum_{\y\in\Z}\exp(\tfrac{\wt{\kappa}|\x-\y|}{N})|\mathbf{V}^{N}_{0,\t,\x,\y}|\cdot\exp(-\tfrac{\wt{\kappa}|\y|}{N})|\mathbf{S}^{N}_{0,\y}|\\
&\lesssim_{\wt{\kappa},\delta}N^{\delta}\sum_{\y\in\Z}\exp(\tfrac{\wt{\kappa}|\x-\y|}{N})|\mathbf{V}^{N}_{0,\t,\x,\y}|\\
&\lesssim N^{\delta}\Big(\sum_{\y\in\Z}\exp(-\tfrac{\wt{\kappa}|\x-\y|}{N})\Big)^{\frac12}\Big(\sum_{\y\in\Z}\exp(\tfrac{3\wt{\kappa}|\x-\y|}{N})|\mathbf{V}^{N}_{0,\t,\x,\y}|^{2}\Big)^{\frac12}\lesssim_{\wt{\kappa}} N^{\delta}N^{-\frac16\omega_{0}}.
\end{align*}
The first estimate follows by \eqref{eq:comparisonstep1II}, \eqref{eq:comparisonstep2II}, and the triangle inequality. The second bound follows by {Lemma \ref{lemma:databound}}, and it holds for any {\small$\delta>0$}. The third estimate follows by Cauchy-Schwarz. The last estimate follows by \eqref{eq:comparisonstep2I5} and an elementary geometric series bound. We emphasize that the previous estimate holds on a high probability event simultaneously for {\small$\t\in[0,1]$} and {\small$|\x|\leq\mathrm{L}N$} if {\small$\mathrm{L}\lesssim1$}. But for such {\small$\x$}, the exponential factor on the far \abbr{LHS} of the previous display is bounded uniformly away from {\small$0$}. Thus, by changing the implied constant, we can drop this exponential factor. We then deduce the desired estimate \eqref{eq:comparisonstep2I}, so the proof is complete. \qed
\subsubsection{Proof of Lemma \ref{lemma:comparisonstep3}}
Recalling {\small$\mathbf{Y}^{N,2}$} from \eqref{eq:comparisonstep2II} and taking a time-differential gives
\begin{align}
\d\mathbf{Y}^{N,2}_{\t,\x}&=\mathscr{T}_{N}\mathbf{Y}^{N,2}_{\t,\x}\d\t+\boldsymbol{\chi}^{(\zeta_{\mathrm{large}})}_{\x}[\mathscr{S}^{N}\star(\sqrt{2}\lambda N^{\frac12}\mathbf{R}^{N,\wedge}_{\t,\cdot}\mathbf{Y}^{N,2}_{\t,\cdot}\d\mathbf{b}_{\t,\cdot})]_{\x}.\label{eq:comparisonstep3I1}
\end{align}
Let us now consider the following modification of \eqref{eq:comparisonstep3I1}, which removes the {\small$\boldsymbol{\chi}$} function therein:
\begin{align}
\d\mathbf{Y}^{N,3}_{\t,\x}&=\mathscr{T}_{N}\mathbf{Y}^{N,3}_{\t,\x}\d\t+[\mathscr{S}^{N}\star(\sqrt{2}\lambda N^{\frac12}\mathbf{R}^{N,\wedge}_{\t,\cdot}\mathbf{Y}^{N,3}_{\t,\cdot}\d\mathbf{b}_{\t,\cdot})]_{\x}.\label{eq:comparisonstep3I2}
\end{align}
The initial data {\color{black}for} these two equations are given by {\small$\mathbf{Y}^{N,2}_{0,\cdot},\mathbf{Y}^{N,3}_{0,\cdot}=\mathbf{S}^{N}_{0,\cdot}$}. By {{Lemmas \ref{lemma:rbound}, \ref{lemma:phibound}, \ref{lemma:avldp}, and \ref{lemma:avldptx}}, we have {\small$\mathfrak{t}_{\mathrm{ap}}=1$} with high probability. Moreover, since {\small$\zeta_{\mathrm{large}}>0$} is large, we have the following analogue of Lemma \ref{lemma:aprioricompact} with high probability for any {\small$\mathrm{L}>0$} independent of {\small$N$}:
\begin{align}
\sup_{\t\in[0,1]}\sup_{|\x|\leq\mathrm{L}N}|\mathbf{Y}^{N,2}_{\t,\x}-\mathbf{Y}^{N,3}_{\t,\x}|\lesssim N^{-\gamma_{\mathrm{L}}}.\label{eq:comparisonstep3I3}   
\end{align}
Above, {\small$\gamma_{\mathrm{L}}>0$} is deterministic and independent of {\small$N$}. Now, we define {\small$\mathbf{Y}^{N,4}$} via the relation {\small$\mathbf{Y}^{N,3}=\mathscr{S}^{N}\star\mathbf{Y}^{N,4}$}. In particular, it is the solution to the following \abbr{SDE} with initial data {\small$\mathbf{Y}^{N,4}_{0,\cdot}=\mathbf{Z}^{N}_{0,\cdot}$} (see \eqref{eq:zsmooth}):
\begin{align}
\d\mathbf{Y}^{N,4}_{\t,\x}&=\mathscr{T}_{N}\mathbf{Y}^{N,4}_{\t,\x}\d\t+\sqrt{2}\lambda N^{\frac12}\mathbf{R}^{N,\wedge}_{\t,\x}[\mathscr{S}^{N}\star\mathbf{Y}^{N,4}_{\t,\cdot}]_{\x}\d\mathbf{b}_{\t,\x}.\label{eq:comparisonstep3I2}
\end{align}
This is the same \abbr{SDE} as the equation \eqref{eq:qsde} for {\small$\mathbf{Q}^{N}$} (with the same initial data), except for the factor of {\small$\mathbf{R}^{N,\wedge}$} and the convolution {\small$\mathscr{S}^{N}$} acting on {\small$\mathbf{Y}^{N,4}$}. However, we know from \eqref{eq:rwedge} that {\small$|\mathbf{R}^{N,\wedge}_{\t,\x}-1|\lesssim N^{-\delta_{\mathbf{S}}/3}$}. Moreover, standard moment estimates (see Lemma 4.2 in \cite{BG}) show that {\small$\mathbf{Y}^{N,4}$} has regularity on length-scales of order {\small$N$}. That is, let us fix any {\small$\ell\in\Z$} with {\small$|\ell|\lesssim N$}, and fix any {\small$p,\kappa\geq1$} large (but {\small$\mathrm{O}(1)$}). For any {\small$\upsilon\in(0,1/2)$}, we have
\begin{align*}
\sup_{\t\in[0,1]}\sup_{\x\in\Z}\exp(\tfrac{-2p\kappa|\x|}{N})\E|\mathbf{Y}^{N,4}_{\t,\x}+\mathbf{Y}^{N,4}_{\t,\x+\ell}|^{2p}\lesssim_{p} N^{-2p\upsilon}|\ell|^{2p\upsilon}.
\end{align*}
Thus, because {\small$\mathscr{S}^{N}$} is {\color{black}a} convolution on {\color{black}a} scale {\small$\lesssim N^{1-\delta_{\mathbf{S}}}\ll N$}, we have {\small$\mathbf{Y}^{N,3}:=\mathscr{S}^{N}\star\mathbf{Y}^{N,4}\sim\mathbf{Y}^{N,4}$} (in the sense of {\small$2p$}-th moments as in the previous display). At this point, it is standard stochastic analysis to compare {\small$\mathbf{Y}^{N,4},\mathbf{Q}^{N}$} and show that the following holds with high probability, in which {\small$|\mathrm{L}|\lesssim1$}, where {\small$\gamma_{\mathrm{L}}$} again depends only on {\small$\mathrm{L}$}:
\begin{align}
\sup_{\t\in[0,1]}\sup_{|\x|\leq\mathrm{L}N}|\mathbf{Y}^{N,3}_{\t,\x}-\mathbf{Y}^{N,4}_{\t,\x}|+\sup_{\t\in[0,1]}\sup_{|\x|\leq\mathrm{L}N}|\mathbf{Y}^{N,4}_{\t,\x}-\mathbf{Q}^{N}_{\t,\x}|\lesssim_{\mathrm{L}} N^{-\gamma_{\mathrm{L}}}.\nonumber
\end{align}
Combining the previous display with \eqref{eq:comparisonstep1I3} gives the desired estimate \eqref{eq:comparisonstep3I}, so we are done. \qed
\appendix
\section{General facts}
\subsection{Deterministic heat kernel estimates}
We record below estimates for the heat kernel {\small$\mathbf{H}^{N}$} from \eqref{eq:heatkernel}.
\begin{prop}\label{prop:hke}
\fsp Fix any {\small$(\s,\t,\x,\y)\in[0,\infty)^{2}\times\Z^{2}$} with {\small$\s\leq\t\lesssim1$}. Fix any {\small$\kappa>0$}. We have
\begin{align}
\exp(\tfrac{\kappa|\x-\y|}{N})\mathbf{H}^{N}_{\s,\t,\x,\y}&\lesssim_{\kappa}N^{-1}|\t-\s|^{-\frac12},\label{eq:hkeIa}\\
\exp(\tfrac{\kappa|\x-\y|}{N})|[\mathscr{S}^{N}\star\mathbf{H}^{N}_{\s,\t,\cdot,\y}]_{\x}|&\lesssim_{\kappa}N^{-1}|\t-\s|^{-\frac12},\label{eq:hkeIb}\\
\sum_{\w\in\Z}\exp(\tfrac{2\kappa|\x-\w|}{N})\mathbf{H}^{N}_{\s,\t,\x,\w}&\lesssim_{\kappa}1,\label{eq:hkeIc}\\
\sum_{\w\in\Z}\exp(\tfrac{2\kappa|\x-\w|}{N})|\mathbf{H}^{N}_{\s,\t,\x,\w}|^{2}&\lesssim_{\kappa}N^{-1}|\t-\s|^{-\frac12}.\label{eq:hkeId}
\end{align}
\end{prop}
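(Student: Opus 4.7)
The plan is to realize $\mathbf{H}^{N}$ as the transition kernel of a continuous-time symmetric simple random walk on $\Z$ with rate $c_{N} := 2(N^{2}\alpha + \tfrac12 N\lambda^{2})$, since $\mathscr{T}_{N}$ is $c_{N}/2$ times the discrete Laplacian. Two classical inputs will do essentially all the work: an \emph{exponential moment identity} and a \emph{Gaussian pointwise bound}. The exponential moment identity is the explicit calculation
\begin{align*}
\sum_{\w\in\Z}\exp(\theta(\w-\x))\mathbf{H}^{N}_{\s,\t,\x,\w}=\exp\bigl(c_{N}(\t-\s)(\cosh\theta-1)\bigr),
\end{align*}
obtained by diagonalizing $\Delta$ via Fourier series (equivalently, the MGF of the random walk). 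Taking $\theta=\pm 2\kappa/N$ and using $\cosh(2\kappa/N)-1\lesssim_{\kappa}N^{-2}$ gives the exponent $\lesssim_{\kappa}(\t-\s)\lesssim1$, and bounding $\exp(2\kappa|\x-\w|/N)\leq \exp(2\kappa(\w-\x)/N)+\exp(-2\kappa(\w-\x)/N)$ yields \eqref{eq:hkeIc} immediately.

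For \eqref{eq:hkeIa}, I would invoke the Gaussian heat-kernel estimate
\begin{align*}
\mathbf{H}^{N}_{\s,\t,\x,\y}\lesssim N^{-1}|\t-\s|^{-\frac12}\exp\Bigl(-\mathrm{c}\tfrac{|\x-\y|^{2}}{N^{2}|\t-\s|}\Bigr),
\end{align*}
which follows from the standard local CLT for simple random walk (or uniform asymptotics of modified Bessel functions $I_{|\x-\y|}(c_{N}(\t-\s))$ together with a sub-Gaussian large-deviation bound to cover the regime $|\x-\y|^{2}\gtrsim N^{2}(\t-\s)$). Completing the square via $\kappa|\x-\y|/N\leq \mathrm{c}|\x-\y|^{2}/(2N^{2}|\t-\s|)+\kappa^{2}(\t-\s)/(2\mathrm{c})$ absorbs the exponential weight into the Gaussian and leaves a constant $\lesssim_{\kappa}1$ (using $\t-\s\lesssim 1$), proving \eqref{eq:hkeIa}.

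The remaining two bounds are short consequences. For \eqref{eq:hkeIb}, the triangle inequality gives $\exp(\kappa|\x-\y|/N)\leq \exp(\kappa|\x-\w|/N)\exp(\kappa|\w-\y|/N)$; since $\mathscr{S}^{N}$ is supported on $|\x-\w|\lesssim N^{1-\delta_{\mathbf{S}}}$ (Definition \ref{definition:zsmooth}), the first factor is $\lesssim_{\kappa}1$, and we apply \eqref{eq:hkeIa} to the second. For \eqref{eq:hkeId}, write $|\mathbf{H}^{N}_{\s,\t,\x,\w}|^{2}=\mathbf{H}^{N}_{\s,\t,\x,\w}\cdot\mathbf{H}^{N}_{\s,\t,\x,\w}$ and split the exponential weight symmetrically, giving
\begin{align*}
\sum_{\w}\exp(\tfrac{2\kappa|\x-\w|}{N})|\mathbf{H}^{N}_{\s,\t,\x,\w}|^{2}\leq\Bigl(\sup_{\w}\exp(\tfrac{\kappa|\x-\w|}{N})\mathbf{H}^{N}_{\s,\t,\x,\w}\Bigr)\cdot\sum_{\w}\exp(\tfrac{\kappa|\x-\w|}{N})\mathbf{H}^{N}_{\s,\t,\x,\w};
\end{align*}
the supremum is $\lesssim_{\kappa}N^{-1}|\t-\s|^{-1/2}$ by \eqref{eq:hkeIa} and the remaining sum is $\lesssim_{\kappa}1$ by \eqref{eq:hkeIc}.

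There is no serious obstacle here; the only item requiring any care is justifying the Gaussian pointwise bound used in \eqref{eq:hkeIa}, which is entirely classical for continuous-time simple random walk at rate $c_{N}$ and can be cited (e.g.\ the estimate used in Proposition A.1 of \cite{DT}, which is in fact referenced in the proof of Theorem \ref{theorem:mainwedge}). All constants depend only on $\kappa$ and the fixed parameters $\alpha,\lambda$, as required.
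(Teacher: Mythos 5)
Your proof is correct and takes essentially the same route as the paper: treat \eqref{eq:hkeIa} and \eqref{eq:hkeIc} as the base estimates, deduce \eqref{eq:hkeIb} from \eqref{eq:hkeIa} via the triangle inequality on the exponential weight and the compact support of $\mathscr{S}^{N}$, and deduce \eqref{eq:hkeId} from \eqref{eq:hkeIa} and \eqref{eq:hkeIc} via a sup-times-sum split. The only cosmetic difference is that you unpack the two base estimates yourself (\eqref{eq:hkeIc} from the explicit moment generating function of the rate-$\mathrm{O}(N^{2})$ continuous-time simple random walk, and \eqref{eq:hkeIa} from the Gaussian local CLT supplemented by a large-deviation tail bound and completing the square in the exponent), whereas the paper simply cites (A.12) and (A.24) of \cite{DT} for these inputs.
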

\begin{proof}
\eqref{eq:hkeIa} follows from (A.12) in \cite{DT}, noting that in \cite{DT}, the {\small$t$}-variable therein should be replaced by {\small$N^{2}t$} given the scaling in \eqref{eq:heatkernel}. \eqref{eq:hkeIb} follows from
\begin{align*}
\exp(\tfrac{\kappa|\x-\y|}{N})|[\mathscr{S}^{N}\star\mathbf{H}^{N}_{\s,\t,\cdot,\y}]_{\x}|&\lesssim N^{-1+\delta_{\mathbf{S}}}\sum_{|\w-\x|\lesssim N^{1-\delta_{\mathbf{S}}}}\exp(\tfrac{\kappa|\x-\w|}{N})\cdot\exp(\tfrac{\kappa|\w-\y|}{N})\mathbf{H}^{N}_{\s,\t,\w,\y}
\end{align*}
and then using \eqref{eq:hkeIa} and noting that {\small$\exp(\kappa|\x-\w|/N)\lesssim_{\kappa}1$} for all {\small$|\x-\w|\ll N$}. \eqref{eq:hkeIc} follows from (A.24) in \cite{DT} (noting that {\small$\e$} therein is equal to {\small$N^{-1}$} here). \eqref{eq:hkeId} follows by \eqref{eq:hkeIa} and \eqref{eq:hkeIc}.
\end{proof}
\subsection{Positivity of $\alpha$ and non-zero $\beta$}\label{subsection:alpha-beta}
Recall from the line before \eqref{eq:beta} that {\small$\alpha=\partial_{\sigma}\E^{\sigma}\mathscr{U}'[\bphi_{0}]|_{\sigma=0}$}. By \eqref{eq:classify1} for {\small$\mathsf{F}\equiv1$}, we deduce that {\small$\E^{\sigma}\mathscr{U}'[\bphi_{0}]=\upsilon_{\sigma}$} and thus {\small$\alpha=\partial_{\sigma}\upsilon_{\sigma}|_{\sigma=0}$}. By Remark 1.3 in \cite{DGP}, this is strictly positive. 

Now, recall that {\small$\beta=\frac12\partial_{\sigma}^{2}\E^{\sigma}(\mathbf{F}_{2}[\bphi])|_{\sigma=0}$} from \eqref{eq:beta}. By \eqref{eq:phi-nl}, we have {\small$\beta=\frac12\beta_{2}\partial_{\sigma}^{2}\E^{\sigma}(\mathscr{U}'[\bphi_{0}]\mathscr{U}'[\bphi_{1}])|_{\sigma=0}$}. By independence of {\small$\bphi_{0},\bphi_{1}$} under {\small$\E^{\sigma}$} and by \eqref{eq:classify1} with {\small$\mathsf{F}\equiv1$}, we deduce {\small$\beta=\frac12\beta_{2}\partial_{\sigma}^{2}(\upsilon_{\sigma}^{2})|_{\sigma=0}=\beta_{2}\upsilon''_{0}\upsilon_{0}+\beta_{2}(\upsilon'_{0})^{2}$}. By Assumption \ref{assump:potential}, we know {\small$\upsilon_{0}=0$}. Again, by Remark 1.3 in \cite{DGP}, we have {\small$\upsilon'_{0}>0$}. So, {\small$\beta\neq0$} if {\small$\beta_{2}\neq0$}.
\subsection{Infinitesimal generators}\label{subsection:generator}
We first note that the following discussion holds if we restrict \eqref{eq:phi} to a discrete torus {\small$\Z/\ell\Z$} and replace {\small$\Z$} with said torus. The generator of {\small$\t\mapsto\bphi_{\t,\cdot}$} from \eqref{eq:phi} is {\small$\mathscr{L}_{\bphi}:=\mathscr{L}_{\bphi,\mathrm{S}}+\mathscr{L}_{\bphi,\mathrm{A}}$}, where
\begin{align}
\mathscr{L}_{\bphi,\mathrm{S}}&:=N^{2}\sum_{\x\in\Z}\Big\{(\partial_{\bphi_{\x}}-\partial_{\bphi_{\x+1}})^{2}-(\mathscr{U}'[\bphi_{\x}]-\mathscr{U}'[\bphi_{\x+1}])(\partial_{\bphi_{\x}}-\partial_{\bphi_{\x+1}})\Big\},\label{eq:gen-sym}\\
\mathscr{L}_{\bphi,\mathrm{A}}&:=N^{\frac32}\sum_{\x\in\Z}\wt{\mathbf{F}}[\tau_{\x}\bphi]\partial_{\bphi_{\x}}.\label{eq:gen-asymm}
\end{align}
Fix any smooth local functions {\small$\mathsf{f},\mathsf{g}:\R^{\Z}\to\R$}, i.e. {\small$\mathsf{f}[\bphi],\mathsf{g}[\bphi]$} depend only on {\small$\bphi_{\x}$} for {\small$\x$} in some finite subset of {\small$\Z$}. It is shown in Lemma 2.1 of \cite{DGP} that {\small$\E^{\sigma}\mathsf{f}\mathscr{L}_{\bphi,\mathrm{S}}\mathsf{g}=\E^{\sigma}\mathsf{g}\mathscr{L}_{\bphi,\mathrm{S}}\mathsf{f}$}. Next, by recalling the formula \eqref{eq:gcmeasure} and integrating by parts, we have the following calculation:
\begin{align*}
&\E^{\sigma}\mathsf{f}\mathscr{L}_{\bphi,\mathrm{A}}\mathsf{g}=N^{\frac32}\sum_{\x\in\Z}\E^{\sigma}\mathsf{f}[\bphi]\cdot\wt{\mathbf{F}}[\tau_{\x}\bphi]\cdot\partial_{\bphi_{\x}}\mathsf{g}[\bphi]\\
&=-N^{\frac32}\sum_{\x\in\Z}\E^{\sigma}\mathsf{g}[\bphi]\cdot\wt{\mathbf{F}}[\tau_{\x}\bphi]\cdot\partial_{\bphi_{\x}}\mathsf{f}[\bphi]+N^{\frac32}\E^{\sigma}\mathsf{g}[\bphi]\mathsf{f}[\bphi]\cdot\sum_{\x\in\Z}\wt{\mathbf{F}}[\tau_{\x}\bphi]\cdot\mathscr{U}'[\bphi_{\x}]-\upsilon_{\sigma}N^{\frac32}\E^{\sigma}\mathsf{g}[\bphi]\mathsf{f}[\bphi]\cdot\sum_{\x\in\Z}\wt{\mathbf{F}}[\tau_{\x}\bphi]
\end{align*}
The last two terms vanish as they are {\color{black}telescopic} sums (see \eqref{eq:phi-nl}), so {\small$\E^{\sigma}\mathsf{f}\mathscr{L}_{\bphi,\mathrm{A}}\mathsf{g}=-\E^{\sigma}\mathsf{g}\mathscr{L}_{\bphi,\mathrm{A}}\mathsf{f}$}. As in Corollary 2.2 in \cite{DGP}, this and {\small$\E^{\sigma}\mathsf{f}\mathscr{L}_{\bphi,\mathrm{S}}\mathsf{g}=\E^{\sigma}\mathsf{g}\mathscr{L}_{\bphi,\mathrm{S}}\mathsf{f}$} imply invariance of {\small$\mathbb{P}^{\sigma}$} under \eqref{eq:phi}. From these calculations, we also have that {\small$\mathscr{L}_{\bphi,\mathrm{S}}$} is symmetric with respect to {\small$\mathbb{P}^{\sigma}$}, and {\small$\mathscr{L}_{\bphi,\mathrm{A}}$} is anti-symmetric with respect to {\small$\mathbb{P}^{\sigma}$}. So, if {\small$\bphi_{0,\cdot}\sim\mathbb{P}^{\sigma}$}, the time-reversed process {\small$\r\mapsto\bphi_{\t-\r}$} for any deterministic {\small$\t\geq0$} and {\small$\r\in[0,\t]$} is a Markov process with generator {\small$\mathscr{L}_{\bphi}^{\ast}=\mathscr{L}_{\bphi,\mathrm{S}}-\mathscr{L}_{\bphi,\mathrm{A}}$}, where the {\small$\ast$}-superscript denotes adjoint with respect to {\small$\mathbb{P}^{\sigma}$}. This follows by standard Markov process theory; see the proof of Lemma 2.4 in \cite{KLO} (in particular, (2.21) therein).
\subsection{An initial data estimate}
We now record a technical estimate for the initial data {\small$\mathbf{Z}^{N}_{0,\cdot}$}.
\begin{lemma}\label{lemma:databound}
\fsp Fix {\small$\delta>0$}. If {\small$\kappa$} is large enough, then with high probability, we have (for all {\small$\x\in\Z$}) that
\begin{align}
\exp(-\tfrac{\kappa|\x|}{N})\mathbf{Z}^{N}_{0,\x}+\exp(-\tfrac{\kappa|\x|}{N})\mathbf{S}^{N}_{0,\x}\lesssim N^{\delta}.\label{eq:databoundI}
\end{align}
\end{lemma}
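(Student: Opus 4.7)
The plan is to combine the moment bound on the initial data from hypothesis (1) of Theorem \ref{theorem:main} with a Chebyshev estimate and a union bound over $\x \in \Z$, taking advantage of the fact that the exponential weight $\exp(-\kappa|\x|/N)$ beats the growth $\exp(\kappa_p|\x|/N)$ on the moment side as soon as $\kappa$ is chosen suitably large relative to $\kappa_p/p$.

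First, I would handle the $\mathbf{Z}^N$ supremum. Fix $p \geq 1$ large (to be chosen), and use the bound $\sup_{\x}\E|\mathbf{Z}^{N}_{0,\x}|^{2p} \lesssim_{p} \exp(\kappa_p |\x|/N)$ from \eqref{eq:mainI}. By Markov's inequality applied at scale $N^{\delta}\exp(\kappa|\x|/N)$,
\[
\mathbb{P}\Big(\exp(-\tfrac{\kappa|\x|}{N})|\mathbf{Z}^{N}_{0,\x}| \geq N^{\delta}\Big) \lesssim_{p} N^{-2p\delta}\exp\big((\kappa_p - 2p\kappa)|\x|/N\big).
\]
Choose $\kappa$ large enough that $\kappa \geq \kappa_p/p + 1$; then $2p\kappa - \kappa_p \geq 2p$, and summing the geometric series in $\x \in \Z$ yields
\[
\sum_{\x\in\Z}\mathbb{P}\Big(\exp(-\tfrac{\kappa|\x|}{N})|\mathbf{Z}^{N}_{0,\x}| \geq N^{\delta}\Big) \lesssim_{p} N^{-2p\delta} \cdot N.
\]
Now picking $p$ large enough that $2p\delta > 2$ makes this quantity $\mathrm{o}(1)$, so a union bound gives $\sup_{\x}\exp(-\kappa|\x|/N)|\mathbf{Z}^{N}_{0,\x}| \lesssim N^{\delta}$ with high probability.

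For the $\mathbf{S}^N$ piece, I would use \eqref{eq:zsmooth} and the fact that $\mathscr{S}^N$ is a probability measure supported in $\{|\w| \lesssim N^{1-\delta_{\mathbf{S}}}\}$:
\[
\exp(-\tfrac{\kappa|\x|}{N})|\mathbf{S}^{N}_{0,\x}| \leq \sum_{\y \in \Z} \mathscr{S}^{N}_{\x-\y}\cdot\exp(-\tfrac{\kappa|\x-\y|}{N})\cdot\exp(-\tfrac{\kappa|\y|}{N})\cdot \exp(\tfrac{2\kappa|\x-\y|}{N})|\mathbf{Z}^{N}_{0,\y}|.
\]
For $|\x-\y| \lesssim N^{1-\delta_{\mathbf{S}}}$ we have $\exp(2\kappa|\x-\y|/N) \lesssim_{\kappa} 1$, and $\mathscr{S}^N$ has unit mass, so this is bounded by $\mathrm{O}_{\kappa}(1) \cdot \sup_{\y} \exp(-\kappa|\y|/N)|\mathbf{Z}^{N}_{0,\y}|$, which is already controlled by the previous step on the same high-probability event.

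There is no real obstacle here since all inputs are standard; the only mild care is in quantifying the choice of $p$ and $\kappa$ so that the tail sum over $\x \in \Z$ is summable with room to spare. In particular, $\kappa$ must grow with $1/\delta$ only through the dependence $\kappa \geq \kappa_p/p$ for some admissible $p > 1/\delta$, which is consistent with the statement that $\kappa$ is ``large enough'' but $N$-independent.
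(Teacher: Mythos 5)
Your proposal is correct and takes essentially the same approach as the paper: Chebyshev/Markov with the moment bound from \eqref{eq:mainI}, a union bound over $\x\in\Z$, choosing $p$ large depending on $\delta$ so the polynomial gain beats the factor of $N$ from the geometric sum, choosing $\kappa$ large depending on $p$ (hence on $\kappa_p$) so that the exponential weight dominates, and then reducing the $\mathbf{S}^N$ supremum to the $\mathbf{Z}^N$ one via the fact that $\mathscr{S}^N$ is a probability kernel supported on length-scale $\ll N$. The only cosmetic difference is that the paper reduces $\mathbf{S}^N$ to $\mathbf{Z}^N$ first and then bounds $\mathbf{Z}^N$, whereas you do it in the opposite order.
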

\begin{proof}
The estimate for {\small$\mathbf{S}^{N}$} follows by the claim for {\small$\mathbf{Z}^{N}$} because {\small$\mathbf{S}^{N}$} is a convolution of {\small$\mathbf{Z}^{N}$} against a probability measure on {\small$\Z$} that is supported on a neighborhood of the origin of length {\small$\ll N$}; see Definition \ref{definition:zsmooth}. So, it suffices to show the estimate for {\small$\mathbf{Z}^{N}$}. For this, we use a union bound, the Chebyshev inequality, and assumptions on the initial data in Theorem \ref{theorem:main} to obtain the following estimate in which {\small$p\geq1$} is large but {\small$\mathrm{O}(1)$}:
\begin{align*}
\mathbb{P}\Big(\sup_{\x\in\Z}\exp(-\tfrac{\kappa|\x|}{N})\mathbf{Z}^{N}_{0,\x}\geq N^{\delta}\Big)&\lesssim N^{-2p\delta}\sum_{\x\in\Z}\exp(-\tfrac{2p\kappa|\x|}{N})\E|\mathbf{Z}^{N}_{0,\x}|^{2p}\lesssim_{p}N^{-2p\delta}\sum_{\x\in\Z}\exp(-\tfrac{2p\kappa|\x|}{N})\exp(\tfrac{\kappa_{p}|\x|}{N}).
\end{align*}
Choose {\small$p$} large so that {\small$2p\delta\geq2$}. If we take {\small$\kappa$} large enough depending only on {\small$p$}, then the last summation is {\small$\lesssim_{p}N$}. Thus, \eqref{eq:databoundI} follows, and the proof is complete.
\end{proof}
\subsection{Stability of \abbr{SHE} with Dirac initial data}
We now record the following result that is used in the proof of Theorem \ref{theorem:mainwedge} to show convergence to \abbr{SHE} with Dirac initial data.
\begin{lemma}\label{lemma:qstabilitywedge}
\fsp Suppose {\small$\wt{\mathbf{Q}}^{N}:[0,\infty)\times\Z\to\R$} satisfies the following \abbr{SDE}, where {\small$\mathscr{T}_{N}$} is from Proposition \ref{prop:s-sde}:
\begin{align}
\d\wt{\mathbf{Q}}^{N}_{\t,\x}=\mathscr{T}_{N}\wt{\mathbf{Q}}^{N}_{\t,\x}\d\t+\sqrt{2}\lambda N^{\frac12}\wt{\mathbf{Q}}^{N}_{\t,\x}\d\mathbf{b}_{\t,\x}.\label{eq:qtildesde}
\end{align}
Suppose also that the initial data is non-negative, possibly random, and satisfies the deterministic bounds
\begin{align}
\sup_{\x\in\Z}\mathrm{e}^{\t\mathscr{T}_{N}}[\wt{\mathbf{Q}}^{N}_{0,\cdot}]_{\x}\lesssim\t^{-1/2}\quad\mathrm{and}\quad N^{-1}\sum_{\x\in\Z}\wt{\mathbf{Q}}^{N}_{0,\x}\lesssim1.\label{eq:qtildesdeII}
\end{align}
Next, suppose that for any {\small$\t>0$} fixed, the function {\small$\X\mapsto\mathrm{e}^{\t\mathscr{T}_{N}}[\wt{\mathbf{Q}}^{N}_{0,\cdot}]_{N\X}$} converges uniformly in {\small$\X\in\R$} to {\small$\mathbf{H}_{0,\t,\X,0}$}. Then, there exists a coupling between {\small$\wt{\mathbf{Q}}^{N}$} and the narrow-wedge solution {\small$\mathbf{Z}^{\mathrm{nw}}$} to \eqref{eq:she} so that for any fixed and small {\small$\tau>0$}, we have {\small$\wt{\mathbf{Q}}^{N}_{\t,N\X}-\mathbf{Z}^{\mathrm{nw}}_{\t,N\X}\to0$} locally uniformly on {\small$[\tau,1]\times\R$} in probability.
\end{lemma}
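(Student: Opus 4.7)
The plan is to pass to the mild/Duhamel formulation on both sides and couple the noises in a standard way. On the discrete side, using \eqref{eq:qtildesde}, we have
\begin{align*}
\wt{\mathbf{Q}}^{N}_{\t,\x} \;=\; \mathrm{e}^{\t\mathscr{T}_{N}}[\wt{\mathbf{Q}}^{N}_{0,\cdot}]_{\x} \;+\; \sqrt{2}\lambda N^{\frac12}\int_{0}^{\t}\sum_{\y\in\Z}\mathbf{H}^{N}_{\s,\t,\x,\y}\,\wt{\mathbf{Q}}^{N}_{\s,\y}\,\d\mathbf{b}_{\s,\y},
\end{align*}
and on the continuum side the narrow-wedge solution satisfies $\mathbf{Z}^{\mathrm{nw}}_{\t,\X}=\mathbf{H}_{0,\t,\X,0}+\int_{0}^{\t}\int_{\R}\mathbf{H}_{\s,\t,\X,\Y}\mathbf{Z}^{\mathrm{nw}}_{\s,\Y}\,\xi_{\s,\Y}\,\d\Y\,\d\s$. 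I would couple by setting $\mathbf{b}_{\t,\x}:=N^{1/2}\int_{0}^{\t}\int_{(\x-1/2)/N}^{(\x+1/2)/N}\xi_{\s,\Y}\,\d\Y\,\d\s$ so that $\{\mathbf{b}_{\cdot,\x}\}_{\x\in\Z}$ are i.i.d.\ standard Brownian motions driven by the same white noise $\xi$ as $\mathbf{Z}^{\mathrm{nw}}$.

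Next, I would establish $\mathrm{L}^{2p}$-moment bounds of the form $\E|\wt{\mathbf{Q}}^{N}_{\t,\x}|^{2p}\lesssim_{p}\t^{-p/2}$ uniformly in $N,\x$. This is a standard Gronwall/fixed-point argument applied to the mild equation, where one uses \eqref{eq:hkeId} of Proposition \ref{prop:hke} to get $N\sum_{\y}|\mathbf{H}^{N}_{\s,\t,\x,\y}|^{2}\lesssim|\t-\s|^{-1/2}$, and the first hypothesis in \eqref{eq:qtildesdeII} to bound the inhomogeneous (heat) term by $\t^{-1/2}$. Burkholder--Davis--Gundy for the stochastic integral yields a closed inequality in $\sup_{\x}\E|\wt{\mathbf{Q}}^{N}_{\t,\x}|^{2p}$ with an integrable time-singularity, which iterates to the claimed bound. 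A completely analogous moment bound, $\E|\mathbf{Z}^{\mathrm{nw}}_{\t,\X}|^{2p}\lesssim_{p}\t^{-p/2}$, holds on the continuum side (and is standard, see \cite{ACQ}).

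Then I would upgrade to space-time H\"{o}lder-type regularity on $[\tau/2,1]\times\mathbb{K}$ for any compact $\mathbb{K}\subset\R$ and any fixed small $\tau>0$. Subtracting the two mild equations and denoting $\mathbf{D}^{N}_{\t,\X}:=\wt{\mathbf{Q}}^{N}_{\t,N\X}-\mathbf{Z}^{\mathrm{nw}}_{\t,\X}$, one obtains a mild equation for $\mathbf{D}^{N}$ whose inhomogeneous term is $\mathrm{e}^{\t\mathscr{T}_{N}}[\wt{\mathbf{Q}}^{N}_{0,\cdot}]_{N\X}-\mathbf{H}_{0,\t,\X,0}$ (which vanishes locally uniformly for $\t\in[\tau/2,1]$ by hypothesis), plus a stochastic term that compares $N\sum_{\y}\mathbf{H}^{N}_{\s,\t,N\X,\y}(\cdot)\d\mathbf{b}_{\s,\y}$ to $\int_{\R}\mathbf{H}_{\s,\t,\X,\Y}(\cdot)\xi(\d\s,\d\Y)$. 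Using the Riemann-sum convergence $N\mathbf{H}^{N}_{\s,\t,N\X,N\Y}\to\mathbf{H}_{\s,\t,\X,\Y}$ locally uniformly (noted in the proof of Theorem \ref{theorem:mainwedge}) together with the moment bound on $\wt{\mathbf{Q}}^{N}$ and $\mathbf{Z}^{\mathrm{nw}}$ gives a Gronwall-closable bound $\E|\mathbf{D}^{N}_{\t,\X}|^{2}\lesssim\mathrm{o}_{N}(1)\cdot\int_{\tau_{0}}^{\t}|\t-\s|^{-1/2}|\s|^{-1/2}\d\s$, which yields $\sup_{\X\in\mathbb{K}}\E|\mathbf{D}^{N}_{\t,\X}|^{2}\to 0$ for $\t\in[\tau/2,1]$.

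The main obstacle is the short-time singularity at $\t=0$: both $\wt{\mathbf{Q}}^{N}_{0,\cdot}$ and $\mathbf{Z}^{\mathrm{nw}}$ are Dirac-like, so all estimates blow up at $\t=0$. I would overcome this by restarting the argument from time $\tau/4>0$, where the state is already a genuine (smooth on scale $\sqrt{\tau}$) function: the hypothesis that $\mathrm{e}^{\t\mathscr{T}_{N}}[\wt{\mathbf{Q}}^{N}_{0,\cdot}]_{N\X}\to\mathbf{H}_{0,\t,\X,0}$ uniformly at $\t=\tau/4$ gives convergence of the ``new'' initial data, after which the SHE stability at strictly positive initial time is classical (essentially Theorem 2.1 of \cite{BG}). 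Finally, to promote $\mathrm{L}^{2}$-convergence to local uniform convergence in probability, I would combine the pointwise $\mathrm{L}^{2p}$-convergence with a Kolmogorov-type tightness/equicontinuity argument in space-time on $[\tau,1]\times\mathbb{K}$, obtained from the moment bounds above applied to increments.
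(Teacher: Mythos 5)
Your proposal shares the key structural pillars of the paper's proof --- moment estimates at a positive time, a ``restart'' to bypass the Dirac singularity at $\t=0$, and the stability of SHE under spatial discretization (Theorem 2.1 of \cite{BG}) --- but the mechanism you use for the restart has a genuine circularity that the paper resolves by a different device. You assert that the hypothesis on $\mathrm{e}^{\t\mathscr{T}_{N}}[\wt{\mathbf{Q}}^{N}_{0,\cdot}]_{N\X}$ at $\t=\tau/4$ ``gives convergence of the new initial data''. But the state at time $\tau/4$,
\begin{align*}
\wt{\mathbf{Q}}^{N}_{\tau/4,\x}=\mathrm{e}^{(\tau/4)\mathscr{T}_{N}}[\wt{\mathbf{Q}}^{N}_{0,\cdot}]_{\x}+\sqrt{2}\lambda N^{\frac12}\int_{0}^{\tau/4}\sum_{\y\in\Z}\mathbf{H}^{N}_{\s,\tau/4,\x,\y}\,\wt{\mathbf{Q}}^{N}_{\s,\y}\,\d\mathbf{b}_{\s,\y},
\end{align*}
has a deterministic part (controlled by your hypothesis) and a random part; proving the latter converges to the corresponding continuum stochastic integral --- even with coupled noises and the Riemann-sum convergence of $\mathbf{H}^{N}$ --- requires already knowing that $\wt{\mathbf{Q}}^{N}_{\s,\cdot}$ converges for $\s\in(0,\tau/4]$, which is precisely the conclusion sought. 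Your preliminary direct Gronwall comparison on $[\tau/2,1]$ has a parallel issue: the Duhamel iteration forces an integral over $\s\in(0,\t]$, and the only available a priori bound on $\sup_{\X}\E|\mathbf{D}^{N}_{\s,\X}|^{2}$ near $\s=0$ is the triangle inequality applied to the individual moment estimates, which is $O(\s^{-1})$ and not $\mathrm{o}_{N}(1)$; the inequality does not close. (Incidentally, the moment scaling you wrote, $\E|\wt{\mathbf{Q}}^{N}_{\t,\x}|^{2p}\lesssim\t^{-p/2}$, should read $\t^{-p}$, since already $|\mathbf{H}_{0,\t,\X,0}|^{2p}\asymp\t^{-p}$.)

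The paper (following Lemmas 5.1--5.2 in \cite{DT}) avoids both obstacles by never attempting to identify the state at the intermediate time. It first establishes the moment estimates \eqref{eq:qstabilitywedge1} at time $\eta>0$, then uses Kolmogorov tightness to extract a \emph{subsequential} local-uniform-in-probability limit $\wt{\mathbf{Q}}_{\eta,\cdot}$ of $\X\mapsto\wt{\mathbf{Q}}^{N}_{\eta,N\X}$ \emph{without} identifying it, feeds this limit into the BG stability theorem to conclude convergence on $[\eta,1]\times\R$ to the solution $\wt{\mathbf{Q}}$ of \eqref{eq:qstabilitywedgelimit} started at time $\eta$ from the (unidentified) data $\wt{\mathbf{Q}}_{\eta,\cdot}$, and only \emph{then} identifies $\wt{\mathbf{Q}}$ with $\mathbf{Z}^{\mathrm{nw}}$ on $[\tau,1]\times\R$ by letting $\eta\to0$ and showing that the heat-flow remainder \eqref{eq:qstabilitywedge2} and the tail stochastic integral \eqref{eq:qstabilitywedge3} both vanish in probability. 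Because this identification is performed at the level of the already-extracted limiting object rather than at the level of $\wt{\mathbf{Q}}^{N}$, the circularity disappears. You would need this subsequence-then-identify structure to make your argument close.
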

\begin{proof}
The argument is the same as {\color{black}in} the proofs of Lemmas 5.1 and 5.2 in \cite{DT}. Indeed, Lemma \ref{lemma:qstabilitywedge} and its proof are about the \abbr{SDE} for {\small$\wt{\mathbf{Q}}^{N}$} for a choice of initial data converging to a Dirac point mass. Thus, we will only sketch the proof. Below, {\small$\wt{\mathbf{Q}}^{N}$} plays the same role as {\small$\mathscr{Z}^{\e}$} in Lemmas 5.1 and 5.2 of \cite{DT}, and {\small$\e$} there is {\small$N^{-1}$} here.

This argument proceeds in two steps. First, we show that for any $\eta>0$ independent of $N$, the data {\small$\x\mapsto\wt{\mathbf{Q}}^{N}_{\eta,\x}$} satisfies the initial data assumptions of Theorem \ref{theorem:main}, i.e. that 
\begin{align}
\sup_{\x\in\Z}\E|\wt{\mathbf{Q}}^{N}_{\eta,\x}|^{2p}\lesssim_{p,\eta}1\quad\mathrm{and}\quad\sup_{\x\in\Z}\sup_{|\mathfrak{l}|\lesssim N}\E|\grad^{\mathbf{X}}_{\mathfrak{l}}\wt{\mathbf{Q}}^{N}_{\eta,\x}|^{2p}\lesssim_{p,\eta}N^{-p+p\zeta}|\mathfrak{l}|^{p-p\zeta}\label{eq:qstabilitywedge1}
\end{align}
given any $p\geq1$ and any $\zeta>0$. This follows from standard moment estimates for the Duhamel version of \eqref{eq:qtildesde}, using the a priori inputs \eqref{eq:qtildesdeII}; the details are in the proof of Lemma 5.1 in \cite{DT}. Next, by Kolmogorov continuity (and Skorokhod representation), there is a subsequence along which {\small$\X\mapsto\wt{\mathbf{Q}}^{N}_{\eta,N\X}$} converges locally uniformly in {\small$\X\in\R$} in probability. Call this limit {\small$\wt{\mathbf{Q}}_{\eta,\cdot}$}. Now, using \eqref{eq:qstabilitywedge1} and the stability of \abbr{SHE} with continuous initial data (see Theorem 2.1 in \cite{BG}), we obtain that along this subsequence, {\small$(\t,\X)\mapsto\wt{\mathbf{Q}}^{N}_{\t,N\X}$} converges locally uniformly on {\small$[\eta,1]\times\R$} to the solution $\wt{\mathbf{Q}}$ of
\begin{align}
\wt{\mathbf{Q}}_{\t,\X}&={\int_{\R}}\mathbf{H}_{\eta,\t,\X,\Y}\wt{\mathbf{Q}}_{\eta,\Y}\d\Y+\lambda{\int_{\eta}^{\t}\int_{\R}}\mathbf{H}_{\s,\t,\X,\Y}\wt{\mathbf{Q}}_{\s,\Y}\xi_{\s,\Y}\d\Y\d\s.\label{eq:qstabilitywedgelimit}
\end{align}
To conclude the proof of Lemma \ref{lemma:qstabilitywedge}, it suffices to use the assumed convergence of {\small$\X\mapsto\mathrm{e}^{\t\mathscr{T}_{N}}[\wt{\mathbf{Q}}^{N}_{0,\cdot}]_{N\X}$} uniformly in {\small$\X\in\R$} to {\small$\mathbf{H}_{0,\t,\X,0}$} and show that pointwise on {\small$(0,1]\times\R$}, we have
\begin{align}
\mathbf{H}_{0,\t,\X,\y}-{\int_{\R}}\mathbf{H}_{\eta,\t,\X,\Y}\wt{\mathbf{Q}}_{\eta,\Y}\d\Y&\to_{\eta\to0}0\label{eq:qstabilitywedge2}\\
{\int_{0}^{\eta}\int_{\R}}\mathbf{H}_{\s,\t,\X,\Y}\wt{\mathbf{Q}}_{\s,\Y}\xi_{\s,\Y}\d\Y\d\s&\to_{\eta\to0}0\label{eq:qstabilitywedge3}
\end{align}
in probability. Indeed, this whole argument thus far would imply that any subsequence of {\small$(\t,\X)\mapsto\wt{\mathbf{Q}}^{N}_{\t,N\X}$} has a subsequential limit in probability on {\small$[\tau,1]\times\R$} (in the local-uniform-convergence topology) which coincides on {\small$[\tau,1]\times\R$} with the narrow-wedge solution to \eqref{eq:she} for any fixed $\tau>0$, thereby completing the proof. 
\end{proof}
%
%
%


\end{document}